\theoremstyle{definition}
\newtheorem{Def}{Definition}[section] 
\newtheorem{Not}[Def]{Notation}
\newtheorem{Hyp}[Def]{Assumption}
\theoremstyle{plain}
\newtheorem{Pro}[Def]{Proposition}
\newtheorem{Clm}[Def]{Claim}
\newtheorem{Lem}[Def]{Lemma}
\newtheorem{Thm}[Def]{Theorem}
\newtheorem{Cor}[Def]{Corollary}
\theoremstyle{remark}
\newtheorem{Rmk}[Def]{Remark}
\newtheorem{Xpl}[Def]{Example}
\makeatletter\@removefromreset{equation}{chapter}\makeatother 
\makeatletter\@removefromreset{figure}{section}\makeatother 
\newcommand\mbb{\mathbb}
\newcommand\mbf{\mathbf}
\newcommand\mcal{\mathcal}
\newcommand\mfrak{\mathfrak}
\newcommand\mrm{\mathrm}
\newcommand\msf{\mathsf}
\newcommand\NN{\mbb{N}}
\newcommand\RR{\mbb{R}}
\newcommand\ZZ{\mbb{Z}}
\newcommand\PP{\Pr}
\newcommand\EE{\mathop{\smash{\mbf{E}}}\nolimits}
\DeclareMathOperator\Var{Var}
\DeclareMathOperator\Law{\mathit{Law}}
\newcommand\ab{\allowbreak}
\renewcommand\epsilon{\varepsilon}
\renewcommand\phi{\varphi}
\renewcommand\geq{\geqslant}
\renewcommand\leq{\leqslant}
\renewcommand\setminus{\smallsetminus}
\renewcommand\ell{l}
\renewcommand\Pr{\mathop{\smash{\mbf{P}}}\nolimits}
\renewcommand\ln{\log}
\renewcommand\Re{\mathop{\mfrak{Re}}}
\renewcommand\Im{\mathop{\mfrak{Im}}}
\renewcommand\arcsin{\mathop{\mathrm{Arcsin}}}
\renewcommand\arccos{\mathop{\mathrm{Arccos}}}
\renewcommand\limsup{\varlimsup}
\renewcommand\liminf{\varliminf}
\renewcommand\colon{\:\mathpunct{:}\enskip}
\renewcommand\[{\begin{equation}}
\renewcommand\]{\end{equation}}
\renewenvironment{itemize}{\vspace{-.5\parskip}\begin{compactitem}}{\end{compactitem}\vspace{-.5\parskip}}
\renewenvironment{enumerate}{\vspace{-.5\parskip}\begin{compactenum}}{\end{compactenum}\vspace{-.5\parskip}}
\let\wedge@TeX=\wedge
\renewcommand\wedge{\mathbin{\text{\relsize{-1}$\wedge@TeX$}}}
\let\vee@TeX=\vee
\renewcommand\vee{\mathbin{\text{\relsize{-1}$\vee@TeX$}}}
\let\bigtriangleup@TeX=\bigtriangleup
\renewcommand\bigtriangleup{\mathbin{\text{\relsize{-1}$\bigtriangleup@TeX$}}}
\def\CM@fonts#1{\begingroup\fontfamily{cmr}\selectfont#1\endgroup}
\def\acute@low{\CM@fonts{\relsize{+4}\raisebox{-1.2ex}{\textasciiacute}}}
\def\bar@low  {\CM@fonts{\relsize{+2}\raisebox{-1.4ex}{\textasciimacron}}}
\def\breve@low{\CM@fonts{\relsize{+3}\raisebox{-1.2ex}{\textasciibreve}}}
\def\check@low{\CM@fonts{\relsize{+3}\raisebox{-1.2ex}{\textasciicaron}}}
\def\ddot@low {\CM@fonts{\relsize{+4}\raisebox{-0.0ex}{.\hspace{-.1em}.}}}
\def\dot@low  {\CM@fonts{\relsize{+4}\raisebox{-0.0ex}{.}}}
\def\grave@low{\CM@fonts{\relsize{+4}\raisebox{-1.2ex}{\textasciigrave}}}
\def\hat@low  {\CM@fonts{\relsize{+3}\raisebox{-1.0ex}{\textasciicircum}}}
\def\tilde@low{\CM@fonts{\relsize{+3}\raisebox{+0.1ex}{\texttildelow}}}
\def\vec@low  {\CM@fonts{\relsize{+0}\raisebox{-0.0ex}{$\rightarrow$}}}
\renewcommand\acute[1]{\accentset{\smash{\text{\acute@low}}\rule{0pt}{.3ex}}{#1}}
\renewcommand\bar  [1]{\accentset{\smash{\text{\bar@low  }}\rule{0pt}{.3ex}}{#1}}
\renewcommand\breve[1]{\accentset{\smash{\text{\breve@low}}\rule{0pt}{.3ex}}{#1}}
\renewcommand\check[1]{\accentset{\smash{\text{\check@low}}\rule{0pt}{.3ex}}{#1}}
\renewcommand\ddot [1]{\accentset{\smash{\text{\ddot@low }}\rule{0pt}{.3ex}}{#1}}
\renewcommand\dot  [1]{\accentset{\smash{\text{\dot@low  }}\rule{0pt}{.3ex}}{#1}}
\renewcommand\grave[1]{\accentset{\smash{\text{\grave@low}}\rule{0pt}{.3ex}}{#1}}
\renewcommand\hat  [1]{\accentset{\smash{\text{\hat@low  }}\rule{0pt}{.3ex}}{#1}}
\renewcommand\tilde[1]{\accentset{\smash{\text{\tilde@low}}\rule{0pt}{.3ex}}{#1}}
\renewcommand\vec  [1]{\accentset{\smash{\text{\vec@low  }}\rule{0pt}{.3ex}}{#1}}
\let\enskip@TeX=\enskip
\renewcommand\enskip{\penalty-0\enskip@TeX\relax}
\let\quad@TeX=\quad
\renewcommand\quad{\penalty-0\quad@TeX\relax}
\renewcommand\big[1]{\text{\relsize{+1}\raisebox{-.09ex}{$#1$}}}
\renewcommand\Big[1]{\text{\relsize{+2}\raisebox{-.17ex}{$#1$}}}
\let\bigg=\bigg 
\newcommand*{\@fnsymbol@six}[1]{%
\@DT@loopN=#1\relax
\advance\@DT@loopN by -1\relax
\divide\@DT@loopN by 6\relax
\@DT@modctr=\@DT@loopN
\multiply\@DT@modctr by 6\relax
\@DT@X=#1\relax
\advance\@DT@X by -1\relax
\advance\@DT@X by -\@DT@modctr
\advance\@DT@loopN by 1\relax
\advance\@DT@X by 1\relax
\@fnsymbol\@DT@X
}
\providecommand*{\fnsymbol@six}[1]{%
\expandafter\protect\expandafter\@fnsymbol@six{%
\expandafter\the\csname c@#1\endcsname}}
\renewcommand\fnsymbol\fnsymbol@six
\let\figurename@TeX=\figurename%
\renewcommand\figurename{\small\figurename@TeX}
\newcommand\coloneqq{\mathrel{\text{\rlap{\raisebox{-.25ex}{$\cdot$}}\raisebox{+.25ex}{$\cdot$}$=$}}} 
\newcommand\eqqcolon{\mathrel{\text{$=$\rlap{\raisebox{-.25ex}{$\cdot$}}\raisebox{+.25ex}{$\cdot$}}}} 
\newcommand\dx[1]{d\mspace{-2.mu}\mathord{#1}} 
\newcommand\1[1]{\mbf{1}_{\text{\relsize{+1}$#1$}}} 
\newcommand\pushfwd{\mathbin{\text{\relsize{-1}$\#$}}} 
\newcommand\Bcdot{\mathord{\Big\cdot}} 
\def\clap#1{\hbox to 0pt{\hss#1\hss}}
\def\mathclap{\mathpalette\mathclapinternal}
\def\mathclapinternal#1#2{%
\clap{$\mathsurround=0pt#1{#2}$}}
\newenvironment{ienumerate}{\vspace{-.5\parskip}\begin{compactenum}[(i)]}{\end{compactenum}\vspace{-.5\parskip}} 
\newenvironment{NOTA}{\par\noindent\rm\ding{42}\quad\it}{\par} 
\author{R\'emi Peyre}
\title{Tensorizing maximal correlations}
\date{December 30, 2010}
\newcommand{\minisquare}[1]{\draw #1 +(-.7pt,-.7pt) rectangle +(+.7pt,+.7pt);}
\newcommand{\rightangle}[3]{\draw (#2)
	let	\p1 = ($(#1)-(#2)$), \p3 = ($(#3)-(#2)$), \n1 = {veclen(\p1)}, \n3 = {veclen(\p3)}
	in	+($5/\n1*(\p1)$) -- +($5/\n1*(\p1) + 5/\n3*(\p3)$) -- +($5/\n3*(\p3)$);}
\newcommand\dist{\mathit{dist}}
\newcommand\C[1]{#1^{\msf{c}}}
\newcommand\vechat[1]{\vec{\hat{#1}}} 
\newcommand\longto{\to} 
\newcommand{\ldb}{\smash{\bar{L}}^2}
\DeclareMathOperator\ecty{Sd}
\DeclareMathOperator\Cov{Cov}
\DeclareMathOperator\Corr{Corr}
\newcommand\footrel[2]{\underset{#1}{#2}}
\newcommand\T[1]{#1^{\msf{\!T}}}
\newcommand\onecirc{\raisebox{-.25ex}{\large\ding{172}}} 
\newcommand\twocirc{\raisebox{-.25ex}{\large\ding{173}}} 
\newcommand\threecirc{\raisebox{-.25ex}{\large\ding{174}}} 
\newcommand{\barLip}{\overline{\mathit{Lip}}}
\newcommand\boldepsilon{\boldsymbol\epsilon} 
\newcommand{\notR}{\mcal{R}\llap{\raisebox{-.22ex}{$\Big/$}$\mspace{3.mu}$}}
\DeclareMathOperator\valency{val}
\DeclareMathOperator\II{II}
\DeclareMathOperator\volume{vol}
\newcommand\teps{\tilde\epsilon} 
\newcommand\offset{\mathit{offset}}
\DeclareMathOperator*{\argmin}{arg\,min}
\begin{document}


\maketitle

\begin{abstract}
The \emph{maximal \emph{(or \emph{Hilbertian})} correlation coefficient} between two random variables $X$ and~$Y$, denoted by ${\{X:Y\}}$, is the supremum of the $\big|\Corr\big(f(X),g(Y)\big)\big|$ for real measurable functions~$f,g$, where ``$\Corr$'' denotes Pearson's correlation coefficient.
It is a classical result that for independent pairs of variables $(X_i,Y_i)_{i\in I}$, ${\{\vec{X}_I : \vec{Y}_I\}}$ is the supremum of the~${\{X_i:Y_i\}}$. The main goal of this monograph is to prove similar tensorization results when one only has partial independence between the~$(X_i,Y_i)$; more generally, for random variables $(X_i)_{i\in I}, \ab (Y_j)_{j\in J}$, we will look for bounds on~${\{\vec{X}_I:\vec{Y}_J\}}$ from bounds on the~${\{X_i:Y_j\}}, \enskip i\in I, \ab j\in J$.

My tensorization theorems will imply new decorrelation results for models of statistical physics exhibiting asymptotic independence, like the subcritical Ising model. I shall prove that for such models, two distant bunches of spins are decorrelated (in the Hilbertian sense) \emph{uniformly in their sizes and shapes}: if $I$ and~$J$ are two sets of spins such that $\dist(i,j) \geq d$ for all~$i\in I, \ab j\in J$, then one gets a nontrivial bound for~${\{\vec{X}_I:\vec{Y}_J\}}$ only depending on~$d$.

Still for models like the subcritical Ising one, I shall also prove how Hilbertian decorrelations may be used to get the spatial CLT or the (strict) positiveness of the spectral gap for the Glauber dynamics, \emph{via} tensorization techniques again.

Besides all that, I shall finally prove a new criterion to bound the maximal correlation~${\{\mcal{F}:\mcal{G}\}}$ between two $\sigma$-algebras~$\mcal{F}$ and~$\mcal{G}$ form a uniform bound on the $\big|{\Pr[A\cap B]}-{\Pr[A]\Pr[B]}\big| \mathbin{\big/} \sqrt{\Pr[A]\Pr[B]}$ for all~$A\in\mcal{F}, \ab B\in\mcal{G}$. Such criteria were already known, but mine strictly improves those and can moreover be proved to be optimal.
\end{abstract}

\chapter*{Introduction}%
\addcontentsline{itc}{chapter}{Introduction}

\section*{Overview of the monograph}

This monograph is devoted to the study
of Hilbertian correlations (also called ``maximal correlations'' or ``$\rho$-mixing coefficients''),
in particular to showing how this concept can be `tensorized'
to yield new results on systems of statistical mechanics exhibiting asymptotic independence.
I have divided it into six chapters:
\begin{itemize}
\item The first chapter, numbered ``\ref{parMotivation}'',
aims at motivating the study of Hilbertian correlations and their tensorization.
In this chapter, I will recall some classical results on the subcritical Ising model,
which is a classical model showing asymptotic independence between pairs of spins.
When one gets interested in very large `bunches' of spins,
it is known that asymptotic independence cannot be captured by $\beta$-mixing any more,
but that, in certain cases at least, it still holds in terms of $\rho$-mixing.
The techniques used so far to establish $\rho$-mixing for bunches of spins
are strongly limited by technical assumptions looking somehow artificial,
which will motivate studying $\rho$-mixing `for itself' and trying to tensorize it.
\item In Chapter~\ref{parBasicHilbertianDecorrelations},
I shall recall the definition of the Hilbertian correlation coefficient;
I shall also recall some classical facts about this concept and give some examples.
This chapter can be seen as a `crash course' on $\rho$-mixing for the non-specialist reader:
almost nothing in it is new.
\item In Chapter~\ref{parEventSufficientConditions}, I shall give some new criteria
to bound the Hilbertian correlation between two $\sigma$-algebras,
which criteria assume bounds on the~$(\Pr[A\cap B] - \Pr[A]\Pr[B])$
for events~$A$ and~$B$ belonging to these respective $\sigma$-algebras.
My ``strong event sufficient condition'', which improves previous results by several authors,
shall even be shown to be optimal.
\item Chapter~\ref{parTensorization} is the core of this work:
in it I will handle tensorization of Hilbertian decorrelations.
This chapter begins with a refined version of the concept of correlation,
called ``subjective correlation'', which is necessary to write the subsequent tensorization results.
Then I shall state and prove my three main tensorization theorems:
Theorem~\ref{thm5750} (`$N$~against~$1$' theorem)
bounds the correlation between
a `simple' and a `vector' variable;
Theorem~\ref{thm6413} (`$N$~against~$M$' theorem)
deals with correlation between two vector variables,
and Theorem~\ref{thm0119} (`$\ZZ$~against~$\ZZ$' theorem)
refines the previous one in the case where certain symmetries are present.
Then, I will discuss some refinement and optimality statements
about these theorems; in \S~\ref{par3lines},
I will also present a geometric corollary of tensorization results
which underlines quite well the Hilbertian aspect of maximal correlations.
\item In Chapter~\ref{parOtherApplicationsOfTensorizationTechniques},
I will continue to use the tensorization techniques of Chapter~\ref{parTensorization},
but that time instead of proving tensorization results \emph{stricto sensu}
I will turn to different types of results, namely the spatial central limit theorem
and the presence of spectral gap for the Glauber dynamics.
\item Finally, Chapter~\ref{parApplications} will present some concrete applications
of the results of this monograph. For instance, I shall prove new results about decorrelation
between distant bunches of spins in Ising's model (see Theorem~\ref{t3730});
I will also give results of the same type for quite general models of statistical mechanics (see e.g.\ Theorems~\ref{thmTchou} and~\ref{thmLama}),
also proving spatial CLT and spectral gap for the Glauber dynamics for these models.
I will also show how tensorization of Hilbertian correlations can be used
to get `hypocoercivity' results~[Theorem~\ref{thmParis}].
\end{itemize}

\section*{Conventions and notation}

Notation will not always be perfectly rigorous:
to make reading easier, it may occur sometimes that formalism is slightly loose,
or that some writing conventions or assumptions are implicit.
However this shall only be done in situations
where adding the missing information by the reader is (hopefully) obvious.

Here is some notation used throughout this text:

\paragraph{Miscellaneous}

\begin{itemize}
\item The symbol $\NN$ denotes the set of nonnegative integers, including $0$.
The set of positive integers $\NN\setminus\{0\}$ is denoted by~$\NN^*$.
\item For~$a,b$ real numbers, $a\wedge b$ denotes $\min\{a,b\}$,
resp.~$a\vee b$ denotes $\max\{a,b\}$; $a_+$ denotes the positive part of~$a$,
i.e.\ $a\vee0$.
\item For~$A$ a set, $\C{A}$ denotes the complement set of~$A$
(the set of reference shall always be clear);
$\1{A}$ denotes the indicator function of~$A$,
that is, the function being~$1$ on~$A$ and~$0$ on~$\C{A}$.
\item For~$A,B$ sets, $A\bigtriangleup B$ denotes the symmetric difference of~$A$ and~$B$,
i.e.\ ${(A\setminus B)}\uplus(B\setminus A)$,
where ``$\uplus$'' means the same as ``$\cup$'', but with underlining that the union is disjoint.
\item The identity matrix in dimension $n$ will be denoted by~$\mathbf{I}_n$.
The transpose of a matrix $A$ will be denoted by~$\T{A}$.
\item If $\Theta$ is a set endowed with a metric $\dist$, then for
$I,J\subset\Theta$, $\dist(I,J)$ denotes the distance between~$I$ and~$J$, that is,
$\dist(I,J) \coloneqq \inf\{\dist(i,j) \colon {i\in I}, \ab {j\in J}\}$.
\item As is customary in physical literature, $\propto$ means ``proportional to''.
\item Whenever $I$ is a set and $X$ a symbol,
$\vec{X}_I$ will be a shorthand for ``$(X_i)_{i\in I}$''.
\end{itemize}

\paragraph{Probability}

\begin{itemize}
\item We will always work on an implicit probability space $(\Omega,\mcal{B})$
equipped with a probability measure~$\Pr$. Sub-$\sigma$-algebras of~$\mcal{B}$
will be merely called ``$\sigma$-algebras'';
I will also often write ``variable'' for ``random variable''.
Unless explicitly specified, variables on~$\Omega$ can be valued in any set.
\item If $f$ is a real random variable, the expectation of~$f$ is denoted by~$\EE[f]$;
its variance is denoted by~$\Var(f)$; its standard deviation is denoted
by $\ecty(f) \coloneqq \sqrt{\Var(f)}$; if $g$ is another real variable,
the covariance between~$f$ and~$g$ is denoted by~$\Cov(f,g) \coloneqq \EE[fg] - \EE[f]\EE[g]$.
All that notation extends to the case where $f$ and~$g$ are valued
in some vector space $\RR^N$, except that in that case it refers to vectors or matrices.
\item If $B$ is an event with $\Pr[B]>0$, then $\Pr[A|B]$, $\EE[f|B]$, $\Var(f|B)$, \dots\ 
stand resp.\ for the probability of~$A$, the expectation of~$f$, the variance of~$f$, \dots\ 
under the conditional law~$\dx\Pr[\Bcdot|B] \coloneqq \1{B}\,\dx\Pr[\Bcdot] \div\Pr[B]$.
Similarly, if $\mcal{F}$ is a $\sigma$-algebra,
$\Pr[A|\mcal{F}]$, $\EE[f|\mcal{F}]$, \dots\ stand for the conditional probability of~$A$,
the conditional expectation of~$f$, \dots\ w.r.t.~$\mcal{F}$.
\item Concerning conditional expectations, I will actually use two different conventions:
for~$\mcal{G}$ a $\sigma$-algebra, $\EE[f|\mcal{G}]$ can also be denoted by~$f^{\mcal{G}}$.
Both conventions can be used inside the same formula.%
\footnote{The use of the first or the second convention will depend on
the way we prefer to see the conditional expectation of~$f$ w.r.t.~$\mcal{G}$:
if it is rather seen as the expectation of~$f$ knowing the information of~$\mcal{G}$,
notation $\EE[f|\mcal{G}]$ will be chosen, while if it is more seen like
the $\mcal{G}$-measurable function best approximating $f$, we will use the notation $f^{\mcal{G}}$.}%
\footnote{One must not confuse $\Var(f|\mcal{G})$,
which is the variance of~$f$ under the law $\Pr[\Bcdot|\mcal{G}]$,
with~$\Var(f^{\mcal{G}})$ which is the (unconditioned) variance of the random variable $f^{\mcal{G}}$.
One has the well-known identity $\Var(f) = \Var(f^{\mcal{G}}) + \EE[\Var(f|\mcal{G})]$,
which I shall refer to as \emph{associativity of variance}.}
\item If $X$ is a variable on~$\Omega$, the $\sigma$-algebra generated by~$X$
(that is, the smallest $\sigma$-algebra w.r.t.\ which $X$ is measurable)
is denoted by~$\sigma(X)$. If $\mcal{F}$ and~$\mcal{G}$ are $\sigma$-algebras,
the $\sigma$-algebra generated by~$\mcal{F}$ and~$\mcal{G}$
(that is, the smallest $\sigma$-algebra containing both~$\mcal{F}$ and~$\mcal{G}$)
is denoted by~$\mcal{F} \vee \mcal{G}$,
and for an arbitrary number of $\sigma$-algebras this notation extends into the $\infty$-ary operator~$\bigvee$.
\item An event $A\in\mcal{B}$ is said to have trivial probability, or to be trivial,
if $\Pr[A]\in\{0,1\}$. A $\sigma$-algebra is said to be trivial if all its events are trivial.
The $\sigma$-algebra $\{\emptyset,\Omega\}$, which is trivial under any law $\Pr$,
will be denoted by~$\mcal{O}$ and refered to as ``the'' trivial sigma-algebra.
\item The Lebesgue measure on~$\RR^n$ will be denoted by~$\dx{x}$,
``$x$'' being the name of the integration variable.
For a Borel set $A\subset\RR^n$, $\int_{x\in A} \dx{x}$ will sometimes be denoted by~$|A|$.
\item For~$C$ a positive-semidefinite matrix (possibly of dimension $1$,
in which case it is identified with~$\sigma^2 \in \RR_+$), $\mcal{N}(C)$ denotes
the law of the centered Gaussian vector with covariance matrix~$C$.
I will write $\mcal{N}(C)+m$
to denote the non-centered Gaussian vector with variance $C$ and mean $m$.
\end{itemize}

\paragraph{Functional analysis}

\begin{itemize}
\item Unless otherwise specified, all the functional spaces considered in this monograph
shall be real.
\item For~$I$ an open interval of~$\RR$ and~$k\in\NN\cup\{\infty\}$,
$\mcal{C}^k_0(I)$ denotes the subset of functions of~$\mcal{C}^k(I)$ with compact support.
\item If $\mu$ is a nonnegative measure on some measurable space $(\Omega,\mcal{B})$,
$L^2(\mu)$ denotes the set of measurable functions $f$ (up to~$\mu$-a.e.\ equality) such that
$\int_{\Omega} f(\omega)^2 \,\dx\mu(\omega) < \infty$.
If $I$ is a countable set, $L^2(I)$ denotes the set of functions $f : I\to\RR$ such that
$\sum_{i\in I} f(i)^2 < \infty$.
If $\mcal{F}$ is a $\sigma$-algebra, $L^2(\mcal{F})$ denotes the space of
$\mcal{F}$-measurable functions (up to a.s.\ equality)
which are square-integrable w.r.t.~$\Pr$.
All these spaces are equipped with their natural Hilbertian product
$\langle\Bcdot,\Bcdot\rangle$ and the associated norm $\|\Bcdot\|$.
\item For~$\mu$ a finite measure, in~$L^2(\mu)$ the constant functions make a line
which can be identified with~$\RR$;
then, $\ldb(\mu)$ will denote the quotient $L^2(\mu)/\RR$,
equipped with its natural Hilbert structure.
In other words, if $\bar{f}\in \ldb(\mu)$ is the projection of~$f\in L^2(\mu)$,
$\|\bar{f}\|_{\ldb} \coloneqq \inf\{\|f-a\|_{L^2}\colon {a\in\RR}\} =
\big( \|f\|_{L^2}^2 - \langle f,1/\|1\|\rangle_{L^2}^2 \big)^{1/2}$.
$\ldb(\mu)$ can also be seen as the subspace of centered functions of~$L^2(\mu)$,
i.e.\ as $\{ f\in L^2(\mu)\colon \langle f,1\rangle = 0 \}$;
throughout the monograph we will implicitly switch between both interpretations.
\item If $L \colon H_1\to H_2$ is a linear operator between two Hilbert spaces,
then $L^* \colon H_2\to H_1$ denotes the adjoint operator of~$L$, characterized by the relationship
$\langle L^*y, x \rangle_{H_1} = \langle y, Lx \rangle_{H_2}$.
\item If $L \colon E\to F$ is a linear operator between two Banach spaces
(not necessarily Hilbert) with respective norms~$\|\Bcdot\|_E$ and~$\|\Bcdot\|_F$,
the operator norm of~$f$, denoted by~$\VERT f\VERT$, is defined as
$\sup\{\|Lx\|_F\colon \|x\|_E=1\}$.
\item If $L \colon E\to E$ is a bounded linear operator on a Banach space,
then $\rho(L)$ denotes the spectral radius of~$f$, that is,
$\rho(L) \coloneqq \lim_{k\longto\infty} \VERT L^k \VERT^{1/k}$
—this limit always exists.
\item A column vector $(a_i)_{i\in I}$ will automatically be identified with
the corresponding element of~$L^2(I)$.
Likewise, a matrix $A = (\!(a_{ij})\!)_{(i,j)\in I\times J}$ will be identified
with the corresponding linear operator from $L^2(J)$ to~$L^2(I)$.
\item In our computations we will often use the Cauchy\,-\,Schwarz inequality
and its variants%
\footnote{For instance, the discrete form $\big|\sum_{i=1}^N a_ib_i\big|
\leq \big(\sum_{i=1}^N a_i^2\big)^{1/2}\big(\sum_{i=1}^N b_i^2\big)^{1/2}$,
the probabilistic form $|\Cov[fg]| \leq \ecty(f)\ecty(g)$, etc..};
when using such an inequality,
we will indicate it by writing ``CS'' under the inequality sign concerned.
Similarly, ``IP'' under an equality sign will mean
that this equality follows from integrating by parts.
\end{itemize}

\section*{Acknowledgements}

This work owes to many people's help.
First of all I must mention V.~Beffara, who launched me on this topic incidentally.
His relevant comments, as well of those of (among others)
Y.~Ollivier, C.~Villani, Y.~Velenik, T.~Bodineau, C.~Shalizi and R.~Bradley,
have also been the origin for several important improvements of this monograph.

Several colleagues provided me with some help on mathematical topics
I was not familiar with. In particular,
Y.~Ollivier suggested to me the use of Lipschitz spaces to prove Lemma~\ref{lem7792};
S.~Martineau pointed out how Gershgorin's Lemma solved a technical point
in the complete proof of Theorem~\ref{thm6657-0};
V.~Calvez had the idea of using Laplace transform to prove Lemma~\ref{l1939a}.
Over the Internet, F.~Martinelli and S.~Shlosman also gave me
precious bibliographic references on the state of the art
about weak and strong mixing in statistical mechanics.

Most of the drawings in this monograph were made thanks to the excellent \LaTeX\ extension \hbox{Ti\textit{k}Z}, combined with computations in C language. The dice of Figure~\ref{fig-3lines} have been kindly drawn for me by A.~Alvarez, using POV-Ray.

\setcounter{tocdepth}{1}
\tableofcontents
\addtocounter{chapter}{-1}
\chapter{Motivation}\label{parMotivation}%
\addcontentsline{itc}{chapter}{\protect\numberline{\thechapter}Motivation}

\section{Some results on Ising's model}\label{parSomeResultsOnIsingsModel}

In this subsection we recall the definition of Ising's model
and give two classical results on it, namely Theorems~\ref{c3793} and~\ref{c6177}.
In \S~\ref{parProblematics}, considerations on these results
will serve as a motivation to the work of this monograph.

\subsection{Ising's model}\label{parIsingsModel}

Ising's celebrated model is a basic model of equilibrium thermodynamics,
which represents a ferromagnetic material:
\begin{Def}
For~$n$ an integer, consider the lattice $\ZZ^n$ endowed
with its usual graph structure (each vertex has $2n$ neighbours),
and denote by~$\dist$ the graph distance.
Define $\Omega = \{\pm1\}^{\ZZ^n}$,
and for~$\vec\omega\in\Omega$, set formally:
\[\label{for9073} H(\vec\omega) = - \frac{1}{2} \sum_{\dist(i,j)=1} \omega_i\omega_j .\]
Then, for~$T\geq0$, the \emph{Ising model on~$\ZZ^n$ at temperature $T$}
is, formally, a probability measure~$\Pr$ on~$\Omega$ such that
$\Pr[\vec\omega] \propto \exp\big(-T^{-1}H(\vec\omega)\big)$.
In rigorous terms, saying that $\Pr$ is an equilibrium measure for Ising's model
means that for all~$i\in\ZZ^n$,
for all~$\vechat\omega_{\C{\{i\}}} \in \{\pm1\}^{\C{\{i\}}}$,
\[\label{f1762} \Pr\big[ \omega_i = \hat\omega_i
\big| \vec\omega_{\C{\{i\}}} = \vechat\omega_{\C{\{i\}}} \big]
\propto \exp\Big( T^{-1} \sum_{\dist(i,j)=1} \hat\omega_i \hat\omega_j \Big) .\]
\end{Def}

Ising's model and the phase transition it exhibits
have been the subject of dozens of works; see~\cite{Grimmett} for an overview.
Here we are interested in the subcritical regime:
\begin{Thm}[Subcritical regime, \cite{Peierls}]
There is a $T_{\mrm{c}}<\infty$ (the `Curie temperature')
such that the solution of~(\ref{f1762}) is unique for $T>T_{\mrm{c}}$ .
\end{Thm}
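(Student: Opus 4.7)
The theorem asserts both existence of a solution to~(\ref{f1762}) and uniqueness for $T$ above some finite threshold. Existence is standard: on a growing sequence of finite boxes $\Lambda_n\uparrow\ZZ^n$, consider the finite Gibbs measures with some fixed boundary condition, and extract a weak subsequential limit using the compactness of $\{\pm1\}^{\ZZ^n}$; the local prescription~(\ref{f1762}) passes to the limit. My plan therefore focuses on uniqueness above some~$T_{\mrm{c}}<\infty$.

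The natural tool is Dobrushin's uniqueness criterion. For each pair of distinct sites $i,j\in\ZZ^n$, define the influence coefficient
\[
C_{ij} \coloneqq \sup_{\vechat\omega,\vechat\omega'}\big\|\Pr[\omega_i\in\Bcdot\mid\vec\omega_{\C{\{i\}}}=\vechat\omega] - \Pr[\omega_i\in\Bcdot\mid\vec\omega_{\C{\{i\}}}=\vechat\omega']\big\|_{\mrm{TV}},
\]
where $\vechat\omega$ and~$\vechat\omega'$ range over pairs of configurations that coincide off~$j$. Dobrushin's criterion then says: if $\alpha\coloneqq\sup_i\sum_{j\neq i}C_{ij}<1$, then (\ref{f1762}) admits at most one solution. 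I would invoke this either as a classical result or recall its proof, which goes by coupling two putative Gibbs measures and iterating single-site resamplings: the condition $\alpha<1$ makes the total-variation distance between the marginals on any fixed finite window contract geometrically at rate $1-\alpha$ per sweep, forcing equality in the limit.

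For Ising, the conditional law~(\ref{f1762}) at~$i$ is Bernoulli with parameter $\tfrac12\big(1+\tanh(T^{-1}s)\big)$, where $s = \sum_{j\sim i}\hat\omega_j$. Flipping a single neighbor changes $s$ by~$\pm 2$, and the TV distance between two Bernoulli laws equals half the difference of their means; a brief calculus exercise (the maximum of $\tanh(x+2T^{-1})-\tanh(x)$ is attained at $x=-T^{-1}$) yields
\[
C_{ij} \leq \tfrac12\sup_{s}\big|\tanh(T^{-1}(s+2))-\tanh(T^{-1}s)\big| = \tanh(T^{-1}).
\]
Since $C_{ij}=0$ when $j$ is not a neighbor of~$i$, and each vertex has $2n$ neighbors, we obtain the uniform bound $\sup_i\sum_j C_{ij}\leq 2n\tanh(T^{-1})$. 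This quantity falls below~$1$ as soon as $T$ exceeds the finite constant $T_{\mrm{c}}\coloneqq 1/\mrm{arctanh}\big(1/(2n)\big)$, so Dobrushin's criterion applies and uniqueness holds for all such~$T$.

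The main obstacle is Dobrushin's criterion itself, whose coupling proof is nontrivial in full generality; by contrast the Ising-specific computation is one line of calculus. A purely perturbative alternative is possible — at $T=\infty$ the measure is i.i.d.\ uniform, hence trivially unique, and one may extend by cluster expansion or disagreement-percolation techniques — but this is generally heavier than the Dobrushin route and yields a worse explicit threshold.
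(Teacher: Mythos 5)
The paper does not prove this theorem itself; it is stated as a classical result with a citation to Peierls, and no proof is given in the text. Your argument via Dobrushin's uniqueness criterion is the standard modern route and is correct: the computation $C_{ij}\leq\tanh(T^{-1})$ for nearest neighbors, the resulting bound $\sup_i\sum_j C_{ij}\leq 2n\tanh(T^{-1})<1$ for $T>1/\mathrm{arctanh}\bigl(1/(2n)\bigr)$, and the existence-by-compactness step are all sound. One remark on attribution only: the Peierls argument cited by the paper is classically used for the \emph{low}-temperature side (showing $T_{\mrm{c}}>0$ for $n\geq2$ via contour counting), whereas the high-temperature uniqueness you actually need is usually credited to Dobrushin or to expansion/disagreement-percolation methods; your choice of tool is therefore better matched to the statement than the paper's own citation suggests.
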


For $T>T_{\mrm{c}}$ one says that they are in the \emph{subcritical regime}.
An interesting feature of this regime is that for distant~$i$ and~$j$,
the random variables~$\omega_i$ and~$\omega_j$ are `almost independent'.
That phenomenon, called \emph{exponential decay of correlations},
is stated by the following theorem:
\begin{Thm}[Exponential decay of correlations, \cite{sharp_transition}]\label{t3795}
For Ising's model on~$\ZZ^n$ in the subcritical regime,
\begin{ienumerate}
\item\label{i6505} For all~$i\in\ZZ^n$, $\Pr[\omega_i=-1] = \Pr[\omega_i=1] = 1/2$.
\item\label{i1366} There exists~$\psi>0$ and~$C<\infty$
such that for all~$i,j\in\ZZ^n$,
\[ | \EE[\omega_i\omega_j] | \leq C \exp\big(-\psi\dist(i,j)\big) .\]
\end{ienumerate}
\end{Thm}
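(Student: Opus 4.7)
For part~(i), my plan is to use pure symmetry. The Hamiltonian~(\ref{for9073}) and the conditional probabilities~(\ref{f1762}) are invariant under the global spin flip $\vec\omega\mapsto-\vec\omega$, so the pushforward of~$\Pr$ by this involution is again an equilibrium measure. Uniqueness in the subcritical regime then forces $\Pr$ to coincide with its pushforward, which yields $\Pr[\omega_i=+1]=\Pr[\omega_i=-1]=1/2$ at every site~$i$.

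For part~(ii), I would first reduce to the scalar two-point function $\EE[\omega_0\omega_x]$ (translation invariance of~$\Pr$ follows again from uniqueness, so only the difference $x=j-i$ matters), and then exploit the classical Simon--Lieb inequality $\EE[\omega_0\omega_x]\leq\sum_{y\in\partial B}\EE[\omega_0\omega_y]\,\EE[\omega_y\omega_x]$, valid for any finite box $B\ni 0$ with $x\notin B$. If one can exhibit some such $B$ of radius~$R$ satisfying $s\coloneqq\sum_{y\in\partial B}\EE[\omega_0\omega_y]<1$, then iterating this inequality $\lfloor\dist(0,x)/R\rfloor$ times gives $|\EE[\omega_0\omega_x]|\leq C\,s^{\lfloor\dist(0,x)/R\rfloor}$, which is exactly the announced exponential bound with $\psi=-R^{-1}\log s$. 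An essentially equivalent route goes through the Fortuin--Kasteleyn ($q=2$) representation, in which $\EE[\omega_0\omega_x]=\Pr_{\mrm{FK}}[0\leftrightarrow x]$, and deduces the bound from an exponential tail estimate on the cluster containing~$0$.

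The main obstacle --- and the content for which~\cite{sharp_transition} is invoked --- is producing such a box~$B$ at \emph{every} $T>T_{\mrm{c}}$, not just at sufficiently high temperature where a standard high-temperature expansion already suffices. This is precisely the sharpness of the phase transition: one must rule out any intermediate regime in which correlations decay but not exponentially. The classical proofs establish a dichotomy --- either the spontaneous magnetization is strictly positive (supercritical), or the susceptibility $\chi\coloneqq\sum_y\EE[\omega_0\omega_y]$ is finite (subcritical) --- via a differential inequality for the magnetization; finiteness of~$\chi$ then implies, via Simon--Lieb, that some admissible box with $s<1$ must exist. Admitting this sharpness result, the conclusion~(ii) follows by the iteration sketched above.
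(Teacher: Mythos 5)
The paper does not prove Theorem~\ref{t3795} at all: it is stated as a quoted result from the literature, attributed to~\cite{sharp_transition}, with no proof given. So there is nothing in the paper's text to compare your argument against; what you have supplied is a sketch of the argument that the cited reference (and its predecessors) actually carries out.

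As a sketch of that external argument, yours is essentially right. Part~(i) via the global spin-flip symmetry combined with uniqueness in the subcritical regime is exactly the standard argument, and translation invariance of~$\Pr$ follows by the same uniqueness. For part~(ii), one small technical point: in the Simon--Lieb inequality the first factor on the right should be the \emph{finite-volume} two-point function $\langle\sigma_0\sigma_y\rangle_B$ inside the box~$B$, not the infinite-volume one; this is what makes the bootstrap from finite susceptibility to a box with $s<1$ actually go through. Your identification of the real content as the sharpness of the transition --- producing the box with $s<1$ at \emph{every} $T>T_{\mrm{c}}$, not merely at high temperature --- is correct, and this is indeed what~\cite{sharp_transition} supplies (the Aizenman--Barsky--Fern\'andez dichotomy, or its modern random-cluster reproof). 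The FK-representation route you mention is the one the more recent reference actually follows. Overall this is a faithful account of the standard proof; the paper simply delegates it by citation.
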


\subsection{Absence of \texorpdfstring{$\beta$}{beta}-mixing}

Theorem~\ref{t3795} states that two distant spins~$i$ and~$j$ are exponentially decorrelated.
However, it does not inform us about the dependence between `bunches' of spins.
The question is the following: if $I$ and~$J$ are two disjoint, distant subsets of~$\ZZ^n$,
to what extent are~$\vec\omega_I$ and~$\vec\omega_J$ independent?

To answer such a question, the first thing to do is to define a way of measuring independence
between `complicated' variables having an arbitrarily large range
like~$\vec\omega_I$ and~$\vec\omega_J$.
The most common choice is the $\beta$-mixing coefficient:
\begin{Def}\label{def3167}\strut
\begin{enumerate}
\item Recall that for~$\mu$, $\nu$ two probability measures
on the same measurable space $(\Omega,\mcal{F})$,
the \emph{total variation distance} between~$\mu$ and~$\nu$ is
the total mass of both the positive and the negative parts of the signed measure~$\nu-\mu$, that is,
$\dist_{\mathrm{TV}}(\mu,\nu) = \sup_{A\subset\mcal{F}} | \nu(A) - \mu(A) |$.
\item If $X$ and~$Y$ are two random variables (with arbitrary ranges)
defined on the same space, then one defines the \emph{$\beta$-mixing coefficient}
between~$X$ and~$Y$ as
\[ \beta(X,Y) \coloneqq \dist_{\mathrm{TV}} ( \Law_X\otimes\Law_Y\,,\,\Law_{(X,Y)} ) .\]
\end{enumerate}
\end{Def}
Notice that $\beta(X,Y)$ actually only depends on the $\sigma$-algebras%
~$\sigma(X)$ and~$\sigma(Y)$~\cite[Formula~(1.5)]{Bradley-review}.
The following proposition is immediate:
\begin{Pro}\label{pr7428}\strut
\begin{ienumerate}
\item One has always $\beta(X,Y) \in [0,1]$, and
\item $\beta(X,Y) = 0$ if and only if $X$ and~$Y$ are independent ;
\item $\beta(X,Y) = 1$ if and only if $\Law_X\otimes\Law_Y$
and~$\Law_{(X,Y)}$ are mutually disjoint.
\item\label{i7428d} If $X'$ is $X$-measurable and $Y'$ is $Y$-measurable,
then $\beta(X',Y') \leq \beta(X,Y)$.
\end{ienumerate}
\end{Pro}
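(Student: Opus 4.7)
The plan is to derive all four items of Proposition~\ref{pr7428} directly from the corresponding elementary properties of the total variation distance, composed in each case with the observation that $\beta(X,Y)$ is defined as $\dist_{\mrm{TV}}(\Law_X\otimes\Law_Y\,,\,\Law_{(X,Y)})$.

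For (i), the bound $\beta(X,Y)\in[0,1]$ is immediate from the very definition of $\dist_{\mrm{TV}}$ as a supremum of $|\nu(A)-\mu(A)|$, since each term lies in $[0,1]$. For (ii), it suffices to recall that $\dist_{\mrm{TV}}(\mu,\nu)=0$ is equivalent to $\mu=\nu$; applied here, this says that $\Law_X\otimes\Law_Y=\Law_{(X,Y)}$, which is the very definition of independence of $X$ and~$Y$. For (iii), I would invoke the classical Hahn\,-\,Jordan decomposition: writing $\nu-\mu=(\nu-\mu)_+-(\nu-\mu)_-$ with disjoint supports $\Omega_+\uplus\Omega_-$, one has $\dist_{\mrm{TV}}(\mu,\nu)=(\nu-\mu)_+(\Omega_+)=(\nu-\mu)_-(\Omega_-)$; so $\dist_{\mrm{TV}}(\mu,\nu)=1$ forces $\mu$ to be concentrated on $\Omega_-$ and $\nu$ on $\Omega_+$, that is, mutual singularity. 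Conversely, if $\mu$ and $\nu$ are singular and live on disjoint sets $A_\mu$, $A_\nu$, then taking $A=A_\nu$ yields $\nu(A)-\mu(A)=1$.

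For~(iv), the key observation is that $(X',Y')$ factors as $(X',Y') = (f(X),g(Y))$ for some measurable maps $f$ and~$g$ (since $X'$ is $\sigma(X)$-measurable and $Y'$ is $\sigma(Y)$-measurable). Consequently $\Law_{X'}\otimes\Law_{Y'}$ is the push-forward of $\Law_X\otimes\Law_Y$ under the measurable map $(f,g)$, and likewise $\Law_{(X',Y')}$ is the push-forward of $\Law_{(X,Y)}$ under the same map. Then I invoke the general fact that total variation distance is non-increasing under push-forward: for any measurable $h$ and any measures $\mu,\nu$, one has $\dist_{\mrm{TV}}(h_{\pushfwd}\mu,h_{\pushfwd}\nu)\leq\dist_{\mrm{TV}}(\mu,\nu)$, simply because every event $A$ in the target space corresponds via $h^{-1}(A)$ to an event in the source with the same measures.

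None of these steps should be difficult; the proposition is genuinely a routine unpacking of the definition of $\dist_{\mrm{TV}}$, so the whole proof would amount to a few lines citing these standard measure-theoretic facts. The only point requiring marginally more care is~(iii), where one has to remember the characterization of maximal total variation as mutual singularity; everything else is essentially tautological.
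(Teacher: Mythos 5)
Your proof is correct. The paper itself gives no argument — it merely labels the proposition ``immediate'' — and what you have written is exactly the routine unpacking that word is standing in for: (i) and (ii) follow from the definition of $\dist_{\mathrm{TV}}$ as a supremum of $|\nu(A)-\mu(A)|$ and from the fact that a probability measure is determined by its values on all events; (iii) is the standard characterisation of maximal total variation as mutual singularity via Hahn\,--\,Jordan; and (iv) follows from monotonicity of total variation under push-forward, after writing $X'=f(X)$, $Y'=g(Y)$.

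One small technical remark on (iv): writing $X'=f(X)$ from the hypothesis ``$X'$ is $X$-measurable'' is a Doob\,--\,Dynkin argument and needs the range of $X'$ to be, say, standard Borel (countably generated and separating). That assumption is universal in this kind of setting and the paper certainly intends it, so this is not a gap; but if one wants to avoid it altogether, one can argue directly at the level of $\sigma$-algebras, noting that $\beta$ depends only on $\sigma(X),\sigma(Y)$ through a formula (the one cited from Bradley just before the proposition, a supremum over pairs of finite measurable partitions drawn from the two $\sigma$-algebras) which is manifestly nondecreasing in each $\sigma$-algebra. Either route is fine; yours is the more transparent.
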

So, $\beta(X,Y)$ is a way of measuring `how much $X$ and~$Y$ are correlated'.

With that tool at hand, decorrelation between bunches of spins
in statistical physics models has already been thoroughly studied.
Concerning Ising's model, there are two well-known great results:
\begin{Thm}[Weak mixing property, \cite{Martinelli}]\label{t4774}
For Ising's model on~$\ZZ^n$ in the subcritical regime,
there exists~$\psi>0$ and~$C<\infty$ (the same as in Theorem~\ref{t3795})
such that for all disjoint $I,J\subset\ZZ^n$:
\[\label{f7869}
\beta \big( \vec\omega_I , \vec\omega_J \big) \ \leq
\ C \sum_{(i,j)\in I\times J} \exp\big(-\psi \dist(i,j)\big) .\]
\end{Thm}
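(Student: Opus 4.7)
The plan is to reduce this $\beta$-mixing bound to a per-spin coupling bound governed by the pointwise correlation decay of Theorem~\ref{t3795}. First, one may assume that $I$ and~$J$ are finite: the right-hand side of~(\ref{f7869}) is monotone in~$I$ and~$J$, and so is~$\beta$ by Proposition~\ref{pr7428}\,(\ref{i7428d}), so the general case follows by taking increasing limits. Then one uses the standard identity
\[ \beta\bigl(\vec\omega_I, \vec\omega_J\bigr) = \EE \bigl[ \dist_{\mathrm{TV}} \bigl( \Pr[\vec\omega_J \in \Bcdot \,|\, \vec\omega_I],\, \Pr[\vec\omega_J \in \Bcdot] \bigr) \bigr] . \]

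The idea next is to enumerate $I = \{i_1, \ldots, i_k\}$ and telescope the inner difference of laws into~$k$ increments, each comparing the conditional law of~$\vec\omega_J$ knowing $\omega_{i_1},\ldots,\omega_{i_{l-1}}$ with the one knowing moreover~$\omega_{i_l}$. By the triangle inequality for total variation, the problem reduces to bounding, for each~$l$ and each realization of previously revealed spins, the TV distance between the two laws of~$\vec\omega_J$ obtained by further conditioning on~$\omega_{i_l}=+1$ versus~$\omega_{i_l}=-1$.

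For this the crucial ingredient is FKG monotonicity, which is preserved when conditioning Ising on spin values: the two laws of~$\vec\omega_J$ admit a monotone coupling $(\vec\omega^+_J, \vec\omega^-_J)$ with $\omega^+_j \geq \omega^-_j$ almost surely. Since each $\omega_j \in \{\pm1\}$, one has
\[ \Pr[\omega^+_j \neq \omega^-_j] = \tfrac{1}{2} \bigl( \EE[\omega_j | \omega_{i_l}{=}{+}1,\ldots] - \EE[\omega_j | \omega_{i_l}{=}{-}1,\ldots] \bigr) , \]
and the TV distance between the two laws is at most the expected number of disagreements in~$J$, i.e.\ $\sum_{j\in J} \Pr[\omega^+_j\neq\omega^-_j]$. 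By the strong mixing property of subcritical Ising (a refinement of Theorem~\ref{t3795}, also due to~\cite{Martinelli}), each such term is bounded by $C'\exp(-\psi\dist(i_l,j))$ uniformly in the conditioning; summing over~$l$ and~$j$ then produces~(\ref{f7869}), up to adjusting the constants.

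The main obstacle lies in that last step: pointwise correlation decay (Theorem~\ref{t3795}) does not by itself yield the uniform one-spin strong-mixing bound required above. Obtaining it relies on the specific FKG and ferromagnetic structure of Ising, combined with sharpness results for the subcritical phase; for more general Gibbs measures lacking FKG, this line of argument simply breaks down, which is precisely one of the motivations for the Hilbertian approach developed later in the monograph.
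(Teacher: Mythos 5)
The paper does not prove Theorem~\ref{t4774}: it is quoted from \cite{Martinelli} as a known result from the literature, so there is no internal proof to compare yours against. What you have offered is a sketch, and you are candid that it has a hole, so let me make that hole precise.

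Your telescoping reduction (reveal the spins of~$I$ one at a time, use FKG to monotonically couple the two conditional laws of~$\vec\omega_J$, and bound the TV increment by the expected number of sign disagreements) is a legitimate and standard scheme. The identity $\beta(\vec\omega_I,\vec\omega_J) = \EE\bigl[\dist_{\mathrm{TV}}(\Pr[\vec\omega_J\in\Bcdot\,|\,\vec\omega_I],\Pr[\vec\omega_J\in\Bcdot])\bigr]$ is correct with the paper's convention for~$\dist_{\mathrm{TV}}$. The problem is the last step: each increment requires a bound of the form
\[ \sup_{\vechat\omega}\ \Bigl(\EE[\omega_j\mid\omega_{i_l}=+1,\,\vechat\omega] - \EE[\omega_j\mid\omega_{i_l}=-1,\,\vechat\omega]\Bigr) \leq C'\,e^{-\psi\,\dist(i_l,j)} , \]
\emph{uniformly over all choices of previously revealed spin values $\vechat\omega$}. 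This is a uniform (boundary-condition-independent) single-spin influence estimate, which is a strong mixing statement, not a weak mixing one. It is \emph{not} a consequence of Theorem~\ref{t3795}, which controls only the free two-point function, and it is not obviously available throughout the subcritical regime $T>T_{\mrm{c}}$. Indeed the very next theorem in the paper, Theorem~\ref{t4505}, introduces a possibly strictly higher threshold $T_{\mrm{c}}'\geq T_{\mrm{c}}$ precisely because such boundary-condition-uniform estimates (complete analyticity) are only known above $T_{\mrm{c}}'$; the footnote there emphasizes that $T_{\mrm{c}}'=T_{\mrm{c}}$ is open in general. So if your sketch were completed along the lines you describe, it would establish~(\ref{f7869}) only for $T>T_{\mrm{c}}'$, which is a strictly weaker conclusion than the cited theorem. (The claim ``the same $\psi$, $C$'' would also not survive your argument, but that is a cosmetic matter the paper itself treats loosely.) The real gap is the circularity: you are trying to deduce weak mixing from strong mixing, whereas the literature goes the other way and derives~(\ref{f7869}) directly in the subcritical phase by different means (duality/random-cluster representations and sharpness of the phase transition).
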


\begin{Thm}[Complete analyticity, \cite{complete_analyticity}]\label{t4505}
There exists some $T_{\mrm{c}}\leq T_{\mrm{c}}'<\infty$%
\footnote{\label{Tc'=Tc?}It is not known whether $T_{\mrm{c}}'=T_{\mrm{c}}$ today,
but in general situations weak mixing does not always imply complete analyticity.
A classical counterexample is Ising's model with external field~\cite[\S~2]{MO}.}
such that for $T \ab {>T_{\mrm{c}}'}$,
Ising's model is \emph{completely analytical}, i.e.\ 
there exists~$\psi'>0$ and~$C'>0$ such that the following holds:
for all~$K\subset\ZZ^n$, for all `boundary' conditions $\vechat\omega_K \in \{\pm1\}^K$,
denoting $\Pr_{\vechat\omega_K} \coloneqq \Pr[\Bcdot|\vec\omega_K=\vechat\omega_K]$,
Formula~(\ref{f7869}) holds with~$\Law$ replaced by~$\Law_{\vechat\omega_K}$
and~$\psi, C$ replaced resp.\ by~$\psi'$ and~$C'$.
\end{Thm}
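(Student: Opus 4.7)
The plan is to deduce complete analyticity from a uniform Dobrushin--Shlosman condition, which is known to hold for sufficiently large~$T$. Let $T_{\mrm{c}}'$ be the infimum of temperatures above which the weighted single-site Dobrushin coefficient
\[ c(T) \coloneqq 2n \sup_{i,j,\vechat\omega,\vechat\omega'} \dist_{\mrm{TV}}\big( \Pr[\omega_i\in\Bcdot\mid\vec\omega_{\C{\{i\}}}=\vechat\omega_{\C{\{i\}}}],\,\Pr[\omega_i\in\Bcdot\mid\vec\omega_{\C{\{i\}}}=\vechat\omega'_{\C{\{i\}}}] \big) \]
is strictly less than~$1$, the supremum running over nearest neighbours $i,j$ and pairs $(\vechat\omega,\vechat\omega')$ differing only at~$j$. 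A direct computation from~(\ref{f1762}) shows that $c(T)\to 0$ as $T\to\infty$, so $T_{\mrm{c}}'<\infty$; and since $c(T)<1$ classically implies Dobrushin uniqueness, one has $T_{\mrm{c}}\leq T_{\mrm{c}}'$.

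For $T>T_{\mrm{c}}'$, the first step is to upgrade the pointwise contraction $c(T)<1$ into a uniform strong mixing estimate. Given any $K\subset\ZZ^n$ and any two boundary configurations $\vechat\omega_K,\vechat\omega'_K$, I would construct a grand coupling of the single-site Glauber dynamics on~$\ZZ^n\setminus K$ (the sites of~$K$ being frozen to their respective boundary data) by using a common uniform random variable to resample the selected site at each step. The contraction $c(T)<1$ then forces the set of disagreement sites to behave like a subcritical contact process, and passing to the stationary measure yields, for any $L\subset\ZZ^n\setminus K$,
\[ \dist_{\mrm{TV}}\big(\Pr_{\vechat\omega_K}[\vec\omega_L\in\Bcdot],\,\Pr_{\vechat\omega'_K}[\vec\omega_L\in\Bcdot]\big) \ \leq\ C_0' \sum_{\ell\in L}\sum_{k\in K\,:\,\hat\omega_k\neq\hat\omega'_k} \exp\big(-\psi'\dist(\ell,k)\big) , \]
with $\psi',C_0'$ depending only on~$T$. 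Since $c(T)<1$ holds uniformly across all conditional single-site distributions, the same estimate persists when~$K$ is replaced by any enlargement of it.

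To derive the claimed $\beta$-mixing bound, I would enumerate $J=\{j_1,\ldots,j_m\}$ and invoke the identity $\beta(\vec\omega_I,\vec\omega_J)=\EE_{\vec b_J}\dist_{\mrm{TV}}\big(\Law_{\vechat\omega_K}(\vec\omega_I\mid\vec\omega_J=\vec b_J),\Law_{\vechat\omega_K}(\vec\omega_I)\big)$. The difference inside telescopes across the partial conditionings $\vec\omega_{\{j_1,\ldots,j_s\}}=\vec b_{\{j_1,\ldots,j_s\}}$, and each telescope term compares two conditional laws of~$\vec\omega_I$ sharing the pinned set~$K\cup\{j_1,\ldots,j_s\}$ with boundary configurations differing only at~$j_s$; by the uniform strong mixing estimate above it is bounded by $C_0'\sum_{i\in I}\exp(-\psi'\dist(i,j_s))$. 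Summing over~$s$ and integrating against~$\vec b_J$ yields exactly~(\ref{f7869}) under~$\Law_{\vechat\omega_K}$, with constants independent of both~$K$ and~$\vechat\omega_K$.

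The main obstacle is precisely this uniformity in the pinning set~$K$ and its configuration~$\vechat\omega_K$. Weak mixing (Theorem~\ref{t4774}) concerns only the free Gibbs measure and follows from cluster-expansion arguments valid throughout the whole subcritical regime~$T>T_{\mrm{c}}$; complete analyticity, by contrast, requires decay constants that control \emph{every} conditioning of \emph{every} subset, which is why one must insist on the pointwise contraction $c(T)<1$ rather than on the mere absence of long-range order. The possible gap $T_{\mrm{c}}'>T_{\mrm{c}}$ flagged in footnote~\ref{Tc'=Tc?} is precisely the price of this strengthening.
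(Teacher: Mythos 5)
The paper does not prove Theorem~\ref{t4505}: it is a result cited directly from the Dobrushin--Shlosman theory of complete analyticity (the reference \cite{complete_analyticity}) and used as a black-box hypothesis in Chapter~\ref{parApplications}, so there is no internal proof to compare your attempt against.

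Taken on its own terms, your sketch is the standard high-temperature argument and its skeleton is sound: the Dobrushin single-site contraction $c(T)<1$ holds for $T$ large by explicit computation from~(\ref{f1762}); a grand coupling of single-site Glauber dynamics turns this into a subcritical disagreement-percolation bound uniform in the pinned set $K$ and configuration $\vechat\omega_K$; and telescoping over the sites of $J$ recovers~(\ref{f7869}) under~$\Law_{\vechat\omega_K}$. Two points merit care. First, the threshold your argument yields is the Dobrushin uniqueness threshold, which may sit strictly above the genuine complete-analyticity threshold $T_{\mrm{c}}'$; this suffices for the existential statement but the two should not be conflated. Second, in the telescoping step the passage from conditioning on $\{j_1,\ldots,j_{s-1}\}$ to conditioning on $\{j_1,\ldots,j_s\}$ compares a conditional law against its own average over $\omega_{j_s}$, so the uniform strong-mixing bound only enters after writing the unconditioned side as a mixture and invoking convexity of total variation; you gesture at this but one should spell it out, otherwise the $s$-th telescope term looks like a comparison of two pinned configurations when one side is in fact unpinned at~$j_s$.
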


Thanks to Theorem~\ref{t4774}, we get an exponential decay of correlation
between two bunches of spins of fixed size when the distance between these bunches increases.
However, we cannot say much about decorrelation between bunches of \emph{variable} size
which are at fixed distance from each other.
For instance, for $n=2$ fix $x>0$ and define
$I_\ell \coloneqq \{(0,y) \colon |y|\leq \ell\}$, resp.\ $J_\ell \coloneqq \{(x,y) \colon {|y|\leq \ell}\}$.
Then Theorem~\ref{t4774} gives us something like:
\[\label{f4961}
\beta \big( \vec\omega_{I_\ell} , \vec\omega_{J_\ell} \big) \lesssim
\ C \ell e^{-\psi x} .\]
But recall that a $\beta$-mixing coefficient is always bounded by~$1$;
so, for $\ell \gtrsim e^{\psi x}/C$, (\ref{f4961}) tell us absolutely nothing about
the decorrelation between~$I_\ell$ and~$J_\ell$.

Though the bound~(\ref{f7869}) is not completely optimal,
the previous point is an \emph{intrinsic} shortcoming of $\beta$-mixing coefficients,
in the sense that it can be proved that bounds like~(\ref{f4961}) \emph{must} become trivial when
$\ell\longto\infty$:
\begin{Thm}\label{c3793}
For all~$T_{\mrm{c}}<T<\infty$, for all~$x>0$,
denoting $I \coloneqq \{0\}\times\ZZ$ and $J \coloneqq \{x\}\times\ZZ$,
one has
\[ \beta \big(\vec\omega_I , \vec\omega_J \big) = 1 .\]
\end{Thm}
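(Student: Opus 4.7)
The plan is to show that $\Law_{(\vec\omega_I,\vec\omega_J)}$ and $\Law_{\vec\omega_I}\otimes\Law_{\vec\omega_J}$ are mutually singular, which by Proposition~\ref{pr7428}\,(iii) yields $\beta(\vec\omega_I,\vec\omega_J)=1$. The candidate distinguishing event will be built from the comparison of ergodic averages along the vertical direction under the two laws.

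First I would use that in the subcritical regime the Gibbs measure~$\Pr$ is unique, hence extremal, hence has trivial tail $\sigma$-algebra, and is therefore ergodic under the vertical shift $T\colon\omega_{(a,b)}\mapsto\omega_{(a,b+1)}$. Applying Birkhoff's theorem to the bounded function $\phi(\vec\omega)\coloneqq\omega_{(0,0)}\omega_{(x,0)}$ gives
\[
\frac{1}{2\ell+1}\sum_{y=-\ell}^{\ell}\omega_{(0,y)}\omega_{(x,y)}\ \longto\ c\coloneqq\EE\big[\omega_{(0,0)}\omega_{(x,0)}\big]\quad\Pr\text{-a.s.},
\]
and the GKS inequality for zero-field ferromagnetic Ising ensures $c>0$ strictly. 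Setting
\[
A\coloneqq\Big\{\liminf_{\ell\to\infty}\tfrac{1}{2\ell+1}\sum_{y=-\ell}^{\ell}\omega_{(0,y)}\omega_{(x,y)}\geq c/2\Big\},
\]
which lies in $\sigma(\vec\omega_I)\vee\sigma(\vec\omega_J)$, the joint law $\Law_{(\vec\omega_I,\vec\omega_J)}$ assigns probability one to~$A$.

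Next, for the product law $\tilde\Pr\coloneqq\Law_{\vec\omega_I}\otimes\Law_{\vec\omega_J}$, realized on two independent copies $\vec\omega^{(1)}_I,\vec\omega^{(2)}_J$ with the respective $\Pr$-marginals, I would set $Z_y\coloneqq\omega^{(1)}_{(0,y)}\omega^{(2)}_{(x,y)}$. Theorem~\ref{t3795}\,(i) gives $\EE_{\tilde\Pr}[Z_y]=0$, while independence of the two copies combined with Theorem~\ref{t3795}\,(ii) yields
\[
\Cov_{\tilde\Pr}(Z_y,Z_{y'})=\EE[\omega_{(0,y)}\omega_{(0,y')}]\cdot\EE[\omega_{(x,y)}\omega_{(x,y')}]\leq C^2 e^{-2\psi|y-y'|}.
\]
Summing yields $\Var_{\tilde\Pr}\big(\tfrac{1}{2\ell+1}\sum_{y=-\ell}^{\ell}Z_y\big)=O(1/\ell)$, so the averages converge to $0$ in $L^2(\tilde\Pr)$, hence almost surely along a subsequence, forcing $\tilde\Pr(A)=0$ and thus mutual singularity.

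The two genuinely substantive ingredients are thus (a)~ergodicity of the vertical shift under~$\Pr$, which rests on uniqueness/extremality of the subcritical Gibbs measure, and (b)~strict positivity of~$c$, a classical consequence of GKS (equivalently, of the FK representation which identifies~$c$ with a subcritical random-cluster connectivity probability). Everything else is a routine Cauchy\,--\,Schwarz-level variance computation on the product side, fuelled by the exponential decay already furnished by Theorem~\ref{t3795}.
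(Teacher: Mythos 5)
Your proof is correct and reaches the same conclusion — mutual singularity of $\Law_{(\vec\omega_I,\vec\omega_J)}$ and $\Law_{\vec\omega_I}\otimes\Law_{\vec\omega_J}$ — but by a genuinely different route from the paper's. The paper does not invoke ergodicity; instead it picks widely spaced anchor points $i_k \coloneqq (0,kp)$, $j_k \coloneqq (x,kp)$, uses Theorem~\ref{t4774} to argue that the joint law $P_{N,p}$ of these $2N$ spins converges as $p\to\infty$ to the $N$-fold product $P_1^{\otimes N}$, and then applies the law of large numbers to the i.i.d.\ products $\omega_{i_k}\omega_{j_k}$ under that limit law, taking $N\to\infty$ afterwards. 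You work with the full columns at once: uniqueness of the Gibbs measure gives tail triviality, hence mixing under the $\ZZ^2$ shift, hence ergodicity of the vertical shift, and Birkhoff's theorem then yields almost-sure convergence of the ergodic averages of $\omega_{(0,y)}\omega_{(x,y)}$ to $c=\EE[\omega_{(0,0)}\omega_{(x,0)}]>0$; on the product side a second-moment bound, fuelled by the pairwise exponential decay of Theorem~\ref{t3795}(\ref{i1366}), forces the same averages to vanish along a subsequence. In effect you replace the paper's ``space out the sites, pass to an i.i.d.\ limit, then use the LLN'' scheme — which needs the bunch-level $\beta$-mixing bound of Theorem~\ref{t4774} and a double limit — by a single-limit ergodic argument requiring only the two-point decay of Theorem~\ref{t3795}, at the cost of importing the implication ``unique Gibbs measure $\Rightarrow$ tail-trivial $\Rightarrow$ mixing under the full $\ZZ^2$ translation group'', which is standard (Georgii) but which the paper's version never needs. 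Both proofs draw $c>0$ from the same source (GKS / random-cluster representation) and both control the product side with a variance computation, so the substantive difference is really the ergodic shortcut versus the explicit limiting construction.
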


\begin{proof}
Denote $i_0 \coloneqq (0,0)$, resp.\ $j_0 \coloneqq (x,0)$.
As we told in Theorem~\ref{t3795}-(\ref{i6505}),
$\EE[\omega_{i_0}], \EE[\omega_{j_0}] = 0$.
Interpretation of Ising's model as a random-cluster model~\cite[\S~1.4]{Grimmett}
shows that $\Pr[\omega_{i_0}=\omega_{j_0}] > 1/2$,
so we define
\[ \gamma \coloneqq \EE[\omega_{i_0}\omega_{j_0}] > 0 .\]

Now, let~$N$ be some large integer, fixed for the time being,
let~$p$ be some large integer and define
$i_1, \ldots, i_N$, resp.~$j_1, \ldots, j_N$,
by~$i_k \coloneqq (0,kp)$, resp.\ $j_k \coloneqq (x,kp)$;
denote by~$P_{N,p}$ the joint law of~%
$(\omega_{i_1},\ldots,\omega_{i_N},\omega_{j_1},\ldots,\omega_{j_N})$.
By translation invariance, for each~$k$,
$(\omega_{i_k},\omega_{j_k})$ has the same law as $(\omega_{i_0},\omega_{j_0})$,
which law we denote by $P_1$.
Then when~$p \longto \infty$, by Theorem~\ref{t4774},
$P_{N,p}$ tends to the law $P_{N,\infty} \coloneqq P_1^{\otimes N}$.%
\footnote{Note that $P_{N,p}$
takes its values in a space of finite dimension,
so there is no ambiguity when speaking of its convergence.}
In other words, $P_{N,\infty}$ is the law such that
all the~$(\omega_{i_k},\omega_{j_k})$ are independent
with $P_{N,\infty}[\omega_{i_k}=\eta \enskip\text{and}\enskip \omega_{j_k}=\theta]
= (1+\gamma\eta\theta)/4$ for all~$k$.
Therefore, the value of~$\beta\big((\omega_{i_k})_k, (\omega_{j_k})_k\big)$
under the law $P_{N,p}$, which by Proposition~\hbox{\ref{pr7428}-(\ref{i7428d})}
is a lower bound for~$\beta \big(\vec\omega_I , \vec\omega_J \big)$,
tends to its value under~$P_{N,\infty}$ when~$p \longto \infty$.
This is summed up by the following formula:
\[\label{fo7691} \beta \big(\vec\omega_I, \vec\omega_J\big) \geq \beta_{P_{N,\infty}}
\big((\omega_{i_k})_{1\leq k\leq N}, (\omega_{j_k})_{1\leq k\leq N}\big) .\]

To end the proof, we will bound the right-hand side of~(\ref{fo7691}) below
by a quantity which tends to~$1$ when~$N\longto\infty$.
Denote by~$\tilde{P}_{N,\infty}$ the product of two the marginals
of~$P_{N,\infty}$ relative resp.\ to the~$(\omega_{i_k})_k$ and the~$(\omega_{j_k})_k$,
so that $\beta_{P_{N,\infty}} \big((\omega_{i_k})_k, (\omega_{j_k})_k\big)
= \dist_{\mathrm{TV}} (P_{N,\infty},\tilde{P}_{N,\infty})$
by the very definition of the $\beta$-mixing coefficient.
Obviously the expression of~$\tilde{P}_{N,\infty}$ is the same
as the expression of~$\tilde{P}_{N,\infty}$, but with~$\gamma$ replaced by~$0$ in the definition of~$P_{N,\infty}$.
Under $P_{N,\infty}$, $(\omega_{i_k}\omega_{j_k})_{1\leq k\leq N}$
is a sequence of i.i.d.\ random variables having a certain law with mean $\gamma > \gamma/2$,
so that by the law of large numbers,
\[ P_{N,\infty} \bigg[N^{-1}\sum_{k=1}^N \omega_{i_k}\omega_{j_k} \leq \frac{\gamma}{2} \bigg]
\stackrel{N\longto\infty}{\longto} 0 .\]
Similarly, since $0 < \gamma/2$,
\[ P_{N,\infty} \bigg[N^{-1} \sum_{k=1}^N \omega_{i_k}\omega_{j_k} \leq \frac{\gamma}{2} \bigg]
\stackrel{N\longto\infty}{\longto} 1 ,\]
so that
\[ \dist_{\mathrm{TV}} \big(P_{N,\infty},\tilde{P}_{N,\infty}\big) \geq
\bigg| \tilde{P}_{N,\infty}
\bigg[ N^{-1} \sum_{k=1}^N \omega_{i_k}\omega_{j_k} \leq \frac{\gamma}{2} \bigg] - P_{N,\infty}
\big[ \text{\it the same} \big] \bigg|
\stackrel{N\longto\infty}{\longto} 1 ,\]
which proves our point.
\end{proof}

\subsection{Presence of \texorpdfstring{$\rho$}{rho}-mixing}

So, Theorem~\ref{c3793} tells us that, for Ising's model on~$\ZZ^2$,
there is a `full' correlation between~$\vec\omega_{I}$ and~$\vec\omega_{J}$
in the sense of $\beta$-mixing.
Yet it is well known too that
Theorem~\ref{t4774} nevertheless implies a Hilbertian form of decorrelation
(called ``$\rho$-mixing'', cf.\ Remark~\ref{rmk8718}) between these variables:
\label{parc6177}\begin{Thm}\label{c6177}
For Ising's model on~$\ZZ^2$ in the subcritical regime,
defining as before $I = \{0\}\times\ZZ$ and $J = \{x\}\times\ZZ$ for some $x>0$,
one has for all~$f\in\ldb(\vec\omega_I)$ and~$g\in\ldb(\vec\omega_J)$:
\[\label{f2881} |\EE[fg]| \leq e^{-\psi x} \ecty(f)\ecty(g) ,\]
where $\psi$ is the same as in Theorem~\ref{t4774}.
\end{Thm}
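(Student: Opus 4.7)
I would use the horizontal Markov structure of Ising's Hamiltonian~(\ref{for9073}) together with transfer-matrix analysis. Setting $C_k \coloneqq \{k\}\times\ZZ$ and $\mcal{F}_k \coloneqq \sigma(\vec\omega_{C_k})$, the nearest-neighbour nature of the interaction makes $(\vec\omega_{C_k})_{k\in\ZZ}$ a stationary Markov chain on $\{\pm1\}^{\ZZ}$—the two half-planes $\{k'<k\}\times\ZZ$ and $\{k'>k\}\times\ZZ$ being conditionally independent given $\vec\omega_{C_k}$—and reflection symmetry of $H$ makes this chain reversible.

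The first step is to reformulate~(\ref{f2881}) as an operator-norm bound. Cauchy\,-\,Schwarz applied to $\EE[fg] = \EE[f\cdot\EE[g|\mcal{F}_0]]$ reduces~(\ref{f2881}) to showing $\VERT T_x\VERT \leq e^{-\psi x}$, where $T_x\colon \ldb(\mcal{F}_x)\to\ldb(\mcal{F}_0)$ is the conditional expectation $g\mapsto\EE[g|\mcal{F}_0]$ restricted to centred functions. By the tower and Markov properties, $T_x$ factorises as a composition of $x$ one-step operators $T_1$ (the ``same'' operator up to stationarity); submultiplicativity of operator norms then reduces the problem to proving $\VERT T_1\VERT \leq e^{-\psi}$, i.e.\ to bounding the Hilbertian correlation between two \emph{adjacent} columns.

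Reversibility makes $T_1$ self-adjoint on $\ldb(\{\pm1\}^{\ZZ})$, so $\VERT T_1\VERT$ equals its spectral radius. I would then invoke the standard identification of this spectral radius with the exponential decay rate of two-point correlations, which by Theorem~\ref{t3795} is at least $\psi$; this yields $\VERT T_1\VERT \leq e^{-\psi}$ and closes the argument. Rigorously, the existence of a spectral gap of the transfer matrix in the subcritical regime is typically obtained via finite-volume truncations whose convergence to the infinite-volume operator is controlled by weak mixing—that is, by Theorem~\ref{t4774}.

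The main obstacle is this final step: passing from a bound on \emph{pointwise} two-spin correlations to a bound on the operator norm of $T_1$ on the huge space $\ldb(\{\pm1\}^{\ZZ})$. What is really needed is a mechanism for controlling correlations involving arbitrary nonlinear test functions of $\vec\omega_{C_0}$ and $\vec\omega_{C_1}$ from information on single-spin correlations—exactly a \emph{tensorization} problem of the type the present monograph is devoted to. Indeed, the elementary bound $\{\omega_i,\omega_j\}\leq Ce^{-\psi\dist(i,j)}$ (immediate from Theorem~\ref{t3795} since each $\omega_i$ is $\pm1$-valued), summed suitably over $(i,j)\in C_0\times C_1$, is precisely what a clean tensorization theorem should turn into the sought operator-norm bound on $T_1$.
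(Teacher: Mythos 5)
Your opening reductions match the paper's exactly: the reformulation of~(\ref{f2881}) as an operator-norm bound, the use of the nearest-neighbour interaction to factorise the transfer operator along columns, the translation invariance to identify all the one-step operators, and the reflection symmetry to obtain a self-adjoint operator $\pi$ with $\VERT \pi^x\VERT=\VERT\pi\VERT^x$. Up to the point where you must show $\VERT\pi\VERT\leq e^{-\psi}$, your plan is the paper's plan.

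The gap is in the final step, and your proposed repair — ``sum the elementary bound $\{\omega_i:\omega_j\}\leq Ce^{-\psi\dist(i,j)}$ suitably over $(i,j)\in C_0\times C_1$ via a tensorization theorem'' — does \emph{not} deliver~(\ref{f2881}), for two reasons. First, the tensorization theorems of Chapter~\ref{parTensorization} require bounds on the \emph{subjective} correlations $\{\omega_i:\omega_j\}_*$, and to verify those for Ising's model one needs complete analyticity (Theorem~\ref{t4505}), not merely the weak mixing of Theorem~\ref{t4774}; the paper's Theorem~\ref{t3730}, obtained via tensorization, therefore only gives the rate $\psi'$, and the monograph explicitly notes that it is unknown whether $\psi'=\psi$. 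Second, tensorization accumulates a constant: applying Lemma~\ref{pro8882} to the two columns would at best give $\{\vec\omega_{C_0}:\vec\omega_{C_1}\}\leq\big(\sum_{|z|\geq1}Ce^{-\psi|z|}\big)\wedge 1$, which carries a prefactor and a transverse sum that destroy the sharp value $e^{-\psi}$, and which may even be trivially $\geq1$.

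What the paper actually does for this step is quite different and worth contrasting. It applies the $\beta$-mixing estimate~(\ref{f7869}) of Theorem~\ref{t4774} to a pair of \emph{finite} segments $I_\ell,J_\ell$ of the two columns, obtaining for bounded $f\in\ldb(\vec\omega_{I_\ell})$ the bound $|\langle f,\pi^t f\rangle|\leq 2(2\ell+1)^2M^2Ce^{-\psi t}$ with a polynomial-in-$\ell$ prefactor. The crucial observation is that since this holds for every $t$, one may take the $t$-th root and let $t\to\infty$: the prefactor disappears and one gets $\limsup_t|\langle f,\pi^tf\rangle|^{1/t}\leq e^{-\psi}$ on the dense subspace $\bigcup_\ell\big(\ldb(\vec\omega_{I_\ell})\cap L^\infty\big)$. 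Lemma~\ref{l3253} — a spectral-theoretic fact about self-adjoint operators — then upgrades this $\limsup$ bound on a dense subset to the operator-norm bound $\VERT\pi\VERT\leq e^{-\psi}$. It is exactly this $\limsup$/density mechanism, exploiting both self-adjointness and the exponential decay, that produces the sharp constant while requiring only weak mixing; tensorization as you propose would not.
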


\begin{proof}
Define the operator
\[ \begin{array}{rrcl} P \colon & \ldb(\vec\omega_I) & \to & \ldb(\vec\omega_J) \\
& f & \mapsto & f^{\sigma(\vec\omega_J)} .\end{array} \]
(Recall that $f^{\sigma(\vec\omega_J)}$ is an alternative notation for~$\EE[f|\vec\omega_J]$,
insisting on the its being a $\sigma(\vec\omega_J)$-measurable function).
Then (\ref{f2881}) is equivalent to proving that
$\VERT P \VERT \leq e^{-\psi x}$ (see \S~\ref{parOperatorInterpretation}).
Now for all~$t \in \{0,\ldots,x\}$, denote $\omega_{(t)} \coloneqq \vec\omega_{\{t\}\times\ZZ}$,
and for all~$t \in \{1,\ldots,x\}$,
\[ \begin{array}{rrcl} \pi_t \colon & \ldb(\omega_{(t-1)}) & \to & \ldb(\omega_{(t)}) \\
& f & \mapsto & f^{\sigma(\omega_{(t)})} .\end{array}\]
Due to the fact that the interactions in Ising's model have only range~$1$,
$\omega_{(0)} \to \omega_{(1)} \to \cdots \to \omega_{(x)}$
is a Markov chain, and therefore
\[\label{f3677} P = \pi_x \circ \cdots \circ \pi_2 \circ \pi_1 .\]
Now, by horizontal translation all the~$\ldb(\omega_{(t)})$ can be identified
with a common Hilbert space $H$.
Then all the~$\pi_t$ are identified with operators on~$\ldb(H)$,
and by the translation invariance of the model all these operators are actually the same.
$P$ is also identified with an operator on~$\ldb(H)$, and (\ref{f3677}) becomes:
\[ P = \pi^x .\]
But $\pi$ is self-adjoint because,
as the model is invariant by translation \emph{and by reflection},
the Markov chain $\omega_{(0)} \to \cdots \to \omega_{(x)}$
is stationary \emph{and reversible}.
In particular $\pi$ is a normal operator, and thus $\VERT P \VERT = \VERT \pi \VERT^x$.
So, proving that $\VERT P \VERT \leq e^{-\psi x}$
is equivalent to proving that $\VERT \pi \VERT \leq e^{-\psi}$,
which will be our new goal.

Take $C<\infty$ like in Theorem~\ref{t4774}.
For~$\ell$ an integer, denote~$I_\ell$ and~$J_\ell$ to be resp.~%
$\{0\}\times\{-\ell,\ldots,\ell\}$ and~$\{x\}\times\{-\ell,\ldots,\ell\}$.
Let~$f$ be a bounded%
\footnote{In fact here it is superfluous to impose that $f$ is bounded since
$\vec\omega_{I_\ell}$ can only take a finite number of values.
I wrote the proof like this just to underline
that the finiteness of the range of the~$\omega_i$ does not play any role in the proof.}
function of~$\ldb(\vec\omega_{I_\ell})$ and denote $M \coloneqq \|f\|_{L^\infty}$.
By translation, $f$ can also be identified with
a function of~$\ldb(\vec\omega_{J_\ell})$,
which is also bounded by~$M$.
Now, since
\[ \EE \big[ f(\vec\omega_{I_\ell})f(\vec\omega_{J_\ell}) \big] =
\Cov \big( f(\vec\omega_{I_\ell}) \,,\, f(\vec\omega_{J_\ell}) \big) \\
= \int f(\vec\omega_{I_\ell})f(\vec\omega_{J_\ell}) \,
\dx{\big(\Law(\vec\omega_{I_\ell\uplus J_\ell})} -
\Law(\vec\omega_{I_\ell})\otimes\Law(\vec\omega_{J_\ell})\big) ,
\]
we can apply~(\ref{f7869}) to~$I_{\ell}$ and~$J_{\ell}$ to obtain:
\[\label{f3079} | \EE[f(\vec\omega_{I_\ell})f(\vec\omega_{J_\ell})] |
\leq M^2\cdot 2C(2\ell+1)^2 e^{-\psi x} .\]

In terms of operators, (\ref{f3079}) means that
\[\label{f3079'} \big|\langle f, Pf \rangle_{\ldb(H)}\big| \leq 2(2\ell+1)^2M^2Ce^{-\psi x} .\]
As the value of~$x$ played no particular role to establish~(\ref{f3079'}),
that formula can be generalized
into
\[ \big|\langle f, \pi^t f \rangle_{\ldb(H)}\big| \leq 2(2\ell+1)^2M^2 C e^{-\psi t} \]
for all~$t\in\NN^*$.
Letting $t$ tend to infinity, we obtain that for all~$\ell$,
for all~$f\in\ldb(\vec\omega_{I_\ell}) \cap L^\infty$,
\[ \limsup_{t\longto\infty} \big( \ln |\langle f, \pi^t f \rangle| \big)^{1/t} \leq e^{-\psi} .\]
But $\bigcup_{\ell\in\NN} \big(\ldb(\vec\omega_{I_\ell})\cap L^\infty\big)$ is a dense subset of
$\ldb(\vec\omega_I)$, so by Lemma~\ref{l3253} set in appendix,
we conclude that $\VERT \pi \VERT_{\ldb(H)} \leq e^{-\psi}$, which is what we wanted.
\end{proof}

\begin{Rmk}
Claim~\ref{c3793} and Theorem~\ref{c6177} adapt straighforwardly,
with similar proofs, to any~$n\geq2$,
replacing~$I$ by~$\{0\}\times\ZZ^{n-1}$ and~$J$ by~$\{x\}\times\ZZ^{n-1}$.
\end{Rmk}

\section{Problematics}\label{parProblematics}

Thanks to Theorem~\ref{c6177}, we see that the Hilbertian concept of $\rho$-mixing
can reveal some independence between infinite bunches of lowly correlated variables
in situations where the $\beta$-mixing coefficient does not show any independence at all.
In the proof we gave, $\rho$-mixing appeared as a corollary
of $\beta$-mixing for finite bunches of spins.
What additional hypotheses did we need to get our corollary?
We used at least the following:
\begin{itemize}
\item To introduce the Markov chain $\omega_{(0)} \to \cdots \to \omega_{(x)}$,
we used that the interactions of our model had finite range.
\item To identify all the spaces $\ldb(\omega_{(t)})$,
we used that $I$ and~$J$ had the same shape and that one could tile up $\ZZ^2$
with a sequence of tiles having that shape (namely, here, tiles of the form $\{t\}\times\ZZ$).
\item To say that all the~$\pi_t$ were the same modulo that identification,
we used the translation invariance of the model.
\item To state that the stationary Markov chain $\omega_{(0)} \to \cdots \to \omega_{(x)}$
was reversible, we used the reflection invariance of the model.
\item To use Lemma~\ref{l3253}, we used the exponential decay of correlations.
\end{itemize}
All these points make the proof of Theorem~\ref{c6177} we gave
in~\S~\ref{parSomeResultsOnIsingsModel} quite difficult to generalize.
What, for instance, if we take~$I$ and~$J$ with arbitrary shapes,
just requiring that $\dist(I,J) \geq x$?
What if we consider statistical physics models with infinite-range interactions? Etc..
The above arguments would not work any more! Yet, we do not have the impression that
the presence of $\rho$-mixing fundamentally relies
on the peculiar symmetries of the case we treated\dots

So, here will be the goal of this work:
\emph{establishing $\rho$-mixing estimates by general methods}.
To achieve this goal, I shall try to concentrate on the properties of $\rho$-mixing
`for itself', rather than on its links with other forms of decorrelation.
I will carry out a thorough study of the $\rho$-mixing coefficient,
in order to get $\rho$-mixing results for `complicated' variables
from decorrelation results \emph{of the same type} for more `basic' variables;
in other words, I will \emph{tensorize} Hilbertian decorrelations.
It turns out that tensorization for such correlation coefficients gives results which are
quite robust as the size of the bunches of variables increases.
Thanks to this method, I shall obtain fairly new decorrelation theorems
for various models of statistical physics.

This monograph is intended to be complete in some sense.
I mean, besides the core of this work—namely, tensorization results—,
I have tried to answer several other questions which appeared natural to me
concerning Hilbertian decorrelation.
This includes studying many examples,
finding sharp criteria for maximal decorrelation,
looking at the optimality issues in the tensorization results
or showing other applications of the tensorization techniques.
Though these topics were initially thought as `sidework',
some of them may be quite interesting for themselves.

\section{Appendix: On the norm of self-adjoint operators}%
\label{parOnTheNormOfSelfAdjointOperators}

In this appendix we prove the following
\begin{Lem}\label{l3253}
Let~$L$ be a self-adjoint operator on a real Hilbert space $H$, and let~$C<\infty$.
Then, to prove that $\VERT L \VERT \leq C$, it suffices to ensure that
\[\big\{ x\in H \colon \limsup_{k\longto\infty} |\langle L^k x, x \rangle|^{1/k} \leq C \big\}\]
is a dense subset of~$H$.
\end{Lem}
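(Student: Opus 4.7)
The approach is by contradiction via the spectral theorem for bounded self-adjoint operators. Write $D := \{x \in H : \limsup_k |\langle L^k x, x\rangle|^{1/k} \leq C\}$ for the set assumed dense. I would first invoke the spectral theorem to obtain a projection-valued measure $E$ on $\RR$, supported on $\sigma(L) \subset [-\VERT L\VERT, \VERT L\VERT]$, such that the positive Borel measure $\mu_x(\cdot) := \langle E(\cdot)x, x\rangle$ satisfies $\langle L^k x, x\rangle = \int_\RR \lambda^k \,\dx{\mu_x}(\lambda)$ for all $k\in\NN$ and all $x\in H$.

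The first step is to reformulate membership in $D$ as a support condition: I would show that $x\in D$ if and only if $\mathrm{supp}(\mu_x) \subset [-C,C]$. The ``if'' direction is the crude bound $|\int \lambda^k \dx{\mu_x}| \leq C^k\mu_x(\RR)$. For the converse the key is to test against \emph{even} exponents: since $\lambda^{2m}\geq 0$ everywhere, any mass of $\mu_x$ on $\{|\lambda|\geq C+\eta\}$ would yield $\langle L^{2m}x, x\rangle \geq (C+\eta)^{2m}\mu_x(\{|\lambda|\geq C+\eta\})$, forcing $\limsup_k |\langle L^k x, x\rangle|^{1/k}\geq C+\eta$.

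For the main argument, suppose for contradiction that $\VERT L\VERT>C$. By the spectral radius formula for self-adjoint operators, $\sigma(L) \not\subset [-C,C]$, so some $\lambda_0\in\sigma(L)$ satisfies $|\lambda_0|>C$. Pick $\delta>0$ small enough that $I_\delta := (\lambda_0-\delta, \lambda_0+\delta)$ stays outside $[-C,C]$; the projection $P := E(I_\delta)$ is then nonzero by the characterization of the spectrum. Choose a unit vector $y\in\mathrm{range}(P)$, then use the density of $D$ to pick $x\in D$ with $\|x-y\|<1/2$, which guarantees $\|Px\| \geq 1/2$. Since $P$ commutes with $L$ and $L^{2k}$ is positive, dropping the $(I-P)$-part of $x$ only decreases $\langle L^{2k}x, x\rangle$, giving $\langle L^{2k}x, x\rangle \geq \langle L^{2k}Px, Px\rangle \geq (|\lambda_0|-\delta)^{2k}\|Px\|^2 \geq (|\lambda_0|-\delta)^{2k}/4$. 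Taking $(2k)$-th roots yields $\limsup_k |\langle L^k x, x\rangle|^{1/k} \geq |\lambda_0|-\delta > C$, contradicting $x\in D$.

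The only substantive input is the spectral theorem; everything else is elementary. The essential point is the positivity of even powers $L^{2k}=(L^k)^*L^k$, which is exactly what lets the hypothesis phrased via $\langle L^kx,x\rangle$ (rather than the a~priori stronger $\|L^kx\|$) detect spectral mass sitting far from zero. No complexification is needed, since the spectral theorem for bounded self-adjoint operators is valid on real Hilbert spaces.
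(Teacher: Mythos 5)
Your proof is correct and rests on the same two ingredients as the paper's: the spectral theorem and the observation that restricting to \emph{even} powers makes $\langle L^{2k}x,x\rangle$ a positive quantity that detects the full spectral support radius of~$x$ (your ``support characterization'' of~$D$ is precisely the paper's Formula~(\ref{f7360}) translated into PVM language). The paper phrases this as a contrapositive and exhibits the whole open set of ``bad'' vectors at once via the multiplication-operator form of the spectral theorem, whereas you argue by contradiction by fixing $\lambda_0\in\sigma(L)$ with $|\lambda_0|>C$, taking the spectral projection onto a small interval, and perturbing a unit vector in its range to land inside~$D$—but these are presentational variants of the same argument, not genuinely different routes.
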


\begin{proof}
Reasoning by contraposition, we have to show that,
for~$L$ a self-adjoint operator on~$H$, for all~$C < \VERT L \VERT$,
the set of the~$x \in H$ such that
\[ \label{f8714} \limsup_{k\longto\infty} |\langle L^k x, x \rangle|^{1/k} > C \]
contains a non-empty open subset of~$H$.

Since $L$ is self-adjoint, by the spectral theorem~\cite[Theorem~7.18]{Weidmann},
it is unitarily equivalent to the ``multiplication by identity'' operator $M$
on a space $\bigoplus_{\alpha\in A} L^2(\rho_\alpha)$,
for~$A$ some set and~$\rho_\alpha$ some Radon measures on~$\RR$, that is
[in the following equation, the variable $\lambda$ is free, so that
$f(\lambda)$ is synonymous with~$f$]:
\[ M\Big( \sum_\alpha f_\alpha(\lambda) \Big) = \sum_\alpha \lambda f_\alpha(\lambda) .\]
So we will assume $L$ is of that form.

One has obviously:
\[\label{f7254} \VERT L \VERT = \sup \big\{ \lambda \geq 0 \colon 
(\exists \alpha \in A) \ \big(\rho_\alpha(\C{[-\lambda,\lambda]}) > 0\big) \big\} ;\]
moreover, for all~$f \in H$,
$f = \sum_{\alpha\in A} f_\alpha$ with $f_\alpha \in L^2(\rho_\alpha)$,
\[ \langle L^k f, f \rangle =
\sum_{\alpha\in A} \int_{\RR} \lambda^k |f_\alpha(\lambda)|^2 \,\dx\rho_\alpha(\lambda) ,\]
so that (observing that, for $k$ even, $\lambda^k \geq 0\ \ \forall\lambda$)
\[\label{f7360}
\limsup_{k\longto\infty} \big|\langle L^k f, f \rangle\big|^{1/k}
= \sup \Big\{\lambda \geq 0 \colon (\exists \alpha \in A) \ 
\Big(\int_{\C{[-\lambda,\lambda]}} |f_\alpha(\lambda')|^2 \,\dx\rho_\alpha(\lambda') > 0\Big)\Big\}.\]

Now, for $C < \VERT L \VERT$, the set
\[ U = \Big\{ f \in H \colon (\exists \alpha \in A) \ 
\Big( \int_{\C{[-C,C]}} |f_\alpha(\lambda)|^2 \,\dx\rho_\alpha(\lambda) > 0 \Big) \Big\} \]
is open because $\int_{\C{[-C,C]}} |f_\alpha(\lambda)|^2 \,\dx\rho_\alpha(\lambda)$
is a continuous function of~$f$, and it is non-empty by~(\ref{f7254}).
But (\ref{f8714}) is satisfied for all~$x\in U$ by~(\ref{f7360}),
so $U$ fulfills our quest.
\end{proof}

\chapter{A first approach to Hilbertian correlations}\label{parBasicHilbertianDecorrelations}%
\addcontentsline{itc}{chapter}{\protect\numberline{\thechapter}A first approach to Hilbertian correlations}

\section{Definition and first properties}\label{parDefinition}

\subsection{Equivalent definitions}

\begin{Def}\label{def4393}
Let~$(\Omega,\mcal{B},\Pr)$ be a probability space.
For~$\mcal{F}, \ab \mcal{G}$ two sub-$\sigma$-algebras of~$\mcal{B}$,
the \emph{Hilbertian correlation coefficient} (or merely ``correlation'')
between~$\mcal{F}$ and~$\mcal{G}$ is defined as
\[\label{for3441} \{\mcal{F}:\mcal{G}\}
\coloneqq \sup_{\substack{f\in\ldb(\mcal{F})\setminus\{0\}\\g\in\ldb(\mcal{G})\setminus\{0\}}}
\frac{|\EE[fg]|}{\ecty(f)\ecty(g)} .\]
If the supremum in~(\ref{for3441}) is taken over an empty set,
that is, if $\mcal{F}$ or $\mcal{G}$ is trivial,
we define this supremum to be $0$.
\end{Def}

\begin{Rmk}\label{rmk8718}
$\{\mcal{F}:\mcal{G}\}$ is often called the ``maximal correlation coefficient'' or ``$\rho$-mixing coefficient'' between~$\mcal{F}$ and~$\mcal{G}$, and denoted by~$\rho(\mcal{F},\mcal{G})$ (see~\cite{Bradley-review}).
\end{Rmk}

\begin{Rmk}
In other words, $\{\mcal{F}:\mcal{G}\}$ is the best $k\in\RR_+$ such that the following refined
Cauchy--Schwarz inequality holds in the Hilbert space $\ldb(\mcal{B})$:
\[ \forall f \in \ldb(\mcal{F}) \enskip\forall g \in \ldb(\mcal{G})
\qquad |\langle f,g \rangle| \leq k \|f\| \|g\| .\]

Yet another formulation is that $\{\mcal{F}:\mcal{G}\}$ is the cosine of the angle
between~$\ldb(\mcal{F})$ and~$\ldb(\mcal{G})$, seen as subspaces of~$\ldb(\mcal{B})$
—this angle being defined as the infimum angle
between any two non-zero vectors of these respective subspaces.

If we speak in terms of~$L^2$ spaces rather than $\ldb$ spaces,
$\{\mcal{F}:\mcal{G}\}$ is the best $k\in\RR_+$ such that
for all non-constant square-integrable~$f,g$ resp.\ $\mcal{F}$ and $\mcal{G}$-measurable,
\[\label{for0263} |\Corr(f,g)| \leq k ,\]
where $\Corr(f,g) \coloneqq \Cov(f,g) \div \ecty(f)\ecty(g)$ is the Pearson correlation coefficient between~$f$ and~$g$.
\end{Rmk}

\begin{Def}
We say that $\mcal{F}$ and~$\mcal{G}$ are \emph{$\epsilon$-decorrelated},
resp.\ \emph{$\epsilon$-correlated}, if $\{\mcal{F}:\mcal{G}\}\leq\epsilon$,
resp.\ $\{\mcal{F}:\mcal{G}\} \geq \epsilon$.
\end{Def}

\begin{Def}\label{def3792}
For~$X$ and~$Y$ random variables (with arbitrary ranges), we will denote~${\{X:Y\}}$
for~${\{\sigma(X):\sigma(Y)\}}$.
\end{Def}

\begin{Rmk}
One can rewrite Definition~\ref{def3792} as
\[\label{for3878}
\{ X:Y \} = \sup_{f,g} \frac{\Cov\big( f(X), g(Y) \big)}{\ecty\big(f(X)\big)\ecty\big(g(Y)\big)} ,\]
where it is implied that $f$ and~$g$ have to be measurable, real,
and such that $0 < \ecty(f(X)), \ab \ecty(g(Y)) \ab {< \infty}$.
\end{Rmk}

\begin{NOTA}
More generally, all the questions relative to Hilbertian correlations
may be handled either in terms of $\sigma$-algebras or in terms of random variables.
In the sequel, we will frequently switch implicitly between these two paradigms.
\end{NOTA}

It is natural to enquire what happens if one deals with \emph{complex} $\ldb$ spaces.
In fact it does not change anything:
\begin{Pro}[{\cite[Theorem~1.1]{Withers}}]
Let~$\mcal{F}$ and~$\mcal{G}$ be two $\sigma$-algebras
and let~$f,g$ be two complex centered $L^2$ variables,
measurable w.r.t.\ resp.~$\mcal{F}$ and~$\mcal{G}$.
Then, with~$\ecty(f)$ meaning $\sqrt{\EE[|f-\EE[f]|^2]}$, one has:
\[ |\EE[fg]| \leq \{\mcal{F}:\mcal{G}\} \ecty(f) \ecty(g) .\]
\end{Pro}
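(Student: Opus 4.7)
The plan is to reduce the complex inequality to the real case already built into the definition of $\{\mathcal{F}:\mathcal{G}\}$, via a phase rotation together with a Cauchy\,-\,Schwarz trick.

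First I would exploit the obvious phase invariance: if $\mathbb{E}[fg] = re^{i\alpha}$ with $r \geq 0$, then replacing $f$ by $e^{-i\alpha}f$ leaves both $|\mathbb{E}[fg]|$ and $\ecty(f)$ unchanged (since $\ecty(f)^2 = \mathbb{E}[|f|^2]$ when $f$ is centered), while making $\mathbb{E}[fg]$ real and nonnegative. So we may assume $\mathbb{E}[fg] \in \mathbb{R}_+$.

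Next, write $f = f_1 + if_2$ and $g = g_1 + ig_2$ with $f_1, f_2$ real centered and $\mathcal{F}$-measurable, and $g_1, g_2$ real centered and $\mathcal{G}$-measurable. Taking real parts in
\[ \mathbb{E}[fg] = \big(\mathbb{E}[f_1 g_1] - \mathbb{E}[f_2 g_2]\big) + i\big(\mathbb{E}[f_1 g_2] + \mathbb{E}[f_2 g_1]\big) \]
(and using that the left-hand side is real nonnegative) yields $|\mathbb{E}[fg]| = \mathbb{E}[f_1 g_1] - \mathbb{E}[f_2 g_2]$. Each term on the right is bounded in absolute value, by the definition of~$\{\mathcal{F}:\mathcal{G}\}$ applied to the real centered pairs $(f_i, g_j)$, by $\{\mathcal{F}:\mathcal{G}\}\,\ecty(f_i)\ecty(g_j)$, so
\[ |\mathbb{E}[fg]| \;\leq\; \{\mathcal{F}:\mathcal{G}\}\,\big(\ecty(f_1)\ecty(g_1) + \ecty(f_2)\ecty(g_2)\big). \]

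Finally I would apply the discrete Cauchy\,-\,Schwarz inequality to the vectors $(\ecty(f_1), \ecty(f_2))$ and $(\ecty(g_1), \ecty(g_2))$, giving
\[ \ecty(f_1)\ecty(g_1) + \ecty(f_2)\ecty(g_2) \;\leq\; \sqrt{\ecty(f_1)^2 + \ecty(f_2)^2}\;\sqrt{\ecty(g_1)^2 + \ecty(g_2)^2} \;=\; \ecty(f)\,\ecty(g), \]
where the last identity uses $\ecty(f)^2 = \mathbb{E}[|f|^2] = \ecty(f_1)^2 + \ecty(f_2)^2$ (valid because $f_1, f_2$ are centered), and similarly for~$g$. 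Combining the two displays finishes the proof. There is no serious obstacle: the only subtlety is recognizing that the real-part/phase reduction is what allows one to avoid a loss of a factor~$2$, i.e.\ that the correct pairing of $(\ecty(f_i), \ecty(g_j))$'s into a Euclidean inner product on~$\mathbb{R}^2$ collapses exactly to $\ecty(f)\ecty(g)$.
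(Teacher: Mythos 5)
Your proof is correct and follows essentially the same route as the paper: a phase rotation to make $\EE[fg]$ real and nonnegative, the decomposition $|\EE[fg]| = \EE[\Re f\,\Re g]-\EE[\Im f\,\Im g]$, the real-case bound applied to each term, and the discrete Cauchy--Schwarz inequality on the vectors of standard deviations. The only cosmetic difference is that the paper rotates $g$ while you rotate $f$, which has the same effect.
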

\begin{proof}
I recall the proof for the sake of completeness.
Up to multiplying $g$ by a well-chosen unit complex number,
we can assume that $\EE[fg] \ab {\in \RR_+}$.
Then we can apply Definition~\ref{def4393} to the \emph{real} $\ldb$ variables%
~$\Re f$ and~$\Re g$, resp.~$\Im f$ and~$\Im g$, getting:
\begin{multline}
|\EE[fg]| = \Re \EE[fg] = \EE[\Re f \Re g] - \EE[\Im f \Im g] \\
\leq \{\mcal{F}:\mcal{G}\} \big( \ecty(\Re f) \ecty(\Re g) + \ecty(\Im f) \ecty(\Im g) \big) \\
\footrel{\text{CS}}{\leq} \{\mcal{F}:\mcal{G}\} \sqrt{\Var(\Re f)+\Var(\Im f)} \sqrt{\Var(\Re g)+\Var(\Im g)} \\
= \{\mcal{F}:\mcal{G}\} \ecty(f) \ecty(g).
\end{multline}
\end{proof}

Now we turn to a different way of seeing correlation levels.

\begin{Def}
For~$\mcal{F}, \mcal{G}$ two $\sigma$-algebras, we denote by~$\pi_{\mcal{G}\mcal{F}}$
the `projection' operator
\[\label{for0595} \begin{array}{rrcl} \pi_{\mcal{G}\mcal{F}} \colon & \ldb(\mcal{F}) & \to & \ldb(\mcal{G}) \\ & f & \mapsto & f^{\mcal{G}}. \end{array} \]

For~$\mcal{F}, \ldots, \mcal{Z}$ $\sigma$-algebras,
we denote $\pi_{\mcal{Z}\mcal{Y}\mcal{X}\ldots\mcal{G}\mcal{F}} \coloneqq
\pi_{\mcal{Z}\mcal{Y}} \circ \ab \pi_{\mcal{Y}\mcal{X}} \circ \ab \cdots \circ \pi_{\mcal{G}\mcal{F}}$.
\end{Def}

With this vocabulary at hand,
\begin{Pro}\label{pro7764}
For~$\mcal{F}, \mcal{G}$ two $\sigma$-algebras, $\{\mcal{F}:\mcal{G}\} = \VERT \pi_{\mcal{G}\mcal{F}} \VERT$.
\end{Pro}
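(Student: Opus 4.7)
The plan is to use the fundamental identity $\EE[fg] = \langle \pi_{\mcal{G}\mcal{F}}f, g\rangle_{\ldb(\mcal{B})}$ whenever $f \in \ldb(\mcal{F})$ and $g \in \ldb(\mcal{G})$, which follows from the defining property of conditional expectation (the $\mcal{G}$-measurable function $g$ can be ``moved across'' without affecting the expectation, and $f^{\mcal{G}}$ is the best $\mcal{G}$-measurable approximation of $f$ in $L^2$). With this identity, both inequalities become essentially formal.

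For the inequality $\{\mcal{F}:\mcal{G}\} \leq \VERT \pi_{\mcal{G}\mcal{F}} \VERT$, I would take arbitrary non-zero $f \in \ldb(\mcal{F})$ and $g \in \ldb(\mcal{G})$, write
\[
|\EE[fg]| = |\langle \pi_{\mcal{G}\mcal{F}}f,\, g\rangle|
\footrel{\text{CS}}{\leq} \|\pi_{\mcal{G}\mcal{F}}f\|\,\|g\|
\leq \VERT \pi_{\mcal{G}\mcal{F}}\VERT\,\ecty(f)\,\ecty(g),
\]
and then take the sup over $f,g$. For the reverse inequality, I would fix $f \in \ldb(\mcal{F})$ with $\pi_{\mcal{G}\mcal{F}}f \neq 0$ and test against the specific choice $g \coloneqq \pi_{\mcal{G}\mcal{F}}f \in \ldb(\mcal{G})$; using that $\pi_{\mcal{G}\mcal{F}}$ is (the restriction of) an orthogonal projection, so $\langle f, \pi_{\mcal{G}\mcal{F}}f\rangle = \|\pi_{\mcal{G}\mcal{F}}f\|^2$, I obtain
\[
\frac{|\EE[fg]|}{\ecty(f)\,\ecty(g)} = \frac{\|\pi_{\mcal{G}\mcal{F}}f\|}{\|f\|},
\]
and the supremum of the right-hand side over admissible $f$ is exactly $\VERT \pi_{\mcal{G}\mcal{F}}\VERT$.

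Finally I would address the degenerate case separately: if $\mcal{F}$ or $\mcal{G}$ is trivial, then $\ldb(\mcal{F})$ or $\ldb(\mcal{G})$ is the zero space, so $\pi_{\mcal{G}\mcal{F}}$ is the zero operator and $\VERT \pi_{\mcal{G}\mcal{F}}\VERT = 0$, matching the convention fixed in Definition~\ref{def4393}.

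There is no real obstacle here; the only mild subtlety is making sure one is working in $\ldb$ rather than $L^2$ (so that denominators $\ecty(f), \ecty(g)$ are the same as the $\ldb$-norms), and that the identification of $\ldb(\mcal{G})$ with a subspace of $\ldb(\mcal{B})$ is compatible with the conditional expectation being the orthogonal projection. Once that identification is in place, the proof reduces to the generic fact that the operator norm of an orthogonal projection-type map coincides with the cosine of the angle between the two subspaces it relates.
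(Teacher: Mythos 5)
Your proof is correct, and it is the same argument as the paper's, just fully unpacked: the paper's one-line proof also rests on identifying $\pi_{\mathcal{G}\mathcal{F}}$ with the orthogonal projection from $\ldb(\mathcal{F})$ to $\ldb(\mathcal{G})$ inside $\ldb(\mathcal{B})$, whose norm is the cosine of the angle between the two subspaces. Your explicit two-inequality calculation (via $\EE[fg] = \langle \pi_{\mathcal{G}\mathcal{F}}f, g\rangle$ and the test function $g = \pi_{\mathcal{G}\mathcal{F}}f$) is simply what that one line means.
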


\begin{proof}
$\pi_{\mcal{G}\mcal{F}}$ is the orthogonal projection from $\ldb(\mcal{F})$
to~$\ldb(\mcal{G})$ in the Hilbert space $\ldb(\mcal{B})$,
so its norm is the cosine of the angle between~$\ldb(\mcal{F})$ and~$\ldb(\mcal{G})$,
i.e.\ ${\{\mcal{F}:\mcal{G}\}}$.
\end{proof}

\begin{Rmk}\label{rmk1265}
One has $\pi_{\mcal{F}\mcal{G}} = \pi_{\mcal{G}\mcal{F}}^*$,
since $\langle \pi_{\mcal{G}\mcal{F}}f , g \rangle = \EE[fg] = \langle f , \pi_{\mcal{F}\mcal{G}}g \rangle$.
Therefore the expression $\VERT \pi_{\mcal{G}\mcal{F}} \VERT$ in Proposition~\ref{pro7764}
can be rewritten into $\sqrt{\VERT \pi_{\mcal{F}\mcal{G}\mcal{F}} \VERT}$,
which is also $\rho(\pi_{\mcal{F}\mcal{G}\mcal{F}})$
since $\pi_{\mcal{F}\mcal{G}\mcal{F}}$ is self-adjoint.
\end{Rmk}

\subsection{Immediate properties}

Having defined Hilbertian correlations, it is now time to study their behaviour.

The following properties are immediate from Definition~\ref{def4393}:
\begin{Pro}
For all $\sigma$-algebras $\mcal{F}$, $\mcal{G}$ and~$\mcal{G}'$,
\begin{ienumerate}
\item $\{\mcal{G}:\mcal{F}\} = \{\mcal{F}:\mcal{G}\}$;
\item $\mcal{G}\subset\mcal{G}' \ \Rightarrow\ \{\mcal{F}:\mcal{G}\} \leq \{\mcal{F}:\mcal{G}'\}$;
\item $\{\mcal{F}:\mcal{G}\} \in [0,1]$;
\item $\{\mcal{F}:\mcal{G}\} = 0$ if and only if $\mcal{F}$ and~$\mcal{G}$ are independent;
\item If $\mcal{F}$ is not trivial, then $\{\mcal{F}:\mcal{F}\} = 1$.
\end{ienumerate}
\end{Pro}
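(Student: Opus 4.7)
The plan is to verify each of the five items directly from Definition~\ref{def4393}, noting that nothing beyond elementary Cauchy--Schwarz and a test-function argument is needed. I would treat them in a convenient order rather than in the order stated: (i) and (ii) are syntactic consequences of the form of~(\ref{for3441}); (iii) and (v) follow from Cauchy--Schwarz together with the choice~$f=g$; and (iv) is the only one requiring a test with indicator functions.

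For (i), I would observe that the defining expression~(\ref{for3441}) is manifestly symmetric under swapping the roles of $(\mcal{F},f)$ and $(\mcal{G},g)$. For (ii), I would note that $\mcal{G}\subset\mcal{G}'$ implies $\ldb(\mcal{G})\subset\ldb(\mcal{G}')$, so the supremum defining $\{\mcal{F}:\mcal{G}'\}$ is taken over a larger set. For the upper bound in (iii), for any centered $f\in\ldb(\mcal{F})$ and $g\in\ldb(\mcal{G})$, the Cauchy--Schwarz inequality gives $|\EE[fg]|\leq\|f\|_{\ldb}\|g\|_{\ldb}=\ecty(f)\ecty(g)$, so the ratio in~(\ref{for3441}) is at most~$1$; the lower bound is clear since the supremum of nonnegative numbers is nonnegative (or $0$ by convention if the sup is empty).

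For (v), assuming $\mcal{F}$ is non-trivial, I would pick any non-constant $f\in\ldb(\mcal{F})\setminus\{0\}$ and test with~$g=f$: then $|\EE[f^2]|/\ecty(f)^2=1$, forcing $\{\mcal{F}:\mcal{F}\}\geq1$, and combining with (iii) yields equality.

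The only substantive step is (iv). The ``$\Leftarrow$'' direction is immediate: if $\mcal{F}$ and $\mcal{G}$ are independent, then for centered $f,g$ one has $\EE[fg]=\EE[f]\EE[g]=0$, so the supremum in~(\ref{for3441}) vanishes. For ``$\Rightarrow$'', assuming $\{\mcal{F}:\mcal{G}\}=0$ means $\EE[fg]=0$ for all centered $\mcal{F}$- and $\mcal{G}$-measurable $L^2$ variables; I would apply this to $f=\1{A}-\Pr[A]$ and $g=\1{B}-\Pr[B]$ for arbitrary $A\in\mcal{F}$, $B\in\mcal{G}$, yielding $\Pr[A\cap B]-\Pr[A]\Pr[B]=0$, which is the definition of independence of $\mcal{F}$ and~$\mcal{G}$. (The degenerate case where $\mcal{F}$ or $\mcal{G}$ is trivial is handled separately but is immediate, since a trivial $\sigma$-algebra is independent of everything.) Since none of these steps is genuinely delicate, there is no real ``main obstacle''; the most one has to be careful about is the trivial-$\sigma$-algebra convention, which is the reason the statement in (v) includes the non-triviality hypothesis.
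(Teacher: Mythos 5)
Your proof is correct and is exactly the routine verification that the paper intends when it says the properties are "immediate from Definition~\ref{def4393}"; the paper itself gives no proof. The only small point worth flagging is in (v): you should note that non-triviality of $\mcal{F}$ guarantees $\ldb(\mcal{F})\neq\{0\}$ (take $A\in\mcal{F}$ with $0<\Pr[A]<1$, then $\1{A}-\Pr[A]$ is a nonzero element), so the test $g=f$ is actually available.
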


When one is concerned by correlation between variables,
it often occurs that some of these variables are vector-valued.
The following proposition means
that it suffices to know the behaviour of finite-length vectors
to understand the behaviour of all vectors:
\begin{Pro}\label{pro6559}
Let~$I, J$ be possibly infinite sets
and let~$\vec{X}_I, \vec{Y}_J$ be vector-valued variables. Then,
denoting ``$I'\Subset I$'' to mean that $I'$ is a finite subset of~$I$,
\[ \big\{ \vec{X}_I : \vec{Y}_J \big\} =
\sup_{I'\Subset I,J'\Subset J}
\big\{ \vec{X}_{I'} : \vec{Y}_{J'} \big\} .\]
\end{Pro}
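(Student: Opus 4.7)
The inequality ``$\geq$'' is immediate from Definition~\ref{def4393}: for any $I'\subset I$ and $J'\subset J$ one has $\sigma(\vec{X}_{I'})\subset\sigma(\vec{X}_I)$ and $\sigma(\vec{Y}_{J'})\subset\sigma(\vec{Y}_J)$, so every admissible pair $(f,g)$ in the ``finite'' supremum is also admissible for the ``full'' one. The content of the proposition is therefore the converse inequality ``$\leq$''.

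The plan for the nontrivial direction is the standard density-plus-contractivity argument for conditional expectations. The starting observation is that $\sigma(\vec{X}_I)$ is generated by the cylinder events $\{X_i\in B\}$ for $i\in I$ and $B$ Borel, each of which already lies in some $\sigma(\vec{X}_{\{i\}})$; hence $\sigma(\vec{X}_I)=\bigvee_{I'\Subset I}\sigma(\vec{X}_{I'})$, and likewise for $J$. A classical monotone class (or martingale) argument then yields that $\bigcup_{I'\Subset I}L^2(\sigma(\vec{X}_{I'}))$ is dense in $L^2(\sigma(\vec{X}_I))$; concretely, writing $f_{I'}\coloneqq \EE[f|\sigma(\vec{X}_{I'})]$, the net $(f_{I'})_{I'\Subset I}$ converges to $f$ in $L^2$, and similarly $g_{J'}\coloneqq \EE[g|\sigma(\vec{Y}_{J'})]\to g$ in $L^2$ as $J'\uparrow J$.

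Now fix arbitrary $f\in\ldb(\vec{X}_I)$ and $g\in\ldb(\vec{Y}_J)$ with $\ecty(f),\ecty(g)>0$, and denote $\kappa\coloneqq \sup_{I'\Subset I,\,J'\Subset J}\{\vec{X}_{I'}:\vec{Y}_{J'}\}$. Since $f$ is centered, so is $f_{I'}$, and by Jensen's inequality $\ecty(f_{I'})\leq\ecty(f)$; the same holds for $g_{J'}$. Applying Definition~\ref{def4393} to the pair $(f_{I'},g_{J'})$ gives
\[
\bigl|\EE[f_{I'}g_{J'}]\bigr|\;\leq\;\{\vec{X}_{I'}:\vec{Y}_{J'}\}\,\ecty(f_{I'})\,\ecty(g_{J'})\;\leq\;\kappa\,\ecty(f)\,\ecty(g).
\]
The bilinear form $(u,v)\mapsto \EE[uv]$ is continuous on $L^2\times L^2$ (Cauchy--Schwarz), so $\EE[f_{I'}g_{J'}]\to\EE[fg]$ along the product net, whence $|\EE[fg]|\leq\kappa\,\ecty(f)\,\ecty(g)$. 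Taking the supremum over $(f,g)$ yields $\{\vec{X}_I:\vec{Y}_J\}\leq\kappa$.

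The only delicate point is the density statement $\overline{\bigcup_{I'\Subset I}L^2(\sigma(\vec{X}_{I'}))}=L^2(\sigma(\vec{X}_I))$ when $I$ is uncountable; it is handled by noting that the left-hand side is a closed linear subspace containing every indicator of a cylinder event, and then invoking the monotone class theorem to conclude that it contains $L^2$ of the generated $\sigma$-algebra. Once that is in place, everything else is a routine application of the contractivity of conditional expectation in $L^2$.
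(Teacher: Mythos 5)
Your proposal is correct and takes essentially the same approach as the paper: the paper proves the ``$\leq$'' direction by observing that $\bigcup_{I'\Subset I}\ldb(\vec{X}_{I'})$ is dense in $\ldb(\vec{X}_I)$, citing Rudin and Bradley for that density fact, and your proof simply fills in the details of that density argument (monotone class plus martingale/conditional-expectation convergence along the net of finite subsets) and then carries out the routine $L^2$-continuity step to conclude.
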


\begin{proof}
This is because $\bigcup_{I'\Subset I} \ldb(\vec{X}_{I'})$,
resp.~$\bigcup_{J'\Subset J} \ldb(\vec{X}_{J'})$,
is a dense subset of~$\ldb(\vec{X}_I)$, resp.\ $\ldb(\vec{Y}_J)$.
That property follows by classical approximation arguments
like in the proof of~\cite[Theorem~3.14]{Rudin}.
See~\cite[Theorem~3.16(II-3)]{Bradley-book} for a more detailed proof.
\end{proof}

\subsection{Operator interpretation}\label{parOperatorInterpretation}

\begin{Pro}\label{cor0706}
If $X \to Y \to Z$ is a Markov chain, then $\{X:Z\} \leq {\{X:Y\}} \* {\{Y:Z\}}$.
\end{Pro}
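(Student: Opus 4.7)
The plan is to exploit the operator interpretation of Hilbertian correlations given by Proposition~\ref{pro7764}, which identifies $\{X:Z\}$ with the operator norm of the conditional-expectation map $\pi_{\sigma(Z)\sigma(X)} \colon \ldb(\sigma(X)) \to \ldb(\sigma(Z))$. Once this is in hand, the result will follow from the submultiplicativity of operator norms applied to a factorization $\pi_{\sigma(Z)\sigma(X)} = \pi_{\sigma(Z)\sigma(Y)} \circ \pi_{\sigma(Y)\sigma(X)}$, so the core of the argument is really to prove that this factorization holds.

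First I would observe that for any $f\in\ldb(\sigma(X))$, the tower property gives $\EE[f|Z] = \EE\big[\EE[f|Y,Z]\big|Z\big]$. Next, the Markov-chain hypothesis $X\to Y\to Z$ means precisely that, conditionally on $Y$, the variables $X$ and $Z$ are independent; in particular $\EE[f|Y,Z] = \EE[f|Y]$ (since $f$ is $\sigma(X)$-measurable). Combining the two identities yields $\EE[f|Z] = \EE\big[\EE[f|Y]\big|Z\big]$, which is exactly the operator identity $\pi_{\sigma(Z)\sigma(X)} f = \pi_{\sigma(Z)\sigma(Y)}\big(\pi_{\sigma(Y)\sigma(X)} f\big)$, valid for all $f\in\ldb(\sigma(X))$.

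Finally I would invoke submultiplicativity:
\[ \{X:Z\} = \VERT \pi_{\sigma(Z)\sigma(X)} \VERT \leq \VERT \pi_{\sigma(Z)\sigma(Y)} \VERT \cdot \VERT \pi_{\sigma(Y)\sigma(X)} \VERT = \{Y:Z\} \cdot \{X:Y\}, \]
where the first and last equalities use Proposition~\ref{pro7764}. Since the supremum in the definition of $\{X:Z\}$ is vacuous (equal to $0$) if either $\sigma(X)$ or $\sigma(Z)$ is trivial, the trivial cases need no separate treatment.

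There is no genuine obstacle here; the only subtle point is making sure that the Markov property is invoked in the correct direction (conditioning $f(X)$ through $Y$), and that the conditional expectation is taken inside $\ldb$ rather than $L^2$, which is harmless since all the operators preserve the subspace of centered functions.
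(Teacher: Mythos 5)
Your proof is correct and follows the same route as the paper: the paper simply notes that the Markov property is equivalent to $\pi_{ZX} = \pi_{ZYX}$ and invokes submultiplicativity, which is exactly your argument, just stated more tersely. Your spelled-out derivation of the factorization via the tower property and conditional independence is the justification the paper leaves implicit.
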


\begin{proof}
The Markov chain property is equivalent to meaning that $\pi_{ZX} = \pi_{ZYX}$,
so the result is a consequence of the submultiplicativity of operator norms.
See also~\cite[\S~VII-4]{Rosenblatt}.
\end{proof}

There is a refined version of Proposition~\ref{cor0706} which is particularly interesting
for reversible chains:
\begin{Pro}\label{pro0281}
If $X \to Y \to Z$ is a Markov chain, then $\{X:Z\} = \sqrt{\rho(\pi_{YZY}\circ\pi_{YXY})}$.
\end{Pro}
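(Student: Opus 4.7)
The plan is to reduce the right-hand side to a spectral quantity already connected to $\{X:Z\}$ via Remark~\ref{rmk1265}, and then use the ``swap trick'' $\rho(UV)=\rho(VU)$ to rearrange the operators.

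First, I would rewrite $\{X:Z\}$ spectrally. By Proposition~\ref{pro7764} and Remark~\ref{rmk1265}, $\{X:Z\}^2 = \VERT\pi_{ZX}\VERT^2 = \VERT\pi_{XZX}\VERT$, and since $\pi_{XZX}=\pi_{XZ}\circ\pi_{ZX}$ is self-adjoint (as $\pi_{XZ}=\pi_{ZX}^*$), we have $\VERT\pi_{XZX}\VERT = \rho(\pi_{XZX})$. So the goal becomes proving the identity
\[\label{plan-goal}
\rho(\pi_{XZX}) \;=\; \rho(\pi_{YZY}\circ\pi_{YXY}).
\]

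Next, I would use the Markov hypothesis to decompose $\pi_{ZX}$ through $Y$. The chain $X\to Y\to Z$ means (as used in the proof of Proposition~\ref{cor0706}) that $\pi_{ZX}=\pi_{ZYX}=\pi_{ZY}\circ\pi_{YX}$; taking adjoints and using Remark~\ref{rmk1265} yields $\pi_{XZ}=\pi_{XY}\circ\pi_{YZ}$ (i.e.\ the reverse chain $Z\to Y\to X$ is Markov too). Substituting,
\[
\pi_{XZX} \;=\; \pi_{XY}\circ\pi_{YZ}\circ\pi_{ZY}\circ\pi_{YX},
\]
while by definition
\[
\pi_{YZY}\circ\pi_{YXY} \;=\; \pi_{YZ}\circ\pi_{ZY}\circ\pi_{YX}\circ\pi_{XY}.
\]

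The main step, and the only subtle one, is then the operator-theoretic fact that $\rho(AB)=\rho(BA)$ for bounded operators $A\colon H_1\to H_2$ and $B\colon H_2\to H_1$ between (possibly distinct) Hilbert spaces; equivalently, $AB$ and $BA$ share the same nonzero spectrum. Applying this with $A\coloneqq \pi_{YZ}\circ\pi_{ZY}\circ\pi_{YX}\colon \ldb(X)\to\ldb(Y)$ and $B\coloneqq\pi_{XY}\colon\ldb(Y)\to\ldb(X)$, one has $BA=\pi_{XZX}$ and $AB=\pi_{YZY}\circ\pi_{YXY}$, yielding exactly \eqref{plan-goal}. Taking square roots gives $\{X:Z\}=\sqrt{\rho(\pi_{YZY}\circ\pi_{YXY})}$, as desired.

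The one place to be careful is invoking $\rho(AB)=\rho(BA)$ across different Hilbert spaces; this is standard (e.g.\ from $\lambda\notin\mathrm{sp}(AB)\setminus\{0\}\iff\lambda\notin\mathrm{sp}(BA)\setminus\{0\}$, proved via the explicit resolvent formula $(\lambda-BA)^{-1}=\lambda^{-1}\bigl(I+B(\lambda-AB)^{-1}A\bigr)$), but it is the one non-automatic ingredient that upgrades the norm identity of Remark~\ref{rmk1265} into the spectral-radius identity stated here.
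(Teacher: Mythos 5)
Your argument is correct and follows essentially the same route as the paper's own proof: decompose $\pi_{XZX}$ through $Y$ using the Markov property, then apply the cyclic-permutation identity $\rho(AB)=\rho(BA)$ for operators between (possibly distinct) Hilbert spaces to shift $\pi_{XY}$ from front to back. The paper invokes this swap trick in exactly the same way, so there is no gap and no substantive difference.
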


\begin{proof}
Because of the Markov chain property, $\pi_{XZ} = \pi_{XY}\circ\pi_{YZ}$
and $\pi_{ZX} = \pi_{ZY}\circ\pi_{YX}$.
Using that for any pair of operators~$\pi \colon H_1 \to H_2$ and~$\tau \colon H_2 \to H_1$
one has $\rho(\pi\circ\tau) = \rho(\tau\circ\pi)$,
we get that $\{X:Z\}^2 = \rho(\pi_{XY}) = \rho(\pi_{XYZYX}) = \rho(\pi_{XY}\circ\pi_{YZYX})
= \rho(\pi_{YZYX}\circ\pi_{XY}) = \rho(\pi_{YZY}\circ\pi_{YXY})$.\linebreak[1]\strut
\end{proof}

\begin{Cor}\label{cor5902}
If $\cdots \to X_{-1} \to X_{0} \to X_{1} \to \cdots$ is a stationary Markov chain
so that $\pi_{X_1X_0}$ and~$\pi_{X_0X_1}$ commute%
\footnote{Reversible chains always satisfy this condition
since then $\pi_{X_1X_0} = \pi_{X_0X_1}$.},
then for all~$k\in\ZZ\setminus\{0\}$, $\{X_0:X_k\} = \{X_0:X_1\}^{|k|}$.
\end{Cor}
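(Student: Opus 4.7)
The plan is to translate everything into operator language on a single Hilbert space and then invoke normality. First, by the symmetry of Hilbertian correlations together with the translation invariance provided by stationarity, one has $\{X_0 : X_{-k}\} = \{X_{-k} : X_0\} = \{X_0 : X_k\}$, so it suffices to treat the case $k \geq 1$.

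Next, I would use stationarity to identify every $\ldb(X_i)$ canonically with a single Hilbert space $H$ (essentially $\ldb(\Law_{X_0})$), and observe that under this identification the operator $\pi_{X_{i-1} X_i}$ becomes one and the same operator $\pi$ on $H$, independent of $i$; by Remark \ref{rmk1265}, its adjoint $\pi^*$ corresponds to $\pi_{X_i X_{i-1}}$. Iterating the Markov property (the same tower-property argument underlying Proposition \ref{cor0706}) gives
\[ \pi_{X_0 X_k} \;=\; \pi_{X_0 X_1} \circ \pi_{X_1 X_2} \circ \cdots \circ \pi_{X_{k-1} X_k} \;=\; \pi^k , \]
so Proposition \ref{pro7764} yields $\{X_0 : X_k\} = \VERT \pi^k \VERT$ and likewise $\{X_0 : X_1\} = \VERT \pi \VERT$.

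Finally, the commutation hypothesis $\pi \pi^* = \pi^* \pi$ says precisely that $\pi$ is a normal operator on $H$. For such operators the operator norm coincides with the spectral radius, and $\pi^k$ is again normal (since $(\pi^k)^* = (\pi^*)^k$ still commutes with $\pi^k$); consequently $\VERT \pi^k \VERT = \rho(\pi^k) = \rho(\pi)^k = \VERT \pi \VERT^k$, which is exactly the desired identity. The only step requiring real care is the bookkeeping in the middle paragraph: one must justify that the $\ldb(X_i)$ really do identify with a common $H$ in a way that makes all the $\pi_{X_{i-1}X_i}$ collapse to a single $\pi$. This however follows routinely from stationarity of the one- and two-dimensional marginal laws of the chain, and once it is in place the result is essentially an invocation of the preceding operator-theoretic material.
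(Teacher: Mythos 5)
Your argument is correct and follows essentially the same route as the paper's own proof. The paper passes through the self-adjoint composite $\pi_{X_0 X_k X_0}$, uses the Markov property to write it as $\pi^k(\pi^*)^k$, invokes the commutation hypothesis to re-group this as $(\pi\pi^*)^k$, and then uses multiplicativity of the spectral radius for powers. You instead observe directly that the commutation hypothesis says $\pi$ is normal and then invoke the standard fact $\VERT\pi^k\VERT=\rho(\pi^k)=\rho(\pi)^k=\VERT\pi\VERT^k$ for normal operators; this is the same calculation wrapped up in cleaner language. The reduction of $k<0$ to $k>0$ via symmetry plus stationarity also matches the paper's one-line remark.
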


\begin{proof}
Since the chain is stationary, all the~$X_n$ have the same law
and thus all the~$\ldb(X_n)$ can be identified;
then the stationarity property is equivalent to saying that
$\pi_{X_{n+1}X_n} = \pi_{X_1X_0}$ for all~$n\in\ZZ$.
Thanks to the commutation hypothesis, one can write for~$k>0$:
\[ \{X_0:X_k\} = \sqrt{\rho(\pi_{X_0X_kX_0})}
= \sqrt{\rho(\pi_{X_0X_1}^k\circ\pi_{X_1X_0}^k)} \\
= \sqrt{\rho(\pi_{X_0X_1X_0}^k)}
= \sqrt{\rho(\pi_{X_0X_1X_0})^k} = \{X_0:X_1\}^k . \]
For the case $k<0$, we use that $\{X_0:X_k\} = \{X_{-k}:X_0\}$.
\end{proof}

\subsection{First criteria for decorrelation}\label{parCriteriaForDecorrelation}

\paragraph{Density sufficient condition}

\begin{Pro}\label{pro0651}
Let~$X$ and~$Y$ be two random variables valued resp.\ in~$E$ and~$F$.
Suppose that $\Law(X,Y)$ has a density $h$ w.r.t.\ the product probability $\Law(X)\otimes\Law(Y)$.
Then:
\[\label{for0255} \{X:Y\} \leq \Big( \int_{E\times F} (h-1)^2 \,\dx{Law_X}\,\dx{Law_Y} \Big)^{1/2} .\]
\end{Pro}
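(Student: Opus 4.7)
The approach is direct: express the covariance of $f(X)$ and $g(Y)$ as an integral against the density $h$, subtract off a vanishing piece using the centering of $f$ and $g$, and then apply Cauchy–Schwarz in $L^2(\Law_X \otimes \Law_Y)$.

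More precisely, I would start with arbitrary $f \in \ldb(\sigma(X)) \setminus \{0\}$ and $g \in \ldb(\sigma(Y)) \setminus \{0\}$, viewed as real measurable functions on $E$ and $F$ respectively, with $\int f\,\dx{\Law_X} = 0$, $\int g\,\dx{\Law_Y} = 0$, $\ecty(f)^2 = \int f^2\,\dx{\Law_X}$, and similarly for $g$. The hypothesis $\dx{\Law_{(X,Y)}} = h\,\dx{\Law_X}\,\dx{\Law_Y}$ gives
\[
\EE[f(X)g(Y)] \;=\; \int_{E\times F} f(x)g(y)\,h(x,y)\,\dx{\Law_X(x)}\,\dx{\Law_Y(y)}.
\]
The crucial trick is now to subtract the corresponding integral with $h$ replaced by $1$: by Fubini and the centering conditions, $\int f(x)g(y)\,\dx{\Law_X}\,\dx{\Law_Y} = \bigl(\int f\,\dx{\Law_X}\bigr)\bigl(\int g\,\dx{\Law_Y}\bigr) = 0$, hence
\[
\EE[f(X)g(Y)] \;=\; \int_{E\times F} f(x)g(y)\,\bigl(h(x,y)-1\bigr)\,\dx{\Law_X(x)}\,\dx{\Law_Y(y)}.
\]

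Then I would apply the Cauchy–Schwarz inequality in $L^2(\Law_X \otimes \Law_Y)$ to the pair $fg$ and $h-1$:
\[
|\EE[f(X)g(Y)]| \;\leq\; \Bigl(\int (fg)^2 \,\dx{\Law_X}\,\dx{\Law_Y}\Bigr)^{1/2} \Bigl(\int (h-1)^2\,\dx{\Law_X}\,\dx{\Law_Y}\Bigr)^{1/2}.
\]
The first factor factorizes by Fubini as $\bigl(\int f^2\,\dx{\Law_X}\bigr)^{1/2}\bigl(\int g^2\,\dx{\Law_Y}\bigr)^{1/2} = \ecty(f)\,\ecty(g)$. Dividing by $\ecty(f)\ecty(g)$ and taking the supremum over admissible $f,g$ in the definition (\ref{for3441}) of $\{X:Y\}$ yields exactly (\ref{for0255}).

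There is essentially no obstacle: the only subtle point is recognizing that centering lets one replace $h$ by $h-1$ before Cauchy–Schwarz, which turns a raw bound on $\|h\|_2$ into the much sharper chi-square-style bound $\|h-1\|_2$. If one wants to be fully rigorous, one should note at the outset that the right-hand side of (\ref{for0255}) may be $+\infty$, in which case the inequality is vacuous; otherwise the Cauchy–Schwarz step above automatically ensures that all integrals are finite.
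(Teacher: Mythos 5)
Your proposal is correct and follows essentially the same route as the paper: rewrite the covariance as an integral against $h-1$ (using the centering of $f$ and $g$ to discard the $h\equiv 1$ term), then apply Cauchy--Schwarz in $L^2(\Law_X\otimes\Law_Y)$ and factorize $\int (fg)^2\,\dx{\Law_X}\dx{\Law_Y}=\ecty(f)^2\ecty(g)^2$ by Fubini. No meaningful difference.
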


\begin{Rmk}\label{rmk5108}
The integral expression in~(\ref{for0255})
is nothing but $2$ times the bilinearized version of the mutual information
\[\label{eq9402} I(X;Y) \coloneqq \int_{E\times F} h \ln h \,\dx{Law_X}\,\dx{Law_Y} .\]
Yet Example~\ref{x3993} will show that one does not have
${\{X:Y\}} \leq \sqrt{2} \* (X;Y)^{1/2}$ in general.
\end{Rmk}

\begin{proof}
To alleviate notation, denote resp.~$\Pr_X,\ab \Pr_Y,\ab \Pr_{(X,Y)}$ for~$\Law(X),\ab \Law(Y),\ab \Law(X,Y)$.
Let~$f$ and~$g$ be centered $L^2$~functions being resp.\ $X$- and $Y$-measurable.
Observe first that
\[ \int_{E\times F}f(x)g(y) \,\dx{\Pr_X}[x]\dx{\Pr_Y}[y] =
\Big(\int_E f \,\dx{\Pr_X}\Big) \Big(\int_F g \,\dx{\Pr_Y} \Big) = 0 \times 0 = 0 ,\]
so that
\[ \EE[fg] = \int fg \,\dx{\Pr_{(X,Y)}}
= \int hfg \,\dx{\Pr_X}\dx{\Pr_Y} = \int (h-1)fg \,\dx{\Pr_X}\dx{\Pr_Y} \]
and thus
\[\label{cal1083}
|\EE[fg]| \leq \Big( \int (h-1)^2 \,\dx{\Pr_X}\dx{\Pr_Y} \Big)^{1/2}
\Big( \int f^2 g^2 \,\dx{\Pr_X} \dx{\Pr_Y} \Big)^{1/2} \]
by the Cauchy--Schwarz inequality.
But the last factor in the right-hand side of~(\ref{cal1083}) is
\[ \Big( \int f^2(x) g^2(y) \,\dx{\Pr_X}[x] \dx{\Pr_Y}[y] \Big)^{1/2}
= \Big( \int f^2 \dx{\Pr_X} \Big)^{1/2}
\Big( \int g^2 \dx{\Pr_Y} \Big)^{1/2}
= \ecty(f) \ecty(g) ,\]
so that (\ref{for0255}) is proved.

You may also see \cite[Theorem~2.5]{Bryc} for an analogous result.
\end{proof}

\paragraph{Event necessary condition}

\label{parpro0699}\begin{Pro}[event necessary condition]\label{pro0699}
Let~$\mcal{F}$ and~$\mcal{G}$ be two $\sigma$-algebras.
If $\{ \mcal{F} : \mcal{G} \} \leq \epsilon$,
then for all events~$A\in\mcal{F}$ and~$B\in\mcal{G}$
with respective probabilities~$p$ and~$q$,
\[\label{for3346} \big| \Pr[A\cap B] - pq \big| \leq \epsilon \sqrt{p(1-p)q(1-q)} .\]

In particular, if there exists two non-trivial events $A\in\mcal{F}, B\in\mcal{G}$
which are equivalent (in the sense that $\Pr[A\bigtriangleup B] = 0$),
then $\{\mcal{F}:\mcal{G}\} = 1$.\footnote{%
The converse is not true: it can occur that $\{\mcal{F}:\mcal{G}\}=1$
but that no non-trivial events of~$\mcal{F}$ and~$\mcal{G}$ are equivalent.
A counterexample is the following: let~$(X_n)_{n\in\NN}$
be independent $\mathit{Bernoulli}(1/2)$ variables, and define independently
$Y_n = 1-X_n$ with probability~$\epsilon_n$ and $Y_n = X_n$ otherwise,
where $(\epsilon_n)_{n\in\NN}$ is a sequence of numbers such that
$0 < \epsilon_n \leq 1/2$ for all~$n$
and $\epsilon_n\stackrel{n\longto\infty}{\longto}0$.
Then the vectorial variables~$\vec{X}$ and~$\vec{Y}$ obviously satisfy $\{\vec{X}:\vec{Y}\} = 1$,
yet it is not hard to prove that no $\vec{X}$-measurable non-trivial event
is equivalent to a $\vec{Y}$-measurable one.}
\end{Pro}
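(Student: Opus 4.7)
The plan is to apply the definition of $\{\mcal{F}:\mcal{G}\}$ directly to the indicator functions $f = \1{A}$ and $g = \1{B}$, which are square-integrable and measurable w.r.t.\ $\mcal{F}$ and $\mcal{G}$ respectively. First I would compute $\EE[f] = p$, $\Var(f) = p(1-p)$ and similarly $\EE[g] = q$, $\Var(g) = q(1-q)$, which gives $\ecty(f) = \sqrt{p(1-p)}$ and $\ecty(g) = \sqrt{q(1-q)}$. A one-line computation yields $\Cov(f,g) = \EE[\1{A}\1{B}] - \EE[\1{A}]\EE[\1{B}] = \Pr[A\cap B] - pq$, so plugging into the bound $|\Cov(f,g)| \leq \{\mcal{F}:\mcal{G}\} \ecty(f)\ecty(g)$ (valid by Definition~\ref{def4393}, or by the equivalent formulation~(\ref{for0263}) applied to $f,g$) gives exactly~(\ref{for3346}).

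The only subtlety is that the definition of $\{\mcal{F}:\mcal{G}\}$ requires nonzero standard deviations, so if $p\in\{0,1\}$ or $q\in\{0,1\}$ the reasoning above does not apply directly; but in those degenerate cases both sides of~(\ref{for3346}) are zero by inspection (e.g.\ $p=0$ forces $\Pr[A\cap B]=0$, and $p=1$ forces $\Pr[A\cap B]=q$), so the inequality holds trivially. I expect this to be the only minor obstacle, and it is essentially a bookkeeping matter.

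For the second assertion, suppose $A\in\mcal{F}$ and $B\in\mcal{G}$ are non-trivial with $\Pr[A\bigtriangleup B] = 0$. Then $p = q = \Pr[A\cap B]$, so the left-hand side of~(\ref{for3346}) equals $p - p^2 = p(1-p)$ while the right-hand side equals $\epsilon\cdot p(1-p)$. Since $A$ is non-trivial we have $p(1-p) > 0$, and the inequality applied with $\epsilon = \{\mcal{F}:\mcal{G}\}$ forces $\{\mcal{F}:\mcal{G}\} \geq 1$; combined with the general upper bound $\{\mcal{F}:\mcal{G}\} \leq 1$, this yields equality.
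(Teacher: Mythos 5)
Your proof is correct and takes exactly the same route as the paper's one-line proof, which simply invokes the bound $|\Corr(f,g)| \leq \{\mcal{F}:\mcal{G}\}$ with $f=\1{A}$, $g=\1{B}$. You have just spelled out the moment computations, the trivial degenerate cases, and the (immediate) second assertion, all of which the paper leaves implicit.
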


\begin{proof}
It follows from~(\ref{for0263}) applied to~$\1{A}$ and~$\1{B}$.
\end{proof}

\subsection{Independent tensorization}\label{parIndependentTensorization}

Now we are turning to the basic tensorization theorem,
which will motivate~\S~\ref{parTensorization}:
\begin{Thm}[{\cite[Theorem~6.2]{CsakiFischer}}]\label{pro1252}
Let~$I$ be a set and let~$\vec{X}_I,\vec{Y_I}$ be vectorial variables.
Suppose all the pairs $(X_i,Y_i)$, $i\in I$, are independent, then
\[\label{for6560} \big\{ \vec{X} : \vec{Y} \big\} = \sup_{i\in I} \{ X_i : Y_i \} .\]
\end{Thm}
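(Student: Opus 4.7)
The easy inequality $\{\vec X_I:\vec Y_I\} \geq \sup_i \{X_i:Y_i\}$ is immediate from monotonicity of $\{\,\cdot\,:\,\cdot\,\}$ under enlargement of its arguments. The substance is the reverse bound. Thanks to Proposition~\ref{pro6559}, I may assume $I = \{1,\ldots,n\}$ is finite, and I propose to proceed by induction on $n$, the case $n=1$ being trivial. Write $\rho := \sup_i\{X_i:Y_i\}$, set $\vec X' := (X_2,\ldots,X_n)$, $\vec Y' := (Y_2,\ldots,Y_n)$, and note that the independence of all pairs $(X_i,Y_i)$ entails on the one hand that $(X_1,Y_1)$ is independent of $(\vec X',\vec Y')$, and on the other that the pairs $(X_i,Y_i)_{i\geq 2}$ are still independent among themselves, so that the induction hypothesis gives $\{\vec X' : \vec Y'\} \leq \rho$.

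Given $f \in \ldb(\vec X)$ and $g \in \ldb(\vec Y)$, the key is to perform a Hoeffding-type splitting along coordinate $1$: set $f_1 := \EE[f|X_1]$, $f^\perp := f - f_1$, and similarly $g_1, g^\perp$. Both pieces are centered, orthogonal in their respective $\ldb$-spaces, and satisfy $\EE[f^\perp|X_1]=0$, $\EE[g^\perp|Y_1]=0$; thus $\ecty(f)^2 = \ecty(f_1)^2+\ecty(f^\perp)^2$ and likewise for $g$. Expanding $\EE[fg]$ into four terms, I expect to show:
\begin{itemize}
\item The two cross terms $\EE[f_1 g^\perp]$ and $\EE[f^\perp g_1]$ vanish. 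For instance, conditioning on $Y_1$ and using that $(X_1,\vec X', \vec Y')$ is conditionally independent of pair~$1$ in the sense that makes the factorization $\EE[f_1(X_1) g^\perp(Y_1,\vec Y')|Y_1] = \EE[f_1(X_1)|Y_1]\cdot\EE[g^\perp|Y_1]$ hold, then using $\EE[g^\perp|Y_1] = 0$.
\item $|\EE[f_1 g_1]| \leq \{X_1:Y_1\}\,\ecty(f_1)\ecty(g_1) \leq \rho\,\ecty(f_1)\ecty(g_1)$ directly from Definition~\ref{def4393}.
\item For the remaining $|\EE[f^\perp g^\perp]|$, condition on $(X_1,Y_1)$: since $(\vec X',\vec Y')$ is independent of pair~$1$, the conditional law of $(\vec X',\vec Y')$ given $(X_1,Y_1)=(x_1,y_1)$ is just its unconditional law, and the partial functions $\vec X'\mapsto f^\perp(x_1,\vec X')$ and $\vec Y'\mapsto g^\perp(y_1,\vec Y')$ are centered for every fixed $(x_1,y_1)$ (by definition of $f^\perp$, $g^\perp$). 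The induction hypothesis thus bounds the conditional expectation by $\rho\cdot a(x_1)b(y_1)$, where $a(x_1)$, $b(y_1)$ are the conditional standard deviations; an application of the ordinary Cauchy--Schwarz inequality to $\EE[a(X_1)b(Y_1)]$, combined with $\EE[a(X_1)^2]=\ecty(f^\perp)^2$ and $\EE[b(Y_1)^2]=\ecty(g^\perp)^2$, yields $|\EE[f^\perp g^\perp]| \leq \rho\,\ecty(f^\perp)\ecty(g^\perp)$.
\end{itemize}

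Combining these three estimates and applying the vectorial Cauchy--Schwarz inequality $ab+cd \leq \sqrt{(a^2+c^2)(b^2+d^2)}$ to $\big(\ecty(f_1),\ecty(f^\perp)\big)$ and $\big(\ecty(g_1),\ecty(g^\perp)\big)$ yields $|\EE[fg]| \leq \rho\,\ecty(f)\ecty(g)$, closing the induction. The main delicacy will lie in item~(iii): one must justify carefully that the conditioning on $(X_1,Y_1)$ preserves both the centeredness of the partial functions (so that the induction hypothesis is actually applicable inside $\ldb$) and the mutual independence of the remaining pairs. Both hinge squarely on the full mutual independence assumption among the $(X_i,Y_i)$; the argument would break down if, say, only pairwise independence were assumed.
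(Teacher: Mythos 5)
Your proof is correct, and at bottom it uses the same mechanism as the paper's: an orthogonal ``martingale'' splitting of $f$ and $g$, vanishing of the cross terms, a conditional application of the one-pair decorrelation bound, and a final Cauchy--Schwarz step to recombine the pieces. The difference is one of packaging rather than substance. You split once along coordinate~$1$ (into $f_1 = \EE[f|X_1]$ and $f^\perp = f - f_1$) and close by induction; the paper instead unrolls the recursion completely, writing $f = \sum_{i=1}^N f_i$ with $f_i = f^{\mcal{F}_i} - f^{\mcal{F}_{i-1}}$ for $\mcal{F}_i = \bigvee_{j\leq i}\sigma(X_j,Y_j)$, observes that independence makes $f^{\mcal{F}_i}$ a function of $X_1,\dots,X_i$ alone (so that the decomposition coincides with yours, iterated), kills the off-diagonal $\EE[f_ig_j]$ by centering, and bounds each diagonal $\EE[f_ig_i]$ by conditioning on $\mcal{F}_{i-1}$. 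The two are the same argument; the paper deliberately writes the unrolled version because that $N$-term telescopic decomposition is exactly what gets reused in the harder tensorization theorems of Chapter~\ref{parTensorization} (Theorems~\ref{thm5750} and \ref{thm6413}), where the pairs are only partially independent and a clean one-step-plus-induction split is no longer available. Your version is shorter and arguably cleaner for the present statement, and the delicate point you flag at the end --- that the conditioning on $(X_1,Y_1)$ both preserves centeredness and leaves the remaining pairs mutually independent --- is precisely the right thing to worry about and is handled correctly by the full mutual independence hypothesis.
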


\begin{proof}
The simplest proof of Theorem~\ref{pro1252}
relies on the operator interpretation of correlations,
see e.g.\ the proof of~\cite[Theorem~1]{Witsenhausen}.
Here however I shall give a proof
based on decomposing functions of several variables into telescopic sums,
for this kind of arguments will be used again
in the proofs of the more general tensorization theorems of~\S~\ref{parTensorization}.

First, observe that the ``$\geq$'' inequality of~(\ref{for6560}) is trivial,
so we only have to prove the ``$\leq$'' inequality.
We denote $\epsilon_i \coloneqq {\{X_i:Y_i\}}$,
and to alleviate notation, $x_i$ will implicitly stand for an element in the range of~$X_i$,
resp.~$y_i$ for an element in the range of~$Y_i$.

By Proposition~\ref{pro6559}, we may assume that $I$ is finite,
say $I = \{1,\ldots,N\}$ for some $N\in\NN$.
Let~$f$ and~$g$ be resp.\ $\vec{X}_I$-measurable and $\vec{Y}_I$-measurable
centered $L^2$ real functions; our goal is to bound above $|\EE[fg]|$.

For~$i\in\{0,\ldots,N\}$, define $\mcal{F}_i \coloneqq \bigvee_{j\leq i} \sigma(X_j,Y_j)$.
I claim that, because of the independence hypothesis,
$f^{\mcal{F}_i}$ only depends on the values of~$X_1,\ldots,X_i$
and not on~$Y_1,\ldots,Y_i$, and similarly
that $g^{\mcal{F}_i}$ only depends on the values of~$Y_1,\ldots,Y_i$:
one can write indeed (in the case of~$f$)
\begin{multline} f^{\mcal{F}_i} (x_1,y_1,\ldots,x_i,y_i)
= \int f(x_1,\ldots,x_i,x_{i+1},\ldots,x_n) \,\dx\Pr[x_{i+1},\ldots,x_n|x_1,y_1,\ldots,x_i,y_i] \\
= \int f(x_1,\ldots,x_i,x_{i+1},\ldots,x_n) \,\dx\Pr[x_{i+1},\ldots,x_n] .
\end{multline}

Now, for~$i\in\{1,\ldots,N\}$, define
\[ f_i(x_1,\ldots,x_i) \coloneqq f^{\mcal{F}_i}(x_1,\ldots,x_i) - \EE[f|x_1,\ldots,x_{i-1}] ,\]
with a similar definition for~$g$.
One has $f = \sum_i f_i$, resp.\ $g = \sum_i g_i$,
and $f_i$ and~$g_i$ are $\mcal{F}_i$-measurable and centered w.r.t.~$\mcal{F}_{i-1}$
(that is, $\EE[f_i|\mcal{F}_{i-1}], \EE[g_i|\mcal{F}_{i-1}] \equiv 0$), so
\[ \Var f = \sum_i \Var f_i ,\]
resp.\ $\Var g = \sum_i \Var g_i$.

We expand:
\[\label{for6921} \EE[fg] = \sum_{(i,j)\in I\times I} \EE[f_ig_j] .\]
In the right-hand side of (\ref{for6921}), if $i\neq j$ then
$\EE[f_ig_j] = 0$ since if, say, $i<j$,
$f_i$ is $\mcal{F}_i$-measurable while $g_j$ is centered w.r.t.~$\mcal{F}_{j-1} \supset \mcal{F}_i$.
So (\ref{for6921}) turns into:
\[ \EE[fg] = \sum_{i\in I} \EE[f_ig_i] .\]

Writing the law of total expectation,
\[ \EE[f_ig_i] = \int \EE[f_ig_i|x_1,y_1,\ldots,x_{i-1},y_{i-1}]
\, \dx\Pr[x_1,y_1,\ldots,x_{i-1},y_{i-1}] .\]
But, as we noticed before,
under~$\Pr[\Bcdot|\ab x_1,y_1,\ldots,x_{i-1},y_{i-1}]$,
$f_i$ only depends on~$X_i$ and~$g_i$ only depends on~$Y_i$.
Moreover, because of the independence property,
the law of~$(X_i,Y_i)$ is the same
under~$\Pr[\Bcdot|\ab x_1,y_1,\ldots,x_{i-1},y_{i-1}]$ as under~$\Pr$,
so under~$\Pr[\Bcdot|\ab x_1,y_1,\ldots,x_{i-1},y_{i-1}]$, $f_i$ and~$g_i$ are centered and $\epsilon_i$-independent.
Thus
\begin{multline}
|\EE[f_ig_i]| \leq
\epsilon_i \int \ecty(f_i|x_1,y_1,\ldots,x_{i-1},y_{i-1}) \ecty(g_i|x_1,y_1,\ldots,x_{i-1},y_{i-1})
\,\dx\Pr[x_1,y_1,\ldots,x_{i-1},y_{i-1}] \\
\footrel{\text{CS}}{\leq} \epsilon_i \sqrt{\int \Var(f_i|x_1,y_1,\ldots,x_{i-1},y_{i-1}) \,\dx\Pr[x_1,y_1,\ldots,x_{i-1},y_{i-1}]}
\sqrt{\textit{the same for~$g$}} \\
\leq \epsilon_i \ecty(f_i)\ecty(g_i) \footnotemark.
\end{multline}
\footnotetext{The last inequality is actually an equality,
because $f_i$ and~$g_i$ are centered w.r.t.~$\mcal{F}_{i-1}$.}
Summing over $i$,
\begin{multline} |\EE[fg]| \leq \sum_{i\in I} \epsilon_i \ecty(f_i)\ecty(g_i)
\leq \sup_{i\in I} \epsilon_i \cdot \sum_{i\in I} \ecty(f_i)\ecty(g_i) \\
\footrel{\text{CS}}{\leq} \sup_{i\in I} \epsilon_i \cdot \sqrt{\sum_{i\in I}\Var(f_i)} \sqrt{\sum_{i\in I}\Var(g_i)}
= \sup_{i\in I} \epsilon_i \cdot \ecty(f) \ecty(g),
\end{multline}
which is the desired bound.
\end{proof}

\section{Examples}\label{parExamples}

\subsection{Finite-ranged variables}

\begin{Pro}\label{pro3798}
Let~$X$ and~$Y$ be random variables with finite ranges
resp.~$\{1,\ldots,N\}$ and~$\{1,\ldots,M\}$,
and denote $p_a \coloneqq \Pr[X=a], \enskip p^b \coloneqq {\Pr[Y=b]}, \enskip p_a^b \coloneqq \Pr[{X=a} \enskip \text{and} \enskip Y=b]$.
Then ${\{X:Y\}} = \VERT \Pi \VERT$, where $\Pi$ is the $N\times M$ matrix with general entry
\[\label{for7497} \Pi_{ab} = \frac{p_a^b-p_ap^b}{\sqrt{p_ap^b}} .\]
\end{Pro}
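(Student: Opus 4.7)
The natural approach is to recast $\{X:Y\}$ as an operator norm on finite-dimensional Euclidean spaces. I would start by discarding (if need be) the values of $X$ or $Y$ having zero probability, so that all $p_a$ and $p^b$ are strictly positive. Then I identify a measurable function $f$ of $X$ with the vector $(f(a))_{a=1}^N \in \RR^N$, and perform the weighted change of variables $\tilde f_a \coloneqq \sqrt{p_a}\, f(a)$ (and similarly $\tilde g_b \coloneqq \sqrt{p^b}\, g(b)$). This transforms the $L^2$ norm of $f$ into the standard Euclidean norm of $\tilde f$, and turns the centering condition $\EE[f] = 0$ into the orthogonality $\tilde f \perp u$, where $u \coloneqq (\sqrt{p_a})_{a=1}^N$; likewise, centering $g$ means $\tilde g \perp v$, with $v \coloneqq (\sqrt{p^b})_{b=1}^M$. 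Note that $\|u\| = \|v\| = 1$.

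Next I compute the covariance in these new coordinates: $\EE[fg] = \sum_{a,b} p_a^b f(a) g(b) = \tilde f^{\msf{T}} A \tilde g$, where $A$ is the $N \times M$ matrix with entries $A_{ab} \coloneqq p_a^b / \sqrt{p_a p^b}$. A direct summation using the marginal identities $\sum_b p_a^b = p_a$ and $\sum_a p_a^b = p^b$ gives $Av = u$ and $\T{A} u = v$; hence, setting $\Pi \coloneqq A - u\T{v}$, one has $\Pi v = 0$ and $\T{\Pi} u = 0$. Moreover $\tilde f^{\msf{T}} A \tilde g = \tilde f^{\msf{T}} \Pi \tilde g$ whenever $\tilde f \perp u$ and $\tilde g \perp v$, because the rank-one correction $u\T{v}$ contributes $(\T{\tilde f} u)(\T{v} \tilde g) = 0$ on such pairs.

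Putting these together, $\{X:Y\}$ equals $\sup |\tilde f^{\msf{T}} \Pi \tilde g|$ taken over unit vectors with $\tilde f \perp u$ and $\tilde g \perp v$. It remains to identify this supremum with $\VERT \Pi \VERT$: decomposing an arbitrary unit $\tilde g \in \RR^M$ as $\alpha v + \tilde g'$ with $\tilde g' \perp v$ yields $\Pi \tilde g = \Pi \tilde g'$ while $\|\tilde g\| \geq \|\tilde g'\|$, so the operator norm of $\Pi$ is attained on $v^\perp$; a symmetric argument on the left using $\T{\Pi} u = 0$ restricts the outer supremum to $u^\perp$. This yields $\{X:Y\} = \VERT \Pi \VERT$. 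The proof is essentially a calculation, so there is no real obstacle; the only subtle point is verifying the identities $Av = u$ and $\T{A} u = v$, which boil down to the marginal sums, and ensuring that zero-probability atoms can be harmlessly removed before the change of variables.
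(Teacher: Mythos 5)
Your proof is correct, and it is essentially the same computation as the paper's: the weighted change of variables $\tilde f_a = \sqrt{p_a}\,f(a)$, $\tilde g_b = \sqrt{p^b}\,g(b)$ is the paper's $h = I_Y^{1/2}g$ done symmetrically on both sides, and the matrix $\Pi = A - u\T{v}$ you obtain has exactly the entries $(p_a^b - p_a p^b)/\sqrt{p_a p^b}$ of the paper's $\Pi = I_X^{-1/2}PI_Y^{-1/2} - I_X^{1/2}1_N\T{(1_N)}PI_Y^{-1/2}$. The only organizational difference is that the paper invokes Proposition~\ref{pro7764} to identify $\{X:Y\}$ with the norm of $\pi_{XY}$ acting on the $\ldb$ spaces — so centeredness is built into the domain and codomain — whereas you work directly with the bilinear form $\tilde f^{\msf{T}}\Pi\tilde g$ restricted to $u^\perp\times v^\perp$ and therefore need the closing step showing that this constrained supremum coincides with the unconstrained operator norm; your argument for that step (decomposing $\tilde g = \alpha v + \tilde g'$ and using $\Pi v = 0$, $\T{\Pi}u = 0$) is sound and makes the proof self-contained at the cost of a few extra lines.
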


\begin{Rmk}\label{rmk9549}
In particular, if both $X$ and~$Y$ have range $\{1,2\}$,
using the same notation as before, one has
\[\label{f0699} \{X:Y\} = \frac{|p_a^b-p_ap^b|}{\sqrt{p_1p_2p^1p^2}} ,\]
where the right-hand side of~(\ref{f0699}) does not depend on the choice of
$a,b \in \{1,2\}$.
\end{Rmk}

\begingroup\def\proofname{Proof of Proposition~\ref{pro3798}}\begin{proof}
By Proposition~\ref{pro7764}, ${\{X:Y\}}$ is the norm of the operator
$\pi_{XY} \colon \ldb(Y) \to \ldb(X)$.
Here it will be more convenient to work in $L^2$ spaces than in $\ldb$ spaces,
so we rather compute the norm of
\[ \begin{array}{rrcl} \tilde{\pi} \colon & L^2(Y) & \to & L^2(X) \\
& g & \mapsto & g^X - \EE[g] , \end{array} \]
which is obviously the same as $\VERT\pi_{XY}\VERT$.

A function~$g\in L^2(Y)$ can be identified with a $M$-dimensional vector also denoted by~$g$,
and similarly $\tilde{\pi}g \in L^2(X)$ can be identified with a $N$-dimensional vector.
Denote $P \coloneqq (\!( p_a^b )\!)_{a,b} \ab \in \RR^{N\times M}$,
$I_X \coloneqq (\!( \delta_{aa'} p_a )\!)_{a,a'} \ab \in \RR^{N\times N}$,
$I_Y \coloneqq (\!( \delta_{bb'} p^b )\!)_{b,b'} \ab \in \RR^{M\times M}$,
$1_N \coloneqq 1^{\{1,\ldots,N\}} \ab \in \RR^N$.
Applying Bayes' formula yields that
\[ \tilde{\pi} g \, = \, I_X^{-1} \, P \, g \, - \, 1_N \, \T{(1_N)} \, P \, g .\]
Now, $\|g\|_{L^2(Y)} = \| I_{Y}^{1/2} g \|$, resp.\ 
$\|\tilde{\pi} g\|_{L^2(X)} = \| I_{X}^{1/2} (\tilde{\pi}g) \|$, so:
\[\label{for7396} \{X:Y\} = \sup_{g\neq 0} \frac{
\big\| \, \big( \, I_X^{-1/2} \, P \, - \, I_X^{1/2} \, 1_N \, \T{(1_N)} \, P \, \big) \, g \, \big\|}
{\| \, I_{Y}^{1/2} \, g \, \|} .\]
Performing the change of variables $h = I_{Y}^{1/2} g$, (\ref{for7396}) becomes
$\{ X:Y \} = \sup_{h\neq 0} \ab  \|\Pi h\| \div \|h\| = \VERT \Pi \VERT$,
with
\[ \Pi = I_X^{-1/2} \, P \, I_Y^{-1/2} - \, I_X^{1/2} \, 1_N \, \T{(1_N)} \, P \, I_Y^{-1/2} ,\]
which is Equation~(\ref{for7497}) indeed.
\end{proof}\endgroup

\begin{Rmk}
With the same kind of proof, there is even a similar proposition to calculate $\{X,Y\}$
if \emph{either} $X$ or $Y$ has finite range,
provided you know (in the case it is $X$ which has finite range)
all the~$\Pr[X=x]$ and all the
\[ \int_y \frac{\dx\Pr[Y=y|X=x]\,\dx\Pr[Y=y|X=x']}{\dx\Pr[Y=y]} .\]
\end{Rmk}

\begin{Rmk}
In the case $X$ or~$Y$ has range of cardinality~$2$,
applying Proposition~(\ref{pro3798}) yields that ${\{X:Y\}}^2$ depends smoothly on~$\Law(X,Y)$.
Yet this is not the case in general:
in fact, maximal correlations are nothing more than a particular case of operator norms
(cf.~\S~\ref{parOperatorInterpretation}), and thus they have the same behaviour%
—they are a continuous function of the parameters,
but they can have some $\mcal{C}^1$~singularity.
The following example exhibits such a singularity.
\end{Rmk}

\begin{Xpl}
Suppose both $X$ and~$Y$ have range $\{1,2,3\}$ and
\[\label{for3787} \big(\!\big( \Pr[X=a\enskip\text{and}\enskip Y=b] \big)\!\big)_{a,b}
= \begin{pmatrix} 2/9 & 1/18 & 1/18 \\
1/18 & 2/9+\alpha & 1/18-\alpha \\
1/18 & 1/18-\alpha & 2/9+\alpha \end{pmatrix} \]
for a parameter $\alpha \in [-2/9,1/18]$.
Then the matrix $\Pi$ defined by~(\ref{for7497}) is
\[ \Pi = \begin{pmatrix} 1/3 & -1/6 & -1/6 \\
-1/6 & 1/3+3\alpha & -1/6-3\alpha \\
-1/6 & -1/6-3\alpha & 1/3+3\alpha \end{pmatrix}
= U \begin{pmatrix} 1/2+6\alpha & 0 & 0 \\
0 & 1/2 & 0 \\ 0 & 0 & 0 \end{pmatrix} U^{-1} ,\]
with
\[ U = \begin{pmatrix} 0 & -2/\sqrt{6} & 1/\sqrt{3} \\
1/\sqrt{2} & 1/\sqrt{6} & 1/\sqrt{3} \\
-1/\sqrt{2} & 1/\sqrt{6} & 1/\sqrt{3} \end{pmatrix} \]
being orthogonal.
So by Proposition~\ref{pro3798}, ${\{X:Y\}} = 1/2 + 6\alpha_+$.
\end{Xpl}

\subsection{Gaussian variables}

The following theorem, which I will frequently use in the sequel, computes exactly the Hilbertian correlation between two jointly Gaussian variables:
\begin{Thm}[\cite{Lancaster,KolmogorovRozanov}]\label{pro1857}
Let~$(\vec{X},\vec{Y})$ be an $(N+M)$-dimensional Gaussian vector
whose covariance matrix writes blockwise
\[ \Var\big( \vec{X},\vec{Y} \big) = \begin{pmatrix} \mathbf{I}_N & C \\
\T{C} & \mathbf{I}_M \end{pmatrix}, \]
then $\{\vec{X}:\vec{Y}\} = \VERT C \VERT$.
\end{Thm}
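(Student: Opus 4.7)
My plan is to split the proof into three steps: an easy lower bound by testing on linear functions, a reduction of the upper bound to the one-dimensional case via singular value decomposition and independent tensorization (Theorem~\ref{pro1252}), and a computation in the one-dimensional case via Hermite polynomials.

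For the lower bound, I would simply test the definition on linear functions. Given unit vectors $a\in\RR^N$ and $b\in\RR^M$, the functions $f(\vec X)=\T a\vec X$ and $g(\vec Y)=\T b\vec Y$ are centered with unit variance and $\Cov(f,g)=\T a\,C\,b$. Taking the supremum over $a,b$ yields $\{\vec X:\vec Y\}\geq\VERT C\VERT$ immediately.

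For the upper bound, I would introduce the singular value decomposition $C=U\Sigma V^{\T{}}$ with $U\in O(N)$, $V\in O(M)$, and $\Sigma$ rectangular diagonal of entries $\sigma_1\geq\sigma_2\geq\cdots\geq0$ (so $\VERT C\VERT=\sigma_1$). Setting $\vec X':=\T U\vec X$ and $\vec Y':=\T V\vec Y$, invertibility of $U,V$ gives $\sigma(\vec X')=\sigma(\vec X)$ and $\sigma(\vec Y')=\sigma(\vec Y)$, so $\{\vec X:\vec Y\}=\{\vec X':\vec Y'\}$. The pair $(\vec X',\vec Y')$ is jointly Gaussian with covariance $\begin{pmatrix}\mathbf I_N&\Sigma\\\T\Sigma&\mathbf I_M\end{pmatrix}$, which means that all cross-covariances vanish except between matching components $X'_i$ and $Y'_i$. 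Gathering each matched pair into a ``block'' (and treating unmatched components, if $N\neq M$, as blocks paired with a trivial variable), I obtain a family of jointly Gaussian blocks which are pairwise uncorrelated, hence independent. Theorem~\ref{pro1252} then applies to give $\{\vec X':\vec Y'\}=\sup_i\{X'_i:Y'_i\}$, with the unmatched indices contributing correlation $0$.

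It remains to prove that, for $(X,Y)$ centered jointly Gaussian with unit variances and correlation $\sigma$, one has $\{X:Y\}=|\sigma|$. The inequality $\{X:Y\}\geq|\sigma|$ follows by taking $f=X$, $g=Y$. For the converse, I would expand any centered $f\in L^2(X)$ and $g\in L^2(Y)$ in the orthonormal basis of normalized Hermite polynomials: $f=\sum_{n\geq 1}a_n H_n(X)$, $g=\sum_{n\geq 1}b_n H_n(Y)$. The Mehler identity $\EE[H_n(X)H_m(Y)]=\sigma^n\delta_{nm}$ then gives
\[ |\EE[fg]|=\Big|\sum_{n\geq 1}a_nb_n\sigma^n\Big|\leq|\sigma|\Big(\sum_n a_n^2\Big)^{1/2}\Big(\sum_n b_n^2\Big)^{1/2}=|\sigma|\,\ecty(f)\,\ecty(g), \]
using $|\sigma|^n\leq|\sigma|$ for $n\geq1$ and Cauchy--Schwarz. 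Combining the three steps yields $\{\vec X:\vec Y\}=\sigma_1=\VERT C\VERT$.

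The main technical point is the one-dimensional Gaussian case, since the rest of the argument is essentially linear algebra combined with Theorem~\ref{pro1252}. The Mehler identity is classical, but alternatives are available—one can invoke the Ornstein--Uhlenbeck semigroup interpretation of $\pi_{YX}$ (via the representation $Y=\sigma X+\sqrt{1-\sigma^2}\,Z$ with $Z\perp X$) and recall that its spectrum on $\ldb$ is $\{\sigma^n:n\geq1\}$, whose modulus is bounded by $|\sigma|$.
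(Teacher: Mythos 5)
Your proof is correct and takes a genuinely different route from the paper's argument. The paper gives a short sketch directly in dimension $N$: it observes that $\pi_{XYX}$ is the transition operator of a multidimensional AR(1) (Ornstein--Uhlenbeck-type) process and invokes, as a known black-box property, that the top quasi-eigenvector of that operator is linear, which forces the supremum defining $\{\vec X:\vec Y\}$ to be attained on linear functions. You instead first reduce to dimension one via the singular value decomposition $C=U\Sigma V^{\T{}}$ — exploiting that orthogonal changes of variables preserve the $\sigma$-algebras and the unit marginals — then invoke the independent tensorization Theorem~\ref{pro1252} (legitimate, since for jointly Gaussian blocks uncorrelated implies independent), and finally settle the one-dimensional case by expanding in the Hermite orthonormal basis and using $\EE[H_n(X)H_m(Y)]=\sigma^n\delta_{nm}$. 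This buys a fully self-contained argument: the linearity of the extremizer, which the paper asserts by citation, drops out of the explicit bound $|\sigma|^n\leq|\sigma|$ for $n\geq 1$. The two routes are of course related — the Hermite polynomials are exactly the eigenfunctions of the Ornstein--Uhlenbeck operator invoked by the paper — but your SVD-plus-tensorization reduction avoids any appeal to unproved spectral properties of the multidimensional AR(1) chain, at the modest cost of relying on Theorem~\ref{pro1252}, which the paper has already proved independently.
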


\begin{Rmk}
In other words, Theorem~\ref{pro1857} tells that in the Gaussian case,
the supremum in~(\ref{for3878}) defining~${\{\vec{X}:\vec{Y}\}}$ can be restricted to \emph{linear} functions~$f$ and~$g$.
\end{Rmk}

\begin{Rmk}\label{rmk4919}
By a linear change of variables, Theorem~\ref{pro1857} actually allows us to compute%
~${\{\vec{X}:\vec{Y}\}}$ for \emph{any} Gaussian vector~$(\vec{X},\ab\vec{Y})$.
\end{Rmk}

\begingroup\def\proofname{Proof of Theorem~\ref{pro1857}}
\begin{proof}
I recall a (sketch of) proof for the sake of completeness.
By the properties of Gaussian vectors,
the law of~$Y$ knowing that ${X=x}$ [I dropped the vector arrows]
is the normal law~${\mcal{N}(\mbf{I}_M-\T{C}C)}+\T{C}x$,
and similarly the law of~$X$ knowing that ${Y=y}$
is the normal law~${\mcal{N}(\mbf{I}_N-C\T{C})}+Cy$.
Consequently, the operator~$\pi_{XYX}$ is the generator
of the following random walk on $\RR^N$
(whose equilibrium measure is the standard Gaussian law):
when one is at~$x$, they jump to a point distributed according to the normal law
$\mcal{N}(\mbf{I}_N-C\T{C}C\T{C})+C\T{C}x$.
This walk is a multidimensional AR($1$)-process (see.~\cite[\S~2.6]{AR(1)}),
whose properties are perfectly known;
in particular, the eigenvalue~$f$ of~$\pi_{XYX}$ responsible for its spectral radius
will be a linear function,
so we only have to consider linear~$f$ in the supremum~(\ref{for3878}).
For such~$f$, the optimal~$g$ will also be linear
by the Gaussian nature of the system,
so in the end $\{\vec{X}:\vec{Y}\}$ is equal to~$\VERT C\VERT$.
\end{proof}\endgroup

\subsection{Miscellaneous examples}

\paragraph{Random conditional laws}

\begin{Xpl}\label{xpl5689}
Let~$0<p<n$ be integers. We consider a random variable $(X,Y)$ for which
$Y$ has range~$\mcal{Y} \coloneqq \{1,\ldots,n\}$ and $X$ has range~$\mcal{X} \coloneqq \mathfrak{P}_p(\mcal{Y})$,
$\mathfrak{P}_p(\mcal{Y})$ denoting the set of subsets $y \subset \mcal{Y}$ with cardinality $p$
—so, $\#\mcal{X} = \binom{n}{p}$ and $\#\mcal{Y} = n$ —,
and we take the law of~$(X,Y)$ uniform on the pairs $(x,y)$ such that $y\in x$:
see Figure~\ref{fig5708}.
\begin{figure}
\centering
\begin{tikzpicture}[x=.5cm,y=1.cm,cm={0,-1,-1,0,(0,0)}]
\draw (0,0) -- (10,0) -- (10,5) -- (0,5) -- cycle;
\node[left] at (5,5) {$\mcal{X}$};
\node[above] at (0,2.5) {$\mcal{Y}$};
\draw[step=1,very thin] (0,0) grid (10,5);
\fill (0,4) -- +(1,0) -- +(1,1) -- +(0,1) -- cycle;
\fill (0,3) -- +(1,0) -- +(1,1) -- +(0,1) -- cycle;
\fill (1,4) -- +(1,0) -- +(1,1) -- +(0,1) -- cycle;
\fill (1,2) -- +(1,0) -- +(1,1) -- +(0,1) -- cycle;
\fill (2,4) -- +(1,0) -- +(1,1) -- +(0,1) -- cycle;
\fill (2,1) -- +(1,0) -- +(1,1) -- +(0,1) -- cycle;
\fill (3,4) -- +(1,0) -- +(1,1) -- +(0,1) -- cycle;
\fill (3,0) -- +(1,0) -- +(1,1) -- +(0,1) -- cycle;
\fill (4,3) -- +(1,0) -- +(1,1) -- +(0,1) -- cycle;
\fill (4,2) -- +(1,0) -- +(1,1) -- +(0,1) -- cycle;
\fill (5,3) -- +(1,0) -- +(1,1) -- +(0,1) -- cycle;
\fill (5,1) -- +(1,0) -- +(1,1) -- +(0,1) -- cycle;
\fill (6,3) -- +(1,0) -- +(1,1) -- +(0,1) -- cycle;
\fill (6,0) -- +(1,0) -- +(1,1) -- +(0,1) -- cycle;
\fill (7,2) -- +(1,0) -- +(1,1) -- +(0,1) -- cycle;
\fill (7,1) -- +(1,0) -- +(1,1) -- +(0,1) -- cycle;
\fill (8,2) -- +(1,0) -- +(1,1) -- +(0,1) -- cycle;
\fill (8,0) -- +(1,0) -- +(1,1) -- +(0,1) -- cycle;
\fill (9,1) -- +(1,0) -- +(1,1) -- +(0,1) -- cycle;
\fill (9,0) -- +(1,0) -- +(1,1) -- +(0,1) -- cycle;
\end{tikzpicture}
\caption{Schematic representation of Example~\ref{xpl5689}
for $n=5$ and $p=2$.}\label{fig5708}
\end{figure}
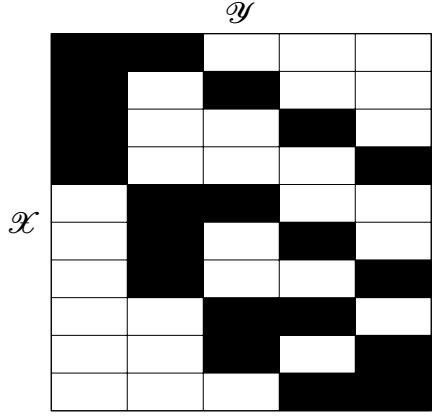

When considered as operators on~$L^2$ spaces, it is obvious that
$\pi_{XY}$ and~$\pi_{YX}$ are characterized by
$\big(\pi_{XY}f\big)(x) = p^{-1}\sum_{y\in x} f(y)$,
resp.\ $\big(\pi_{YX}g\big)(y) = \binom{n-1}{p-1}^{-1} \sum_{y\in x} g(x)$,
so that
\[ \big(\pi_{YXY}f\big)(y)
= \frac{1}{p} f(y) + \sum_{y'\neq y} \frac{p-1}{p(n-1)} f(y') .\]

Thus, on~$\ldb(Y)$, $\pi_{YXY}$ is nothing but the scalar operator $\frac{n-p}{p(n-1)}\mbf{I}$,
and therefore
\[\label{for7767} {\{X:Y\}} = \sqrt{\frac{n-p}{p(n-1)}} \]
by Proposition~\ref{pro7764} and Remark~\ref{rmk1265}.
\end{Xpl}

\paragraph{Weakly coupled particles}

\begin{Pro}\label{p8931}
Let~$V_1$ and~$V_2$ be potentials on~$\RR^n$, $n\geq 1$,
i.e.\ the~$V_i$ are real-valued measurable functions on~$\RR^n$
with $\int_{\RR^n} e^{-V_i(x)} \,\dx{x} < \infty$.
For $i\in\{1,2\}$, denote by~$\PP_i$ the probability measure on~$\RR^n$
proportional to~$e^{-V_i(x)}\dx{x}$, which is to be thought as
the law of the position $X_i$ of a particle $i$ subjected to the potential $V_i$.
Denote $\PP_\otimes \coloneqq \PP_1\otimes\PP_2$, which is the joint law of~$(X_1,X_2)$
in absence of interaction.

Now let~$W$ be an interaction potential on~$(\RR^n)^2$
such that $e^{\big{-[V_1(x_1)+V_2(x_2)+W(x_1,x_2)]}}$ is integrable;
denote by~$\PP$ the probability measure on~$(\RR^n)^2$ proportional to
$e^{-[V_1+V_2+W]} \,\dx{x_1}\dx{x_2}$,
which is the joint law of~$(X_1,X_2)$ in presence of interaction potential $W$.

Then, under the law $\PP$,
\[\label{f8367} \{X_1:X_2\} \leq \frac{\ecty_\otimes(e^{-W})}{\EE_\otimes[e^{-W}]} .\]
\end{Pro}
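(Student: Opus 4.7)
The strategy is to invoke Proposition~\ref{pro0651}. Under~$\PP$, the joint law of $(X_1,X_2)$ admits the density $h = e^{-W}/\EE_\otimes[e^{-W}]$ with respect to~$\PP_\otimes$, and a direct computation yields $\int(h-1)^2\,d\PP_\otimes = \Var_\otimes(e^{-W})/\EE_\otimes[e^{-W}]^2$, which is exactly the square of the claimed bound. The catch is that Proposition~\ref{pro0651} requires the density to be taken with respect to $\Law_{X_1}\otimes\Law_{X_2}$, the product of the marginals of~$\PP$; these marginals, call them $\PP_1^W$ and $\PP_2^W$, do not coincide with the reference marginals $\PP_1,\PP_2$ unless $W$ happens to preserve them.

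My plan is therefore to rerun the Cauchy--Schwarz argument of Proposition~\ref{pro0651} directly against~$\PP_\otimes$. Writing $\phi=e^{-W}$, $Z=\EE_\otimes[\phi]$ and $\alpha$ for the claimed bound, for $f\in\ldb_\PP(X_1)$ and $g\in\ldb_\PP(X_2)$ I would decompose
\[
  Z\,\EE_\PP[fg] \;=\; \Cov_\otimes(fg,\phi) \,+\, Z\,\EE_{\PP_1}[f]\,\EE_{\PP_2}[g],
\]
using the independence $X_1\perp X_2$ under~$\PP_\otimes$ to split $\EE_\otimes[fg]=\EE_{\PP_1}[f]\EE_{\PP_2}[g]$. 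The crucial point is that the $\PP$-centering $\EE_\PP[f]=0$ is \emph{equivalent} to $f\perp a$ in $L^2(\PP_1)$, where $a(x_1)\coloneqq\EE_{\PP_2}[\phi(x_1,\cdot)]$; this unpacks as $\EE_{\PP_1}[f] = -Z^{-1}\Cov_\otimes(f,\phi)$, and analogously for~$g$. Substituting and applying Cauchy--Schwarz to each covariance against~$\phi$ (using $\ecty_\otimes(\phi)=Z\alpha$) would yield an estimate of the shape $|\EE_\PP[fg]|\leq \alpha\,\ecty_{\PP_1}(f)\,\ecty_{\PP_2}(g)$, at least in the regime $\alpha\leq 1$ (the case $\alpha>1$ being vacuous since $\{X_1:X_2\}\leq 1$).

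The main obstacle I foresee is replacing the norms $\ecty_{\PP_1},\ecty_{\PP_2}$ on the right-hand side by the intrinsic $\ecty_\PP$, which is delicate precisely because the two families of marginals differ. The tool I would reach for is an ANOVA-type decomposition $\phi - Z = A(x_1) + B(x_2) + C(x_1,x_2)$ with $A,B$ centered under the reference marginals and $C$ doubly centered, satisfying the energy identity $\|A\|^2+\|B\|^2+\|C\|^2 = Z^2\alpha^2$. The orthogonality $f\perp a = Z+A$ lets one rewrite the product $\EE_{\PP_1}[f]\EE_{\PP_2}[g]$ purely in terms of $\langle f,A\rangle$ and $\langle g,B\rangle$, which combines with $\langle fg,C\rangle$ into a single bilinear form whose operator norm is controlled by the elementary inequality $Z\|C\|+\|A\|\|B\| \leq Z^2\alpha$ (valid when $\alpha\leq 1$, by a two-dimensional Cauchy--Schwarz on the pairs $(Z,\|A\|)$ and $(\|C\|,\|B\|)$ together with the energy identity). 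Tracking how this operator norm interacts with the $\PP$-inner product, rather than the $\PP_\otimes$-inner product, is where the subtlety lies; once that bookkeeping is done cleanly, the bound $\{X_1:X_2\}\leq\alpha$ should drop out.
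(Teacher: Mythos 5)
Your diagnosis is exactly right, and you have in fact spotted a real flaw in the paper: its proof is precisely the one-liner you were suspicious of, reading ``$\PP$ has density $h = e^{-W}/\EE_\otimes[e^{-W}]$ w.r.t.~$\PP_\otimes$, whence the result by Proposition~\ref{pro0651}.'' As you observe, Proposition~\ref{pro0651} needs the density to be taken with respect to the product $\Law(X_1)\otimes\Law(X_2)$ of the \emph{true} marginals of the joint law, and $\PP_\otimes = \PP_1\otimes\PP_2$ is not that as soon as $W$ shifts the marginals. So the published proof does not establish the claim as written.

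Unfortunately your sketched repair cannot close the gap, because the inequality as stated is actually false. Take $\PP_1 = \PP_2$ uniform on $\{0,1\}$ (smooth this into narrow double-well potentials on $\RR$ if you want honest Lebesgue densities) and $\phi = e^{-W}$ with $\phi(0,0)=0.4$, $\phi(0,1)=\phi(1,0)=1.8$, $\phi(1,1)=\eta$ small. As $\eta\to 0$ one has $\EE_\otimes[\phi]=1$ and $\EE_\otimes[\phi^2]=1.66$, so the claimed bound equals $\ecty_\otimes(\phi)/\EE_\otimes[\phi]=\sqrt{0.66}\approx 0.8124$; but $\PP$ puts mass $(0.1,\,0.45,\,0.45,\,0)$ on the four points, the Pearson correlation of the two binary coordinates is $-0.2025/0.2475=-9/11$, and hence $\{X_1:X_2\}=9/11\approx 0.8182 > \sqrt{0.66}$. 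The ``obstacle'' you flag at the end --- passing from $\ecty_{\PP_1},\ecty_{\PP_2}$ to the intrinsic $\ecty_\PP$ --- is thus a genuine obstruction, not bookkeeping: in this example $\Var_\PP(X_1)=0.2475 < 0.25 = \Var_{\PP_1}(X_1)$, and that norm deflation is exactly what pushes the correlation past the bound. Your ANOVA identity and the two-dimensional Cauchy--Schwarz on $(Z,\|A\|)$, $(\|C\|,\|B\|)$ are fine, but they only yield an inequality against the $\PP_\otimes$-norms, and there is no general comparison converting those to $\PP$-norms. (The proposition is correct in the special case where $W$ leaves the reference marginals unchanged, for then $\PP_1,\PP_2$ \emph{are} the marginals of $\PP$ and Proposition~\ref{pro0651} applies verbatim.)
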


\begin{proof}
The law $\PP$ has density $h = e^{-W} \div \EE_\otimes[e^{-W}]$ w.r.t.~$\PP_\otimes$,
whence the result by Proposition~\ref{pro0651}.\linebreak[1]\strut
\end{proof}

\begin{Rmk}
Proposition~\ref{p8931} gives a rigorous sense to the intuition that two weakly coupled particles
must have nearly independent positions.
This is valid in a quite general setting,
in particular, $W$ does not have to be bounded.
\end{Rmk}

\paragraph{Non-reversible Markov chain}

\begin{Xpl}
Here is a example showing that the inequality in Proposition~\ref{cor0706}
is strict in general.
Consider the stationary Markov chain on~$\{1,2,3\}$ defined by
\[ P = \big(\!\big( \Pr[X_{k+1}=a | X_k=b] \big)\!\big)_{ab} =
\begin{pmatrix} 0&1/2&1 \\ 1&0&0 \\ 0&1/2&0 \end{pmatrix} ,\]
which has equilibrium measure~$(2/5,2/5,1/5)$.
Diagonalizing $P$ shows that
\[ P^t = \begin{pmatrix} 2/5&2/5&2/5 \\ 2/5&2/5&2/5 \\ 1/5&1/5&1/5 \end{pmatrix}
+ O(2^{-t/2}) ,\]
whence $\{ X_k:X_{k+t} \} = O(2^{-t/2})$ when~$t\longto+\infty$
by Proposition~\ref{pro3798}.
Yet ${\{X_k:X_{k+1}\}} = 1$, since the non-trivial events~${\{X_k = 1\}}$
and~${\{X_{k+1}=2\}}$ are equivalent (cf.\ Proposition~\ref{pro0699}).
\end{Xpl}

\paragraph{Hyperplanes in Ising's model}\label{parHyperplanesInIsingsModel}

As I told in Chapter~\ref{parMotivation},
the initial motivation of this work
was to understand the presence of $\rho$-mixing
in Ising's model (cf.\ \S~\ref{parIsingsModel});
in particular, I intended to re-get a result similar to Theorem~\ref{c6177}
by a more `natural' method.
That shall be achieved indeed in \S~\ref{parBackToIsingsModel}:
\begin{Thm}[Theorem~\ref{t3730}-(\ref{i3743a})]
For Ising's model on~$\ZZ^n$ in the completely analytical regime,
for all disjoint $I,J\subset\ZZ^n$,
\[\label{f46149} \{ \vec\omega_I : \vec\omega_J \} \leq
\exp\big[-\big(\psi'+o(1)\big) \, \dist(I,J)\big] ,\]
where $\psi'$ is the same as in Theorem~\ref{t4505}
and where the ``$o(1)$'' (to be understood ``as $\dist(I,J) \allowbreak {\longto \infty}$'') is uniform in~$I,J$.
\end{Thm}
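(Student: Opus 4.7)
The plan is to reduce $\{\vec\omega_I : \vec\omega_J\}$ to pairwise single-site correlations via the $N$-against-$M$ tensorization theorem of Chapter~\ref{parTensorization} (Theorem~\ref{thm6413}), and then to control these single-site correlations through complete analyticity (Theorem~\ref{t4505}). The virtue of this route, compared with the Markov-chain argument behind Theorem~\ref{c6177}, is that it makes no use of the shapes of $I$ and $J$ nor of any reflection symmetry: it only asks for good conditional pair estimates, which is exactly what complete analyticity delivers.

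First, Theorem~\ref{thm6413} should bound $\{\vec\omega_I : \vec\omega_J\}$ by the operator norm of a matrix $(\epsilon_{ij})_{(i,j)\in I\times J}$, where each $\epsilon_{ij}$ is a \emph{subjective} Hilbertian correlation between $\omega_i$ and $\omega_j$ — that is, the pointwise correlation under the Ising law conditioned on a configuration of all spins other than $\{i,j\}$ (or on some similar adequate conditioning). Second, for any such conditioning $\vechat\omega_K$, Theorem~\ref{t4505} applied to the singletons $\{i\}$ and $\{j\}$ yields $\beta\big(\omega_i,\omega_j \,|\, \vec\omega_K = \vechat\omega_K\big) \leq 2 C' e^{-\psi'\dist(i,j)}$, which amounts to the uniform four-event bound $|\Pr_{\vechat\omega_K}[A\cap B] - \Pr_{\vechat\omega_K}[A]\Pr_{\vechat\omega_K}[B]| \leq 2C' e^{-\psi' \dist(i,j)}$ for all $A\in\sigma(\omega_i)$, $B\in\sigma(\omega_j)$. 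Feeding this estimate into the event sufficient condition of Chapter~\ref{parEventSufficientConditions} converts it into a Hilbertian bound of the same order: $\epsilon_{ij} \leq C'' e^{-\psi'\dist(i,j)}$ for every $i\in I$, $j\in J$, with $C''$ depending only on the model.

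Third, one bounds the operator norm of the matrix $(e^{-\psi'\dist(i,j)})$ by Schur's test. For a fixed $i\in I$, since $\dist(i,j)\geq \dist(I,J)=d$ for all $j\in J$ and $\ZZ^n$ has polynomial volume growth, the row sum is at most $\sum_{k\geq d} c_n k^{n-1} e^{-\psi' k} \leq C_n d^{n-1} e^{-\psi' d}$, and similarly for column sums. Hence
\[
\{\vec\omega_I : \vec\omega_J\} \;\leq\; C'' C_n \, d^{n-1} e^{-\psi' d}.
\]
The polynomial prefactor $d^{n-1}$ is $e^{o(d)}$, so absorbing $\log$-factors into the exponent yields the bound $\exp\big(-(\psi'+o(1))\,d\big)$ announced in~(\ref{f46149}), uniformly in the choice of $I$ and $J$.

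The main obstacle, and what makes this proof non-trivial despite its short outline, is arranging the ``subjective correlation'' framework correctly so that Theorem~\ref{thm6413} actually applies with the correct single-pair coefficients $\epsilon_{ij}$: one must identify a conditioning scheme for which (i) the tensorization theorem produces the matrix-norm bound, and (ii) complete analyticity gives $\beta$-mixing estimates under that same conditioning. Once the formalism of subjective correlations developed at the start of Chapter~\ref{parTensorization} is in hand, the remainder is routine — the single-site estimate coming cleanly from complete analyticity plus the event sufficient condition, and the Schur bound handling the combinatorics of $\ZZ^n$.
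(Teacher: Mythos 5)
Your overall architecture coincides with the paper's: bound $\{\omega_i:\omega_j\}_{*}$ by an exponential in $\dist(i,j)$, then tensorize via Theorem~\ref{thm6413} (through Lemma~\ref{pro8882} / the doubling-up technique), and finally absorb the polynomial volume factor of $\ZZ^n$ into the $o(1)$. The tensorization and Schur-type step are fine, and you are right that the conditioning compatible with complete analyticity is exactly what makes the subjective hypotheses of Theorem~\ref{thm6413} verifiable.

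Where your argument differs from the paper's — and where a genuine gap sits — is in the single-pair step converting the $\beta$-mixing estimate into a maximal-correlation estimate. You write that ``feeding this estimate into the event sufficient condition of Chapter~\ref{parEventSufficientConditions} converts it into a Hilbertian bound of the same order,'' but a uniform bound $|\Pr[A\cap B] - pq|\leq 2C'e^{-\psi' d}$ is an $\alpha$-mixing bound, and $\alpha$-mixing does \emph{not} control $\rho$-mixing in general (Example~\ref{x3993}). To apply Theorem~\ref{thm0367a} one needs a bound of the form $\zeta(p)\theta(q)$ with $\zeta,\theta$ vanishing at $0$ and $1$, and to apply Theorem~\ref{thm0367b} one needs $\epsilon\sqrt{p(1-p)q(1-q)}$; in neither case does a uniform constant qualify. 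The missing ingredient is a uniform lower bound on the conditional marginal probabilities $\Pr_{\hat\omega_K}[\omega_i=\pm1]$ (and the joint probabilities), which in the paper comes from the local equilibrium relation: each spin feels at most $2n$ neighbours, giving $\Pr_{\mathrm{con}}[\omega_i=\pm1]\geq(e^{4n/T}+1)^{-1}$ regardless of the conditioning. Only with such a bound in hand can one divide by $\sqrt{p(1-p)q(1-q)}$ and obtain a $\rho$-mixing estimate from the $\beta$-mixing one. You assert ``$C''$ depending only on the model'' without supplying this step, and that is the real content of the single-pair estimate.

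Once that lower bound is established, your detour through the event sufficient condition is legitimate (at the cost of a harmless $\ln$-factor if you use Theorem~\ref{thm0367b}), but the paper takes a shorter route: for two-valued variables the maximal correlation is given \emph{exactly} by the explicit formula of Remark~\ref{rmk9549}, so no general conversion theorem is needed — one just reads off $\{\omega_i:\omega_j\}=|p_a^b-p_ap^b|\big/\sqrt{p_1p_2p^1p^2}$ and bounds numerator by the conditional $\beta$-mixing estimate and denominator by the local-equilibrium lower bound. This is both sharper (no $\ln$-factor) and makes visible exactly why the marginal bounds are indispensable.
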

If we apply that result to the case of parallel `hyperplanes' of~$\ZZ^n$
(I mean, sets of the form $\{t\}\times\ZZ^{n-1}$),
Formula~(\ref{f46149}) looks far less neat than Formula~(\ref{f2881}) in Theorem~\ref{c6177}.

That bound can however be improved by using Proposition~\ref{pro0281}.
Indeed, as we noticed in \S~\ref{parc6177}, the states of two parallel hyperplanes
are elements of some reversible stationary Markov chain.
Therefore, applying Corollary~\ref{cor5902} (in which we let~$k\to\infty$),
we get a result exactly similar to~(\ref{f2881}),
except that we have to replace $\psi$ by~$\psi'$ —recall that it is not known whether $\psi'=\psi$.

\section{Comparing \texorpdfstring{$\rho$}{rho}-mixing to other measures of dependence}

The material of this section is classical; most of it can be found for instance in
\cite[\S\S~3~\&~5]{Bradley-book}.
Here we will say that a sequence of pairs of $\sigma$-algebras $(\mcal{F}^n,\mcal{G}^n)$
is \emph{$\rho$-mixing} to mean that ${\{ \mcal{F}^n : \mcal{G}^n \}} \ab {\stackrel{n\longto\infty}{\longto} 0}$.

\subsection{\texorpdfstring{$\alpha$}{alpha}-mixing}\label{parAlphaMixing}

\begin{Def}
The \emph{$\alpha$-mixing} coefficient of two $\sigma$-algebras~$\mcal{F}$ and~$\mcal{G}$ is
\[ \alpha(\mcal{F},\mcal{G}) \coloneqq
\sup_{\substack{A\in\mcal{A}\\B\in\mcal{B}}} \big| \Pr[A\cap B] - \Pr[A]\Pr[B] \big| .\]
\end{Def}

Proposition~\ref{pro0699} shows that `$\rho$-mixing implies $\alpha$-mixing',
in the sense that one has $\alpha(\mcal{F},\mcal{G}) \leq A\big(\{\mcal{F}:\mcal{G}\}\big)$
for some universal function~$A \colon [0,1] \to [0,1]$
with $A(\rho) \stackrel{\rho\longto0}{\longto} 0$.

\begin{Rmk}
Saying that the correlation of two variables tends to~$0$ means that
their joint law tends in some sense to the product law.
When the variables are ranged in Polish spaces,
a common notion of convergence is weak convergence,
that is, convergence against all bounded continuous function.
\cite[Theorem~2.2]{Billingsley} states that weak convergence is implied
by~$\alpha$-mixing, hence by $\rho$-mixing.
The precise statement is the following:
if $(X^n,Y^n)_{n\in\NN}$ is a sequence of pairs of random variables
such that all the~$X_n$ (resp.~$Y_n$)
have the same law~$\Law(X)$ (resp.~$\Law(Y)$) in some Polish space $E$ (resp.~$F$), then
$\big(\alpha(X^n,Y^n) \stackrel{n\longto\infty}{\longto} 0\big) \ \Rightarrow
\ \big(\Law(X^n,Y^n)\stackrel{n\longto\infty}{\rightharpoonup}\Law(X)\otimes\Law(Y)\big)$.
\end{Rmk}

On the other hand, the following example shows that $\alpha$-mixing does not imply $\rho$-mixing:

\begin{Xpl}\label{x3993}
For~$\epsilon\in (0,1/2]$, define $(X^{\epsilon},Y^{\epsilon})$ in the following way:
\begin{itemize}
\item With probability $\epsilon$, one samples~$X^{\epsilon}$ and~$Y^{\epsilon}$ independently
with common law uniform on~$[0,\epsilon]$;
\item With probability $(1-\epsilon)$, one samples~$X^{\epsilon}$ and~$Y^{\epsilon}$ independently
with common law uniform on~$[\epsilon,1]$.
\end{itemize}
Then for all~$\epsilon>0$ one has $\{X^{\epsilon}:Y^{\epsilon}\}=1$,
since the non-trivial events~${\{X^\epsilon\leq\epsilon\}}$ and~${\{Y^\epsilon\leq\epsilon\}}$
are equivalent (cf.\ Proposition~\ref{pro0699}).
However it is easy to show that $\alpha(X^\epsilon,Y^\epsilon) = \epsilon-\epsilon^2
\stackrel{\epsilon\longto0}{\longto} 0$.
\end{Xpl}

\subsection{\texorpdfstring{$\beta$}{beta}-mixing}

Recall the definition of the $\beta$-mixing coefficient from the previous chapter
[Definition~\ref{def3167}].

\begin{Xpl}\label{xpl9506}
For~$\epsilon \in (0,1)$, consider two random sequences~$(X_i)_{i\in\NN}$ and~$(Y_i)_{i\in\NN}$
defined in the following way: $(X_i)_{i\in\NN}$ is a sequence of i.i.d.\ variables with uniform law on~$\{\pm1\}$,
and for each~$i\in\NN$, independently, one sets
$Y_i = X_i$ with probability $\epsilon$, and with probability $(1-\epsilon)$
one chooses~$Y_i$ uniformly on~$\{\pm1\}$.
Then all the~$(X_i,Y_i)$ are i.i.d.\ 
with $\Pr[X_i=\eta \enskip\text{and}\enskip Y_i=\theta] = (1+\eta\theta\epsilon)/4$
for all~$\eta,\theta\in\{\pm1\}$, thus $\{X_i:Y_i\} = \epsilon$
by Remark~\ref{rmk9549}, whence $\{\vec{X}:\vec{Y}\} = \epsilon$ by Theorem~\ref{pro1252}.
Yet $\Law(\vec{X},\vec{Y})$ and~$\Law(\vec{X})\otimes\Law(\vec{Y})$
are mutually singular for all~$\epsilon>0$.

This shows that $\rho$-mixing does not imply $\beta$-mixing,
and \emph{a fortiori} that there can be no kind of converse to Proposition~\ref{pro0651}.
\end{Xpl}

\subsection{Mutual information}

Recall the definition~(\ref{eq9402}) of mutual information.
\cite[Theorem~5.3(III)]{Bradley-book} states that mutual information controls
the $\beta$-mixing coefficient, so Example~\ref{xpl9506}, which
shows that $\rho$-mixing does not imply $\beta$-mixing in general,
shows that it does not imply mutual information to tend to~$0$ either.

Proposition~\ref{pro0651} suggests that, on the other hand,
maximal correlation could be controlled by mutual information,
but that is not true either: in Example~\ref{x3993} indeed,
$\{X^\epsilon:Y^\epsilon\} = 1$ for all~$\epsilon > 0$, but
\[ I(X^\epsilon;Y^\epsilon) =
\epsilon \ln (\epsilon^{-1}) + (1-\epsilon) \ln \big((1-\epsilon)^{-1}\big)
\stackrel{\epsilon\longto0}{\longto} 0 .\]

Mutual information measures the quantity of information shared by two random variables,
which explains intuitively the following property (\cite[Theorem~2.5.2]{CoverThomas}):
if $X\to Y\to Z$ is a Markov chain, then $I(Y;X,Z) \leq I(X;Y) + I(Y;Z)$.
Does a similar inequality hold for Hilbertian correlation?
In the Gaussian case, the answer is ``yes'' thanks to Theorem~\ref{pro1857}:
one gets that
\[ \{Y:X,Z\}^2 \leq
\frac{(1-\{Y:Z\}^2)\,\{X:Y\}^2 + (1-\{X:Y\}^2)\,\{Y:Z\}^2} {1-\{X:Y\}^2\{Y:Z\}^2}
\leq \{X:Y\}^2 + \{Y:Z\}^2 .\]
But that property does not hold in general, as the following example shows:

\begin{Xpl}\label{xpl1151}
Consider a Markov chain $X\to Y\to Z$,
where~$(Y,X)$ and~$(Y,Z)$ have the same law, which is the joint law described in Example~\ref{xpl5689}%
—the role of ``$Y$'' in that example being played here by~$Y$ in both cases.
Fix $y \in \mcal{Y}$; define event $A$ as ``$Y=y$''
and event $B$ as ``$y\in X\cap Z$''.
Then one computes that $\Pr[A] = n^{-1}$, while
\[ \Pr[B] = \frac{1}{n} + \frac{(p-1)^2}{n(n-1)} .\]
Since $A \subset B$, Proposition~\ref{pro0699}
then yields that
\[\label{for7766} \{Y:X,Z\} \geq \sqrt{\Pr[\C{B}]\Pr[A] \mathbin{\big/} \Pr[\C{A}]\Pr[B]}
= \left(\frac{(n-1)^2 - (p-1)^2} {(n-1)^2 + (n-1)(p-1)^2}\right)^{1/2} .\]

Comparing~(\ref{for7767}) and~(\ref{for7766}),
one sees that taking $n \gg 1$ and $1 \ll p \ll n^{1/2}$
makes~${\{X:Y\}}$ and~${\{Y:Z\}}$ arbitrarily close to~$0$
while $\{Y:X,Z\}$ gets arbitrarily close to~$1$.
\end{Xpl}

\begin{Rmk}
There are similar examples with $(X,Y,Z) = (Y_{-1},Y_0,Y_1)$ for a reversible Markov process $(Y_t)_{t\in\RR}$~\cite{perso}.
\end{Rmk}

\chapter{Event sufficient conditions}\label{parEventSufficientConditions}%
\addcontentsline{itc}{chapter}{\protect\numberline{\thechapter}Event sufficient conditions}

In \S~\ref{parpro0699} we saw that the maximal correlation coefficient $\{\mathcal{F}:\mathcal{G}\}$
controls the difference between~$\Pr[A \cap B]$ and~$\Pr[A]\Pr[B]$
for~$A$ and~$B$ two events resp.\ $\mathcal{F}$- and $\mathcal{G}$-measurable.
A natural question is whether the converse is true,
i.e.\ whether saying that $\Pr[A\cap B]$ is always close in some sense to~$\Pr[A]\Pr[B]$
implies a control on~$\{\mathcal{F}:\mathcal{G}\}$.
We saw in \S~\ref{parAlphaMixing} that $\alpha$-mixing does not fit,
but maybe stronger conditions of the same type would work.

In \S~\ref{parWeakEventSufficientCondition}
I will present a simple such condition (Theorem~\ref{thm0367a}).
This condition demands $|\Pr[A\cap B]-\Pr[A]\Pr[B]|$ to be bounded uniformly
by~$\zeta(\Pr[A])\*\,\theta(\Pr[B])$ for functions $\zeta,\theta \colon [0,1] \to \RR_+$
sufficiently well behaved.
This result, whose proof is rather simple, is apparently new.

Proposition~\ref{pro0699}, however, suggests that the natural condition on events
would be a uniform control on
$\big|\Pr[A\cap B]-\Pr[A]\Pr[B]\big| \mathbin{\big/} \sqrt{\Pr[\C{A}]\Pr[A]}\sqrt{\Pr[\C{B}]\Pr[B]}$,
which is out of the scope of Theorem~\ref{thm0367a}.
Bradley~\cite{Bradley83} proved in 1983 that
that condition was indeed sufficient to get $\rho$-mixing.
His result was improved in the next few years
(see for instance the bound of~\cite{Bulinskii84}),
but the optimal bound was remaining unknown,
though its value was being conjectured.
In \S~\ref{parStrongEventSufficientCondition}, I shall prove this optimal bound.
My method, different from the techniques of~\cite{Bradley83, Bulinskii84},
relies on the analysis of the spectral properties of an operator linked to a law
which I call the ``Chogosov law'', whose study is proceeded to in \S~\ref{parTheChogosovProcess}.

\section{Weak event sufficient condition}\label{parWeakEventSufficientCondition}

To state our next result we need some functional analysis reminders first:
\begin{Def}
On the space $\mathcal{C}^{\infty}_0(0,1)$ of compactly supported fuctions of~$\mathcal{C}^{\infty}(0,1)$,
one defines the scalar product
\[ \langle\phi,\psi\rangle_{H^1_0} = \int_0^1 \phi'(x)\psi'(x) \,\dx{x} .\]
$\mathcal{C}^{\infty}_0(0,1)$ endowed with~$\langle\Bcdot,\Bcdot\rangle_{H^1_0}$ is a pre-Hilbert space;
its completion is denoted by~$H^1_0(0,1)$.
\end{Def}
Recall that elements of~$H^1_0(0,1)$ may be seen as ordinary functions:
\begin{Lem}[Sobolev, {\cite[Theorem~4.12]{Sobolev}}]\label{lem9060}
Any element $f \in H^1_0(0,1)$ can be identified with a unique function
$\bar{f} \in \mathcal{C}^0_0[0,1]$, the space of continuous functions on~$[0,1]$
with $\bar{f}(0), \bar{f}(1) = 0$.
Conversely, a function~$\bar{f} \in \mathcal{C}^0_0[0,1]$ corresponds to
an element of~$H^1_0(0,1)$ if and only if
\[\label{for9029}
\sup_{g\in\mathcal{C}^{\infty}_0(0,1)}
\frac{\big|\int_0^1 f(x)g''(x)\dx{x}\big|}{\sqrt{\int_0^1 g'(x)^2\dx{x}}} \]
is finite, and then there is a unique $f\in H^1_0(0,1)$ associated to~$\bar{f}$,
whose norm is~(\ref{for9029}).
\end{Lem}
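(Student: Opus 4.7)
\emph{Proof plan.} For any $\phi \in \mathcal{C}^\infty_0(0,1)$, compact support and the fundamental theorem of calculus give $\phi(y)-\phi(x) = \int_x^y \phi'(t)\,\dx{t}$, and Cauchy--Schwarz yields the H\"older-$1/2$ bound $|\phi(y)-\phi(x)| \leq \sqrt{|y-x|}\,\|\phi\|_{H^1_0}$; in particular $\|\phi\|_{\infty} \leq \|\phi\|_{H^1_0}$. Hence any $\langle\cdot,\cdot\rangle_{H^1_0}$-Cauchy sequence $(\phi_n)$ is uniformly Cauchy on $[0,1]$, with a limit $\bar{f} \in \mathcal{C}^0_0[0,1]$ (equivalent Cauchy sequences produce the same uniform limit), so $f \mapsto \bar{f}$ is well defined. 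Injectivity is the standard distributional argument: if $\phi_n \to 0$ uniformly with $\phi_n' \to u$ in $L^2$, then $\int u\psi = \lim\int\phi_n'\psi = -\lim\int\phi_n\psi' = 0$ for every $\psi \in \mathcal{C}^\infty_0$, hence $u=0$ and $f=0$ in $H^1_0$.

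When $f \in H^1_0$, the value of~(\ref{for9029}) equals $\|f\|_{H^1_0}$: integration by parts on smooth representatives (extended by continuity) gives $\int fg'' = -\int f'g'$ for all $g \in \mathcal{C}^\infty_0$, so Cauchy--Schwarz bounds the supremum by $\|f\|_{H^1_0}$, and choosing $g_n'$ to approximate $f'/\|f'\|_{L^2}$ in $L^2$ (possible because $f'$ has zero mean, hence lies in the closure of $\{g' : g \in \mathcal{C}^\infty_0\}$) gives the reverse inequality.

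For the converse, suppose $\bar{f} \in \mathcal{C}^0_0[0,1]$ makes~(\ref{for9029}) finite with value $M$. Set $V := \{v \in L^2(0,1) : \int_0^1 v = 0\}$, a closed subspace of $L^2$ in which $W := \{g' : g \in \mathcal{C}^\infty_0(0,1)\}$ is dense (approximate $v \in V$ by smooth compactly supported functions, subtract a small correction with vanishing $L^2$-mass to restore zero mean, then antidifferentiate; the result lies in $\mathcal{C}^\infty_0$ thanks to the zero-mean condition at the right endpoint). The linear form $L(g') := -\int_0^1 fg''$ is well defined on $W$ and satisfies $|L(g')| \leq M\|g'\|_{L^2}$ by hypothesis, so it extends to a bounded functional on $V$ of norm $M$; Riesz representation produces a unique $u \in V$ with $\|u\|_{L^2} = M$ and $L(v) = \int uv$ for all $v \in V$. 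Choosing $u_n \in \mathcal{C}^\infty_0(0,1) \cap V$ with $u_n \to u$ in $L^2$ and setting $\phi_n(x) := \int_0^x u_n$ gives $\phi_n \in \mathcal{C}^\infty_0(0,1)$ (compact support follows from $\int u_n = 0$), a Cauchy sequence in $H^1_0$ whose norm tends to $M$; this defines $\tilde{f} \in H^1_0$ with $\|\tilde{f}\|_{H^1_0} = M$ and continuous representative $\bar{\tilde{f}}(x) = \int_0^x u$.

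It remains---and this is the delicate part---to identify $\bar{\tilde{f}}$ with the given $\bar{f}$. For every $g \in \mathcal{C}^\infty_0$, integration by parts (legitimate because $\bar{\tilde{f}}$ is absolutely continuous and $g$ vanishes at the endpoints) gives $\int\bar{\tilde{f}}\,g'' = -\int u g' = \int fg''$, so $\bar{f}-\bar{\tilde{f}}$ is $L^2$-orthogonal to $\{g'' : g \in \mathcal{C}^\infty_0(0,1)\}$. Since $\int g'' = 0$ and $\int xg'' = 0$ for every such $g$, and this set is dense in $\{w \in L^2 : \int w = \int xw = 0\}$ (by approximating such a $w$ by smooth compactly supported functions with both moments corrected and double-antidifferentiating), its orthogonal complement equals $\mathrm{span}\{1, x\}$; hence $\bar{f}-\bar{\tilde{f}}$ is affine. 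The boundary conditions $\bar{f}(0) = \bar{\tilde{f}}(0) = 0$ and $\bar{f}(1) = \bar{\tilde{f}}(1) = 0$ then force both affine coefficients to vanish, giving $\bar{f} = \bar{\tilde{f}}$. I expect this last identification to be the main obstacle: one must exploit \emph{both} endpoint conditions encoded in $\mathcal{C}^0_0[0,1]$ (as opposed to bare $\mathcal{C}^0$) and verify the density of $\{g''\}$ in its natural two-moment-free subspace; everything else reduces to routine Sobolev--Riesz computations.
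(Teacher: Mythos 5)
The paper does not prove this lemma: it is stated with a bare citation to Adams's \emph{Sobolev Spaces}, Theorem~4.12, so there is no in-text argument to compare against. Your blind proof is self-contained and correct. The embedding step (H\"older-$1/2$ bound via Cauchy--Schwarz, hence uniform convergence of $H^1_0$-Cauchy sequences to a limit in $\mathcal{C}^0_0[0,1]$) and the injectivity argument are the standard ones. The converse direction is where you had to be careful, and you handled it properly: the Riesz representation on $V=\{\int v=0\}$ produces the candidate $u=\tilde{f}'$, and the crucial density facts --- that $\{g':g\in\mathcal{C}^\infty_0\}$ is dense in $V$ and that $\{g'':g\in\mathcal{C}^\infty_0\}$ is dense in $\{w:\int w=\int xw=0\}$ --- are both verified by the antidifferentiation-of-moment-corrected-approximants trick, which is the right move (the key point you correctly exploit is that vanishing moments of the source function guarantee compact support of the antiderivative). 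The final identification $\bar{f}=\bar{\tilde{f}}$, via orthogonality to $\{g''\}$ implying $\bar{f}-\bar{\tilde{f}}\in\mathrm{span}\{1,x\}$ and then pinning down the two affine coefficients with the two boundary conditions, is exactly the delicate point you flagged, and your treatment of it is sound. One cosmetic remark: the ``reverse inequality'' in your second paragraph could have been deferred, since it is subsumed by the converse construction (once $u$ is produced with $\|u\|_{L^2}=M$ and identified with $f'$, the equality of norms is automatic); but stating it separately does no harm.
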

In accordance with Lemma~\ref{lem9060}, we will identify functions of~$\mathcal{C}^0_0[0,1]$
with elements of~$H^1_0(0,1)$ whenever it is possible. If $f\in\mathcal{C}^0_0[0,1]$
does not correspond to an element of~$H^1_0(0,1)$, then we will set $\|f\|_{H^1_0} = +\infty$.

Now we can state the
\begin{Thm}[Weak event sufficient condition]\label{thm0367a}
Let~$\mathcal{F}$ and~$\mathcal{G}$ be two $\sigma$-algebras
such that, for all~$A\in\mathcal{F}$ and~$B\in\mathcal{G}$
with respective probabilities~$p$ and~$q$,
\[\label{for0256} \Pr[A\cap B]-pq \leq \zeta(p)\theta(q) \footnotemark\]
\footnotetext{Note that there is no need to put absolute values
in the left-hand side of (\ref{for0256}).}%
for some $\zeta, \theta \in \mathcal{C}^0_0[0,1]$.
Then:
\[\label{for0458} \{\mathcal{F}:\mathcal{G}\} \leq \|\zeta\|_{H^1_0} \|\theta\|_{H^1_0} .\]
\end{Thm}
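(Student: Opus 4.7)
The plan is to prove the pointwise bound
\[|\Cov(f,g)|\leq\|\zeta\|_{H^1_0}\|\theta\|_{H^1_0}\,\ecty(f)\,\ecty(g)\]
for every $f\in\ldb(\mcal{F})$, $g\in\ldb(\mcal{G})$, whence~(\ref{for0458}) follows from Definition~\ref{def4393}. WLOG $\zeta,\theta\geq0$ (the hypothesis is preserved and the $H^1_0$-norms unchanged under $\zeta\mapsto|\zeta|$), and by an approximation argument---truncating and mollifying---one may also assume that $f$ and $g$ are bounded random variables whose distributions are absolutely continuous with strictly positive densities.

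The core of the argument combines Hoeffding's covariance identity
\[\Cov(f,g)=\iint_{\RR^2}c(s,t)\,\dx s\,\dx t,\qquad c(s,t)\coloneqq\Cov\bigl(\1{\{f>s\}},\,\1{\{g>t\}}\bigr),\]
with the hypothesis. Setting $F(s)\coloneqq\Pr[f>s]$ and $G(t)\coloneqq\Pr[g>t]$, applying~(\ref{for0256}) to the pair $(\{f>s\},\{g>t\})$ gives $c(s,t)\leq\zeta(F(s))\theta(G(t))$; the same hypothesis applied to $(\{f>s\},\C{\{g>t\}})$ gives $-c(s,t)\leq\zeta(F(s))\theta(1-G(t))$. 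Since $v\mapsto\theta(1-v)$ has the same $H^1_0$-norm as~$\theta$, splitting $c$ into its positive and negative parts and applying Fubini reduces everything to the key lemma
\[\int_{\RR}\zeta(F(s))\,\dx s\ \leq\ \|\zeta\|_{H^1_0}\,\ecty(f),\]
together with its analogue for~$g$.

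To prove this lemma I would integrate by parts. Since $\zeta(0)=\zeta(1)=0$ and $\zeta(F(s))$ vanishes outside the (bounded) support of~$f$, all boundary terms disappear and one obtains the clean identity
\[\int_{\RR}\zeta(F(s))\,\dx s\ =\ \EE\bigl[f\cdot\zeta'\bigl(F(f)\bigr)\bigr].\]
But $F(f)$ is uniformly distributed on $[0,1]$, so $\EE[\zeta'(F(f))]=\int_0^1\zeta'(u)\,\dx u=0$; the right-hand side is therefore unchanged when $f$ is replaced by $f-\EE f$, and Cauchy--Schwarz yields
\[\bigl|\EE\bigl[f\,\zeta'(F(f))\bigr]\bigr|\ \leq\ \ecty(f)\,\sqrt{\EE\bigl[\zeta'(F(f))^2\bigr]}\ =\ \ecty(f)\,\sqrt{\int_0^1\zeta'(u)^2\,\dx u}\ =\ \ecty(f)\,\|\zeta\|_{H^1_0}.\]

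The analytic content is thus very short: Hoeffding's identity followed by a single Cauchy--Schwarz in $L^2(0,1)$. The main technical obstacle is the initial reduction to bounded variables with continuous distributions, which is what legitimises the integration by parts and keeps the change-of-variable calculations clean; the general case should then follow by truncating $f\mapsto(f\wedge n)\vee(-n)$, convolving with a vanishingly small Gaussian to smooth the law, and checking that $\ecty$, the hypothesis~(\ref{for0256}) and the $H^1_0$-norms (\emph{via} Lemma~\ref{lem9060}) pass properly to the limit.
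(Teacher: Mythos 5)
You follow the paper's skeleton — Hoeffding's identity (its Lemma~\ref{lem8312}), factorization of the bound, and a one-variable key lemma which, after a change of variables and a reflection $p\mapsto 1-p$, is precisely the paper's Claim~\ref{clm6860}. The genuine difference is in the proof of that key lemma. The paper introduces three scalar products on $\mathcal{C}^0(0,1)$ — the usual $L^2$ product, the $H^1_0$ product, and a ``variance product'' $\langle\phi,\psi\rangle_V$ with kernel $p(1-q)\wedge q(1-p)$ — and deduces the required Cauchy--Schwarz inequality $|\langle\phi,\psi\rangle_{L^2}|\leq\|\phi\|_{H^1_0}\|\psi\|_V$ from the observation that this kernel is the Green function of $-\partial^2$ on $(0,1)$ with Dirichlet conditions. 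Your proof reaches the same point more directly: integrate $\int\zeta(F(s))\,\dx{s}$ by parts to obtain $\EE[f\,\zeta'(F(f))]$, observe that $F(f)$ is uniform so that $\zeta'(F(f))$ is automatically centred, and finish with a single Cauchy--Schwarz in $L^2(0,1)$. The two proofs are morally computing the same quantity — the paper's Green-function manipulation is an abstract integration by parts — but your route is shorter and makes the appearance of $\|\zeta'\|_{L^2}$ immediate.

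Two small points worth fixing. First, you only need to bound $\Cov(f,g)$ from \emph{above}, since the supremum defining $\{\mathcal{F}:\mathcal{G}\}$ is unchanged under $g\mapsto -g$; the split of $c$ into positive and negative parts is therefore superfluous, and if read as $|\iint c|\leq\iint c_+ + \iint c_-$ it would cost you a spurious factor~$2$ (the correct reading, $|\iint c_+-\iint c_-|\leq\max$, is safe but awkward). Second, convolving $f$ with an independent Gaussian enlarges $\sigma(f)$ beyond $\mathcal{F}$, so hypothesis~(\ref{for0256}) is not manifestly preserved along that approximation. This is harmless, though: the reduction via Hoeffding and the hypothesis is valid for arbitrary $f\in\ldb(\mathcal{F})$, and the key lemma $\int\zeta(F(s))\,\dx{s}\leq\|\zeta\|_{H^1_0}\ecty(f)$ depends on $f$ only through its \emph{distribution} (both sides are functionals of $\Law(f)$), so the smoothing may be carried out on the pushforward measure alone, without touching the $\sigma$-algebra.
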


\begin{proof}
We begin with the following formula for covariance:
\begin{Lem}\label{lem8312}
For~$f$ and~$g$ two real $L^2$ functions,
\[\label{for0257}
\Cov(f,g) = \int_{\RR\times\RR}
\big( \Pr[f\leq x\text{ and }g\leq y] - \Pr[f\leq x] \Pr[g\leq y]\big) \,\dx{x}\dx{y} .\]
\end{Lem}
\begingroup\def\proofname{Proof of Lemma~\ref{lem8312}}
\begin{proof}
Suppose in a first time that $f$ and~$g$ are nonnegative.
A classical Fubini argument (see \cite[Problem~21.6]{Billingsley*})
shows that
\[ \EE[f] = \int_{\RR_+} \Pr[f>x] \,\dx{x} ,\]
with a similar formula for~$g$. By the same method,
\[ \EE[fg] = \int_{\RR_+\times\RR_+} \Pr[f>x \text{ and } g>y] \,\dx{x}\dx{y} ,\]
so that, using the computational formula $\Cov(f,g) = \EE[fg] - \EE[f]\EE[g]$,
\[ \Cov(f,g) = \int_{\RR_+\times\RR_+}
\big( \Pr[f>x\text{ and }g>y] - \Pr[f>x] \Pr[g>y] \big) \,\dx{x}\dx{y}.\]
Observing that the integrand is also
$( \Pr[f\leq x\text{ and }g\leq y] - \Pr[f\leq x] \Pr[g\leq y] )$
and that it is zero for~$(x,y) \notin \RR_+ \times \RR_+$,
we get~(\ref{for0257}) in the nonnegative case.
By translation invariance, the formula remains true for all~$f,g$ bounded below,
and then by approximation for all~$f,g \in L^2$.
\end{proof}\endgroup

Now, let~$f$ and~$g$ be $L^2$ variables resp.\ $\mathcal{F}$- and $\mathcal{G}$-mesurable,
and denote by~$F$ and~$G$ the respective distribution functions of~$f$ and~$g$.
Up to a slight perturbation, $F$ and~$G$
may be supposed to be diffeomorphisms from~$\RR$ onto~$(0,1)$;
denote by~$\alpha$ and~$\beta$ their respective inverse maps.
Then a change of variables in (\ref{for0257}) yields:
\[\label{for0268} \Cov(f,g) = \int_{(0,1)^2}
\Big( \Pr\big[f\leq \alpha(p)\text{ and }g\leq \beta(q)\big] - pq \Big)
\alpha'(p)\beta'(q) \,\dx{p}\dx{q} ,\]
so by assumption (\ref{for0256}):
\[\label{for9181}
\Cov(f,g) \leq \Big(\int_0^1\zeta(p)\alpha'(p) \,\dx{p}\Big) \times
\Big(\int_0^1\theta(q)\beta'(q) \,\dx{q}\Big) .\]
Then our theorem becomes equivalent to the claim stated and proved just below.
\end{proof}
\begin{Clm}\label{clm6860}
If $f$ is a random variable whose repartition function~$F$ is a diffeomorphism
of inverse~$\alpha$, then for~$\zeta \in \mathcal{C}^0_0[0,1]$:
\[ \Big| \int_0^1 \zeta(p)\alpha'(p)\dx{p} \Big| \leq \|\zeta\|_{H^1_0} \ecty(f) .\]
\end{Clm}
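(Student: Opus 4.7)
The plan is to establish the identity
\[\int_0^1 \zeta(p)\alpha'(p)\,dp \;=\; -\int_0^1 \zeta'(p)\bigl(\alpha(p)-\EE[f]\bigr)\,dp\]
via integration by parts, and then to apply Cauchy--Schwarz. Once the identity is in hand, its right-hand side is bounded by $\|\zeta'\|_{L^2(0,1)} \cdot \|\alpha-\EE[f]\|_{L^2(0,1)} = \|\zeta\|_{H^1_0} \cdot \ecty(f)$, the second factor because $\alpha(U)$, for $U$ uniform on $(0,1)$, has the law of $f$ by the standard inverse-CDF construction. Without loss of generality I assume $\|\zeta\|_{H^1_0} < \infty$ (otherwise the bound is vacuous), so $\zeta$ is absolutely continuous on $[0,1]$ with $\zeta'\in L^2(0,1)$; and $\alpha\in L^2(0,1)$ since $\EE[f^2]<\infty$.

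To prove the identity I would first integrate by parts on a truncated interval $[\epsilon,1-\epsilon]$:
\[\int_\epsilon^{1-\epsilon}\zeta(p)\alpha'(p)\,dp \;=\; \bigl[\zeta(1-\epsilon)\alpha(1-\epsilon) - \zeta(\epsilon)\alpha(\epsilon)\bigr] - \int_\epsilon^{1-\epsilon}\zeta'(p)\alpha(p)\,dp,\]
and then send $\epsilon\to 0$. The main obstacle will be ensuring that the boundary bracket vanishes in the limit, since $\alpha$ may blow up at the endpoints whenever $f$ is unbounded. I would combine two matching estimates: first, $|\zeta(\epsilon)|^2 = \bigl|\int_0^\epsilon\zeta'(s)\,ds\bigr|^2 \leq \epsilon \int_0^\epsilon \zeta'(s)^2\,ds$, so that $|\zeta(\epsilon)| = o(\sqrt{\epsilon})$ as $\epsilon\to 0$ (and analogously near~$1$); second, Chebyshev's inequality applied to $f$ gives $\epsilon = F(\alpha(\epsilon)) \leq \Var(f)\big/(\EE[f]-\alpha(\epsilon))^2$ for small~$\epsilon$, whence $|\alpha(\epsilon)| = O(1/\sqrt{\epsilon})$ (and analogously for $\alpha(1-\epsilon)$). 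Multiplying, the boundary terms are $o(1)$.

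Since $\int_0^1 \zeta'\alpha\,dp$ is absolutely convergent by Cauchy--Schwarz (both $\zeta'$ and $\alpha$ lie in $L^2(0,1)$), letting $\epsilon\to 0$ yields $\int_0^1 \zeta\alpha'\,dp = -\int_0^1 \zeta'\alpha\,dp$. Finally, because $\int_0^1 \zeta'(p)\,dp = \zeta(1)-\zeta(0) = 0$, I may freely subtract the constant $\EE[f]$ from $\alpha$ in the integrand, and Cauchy--Schwarz concludes via $\int_0^1 (\alpha-\EE[f])^2\,dp = \Var(\alpha(U)) = \Var(f) = \ecty(f)^2$.
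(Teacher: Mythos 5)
Your proof is correct but takes a genuinely different route from the paper's. The paper introduces the auxiliary scalar product $\langle\phi,\psi\rangle_V = \int_{(0,1)^2}[p(1-q)\wedge q(1-p)]\phi(p)\psi(q)\,\dx{p}\dx{q}$, observes that $\Var(f)=\|\alpha'\|_V^2$, and then reduces the claim to the abstract inequality $|\langle\phi,\psi\rangle_{L^2}|\le\|\phi\|_{H^1_0}\|\psi\|_V$. This is proved for $\phi\in\mathcal{C}^\infty_0(0,1)$ (and extended by density) via the integral operator $(L\phi)(x)=x\int_0^x(1-y)\phi(y)\dx{y}+(1-x)\int_x^1 y\phi(y)\dx{y}$, the key identities being $\langle\phi,\psi\rangle_V=\langle L\phi,\psi\rangle_{L^2}$ and $\phi=L(-\phi'')$; Cauchy--Schwarz is then applied in the $V$-inner product, and $\|\phi''\|_V=\|\phi'\|_{L^2}$ closes the loop. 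Your proof instead performs the integration by parts directly on $\int\zeta\alpha'$ and deals with the potential blow-up of $\alpha$ at the endpoints head-on, via the matched estimates $|\zeta(\epsilon)|=o(\sqrt{\epsilon})$ (from $\zeta(0)=0$, $\zeta'\in L^2$) and $|\alpha(\epsilon)|=O(1/\sqrt{\epsilon})$ (from Chebyshev), then concludes with Cauchy--Schwarz in the ordinary $L^2$ product after centering $\alpha$. The trade-off: the paper's version avoids all boundary-term analysis by staying within compactly supported test functions and a density argument, and proves a somewhat more general inequality ($\psi$ need not be of the form $\alpha'$); yours is more elementary and self-contained, requiring no clever operator $L$, at the cost of the explicit endpoint estimates. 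Both are legitimate. One small remark: in your argument the integral $\int_0^1\zeta\alpha'\,\dx{p}$ is only exhibited as an improper (truncated) limit, not as an absolutely convergent integral — this is fine and matches the way the quantity actually enters the proof of Theorem~\ref{thm0367a}, but it is worth stating explicitly.
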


\begin{proof}
First note that, replacing $g$ by~$f$ in~(\ref{for0268}),
one has:
\[ \Var(f) = \int_{(0,1)^2} \big[ p(1-q) \wedge q(1-p) \big] \alpha'(p)\alpha'(q) \,\dx{p}\dx{q} .\]
In fact, one can define a scalar product%
\footnote{The positivity of~$\langle\Bcdot,\Bcdot\rangle_V$ follows from the identity
$\langle\phi,\phi\rangle_V = \int_{p<q} \big( \int_p^q \phi(r) \,\dx{r} \big)^2 \,\dx{p}\dx{q}$.}
$\langle\Bcdot,\Bcdot\rangle_V$ on~$\mathcal{C}^0(0,1)$ by setting
\[ \label{for0312}
\langle\phi,\psi\rangle_{V} = \int_{(0,1)^2} \big[ p(1-q) \wedge q(1-p) \big] \phi(p)\psi(q) \,\dx{p}\dx{q} ,\]
so that if $\alpha$ is the inverse distribution fuction
of a variable $f$, $\Var(f) = \|\alpha'\|_V^2$.

So, we are considering three scalar products on some subspaces of~$\mathcal{C}^0(0,1)$:
the ordinary $L^2$ product, which we denote by~$\langle\Bcdot,\Bcdot\rangle_{L^2}$,
the $H^1_0(0,1)$ product $\langle\Bcdot,\Bcdot\rangle_{H^1_0}$
and the variance product $\langle\Bcdot,\Bcdot\rangle_{V}$.
Our goal is to show that for all~$\phi \in H^1_0(0,1), \psi \in \mathcal{C}^0(0,1)$,
\[\label{for0032}
\big| \langle \phi,\psi \rangle_{L^2} \big| \leq \|\phi\|_{H^1_0} \|\psi\|_{V} .\]

By approximation we can suppose that $\phi\in\mathcal{C}^{\infty}_0(0,1)$.
A direct computation shows that
\[ \langle\phi,\psi\rangle_V = \langle L\phi,\psi\rangle_{L^2} ,\]
where the operator $L \colon \mathcal{C}^{\infty}_0(0,1) \to \mathcal{C}^2(0,1)$ is defined by:
\[\label{for0269} \big(L\phi\big)(x) = x\int_0^x (1-y)\phi(y) \,\dx{y} + (1-x)\int_x^1 y\phi(y) \,\dx{y} .\]
But we can make $L$ appear thanks to the following formula:
for~$\phi \in \mathcal{C}^2_0(0,1)$,
\[ \phi = L(-\phi'') ,\]
as one checks by integrating by parts twice.
So,
\[ \big|\langle\phi,\psi\rangle_{L^2}\big| = \big|\langle L(-\phi''),\psi\rangle_{L^2}\big|
= \big|\langle-\phi'',\psi\rangle_V\big| \footrel{\text{CS}}{\leq} \| \phi'' \|_V \| \psi \|_V ,\]
where
\[ \|\phi''\|_V^2 = \langle\phi'',\phi''\rangle_V = \langle L\phi'',\phi''\rangle_{L^2} = -\langle\phi,\phi''\rangle_{L^2}
\footrel{\text{IP}}{=} \|\phi'\|_{L^2}^2 = \|\phi\|_{H^1_0}^2 ,\]
whence~(\ref{for0032}).
\end{proof}

\section{Strong event sufficient condition}\label{parStrongEventSufficientCondition}

\subsection{The strong event sufficient condition}\label{parTheStrongEventSufficientCondition}

A natural choice for functions~$\zeta$ and~$\theta$ in Theorem~\ref{thm0367a}
would be $\zeta(p) = \theta(p) = \epsilon^{1/2} \* \sqrt{p(1-p)}$,
since that would give a converse to Formula~(\ref{for3346}) of Proposition~\ref{pro0699}.
Unfortunately $\| \sqrt{p(1-p)} \|_{H^1_0} = +\infty$,
so Theorem~\ref{thm0367a} does not work in this case.
There is however a specific result then:

\begin{Thm}[Strong event sufficient condition]\label{thm0367b}
Let~$\mathcal{F}$ and~$\mathcal{G}$ be two $\sigma$-algebras
such that, for all~$A$ and~$B$ resp.\ in~$\mathcal{F}$ and~$\mathcal{G}$
with respective probabilities~$p$ and~$q$,
\[\label{for0256b} \Pr[A\cap B]-pq \leq \epsilon \sqrt{p(1-p)q(1-q)} \]
for some $\epsilon\in[0,1]$.
Then
\[\label{for7753} \{\mathcal{F}:\mathcal{G}\} \leq \Lambda(\epsilon) ,\]
where $\Lambda \colon [0,1] \to \RR_+$ is defined by
\[ \Lambda(\epsilon) \coloneqq \begin{cases} \epsilon (1+|\ln\epsilon|) & \text{if $\epsilon>0$,} \\
0 & \text{if $\epsilon=0$.} \end{cases}\]
\end{Thm}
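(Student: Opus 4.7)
Starting from the covariance formula of Lemma~\ref{lem8312} and performing the change of variables $p = F(x)$, $q = G(y)$ as in the proof of Theorem~\ref{thm0367a} (here $F,G$ are the distribution functions of $L^2$ variables $f,g$ measurable w.r.t.\ $\mathcal{F},\mathcal{G}$, and $\alpha,\beta$ their inverses), one obtains
\[
\Cov(f,g) \;=\; \int_0^1\!\!\int_0^1 \bigl[\Pr[A_p \cap B_q] - pq\bigr]\,\alpha'(p)\beta'(q)\,dp\,dq,
\]
with $A_p \coloneqq \{f \leq \alpha(p)\}$ and $B_q \coloneqq \{g \leq \beta(q)\}$. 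The task reduces to bounding this double integral by $\Lambda(\epsilon)\,\ecty(f)\,\ecty(g)$ under the pointwise hypothesis $|\Pr[A_p\cap B_q]-pq| \leq \epsilon\sqrt{p(1-p)q(1-q)}$. A brute factorisation into marginal integrals of the form $\int\sqrt{p(1-p)}\,\alpha'(p)\,dp$ is doomed: the weight $\sqrt{p(1-p)}$ sits on the boundary of $H^1_0(0,1)$, and for suitably heavy-tailed $f$ the marginal pairing can be infinite while $\ecty(f)$ stays finite — so the pointwise bound alone cannot carry the full estimate and the joint structure must be exploited.

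To bypass this, the plan is to split $\sqrt{p(1-p)} = \zeta_\delta + \eta_\delta$ for a parameter $\delta \in (0,1/2)$, where $\zeta_\delta$ is a smooth truncation vanishing outside $[\delta/2,\,1-\delta/2]$ and coinciding with $\sqrt{p(1-p)}$ on $[\delta,1-\delta]$, chosen so that $\|\zeta_\delta\|_{H^1_0}^2 = O(|\ln\delta|)$. On the \emph{bulk}, the weak event sufficient condition (Theorem~\ref{thm0367a}) applied with $\sqrt\epsilon\,\zeta_\delta$ in the role of both $\zeta$ and $\theta$ delivers a contribution of size $O(\epsilon|\ln\delta|)\,\ecty(f)\,\ecty(g)$. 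For the \emph{tail} $\eta_\delta$, supported on the two endpoint intervals, the corresponding part of $f$ is decomposed dyadically: setting $C_k \coloneqq \{\alpha(2^{-k-1}\delta) < f \leq \alpha(2^{-k}\delta)\}$ (and analogously near $p=1$), the contribution of each level set to the covariance is controlled by the hypothesis applied directly to the indicator $\1{C_k}$ against $g$, yielding $|\Cov(\1{C_k},\,g)| \leq \epsilon\sqrt{\Pr[C_k]\bigl(1-\Pr[C_k]\bigr)}\,\ecty(g)$. Summing the resulting geometric series in $k$ produces an additional $O(\epsilon)\,\ecty(f)\,\ecty(g)$ contribution, absorbed by the bulk term for $\delta$ small.

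Combining both pieces and optimising $\delta \sim \epsilon$ yields $|\Cov(f,g)| \leq C\epsilon(1+|\ln\epsilon|)\,\ecty(f)\,\ecty(g)$ for some universal constant $C$. The main obstacle is to replace this unspecified $C$ by the sharp constant $1$ that is built into the definition of $\Lambda$: a crude truncation inevitably introduces a universal multiplicative factor strictly greater than one. The plan to recover the exact bound — which the paper asserts to be optimal — is to identify the extremal pair of laws saturating the event hypothesis, namely the \emph{Chogosov law} developed in \S~\ref{parTheChogosovProcess}, and carry out a spectral analysis of the conditional-expectation operator attached to it. That computation should simultaneously produce the sharp upper bound $\Lambda(\epsilon)$ and, because the extremal law is a concrete realisable example, its optimality.
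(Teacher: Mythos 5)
Your opening — Lemma~\ref{lem8312}, the change of variables $p=F(x)$, $q=G(y)$, and the observation that $\sqrt{p(1-p)}\notin H^1_0(0,1)$ so that a brute factorisation cannot work — coincides exactly with the start of the paper's proof. The tail step, however, has a real gap. You claim $|\Cov(\1{C_k},g)|\leq\epsilon\sqrt{\Pr[C_k](1-\Pr[C_k])}\,\ecty(g)$, which is precisely the assertion $\{\sigma(C_k):\mathcal{G}\}\leq\epsilon$. But the event hypothesis only controls $\Cov(\1{C_k},\1{B})$ for \emph{events} $B\in\mathcal{G}$; passing from indicators to arbitrary $g\in\ldb(\mathcal{G})$ is exactly what the theorem must establish, so the step is circular. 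If you run the covariance formula on $\1{C_k}$ against $g$ you get $|\Cov(\1{C_k},g)|\leq\epsilon\,\ecty(\1{C_k})\int_0^1\sqrt{q(1-q)}\,\beta'(q)\,\dx{q}$, and the last integral is precisely the quantity whose possible divergence motivated your truncation in the first place — it is not controlled by $\ecty(g)$. Moreover, the inequality $\{\sigma(C_k):\mathcal{G}\}\leq\epsilon$ is simply false in general under the hypothesis (the correct order of magnitude is $\Lambda(\epsilon)$, again by the theorem itself), so this is not a fixable-by-a-constant omission: your part (a) does not deliver even a non-sharp $C\epsilon(1+|\ln\epsilon|)$ without a new idea to close the recursion.

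The paper's resolution acts on the bound rather than on the test functions. It first strengthens the hypothesis for free, using the trivial constraint $\Pr[A\cap B]\leq p\wedge q$, to obtain $\Pr[A\cap B]\leq Z_\epsilon(p,q):=\bigl(pq+\epsilon\sqrt{p(1-p)q(1-q)}\bigr)\wedge p\wedge q$. That $\wedge p\wedge q$ is the decisive step: it is what makes $Z_\epsilon$ the distribution function of a genuine probability measure on $(0,1)^2$ with uniform marginals (the Chogosov law), so one can write $\langle\alpha',\beta'\rangle_{Z_\epsilon}=\Cov(\alpha,\mathcal{L}\beta)$ for its self-adjoint Markov kernel $\mathcal{L}$ and reduce everything to $\VERT\mathcal{L}\VERT_{\ldb(0,1)}\leq\Lambda(\epsilon)$, which is then proved by a weighted-Lipschitz contraction argument (Lemma~\ref{lem7792}) combined with Lemma~\ref{l3253}. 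Your closing paragraph correctly identifies the Chogosov law as the object to analyse, but omits that the $\wedge p\wedge q$ truncation is not merely a sharpness refinement: without it $\partial^2_{pq}\bigl(pq+\epsilon\sqrt{p(1-p)q(1-q)}\bigr)$ is not even a positive measure, and the whole operator reformulation collapses.
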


\begin{Rmk}
The function~$\Lambda$ is increasing on~$[0,1]$ and satisfies
$\Lambda(0)=0$, $\Lambda(1)=1$, and $\Lambda(\epsilon)>\epsilon$
for all~$\epsilon\in(0,1)$.
Moreover it is continuous, in particular
$\Lambda(\epsilon) \searrow 0$ as $\epsilon \searrow 0$
(see Figure~\ref{fig-Lambda}).
\end{Rmk}
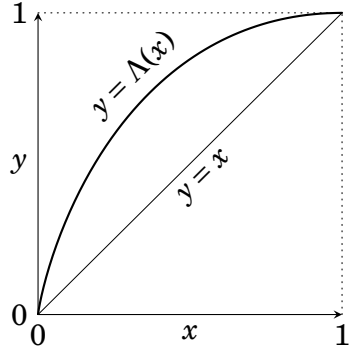
\begin{figure}
\centering
\begin{tikzpicture}[x=4cm, y=4cm, line cap=round, >=stealth]

\draw[->,thin] (0,0) node[anchor=north]{$0$} -- node[anchor=north]{$x$} (1,0) node[anchor=north]{$1$} ;
\draw[->,thin] (0,0) node[anchor=east ]{$0$} -- node[anchor=east ]{$y$} (0,1) node[anchor=east ]{$1$} ;
\draw[thin,dotted] (0,1) -- (1,1) ;
\draw[thin,dotted] (1,0) -- (1,1) ;
\draw[thick] plot[smooth] file{Lambda.txt} ;
\path (.36788,.73576) +(-1pt,-1pt) -- node[above,sloped]{$y=\Lambda(x)$} +(1pt,1pt) ;
\draw[very thin] (0,0) -- node[midway,below,sloped]{$y=x$} (1,1) ;

\end{tikzpicture}
\caption{The function~$\Lambda$.}\label{fig-Lambda}
\end{figure}

\begin{Rmk}
I called Theorems~\ref{thm0367a} and~\ref{thm0367b}
resp.\ ``weak'' and ``strong'' event sufficient conditions;
yet that vocabulary is a bit misleading,
since the strong condition does not imply the weak one \emph{stricto sensu}:
with the hypotheses of Theorem~\ref{thm0367a} indeed,
Theorem~\ref{thm0367b} only implies that
\[\label{fo1611} \{\mathcal{F}:\mathcal{G}\} \leq \Lambda\big(\|\zeta\|_{H^1_0}\|\theta\|_{H^1_0}\big) .\]
But the right-hand side of~(\ref{fo1611}) tends to~$0$
as soon the right-hand side of~(\ref{for0458}) does,
so it is relevant to say that Theorem~\ref{thm0367b} is `qualitatively stronger'
than Theorem~\ref{thm0367a}.
\end{Rmk}

\begin{Rmk}
With the same informal vocabulary,
Theorem~\ref{thm0367b} is a `qualitative converse' of Proposition~\ref{pro0699}:
maximal decorrelation is `qualitatively equivalent' to decorrelation of events
as defined by Formula~(\ref{for3346}).
\end{Rmk}

\begin{Rmk}
One can prove that the bound $\Lambda(\epsilon)$
in~(\ref{for7753}) is the best possible:
see \S~\ref{parOptimalityOfTheStrongEventSufficientCondition}.
\end{Rmk}

\begin{proof}
The core principle of the proof is the same as for Theorem~\ref{thm0367a},
except that we first perform a tricky refinement of the hypothesis:
observing that, for~$A$ and~$B$ with respective probabilities~$p$ and~$q$,
one trivially has $\Pr[A\cap B] \leq p\wedge q$,
the bound~(\ref{for0256b}) can be strengthened into:
\[\label{for0256c} \Pr[A\cap B] \leq \big( pq + \epsilon\sqrt{p(1-p)q(1-q)} \big) \wedge p\wedge q .\]
The right-hand side of~(\ref{for0256c}) will be denoted by~$Z_{\epsilon}(p,q)$.

Now, like in the proof of Theorem~\ref{thm0367a},
if (\ref{for0256b}) is satisfied, for $f$ and~$g$ two $L^2$ real variables
resp.\ $\mathcal{F}$- and $\mathcal{G}$-measurable, having respective distribution functions~$F$ and~$G$
with respective inverses maps~$\alpha$ and~$\beta$:
\[\label{for0268b} \Cov(f,g) \leq \int_{(0,1)^2} \big(Z_{\epsilon}(p,q)-pq\big) \alpha'(p)\beta'(q)
\, \dx{p}\dx{q} .\]
Call~$\langle\alpha',\beta'\rangle_{Z_\epsilon}$ the right-hand side of~(\ref{for0268b}).

To bound $\langle\alpha',\beta'\rangle_{Z_\epsilon}$,
this time we are remaining on a random variable paradigm:
\begin{Def}\label{def5228}
The \emph{Chogosov law}%
\footnote{So called in honour of my dear friend M.~K.~Chogosov.}
is the (unique) probability measure $\mu$ on~$(0,1)^2$ such that
\[\label{for0229}
\forall (p,q) \in (0,1)^2 \qquad \mu\big[ (0,p) \times (0,q) \big] = Z_\epsilon(p,q) .\]
\end{Def}
It shall be proved in \S~\ref{parTheChogosovProcess} that the Chogosov law actually exists.
$\mu$ is invariant under switching $x_1$ and~$x_2$ as the function $Z(p,q)$ is,
and its marginals are uniform on~$(0,1)$ as $Z(p,1) \equiv p$.

The Chogosov law is linked to~$\langle\Bcdot,\Bcdot\rangle_{Z_{\epsilon}}$ by the operator defined next:
\begin{Def}
For $p\in(0,1)$, denote by $\mu_p$ the conditional law of~$x_2$ knowing that $x_1=p$ under~$\mu$:
$(\mu_p)_{p\in(0,1)}$ is the family of probability laws on~$(0,1)$ such that for all measurable $X\subset(0,1)^2$,
\[ \mu[X] = \int_0^1 \mu_p [\{q\colon (p,q)\in X\}] \,\dx{p} .\footnotemark\]
\footnotetext{\emph{Stricto sensu} the family of measures $(\mu_p)_{p\in(0,1)}$
is only defined up to a.s.\ equality; however, this family has a unique version such that
$p \mapsto \mu_p$ is continuous for the weak convergence of measures,
which is the one that we will consider in the sequel.}
\end{Def}
\begin{Def}
Let~$\mcal{L}$ be the operator on bounded functions on~$(0,1)$ defined by
\[ (\mcal{L}\beta)(p) \coloneqq \int_0^1 \beta(q) \,\dx{\mu_p}[q] ;\]
in other words, $\mcal{L}$ is the generator of the stationary Markow chain
$\ldots \to r_0 \to r_1 \to \ldots$ with uniform equilibrium measure on~$(0,1)$
such that the $(r_i,r_{i+1})$ have law~$\mu$.
\end{Def}
\noindent Then the very definition of~$\langle\Bcdot,\Bcdot\rangle_{Z_{\epsilon}}$ yields:
\[ \langle\alpha',\beta'\rangle_{Z_{\epsilon}} = \Cov(\alpha,\mathcal{L}\beta) ,\]
where by writing ``$\Cov(\alpha,\mathcal{L}\beta)$'' I consider
functions~$\alpha$ and~$\mathcal{L}\beta$ as real random variables
on the probability space $(0,1)$ endowed with the uniform measure.

By the Cauchy--Schwarz inequality,
it is then enough to prove that $\Var(\mathcal{L}\beta) \leq \Lambda(\epsilon)\Var(\beta)$,
i.e.\ that the operator norm of~$\mathcal{L}$ on~$\ldb(0,1)$ is bounded above by~$\Lambda(\epsilon)$.
That work is achieved by Lemma~\ref{lem7792} in next subsection.
\end{proof}

\subsection{The Chogosov law}\label{parTheChogosovProcess}

This subsection deals with the ``Chogosov law'',
which we introduced in the proof of Theorem~\ref{thm0367b}.

\begin{NOTA}
Throughout this subsection we suppose $\epsilon\in(0,1)$ fixed
and we write~$\Lambda$ for~$\Lambda(\epsilon)$, resp.~$Z$ for~$Z_{\epsilon}$.
The drawings will be made for $\epsilon=1/2$.
\end{NOTA}

Recall Definition~\ref{def5228} of the Chogosov law.
We first have to check that the Chogosov law actually exists:
\begin{Clm}\label{clm4375}
There exists a (unique) probability measure~$\mu$ on~$(0,1)^2$ such that
\[\label{for7083}
\forall p,q \in [0,1] \quad \mu\big[\big\{(x_1,x_2)\in(0,1)^2 \colon 
x_1\leq p \text{ and } x_2\leq q\big\}\big] = Z(p,q) ,\]
where we recall that
\[ Z(p,q) \coloneqq \big( pq + \epsilon\sqrt{p(1-p)q(1-q)} \big) \wedge p\wedge q .\]
\end{Clm}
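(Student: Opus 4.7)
Plan: Uniqueness of $\mu$ is standard, since the rectangles $(0,p)\times(0,q)$, $p,q\in(0,1)$, form a $\pi$-system generating the Borel $\sigma$-algebra on $(0,1)^2$. For existence, it suffices to verify that $Z$ satisfies the axioms of a two-dimensional distribution function: continuity (immediate, since $Z$ is the pointwise minimum of three continuous functions), the marginal/boundary identities $Z(p,0)=Z(0,q)=0$ and $Z(1,1)=1$ (obtained by direct substitution, noting that $\sqrt{p(1-p)q(1-q)}$ vanishes on the boundary of $[0,1]^2$), and—the crucial point—the \emph{$2$-increasing} property
\[ Z(p_2,q_2)-Z(p_1,q_2)-Z(p_2,q_1)+Z(p_1,q_1)\ \geq\ 0 \qquad (p_1\leq p_2,\ q_1\leq q_2). \]

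To establish the $2$-increasing property, I would partition $[0,1]^2$ into the open region $R_1=\{f_1<p\wedge q\}$, on which $Z=f_1:=pq+\epsilon\sqrt{p(1-p)q(1-q)}$, and the two complementary regions $R_p=\{Z=p\}$ and $R_q=\{Z=q\}$. On $R_p$, $Z$ is independent of $q$, so it contributes $0$ to the mixed second partial; likewise on $R_q$. On $R_1$ the mixed partial is
\[ \partial_p\partial_q f_1(p,q)\ =\ 1+\frac{\epsilon(1-2p)(1-2q)}{4\sqrt{p(1-p)q(1-q)}}. \]
The three regions stack in the correct order in $q$ for each fixed $p$ (one checks that $R_q$ occupies small $q$, $R_1$ an intermediate band, and $R_p$ large $q$), and $Z$ is of class $\mathcal{C}^1$ across the separating curves $\{f_1=p\}$ and $\{f_1=q\}$: indeed on $\{f_1=q\}$ one has $\partial_p f_1=0$ (differentiate $\epsilon^2(1-p)q=p(1-q)$ and substitute), and symmetrically on the other curve, so no singular $1$-dimensional contribution to the mixed partial appears along the boundaries.

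The main obstacle is therefore showing that $\partial_p\partial_q f_1\geq 0$ throughout $R_1$; note that this expression is genuinely negative outside $R_1$ when $(1-2p)(1-2q)$ is very negative, so one really must exploit the defining inequality of $R_1$. I would do this via the trigonometric substitution $p=\sin^2\theta$, $q=\sin^2\phi$ with $\theta,\phi\in(0,\pi/2)$, under which $2\sqrt{p(1-p)}=\sin 2\theta$ and $1-2p=\cos 2\theta$. The defining inequality $f_1\leq p\wedge q$ of $R_1$ then reduces algebraically to
\[ \epsilon\ \leq\ \tan\theta/\tan\phi\ \leq\ 1/\epsilon, \]
and the sought nonnegativity becomes $\sin(2\theta)\sin(2\phi)+\epsilon\cos(2\theta)\cos(2\phi)\geq 0$, which is automatic when $\cos(2\theta)\cos(2\phi)\geq 0$, and in the remaining case (WLOG $\theta<\pi/4<\phi$) follows from the $R_1$ bounds by a short computation after the further substitution $\psi=\pi/2-\phi$. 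Once this is checked, the two-dimensional distribution $\mu$ is the measure with density $\partial_p\partial_q f_1$ on $R_1$ and zero elsewhere, and the claim follows.
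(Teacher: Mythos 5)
Your overall strategy — show $Z$ is a bivariate distribution function by verifying the $2$-increasing property, i.e.\ that $\partial^2_{pq}Z\geq 0$ as a distribution — is the same as the paper's, and your handling of the interior region $R_1$ via the substitution $p=\sin^2\theta$, $q=\sin^2\phi$ is a plausible alternative to the paper's elementary case analysis of the sign of $\tilde p\tilde q$. However, there is a genuine gap in your treatment of the boundary curves, and it is exactly the step the paper has to work hardest on.

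You assert that $Z$ is $\mathcal{C}^1$ across the two separating curves because $\partial_p f_1=0$ on $\{f_1=q\}$ (and symmetrically on the other curve). This is false. On $\{f_1=q\}$ one has $\epsilon^2 p\bar q=q\bar p$, hence $\epsilon\sqrt{q\bar q/(p\bar p)}=q/p$, and substituting into $\partial_p f_1 = q - \epsilon\tilde p\sqrt{q\bar q/(p\bar p)}$ gives $\partial_p f_1 = q(1-\tilde p/p)=q/(2p)$, not $0$; since $\partial_pZ=0$ on the side where $Z=q$, the map $q\mapsto\partial_pZ(p,q)$ jumps by $q/(2p)>0$ as $q$ crosses this curve. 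Similarly, on the other boundary one computes $\partial_p f_1 = q - \tilde p\,\bar q/\bar p$, whereas $\partial_pZ=1$ on the side where $Z=p$, giving a jump of $\bar q/(2\bar p)>0$. So $Z$ is \emph{not} $\mathcal{C}^1$ there, and the distribution $\partial^2_{pq}Z$ does have a singular part supported on the two boundary curves (the Chogosov measure has a nontrivial linear density along them; cf.\ Remark~\ref{rmk5379} of the paper). Fortunately both jumps are nonnegative, so nonnegativity of $\partial^2_{pq}Z$ still holds — but establishing that requires computing the jumps, which is precisely the argument you skipped. As a corollary, your final description of $\mu$ (``density $\partial_p\partial_q f_1$ on $R_1$, zero elsewhere'') is also incorrect: it omits the singular part and does not reproduce $Z$ as its distribution function.
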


\begin{proof}
(\ref{for7083}) means that the density of~$\mu$ on~$(0,1)^2$ is equal to the distribution
$\partial^2_{x_1x_2}Z$; the non-trivial point consists in proving that
that distribution is nonnegative.

\begin{NOTA}
From now on in this subsection elements of~$(0,1)^2$ will be automatically denoted by~$(p,q)$.
Moreover, we will denote $\bar{p} \coloneqq 1-p$ and $\tilde{p} \coloneqq p-1/2$,
resp.\ $\bar{q} \coloneqq 1-q$ and $\tilde{q} \coloneqq q-1/2$.
\end{NOTA}

The analytic formula defining $Z(p,q)$ depends on the zone of~$(0,1)^2$
in which $(p,q)$ lies (see Figure~\ref{fig-mu}):
\begin{itemize}
\item If $p\bar{q} \div q\bar{p} \leq \epsilon^2$, then $Z(p,q) = p$ and we will say that we are in zone~\onecirc;
\item If $\epsilon^2 \leq p\bar{q} \div q\bar{p} \leq \epsilon^{-2}$, then $Z(p,q) = pq + \sqrt{p\bar{p}q\bar{q}}$
and we will say that we are in zone~\twocirc;
\item If $\epsilon^{-2} \leq p\bar{q} \div q\bar{p}$, then $Z(p,q) = q$ and we will say that we are in zone~\threecirc.
\end{itemize}
\begin{figure}
\centering
%
%
\begin{tikzpicture}[x=5.25cm, y=5.25cm, line cap=round, >=stealth]


\draw[->,thin] (0,0) node[anchor=north]{$0$} -- node[midway,anchor=north]{$p$} (1,0) node[anchor=north]{$1$};
\draw[->,thin] (0,0) node[anchor=east ]{$0$} -- node[midway,anchor=east ]{$q$} (0,1) node[anchor=east ]{$1$};
\draw[thin,dotted] (1,0) -- (1,1);
\draw[thin,dotted] (0,1) -- (1,1);


\draw[thick] plot[smooth] file {fuseauD.txt};
\draw[thick] plot[smooth] file {fuseauU.txt};


\node at (.167,.833) {\onecirc};
\node at (.500,.500) {\twocirc};
\node at (.833,.167) {\threecirc};


\coordinate (ligneD) at (0.25000, 0.07692) {};
\coordinate (ligneU) at (0.75000, 0.92308) {};
\path (ligneD) +(0,.125) node[inner sep=1pt] (nomD) {$\mfrak{D}$};
\path (ligneU) +(0,-.125) node[inner sep=1pt] (nomU) {$\mfrak{U}$};
\draw[<-] (ligneD) -- (nomD) ;
\draw[<-] (ligneU) -- (nomU) ;
\end{tikzpicture}
\hfill
%
%
\begin{tikzpicture}[x=5.25cm, y=5.25cm, line cap=round, >=stealth]


\draw[->,thin] (0,0) node[anchor=north]{$0$} -- node[midway,anchor=north]{$p$} (1,0) node[anchor=north]{$1$};
\draw[->,thin] (0,0) node[anchor=east ]{$0$} -- node[midway,anchor=east ]{$q$} (0,1) node[anchor=east ]{$1$};
\draw[thin,dotted] (0,1) -- (1,1);
\draw[thin,dotted] (1,0) -- (1,1);


\begin{scope}[ultra thin]
\input{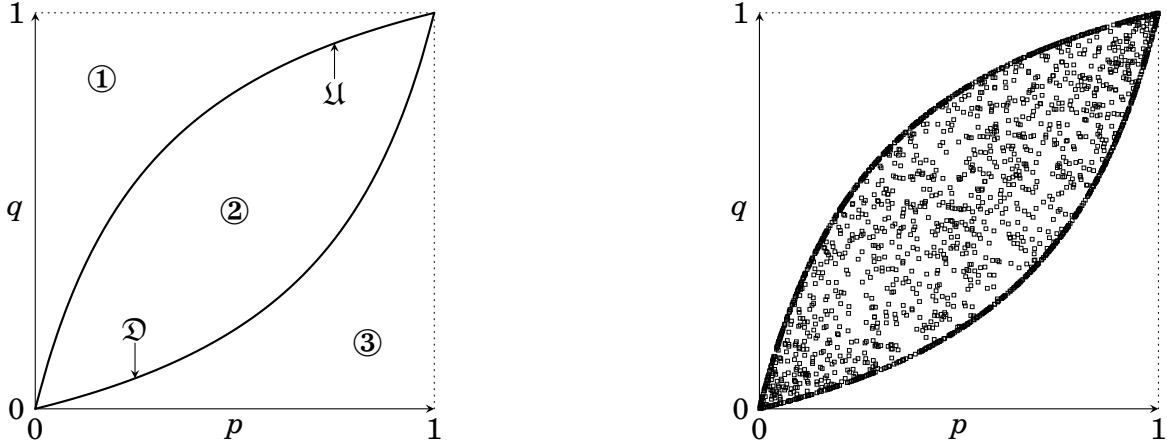}
\end{scope}
\end{tikzpicture}
\caption{The Chogosov law~$\mu$. On the left
are drawn the different zones relative to the support of the measure;
on the right is a cloud of 2,048 independent points with law $\mu$.}%
\label{fig-mu}
\end{figure}

So the expression of~$\partial_pZ$ depends on the zone where one lies:
in~\onecirc\ it is ``$1$'', in~\twocirc\ it is
``$q-\epsilon\tilde{p}\sqrt{q\bar{q} \div p\bar{p}}$'',
and in~\threecirc\ it is ``$0$''. Anyway it is defined and finite evererywhere,
just having jumps at the borders between the zones, which borders we will denote
respectively $\mfrak{U}$ for the border between~\onecirc\ and~\twocirc,
and $\mfrak{D}$ for the border between~\twocirc\ and~\threecirc\ (see Figure~\ref{fig-mu}).
To prove that the distribution~$\partial^2_{pq}Z$ is nonnegative, we have to show
that $\partial_pZ$ is increasing in~$q$ at~$p$ fixed.
Let us check it:
\begin{itemize}
\item In~\onecirc\ and~\threecirc, $\partial_pZ$ is differentiable with
$\partial_q(\partial_pZ) = 0 \geq 0$;
\item In~\twocirc, $\partial_pZ$ is differentiable with
$\partial_q(\partial_pZ) = 1 + \epsilon \tilde{p}\tilde{q} \div \sqrt{p\bar{p}q\bar{q}}$.
Denoting by~$\rho(p,q)$ that expression, let us prove that
$\rho(p,q)$ is nonnegative (and even positive) in~\twocirc:
either $\tilde{p}$ and~$\tilde{q}$ have the same sign and then $\rho(p,q)$ is trivially $\geq 1$,
or $\tilde{p}$ and~$\tilde{q}$ have opposite signs.
In the latter case, say for instance that $(\tilde{p}\geq0\enskip\text{and}\enskip \tilde{q}\ \leq 0)$.
Then $p\geq1/2$ and $q\leq1/2$, so $|\tilde{p}|=p-1/2<p$
and $|\tilde{q}|=1/2-q<\bar{q}$, which implies that
\[ \epsilon \frac{|\tilde{p}\tilde{q}|}{\sqrt{p\bar{p}q\bar{q}}}
< \epsilon \sqrt{\frac{p\bar{q}}{q\bar{p}}} \stackrel{\twocirc}{\leq} \epsilon\sqrt{\epsilon^{-2}} = 1 ,\]
so that $\rho(p,q) > 0$.
\item On~$\mfrak{D}$, $\partial_pZ$ makes a jump. Denote by~$q^{\mfrak{D}}_p$ the unique
$q$ such that $(p,q)\in\mfrak{D}$. When~$q$ tends to~$q^{\mfrak{D}}_p$ by lower values,
$(p,q)$ is in \threecirc, so $\partial_pZ(p,q^{\mfrak{D}}_p-) = 0$,
while when~$q$ tends to~$q^{\mfrak{D}}_p$ by upper values,
$(p,q)$ is in \twocirc, so $\partial_pZ(p,q^{\mfrak{D}}_p+) =
q^{\mfrak{D}}_p-\epsilon\tilde{p}\sqrt{q^{\mfrak{D}}_p\bar{q}^{\mfrak{D}}_p \div p\bar{p}}$.
But on~$\mfrak{D}$, $q\bar{p} = \epsilon^2p\bar{q}$, so
\[ q^{\mfrak{D}}_p- \epsilon\tilde{p}\sqrt{\frac{q^{\mfrak{D}}_p\bar{q}^{\mfrak{D}}_p}{p\bar{p}}}
= q^{\mfrak{D}}_p - \epsilon\tilde{p}\sqrt{\frac{(q^{\mfrak{D}}_p)^2}{\epsilon^2p^2}}
= q^{\mfrak{D}}_p \bigg( 1 - \frac{\tilde{p}}{p} \bigg) = \frac{q^{\mfrak{D}}_p}{2p} > 0 ,\]
so that the jump of~$\partial_pZ(p,\Bcdot)$ at~$q^{\mfrak{D}}_p$
occurs in the increasing sense.
\item Similarly we find that on~$\mfrak{U}$, with obvious notation,
$\partial_pZ(p,q^{\mfrak{U}}_p+) - \partial_pZ(p,q^{\mfrak{U}}_p-) = \bar{q}^{\mfrak{U}}_p \div 2\bar{p} > 0$.
\end{itemize}
So we have proved that $\partial_pZ(p,q)$ is increasing in~$q$, which is what we wanted.
\end{proof}

\begin{Rmk}\label{rmk5379}
The measure~$\mu$ has a rather complicated structure:
it is supported by zone~\twocirc; it has density
$1+\epsilon\tilde{p}\tilde{q} \div \sqrt{p\bar{p}q\bar{q}}$ w.r.t.\ the Lebesgue measure
in the interior of that zone,
and on its boundaries it has a \emph{linear} density giving a mass $(q\div 2p)\,\dx{p}$
to the infinitesimal part of~$\mfrak{D}$ of abscissa~$p$,
resp.\ a mass $(\bar{q}\div 2\bar{p})\,\dx{p}$ to the infinitesimal part of~$\mfrak{U}$ of abscissa~$p$.
See Figure~\ref{fig-mu}.
\end{Rmk}

Now that its existence is ensured, we notice a crucial property of the operator~$\mcal{L}$:
\begin{Pro}\label{pro1288}
$\mcal{L}$ is self-adjoint on~$L^2(0,1)$.
\end{Pro}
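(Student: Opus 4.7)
The plan is to derive self-adjointness from the symmetry of the Chogosov law~$\mu$ under swapping coordinates, which translates into reversibility of the associated Markov chain with respect to its uniform equilibrium measure.

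First I would observe that the function $Z(p,q)$ appearing in Definition~\ref{def5228} is manifestly symmetric: $Z(p,q) = Z(q,p)$, since the defining formula $\big(pq + \epsilon\sqrt{p(1-p)q(1-q)}\big)\wedge p\wedge q$ is symmetric in $(p,q)$. By uniqueness in Claim~\ref{clm4375}, this forces $\mu$ to be invariant under the involution $(p,q)\mapsto(q,p)$, as already noted in the text just after Definition~\ref{def5228}. Moreover, since $Z(p,1) = p$, the marginals of~$\mu$ are uniform on~$(0,1)$, so the disintegration
\[ \dx\mu(p,q) = \dx{p}\,\dx{\mu_p}(q) = \dx{q}\,\dx{\mu_q}(p) \]
holds with the uniform reference measure on both sides.

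Then, for $\alpha,\beta$ two bounded functions on~$(0,1)$, I would compute directly:
\begin{multline*}
\langle \mcal{L}\beta, \alpha \rangle_{L^2(0,1)}
= \int_0^1 \alpha(p) \Big( \int_0^1 \beta(q) \,\dx{\mu_p}(q) \Big) \,\dx{p}
= \int_{(0,1)^2} \alpha(p)\beta(q) \,\dx\mu(p,q) \\
= \int_{(0,1)^2} \alpha(p)\beta(q) \,\dx\mu(q,p)
= \int_0^1 \beta(q) \Big( \int_0^1 \alpha(p) \,\dx{\mu_q}(p) \Big) \,\dx{q}
= \langle \beta, \mcal{L}\alpha \rangle_{L^2(0,1)},
\end{multline*}
where the third equality uses the symmetry $\mu = \mu \circ \mathrm{swap}$. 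A standard density argument extends this to all $\alpha,\beta \in L^2(0,1)$, provided one first checks that $\mcal{L}$ is bounded on $L^2(0,1)$, which follows from $\mcal{L}$ being a Markov kernel preserving the uniform measure (hence a contraction on every $L^p$).

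There is no real obstacle here: the heart of the matter is the symmetry $Z(p,q) = Z(q,p)$, which is immediate from the form of~$Z$. The only mild subtlety is to justify switching the order of integration against~$\mu$, which is legitimate as soon as $\alpha\beta$ is $\mu$-integrable; for $\alpha,\beta \in L^\infty$ this is trivial, and the general case follows by approximation using that $\mu$ has total mass~$1$ and uniform marginals, so that $\|\alpha\beta\|_{L^1(\mu)} \leq \|\alpha\|_{L^2}\|\beta\|_{L^2}$ by Cauchy--Schwarz applied to each marginal.
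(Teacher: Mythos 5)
Your proof is correct and takes essentially the same approach as the paper: the paper's proof is a one-liner noting that $\langle\alpha,\mcal{L}\beta\rangle = \EE_\mu[\alpha(p)\beta(q)]$, which is symmetric in $\alpha,\beta$ because $\mu$ is invariant under swapping $p$ and $q$. You simply spell out the disintegration identity, the source of the symmetry in $Z$, and the boundedness/density step that the paper leaves implicit.
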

\begin{Rmk}
As $\mcal{L}1 \equiv 1$, we can also consider $\mcal{L}$
as an operator on the quotient space $\ldb(0,1)$,
on which it shall also be self-adjoint.
\end{Rmk}
\begingroup\def\proofname{Proof of Proposition~\ref{pro1288}}\begin{proof}
Indeed $\langle \alpha , \mcal{L}\beta \rangle = \EE_{\mu}[\alpha(p)\beta(q)]$,
which is invariant under switching $\alpha$ and~$\beta$
as $\mu$ is invariant under switching $p$ and~$q$.
\end{proof}\endgroup

Now we can turn to the main result of this subsection:
\begin{Lem}\label{lem7792}
The operator norm of~$\mathcal{L}$ on~$\ldb(0,1)$ is bounded above by~$\Lambda$.
\end{Lem}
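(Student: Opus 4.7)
By Proposition~\ref{pro1288} the operator $\mathcal{L}$ is self-adjoint, and by the spectral theorem for self-adjoint operators its norm on~$\ldb(0,1)$ is given by the quadratic form:
\[ \VERT \mathcal{L} \VERT_{\ldb(0,1)} = \sup_{\beta \in \ldb(0,1) \setminus \{0\}} \frac{|\langle \beta, \mathcal{L}\beta \rangle|}{\|\beta\|_{L^2}^2} .\]
So the entire task reduces to showing that $|\EE_\mu[\beta(X_1)\beta(X_2)]| \leq \Lambda \,\|\beta\|_{L^2}^2$ for every centered $\beta \in L^2(0,1)$, where $(X_1,X_2)$ denotes a pair with joint law $\mu$. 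By density I would first assume $\beta$ to be $\mathcal{C}^1$.

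The key step is to apply the Fubini-type manipulation used in the proof of Lemma~\ref{lem8312} to the functions $\beta \circ X_1$ and $\beta \circ X_2$. Since $\mu$ has uniform marginals, the centering of $\beta$ in $\ldb$ translates into the correct boundary behaviour of the cumulative distribution integrals, and I expect to obtain the clean identity
\[\label{keyquad}
 \langle \beta, \mathcal{L}\beta \rangle = \int_0^1 \int_0^1 \beta'(r_1)\,\beta'(r_2)\, K(r_1, r_2) \,\dx{r_1}\dx{r_2} ,
\]
where the kernel $K(r_1,r_2) \coloneqq Z(r_1,r_2) - r_1 r_2$ is exactly the positive distribution whose analysis already occupied Claim~\ref{clm4375}.

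Using the zone-by-zone description of $Z$, one sees that $0 \leq K(r_1,r_2) \leq \epsilon\,\sqrt{r_1\bar{r}_1 r_2 \bar{r}_2}$ on the whole square: inside zone~\twocirc\ this is an equality, while in zones~\onecirc\ and~\threecirc\ the inequalities $p\bar{q} \leq \epsilon^{2} q \bar{p}$ (resp.~the symmetric one) imply $p\bar{q}, q\bar{p} \leq \epsilon\sqrt{r_1\bar{r}_1 r_2\bar{r}_2}$ as well. More importantly, $K$ vanishes outside the \emph{band} formed by zone~\twocirc, whose transverse width is of order~$\epsilon$ in the arc-sine coordinate $\theta = \arcsin(2r-1)$ (indeed the boundaries $\mfrak{U}$ and $\mfrak{D}$ are of the form $\theta_2 - \theta_1 = O(\epsilon)$ uniformly away from $\theta_i = \pm \pi/2$).

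The hard part—and the place where the logarithmic factor $(1+|\ln\epsilon|)$ appears—is to convert~\eqref{keyquad} into a bound on $\|\beta\|_{L^2}^2$ rather than on some weighted Dirichlet energy of~$\beta$. Following Ollivier's suggestion of using Lipschitz spaces, I would interpret~\eqref{keyquad} as pairing $\beta$ against a Lipschitz-type functional: for each scale $s \gtrsim \epsilon$, the part of $K$ living within distance~$s$ of the diagonal contributes at most $\epsilon \, s^{-1} \|\beta\|_{L^2}^2$ by a Hardy-type or Cauchy--Schwarz argument across this thin strip, and a dyadic summation over the allowed scales $s \in [\epsilon, 1]$ produces the $|\ln\epsilon|$ factor. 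Combining with the trivial bound $\epsilon\|\beta\|_{L^2}^2$ coming from the coarsest scale gives the claimed bound $\Lambda(\epsilon)\|\beta\|_{L^2}^2$. The main technical obstacle lies precisely in carrying out this scale decomposition uniformly in~$\beta$: one must handle the logarithmic singularity of the Jacobi weight $\sqrt{r\bar{r}}$ near the endpoints~$r=0,1$, which is exactly what the Lipschitz-space framework is designed to do.
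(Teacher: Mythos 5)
Your reduction to bounding the quadratic form is valid, and the identity $\langle\beta,\mathcal{L}\beta\rangle = \int_0^1\int_0^1\beta'(r_1)\beta'(r_2)\,K(r_1,r_2)\,\dx{r_1}\dx{r_2}$ (with $K = Z_\epsilon - r_1r_2$) does hold for $\mathcal{C}^1$ functions $\beta$ by double integration by parts, since $K$ vanishes on the boundary of the square. So the starting point is correct, and it is a genuinely different angle of attack from the paper: the paper never touches the $L^2$ quadratic form directly.

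However there are concrete errors and a gap that I do not see how to close. First, the statement that ``$K$ vanishes outside the band formed by zone~\twocirc'' is false: in zones~\onecirc\ and~\threecirc\ one has $K = (p\wedge q) - pq > 0$ everywhere in the open square. You in fact correctly established, in the immediately preceding sentence, the sharper pointwise bound $0\le K\le\epsilon\sqrt{p\bar p q\bar q}$, which holds everywhere, so the measure $\partial^2_{pq}Z$ being supported on zone~\twocirc\ should not be confused with $K$ itself vanishing there. Second, the claimed per-scale contribution of order $\epsilon s^{-1}$ cannot produce $\Lambda(\epsilon)=\epsilon(1+|\ln\epsilon|)$: a geometric sum of $\epsilon s^{-1}$ over dyadic $s\in[\epsilon,1]$ is dominated by the smallest scale and gives $O(1)$, not $O(\epsilon|\ln\epsilon|)$. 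The width of zone~\twocirc\ is also not uniformly $O(\epsilon)$ in the arcsine (or any reasonable) coordinate; near the corners $(0,0)$ and $(1,1)$ it degenerates, which is precisely where the quasi-eigenvector $f_\Lambda(p)=\int_{1/2}^p(p'\bar p')^{-3/2}\dx{p'}$ concentrates its $L^2$ mass.

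The deeper obstruction is this. To use the pointwise bound on $K$ you would have to replace $\beta'$ by $|\beta'|$ (exploiting $K\ge0$), and you would then face
\[ \int\!\!\int|\beta'(p)||\beta'(q)|\,K(p,q)\,\dx{p}\dx{q} \le \epsilon\left(\int_0^1|\beta'(p)|\sqrt{p\bar p}\,\dx{p}\right)^{\!2}, \]
which is exactly the quantity one would want to control by $\|\sqrt{p\bar p}\|_{H^1_0}^2\,\Var(\beta)$ via Claim~\ref{clm6860}—except that $\|\sqrt{p\bar p}\|_{H^1_0}=+\infty$, as the paper notes in the opening lines of \S~\ref{parTheStrongEventSufficientCondition}. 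Moreover, replacing $\beta'$ by $|\beta'|$ changes $\Var(\beta)$ into the variance of the monotone function with derivative $|\beta'|$, which is strictly larger in general, so the comparison with $\Var(\beta)$ is lost anyway. You cannot simply reduce to monotone $\beta$.

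The paper's proof avoids the $L^2$ quadratic form altogether and is worth contrasting. It introduces the metric $d_\eta(p_1,p_2)=\int_{p_1}^{p_2}(p\bar p)^{-3/2+\eta}\,\dx{p}$, proves that $\mathcal{L}$ maps the Lipschitz space $\barLip(\eta)$ into itself with norm $\Lambda_\eta\le\epsilon^{-2\eta}\Lambda$, and does so by a monotone coupling of the conditional laws $\mu_{p_1}$, $\mu_{p_2}$ via the common quantile map $Q(p,\omega)$; the $|\ln\epsilon|$ factor then appears from a clean, explicit integration of $1/(q\bar q)$ across zone~\twocirc, which has logarithmic extent in that weight. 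Crucially, this is a statement about a \emph{non-Hilbert} Banach norm; the transfer back to the operator norm on $\ldb(0,1)$ is done via Lemma~\ref{l3253}, which exploits self-adjointness: a bound of $\limsup_k|\langle\mathcal{L}^k f,f\rangle|^{1/k}$ on a dense subspace controls the spectral radius, hence the norm. This last passage—proving a contraction in an auxiliary Lipschitz norm and then invoking the spectral lemma to convert it into an $L^2$ operator-norm bound—is the ingredient your proposal is missing, and without it I do not see how the logarithmic bound can be reached by a direct quadratic-form estimate.
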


\begin{proof}
Let~$\eta \in (0,1/2)$, devised to tend to~$0$, and define the distance $d_{\eta}$ on~$(0,1)$ by:
\[ \forall p_1 < p_2 \qquad d_{\eta}(p_1,p_2) \coloneqq \int_{p_1}^{p_2} (p\bar{p})^{-3/2+\eta} \,\dx{p} .\]
For continuous $f \colon (0,1)\to\RR$, define
\[ \|f\|_{\mathit{Lip}(\eta)} \coloneqq \sup_{p_1\neq p_2} \frac{|f(p_2)-f(p_1)|}{d_{\eta}(p_1,p_2)} ,\]
and denote by~$\mathit{Lip}(\eta)$ the set of functions $f$ with $\|f\|_{\mathit{Lip}(\eta)} < \infty$.
$\mathit{Lip}(\eta)$ is obviously complete for~$\|\Bcdot\|_{\mathit{Lip}(\eta)}$,
yet that semi-norm is not definite since it is zero for any constant function.
We thus define $\barLip(\eta)$ as $\mathit{Lip}(\eta)/\RR$,
which is actually a Banach space.
I claim that
\begin{Clm}\label{clm1291}
$\barLip(\eta)$ is continuously imbedded in~$\ldb(0,1)$,
i.e.\ there exists some $C<\infty$ (depending on~$\eta$) such that
for all~$f \in \mathit{Lip}(\eta)$, $\ecty(f) \leq C \|f\|_{\mathit{Lip}(\eta)}$.
\end{Clm}

\begingroup\def\proofname{Proof of Claim~\ref{clm1291}}
\begin{proof}
Fix some arbitrary $p_0\in(0,1)$.
For~$f \in \mathit{Lip}(\eta)$, denoting $y_0 \coloneqq f(p_0)$,
one has, for all~$p \in (0,1)$,
\[ | f(p)-y_0 | \leq \|f\|_{\mathit{Lip}(\eta)} \Big| \int_{p_0}^{p} (q\bar{q})^{-3/2+\eta} \dx{q} \Big| ,\]
whence:
\[\label{for1492} \ecty(f) \leq \sqrt{\int_0^1 \big|f(p)-y_0\big|^2 \dx{p}} \leq
\|f\|_{\mathit{Lip}(\eta)}
\sqrt{\int_0^1 \Big( \int_{p_0}^{p} (q\bar{q})^{-3/2+\eta} \dx{q} \Big)^{\!2} \dx{p}}
.\]
Since $\eta>0$, the integral in the right-hand side of~(\ref{for1492}) is finite,
which proves the claim.
\end{proof}\endgroup

Now, the cruxpoint is the following claim, whose proof is postponed:
\begin{Clm}\label{clm3453}
\begin{ienumerate}
\item\label{itm6574a} There exists a constant $\Lambda_{\eta}<\infty$ such that
for all~$f \in \barLip(\eta)$, $\|\mathcal{L}f\|_{\mathit{Lip}(\eta)} \leq \Lambda_{\eta} \|f\|_{\mathit{Lip}(\eta)}$.
\item\label{itm6574b} It is possible to choose $\Lambda_{\eta}$ so that
$\liminf_{\eta\searrow0} \Lambda_{\eta} \leq \Lambda$.
\end{ienumerate}
\end{Clm}

\noindent Using Claims~\ref{clm1291} and~\ref{clm3453},
one has for all~$f \in \barLip(\eta)$, for~$n\in\NN$,
\[\label{for3674}
\ecty(\mathcal{L}^n f) \leq C \| \mathcal{L}^n f \|_{\mathit{Lip}(\eta)} \leq
C \Lambda_{\eta}^n \|f\|_{\mathit{Lip}(\eta)} \stackrel{n\longto\infty}{=} O(\Lambda_{\eta}^n) .\]
By Lemma~\ref{l3253}, since $\mathcal{L}$ is self-adjoint on~$\ldb(0,1)$
and $\barLip(\eta)$ is a dense subset of~$\ldb(0,1)$,
(\ref{for3674}) implies that $\VERT \mathcal{L} \VERT_{\ldb(0,1)} \leq \Lambda_{\eta}$.
Making $\eta \searrow 0$, $\VERT \mathcal{L} \VERT_{\ldb(0,1)} \leq \Lambda$, \textsc{qed}.
\end{proof}

\begingroup\def\proofname{Proof of Claim~\ref{clm3453}}
\begin{proof}
The proof relies on monotone rearrangement of measures (cf.~\cite[p.~75]{Villani-topics}).
For~$p \in (0,1)$, $\omega \in [0,1]$, define
\[ Q(p,\omega) \coloneqq \inf\big\{ q\in(0,1) \colon \partial_p Z(p,q) \geq \omega \big\} \]
(see Figure~\ref{fig-Q}),
so that $Q(p,\omega)$ is nondecreasing in~$\omega$
and that, for $\omega$ with uniform law on~$[0,1]$,
the law of~$Q(p,\omega)$ is the conditioned version $\mu_p$ of the Chogosov law,
therefore giving:
\[ \big(\mathcal{L}f\big)(p) = \EE\big[f\big(Q(p,\omega)\big)\big] \footnotemark .\]
\footnotetext{Beware that here the expectation is not taken w.r.t.~$p$
but w.r.t.~$\omega$.}%
Then one has the following `coupling formula':
\[\label{for4666}
\big(\mathcal{L}f\big)(p_2) - \big(\mathcal{L}f\big)(p_1) = \EE\big[f\big(Q(p_2,\omega)\big) - f\big(Q(p_1,\omega)\big)\big] .\]
From~(\ref{for4666}) we deduce that
\[ \big| \big(\mathcal{L}f\big)(p_2) - \big(\mathcal{L}f\big)(p_1) \big|
\leq \|f\|_{\mathit{Lip}(\eta)} \EE\big[d_{\eta}\big(Q(p_1,\omega),Q(p_2,\omega)\big)\big] .\]
So, if we can prove that for all~$p_1<p_2$,
\[\label{for5207}
\EE\big[d_{\eta}\big(Q(p_1,\omega),Q(p_2,\omega)\big)\big] \leq \Lambda_{\eta} d_{\eta}(p_1,p_2) ,\]
then we are done.
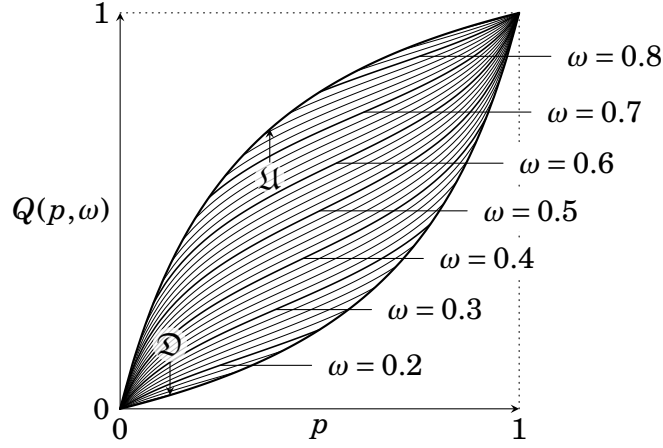
\begin{figure}
\centering
\begin{tikzpicture}[x=5.25cm,y=5.25cm,>=stealth,line cap=round]
\draw[->,anchor=north] (0,0) node{$0$} -- node{$p$}      (1,0) node{$1$};
\draw[->,anchor=east ] (0,0) node{$0$} -- node{$Q(p,\omega)$} (0,1) node{$1$};
\draw[dotted] (0,1) -- (1,1);
\draw[dotted] (1,0) -- (1,1); 

\draw[thick] plot[smooth] file {fuseauD.txt};
\draw[thick] plot[smooth] file {fuseauU.txt};

\begin{scope}[very thin]
\draw plot[smooth] file {equiQ14.txt};
\draw plot[smooth] file {equiQ16.txt};
\draw plot[smooth] file {equiQ18.txt};
\draw plot[smooth] file {equiQ20.txt} [semithick];
\draw plot[smooth] file {equiQ22.txt};
\draw plot[smooth] file {equiQ24.txt};
\draw plot[smooth] file {equiQ26.txt};
\draw plot[smooth] file {equiQ28.txt};
\draw plot[smooth] file {equiQ30.txt} [semithick];
\draw plot[smooth] file {equiQ32.txt};
\draw plot[smooth] file {equiQ34.txt};
\draw plot[smooth] file {equiQ36.txt};
\draw plot[smooth] file {equiQ38.txt};
\draw plot[smooth] file {equiQ40.txt} [semithick];
\draw plot[smooth] file {equiQ42.txt};
\draw plot[smooth] file {equiQ44.txt};
\draw plot[smooth] file {equiQ46.txt};
\draw plot[smooth] file {equiQ48.txt};
\draw plot[smooth] file {equiQ50.txt} [semithick];
\draw plot[smooth] file {equiQ52.txt};
\draw plot[smooth] file {equiQ54.txt};
\draw plot[smooth] file {equiQ56.txt};
\draw plot[smooth] file {equiQ58.txt};
\draw plot[smooth] file {equiQ60.txt} [semithick];
\draw plot[smooth] file {equiQ62.txt};
\draw plot[smooth] file {equiQ64.txt};
\draw plot[smooth] file {equiQ66.txt};
\draw plot[smooth] file {equiQ68.txt};
\draw plot[smooth] file {equiQ70.txt} [semithick];
\draw plot[smooth] file {equiQ72.txt};
\draw plot[smooth] file {equiQ74.txt};
\draw plot[smooth] file {equiQ76.txt};
\draw plot[smooth] file {equiQ78.txt};
\draw plot[smooth] file {equiQ80.txt} [semithick];
\draw plot[smooth] file {equiQ82.txt};
\draw plot[smooth] file {equiQ84.txt};
\draw plot[smooth] file {equiQ86.txt};
\end{scope}

\begin{scope}[mynode/.style={anchor=west,shape=rounded rectangle,fill=white}]
\draw let \p0 = (.2500,.1098) in
(\p0) -- (\p0 -| 0.48,0) node[mynode]{$\omega=0.2$};
\draw let \p0 = (.3889,.2506) in
(\p0) -- (\p0 -| 0.63,0) node[mynode]{$\omega=0.3$};
\draw let \p0 = (.4583,.3797) in
(\p0) -- (\p0 -| 0.76,0) node[mynode]{$\omega=0.4$};
\draw let \p0 = (.5000,.5000) in
(\p0) -- (\p0 -| 0.87,0) node[mynode]{$\omega=0.5$};
\draw let \p0 = (.5417,.6203) in
(\p0) -- (\p0 -| 0.96,0) node[mynode]{$\omega=0.6$};
\draw let \p0 = (.6111,.7494) in
(\p0) -- (\p0 -| 1.03,0) node[mynode]{$\omega=0.7$};
\draw let \p0 = (.7500,.8902) in
(\p0) -- (\p0 -| 1.08,0) node[mynode]{$\omega=0.8$};
\end{scope}

\begin{scope}[mynode/.style={shape=circle,fill=white,fill opacity=.75,text opacity=1,inner sep=0pt}]
\draw[<-] (.1250,.0345) -- +(0,+.0833) node[anchor=south,mynode] {$\mfrak{D}$};
\draw[<-] (.3750,.7059) -- +(0,-.0833) node[anchor=north,mynode] {$\mfrak{U}$};
\end{scope}

\end{tikzpicture}
\caption{The function~$Q(p,\omega)$. This drawing plots the functions $Q(\protect\Bcdot,\omega)$
for values of~$\omega$ running from $0$ to~$1$ with step $0.02$.
Note that all these functions are defined on the whole $(0,1)$:
in fact the graph of~$Q$ `merges' with~$\mfrak{D}$ beyond a certain point for $\omega<1/2$,
resp.\ it merges with~$\mfrak{U}$ below a certain point for $\omega>1/2$.
For $\omega<\epsilon^2/2$, resp.\ $\omega>1-\epsilon^2/2$ (which corresponds here to~$\omega<0.125$, resp.\ $\omega>0.875$),
the whole graph of~$Q(p,\protect\Bcdot)$ is actually equal to the curve $\mfrak{D}$, resp.~$\mfrak{U}$.}\label{fig-Q}
\end{figure}

Now I claim (it will be checked later) that $Q$ is absolutely continuous w.r.t.~$p$,
i.e.\ that there exists an integrable function~$Q' \colon (0,1)\times[0,1] \to \RR$
such that for all~$\omega,p_1,p_2$ one has $Q(p_2,\omega) = Q(p_1,\omega) + \int_{p_1}^{p_2} Q'(p,\omega) \,\dx{p}$.
Introducing that function, (\ref{for5207}) becomes:
\[\label{for5278} \EE\Big[ \Big| \int_{p_1}^{p_2} \big(Q(p,\omega)\bar{Q}(p,\omega)\big)^{-3/2+\eta} Q'(p,\omega) \,\dx{p} \Big| \Big]
\leq \Lambda_{\eta} \EE\Big[ \int_{p_1}^{p_2} (p\bar{p})^{-3/2+\eta} \,\dx{p} \Big] ,\]
so by Fubini's theorem (which is legal here since, as we will see later, $Q'$ is bounded),
proving~(\ref{for5278}) for all~$p_1<p_2$ is tantamount to proving that, for all~$p\in(0,1)$,
\[\label{for5278b} \EE \Big[ \big(Q(p,\omega)\bar{Q}(p,\omega)\big)^{-3/2+\eta} \big|Q'(p,\omega)\big| \Big]
\leq \Lambda_{\eta} (p\bar{p})^{-3/2+\eta} .\]

So we have to compute $Q'(p,\omega)$.
Using the structure of the law $\mu$ (cf.\ Remark~\ref{rmk5379}),
we find the following (see Figure~\ref{fig-Q}):
\begin{itemize}
\item First if $\omega < q^{\mfrak{D}}_p \div 2p$, then $Q(p,\omega) = q^{\mfrak{D}}_p$,
whence $Q'(p,\omega) = \dx{q^{\mfrak{D}}_p}/\dx{p}$.
Differentiating the equality $q\bar{p}=\epsilon^2p\bar{q}$ defining $\mfrak{D}$,
one finds that $\dx{q^{\mfrak{D}}_p}/\dx{p} = (q^{\mfrak{D}}_p + \epsilon^2\bar{q}^{\mfrak{D}}_p) \div (\bar{p}+\epsilon^2p)$,
which simplifies into $q^{\mfrak{D}}_p\bar{q}^{\mfrak{D}}_p \div p\bar{p}$ using once again
that $q\bar{p}=\epsilon^2p\bar{q}$.
\item Similarly if $\omega > 1 - \bar{q}^{\mfrak{U}}_p \div 2\bar{p}$,
one has $Q'(p,\omega) = q^{\mfrak{U}}_p\bar{q}^{\mfrak{U}}_p \div p\bar{p}$.
\item If $q^{\mfrak{D}}_p \div 2p < \omega < 1 - \bar{q}^{\mfrak{U}}_p \div 2\bar{p}$, then
$\partial_p Z(p,q) = q-\epsilon\tilde{p}\sqrt{q\bar{q}/p\bar{p}}$,
thus differentiating the equality $\partial_p Z \big(p,Q(p,\omega)\big) = \omega$,
we get:
\[ Q'(p,\omega) = \frac{\epsilon\sqrt{Q(p,\omega)\bar{Q}(p,\omega)}}
{4\sqrt{p\bar{p}}^3 \bigg( 1 + \epsilon \frac{\tilde{p}\tilde{Q}(p,\omega)}{\sqrt{p\bar{p}Q(p,\omega)\bar{Q}(p,\omega)}} \bigg)} .\]
\item Finally in the critical cases $\omega = q^{\mfrak{D}}_p \div 2p, 1 - \bar{q}^{\mfrak{U}}_p \div 2\bar{p}$,
there is no canonical value for~$Q'(\omega)$ since at these points
$Q(\Bcdot,\omega)$ is not $\mathcal{C}^1$, but that does not matter.
\end{itemize}

\begin{Rmk}
Note that one always has $Q'(p,\omega)>0$, i.e.\ $Q(\Bcdot,\omega)$ is increasing.
In other words, for $p_1 < p_2$, $\mu_{p_1}$ is stochastically smaller than~$\mu_{p_2}$.
\end{Rmk}

We have computed $Q'(p,\omega)$, so now we can tackle~(\ref{for5278b}):
we have to bound 
\[\label{for7401} \int_0^1 \bigg(\frac{p\bar{p}}{Q(p,\omega)\bar{Q}(p,\omega)}\bigg)^{\!3/2-\eta}
Q'(p,\omega) \dx{\omega} ,\]
uniformly in~$p$.
We begin with noticing that
\begin{Clm}\label{clm7320}
For all~$p\in(0,1)$, all~$q\in[q^{\mfrak{D}}_p,q^{\mfrak{U}}_p]$,
one has $q\bar{q}/p\bar{p} \leq \epsilon^{-2}$.
\end{Clm}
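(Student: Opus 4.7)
The plan is to reduce the claim to a clean algebraic check via a well-chosen change of variables. I begin by observing that $[q^{\mfrak{D}}_p, q^{\mfrak{U}}_p]$ is, at fixed~$p$, the closed vertical slice of zone~\twocirc, so the hypothesis on~$q$ is equivalent to saying that $(p,q)$ satisfies $\epsilon^2 \leq p\bar{q}/(q\bar{p}) \leq \epsilon^{-2}$, by the very definition of that zone given in the proof of Claim~\ref{clm4375}.

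Next I introduce the ``odds\nobreakdash-ratios'' $r \coloneqq p/\bar{p}$ and $s \coloneqq q/\bar{q}$, which range over~$(0,\infty)$ as $p,q$ range over~$(0,1)$. An elementary computation yields $p\bar{p} = r/(1+r)^2$, $q\bar{q} = s/(1+s)^2$, and $p\bar{q}/(q\bar{p}) = r/s$. Hence the hypothesis amounts to saying that $t \coloneqq r/s$ lies in $[\epsilon^2,\epsilon^{-2}]$, while the target inequality $\epsilon^2 q\bar{q} \leq p\bar{p}$ rewrites as $\epsilon^2(1+ts)^2 \leq t(1+s)^2$. A direct expansion then gives
\[ t(1+s)^2 - \epsilon^2(1+ts)^2 = (t-\epsilon^2) + 2ts(1-\epsilon^2) + ts^2(1-\epsilon^2 t) ,\]
and the three summands on the right-hand side are each manifestly nonnegative: respectively because $t \geq \epsilon^2$, because $\epsilon \leq 1$, and because $t \leq \epsilon^{-2}$.

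If there is any obstacle here at all, it is merely in picking the right parametrization: handling the inequality directly in $(p,q)$ — or even in the more symmetric variables $(p\bar{p}, q\bar{q})$ — produces an unwieldy manipulation, whereas the odds-ratio substitution linearizes the hypothesis and turns the conclusion into a quadratic in~$s$ whose three coefficients are individually of the desired sign.
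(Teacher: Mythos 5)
Your proof is correct, and it takes a genuinely different route from the paper's. The paper's argument is a short two-case factorization: depending on whether $p\leq q$ or $p\geq q$, it writes $q\bar{q}/p\bar{p}$ as either $(\bar{q}/\bar{p})^2\cdot q\bar{p}/(p\bar{q})$ or $(q/p)^2\cdot p\bar{q}/(q\bar{p})$; in each case the first factor is $\leq 1$ and the second is $\leq\epsilon^{-2}$ by the defining inequalities of zone~\twocirc. Your odds-ratio substitution $r=p/\bar p$, $s=q/\bar q$, $t=r/s$ replaces that casework with a single algebraic expansion of $t(1+s)^2-\epsilon^2(1+ts)^2$ into a sum of three manifestly nonnegative terms. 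What the paper's route buys is brevity and the fact that it operates directly on the quantities $p,q$ without any change of variables; what your route buys is the elimination of the dichotomy $p\lessgtr q$ (which, in your coordinates, would be the dichotomy $t\lessgtr1$), at the modest cost of an extra reparametrization and a slightly longer algebraic check. Both are elementary and self-contained; neither dominates the other, and the correspondence between them is transparent once one notices that the paper's choice of factorization is exactly a choice governed by the sign of $\log t$.
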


\begingroup\def\proofname{Proof of Claim~\ref{clm7320}}
\begin{proof}
The condition $q\in[q^{\mfrak{D}}_p,q^{\mfrak{U}}_p]$ means that
$\epsilon^2 \leq p\bar{q}/q\bar{p} \leq \epsilon^{-2}$.
Then we distinguish two cases:
\begin{itemize}
\item If $p\leq q$, then $q\bar{q}/p\bar{p} = (\bar{q}/\bar{p})^2 q\bar{p}/p\bar{q} \leq q\bar{p}/p\bar{q} = (p\bar{q}/q\bar{p})^{-1} \leq \epsilon^{-2}$;
\item If $p\geq q$, then $q\bar{q}/p\bar{p} = (q/p)^2 p\bar{q}/q\bar{p} \leq p\bar{q}/q\bar{p} \leq \epsilon^{-2}$.
\end{itemize}
\end{proof}\endgroup

\noindent Thanks to Claim~\ref{clm7320}, we bound~(\ref{for7401}) by
\[\label{for7472} \epsilon^{-2\eta} \int_0^1 \bigg(\frac{p\bar{p}}{Q(p,\omega)\bar{Q}(p,\omega)}\bigg)^{\!3/2} Q'(p,\omega) \dx{\omega} ,\]
which we shorthand into $\epsilon^{-2\eta}\lambda(p)$.
Splitting the integral in~(\ref{for7472}) according to the value of~$\omega$
(resp.\ for~$\omega\in(0,q^{\mfrak{D}}_p/2p)$, $(q^{\mfrak{D}}_p/2p,1-\bar{q}^{\mfrak{U}}_p/2\bar{p})$
and~$(1-\bar{q}^{\mfrak{U}}_p/2\bar{p},1)$), one finds:
\begin{eqnarray}
\label{for6433a}
\lambda(p) &=& \frac{q^{\mfrak{D}}_p}{2p} \bigg(\frac{p\bar{p}}{q^{\mfrak{D}}_p\bar{q}^{\mfrak{D}}_p}\bigg)^{\!3/2}
\frac{q^{\mfrak{D}}_p \bar{q}^{\mfrak{D}}_p}{p\bar{p}} \\
\label{for7783}
&+& \int_{q^{\mfrak{D}}_p/2p}^{1-\bar{q}^{\mfrak{U}}_p/2\bar{p}} \bigg(\frac{p\bar{p}}{Q(p,\omega)\bar{Q}(p,\omega)}\bigg)^{\!3/2}
\frac{\epsilon\sqrt{Q(p,\omega)\bar{Q}(p,\omega)}}
{4\sqrt{p\bar{p}}^3 \bigg( 1 + \epsilon \frac{\tilde{p}\tilde{Q}(p,\omega)}{\sqrt{p\bar{p}Q(p,\omega)\bar{Q}(p,\omega)}} \bigg)} \dx{\omega} \\
\label{for6433b}
&+& \frac{\bar{q}^{\mfrak{U}}_p}{2\bar{p}} \bigg(\frac{p\bar{p}}{q^{\mfrak{U}}_p\bar{q}^{\mfrak{U}}_p}\bigg)^{\!3/2}
\frac{q^{\mfrak{U}}_p \bar{q}^{\mfrak{U}}_p}{p\bar{p}}.
\end{eqnarray}
(\ref{for6433a}) simplifies into $\frac{1}{2}\sqrt{q^{\mfrak{D}}_p\bar{p}/p\bar{q}^{\mfrak{D}}_p} = \frac{1}{2}\sqrt{\epsilon^2} = \epsilon/2$;
similarly $(\ref{for6433b}) = \frac{1}{2}\sqrt{p\bar{q}^{\mfrak{U}}_p/q^{\mfrak{U}}_p\bar{p}} = \epsilon/2$.
Concerning term~(\ref{for7783}),
we make the change of variables $q = Q(p,\omega)$, for which
$\dx{\omega} = ( 1 + \epsilon \tilde{p}\tilde{q}/\sqrt{p\bar{p}q\bar{q}} ) \,\dx{q}$
because of the expression of the density of~$\mu$ in zone \twocirc\ (cf.\ Remark~\ref{rmk5379}).
One gets:
\[(\ref{for7783}) = \frac{\epsilon}{4} \int_{q^{\mfrak{D}}_p}^{q^{\mfrak{U}}_p} \frac{1}{q\bar{q}} \dx{q}
= \frac{\epsilon}{4} \bigg[ \ln \frac{q}{\bar{q}} \bigg]_{q^{\mfrak{D}}_p}^{q^{\mfrak{U}}_p}
= \frac{\epsilon}{4} \bigg(\ln \frac{\bar{p}}{\epsilon^2p} - \ln \frac{\epsilon^2\bar{p}}{p} \bigg)
= \frac{\epsilon}{4} \ln\frac{1}{\epsilon^4} = \epsilon|\ln\epsilon|.\]
So in the end we have $\lambda(p) = \epsilon/2+\epsilon/2+\epsilon|\ln\epsilon| = \Lambda$
for all~$p$, thus $\Lambda_{\eta} \leq \epsilon^{-2\eta}\Lambda$ (hence (\ref{itm6574a})),
which tends to~$\Lambda$ as $\eta\searrow0$ (hence (\ref{itm6574b})).
\end{proof}\endgroup

\begin{Rmk}
The simplifications in the computation of~$\lambda(p)$ look rather miraculous\dots\
\emph{A priori} I only expected that $\lambda(p) \leq \Lambda$ on~$(0,1)$
with $\lambda(p) \stackrel{p\longto 0,1}{\longto} \Lambda$).
That I found the exact quasi-eigenvector
associated to the quasi-eigenvalue $\Lambda$ (cf.\ Remark~\ref{r6061})
is purely fortuitous; I have no simple explanation for why things work so well.
\end{Rmk}

\begin{Rmk}\label{r6061}
$\mathcal{L}$ is self-adjoint, hence normal,
so its operator norm is also its spectral radius.
Therefore there is some $(\textit{eigenvalue},\textit{eigenvector})$ pair,
or more precisely (since here the spectral radius of~$\mathcal{L}$ is due to its continuous spectrum)
some `quasi-eigenvalue' and its `quasi-eigenvector'
(cf.~\cite[\S~4]{nonstandard}), which are responsible for the value of the operator norm.

Tracking this quasi-eigenvector throughout the proof of Lemma~\ref{lem7792},
we find that $\Lambda$ is a quasi-eigenvalue of~$\mathcal{L}$
and that the associated quasi-eigenvector is:
\[ f_{\Lambda} \colon p \mapsto \int_{1/2}^p (p'\bar{p}')^{-3/2} \dx{p'} .\]
Obviously $f_{\lambda}$ is not in~$L^2$,
so it is not a true eigenvector; however
one can perturb it slightly to get an element $\tilde{f}_\Lambda\in\ldb(0,1)\setminus\{0\}$
such that $\langle \mathcal{L}\tilde{f}_{\Lambda},\tilde{f}_{\Lambda} \rangle_{\ldb(0,1)}
\mathbin{\big/}
\|\tilde{f}_{\Lambda}\|_{\ldb(0,1)}^2$
is arbitrarily close to~$\Lambda$.
\end{Rmk}

\begin{Rmk}\label{rmk0995}
An interesting feature of~$f_\Lambda$ is that its `$L^2$ mass'
is concentrated about~$0$ and~$1$, so that one needs only look
at what happens near~$0$ and~$1$ to understand
how $f_\Lambda$ contributes to the operator norm of~$\mathcal{L}$.

When one `zooms' more and more to the point $(0,0)$
—the same behaviour would happen about~$(1,1)$ —,
$\mu$ `looks more and more like' the measure~$\mu^*$ on~$(0,\infty)^2$
defined by (see Figure~\ref{fig-mustar}):
\[\label{fo4208}
\forall p,q \in [0,\infty)^2 \quad \mu^*\big[\big\{(x_1,x_2)\in(0,\infty)^2 \colon 
x_1\leq p \text{ and } x_2\leq q\big\}\big] = \epsilon\sqrt{pq} \wedge p \wedge q ,\]
i.e.
\[ \dx\mu^*(p,q) = \1{\{\epsilon^2p<q<\epsilon^{-2}p\}} \frac{\epsilon}{4\sqrt{pq}} \dx{p}\dx{q}
+ \1{\{q=\epsilon^2p\}} \frac{\epsilon^2}{2} \dx{p}
+ \1{\{q=\epsilon^{-2}p\}} \frac{1}{2} \dx{p} .\]
\begin{figure}
\centering
%
%
\begin{tikzpicture}[x=5.25cm, y=5.25cm, line cap=round, >=stealth]


\fill[gray!12.5] (0,0) -- (.25,1) -- (1,1) -- (1,.25) -- cycle ;
\draw[thick] (0,0) -- node[near end,sloped,above]{$q=\epsilon^{-2}p$} (.25,1) ;
\draw[thick] (0,0) -- node[near end,sloped,below]{$q=\epsilon^2  p$} (1,.25) ;


\node[anchor=north east, inner sep=1pt] (0,0) {$0$};
\draw[thin,->] (0,0) -- node[near end,anchor=north]{$p$} (1,0);
\draw[thin,->] (0,0) -- node[near end,anchor=east ]{$q$} (0,1);


\node at (.6,.6) {$C$};

\end{tikzpicture}
\hfill
\begin{tikzpicture}[x=5.25cm, y=5.25cm, line cap=round, >=stealth]



\node[anchor=north east, inner sep=1pt] (0,0) {$0$};
\draw[thin,->] (0,0) -- node[near end,anchor=north]{$p$} (1,0);
\draw[thin,->] (0,0) -- node[near end,anchor=east ]{$q$} (0,1);


\begin{scope}[ultra thin]
\input{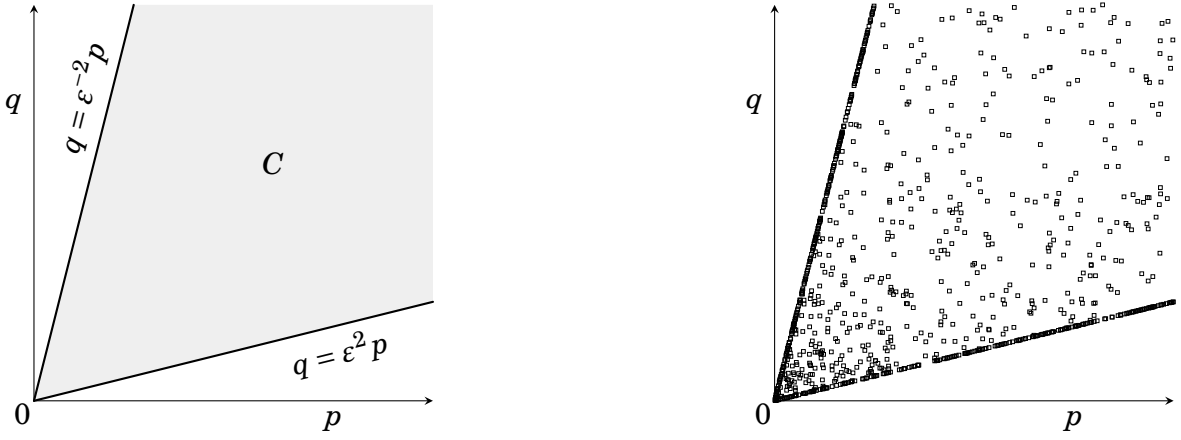}
\end{scope}

\end{tikzpicture}
\caption{The measure~$\mu^*$. On the left, the different zones for the measure;
on the right, a Poisson cloud of points with density $\mu^*$.
The scale and density of the cloud are consistent with Figure~\ref{fig-mu}.}%
\label{fig-mustar}
\end{figure}
So, near $0$, $\mathcal{L}$ behaves like the operator $\mathcal{L}^*$ on~$L^2(0,\infty)$
defined by:
\[\label{fo4259}
\big(\mathcal{L}^*f\big)(p) = \int_{\epsilon^2p}^{\epsilon^{-2}p} \frac{\epsilon}{4\sqrt{pq}} \dx{q}
+ \frac{\epsilon^2}{2} f(\epsilon^2p) + \frac{1}{2} f(\epsilon^{-2}p) .\]

$\mathcal{L}^*$ has scale invariance properties which make it easy to study.
One finds that $\mathcal{L}^*$ is self-adjoint,
that its spectral radius is $\Lambda$,
and that it has $\Lambda$ as a quasi-eigenvalue, associated with the quasi-eigenvector
$(p\mapsto1/\sqrt{p})$.
So, you see that it suffices to study the `local' operator $\mathcal{L}^*$
to compute the spectral radius of the `global' operator $\mathcal{L}$;
in other words, there is a phenomenon of `localization of the spectral radius'
for~$\mathcal{L}$.
\end{Rmk}

\subsection{Optimality of the strong event sufficient condition}%
\label{parOptimalityOfTheStrongEventSufficientCondition}

Now I will prove that Theorem~\ref{thm0367b} is optimal:
\begin{Thm}\label{pro1616}
The factor $\Lambda(\epsilon)$ in~(\ref{for7753}) cannot be improved.
In other words, for all~$\Lambda'<\Lambda(\epsilon)$ it is possible to find
$\sigma$-fields~$\mathcal{F}$ and~$\mathcal{G}$ satisfying
\[\label{for8717}
\forall A \in \mathcal{F}, B \in \mathcal{G} \quad \Pr[A\cap B] - \Pr[A]\Pr[B] \leq \epsilon \sqrt{\Pr[A]\Pr[\C{A}]\Pr[B]\Pr[\C{B}]}, \]
but such that $\{\mathcal{F}:\mathcal{G}\} \geq \Lambda'$.
\end{Thm}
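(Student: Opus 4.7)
My candidate is to take $(X,Y)$ distributed according to the Chogosov law $\mu$ on $(0,1)^2$ (Claim~\ref{clm4375}), and to set $\mcal{F} \coloneqq \sigma(X)$ and $\mcal{G} \coloneqq \sigma(Y)$. By Proposition~\ref{pro7764}, $\{\mcal{F}:\mcal{G}\} = \VERT \pi_{\mcal{G}\mcal{F}} \VERT$, and by the very construction of $\mu$ this conditional expectation operator is~$\mcal{L}$ on~$\ldb(0,1)$. Lemma~\ref{lem7792} already delivers $\VERT \mcal{L} \VERT \leq \Lambda$; for the matching lower bound, I would test against regularized truncations of the quasi-eigenvector $f_\Lambda$ of Remark~\ref{r6061}, which after compactly supporting them away from $\{0,1\}$ lie in $\ldb(0,1)$ and yield Rayleigh quotients approaching $\Lambda$. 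This would give $\{\mcal{F}:\mcal{G}\} = \Lambda$, and in particular it would exceed any prescribed $\Lambda'<\Lambda$.

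\textbf{The event inequality.} The remaining task is to verify that, for every pair of measurable $U,V\subset(0,1)$,
\[ \mu(U\times V) - |U|\,|V| \leq \epsilon\sqrt{|U|\,|\C{U}|\,|V|\,|\C{V}|} .\]
For lower-left rectangles $U=(0,p)$ and $V=(0,q)$, this is precisely the defining inequality $Z(p,q)-pq\leq\epsilon\sqrt{p\bar p q\bar q}$, so it holds by construction of $\mu$. To pass to arbitrary $U,V$, I would use a distribution-sense Stieltjes integration by parts: approximating $U,V$ by finite unions of intervals with endpoints $x_i,y_j$ carrying alternating signs $\sigma_i,\sigma'_j$,
\[ \mu(U\times V) - |U|\,|V| \ = \ \sum_{i,j}\sigma_i\sigma'_j\,h(x_i,y_j), \qquad h(p,q) \coloneqq Z(p,q)-pq, \]
so the statement reduces to a signed-sum estimate on $h$. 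The available inputs are the pointwise bound $h(p,q)\leq\epsilon\sqrt{p\bar p q\bar q}$, the vanishing of $h$ on $\partial[0,1]^2$, and the positivity of $\partial^2_{pq}Z$ established in Claim~\ref{clm4375}.

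\textbf{Main difficulty and fallback.} I expect this signed-sum step to be the delicate part: the estimate on $h$ is only pointwise, so controlling alternating combinations of $h(x_i,y_j)$ requires a more structural argument than term-by-term replacement. Should a direct bound prove too stubborn, a safer route is discretization: take $\mcal{F}$ and~$\mcal{G}$ generated by finite partitions of $(0,1)$ refined near $\{0,1\}$ (where $f_\Lambda$ concentrates, by Remark~\ref{rmk0995}), compute $\{\mcal{F}:\mcal{G}\}$ via Proposition~\ref{pro3798} and check that it converges to $\Lambda$ as the partition refines, while verifying the finitely many instances of the event inequality by telescoping reduction to the rectangle case. Since the theorem only demands $\Lambda'<\Lambda$ strictly, such an approximation scheme is enough to conclude.
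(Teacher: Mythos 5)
Your starting point is exactly what the paper itself calls the ``natural'' construction: take $(\Omega,\Pr)=\big((0,1)^2,\mu\big)$ with $\mcal{F}=\sigma(p)$, $\mcal{G}=\sigma(q)$. The correlation side of your argument is fine in spirit---regularized truncations of the quasi-eigenvector $f_\Lambda$ do produce Rayleigh quotients approaching $\Lambda$, which together with Lemma~\ref{lem7792} gives $\{\mcal{F}:\mcal{G}\}=\Lambda$. The genuine gap is precisely the step you flag as the ``main difficulty'': passing from lower-left rectangles (where the event bound is the defining identity of $\mu$) to arbitrary Borel $U,V$. You correctly reduce it to controlling alternating sums $\sum_{i,j}\sigma_i\sigma'_j\,h(x_i,y_j)$, but the only input you have---a \emph{pointwise} upper bound on $h$---says nothing about such alternating combinations. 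The paper states this outright: ``the complicated structure of~$\mu$ makes existence of a short proof for that property unlikely.'' Your discretization fallback does not escape this: the finitely many instances of the event inequality for a partition are exactly the alternating sums you cannot bound, and refining the partition near $\{0,1\}$ does not make them easier.

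The paper's escape is the missing idea. First, it relaxes the target: instead of demanding the event bound with the exact constant $\epsilon$, it only proves it with an arbitrary $\epsilon'>\epsilon$, which suffices because $\Lambda(\Bcdot)$ is continuous and the theorem only requires $\Lambda'<\Lambda(\epsilon)$. Second---and this is the crux---it abandons $\mu$ in favor of a measure $\nu$ built from the self-similar ``local'' law $\mu^*$ on $(0,\infty)^2$: $\nu$ coincides with $\mu^*$ on $(0,x]^2$ for a small $x$, and is essentially the product measure outside. For $\mu^*$ the event bound for \emph{arbitrary} Borel sets is actually provable (Lemma~\ref{clm8768}, via a monotone-rearrangement argument exploiting the explicit density of~$\mu^*$), and the product structure outside $(0,x]^2$ controls the remaining contributions, yielding the event bound for $\nu$ with constant $\epsilon'$ once $x$ is small. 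The lower bound $\{\mcal{F}:\mcal{G}\}\geq\Lambda$ is then obtained from rescaled test functions $f_y$ supported in $(0,y]$, using the scale invariance of $\mcal{L}^*$, rather than by mollifying $f_\Lambda$ directly. Without this switch from $\mu$ to a truncation of $\mu^*$, the verification of the event condition for arbitrary sets remains open, so your proposal does not yet close the proof.
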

\begin{Rmk}
One can automatically add absolutes values
in the left-hand side of the condition~(\ref{for8717}),
since $-\big(\Pr[A\cap B] - \Pr[A]\Pr[B]\big) = \Pr[A\cap \C{B}] - \Pr[A]\Pr[\C{B}]$.
\end{Rmk}

Actually I will rather prove the following statement,
which is equivalent to the theorem by continuity of the function~$\Lambda(\Bcdot)$:
\begin{Clm}
For all~$\epsilon'>\epsilon$ it is possible to find $\sigma$-fields~$\mathcal{F}$ and~$\mathcal{G}$
satisfying $\{\mathcal{F}:\mathcal{G}\} \geq \Lambda(\epsilon)$, but such that
\[\label{for8717b}
\forall A \in \mathcal{F}, B \in \mathcal{G} \quad \Pr[A\cap B] - \Pr[A]\Pr[B] \leq \epsilon' \sqrt{\Pr[A]\Pr[\C{A}]\Pr[B]\Pr[\C{B}]} .\]
\end{Clm}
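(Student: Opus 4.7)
The plan is to take $(X,Y)$ jointly distributed according to the Chogosov law $\mu$ of Definition~\ref{def5228} and set $\mathcal{F} \coloneqq \sigma(X)$, $\mathcal{G} \coloneqq \sigma(Y)$. Since $X$ and $Y$ are both uniform on $(0,1)$, the spaces $\ldb(\mathcal{F})$ and $\ldb(\mathcal{G})$ are canonically identified with $\ldb(0,1)$, and under this identification the projection $\pi_{\mathcal{G}\mathcal{F}}$ coincides with the operator $\mathcal{L}$ of \S~\ref{parTheChogosovProcess}.

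For the correlation lower bound, I would invoke Proposition~\ref{pro7764} to write $\{\mathcal{F}:\mathcal{G}\} = \VERT\mathcal{L}\VERT_{\ldb(0,1)}$, and then appeal to Remark~\ref{r6061}: the function $f_\Lambda(p) = \int_{1/2}^p (p'(1-p'))^{-3/2} \dx{p'}$ is a quasi-eigenvector for $\mathcal{L}$ with quasi-eigenvalue $\Lambda(\epsilon)$, so suitable $\ldb$-approximations of $f_\Lambda$ (truncating away from $0$ and $1$) show $\VERT\mathcal{L}\VERT_{\ldb(0,1)} \geq \Lambda(\epsilon)$. Combined with Lemma~\ref{lem7792} this is in fact an equality, but only the lower bound is needed.

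For the event sufficient condition I must show that for all measurable $A_0, B_0 \subset (0,1)$, the excess $\mu(A_0 \times B_0) - |A_0||B_0|$ is at most $\epsilon' \sqrt{|A_0|(1-|A_0|)|B_0|(1-|B_0|)}$. By construction of~$\mu$, this holds with the sharp constant $\epsilon$ when $A_0 = (0,p)$ and $B_0 = (0,q)$ are half-lines, and by complementation when they are complements of half-lines. To treat arbitrary events I would exploit the nonnegativity of $\partial_p\partial_q Z_\epsilon$ established in Claim~\ref{clm4375}: writing $\nu \coloneqq \mu - \mathrm{Leb}\otimes\mathrm{Leb}$, one represents $\nu(A_0\times B_0)$ as a superposition of box-contributions via the distribution $\partial^2_{pq} Z_\epsilon$, with rearrangement losses absorbed by the slack $\epsilon'-\epsilon$.

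The main obstacle will be precisely this last step: the measure $\mu$ mixes an absolutely continuous piece on zone~\twocirc\ with two singular \emph{line} components supported on $\mathfrak{D}$ and $\mathfrak{U}$ (Remark~\ref{rmk5379}), so passing from half-line bounds to general-event bounds requires careful bookkeeping. If the rearrangement argument turns out to be too delicate to close cleanly, a natural fallback is to smooth $\mu$ into $\mu_\delta \coloneqq (1-\delta)\mu + \delta\,\mathrm{Leb}\otimes\mathrm{Leb}$ for small $\delta>0$: this strictly relaxes the event bound on the singular pieces while affecting the operator norm of the associated $\mathcal{L}_\delta$ by only $O(\delta)$, so choosing $\delta=\delta(\epsilon,\epsilon')$ small enough produces the required pair $\mathcal{F},\mathcal{G}$.
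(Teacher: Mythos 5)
Your proposed system — the Chogosov law $\mu$ itself, with $\mathcal{F}=\sigma(X)$, $\mathcal{G}=\sigma(Y)$ — is precisely the ``natural'' candidate that the paper opens its own proof by considering and then discarding. The first half of your argument is sound: the identification $\pi_{\mathcal{G}\mathcal{F}}=\mathcal{L}$, the appeal to the quasi-eigenvector of Remark~\ref{r6061}, and the resulting lower bound $\{\mathcal{F}:\mathcal{G}\}\geq\Lambda(\epsilon)$ all go through. You have also correctly located the obstruction: one must show that for \emph{arbitrary} Borel sets $A_0,B_0\subset(0,1)$ (not just intervals of the form $(0,p)$), one has $\mu(A_0\times B_0)-|A_0||B_0|\leq\epsilon'\sqrt{|A_0||B_0|(1-|A_0|)(1-|B_0|)}$. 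The paper itself says this ``seems to be true'' but that a short proof is ``unlikely''.

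The gap is in your fallback. Smoothing $\mu_\delta\coloneqq(1-\delta)\mu+\delta\,\mathrm{Leb}\otimes\mathrm{Leb}$ just rescales the covariance: $\mu_\delta(A\times B)-|A||B|=(1-\delta)\bigl(\mu(A\times B)-|A||B|\bigr)$ for every pair of events. So the smoothed measure satisfies the event bound with constant $\epsilon'$ if and only if $\mu$ satisfies it with constant $\epsilon'/(1-\delta)$ — which we have no way to establish, since we do not yet control the left-hand side for $\mu$ at all. Smoothing does nothing to ``relax the bound on the singular pieces'' differentially; it cannot substitute for proving the event estimate for $\mu$, which was the whole difficulty. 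Your rearrangement sketch via $\partial^2_{pq}Z_\epsilon\geq0$ only gives a decomposition of $\mu(A_0\times B_0)-|A_0||B_0|$ into signed box-contributions; keeping track of the cancellations for arbitrary $A_0,B_0$ on a measure with two curved singular line components would require an argument roughly as heavy as the one you are trying to avoid.

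The paper sidesteps this by changing the model. Instead of $\mu$, it uses a truncation $\nu$ of the scale-invariant limit measure $\mu^*$ (which is $\mu$ ``zoomed in'' at $(0,0)$; see Remark~\ref{rmk0995}). The gain is twofold. First, $\mu^*$ is supported on a straight cone with density $\epsilon/4\sqrt{pq}$ plus two straight singular rays, so the conditional laws $\mu^*_q$ have a simple monotone structure; this makes it tractable to prove the key bound $\mu^*[A\times B]\leq\epsilon\sqrt{|A||B|}$ for \emph{arbitrary} $A,B$ (Lemma~\ref{clm8768}), by a genuine monotone-rearrangement argument in five explicit steps. Second, the exact scale invariance of the associated operator $\mathcal{L}^*$ lets one produce rescaled near-optimizers $f_y$ living on $(0,y]$ with $y\to0$, so the correlation lower bound $\{\mathcal{F}:\mathcal{G}\}\geq\Lambda$ is obtained directly on the truncated model; the truncation only contributes an error that can be absorbed into $\epsilon'-\epsilon$ by shrinking the truncation scale $x$. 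In short: where you keep $\mu$ and try to perturb it, the paper replaces $\mu$ by a structurally simpler measure for which both halves of the claim can be closed.
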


\begin{proof}
According to the proof of Theorem~\ref{thm0367b},
the `natural' proof would be to take for space $(\Omega,\mathcal{B},\Pr)$
the set $(0,1)^2$ equipped with its Borel $\sigma$-field
and endowed with the Chogosov law~$\mu$,
and to set $\mathcal{F} = \sigma(p)$ and $\mathcal{G} = \sigma(q)$.
Though it seems to be true that that system satisfies~(\ref{for0256b}),
the complicated structure of~$\mu$ makes existence of a short proof for that property unlikely.
Therefore I will rather adapt the previous idea
to the nicer measure~$\mu^*$ defined by~(\ref{fo4208}),
or more precisely to a `truncation' of it.

My system is the following:
$(\Omega,\mathcal{B},\Pr)$ is the set $(0,1)^2$
equipped with its Borel $\sigma$-field and endowed with a certain measure~$\nu$
(specified just after), and I take $\mathcal{F} = \sigma(p)$, resp.\ $\mathcal{G} = \sigma(q)$.
The measure~$\nu$, which depends on some parameter $x \in (0,1)$ morally close to~$0$,
is a measure on~$(0,1)^2$ having uniform marginals,
which coincides with~$\mu^*$ on~$(0,x]^2$
and which is `as uniform as possible' outside $(0,x]^2$ (see Figure~\ref{fig-nu}).
Technically:
\[ \nu[A\times B] = \begin{cases}
\mu^*(A\times B) & \text{if $A \subset (0,x]$ and $B \subset (0,x]$;} \\
0 & \text{if $A \subset (0,\epsilon^2x]$ and $B \subset (x,1)$;} \\
0 & \text{if $A \subset (x,1)$ and $B \subset (0,\epsilon^2x]$;} \\
\big[ \int_A \big( 1 - \frac{\epsilon}{2}\sqrt{\frac{x}{p}} \big) \dx{p} \big] |B| \big/ (1-x)
& \text{if $A \subset (\epsilon^2x,x]$ and $B \subset (x,1)$;} \\
|A| \big[ \int_B \big( 1 - \frac{\epsilon}{2}\sqrt{\frac{x}{q}} \big) \dx{q} \big] \big/ (1-x)
& \text{if $A \subset (x,1)$ and $B \subset (\epsilon^2x,x]$;} \\
{[}1-(2-\epsilon)x] |A| |B| / (1-x)^2 & \text{if $A \subset (x,1)$ and $B \subset (x,1)$.}
\end{cases} \]
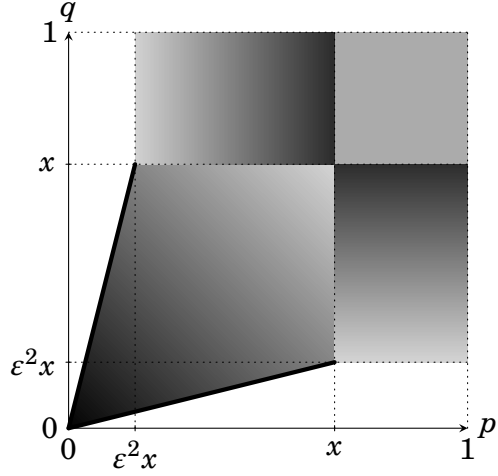
\begin{figure}
\centering
\begin{tikzpicture}[x=5.25cm, y=5.25cm, line cap=round, >=stealth]

\begin{scope}
\clip[draw] (0,0) -- (.1667,.6667) -- (.6667,.6667) -- (.6667,.1667) -- cycle ;
\shade[top color=black!100,bottom color=black!17,shading angle=135] (.3333,.3333) circle (.4714);
\end{scope}

\shade[left color=black!17,right color=black!83] (.1667,.6667) rectangle (.6667,1) ;
\shade[bottom color=black!17,top color=black!83] (.6667,.1667) rectangle (1,.6667) ;
\fill[black!33] (.6667,.6667) rectangle (1,1) ;

\draw[ultra thick] (0,0) -- (.1667,.6667) ;
\draw[ultra thick] (0,0) -- (.6667,.1667) ;
\draw[thin,dotted] (.1667,0) -- +(0,1) ;
\draw[thin,dotted] (.6667,0) -- +(0,1) ;
\draw[thin,dotted] (1,0) -- +(0,1) ;
\draw[thin,dotted] (0,.1667) -- +(1,0) ;
\draw[thin,dotted] (0,.6667) -- +(1,0) ;
\draw[thin,dotted] (0,1) -- +(1,0) ;

\draw[->,thin] (0,0) node[anchor=north]{$0$} --
(1,0) node[anchor=north]{$1$} node[anchor=west]{$p$} ;
\draw[->,thin] (0,0) node[anchor=east]{$0$} --
(0,1) node[anchor=east]{$1$} node[anchor=south]{$q$} ;

\draw (.1667,0) +(0,1pt) -- +(0,-1pt) node[anchor=north]{$\epsilon^2x$} ;
\draw (.6667,0) +(0,1pt) -- +(0,-1pt) node[anchor=north]{$x$} ;
\draw (0,.1667) +(1pt,0) -- +(-1pt,0) node[anchor=east ]{$\epsilon^2x$} ;
\draw (0,.6667) +(1pt,0) -- +(-1pt,0) node[anchor=east ]{$x$} ;

\end{tikzpicture}
\caption{A schematic representation of the measure~$\nu$.}%
\label{fig-nu}
\end{figure}

\noindent\textit{First step: Proof that $\{\mathcal{F}:\mathcal{G}\} \geq \Lambda$.}\quad
Let~$\Lambda' < \Lambda$.
Since $\Lambda$ is in the spectrum of the self-adjoint operator $\mathcal{L}^*$ on~$L^2(0,\infty)$
(see~(\ref{fo4259}) and the lines just below),
there exists $f \in L^2(0,\infty)\setminus\{0\}$
such that $\langle \mathcal{L}^*f,f \rangle / \|f\|_{L^2(0,\infty)} > \Lambda'$.
By a standard truncation argument, we can assume that $f$ has bounded support,
say that $f$ is zero outside $(0,Y]$.
Dividing $f$ by its norm we can also assume that $\|f\|_{L^2} = 1$.

Now, for~$y \in (0,x]$ define the function~$f_y$ by:
\[ f_y(p) \coloneqq \sqrt{\frac{Y}{y}} \, f \! \bigg(\frac{Y}{y}p\bigg) .\]
$f_y$ is zero outside $(0,y] \allowbreak {\subset (0,x]}$; it satisfies $\|f_y\|_{L^2} = 1$ and
\[ \langle \mathcal{L}^*f_y, f_y \rangle = \langle \mathcal{L}^*f,f \rangle > \Lambda' \]
by the scale invariance properties of~$\mathcal{L}^*$.

Denote $m \coloneqq \int f(p) \,\dx{p} / \sqrt{Y}$, which is finite since $f$ is $L^2$ with compact support;
one has $\int f_y(p) \,\dx{p} = \sqrt{y}m$,
so the projection of~$f_y$ on~$\ldb(0,1)$ is the function~$\bar{f}_y = f_y-\sqrt{y}m$.
One has $\|\bar{f}_y\|_{\ldb(0,1)} \leq \|f_y\|_{L^2} = 1$,
and
\[ \EE\big[ \bar{f}_y(p)\bar{f}_y(q) \big] = \langle \mathcal{L}^*f_y,f_y \rangle - m^2y
> \Lambda' - m^2y ,\]
so that $\{\mathcal{F}:\mathcal{G}\} > \Lambda' - m^2y$. Making $y \longto 0$
and then $\Lambda' \longto \Lambda$, one finally gets $\{\mathcal{F}:\mathcal{G}\} \geq \Lambda$.

\noindent\textit{Second step: Proof of~(\ref{for8717b}).}\quad
Let~$\epsilon'>\epsilon$; we want to prove that, provided $x$ is small enough,
(\ref{for8717b}) is satisfied.

Let~$A$ and~$B$ be resp.\ $\mathcal{F}$- and $\mathcal{G}$-measurable events.
One can assume safely that $|A| \leq 1/2$,
since replacing simultaneously~$A$ by~$\C{A}$ and~$B$ by~$\C{B}$ leaves
both sides of~(\ref{for8717b}) unchanged.
One can also assume that $|B| < 1/(1+\epsilon^2)$,
since for $|B|\geq1/(1+\epsilon^2)$, (\ref{for8717b}) comes `for nothing'
by writing
\[ \Pr[A\cap B] - \Pr[A]\Pr[B] \leq \Pr[A](1-\Pr[B]) \leq
\sqrt{\Pr[A]\Pr[\C{A}]} \times \epsilon \sqrt{\Pr[B]\Pr[\C{B}]}
.\]

\begin{NOTA}
In the sequel of this proof we indentify~$A$ and~$B$ with Borel subsets of~$(0,1)$,
rewriting the $p$-measurable event $A$ into the set $A\times(0,1)$, resp.\
the $q$-measurable event $B$ into the set $(0,1)\times B$.
Since both marginals of~$\nu$ are uniform on~$(0,1)$,
one then has $\Pr[A]=|A|$, resp.\ $\Pr[B]=|B|$, so that our goal becomes proving:
\[ \nu[A\times B] - |A||B| \leq \epsilon' \sqrt{|A||B||\C{A}||\C{B}|} .\]
\end{NOTA}

Denote $\check{A} \coloneqq A\cap(0,x]$, resp.\ $\check{B} \coloneqq B\cap(0,x]$.
Provided $x\leq\epsilon/2$, the signed measure~$\dx\nu(p,q) - \dx{p}\dx{q}$ is nonpositive
on~$(0,x]\times(x,1) \cup (x,1)\times(0,x]$,
so that
\[\label{for1899}
\nu[A\times B] - |A||B| \leq
\nu\big[\check{A}\times\check{B}\big] - |\check{A}| |\check{B}|
+ \nu\big[(A\setminus\check{A})\times(B\setminus\check{B})\big]
- \big|A\setminus\check{A}\big| \big|B\setminus\check{B}\big| .\]

Now let us bound above the right-hand side of~(\ref{for1899}):
\begin{itemize}
\item The second term is obviously nonpositive.
\item The third term is $[1-(2-\epsilon)x] \* |A\setminus\check{A}| \* |B\setminus\check{B}| \div (1-x)^2$, so the sum of the two last terms is~%
${(\epsilon x-x^2)} \* {|A\setminus\check{A}|} \* {|B\setminus\check{B}|} \div
(1-x)^2 \ab \leq (\epsilon x - x^2) \* |A| |B|/ \* (1-x)^2$.
Since $|A| \leq 1/2$ and $|B| \leq 1/\allowbreak{(1+\epsilon^2)}$, that quantity
is in turn bounded by~$\frac{\epsilon x-x^2}{\epsilon(1-x)^2} \times
\sqrt{|A||B||\C{A}||\C{B}|}$.
\item For the first term, by Lemma~\ref{clm8768} stated just below, one has
$\nu\big[\check{A}\times \check{B}\big] = \mu^*\big[\check{A}\times \check{B}\big]
\leq \epsilon \sqrt{\mathsmaller{\big|\check{A}\big|\big|\check{B}\big|}}
\leq \epsilon \sqrt{\mathsmaller{\big|\check{A}\big|\big|\C{\check{A}}\big|\big|\check{B}\big|\big|\C{\check{B}}\big|}} \div (1-x)$,
in which, provided $x \leq \epsilon^2 \div (1+\epsilon^2)$, one has
$\sqrt{\mathsmaller{\big|\check{B}\big|\big|\C{\check{B}}\big|}} \leq \sqrt{B|\C{B}|}$
(because then $|\check{B}| \leq |B|\wedge x$ and $|B| \leq 1-x$),
and similarly $\sqrt{\mathsmaller{\big|\check{A}\big|\big|\C{\check{A}}\big|}} \leq \sqrt{A|\C{A}|}$,
so that in the end the first term is bounded by
$\frac{\epsilon}{1-x} \sqrt{|A||B||\C{A}||\C{B}|}$.
\end{itemize}
Summing things up, we get:
\[\label{f4004} \nu[A\times B] - |A||B| \leq
\bigg( \frac{\epsilon}{1-x} + \frac{\epsilon x-x^2}{\epsilon(1-x)^2} \bigg)
\sqrt{|A||B||\C{A}||\C{B}|} .\]
Taking $x$ sufficiently close to~$0$,
the first factor of the right-hand side of~(\ref{f4004}) is $\leq\epsilon'$,
whence the second step of the proof.
\end{proof}

\begin{Lem}\label{clm8768}
For all~$A,B \subset (0,\infty)^2$ with Lebesgue measures $|A|,|B| \ab {<\infty}$,
${\mu^*[A\times B]} \leq \epsilon \sqrt{|A||B|}$.
\end{Lem}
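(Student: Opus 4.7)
The plan is to establish the stronger rearrangement inequality
\[ \mu^*[A \times B] \leq \mu^*\big[(0, |A|) \times (0, |B|)\big]; \]
the lemma will then follow immediately from the defining formula~(\ref{fo4208}), since $\mu^*[(0, a) \times (0, b)] = \epsilon\sqrt{ab} \wedge a \wedge b \leq \epsilon\sqrt{ab}$.

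To prove the rearrangement inequality, I will decompose $\mu^*$ into its three constituent pieces. For the absolutely continuous part on the cone $\{\epsilon^2 p < q < \epsilon^{-2} p\}$, I perform the change of variables $u = \sqrt{q/p}$ at fixed $p$. Since $dq = 2up\,du$ and $\sqrt{pq} = up$, the density $\tfrac{\epsilon}{4\sqrt{pq}}\,dp\,dq$ collapses to the uniform density $\tfrac{\epsilon}{2}\,dp\,du$ on the strip $\{p > 0,\ \epsilon < u < 1/\epsilon\}$, yielding
\[ \mu^*_{\mathrm{ac}}[A \times B] = \frac{\epsilon}{2} \int_\epsilon^{1/\epsilon} \big|A \cap u^{-2} B\big|\,du. \]
The two linear singular parts, supported on $\{q = \epsilon^2 p\}$ and $\{q = \epsilon^{-2} p\}$, similarly contribute $\tfrac{\epsilon^2}{2}|A \cap \epsilon^{-2}B|$ and $\tfrac{1}{2}|A \cap \epsilon^2 B|$ respectively.

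The crux is then elementary: apply the trivial bound $|E \cap F| \leq \min(|E|, |F|)$ to each of the intersections appearing above. The key observation is that when $A, B$ are replaced by the ``rearranged'' intervals $(0, |A|)$ and $(0, |B|)$, each of these bounds is attained as an equality, because $u^{-2}(0, |B|) = (0, u^{-2}|B|)$ is again an interval starting at $0$, and likewise for $\epsilon^{\pm 2}(0, |B|)$. Consequently, summing the three terms gives exactly $\mu^*[(0, |A|) \times (0, |B|)]$, which is the desired rearrangement inequality.

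No serious obstacle is expected: the only genuine computation is the Jacobian yielding the uniform density $\tfrac{\epsilon}{2}\,dp\,du$, after which the argument reduces to applying $|E \cap F| \leq \min(|E|, |F|)$ slice by slice and recognizing that the resulting sum is tight precisely for intervals based at the origin.
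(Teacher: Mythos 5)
Your proof is correct, and it takes a genuinely different and arguably cleaner route than the paper's. The paper proves the lemma by a multi-step analysis: first dispatching the trivial cases $|A|\leq\epsilon^2|B|$, then splitting $\mu^*[A\times B]=m_U+m_2+m_D$ and bounding the two singular contributions crudely, then reducing to the configuration $\epsilon^2 B\subset A$ via a monotonicity argument on the conditional measures $\mu^*_q$, and finally estimating $m_2$ slice by slice with a coupling/monotone-rearrangement argument so that all three bounds combine exactly to $\epsilon\sqrt{|A||B|}$. Your proof bypasses the case analysis and the conditioning/reduction step entirely by identifying the right coordinates: under $u=\sqrt{q/p}$ the absolutely continuous part of $\mu^*$ has \emph{constant} density $\tfrac{\epsilon}{2}\,dp\,du$ on the strip $\epsilon<u<1/\epsilon$, so the three contributions become $\tfrac{\epsilon^2}{2}|A\cap\epsilon^{-2}B|$, $\tfrac{\epsilon}{2}\int_\epsilon^{1/\epsilon}|A\cap u^{-2}B|\,du$, $\tfrac{1}{2}|A\cap\epsilon^2 B|$, each dominated termwise by $\min(|A|,\cdot)$, with termwise equality for intervals anchored at $0$; the rearrangement inequality $\mu^*[A\times B]\leq\mu^*[(0,|A|)\times(0,|B|)]$ and then formula~(\ref{fo4208}) finish it. What the paper's route buys is little beyond its own self-containedness; your route is shorter, avoids the somewhat delicate reduction ``assume $\epsilon^2 B\subset A$'', and still produces the same equality case $A=(0,|A|)$, $B=(0,|B|)$ noted in the paper's Remark. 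The only thing to make explicit is that $A$ and $B$ are Borel (so the slice measures are defined) and that Fubini applies because all integrands are nonnegative — both routine.
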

\begingroup\def\proofname{Proof of Lemma~\ref{clm8768}}
\begin{proof}
Recall that $\mu^*$ is the Radon measure on~$(0,\infty)^2$
having density $\epsilon/4\sqrt{pq}$ w.r.t.\ the Lebesgue measure
inside the cone $C = \{(p,q)\colon \epsilon^2p<q<\epsilon^{-2}p\}$,
being zero outside~$C$,
and giving to the borders of~$C$ a lineic mass defined by
$\mu^*\{ (p,\epsilon^2p) \colon p \in A \} = \epsilon^2|A|/2$, resp.\ 
$\mu^*\{ (p,\epsilon^{-2}p) \colon p \in A \} = |A|/2$ (see Figure~\ref{fig-mustar}).
$\mu^*$ is invariant under switching~$p$ and~$q$,
and its marginals both are the Lebesgue measure on~$(0,\infty)$.
Let~$A, B \subset (0,\infty)$ be Borel;
our goal is to show that $\mu^*[A\times B] \leq \epsilon\sqrt{|A||B|}$.

\newcounter{step}
\newcommand{\step}{\addtocounter{step}{1}\textbf{\textit{Step \arabic{step}}.}\quad}
\step If $|A| \leq \epsilon^2|B|$ the result is trivially true,
since then $\mu^*[A\times B] \leq \mu^*[A\times (0,\infty)] = |A| \leq \epsilon\sqrt{|A||B|}$.
Similarly the result is true if $|B| \leq \epsilon^2|A|$.
Therefore in our proof we will always assume that $\epsilon^2|A| < |B| < \epsilon^{-2}|A|$.

\step As for the measure~$\mu$,
decompose the support of~$\mu^*$ into three parts~%
$\mfrak{U}$, \twocirc\ and~$\mfrak{D}$, corresponding resp.\
to the line ``$p=\epsilon^2q$'', the cone $C$ and the line ``$p=\epsilon^{-2}q$''
(see Figure~\ref{fig-mustar}).
Write $\mu^*[A\times B] = m_U + m_2 + m_D$,
where $m_U = \mu^*[(A\times B)\cap\mfrak{U}]$, etc..

Denote by~$\mu^*_q$ the `conditioned version' of~$\mu$ knowing~$q$, i.e. the probability measure such that
\[ \mu^*[X] = \int_0^{\infty} \mu^*_q [\{p\colon (p,q)\in X\}] \,\dx{q} ,\]
which can be computed explicitly to be:
\[\label{for7553} \dx\mu^*_q[p] = \1{\{p=\epsilon^2q\}} \frac{\epsilon^2}{2}
+ \1{\{\epsilon^2q<p<\epsilon^{-2}q\}} \frac{\epsilon}{4\sqrt{pq}} \dx{p} + \1{\{p=\epsilon^{-2}q\}} \frac{1}{2} .\]
The three terms of the right-hand side of~(\ref{for7553}) are respectively due
to~$\mfrak{U}$, $\twocirc$ and~$\mfrak{D}$, so that, integrating the first one, one finds:
\[ m_U = \int_B \frac{\epsilon^2}{2} \1{\{A\ni\epsilon^2q\}} \dx{q}
\leq \frac{\epsilon^2}{2} |B| .\]
Switching the roles of~$p$ and~$q$,
one has similarly $m_D \leq \epsilon^2|A|/2$.
Then it only remains to bound~$m_2$.

\step Let us study further the measures $\mu^*_q$.
If $q\in\epsilon^2A$, then $A\ni\epsilon^{-2}q$
and thus $\mu^*_q[A] \geq \mu^*[\{\epsilon^{-2}q\}] = 1/2$,
and conversely if $q\notin\epsilon^2A$, then $A\not\ni\epsilon^{-2}q$
and thus $\mu^*_q[A] \leq 1-\mu^*[\{\epsilon^{-2}q\}] = 1/2$.
So, $\mu^*_q[A]$ is never smaller if $q\in\epsilon^2A$
than if $q\notin\epsilon^2A$.

As a consequence, let us show that we can always assume
that $\epsilon^2A \subset B$.
Since $|B|>\epsilon^2|A|$,
$|B\setminus \epsilon^2A| > |\epsilon^2A\setminus B|$,
so we can fix some $B^- \subset B\setminus \epsilon^2A$
such that $|B^-| = |\epsilon^2A\setminus B|$.
One has:
\[\label{c7947}
\mu^*[A\times B^-] = \int_{B^-} \mu^*_q(A) \,\dx{q} \leq \frac{|B^-|}{2} \\
= \frac{|\epsilon^2A\setminus B|}{2}
\leq \int_{\epsilon^2A\setminus B} \mu^*_q(A) \,\dx{q}
= \mu^*[A\times(\epsilon^2A\setminus B)] .
\]
Shorthanding ``$(B\setminus B^-)\cup\epsilon^2A$'' into ``$B'$'', (\ref{c7947}) implies that
replacing $B$ by~$B'$ —which does not modify the value of~$|B|$ —%
cannot make $\mu^*[A\times B]$ decrease.
Consequently, if we prove that ${\mu^*[A\times B']} \leq \epsilon\sqrt{|A||B'|}$,
then we will also have proved that $\mu^*[A\times B] \leq \epsilon\sqrt{|A||B|}$.
As $\epsilon^2A\subset B'$, we thus have demonstrated the statement
at the beginning of this paragraph:
one can always assume that $\epsilon^2A\subset B$.

\begin{NOTA}
Switching the roles of~$p$ and~$q$, we will rather impose,
instead of~$\epsilon^2A\subset B$, that $\epsilon^2B\subset A$.
\end{NOTA}

\step Call~$\mu^{\circ}$ the measure~$\mu^*$ restricted to~$C$,
i.e.\ $\dx\mu^{\circ} = \1{C}\dx\mu^*$,
so that $m_2 = \mu^{\circ}[A\times B]$.
$\mu^{\circ}$ is absolutely continuous w.r.t.\ the Lebesgue measure;
denote by~$\mu^{\circ}_q$ its `conditioned version' for fixed~$q$,
i.e.\ the measure such that
\[ \mu^{\circ}[X] = \int_0^\infty \mu^{\circ}_q \big[\big\{p\colon (p,q)\in X\big\}\big] \,\dx{q} ,\]
which has the following explicit density w.r.t.\ the Lebesgue measure:
\[ \label{for5093} \dx\mu^{\circ}_q[p] =
\1{\{\epsilon^2q<p<\epsilon^{-2}q\}} \frac{\epsilon}{4\sqrt{pq}} \,\dx{p} .\]
We perform a change of variables:
for~$y\in(0,|B|)$, define
\[ \beta(y) = \inf\big\{ q\in(0,\infty) \colon |B\cap(0,q)| \geq y \big\} ;\]
so that the push-forward $\beta\pushfwd\dx{q}$
of the Lebesgue measure on~$(0,|B|)$ by the map $\beta$
is equal to~$\1{B}\dx{q}$, the Lebesgue measure restricted to~$B$;
then
\[ m_\text{\twocirc} = \int_B \mu^{\circ}_q[A] \,\dx{q} =
\int_0^{|B|} \mu^{\circ}_{\beta(y)}[A] \,\dx{y} .\]
Our strategy will consist in bounding $\mu^{\circ}_{\beta(y)}[A]$ for all~$y$.

First, we observe that there is some portion of~$A$ which does not contribute to~$\mu^\circ_{\beta(y)}[A]$.
Denote indeed $A_y \coloneqq \{ \epsilon^2q \colon q\in B\cap(0,\beta(y)) \}$;
by the definition of~$\beta$, $|A_y| = \epsilon^2y$,
and one has $A_y \subset \epsilon^2B \subset A$.
But $A_y \subset (0,\epsilon^2\beta(y))$,
so $\mu^{\circ}_{\beta(y)}[A_y] = 0$,
and thus $\mu^{\circ}_{\beta(y)}[A] = \mu^{\circ}_{\beta(y)}[A\setminus A_y]$,
where $|A\setminus A_y| = |A| - |A_y| = |A| - \epsilon^2y$.

Now, for~$q\in(0,\infty)$, the density of~$\mu^{\circ}_q$ is zero for $p\leq\epsilon^2q$
and it is nonincreasing for $p>\epsilon^2q$,
so an immediate coupling argument shows that the maximal value of
$\mu^{\circ}_q[X]$ under the constraint ``$|X|=x$'' is attained for
$X = (\epsilon^2q,\epsilon^2q+x)$.
Applying that result to the conclusion of the previous paragraph, we get that:
\[\label{f1291} \mu^{\circ}_{\beta(y)}[A] \leq
\mu^{\circ}_{\beta(y)} \big[\big(\epsilon^2\beta(y)\,,\,\epsilon^2\beta(y)+|A|-\epsilon^2y\big)\big] .\]
But for~$x\geq 0$, the quantity
$ \mu^{\circ}_q[(\epsilon^2q\,,\,\epsilon^2q+x)] $
can be computed explicitly to be
\[ \mu^{\circ}_q \big[\big(\epsilon^2q\,,\,\epsilon^2q+x\big)\big] =
\left\{\begin{array}{lcl} (1-\epsilon^2)/2 & \quad & \text{if $q\leq x/(\epsilon^{-2}-\epsilon^2)$;} \\
\big( \epsilon \sqrt{\epsilon^2+x/q} - \epsilon^2 \big) /2 & \quad & \text{if $q>x/(\epsilon^{-2}-\epsilon^2)$.} \end{array}\right.\]
In particular, that quantity is a nonincreasing function of~$q$.
Since, by the definition of~$\beta$, one always has $\beta(y) \geq y$,
it follows that (\ref{f1291}) can be improved into:
\[ \mu^\circ_{\beta(y)}[A] \leq \mu^\circ_y [(\epsilon^2y\,,\,|A|)] =
\left\{\begin{array}{lcl} (1-\epsilon^2)/2 &\quad& \text{if $y\leq\epsilon^2|A|$;} \\
\big( \epsilon\sqrt{|A|/y} - \epsilon^2 \big) /2 &\quad& \text{if $y>\epsilon^2|A|$.} \end{array}\right.\]
Integrating, one finds finally:
\begin{multline}
m_2 \leq \int_0^{\epsilon^2|A|} \frac{1-\epsilon^2}{2} \dx{y} + \int_{\epsilon^2|A|}^{|B|}
\bigg( \frac{\epsilon\sqrt{|A|}}{2\sqrt{y}} - \frac{\epsilon^2}{2} \bigg) \dx{y} \\
= \frac{(1-\epsilon^2)\epsilon^2|A|}{2} + \bigg[ \epsilon\sqrt{|A|y} - \frac{\epsilon^2y}{2} \bigg]^{|B|}_{\epsilon^2|A|}
= \epsilon\sqrt{|A||B|} - \frac{\epsilon^2}{2} \big(|A|+|B|\big).
\end{multline}

\step We put our bounds together to get the lemma:
\[ \mu^*[A\times B] \leq m_D + m_U + m_2
= \frac{\epsilon^2}{2} \big(|A|+|B|\big) + \epsilon\sqrt{|A||B|} - \frac{\epsilon^2}{2} \big(|A|+|B|\big)
= \epsilon\sqrt{|A||B|} .\]
\end{proof}\endgroup

\begin{Rmk}
A careful reading of the proof above shows
that the maximal value of~$\mu^*[A\times B]$ is attained for
$A = (0,|A|), B=(0,|B|)$, in which case, provided $\epsilon^2|A|\leq|B|\leq\epsilon^{-2}|A|$,
one has equality in Lemma~\ref{clm8768}.
\end{Rmk}

\chapter{Tensorization}\label{parTensorization}%
\addcontentsline{itc}{chapter}{\protect\numberline{\thechapter}Tensorization}

\section{Subjective correlation}\label{parSubjectiveCorrelation}

In this chapter we will need more advanced definitions for decorrelation.

\begin{Def}
Let~$X$, $Y$ and~$Z$ be random variables. For~$\epsilon\geq 0$,
one says that $X$ and~$Y$ are \emph{subjectively $\epsilon$-decorrelated w.r.t.~$Z$}
(or \emph{$\epsilon$-decorrelated seen from $Z$})
if $X$ and~$Y$ are $\epsilon$-decorrelated under the law $\Law(X,Y|{Z=z})$
for $\Law(Z)$-almost-all~$z$\footnote{The conditional laws $\Law(\Bcdot|{Z=z})$
are only defined up to~$\Law(Z)$-a.e.\ equality, whence the need to specify
``for $\Law(Z)$-almost-all~$z$''.}.

The smallest $\epsilon$ such that $X$ and~$Y$ are $\epsilon$-decorrelated
seen from $Z$ will be called the \emph{subjective correlation level
between~$X$ and~$Y$ w.r.t.~$Z$} (or \emph{correlation level between~$X$ and~$Y$ seen from $Z$});
we denote it ${\{X:Y\}}_{Z}$.
\end{Def}

In \S~\ref{parDefinition}, we had given the definitions
in terms of $\sigma$-algebras rather than random variables.
Of course there is also a $\sigma$-algebra definition for subjective correlation,
though I find it harder to understand:
\begin{Def}\label{def5436}
Let~$\mcal{F}$, $\mcal{G}$ and~$\mcal{H}$ be $\sigma$-algebras.
For~$\epsilon \in [0,1]$, the expression
``$\{\mcal{F}:\mcal{G}\}_{\mcal{H}} \leq \epsilon$'' means that
for all~$f \in \ldb(\mcal{F}\vee\mcal{H})$ and all~$g \in \ldb(\mcal{G}\vee\mcal{H})$
satisfying $\EE[f|\mcal{H}] \equiv 0$, resp.\ $\EE[g|\mcal{H}] \equiv 0$,
one has:
\[ |\EE[fg]| \leq \epsilon \ecty(f) \ecty(g) .\]
\end{Def}

We let the reader check that with that definition,
for~$X$, $Y$ and~$Z$ random variables,
${\{X:Y\}}_{Z} = \{\sigma(X):\sigma(Y)\}_{\sigma(Z)}$.

\begin{Rmk}
The ordinary correlation can be seen as a particular case of subjective correlation,
since $\{\mcal{F}:\mcal{G}\} = \{\mcal{F}:\mcal{G}\}_{\mcal{O}}$ for $\mcal{O} = \{\emptyset,\Omega\}$
the trivial $\sigma$-field.
\end{Rmk}

\begin{Rmk}\label{r4670}
Warning! Writing that ${\{X:Y\}}_Z \leq \epsilon$ does \emph{not} imply
that for all subset $C$ of the range of~$Z$,
$X$ and~$Y$ are $\epsilon$-decorrelated under~$\Law(X,Y|\ab Z\in C)$:
see Examples~\ref{xpl3309} and~\ref{xpl3450} below.
\end{Rmk}

\begin{Rmk}
Warning again! There is no general inequality between~%
${\{X:Y\}}$ and~${\{X:Y\}}_Z$: see Examples~\ref{xpl3308} and~\ref{xpl3309} below.
\end{Rmk}

\begin{Xpl}
Let~$f \colon \RR \to \RR_+$ be a nonnegative continuous function
with $\int_{\RR} f(x)\,\dx{x} = 1$ and let~$(X,Y,Z)$ be a variable
on~$\RR^3$ with density
\[ \dx\Pr\big[(X,Y,Z)=(x,y,z)\big] = \frac{1}{2\pi} f(z)
\exp\Big( \sinh z\cdot xy - {\textstyle\frac{1}{2}} \cosh z\cdot(x^2+y^2) \Big) \,\dx{x}\dx{y}\dx{z} .\]
Then, conditionally to ``$Z=z$'', $(X,Y)$ is a Gaussian vector
with $\Var(X) = \Var(Y) = \cosh z$ and $\Cov(X,Y) = \sinh z$,
so by Theorem~\ref{pro1857}, under the law $\Pr[\Bcdot|Z=z]$ one has ${\{X:Y\}} = |\tanh z|$.
Consequently ${\{X:Y\}}_Z = \sup\{ |\tanh z| \colon f(z)>0 \}$.
\end{Xpl}

The three following examples show that subjective correlation may behave rather wildly,
especially when one changes the $\sigma$-field of reference:
\begin{Xpl}\label{xpl3308}
Let~$X$ and~$Y$ be independent variables with uniform law on~$\RR/\ZZ$
and let~$Z=X+Y$; then ${\{X:Y\}}_{Z} = 1$: under~$\Pr[\Bcdot|\ab Z=z]$ indeed
$Y$ is $X$-measurable (and not constant), since $Y\equiv z-X$.
\end{Xpl}

\begin{Xpl}\label{xpl3309}
Let~$\alpha,\beta,\gamma$ be three independent random variables
uniform on~$\{0,1\}$; define ${X=(\gamma,\alpha)}, \ab Y=(\gamma,\beta)$ and $Z=\gamma$.
Then, conditionally to ``$Z=0$'', $X$ and~$Y$ are independent
with common law uniform on~$\{(0,0),(0,1)\}$,
and similarly $X$ and~$Y$ are independent conditionally to ``$Z=1$'',
so ${\{X:Y\}}_Z=0$.
Yet $X$ and~$Y$ are not independent
since the events ``$X\in\{(0,0),(0,1)\}$'' and ``$Y\in\{(0,0),(0,1)\}$'',
which are non-trivial under~$\Pr$, are equivalent,
so that ${\{X:Y\}}=1$.
\end{Xpl}

\begin{Xpl}\label{xpl3450}
Let $X=(X_1,X_2)$ and $Y=(Y_1,Y_2)$ be independent with uniform laws
on~$\{0,1\}^2$ and define $Z=(X_1,Y_1)$;
then one easily checks that ${\{X:Y\}}_Z=0$.
Now let~$Z'=\1{X_1=Y_1}$, which is $Z$-measurable;
one has ${\{X:Y\}}_{Z'}=1$ since, for instance,
under ``$Z'=1$'' the events ``$X_1=0$'' and ``$Y_1=0$''
are non-trivial and equivalent.
\end{Xpl}

Now we define a more restrictive concept of subjective correlation.
\begin{Def}
A \emph{$\sigma$-metalgebra} $\mcal{M}$ is a set $\{\mcal{H}\colon \mcal{H}\in\mcal{M}\}$
of $\sigma$-algebras which is stable under the ``$\bigvee$'' operator,
i.e.\ such that for any~$\mcal{M}'\subset\mcal{M}$, $\bigvee_{\mcal{H}\in\mcal{M}'}\mcal{H}\in\mcal{M}$.
\end{Def}

One can speak of the `$\sigma$-metalgebra spanned by some set of $\sigma$-algebras',
as states the following immediate proposition:
\begin{Pro}\label{pro0539}
If $(\mcal{H}_k)_{k\in K}$ is a set of $\sigma$-algebras, then there is a smallest
$\sigma$-metalgebra containing all the~$\mcal{H}_k$, which is
\[ \mcal{M} = \Big\{ \bigvee_{k\in K'} \mcal{H}_k \ ;\ K'\subset K \Big\} .\]
\end{Pro}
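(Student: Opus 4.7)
The plan is to verify the two natural halves of the claim separately: first that the described set $\mcal{M}$ is itself a $\sigma$-metalgebra containing each $\mcal{H}_k$, and second that any $\sigma$-metalgebra with the latter property must contain~$\mcal{M}$. Both parts are essentially bookkeeping arguments, with no real obstacle beyond handling the index manipulations carefully.

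For the first half, I will first note that each $\mcal{H}_k$ belongs to~$\mcal{M}$ by taking $K' = \{k\}$ (and, incidentally, the trivial $\sigma$-algebra~$\mcal{O}$ belongs to~$\mcal{M}$ by taking $K' = \emptyset$, so that $\mcal{M}$ is nonempty even when $K$ is). To check stability under the $\bigvee$ operator, I would take an arbitrary subfamily $\mcal{M}' \subset \mcal{M}$ and write each of its elements as $\bigvee_{k \in K'_\alpha}\mcal{H}_k$ for some $K'_\alpha \subset K$, indexed by $\alpha$ running over $\mcal{M}'$. Then I would invoke the associativity of the $\bigvee$ operator on $\sigma$-algebras to obtain
\[
\bigvee_{\alpha} \bigvee_{k \in K'_\alpha} \mcal{H}_k \; = \; \bigvee_{k \in \bigcup_\alpha K'_\alpha} \mcal{H}_k,
\]
which again lies in $\mcal{M}$ since $\bigcup_\alpha K'_\alpha \subset K$.

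For the second half, I would pick any $\sigma$-metalgebra $\mcal{N}$ containing every $\mcal{H}_k$. By the very definition of a $\sigma$-metalgebra, for every $K' \subset K$ the $\sigma$-algebra $\bigvee_{k \in K'} \mcal{H}_k$ lies in~$\mcal{N}$, so $\mcal{M} \subset \mcal{N}$. This proves both the minimality of~$\mcal{M}$ and its uniqueness as the smallest such $\sigma$-metalgebra.

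The only mildly delicate step is the associativity manipulation in the stability check, which relies on the fact that an arbitrary join of joins of $\sigma$-algebras equals the join over the union of indices; this is standard and follows immediately from the characterization of $\bigvee_i \mcal{H}_i$ as the smallest $\sigma$-algebra containing $\bigcup_i \mcal{H}_i$. Everything else is a formal verification, and no genuine difficulty is expected.
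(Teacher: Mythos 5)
Your proof is correct, and it is the natural argument; the paper itself gives no explicit proof, merely labelling the proposition "immediate". Your two-step verification (stability of $\mcal{M}$ under $\bigvee$ via associativity of joins over unions of index sets, then minimality by unfolding the definition of $\sigma$-metalgebra applied to subfamilies $\{\mcal{H}_k : k \in K'\}$) is exactly what "immediate" is gesturing at, and your remark that $K' = \emptyset$ gives $\mcal{O} \in \mcal{M}$ correctly accounts for the fact that the stability condition, applied to $\mcal{M}' = \emptyset$, forces any $\sigma$-metalgebra to contain the trivial $\sigma$-algebra.
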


When one deals with random variables rather than $\sigma$-algebras,
one has the following variant of Proposition~\ref{pro0539}:
\begin{Pro}\label{cor0780}
Let~$(Z_k)_{k\in K}$ be a set of random variables,
then the $\sigma$-metalgebra spanned by~$\{\sigma(Z_k) \colon k\in K\}$
is $\{\sigma(\vec{Z}_{K'})\colon K'\subset K\}$.
\end{Pro}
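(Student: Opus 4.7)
The plan is to reduce the statement directly to Proposition~\ref{pro0539} combined with the identification of $\sigma(\vec{Z}_{K'})$ as the supremum of the $\sigma$-algebras of its components.

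First, I would apply Proposition~\ref{pro0539} to the family $(\mcal{H}_k)_{k\in K}$ defined by $\mcal{H}_k \coloneqq \sigma(Z_k)$. This yields that the $\sigma$-metalgebra spanned by $\{\sigma(Z_k) \colon k\in K\}$ is
\[ \mcal{M} = \Big\{ \bigvee_{k\in K'} \sigma(Z_k) \colon K'\subset K \Big\} .\]
It thus suffices to check that for every $K'\subset K$ one has the identity $\bigvee_{k\in K'} \sigma(Z_k) = \sigma(\vec{Z}_{K'})$.

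For that identity, I would argue by double inclusion using the universal property of $\sigma(\vec{Z}_{K'})$. Recall that, by the conventions of the paper, $\vec{Z}_{K'}$ is valued in a product space whose natural $\sigma$-algebra is the one generated by the coordinate projections; consequently $\sigma(\vec{Z}_{K'})$ is by definition the smallest $\sigma$-algebra on $\Omega$ with respect to which every coordinate $Z_k$ ($k\in K'$) is measurable. On one hand, $\sigma(\vec{Z}_{K'})$ makes each $Z_k$ measurable (by composition with the projection), hence contains $\sigma(Z_k)$ for every $k\in K'$, and therefore contains $\bigvee_{k\in K'} \sigma(Z_k)$. On the other hand, $\bigvee_{k\in K'} \sigma(Z_k)$ is a $\sigma$-algebra for which each $Z_k$ ($k\in K'$) is measurable, so by the universal property it contains $\sigma(\vec{Z}_{K'})$.

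There is really no serious obstacle here: the only point that might look delicate is the case where $K'$ is uncountable, but since the range of $\vec{Z}_{K'}$ is endowed with the product $\sigma$-algebra (the one generated by projections, \emph{not} any stronger Borel structure from a topology on the product), the universal property above holds verbatim regardless of the cardinality of $K'$. Combining the two steps yields $\mcal{M} = \{\sigma(\vec{Z}_{K'}) \colon K'\subset K\}$, which is exactly the claim.
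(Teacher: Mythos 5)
Your proof is correct and is precisely the natural argument the paper leaves implicit (the proposition is stated as an immediate variant of Proposition~\ref{pro0539} with no proof written out): apply Proposition~\ref{pro0539} to $\mcal{H}_k = \sigma(Z_k)$, then identify $\bigvee_{k\in K'}\sigma(Z_k)$ with $\sigma(\vec{Z}_{K'})$ by double inclusion via the universal property of the product $\sigma$-algebra. Your remark about uncountable $K'$ is the right caveat to flag, and your resolution of it (using the product $\sigma$-algebra, not any topological Borel structure) is exactly what makes the identity hold in full generality.
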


\begin{Def}
Let~$\mcal{F}$ and~$\mcal{G}$ be $\sigma$-algebras and $\mcal{M}$ be a $\sigma$-metalgebra.
We define the \emph{correlation between~$\mcal{F}$ and~$\mcal{G}$ seen from $\mcal{M}$} by:
\[ \{\mcal{F}:\mcal{G}\}_{\mcal{M}} = \sup_{\mcal{H}\in\mcal{M}} \{\mcal{F}:\mcal{G}\}_{\mcal{H}} .\]
\end{Def}

\begin{Rmk}
Speaking in terms of random variables, if $X$, $Y$ and~$(Z_k)_{k\in K}$ are variables,
denoting by~$\mcal{M}$ the $\sigma$-metalgebra spanned by the~$Z_k$,
then ${\{X:Y\}}_{\mcal{M}}$ is the supremum%
\footnote{More precisely it is a \emph{true} supremum (over $K'$)
of \emph{essential} suprema (over $\vec{z}_{K'}$).}
of the~${\{X:Y\}}$ when taken under all the laws
of kind $\Pr[\Bcdot|\vec{Z}_{K'}=\vec{z}_{K'}]$ for~$K'$ a subset of~$K$
and~$z_k, k\in K'$ elements of the respective ranges of the~$Z_k$.
\end{Rmk}

Finally, the following proposition gathers some easy properties
of relative correlation w.r.t.\ a $\sigma$-metalgebra:
\begin{Pro}\strut
\begin{ienumerate}
\item Call~$\mcal{M}_{\emptyset}$ the trivial $\sigma$-metalgebra, that is,
$\mcal{M}_{\emptyset} = \{\mcal{O}\}$; then for all $\sigma$-algebras~$\mcal{F}$ and~$\mcal{G}$,
$\{\mcal{F}:\mcal{G}\} = \{\mcal{F}:\mcal{G}\}_{\mcal{M}_{\emptyset}}$.
\item If $\mcal{M} \subset \mcal{M}'$, then $\{\mcal{F}:\mcal{G}\}_{\mcal{M}} \leq \{\mcal{F}:\mcal{G}\}_{\mcal{M}'}$.
\item Let~$\mcal{F}$ and~$\mcal{G}$ be $\sigma$-algebras,
let~$\mcal{M}$ be a $\sigma$-metalgebra, and call~$\tilde{\mcal{M}}$ the $\sigma$-metalgebra
spanned by~$\mcal{M}$, $\mcal{F}$ and~$\mcal{G}$; then
$\{\mcal{F}:\mcal{G}\}_{\mcal{M}} = \{\mcal{F}:\mcal{G}\}_{\tilde{\mcal{M}}}$.
\end{ienumerate}
\end{Pro}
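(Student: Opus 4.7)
The plan is to treat each of the three items in turn, as each reduces to an unwinding of Definition~\ref{def5436}.

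For~(i), I would instantiate Definition~\ref{def5436} with $\mcal{H}=\mcal{O}$. Since $\mcal{O}$ is trivial one has $\mcal{F}\vee\mcal{O}=\mcal{F}$ and $\mcal{G}\vee\mcal{O}=\mcal{G}$, and the conditional expectations $\EE[f|\mcal{O}]$, $\EE[g|\mcal{O}]$ are simply $\EE[f]$ and $\EE[g]$. The condition ``$f\in\ldb(\mcal{F}\vee\mcal{H})$ with $\EE[f|\mcal{H}]\equiv0$'' thus becomes ``$f\in\ldb(\mcal{F})$ centered'', so that $\{\mcal{F}:\mcal{G}\}_{\mcal{O}}$ coincides literally with the quantity~(\ref{for3441}) defining $\{\mcal{F}:\mcal{G}\}$.

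Item~(ii) is an immediate consequence of the supremum definition: any $\mcal{H}\in\mcal{M}$ also belongs to~$\mcal{M}'$, so the sup over $\mcal{M}$ is dominated by the sup over $\mcal{M}'$.

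For~(iii), the ``$\leq$'' direction follows from~(ii) applied to $\mcal{M}\subset\tilde{\mcal{M}}$. For the reverse direction, I would first use Proposition~\ref{pro0539} (together with the observation that $\mcal{M}$, being stable under $\bigvee$, contains the empty join~$\mcal{O}$) to describe every element $\tilde{\mcal{H}}\in\tilde{\mcal{M}}$ as
\[ \tilde{\mcal{H}} = \mcal{H}\vee\mcal{F}''\vee\mcal{G}'' \qquad \text{for some } \mcal{H}\in\mcal{M},\ \mcal{F}''\in\{\mcal{O},\mcal{F}\},\ \mcal{G}''\in\{\mcal{O},\mcal{G}\}. \]
The key remark is then that \emph{as soon as $\mcal{F}\subset\tilde{\mcal{H}}$ (i.e.\ $\mcal{F}''=\mcal{F}$) one has $\{\mcal{F}:\mcal{G}\}_{\tilde{\mcal{H}}}=0$}: indeed in Definition~\ref{def5436} the space $\ldb(\mcal{F}\vee\tilde{\mcal{H}})=\ldb(\tilde{\mcal{H}})$, and the only $f\in\ldb(\tilde{\mcal{H}})$ with $\EE[f|\tilde{\mcal{H}}]\equiv0$ is $f=0$ a.s., so the defining supremum is empty and equals~$0$. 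The symmetric remark applies if $\mcal{G}''=\mcal{G}$. Consequently the only $\tilde{\mcal{H}}\in\tilde{\mcal{M}}$ that can contribute a non-zero value to $\{\mcal{F}:\mcal{G}\}_{\tilde{\mcal{M}}}$ are those with $\mcal{F}''=\mcal{G}''=\mcal{O}$, i.e.\ elements of~$\mcal{M}$ itself, which yields $\{\mcal{F}:\mcal{G}\}_{\tilde{\mcal{M}}}\leq\{\mcal{F}:\mcal{G}\}_{\mcal{M}}$ and closes the proof.

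There is essentially no technical obstacle here; the only point that might mislead is~(iii), where one must notice that enlarging $\mcal{H}$ by $\mcal{F}$ or $\mcal{G}$ kills the subjective correlation rather than amplifying it---an asymmetry between~$\mcal{M}$ and its span that is a mere artifact of the definition requiring $f$ to be centered on top of being measurable.
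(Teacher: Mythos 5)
The paper states this proposition without proof, so there is no paper argument to compare against; your proposal is a correct and complete verification. Items (i) and (ii) are, as you say, immediate unwindings of Definition~\ref{def5436}. For item (iii), your key observation is exactly the right one and is stated correctly: once $\mcal{F}\subset\tilde{\mcal{H}}$, the only $f\in\ldb(\mcal{F}\vee\tilde{\mcal{H}})=\ldb(\tilde{\mcal{H}})$ with $\EE[f|\tilde{\mcal{H}}]\equiv0$ is $f=0$, so $\{\mcal{F}:\mcal{G}\}_{\tilde{\mcal{H}}}=0$ by the empty-supremum convention of Definition~\ref{def4393} (equivalently, in the random-variable picture, conditioning on a $\sigma$-algebra containing $\sigma(X)$ makes $X$ a.s.\ constant, hence trivially decorrelated from anything). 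The decomposition of a generic $\tilde{\mcal{H}}\in\tilde{\mcal{M}}$ as $\mcal{H}\vee\mcal{F}''\vee\mcal{G}''$ with $\mcal{H}\in\mcal{M}$ is justified by Proposition~\ref{pro0539} together with closedness of $\mcal{M}$ under arbitrary $\bigvee$ (including the empty join, giving $\mcal{O}\in\mcal{M}$), and the two inequalities then follow as you describe. No gaps.
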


\begin{Def}\label{def3128}
In the sequel, the probabilistic systems which we shall consider
will often be made of some `elementary' variables, say $(X_i)_{i\in I}$.
In this case, the so-called \emph{natural $\sigma$-metalgebra} of the system
will mean the $\sigma$-metalgebra spanned by the~$X_i$.
\end{Def}

\section{Simple tensorization}\label{parSimpleTensorization}

Now we turn to tensorization.
First let us deal with `simple' tensorization,
by which I mean that tensorization is performed on only one variable.
The main result of this section will be the `$N$~against~$1$' theorem
(Theorem~\ref{thm5750}).

The problem considered is the following:
Let~$I$ be a set and~$(X_i)_{i\in I}, Y$ be random variables;
call~$\mcal{M}$ the natural $\sigma$-metalgebra of this system, that is,
the $\sigma$-metalgebra spanned by the~$X_i$ and~$Y$ (cf.\ Definition~\ref{def3128}).
Suppose we have bounds ${\{X_i:Y\}_{\mcal{M}}} \leq \epsilon_i$ for all~$i$;
the question is, can we deduce from them a bound on~$\{\vec{X}_I:Y\}$?
We shall prove that the answer is ``yes'',
and moreover the bound~(\ref{for5719}) we will give is optimal in some way
(see \S~\ref{parOptimality}).

For pedagogical purpose,
let us first state and prove a weaker but easier proposition:
\begin{Pro}\label{pro5527}
With the notation above,
\[\label{for5528} \big\{\vec{X}_I:Y\big\} \leq \sqrt{\sum_{i\in I} \epsilon_i^2} .\]
\end{Pro}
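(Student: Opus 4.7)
The plan is to mimic the telescoping-sum approach used in the proof of Theorem~\ref{pro1252}, but replacing the independence assumption by the subjective correlation bounds. First I would reduce to the case $I = \{1,\ldots,N\}$ finite by Proposition~\ref{pro6559}, and fix a centered $f \in \ldb(\vec{X}_I)$ together with a centered $g \in \ldb(Y)$; the goal is to bound $|\EE[fg]|$ by $\sqrt{\sum_i \epsilon_i^2}\,\ecty(f)\ecty(g)$.

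Next I would set $\mcal{F}_i \coloneqq \sigma(X_1,\ldots,X_i)$ and perform the telescoping decomposition $f = \sum_{i=1}^N f_i$ with $f_i \coloneqq f^{\mcal{F}_i} - f^{\mcal{F}_{i-1}}$. Each $f_i$ is $\mcal{F}_i$-measurable and satisfies $\EE[f_i \mid \mcal{F}_{i-1}] \equiv 0$, so these increments are pairwise orthogonal and one has the Pythagorean identity $\Var(f) = \sum_i \Var(f_i)$. Likewise define $\tilde{g} \coloneqq g - g^{\mcal{F}_{i-1}}$ (this depends on $i$, but I would keep the notation light in the writeup); note that $\tilde{g}$ is $\sigma(Y) \vee \mcal{F}_{i-1}$-measurable and centered w.r.t.~$\mcal{F}_{i-1}$.

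The heart of the argument is then the following: since $\mcal{F}_{i-1} = \sigma(X_1,\ldots,X_{i-1})$ belongs to the natural $\sigma$-metalgebra~$\mcal{M}$ by Proposition~\ref{cor0780}, the assumption $\{X_i:Y\}_{\mcal{M}} \leq \epsilon_i$ yields in particular $\{X_i:Y\}_{\mcal{F}_{i-1}} \leq \epsilon_i$. Applying Definition~\ref{def5436} to~$f_i$ and~$\tilde{g}$ (which are conditionally centered and respectively $\sigma(X_i)\vee\mcal{F}_{i-1}$- and $\sigma(Y)\vee\mcal{F}_{i-1}$-measurable) I get
\[ \big|\EE[f_i g \mid \mcal{F}_{i-1}]\big| = \big|\EE[f_i \tilde{g} \mid \mcal{F}_{i-1}]\big| \leq \epsilon_i\,\ecty(f_i|\mcal{F}_{i-1})\,\ecty(\tilde{g}|\mcal{F}_{i-1}). \]
Taking unconditional expectations, using Cauchy--Schwarz and the fact that $\EE[\Var(f_i|\mcal{F}_{i-1})] = \Var(f_i)$ (since $f_i$ is conditionally centered) while $\EE[\Var(\tilde{g}|\mcal{F}_{i-1})] \leq \Var(g)$, this gives $|\EE[f_i g]| \leq \epsilon_i\,\ecty(f_i)\,\ecty(g)$.

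Finally, since $f_i$ is $\mcal{F}_i$-measurable and $f_j$ is centered w.r.t.~$\mcal{F}_{j-1} \supset \mcal{F}_i$ for $j>i$, the cross terms $\EE[f_i \cdot (g - \tilde g)\cdot\text{something}]$ combine only as $\EE[fg] = \sum_i \EE[f_i g]$; then a last Cauchy--Schwarz on the index~$i$ together with $\sum_i \Var(f_i) = \Var(f)$ yields the desired bound. The main (minor) obstacle is bookkeeping the conditional variance estimate so as to apply Definition~\ref{def5436} correctly to the pair $(f_i, \tilde{g})$ rather than to $g$ itself; no new idea beyond the telescoping machinery of Theorem~\ref{pro1252} is required.
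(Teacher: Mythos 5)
Your proposal is correct and follows essentially the same route as the paper: the finite-$I$ reduction, the one-sided telescoping decomposition $f=\sum_i f_i$ with the Pythagorean identity $\Var(f)=\sum_i\Var(f_i)$, the conditional decorrelation bound applied increment by increment, and a final Cauchy--Schwarz on the index~$i$. The only stylistic deviation is that you explicitly recenter~$g$ conditionally (introducing $\tilde g=g-g^{\mcal F_{i-1}}$) so as to match Definition~\ref{def5436} verbatim, whereas the paper applies the decorrelation hypothesis to the pair $(f_i,g)$ directly and then uses the crude bound $\ecty(g\,|\,\mcal F_{i-1})\le\sqrt{\EE[g^2\,|\,\mcal F_{i-1}]}$; since $\ecty(\tilde g\,|\,\mcal F_{i-1})=\ecty(g\,|\,\mcal F_{i-1})$, the two computations coincide. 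A small point of presentation: the identity $\EE[fg]=\sum_i\EE[f_ig]$ is just linearity of expectation once $f=\sum_i f_i$, so the remark about ``cross terms'' in your last paragraph is a red herring carried over from the proof of Theorem~\ref{pro1252}, where $g$ is also decomposed; here only $f$ is, and no orthogonality argument is needed for that step (it is needed only to get $\Var(f)=\sum_i\Var(f_i)$, which you already noted).
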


\begin{proof}
By Proposition~\ref{pro6559} we may assume $I=\{1,\ldots,N\}$.
Let~$f$ and~$g$ be centered $L^2$ $\vec{X}$-measurable,
resp.\ $Y$-measurable, functions; our goal is to bound $|\EE[fg]|$.

For all~$i\in\{0,\ldots,N\}$, denote
\[ \mcal{F}_i = \sigma(X_1,\ldots,X_i) ,\]
and for all~$i\in\{1,\ldots,N\}$,
\[ f_i = f^{\mcal{F}_i} - \EE[f|\mcal{F}_{i-1}] .\]
Then $f = \sum_i f_i$, where each~$f_i$ is $\mcal{F}_i$-measurable
and centered w.r.t.~$\mcal{F}_{i-1}$ (i.e., $\EE[f_i|\mcal{F}_{i-1}] \equiv 0$).
Consequently, for all~$i_0<i_1$ one has $\EE[f_{i_0}f_{i_1}] = 0$
(since $f_{i_0}$ is $\mcal{F}_{i_0}$-measurable while
$f_{i_1}$ is centered w.r.t.~$\mcal{F}_{i_1-1} \supset \mcal{F}_{i_0}$)
and thus when one expands $\Var(f) = \EE\big[\big(\sum_i f_i\big)^2\big]$
all the non-diagonal terms vanish, yielding:
\[\label{f9207} \Var(f) = \sum_{i=1}^{N} \Var(f_i) .\]

Now, the decomposition ``$f = \sum_i f_i$'' yields
\[ \EE[fg] = \sum_{i=1}^{N} \EE[f_ig] ,\]
so let us bound the~$|\EE[f_ig]|$.
The law of total expectation gives:
\[ \EE[f_ig] =
\int \EE\big[f_ig\big|X_1=x_1,\ldots,X_{i-1}=x_{i-1}\big] \,\dx\Pr[x_1,\ldots,x_{i-1}] .\]
But under~$\dx\Pr[\Bcdot|\ab x_1,\ldots,x_{i-1}]$,
$f_i$ is $X_i$-measurable and centered while $g$ is $Y$-mea\-su\-ra\-ble,
moreover under this law
$\{X_i:Y\} \leq \{X_i:Y\}_{\mcal{F}_{i-1}} \leq \{X_i:Y\}_{\mcal{M}} \leq \epsilon_i$, so:
\[ \big|\EE\big[f_ig\big|Y_1=y_1,\ldots,Y_{i-1}=y_{i-1}\big]\big| \leq \epsilon_i
\ecty\big(f_i\big|x_1,\ldots,x_{i-1}\big) \ecty\big(g\big|x_1,\ldots,x_{i-1}\big) .\]
Using the bound $\ecty(h) \leq \sqrt{\EE[h^2]}$, it follows that:
\begin{multline}\label{cal4816}
\big|\EE[f_ig]\big| \leq \epsilon_i
\int \sqrt{\EE[f_i^2|x_1,\ldots,x_{i-1}]} \sqrt{\EE[g^2|x_1,\ldots,x_{i-1}]}
\,\dx\Pr[x_1,\ldots,x_{i-1}] \\
\footrel{\text{CS}}{\leq} \epsilon_i \sqrt{\int\EE[f_i^2|x_1,\ldots,x_{i-1}]\,\dx\Pr[x_1,\ldots,x_{i-1}]}\sqrt{\textit{the same for~$g$}}
= \epsilon_i \ecty(f) \ecty(g_i) .
\end{multline}
So, summing~(\ref{cal4816}) for all~$i$:
\[\label{f3109} |\EE[fg]| \leq \sum_{i=1}^N \epsilon_i \ecty(f_i) \ecty(g)
\footrel{\text{CS}}{\leq} \sqrt{\sum_i \epsilon_i^2} \sqrt{\sum_i \Var(f_i)} \, \ecty(g)
= \sqrt{\sum_i \epsilon_i^2} \, \ecty(f) \ecty(g) .\]
Since (\ref{f3109}) is true for all~$f,g$, (\ref{for5528}) is proved.
\end{proof}

\label{lbl91}It is striking in Proposition~\ref{pro5527}
that the right-hand side of~(\ref{for5528})
may be greater than $1$, which is never the case for a correlation level.
Actually there is some `loss of optimality' in the proof
of the proposition when we bound above $\Var(f_i|\mcal{F}_{i-1})$
by~$\EE[f_i^2|\mcal{F}_{i-1}]$, since $\EE[f_i^2|\mcal{F}_{i-1}] - \Var(f_i|\mcal{F}_{i-1}) = \EE[f_i|\mcal{F}_{i-1}]^2$
may be different to~$0$. We will use a technique for `recycling' that loss
to get the following result, which \S~\ref{parOptimality} shall prove to be optimal:
\begin{Thm}[`$N$~against~$1$' theorem]\label{thm5750}
Take the same hypotheses as in Proposition~\ref{pro5527}:
$\forall i\in I \quad \{X_i:Y\}_{\mcal{M}} \leq \epsilon_i$, where $\mcal{M}$
is the natural $\sigma$-metalgebra of the system. Then:
\[\label{for5719} \big\{\vec{X}_I:Y\big\} \leq \sqrt{1-\prod_{i\in I}(1-\epsilon_i^2)} .\]
\end{Thm}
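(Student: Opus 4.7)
The plan is to adopt the operator/projection viewpoint of Proposition~\ref{pro7764}: writing $\mcal{G} = \sigma(Y)$ and $\mcal{F}_i = \sigma(X_1,\ldots,X_i)$, one has $\{\vec{X}_I:Y\}^2 = \sup_{g \in \ldb(\mcal{G})} \|\EE[g|\mcal{F}_N]\|^2 / \|g\|^2$. So, assuming $I = \{1,\ldots,N\}$ by Proposition~\ref{pro6559}, it suffices to fix $g \in \ldb(\mcal{G})$ and bound the squared norm $v_N := \|\EE[g|\mcal{F}_N]\|^2 = \Var(\EE[g|\mcal{F}_N])$ by $V \cdot (1 - \prod_i (1-\epsilon_i^2))$, where $V := \Var(g)$.

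I would then set up the martingale decomposition $g_i := \EE[g|\mcal{F}_i] - \EE[g|\mcal{F}_{i-1}]$ (as in the proof of Proposition~\ref{pro5527}, but applied to $g$ rather than $f$). The $g_i$ are orthogonal in $L^2$, giving the telescoping identity $v_i - v_{i-1} = \|g_i\|^2$ with $v_0 = 0$. The cruxpoint is then the following estimate, which exploits the subjective-correlation hypothesis:
\[ \|g_i\|^2 \leq \epsilon_i^2 \, \EE[\Var(g|\mcal{F}_{i-1})] = \epsilon_i^2 (V - v_{i-1}) . \]
To prove this, I would argue pointwise in $\vec{x}_{<i}$: under the conditional law $\Pr[\Bcdot|\mcal{F}_{i-1} = \vec{x}_{<i}]$, the function $g_i$ is $\sigma(X_i)$-measurable and centered, and by the very definition of conditional expectation it coincides with the $\ldb$-orthogonal projection of $g - \EE[g|\vec{x}_{<i}]$ (which is $Y$-measurable and centered) onto $\ldb(X_i)$. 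Since $\{X_i : Y\}$ under this conditional law is at most $\epsilon_i$ by the hypothesis ${\{X_i:Y\}}_{\mcal{M}} \leq \epsilon_i$ (recalling that $\mcal{M}$ is the natural $\sigma$-metalgebra), the conditional projection has operator norm at most $\epsilon_i$, yielding $\Var(g_i | \mcal{F}_{i-1}) \leq \epsilon_i^2 \Var(g|\mcal{F}_{i-1})$ almost surely; integrating gives the displayed bound, where the equality on the right follows from associativity of variance applied to $g$.

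From the recursion $v_i - v_{i-1} \leq \epsilon_i^2 (V - v_{i-1})$, i.e.\ $V - v_i \geq (1-\epsilon_i^2)(V - v_{i-1})$, a trivial induction yields $V - v_N \geq V \prod_{i=1}^{N}(1-\epsilon_i^2)$, whence $v_N \leq V \big(1 - \prod_i(1-\epsilon_i^2)\big)$. Taking the supremum over $g$ produces~(\ref{for5719}).

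The main subtlety—and this is the point where one must be careful—is the pointwise conditional argument in the key estimate: it is essential that the bound $\{X_i:Y\} \leq \epsilon_i$ survives conditioning on arbitrary values of $(X_1,\ldots,X_{i-1})$, which is precisely what the \emph{subjective} correlation w.r.t.\ the natural $\sigma$-metalgebra was designed to encode (cf.\ Remark~\ref{r4670}, which warns that conditioning on events rather than on values of a $\sigma$-metalgebra element would \emph{not} be legitimate). Once this is in place, the rest is the elementary geometric recursion, which is where the previously `wasted' slack $\EE[g|\mcal{F}_{i-1}]^2$ (mentioned on page~\pageref{lbl91}) is now recycled, multiplicatively, into the factor $\prod(1-\epsilon_i^2)$ instead of the additive $\sum \epsilon_i^2$ of Proposition~\ref{pro5527}.
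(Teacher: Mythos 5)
Your proof is correct and is essentially the paper's own ``alternative proof'' of Theorem~\ref{thm5750}: you reduce to showing $\Var\big(g - g^{\mcal{F}}\big) \geq \prod_i(1-\epsilon_i^2)\Var(g)$ (your $V-v_N$ is the paper's $\Var(g^N)$), and establish the same conditional one-step recursion $\Var(g^i) \geq (1-\epsilon_i^2)\Var(g^{i-1})$ by projecting onto $\ldb(X_i)$ under $\Pr[\Bcdot|\mcal{F}_{i-1}]$ and invoking the subjective decorrelation. The only cosmetic difference is that you phrase the reduction directly as the operator norm squared $\sup_g \|\EE[g|\mcal{F}_N]\|^2/\|g\|^2$ rather than pairing with an auxiliary $f$, and you track martingale increments $g_i$ rather than residuals $g^i$; the inequality content is identical.
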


\begin{Rmk}
The right-hand side of~(\ref{for5719})
is the~$\bar\epsilon\in[0,1]$ characterized by
$1-\bar\epsilon^2 = \prod_i (1-\epsilon_i)^2$.
\end{Rmk}

\begin{Rmk}
The right-hand side of~(\ref{for5719}) is bounded above by~$\sqrt{\sum_i \epsilon_i^2}$,
so Theorem~\ref{thm5750} gives back Proposition~\ref{pro5527} as a corollary.
\end{Rmk}
\begin{proof}
As in the proof of Proposition~\ref{pro5527},
let~$f$ and~$g$ be centered $L^2$ $\vec{X}$-measurable,
resp.\ $Y$-measurable, functions.
Assume $I=\{1,\ldots,N\}$;
denote $\mcal{F}_i \coloneqq \sigma(X_1,\ldots,X_i)$
and $f_i \coloneqq f^{\mcal{F}_i} - \EE[f|\mcal{F}_{i-1}]$.
Also denote, for~$i\in\{0,\ldots,N\}$,
\[ g^i \coloneqq g - \EE[g|\mcal{F}_i] .\]

As before, one has $\Var(f) = \sum_i\Var(f_i)$ and $\EE[fg] = \sum_{i=1}^N \EE[f_ig]$.
But $f_i$ is centered w.r.t.~$\mcal{F}_{i-1}$
while $(g - g^{i-1})$ is $\mcal{F}_{i-1}$-measurable,
so $\EE[f_ig] = \EE[f_ig^{i-1}]$.
Since, conditionally to~$\mcal{F}_{i-1}$, $f_i$ and~$g^{i-1}$ are both centered and resp.\
$X_i$- and $Y$-measurable, the fact that $\{X_i:Y\}_{\mcal{M}} \leq \epsilon_i$ implies,
by the same argument as in the previous proof,
that
\[ |\EE[f_ig^{i-1}]| \leq \epsilon_i \ecty(f_i) \ecty(g^{i-1}) .\]

Now, for~$i\in\{1,\ldots,N\}$, denote
\[ \bar{g}^i = \EE[g^{i-1}|\mcal{F}_i] .\]
Since $g^{i-1} = \bar{g}^i + g^i$,
where $\bar{g}^i$ is $\mcal{F}_i$-measurable while $g^i$ is centered w.r.t.~$\mcal{F}_i$, one has:
\[ \Var(g^i) = \Var(g^{i-1}) - \Var(\bar{g}^i) .\]
Then, the point consists in making the following observation:
for~$\Var(g^i)$ to be large (that is, close to~$\Var(g^{i-1})$),
$\Var(\bar{g}^i)$ has to be small. But in that case
$|\EE[f_ig]|$ shall be small:
one has indeed, since $f_i$ is $\mcal{F}_i$-measurable,
\[ |\EE[f_ig]| = |\EE[f_ig^{i-1}]|
= \big|\EE\big[f_i(g^{i-1})^{\mcal{F}_i}\big]\big| = \big|\EE\big[f_i\bar{g}^i\big]\big|
\footrel{\text{CS}}{\leq} \ecty(f_i) \ecty(\bar{g}^i) .\]

Let us sum up the relations obtained.
One has, for all~$i\in\{1,\ldots,N\}$:
\begin{eqnarray}
\label{f9041a} |\EE[f_ig]| & \leq & \epsilon_i \ecty(f_i) \ecty(g^{i-1}) ; \\
\label{f9041b} \ecty(g^i) & = & \sqrt{\ecty(g^{i-1})^2 - \ecty(\bar{g}^i)^2} ; \\
\label{f9041c} |\EE[f_ig]| & \leq & \ecty(f_i) \ecty(\bar{g}^i) .
\end{eqnarray}
Now define $\hat\epsilon_i \coloneqq |\EE[f_ig]| \div \ecty(f_i)\ecty(g^{i-1})$,
or $\hat\epsilon_i = 0$ if the right-hand side is $0 \div 0$.
Then (\ref{f9041a}) ensures that $\hat\epsilon_i \leq \epsilon_i$,
and (\ref{f9041c}) means that $\ecty(\bar{g}^i) \geq \hat\epsilon_i\ecty(g^{i-1})$,
so that (\ref{f9041b}) yields $\ecty(g^i) \leq \sqrt{1-\hat\epsilon_i^2} \*
\hspace{.056em} \ecty(g^{i-1})$.
Since $g^0 = g$, one has therefore by induction
$\ecty(g^i) \leq \prod_{i'=1}^{i-1} \ab \sqrt{1-\hat\epsilon_{i'}^2} \,\ecty(g)$,
so that the decomposition ``$\EE[fg]=\sum_i\EE[f_ig]$'' gives:
\[\label{f8485} |\EE[fg]| \leq \sum_{i=1}^N
\Big( \hat\epsilon_i \prod_{i'=1}^{i-1} \sqrt{1-\hat\epsilon_{i'}^2} \Big)
\ecty(f_i) \ecty(g) .\]
By the Cauchy--Schwarz inequality, it follows that:
\[\label{f9280} |\EE[fg]| \leq \sqrt{\sum_{i=1}^N \hat\epsilon_i^2
\prod_{i'=1}^{i-1} \big(1-\hat\epsilon_{i'}^2 \big)} \, \ecty(f)\ecty(g)
= \sqrt{1-\prod_{i=1}^N \big(1-\hat\epsilon_i^2\big)} \, \ecty(f)\ecty(g) .\]
Obviously the maximal value for the right-hand side of~(\ref{f9280})
is when $\hat\epsilon_i = \epsilon_i$ for all~$i$,
then yiel\mbox{di}ng~(\ref{for5719}).
\end{proof}

There is an alternative proof, which is less intuitive
but whose reasoning shall be used again in the proof of Theorem~\ref{thm6413}:
\begingroup\def\proofname{Alternative proof of Theorem~\ref{thm5750}}
\begin{proof}
We use the same notation as in the previous proof.
As $f$ is $\mcal{F}$-measurable, $\EE[fg] = \EE[fg^{\mcal{F}}]$,
so by the Cauchy--Schwarz inequality:
\[\label{f2641} |\EE[fg]| \leq \ecty(f) \ecty(g^{\mcal{F}}) .\]
Now, by associativity of variance $\ecty(g^{\mcal{F}}) = \sqrt{\Var(g)-\Var(g-g^{\mcal{F}})}$,
so by~(\ref{f2641}) it suffices to prove that
\[\label{for5211}
\Var\big(g-g^{\mcal{F}}) \geq \prod_{i=1}^N \big(1-\epsilon_i^2\big) \Var(g) .\]

With our notation, $g-g^{\mcal{F}} = g^N$ and $g = g^0$;
we will prove that for all~$i\in\{1,\ldots,N\}$,
\[\label{for0829} \Var(g^i) \geq (1-\epsilon_i^2) \Var(g^{i-1}) .\]
Since $g^{i-1}$ and~$g^i$ are centered w.r.t.~$\mcal{F}_{i-1}$,
one has
\[ \Var(g^{i-1}) = \int \Var(g^{i-1}|x_1,\ldots,x_{i-1}) \,\dx\Pr[x_1,\ldots,x_{i-1}] ,\]
with a similar decomposition for~$\Var(g^i)$,
so that it suffices to prove~(\ref{for0829}) conditionally to~$\mcal{F}_{i-1}$.

Conditionally to~$\mcal{F}_{i-1}$, $g^{i-1}$ is centered and $Y$-measurable.
Moreover, $g^i = g^{i-1} - (g^{i-1})^{\sigma(X_i)}$,
so by associativity of variance
$\Var(g^i) = \Var(g^{i-1}) - \Var\big( (g^{i-1})^{\sigma(X_i)} \big)$,
and therefore (\ref{for0829}) is equivalent to
\[ \Var\big( (g^{i-1})^{\sigma(X_i)} \big) \geq \epsilon_i^2 \Var(g^{i-1}) ,\]
which follows directly from the assumption ``$\{X_i:Y\}_{\mcal{F}_{i-1}} \leq \epsilon_i$''.
\end{proof}\endgroup

\section{Double tensorization}\label{parDoubleTensorization}

Simple tensorization as itself is already interesting
since it gives an $L^2$-type bound for the correlation between~$X$ and~$\vec{Y}$,
which is better than the $L^1$-type bounds
typically obtained by total variation methods.
Yet it does not exhaust
the full potential of Hilbertian correlations
concerning tensorization, since obviously
it does not contain results like independent tensorization
(cf.\ \S~\ref{parIndependentTensorization}).

The aim of this section is to get sharp tensorization results
where we perform tensorizing on \emph{both} sides,
without having to assume complete independence like in Theorem~\ref{pro1252}.
The price to pay is that the techniques involved,
though similar in their spirit, will be much more tricky,
moreover the bounds obtained will not be completely optimal
(see \S~\ref{parOptimality}).

\subsection{`\texorpdfstring{$N$}{N}~against~\texorpdfstring{$M$}{M}' tensorization}

The following theorem may be considered as the main result of this monograph.
As will be explained in \S~\ref{parAsymptoticOptimality},
it `contains' qualitatively all the other tensorization theorems
(i.e.\ Theorems~\ref{pro1252}, \ref{thm5750} and~\ref{thm0119}).

\begin{Thm}[`$N$~against~$M$' theorem]\label{thm6413}
Let~$I$ and~$J$ be sets, and let~$(X_i)_{i\in I}$ and~$(Y_j)_{j\in J}$ be random variables,
the $\sigma$-metalgebra they generate being denoted by~$\mcal{M}$.
Suppose for any~$i,j$, ${\{X_i:Y_j\}_{\mcal{M}}} \leq \epsilon_{ij}$ for some $\epsilon_{ij} \geq 0$,
and define the operator
\[ \begin{array}{rrcl} \boldepsilon \colon & L^2(J) & \to & L^2(I) \\
& (a_j)_{j\in J} & \mapsto & \big( \sum_{j\in J} \epsilon_{ij}a_j \big)_{i\in I} ,\end{array} \]
then:
\[\label{for3009} \big\{ \vec{X}_I : \vec{Y}_J \big\} \leq \VERT \boldepsilon \VERT \wedge 1 .\]
\end{Thm}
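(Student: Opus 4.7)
The plan is to lift the alternative proof of Theorem~\ref{thm5750} from a one-sided martingale decomposition to a double one, and to convert the resulting bilinear estimate into an operator-norm bound via duality.

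\emph{Setup.} By Proposition~\ref{pro6559} I may assume $I=\{1,\dots,N\}$ and $J=\{1,\dots,M\}$ finite. Fix $f\in\ldb(\vec{X}_I)$, $g\in\ldb(\vec{Y}_J)$; it suffices to bound $|\EE[fg]|$. Introduce the filtrations $\mcal F_i=\sigma(X_1,\dots,X_i)$ and $\mcal G_j=\sigma(Y_1,\dots,Y_j)$, and the $L^2$-orthogonal martingale-difference decompositions
\[ f=\sum_{i=1}^N f_i,\quad f_i := f^{\mcal F_i}-f^{\mcal F_{i-1}}; \qquad g=\sum_{j=1}^M g_j,\quad g_j := g^{\mcal G_j}-g^{\mcal G_{j-1}}. \]
Setting $\sigma_i=\ecty(f_i)$ and $\tau_j=\ecty(g_j)$, one has $\sum_i\sigma_i^2=\ecty(f)^2$ and $\sum_j\tau_j^2=\ecty(g)^2$.

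\emph{Key bilinear estimate.} The crux is the cross bound
\[ |\EE[f_i g_j]| \;\leq\; \epsilon_{ij}\,\sigma_i\,\tau_j \qquad \text{for all }(i,j)\in I\times J. \]
To establish it, I condition on $\mcal H_{ij}:=\mcal F_{i-1}\vee\mcal G_{j-1}$, which by Proposition~\ref{pro0539} lies in the natural $\sigma$-metalgebra $\mcal M$. Conditionally on $\mcal H_{ij}$, $f_i$ is a function of $X_i$ alone and $g_j$ a function of $Y_j$ alone; after subtracting the $\mcal H_{ij}$-conditional means, the subjective correlation hypothesis $\{X_i:Y_j\}_{\mcal H_{ij}}\leq\epsilon_{ij}$ applies directly to the centered parts. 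The ``mean parts'' $\mu_{ij}:=\EE[f_i|\mcal H_{ij}]$ and $\nu_{ij}:=\EE[g_j|\mcal H_{ij}]$---which are respectively $\mcal F_{i-1}$- and $\mcal G_{j-1}$-centered and form $L^2$-martingales in the complementary index---are handled separately; their contribution is combined with the correlation bound via a Cauchy--Schwarz of the form $(\epsilon XY+xy)^2\leq(X^2+x^2)(\epsilon^2 Y^2+y^2)$ and re-absorbed using the global variance constraints $\sum_j\Var(\mu_{ij})\leq\sigma_i^2$ and $\sum_i\Var(\nu_{ij})\leq\tau_j^2$, leaving the clean inequality above.

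\emph{Conclusion by duality.} With the key estimate at hand, set $\vec\sigma=(\sigma_i)_i\in L^2(I)$ and $\vec\tau=(\tau_j)_j\in L^2(J)$; then
\[ |\EE[fg]| \;\leq\; \sum_{i,j}|\EE[f_i g_j]| \;\leq\; \sum_{i,j}\epsilon_{ij}\sigma_i\tau_j \;=\; \langle\vec\sigma,\boldsymbol\epsilon\vec\tau\rangle_{L^2(I)} \;\leq\; \VERT\boldsymbol\epsilon\VERT\,\|\vec\sigma\|\,\|\vec\tau\| \;=\; \VERT\boldsymbol\epsilon\VERT\,\ecty(f)\,\ecty(g), \]
which by Definition~\ref{def4393} gives $\{\vec{X}_I:\vec{Y}_J\}\leq\VERT\boldsymbol\epsilon\VERT$; the $\wedge 1$ follows from the universal bound in Proposition that correlations always lie in $[0,1]$.

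\emph{Main obstacle.} The technical heart is the bilinear estimate. A naive conditioning argument only yields the weaker inequality $|\EE[f_ig_j]|\leq\epsilon_{ij}\sqrt{(\sigma_i^2-a_{ij}^2)(\tau_j^2-b_{ij}^2)}+a_{ij}b_{ij}$, where $a_{ij}=\ecty(\mu_{ij})$ and $b_{ij}=\ecty(\nu_{ij})$, and this bound can strictly exceed $\epsilon_{ij}\sigma_i\tau_j$ pointwise (for instance with $\epsilon_{ij}=a_{ij}=b_{ij}=1/2$ and $\sigma_i=\tau_j=1$ one gets $5/8>1/2$). Recovering the clean bound requires extracting additional mileage from the two-parameter martingale structure of $(\mu_{ij})_j$ and $(\nu_{ij})_i$---an ingredient with no analogue in the one-sided setting of Theorem~\ref{thm5750} and the genuine novelty of the present result.
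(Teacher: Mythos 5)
Your proposal stands or falls with the asserted ``key bilinear estimate'' $|\EE[f_ig_j]|\leq\epsilon_{ij}\,\sigma_i\tau_j$, which — as you concede in the ``main obstacle'' paragraph — you do not prove. In fact this estimate is \emph{false}, so no amount of extra mileage from the two-parameter martingale structure can rescue it. Take $Z_1,Z_2,W_1,W_2$ i.i.d.\ $\mcal N(1)$ and set $X_1=Z_1$, $Y_1=W_1$, $X_2=Z_2+\lambda W_1$, $Y_2=W_2+\lambda Z_1$. The pair $(X_2,Y_2)$ is built from disjoint independent blocks, so $X_2\perp Y_2$ both unconditionally and conditionally on every subset of $\{X_1,Y_1\}$, whence $\epsilon_{22}={\{X_2:Y_2\}}_{\mcal M}=0$. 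Now take $f=X_1X_2$, $g=Y_1Y_2$: since $\EE[f|X_1]=X_1\,\EE[X_2]=0$ one has $f_1=0$ and $f_2=f$, and likewise $g_1=0$, $g_2=g$; yet
\[ \EE[f_2g_2]=\EE\big[Z_1(Z_2+\lambda W_1)\,W_1(W_2+\lambda Z_1)\big]=\lambda^2\,\EE[Z_1^2]\,\EE[W_1^2]=\lambda^2>0=\epsilon_{22}\,\sigma_2\tau_2 .\]
So the per-pair bound you need fails outright. (Theorem~\ref{thm6413} itself is of course unharmed: here $\epsilon_{12}=\epsilon_{21}=\lambda/\sqrt{1+\lambda^2}$ so $\VERT\boldepsilon\VERT=\lambda/\sqrt{1+\lambda^2}$, and one checks $|\EE[fg]|/\ecty(f)\ecty(g)=\lambda^2/(1+\lambda^2)\leq\lambda/\sqrt{1+\lambda^2}$ — but the correlation is ``routed through'' $\epsilon_{12}$ and $\epsilon_{21}$, not $\epsilon_{22}$.)

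The root of the trouble is exactly the one you flag: $f_i$ is centered w.r.t.\ $\mcal F_{i-1}$ only, not w.r.t.\ $\mcal F_{i-1}\vee\mcal G_{j-1}$, and the ``mean parts'' $\mu_{ij}$, $\nu_{ij}$ can carry covariance that $\epsilon_{ij}$ alone does not control — in the example above \emph{all} of $\EE[f_2g_2]$ sits in $\EE[\mu_{22}\nu_{22}]$. This is precisely why the paper does \emph{not} decompose $f$ and $g$ separately and never tries to bound individual covariances $\EE[f_ig_j]$. Instead it decomposes $f$ jointly along both filtrations, with $f^j_i:=f^{\mcal G_j\vee\mcal F_i}-\EE[f|\mcal G_j\vee\mcal F_{i-1}]$, which \emph{is} centered conditionally on $\mcal G_j\vee\mcal F_{i-1}$; it then reduces the claim (as in the alternative proof of Theorem~\ref{thm5750}) to a lower bound on $\Var(f-f^{\mcal G})$ and tracks the variances $V^j_i=\Var(f^j_i)$ through the genuinely non-bilinear recursion of Lemma~\ref{lem7265} — whose cross term $2\epsilon_i\sqrt{V_i}\sum_{i'>i}\epsilon_{i'}\sqrt{V_{i'}}$ is precisely the leakage your clean estimate pretends away. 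That recursion is then linearized with free positive weights $\alpha_i$ (Lemma~\ref{lem5453}), which are finally optimized via a Perron--Frobenius argument (Lemma~\ref{l3542}) to produce the operator norm $\VERT\boldepsilon\VERT$. A direct ``bilinear estimate plus duality'' argument of the type you sketch is not available under these hypotheses.
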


\begin{Rmk}
On~$(\RR_+)^{I\times J}$, $\VERT \boldepsilon \VERT$
is a nondecreasing function of each~$\epsilon_{ij}$.
\end{Rmk}

\begin{NOTA}
As the proof of Theorem~\ref{thm6413} is rather technical,
I found it useful to write down how it goes on a concrete example.
This is performed in Appendix~\ref{parIllustrationOfTheProofs},
which I suggest the reader to look at in parallel with the proof as a complement.
\end{NOTA}

To prove Theorem~\ref{thm6413},
we will need the following
\begin{Lem}\label{lem7265}
Let~$X_1,X_2,\ldots,X_N$ and~$Y$ be random variables,
call~$\mcal{M}$ their natural $\sigma$-metalgebra,
and assume that for all~$i\in\{1,\ldots,N\}$,
\[ \{X_i:Y\}_{\mcal{M}} \leq \epsilon_i .\]
Let~$f$ be an $\ldb(\vec{X})$ function.
For all~$0 \leq i \leq N$, denote $\mcal{F}_i \coloneqq \sigma(X_1,\ldots,X_i)$,
resp.\ $\mcal{F}^*_i \coloneqq \sigma(X_1,\ldots,\ab X_i,Y)$,
and for all~$1 \leq i \leq N$, define
\[ f_i \coloneqq f^{\mcal{F}_i} - \EE[f|\mcal{F}_{i-1}] ,\]
\[ \llap{\text{resp.}\quad} f^*_i \coloneqq f^{\mcal{F}^*_i} - \EE[f|\mcal{F}^*_{i-1}] ,\]
and denote by~$V_i$ and~$V^*_i$ their respective variances. Then,
for all~$1 \leq i \leq N$,
\[\label{for1302}
V^*_i \geq (1-\epsilon_i^2) V_i - 2\epsilon_i \sqrt{V_i} \Big( \sum_{i'>i} \epsilon_{i'} \sqrt{V_{i'}} \Big) .\]
\end{Lem}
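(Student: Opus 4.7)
The plan is to establish the inequality conditionally on $\mcal{F}_{i-1}$ and then integrate. I will let $\Pr_{i-1}$ denote the conditional law given $\mcal{F}_{i-1}$ and let $\|\Bcdot\|$ refer to the $L^2(\Pr_{i-1})$-norm throughout. Under $\Pr_{i-1}$, $f$ decomposes as $c + f_i + \sum_{i'>i} f_{i'}$, where $c:=\EE_{i-1}[f]$ and $f_i = K(X_i)-c$ with $K(X_i):=\EE_{i-1}[f|X_i]$; the higher-order terms $f_{i'}$ remain centered w.r.t.\ $\mcal{F}_{i'-1}$. I will also introduce the two regressions $H(X_i,Y):=\EE_{i-1}[f|X_i,Y]$ and $G(Y):=\EE_{i-1}[f|Y]$, and observe that the conditional analogue of $V^*_i$ is $\EE_{i-1}[\Var_{i-1}(H|Y)]$, since under $\Pr_{i-1}$ further conditioned on $Y$ one has $f^*_i = H(X_i,Y)-G(Y)$.

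Applying associativity of variance to $H$ first via $X_i$ and then via $Y$, and subtracting the two identities, will give the key equality
\[
\EE_{i-1}[\Var_{i-1}(H|Y)] = \Var_{i-1}(K) + \EE_{i-1}[\Var_{i-1}(H|X_i)] - \Var_{i-1}(G(Y)),
\]
so that everything reduces to bounding $\Var_{i-1}(G)$ from above. I will split $G(Y)-c = A+B$, with $A:=\EE_{i-1}[K-c\,|\,Y]$ and $B:=\EE_{i-1}[f-K\,|\,Y]$. For $\|A\|$, the hypothesis $\{X_i:Y\}_{\mcal{F}_{i-1}}\leq\epsilon_i$ combined with Proposition~\ref{pro7764} (which reads the correlation as the norm of a conditional-expectation operator) yields directly $\|A\|\leq \epsilon_i\sqrt{\Var_{i-1}(K)}$. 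For $\|B\|$, I will use $f-K=\sum_{i'>i}f_{i'}$ and the triangle inequality, reducing to bounding each $\|\EE_{i-1}[f_{i'}|Y]\|$; by duality against unit-norm centered $Y$-functions and conditioning further on $\mcal{F}_{i'-1}$, the variable $f_{i'}$ becomes a centered $X_{i'}$-function to which the hypothesis $\{X_{i'}:Y\}_{\mcal{F}_{i'-1}}\leq\epsilon_{i'}$ applies, and a Cauchy--Schwarz integration over $\mcal{F}_{i'-1}$ returns $\|\EE_{i-1}[f_{i'}|Y]\|\leq \epsilon_{i'}\sqrt{\Var_{i-1}(f_{i'})}$, so that $\|B\|\leq S := \sum_{i'>i}\epsilon_{i'}\sqrt{\Var_{i-1}(f_{i'})}$.

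The triangle inequality then gives $\Var_{i-1}(G)\leq \|A\|^2+2\|A\|\|B\|+\|B\|^2$; the unwanted quadratic term $\|B\|^2$ is what I expect to be the main obstacle, but it cancels cleanly with a term already present in the key identity. Indeed $B=\EE_{i-1}[H-K|Y]$, so by Jensen $\|B\|^2\leq\|H-K\|^2=\EE_{i-1}[\Var_{i-1}(H|X_i)]$, which is exactly the positive contribution on the right-hand side of the key identity. Substituting everything back, I will obtain
\[
\EE_{i-1}[\Var_{i-1}(H|Y)] \geq (1-\epsilon_i^2)\Var_{i-1}(K) - 2\epsilon_i\sqrt{\Var_{i-1}(K)}\,S,
\]
and to conclude I will integrate over $\mcal{F}_{i-1}$ using $\EE[\Var_{i-1}(K)]=V_i$ and $\EE[\Var_{i-1}(f_{i'})]=V_{i'}$, and apply Cauchy--Schwarz to each summand in $S$ to get $\EE\bigl[\sqrt{\Var_{i-1}(K)\,\Var_{i-1}(f_{i'})}\bigr]\leq \sqrt{V_iV_{i'}}$, thereby obtaining~\eqref{for1302}.
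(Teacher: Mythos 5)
Your proof is correct, and it takes a genuinely different route from the paper's. The paper proves the lemma by a decreasing induction on~$i$: it introduces the telescoping quantities $\tilde{V}_i \coloneqq \Var(f - f^{\mcal{F}_i})$, $\tilde{V}^*_i \coloneqq \Var(f - f^{\mcal{F}^*_i})$, identifies $\tilde{V}_i - \tilde{V}^*_i = \Var\big((\tilde{f}_i)^{\mcal{F}^*_i}\big)$, proves the recursion $\tilde{V}_{i-1} - \tilde{V}^*_{i-1} \leq \big(\epsilon_i\sqrt{V_i} + \sqrt{\tilde{V}_i - \tilde{V}^*_i}\big)^2$ via a test-function argument, unrolls it from $i=N$, and then reads off~(\ref{for1302}) from $V_i - V^*_i = (\tilde{V}_{i-1} - \tilde{V}^*_{i-1}) - (\tilde{V}_i - \tilde{V}^*_i)$. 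You instead work directly at level~$i$ conditionally on~$\mcal{F}_{i-1}$, use associativity of variance twice on~$H$ to get the exact identity $\EE_{i-1}[\Var_{i-1}(H|Y)] = \Var_{i-1}(K) + \EE_{i-1}[\Var_{i-1}(H|X_i)] - \Var_{i-1}(G)$, and bound $\Var_{i-1}(G)$ via the triangle inequality on $G-c = A+B$ with the crucial observation that the unwanted $\|B\|^2$ term is dominated by $\EE_{i-1}[\Var_{i-1}(H|X_i)]$ and so cancels exactly. Your $\Var_{i-1}(G)$ is in fact the $\mcal{F}_{i-1}$-conditional version of the paper's $\tilde{V}_{i-1}-\tilde{V}^*_{i-1}$, and bounding $\|B\|$ via the telescoping $f-K = \sum_{i'>i}f_{i'}$ replaces, in a single step, what the paper does by unrolling the recursion. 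Your route is arguably more economical and makes the cancellation mechanism transparent, at the price of juggling three simultaneous regressions ($K$, $G$, $H$) rather than one clean recursion; the paper's route is more modular in that Claim~\ref{clm1959} is reused directly in the concluding algebra. Both are sound.
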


\begin{proof}
For~$0\leq i\leq N$, define
\[ \tilde{f}_i \coloneqq f - \EE[f|\mcal{F}_i] ,\]
\[ \llap{\text{resp.}\quad} \tilde{f}^*_i \coloneqq f - \EE[f|\mcal{F}^*_i] ,\]
and call~$\tilde{V}_i$ and~$\tilde{V}^*_i$ their respective variances.
One has $\tilde{f}_i = \sum_{i'>i} f_{i'}$, resp.\ $\tilde{f}^*_i = \sum_{i'>i} f^*_{i'}$.
Moreover, by the same argument as in the proof of Proposition~\ref{pro5527},
all the~$f_i$ are orthogonal (that is, ${i_0\neq i_1} {\enskip\Rightarrow}\enskip \EE[f_{i_0}f_{i_1}] = 0$),
thus
\[ \tilde{V}_i = \sum_{i'>i} V_{i'} ;\]
similarly,
\[ \tilde{V}^*_i = \sum_{i'>i} V^*_{i'} .\]

In a first step, we observe that for all~$i$,
$\tilde{f}_i - \tilde{f}^*_i = (f-f^{\mcal{F}_i}) - (f-f^{\mcal{F}^*_i})
= f^{\mcal{F}^*_i} - f^{\mcal{F}_i} = {f^{\mcal{F}^*_i} - (f^{\mcal{F}_i})^{\mcal{F}^*_i}} \ab = (f - f^{\mcal{F}_i})^{\mcal{F}^*_i} = (\tilde{f}_i)^{\mcal{F}^*_i}$,
which by associativity of variance yields the following
\begin{Clm}\label{clm1908}
\[\label{for4854} \tilde{V}_i - \tilde{V}^*_i = \Var\big((\tilde{f}_i)^{\mcal{F}^*_i}\big).\]
\end{Clm}

Now, the following claim will be the main tool for proving the lemma:
\begin{Clm}\label{clm1959}
For all~$1 \leq i \leq N$,
\[\label{for1960}
\tilde{V}_{i-1} - \tilde{V}^*_{i-1} \leq \Big( \epsilon_i \sqrt{V_i} + \sqrt{\tilde{V}_i-\tilde{V}^*_i} \Big)^{\!2\,}
\footnotemark .\]
\end{Clm}
\footnotetext{Taking the square root of~$(\tilde{V}_i-\tilde{V}^*_i)$
is allowed, since that quantity is nonnegative by Claim~\ref{clm1908}.}

Admit temporarily Claim~\ref{clm1959}. Since $\tilde{V}_N= \tilde{V}^*_N = 0$,
(\ref{for1960}) applied with $i=N$ gives $\tilde{V}_{N-1} - \tilde{V}^*_{N-1} \leq \epsilon_N^2 V_N$,
which in turn we can use in~(\ref{for1960}) with $i=N-1$, and so on,
to finally prove by finite (decreasing) induction that, for all~$i$,
\[\label{for1291}
\tilde{V}_i - \tilde{V}^*_i \leq \Big( \sum_{i'>i}\epsilon_{i'}\sqrt{V_{i'}} \Big)^2 .\]
Now to get~(\ref{for1302}),
we note that $V_i = \tilde{V}_{i-1} - \tilde{V}_i$,
resp.\ $V^*_i = \tilde{V}^*_{i-1} - \tilde{V}^*_i$,
so, using successively the inequalities~(\ref{for1960}) and~(\ref{for1291}),
\begin{multline}
V_i - V^*_i = \big( \tilde{V}_{i-1} - \tilde{V}^*_{i-1} \big)
- \big( \tilde{V}_i - \tilde{V}^*_i \big) \\
\leq \Big( \epsilon_i \sqrt{V_i} + \sqrt{\tilde{V}_i - \tilde{V}^*_i} \Big)^2
- (\tilde{V}_i - \tilde{V}^*_i)
= \epsilon_i^2 V_i + 2 \epsilon_i\sqrt{V_i} \sqrt{\tilde{V}_i - \tilde{V}^*_i} \\
\leq \epsilon_i^2 V_i + 2 \epsilon_i\sqrt{V_i} \Big( \sum_{i'>i}\epsilon_{i'}\sqrt{V_{i'}} \Big),
\end{multline}
which is equivalent to (\ref{for1302}).
\end{proof}

\begingroup\def\proofname{Proof of Claim~\ref{clm1959}}\begin{proof}
Thanks to Claim~\ref{clm1908}, what we have to prove is:
\[\label{for3998}
\Var\big((\tilde{f}_{i-1})^{\mcal{F}^*_{i-1}}\big) \leq \Big( \epsilon_i \sqrt{V_i} + \sqrt{\tilde{V}_i-\tilde{V}^*_i} \Big)^{\!2\,} .\]
By the definition of conditional expectation
and the equality case in the Cauchy--Schwarz inequality,
(\ref{for3998}) is equivalent to saying that for all~$\ldb(\mcal{F}^*_{i-1})$ function~$g$,
\[\label{for3331} \big| \EE \big[ \tilde{f}_{i-1} g \big] \big| \leq
\Big( \epsilon_i \sqrt{V_i} + \sqrt{\tilde{V}_i-\tilde{V}^*_i} \Big) \ecty(g) .\]
So let~$g$ be a centered $L^2$ $\mcal{F}^*_{i-1}$-measurable real function.
Since $\tilde{f}_{i-1} = f_i + \tilde{f}_i$,
$\EE[\tilde{f}_{i-1}g] = \EE[f_ig] + \EE[\tilde{f}_ig]$,
which two terms we shall bound separately.

For the first term, under~$\Pr[\Bcdot|\ab \mcal{F}_{i-1}]$,
$f_i$ is centered and only depends on~$X_i$, and~$g$ only depends on~$Y$.
Since $\{ X_i : Y \}_{\mcal{F}_{i-1}} \leq {\{ X_i : Y \}_{\mcal{M}}} \leq \epsilon_i$,
it follows that
\[ \big| \EE[f_ig|\mcal{F}_{i-1}] \big| \leq \epsilon_i \, \ecty(f_i|\mcal{F}_{i-1}) \, \ecty(g|\mcal{F}_{i-1}) ,\]
which yields upon integrating:
\begin{multline}\label{cal3329}
|\EE[f_ig]| \leq \epsilon_i \int \ecty(f_i|\mcal{F}_{i-1}) \ecty(g|\mcal{F}_{i-1}) \,\dx\Pr \\
\footrel{\text{CS}}{\leq} \epsilon_i \sqrt{\int \Var(f_i|\mcal{F}_{i-1}) \, \dx\Pr} \sqrt{\int \Var(g|\mcal{F}_{i-1}) \, \dx\Pr} \\
= \epsilon_i \sqrt{V_i} \sqrt{\Var(g) - \Var(g^{\mcal{F}_{i-1}})} \leq \epsilon_i \sqrt{V_i} \ecty(g) .
\end{multline}

For the second term, under~$\Pr[\Bcdot|\ab \mcal{F}_i]$,
$g$ only depends on~$Y$, and $\EE[\tilde{f}_i|Y]
\equiv \tilde{f}_i - \tilde{f}^*_i$ as we noticed just before Claim~\ref{clm1908},
so $\EE[\tilde{f}_ig|\mcal{F}_i] = \EE[{(\tilde{f}_i - \tilde{f}^*_i)}g | \mcal{F}_i]$,
which yields upon integrating:
\[\label{for3330} |\EE[\tilde{f}_ig]|
= \big| \EE \big[ (\tilde{f}_i - \tilde{f}^*_i) g \big] \big|
\footrel{\text{CS}}{\leq} \ecty(\tilde{f}_i - \tilde{f}^*_i) \ecty(g) = \sqrt{\tilde{V}_i - \tilde{V}^*_i} \ecty(g) ,\]
the last equality coming from Claim~\ref{clm1908}.
Then it just remains to combine~(\ref{cal3329}) and~(\ref{for3330})
to get~(\ref{for3331}).\linebreak[1]\strut
\end{proof}\endgroup

\begingroup\def\proofname{Proof of Theorem~\ref{thm6413}}\begin{proof}
First, thanks to a by now classical approximation argument
we may assume that $I = \{1,\ldots,N\}$
and $J = \{1,\ldots,M\}$. Denote $\mcal{F} \coloneqq \sigma(\vec{X}_I)$,
resp.\ $\mcal{G} \coloneqq \sigma(\vec{Y}_J)$; our goal is to prove
that for all~$f\in\ldb(\mcal{F})$, all~$g\in\ldb(\mcal{G})$,
one has $|\EE[fg]| \leq ( \VERT \boldepsilon \VERT \wedge 1 ) \ecty(f) \ecty(g)$.
We will use the same trick as in our alternative proof of Theorem~\ref{thm5750}:
by the definition of conditional expectation and the Cauchy--Schwarz inequality,
proving the inequality above is equivalent to showing that for all~$f\in\ldb(\mcal{F})$,
\[ \Var\big( f^{\mcal{G}} \big) \leq \big( \VERT \boldepsilon \VERT^2 \wedge 1 \big) \Var(f) ,\]
which, by associativity of variance, is in turn equivalent to:
\[\label{f3451}
\Var\big( f - f^{\mcal{G}} \big) \geq \big( 1-\VERT\boldepsilon\VERT^2 \big)_+ \Var(f) .\]

For~$0\leq i\leq N$, resp.~$0\leq j\leq M$, define
$\mcal{F}_i = \sigma(X_1,\ldots,X_i)$, resp. $\mcal{G}_j = \sigma(Y_1,\ldots,Y_j)$.
For all~$0 \leq j \leq M$, define
\[ f^j \coloneqq f - \EE[f|\mcal{G}_j] ,\]
and for all~$1 \leq i \leq N$, define moreover
\[ f^j_i \coloneqq f^{\mcal{G}_j\vee\mcal{F}_i} - \EE[f|\mcal{G}_j\vee\mcal{F}_{i-1}] .\]
Denote $V^j \coloneqq \Var(f^j)$, resp.\ $V^j_i \coloneqq \Var(f^j_i)$.
For fixed~$j$, the~$f^j_i$ are pairwise orthogonal
(again by the argument in the proof of Proposition~\ref{pro5527})
and their sum is equal to~$f^j$, so:
\[\label{for3043} V^j = \sum_{i=1}^N V^j_i .\]
Thus, with this notation our goal~(\ref{f3451}) becomes:
\[\label{for7254}
\sum_{i=1}^N V^M_i \geq \big( 1-\VERT\boldepsilon\VERT^2 \big)_{\!+\,} \sum_{i=1}^N V^0_i .\]

The main tool to prove~(\ref{for7254}) will be Lemma~\ref{lem7265}.
Actually the rough formula~(\ref{for1302}) is quite impratical,
so we introduce a \emph{linearized} version of it:
for each~$1\leq i\leq N$ take some $\alpha_i > 0$
(which for the time being is arbitrary), then by the Cauchy--Schwarz inequality,
(\ref{for1302}) implies that:
\[\label{for3261}
V^*_i \geq (1-\epsilon_i^2) V_i - \frac{\epsilon_i V_i}{\alpha_i} \sum_{i'>i} \epsilon_{i'}\alpha_{i'}
- \epsilon_i\alpha_i \sum_{i'>i} \frac{\epsilon_{i'}V_{i'}}{\alpha_{i'}} .\]

\begin{Rmk}
(\ref{for3261}) is devised so that its right-hand side is exactly the same
as in~(\ref{for1302}) if $V_i \propto \alpha_i^2\enskip\forall i$.
\end{Rmk}

Let us reason conditionally to~$\mcal{G}_{j-1}$ for a few lines.
Under this conditioning, call~$\dot{\mcal{F}}_i \coloneqq \sigma(X_1,\ldots,X_i)$,
resp.\ $\dot{\mcal{F}}^*_i \coloneqq \sigma(X_1,\ldots,X_i,Y_j)$, and $\dot{f} \coloneqq f^{j-1}$.
Then $\dot{f}$ is an $\ldb(\vec{X})$ function,
so we are in situation of applying Lemma~\ref{lem7265} to the functions
\begin{eqnarray} \dot{f}_i &\coloneqq& \dot{f}^{\dot{\mcal{F}}_i} - \EE\big[\dot{f}\big|\dot{\mcal{F}}_{i-1}\big] \\
\text{\llap{and \qquad}{}} \dot{f}^*_i &\coloneqq& \dot{f}^{\dot{\mcal{F}}^*_i} - \EE\big[\dot{f}\big|\dot{\mcal{F}}^*_{i-1}\big] .
\end{eqnarray}
But in fact we already know these functions:
namely, $\dot{f}_i = f^{j-1}_i$ and $\dot{f}^*_i = f^j_i$.
Then, applying the linearized version~(\ref{for3261}) of Lemma~\ref{lem7265}
\[ \Var \big( f^j_i|\mcal{G}_{j-1} \big) \geq \\
\bigg( 1-\epsilon_{ij}^2 - \frac{\epsilon_{ij}}{\alpha_i} \sum_{i'>i} \epsilon_{i'j}\alpha_{i'} \bigg)
\Var \big( f^{j-1}_i|\mcal{G}_{j-1} \big)
- \epsilon_{ij}\alpha_i \sum_{i'>i} \frac{\epsilon_{i'j}\Var\big( f^{j-1}_{i'}|\mcal{G}_{j-1} \big)}{\alpha_{i'}} , \]
whence upon integrating:
\[\label{for4382}
V^j_i \geq (1-\epsilon_{ij}^2) V^{j-1}_i
- \Big(\sum_{i'>i} \epsilon_{i'j}\alpha_{i'}\Big) \frac{\epsilon_{ij} V^{j-1}_i}{\alpha_i}
- \epsilon_{ij}\alpha_i \sum_{i'>i} \frac{\epsilon_{i'j}V^{j-1}_{i'}}{\alpha_{i'}} .\]

By Equation~(\ref{for4382}), we have transformed our initial problem
into a purely abstract operator problem, \emph{posed in an $L^1$ setting}.
To handle it, we need a little notation.
Call~$L^1(I)$ the set of real functions on~$I$,
endowed with the $L^1$ norm
\[ \big\| (v_i)_{i\in I} \big\|_1 \coloneqq \sum_{i\in I} |v_i| .\]
The dual space of~$L^1(I)$ is made of the linear forms
$\ell \colon (v_i)_{i\in I} \mapsto \sum \ell_i v_i$,
equipped with the $L^{\infty}$ norm
\[ \| \ell \|_{\infty} \coloneqq \sup_{i\in I} |\ell_i| .\]
We shall write ``$L^1(I) \ni v \geq 0$'' to mean that all the entries of~$v$ are nonnegative,
and ``$(L^1(I))' \ni \ell \geq 0$'' to mean that
$(v \geq 0) \Rightarrow (\ell v \geq 0)$, which is equivalent to say that
all the~$\ell_i$ are nonnegative.
Now I claim the following lemma, whose proof is postponed:
\begin{Lem}\label{lem5453}
Suppose given some nonnegative numbers $V^j_i$ for
$(i,j) \in \{1,\ldots,N\} \times \{0,\ldots,M\}$,
such that Equation~(\ref{for4382}) is satisfied for all~$i,j$.
Call~$\mcal{L}$ the nonnegative linear form on~$L^1(I)$ defined by
\[ \mcal{L}v = \sum_{\substack{j\in J\\i,i'\in I}} \frac{\alpha_{i'}}{\alpha_i} \epsilon_{ij}\epsilon_{i'j} v_i ,\]
and assume $\|\mcal{L}\|_{\infty} \leq 1$,
then:
\[\label{for6376}
\sum_{i=1}^N V^M_i \geq \sum_{i=1}^N V^0_i - \mcal{L}\big( (V^0_i)_{i\in I} \big) .\]
\end{Lem}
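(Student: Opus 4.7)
The plan is to prove the lemma in three steps: sum the hypothesis inequality over~$i$, telescope over~$j$, and reduce to an extremal sequence that is componentwise nonincreasing in~$j$.

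First, write $v^j := (V^j_i)_{i\in I}$ and let $\mcal{L}^{(j)}$ denote the column-$j$ contribution to~$\mcal{L}$, namely $\mcal{L}^{(j)}(v) = \sum_{i,i'}(\alpha_{i'}/\alpha_i)\epsilon_{ij}\epsilon_{i'j}v_i$. I will sum~(\ref{for4382}) over~$i \in \{1,\ldots,N\}$: after relabeling $i \leftrightarrow i'$ in the third summand it contributes $\sum_i(\epsilon_{ij}/\alpha_i)V^{j-1}_i\sum_{i'<i}\alpha_{i'}\epsilon_{i'j}$, which combined with the diagonal $\epsilon_{ij}^2 V^{j-1}_i$ and the second ``upper'' term completes the symmetric double sum $\mcal{L}^{(j)}(v^{j-1})$. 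Hence $\sum_i V^{j-1}_i - \sum_i V^j_i \leq \mcal{L}^{(j)}(v^{j-1})$, and telescoping over $j$ yields
\[
\sum_i V^0_i - \sum_i V^M_i \;\leq\; \sum_{j=1}^M \mcal{L}^{(j)}(v^{j-1}).
\]

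The conclusion is equivalent to $\sum_{j=1}^M \mcal{L}^{(j)}(v^{j-1}) \leq \mcal{L}(v^0) = \sum_{j=1}^M \mcal{L}^{(j)}(v^0)$. Since each $\mcal{L}^{(j)}$ has nonnegative coefficients, this is immediate once $v^{j-1} \leq v^0$ componentwise, and I plan to reduce to this case by passing to the extremal (minimal) sequence $v^{j,\mrm{ext}}_i := \max\bigl(0, R_i^j(v^{j-1,\mrm{ext}})\bigr)$, where $R_i^j$ is the right-hand side of~(\ref{for4382}). The coefficient of $v^{j-1}_i$ inside $R_i^j$ equals $1 - \epsilon_{ij}^2 - (\epsilon_{ij}/\alpha_i)\sum_{i'>i}\alpha_{i'}\epsilon_{i'j}$, which lies in $[0,1]$ by the hypothesis $\|\mcal{L}\|_\infty \leq 1$ (bounding the larger quantity $M^j_i$); hence $R_i^j(u) \leq u_i$ for $u \geq 0$, so $v^{j,\mrm{ext}}$ is componentwise nonincreasing in~$j$ and in particular $v^{j-1,\mrm{ext}} \leq v^0$, yielding the desired bound for it.

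The main obstacle is to justify this passage to the extremal sequence: one must show that $\sum_i V^M_i$ is minimized, over valid sequences with the prescribed $v^0$, at the extremal sequence. Direct componentwise domination $v^j \geq v^{j,\mrm{ext}}$ need not hold (raising $v^{j-1}_{i'}$ for some $i'>i$ shrinks the lower bound on $v^j_i$ via the cross term), but any such excess propagates, through the diagonal part of the step at index $i'$ (where no cross term appears when $i'=N$, and more generally is absorbed by the $(1-\epsilon_{i'j}^2)$ factor), into an excess of $v^{j,\mrm{ext}}_{i'}$ of comparable size that compensates the loss at the companion coordinates. Making this trade-off rigorous by induction on~$M$ is the delicate part; once it is in place, combining it with the telescoped bound above will complete the proof.
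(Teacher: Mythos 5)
Your first two steps are correct, and the column-summation computation yielding $\sum_i V^{j-1}_i - \sum_i V^j_i \leq \mcal{L}^{(j)}\big((V^{j-1}_i)_i\big)$ is exactly the one the paper uses as well (to verify $\sum_i \check{V}^1_i = \sum_i V^0_i - \mcal{L}^1(V^0)$). The gap appears at Step~3. First, the conclusion is \emph{not} equivalent to $\sum_{j}\mcal{L}^{(j)}(v^{j-1}) \leq \mcal{L}(v^0)$, only implied by it through the telescoping inequality. More seriously, that sufficient condition is \emph{false} under the hypotheses of the lemma: take $N=1$, $M=2$, $\epsilon_{11}=0$, $\epsilon_{12}=1$, so $\|\mcal{L}\|_\infty = \epsilon_{11}^2+\epsilon_{12}^2 = 1$; then~(\ref{for4382}) only asks $V^1_1\geq V^0_1$ and $V^2_1\geq 0$, so $(V^0_1,V^1_1,V^2_1)=(1,10,0)$ is admissible, giving $\sum_j\mcal{L}^{(j)}(v^{j-1}) = 10 > 1 = \mcal{L}(v^0)$, while the lemma itself holds ($0\geq 1-1$). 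You anticipate this and fall back on the claim ``$\sum_i V^M_i$ is minimized at the pointwise-extremal sequence,'' but you leave it unproven, and it is not a technicality you can defer: because~(\ref{for4382}) is antitone in the coordinates $V^{j-1}_{i'}$ with $i'>i$, a componentwise excess at time $j-1$ does not propagate to $V^j\geq v^{j,\mrm{ext}}$ componentwise, and comparing the \emph{sums} through $M$ coupled steps needs precisely the estimate the lemma asserts. The fallback re-opens the problem rather than closing it.

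The paper's proof sidesteps all propagation-tracking by peeling off a single column inside an induction on~$M$. Write $\mcal{L} = \mcal{L}^1 + \mcal{L}^*$ with $\mcal{L}^1 = \mcal{L}^{(1)}$ and $\mcal{L}^* = \sum_{j\geq 2}\mcal{L}^{(j)}$ (so $\|\mcal{L}^*\|_\infty \leq 1$ as well); the induction hypothesis applied to the remaining $M-1$ columns gives $\sum_i V^M_i \geq \sum_i V^1_i - \mcal{L}^*(V^1)$. Let $\check{V}^1$ be the right-hand side of~(\ref{for4382}) for~$j=1$ (your $R^1(v^0)$, without the max with~$0$), and split
\[
\sum_i V^1_i - \mcal{L}^*(V^1)
= \Big[\sum_i \check{V}^1_i - \mcal{L}^*(\check{V}^1)\Big]
+ \Big[\|V^1-\check{V}^1\|_1 - \mcal{L}^*(V^1-\check{V}^1)\Big].
\]
The second bracket is $\geq 0$ because $V^1-\check{V}^1\geq 0$ by~(\ref{for4382}) and $\|\mcal{L}^*\|_\infty\leq 1$; the first bracket equals $\sum_i V^0_i - \mcal{L}^1(V^0) - \mcal{L}^*(\check{V}^1)$ by your column-summation identity, and is $\geq \sum_i V^0_i - \mcal{L}(V^0)$ since $\check{V}^1\leq V^0$ and $\mcal{L}^*$ has nonnegative coefficients. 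The entire one-step excess is thus absorbed by a single application of $\|\mcal{L}^*\|_\infty\leq 1$, with no extremal-sequence comparison ever needed; that is the device your sketch is missing.
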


Lemma~\ref{lem5453} has the following immediate
\begin{Cor}\label{cor1307}
Suppose given some nonnegative numbers $V^j_i$ for
$(i,j) \in \{1,\ldots,N\} \times \{0,\ldots,M\}$,
such that Equation~(\ref{for4382}) is satisfied for all~$i,j$,
then:
\[\label{for1338} \sum_{i=1}^N V^M_i \geq
\bigg( 1 - \sup_{i\in I} \sum_{\substack{j\in J\\i'\in I}} \frac{\alpha_{i'}}{\alpha_i} \epsilon_{ij}\epsilon_{i'j}
\bigg)_{\!\!+\,\,} \sum_{i=1}^N V^0_i .\]
\end{Cor}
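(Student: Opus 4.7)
The corollary is a direct consequence of Lemma~\ref{lem5453}, obtained simply by bounding above the linear form $\mcal{L}$ in terms of its operator norm. The plan is as follows.

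First, I would identify the supremum appearing in~(\ref{for1338}) with the $L^\infty$ norm of~$\mcal{L}$. Writing $\mcal{L}v = \sum_{i\in I} \ell_i\, v_i$ with nonnegative coefficients $\ell_i \coloneqq \sum_{j\in J,\,i'\in I} \frac{\alpha_{i'}}{\alpha_i}\epsilon_{ij}\epsilon_{i'j}$, the duality $(L^1(I))' \simeq L^\infty(I)$ gives $\VERT\mcal{L}\VERT = \sup_{i\in I} \ell_i$, which is exactly the quantity $S$ appearing under the supremum in~(\ref{for1338}).

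Next, I would distinguish two cases on the value of $S$. If $S \leq 1$, Lemma~\ref{lem5453} applies directly. Since the vector $(V^0_i)_{i\in I}$ is componentwise nonnegative and $\mcal{L}$ has nonnegative coefficients, one has the trivial dual bound $\mcal{L}\big((V^0_i)_{i\in I}\big) \leq \VERT\mcal{L}\VERT \cdot \big\|(V^0_i)_{i\in I}\big\|_1 = S \sum_i V^0_i$, so~(\ref{for6376}) yields $\sum_i V^M_i \geq (1-S) \sum_i V^0_i$, which is~(\ref{for1338}) in this case. If on the other hand $S > 1$, then $(1-S)_+ = 0$, so~(\ref{for1338}) reduces to the trivial inequality $\sum_i V^M_i \geq 0$, which holds because each $V^M_i$ is a variance.

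There is no real obstacle here since all the work has already been done in Lemma~\ref{lem5453}; the only thing to be careful about is not to forget the positive-part operation $(\Bcdot)_+$ on the right-hand side of~(\ref{for1338}), which is precisely what makes the statement valid without any assumption on~$S$.
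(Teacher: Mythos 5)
Your proof is correct and is precisely the ``immediate'' argument the paper has in mind (it states Corollary~\ref{cor1307} without proof as an immediate consequence of Lemma~\ref{lem5453}): identify the supremum with $\|\mcal{L}\|_\infty$, use the dual bound $\mcal{L}(V^0)\leq\|\mcal{L}\|_\infty\|V^0\|_1$, and handle the case $S>1$ by the positive part. One small slip: in the abstract statement of the corollary the $V^M_i$ are just hypothesized to be nonnegative numbers, not variances, so the correct justification in the $S>1$ case is simply the nonnegativity assumption rather than ``because each $V^M_i$ is a variance'' — but the conclusion is the same.
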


Now we finish the proof of Theorem~\ref{thm6413}:
thanks to Corollary~\ref{cor1307}
we have proved that~(\ref{for1338}) stands true in our situation
for \emph{any} choice of positive $(\alpha_i)_{i\in I}$.
The last step then consists in optimizing that choice.
Denote ``$\alpha>0$'' to mean that all the~$\alpha_i$ are positive. One has:
\begin{multline}
\inf_{\alpha>0} \sup_{i\in I} \sum_{\substack{j\in J\\i'\in I}} \frac{\alpha_{i'}}{\alpha_i} \epsilon_{ij}\epsilon_{i'j}
= \inf \big\{ \lambda \geq 0 \colon
(\exists \alpha > 0) (\forall i) \big( \sum_{\substack{j\in J\\i'\in I}} \epsilon_{ij}\epsilon_{i'j}\alpha_{i'}
\leq \lambda \alpha_i \big) \big\} \\
= \inf \big\{ \lambda \geq 0 \colon (\exists \alpha > 0)
(\boldepsilon\boldepsilon^* \alpha \leq \lambda\alpha) \big\}.
\end{multline}
But $\boldepsilon\boldepsilon^*$ is a nonnegative operator on~$L^2(I)$
(I mean, when seen as a matrix all its entries are nonnegative),
so by Lemma~\ref{l3542} in appendix:
\[\label{for2029}
\inf \big\{ \lambda\geq 0 \colon (\exists \alpha > 0) (\boldepsilon\boldepsilon^* \alpha \leq \lambda\alpha) \big\}
= \rho(\boldepsilon\boldepsilon^*) = \VERT \boldepsilon \VERT^2 .\]
This ends the proof of Theorem~\ref{thm6413}.
\end{proof}\endgroup

\begingroup\def\proofname{Proof of Lemma~\ref{lem5453}}\begin{proof}
We prove Lemma~\ref{lem5453} by induction on~$M$.
The case $M=0$ is trivial.
Suppose $M\geq1$ and assume the result is true for~${(M-1)}$.
We generalize the notation $\mcal{L}$
by defining, for~$\bullet \in \{\text{\textvisiblespace},1,*\}$,
\[ \mcal{L}^{\bullet}v = \sum_{\substack{j\in J^{\bullet}\\i,i'\in I}}
\frac{\alpha_{i'}}{\alpha_i} \epsilon_{ij}\epsilon_{i'j} v_i ,\]
with $J^1 = \{1\}$, resp.\ $J^* = \{2,\ldots,M\}$,
so that $\mcal{L} = \mcal{L}^1 + \mcal{L}^*$.
Notice that $\|\mcal{L}^*\|_{\infty}\leq 1$ since ${\|\mcal{L}\|_{\infty}\leq1}$.
For all~$i \in I$, define
\[ \check{V}^1_i = (1-\epsilon_{i1}^2) V^0_i - \frac{\epsilon_{i1} V^0_i}{\alpha_i} \sum_{i'>i} \epsilon_{i'1}\alpha_{i'}
- \epsilon_{i1}\alpha_i \sum_{i'>i} \frac{\epsilon_{i'1}V^0_{i'}}{\alpha_{i'}} ,\]
which is the value that $V^1_i$ would take if there were equality in~(\ref{for4382})
for $j=1$.
With that notation, (\ref{for4382}) writes
\[\label{for6134} \big( V^1_i - \check{V}^1_i \big)_{i\in I} \geq 0 ,\]
and by induction hypothesis we have:
\[\label{for6135} \sum_{i=1}^N V^M_i \geq \sum_{i=1}^N V^1_i - \mcal{L}^* \big( (V^1_i)_{i\in I} \big) .\]

Introducing the~$\check{V}^1_i$, we have therefore the following chain of inequalities:
\begin{multline}
\sum_{i=1}^N V^M_i \footrel{(\ref{for6135})}{\geq}
\sum_{i=1}^N V^1_i - \mcal{L}^* \big( (V^1_i)_{i\in I} \big) \\
\footrel{(\ref{for6134})}{=} \sum_{i=1}^N \check{V}^1_i + \big\| \big( V^1_i - \check{V}^1_i \big)_{i\in I} \big\|_1
- \mcal{L}^* \big( (\check{V}^1_i)_{i\in I} \big) - \mcal{L}^* \big( (V^1_i - \check{V}^1_i)_{i\in I} \big) \\
\footrel{\|\mcal{L}^*\|_\infty\leq1}{\geq} \sum_{i=1}^N \check{V}^1_i - \mcal{L}^* \big( (\check{V}^1_i)_{i\in I} \big)
= \sum_{i=1}^N V^0_i - \mcal{L}^1 \big( (V^0_i)_{i\in I} \big)
- \mcal{L}^* \big( (\check{V}^1_i)_{i\in I} \big) \\
\footrel{\substack{\check{V}^1\leq V^0\\\mcal{L}^*\geq0}}{\geq}
\sum_{i=1}^N V^0_i - \mcal{L}^1 \big( (V^0_i)_{i\in I} \big)
- \mcal{L}^* \big( (V^0_i)_{i\in I} \big)
= \sum_{i=1}^N V^0_i - \mcal{L} \big( (V^0_i)_{i\in I} \big),
\end{multline}
so~(\ref{for6376}) is true for~$M$, whence the lemma by induction.
\end{proof}\endgroup

\begin{Rmk}
Our proof of Theorem~\ref{thm6413}
handled the~$X_i$ and the~$Y_j$ in a fully nonsymmetric way,
since we began with putting orders on~$I$ and~$J$,
which orders played a crucial role in the decomposition of~$f$.
Yet the bound~(\ref{for3009}) obtained \emph{is} obviously symmetric
by re-labelling the basic variables%
—and this is not due to having proceeded to any `re-symmetrization' step\dots\ 
To date I have no simple explanation for this `coincidence'.
\end{Rmk}

\subsection{`\texorpdfstring{$\ZZ$}{Z}~against~\texorpdfstring{$\ZZ$}{Z}' tensorization}

The proof of the `$N$~against~$M$' theorem was quite more technical than
that of the `$N$~against~$1$' theorem;
because of that, in order to get tractable computations
we had to use suboptimal inequalities at two places:
\begin{itemize}
\item Claim~\ref{clm1959} is suboptimal: it has indeed the same shortcoming
as Proposition~\ref{pro5527} exhibited compared to Theorem~\ref{thm5750},
namely, it does not `recycle the losses' occurring
when one makes $g$ covariate with both~$f_i$ and~$\tilde{f}_i$
(cf.\ the discussion on page~\pageref{lbl91},
just after the proof of Proposition~\ref{pro5527}).
\item Our linearization technique is suboptimal in general,
even after optimizing the~$\alpha_i$.
In fact, as we said before, Inequality~(\ref{for3261}) is optimal
if and only if one has $V_i \propto \alpha_i$;
thus, for~(\ref{for4382}) to be always optimal,
one has to have $V^j_i \propto \alpha_i$
for \emph{all} $j$, with the \emph{same} values for the~$\alpha_i$.
This would imply that all the sequences $(V_i^j)_{0\leq i<n}$
are proportional, which is not true in general.
\end{itemize}

So, Theorem~\ref{thm6413} is certainly not optimal%
\footnote{Though, as we will see in \S~\ref{parAsymptoticOptimality},
it is `asymptotically optimal'.}%
—this is confirmed by the example of \S~\ref{parIllustrationOfTheProofs}.
Nonetheless, there is one particular case in which
an alternative reasoning yields an optimal bound%
\footnote{The bound's being optimal shall be proved by Theorem~\ref{t6657}.}.
This case is when some symmetries in the decorrelation hypotheses
allow us to transform the original two-parameter problem (indexed by~$I\times J$)
into a one-parameter problem (indexed by~$\ZZ$).
Let us state and prove the corresponding result:
\begin{Thm}[`$\ZZ$~against~$\ZZ$' theorem]\label{thm0119}
Let~$I$ and~$J$ be sets isomorphic to~$\ZZ$,
and let~$(X_i)_{i\in I}$ and~$(Y_j)_{j\in J}$ be random variables
such that, $\mcal{M}$ denoting the $\sigma$-metalgebra they generate,
one has for all~$i,j\in\ZZ$
\[ \{X_i : Y_j\}_{\mcal{M}} \leq \epsilon(j-i) \]
for some function~$\epsilon \colon \ZZ \to [0,1]$.

Then
\[ \big\{ \vec{X}_I : \vec{Y}_J \big\} \leq \bar{\epsilon} ,\]
where $\bar{\epsilon} \in [0,1]$ is characterized by:
\[\label{f6444} \arcsin \bar{\epsilon} =
\Big( \sum_{z\in\ZZ} \arcsin \epsilon(z) \Big) \wedge \frac{\pi}{2} .\]
\end{Thm}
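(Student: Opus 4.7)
The plan is to obtain Theorem~\ref{thm0119} by refining Theorem~\ref{thm6413} in a way that fully exploits the shift-invariant structure $\{X_i:Y_j\}_{\mcal{M}}\leq\epsilon(j-i)$. First, by Proposition~\ref{pro6559}, I would reduce to finite $I,J\subset\ZZ$ and to finitely supported~$\epsilon$, say supported on~$\{z_1,\ldots,z_n\}$ with $\epsilon_k\coloneqq\epsilon(z_k)$. A direct application of the $N$-against-$M$ theorem to this situation gives the bound $\VERT\boldepsilon\VERT\wedge 1$, where $\boldepsilon$ is a finite-window Toeplitz operator with nonnegative symbol~$\epsilon(\cdot)$: its operator norm is attained on the (almost-)constant vector and equals $\sum_k\epsilon_k$, which matches the desired $\sin(\sum_k\arcsin\epsilon_k)$ only to first order and is strictly weaker whenever several~$\epsilon_k$ are not small.

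To sharpen the bound, my strategy is to induct on the size~$n$ of the support of~$\epsilon$, peeling off one shift class at a time. At each step I would introduce an intermediate $\sigma$-algebra~$\mcal{Z}$ designed so that $\vec{X}_I\to\mcal{Z}\to\vec{Y}_J$ is (approximately) Markov, with one factor controlled by a shift-invariant~$\epsilon'$ supported on $\{z_1,\ldots,z_{n-1}\}$ and the other controlled by the single value~$\epsilon_n$. Combining the Markov-chain inequality of Proposition~\ref{cor0706} with the single-step $N$-against-$M$ bound should produce a recursion of the form $\sin\bar\theta_n = \epsilon_n\cos\bar\theta_{n-1}+\cos\epsilon_n\cdot\sin\bar\theta_{n-1}$—precisely the identity $\sin(a+b)=\sin a\cos b+\cos a\sin b$—so that the angles $\arcsin\epsilon_k$ add up, with the cap at~$\pi/2$ merely reflecting the trivial saturation $\bar\epsilon\leq 1$. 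The `$\ZZ$~against~$\ZZ$' geometry is precisely what permits this decomposition: translation invariance allows the choice of~$\mcal{Z}$ to be symmetric w.r.t.\ every shift of~$\ZZ$, so that one does not lose the factor of~$\sqrt{N}$-type that would otherwise appear in a generic application of Theorem~\ref{thm6413}.

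The main obstacle will be the construction of the intermediate $\sigma$-algebra~$\mcal{Z}$ in the non-Gaussian setting. In the Gaussian case, Theorem~\ref{pro1857} reduces the problem to a spectral computation on Toeplitz operators, where Fourier/Karhunen--Lo\`eve diagonalization yields an explicit decomposition of each~$Y_j$ into contributions from each shift, and the $\arcsin$-addition formula follows by routine trigonometry. In general, I would rather work directly at the level of operator norms via Proposition~\ref{pro7764} and Remark~\ref{rmk1265}: identify $\{\vec{X}_I:\vec{Y}_J\}$ with the norm of the projection $\pi_{\vec{Y}_J\vec{X}_I}$, and show that this projection factors through operators whose norms are exactly $\sin(\arcsin\epsilon_k)=\epsilon_k$. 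The subjective correlation hypothesis $\{X_i:Y_j\}_{\mcal{M}}\leq\epsilon(j-i)$ (as opposed to unconditional correlation) should be exactly what is needed to perform the conditioning used in constructing~$\mcal{Z}$ while preserving the correlation bounds under each conditional law, analogously to the role played by~$\mcal{M}$ in the alternative proof of Theorem~\ref{thm5750}.
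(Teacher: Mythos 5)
Your proposal correctly identifies the two key structural observations — that a direct application of the $N$\nobreakdash-against\nobreakdash-$M$ theorem gives only the weaker bound $\sum_z\epsilon(z)$, and that the translation invariance of the hypothesis must be exploited to improve it — but the mechanism you propose for exploiting it does not work, and the genuine content of the proof is missing.

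The central problem is your plan to interpose an intermediate $\sigma$-algebra $\mcal{Z}$ so that $\vec{X}_I\to\mcal{Z}\to\vec{Y}_J$ is (approximately) Markov and then apply Proposition~\ref{cor0706}. That proposition is multiplicative: it yields $\{\vec{X}_I:\vec{Y}_J\}\leq\{\vec{X}_I:\mcal{Z}\}\cdot\{\mcal{Z}:\vec{Y}_J\}$. Chaining such factorizations across shift classes would produce a \emph{product} of the $\epsilon_k$'s, not the arcsine sum $\sin(\sum_k\arcsin\epsilon_k)$, which is fundamentally additive in angle. There is no way to recover the addition formula $\sin(a+b)=\sin a\cos b+\cos a\sin b$ from operator-norm submultiplicativity alone; the two cross terms in that formula encode a cancellation/orthogonality structure, not a simple composition of contractions. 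You gesture at a recursion producing this formula, but you never exhibit where the $\cos$-factors would come from, and the ``approximate Markov'' relaxation is left entirely unconstructed. You in fact acknowledge this is ``the main obstacle,'' and the proposal stops there.

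The paper's actual argument is of a quite different shape. Rather than factoring the projection operator, it decomposes the quantity $\EE[fg]$ into a double telescopic sum $\sum_{i,j}S_{ij}$ built from nested martingale increments $f_i^j$ and $g_j^i$ w.r.t.\ the filtrations $\mcal{F}_i$ and $\mcal{G}_j$; it then derives three scalar inequalities relating the variances $V_i^j$, $W_j^i$ and the covariances $S_{ij}$ (one from the subjective correlation hypothesis applied conditionally, two from Cauchy--Schwarz). These inequalities are stable under translation and under nonnegative linear combination (a Brunn--Minkowski-type inequality is used here), which is precisely where translation invariance enters: it permits an averaging of the array $(V_i^j,W_j^i,S_{ij})$ over shifts $\tau^z$, reducing the problem to \emph{Toeplitz} arrays indexed by a single integer $z=j-i$. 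The arcsine-addition formula then appears when solving the resulting one-dimensional recursion $\Gamma(z+1)=\sin\hat\theta(z)\sqrt{V_{(z-1)}}+\cos\hat\theta(z)\Gamma(z)$, with $|\hat\theta(z)|\leq\arcsin\epsilon(z)$. None of these ingredients — the double martingale decomposition, the three array inequalities, the translation-averaging reduction — appears in your sketch, so the proposal cannot be completed along the lines you describe; a different strategy is needed.
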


\begin{Rmk}
If we apply Theorem~\ref{thm6413} to the situation above, we find
${\{\vec{X}_I:\vec{Y}_J\}} \ab \leq \big( \sum_{z\in\ZZ}\epsilon(z) \big) \wedge 1$
(cf.\ \S~\ref{parPractical}).
The latter expression is always $\geq \bar{\epsilon}$
because of the concavity of the function~$\sin(\Bcdot\wedge\frac{\pi}{2})$ on~$\RR_+$,
and even $>\bar\epsilon$ if $\bar{\epsilon}\neq0,1$;
so, when it is applicable, Theorem~\ref{thm0119} is strictly stronger than Theorem~\ref{thm6413}.
\end{Rmk}

\begin{proof}
Let~$f$ and~$g$ be resp.\ $\vec{X}_I$- and $\vec{Y}_J$-measurable $\ldb$ functions.
Denote $\mcal{F} \coloneqq \sigma(\vec{X})$, resp.\ $\mcal{G} \coloneqq \sigma(\vec{Y})$,
and for~$i\in\ZZ$, resp.~$j\in\ZZ$,
denote $\mcal{F}_i \coloneqq \bigvee_{i'\leq i} \sigma(X_{i'})$,
resp.\ $\mcal{G}_j \coloneqq \bigvee_{j'\leq j} \sigma(Y_{j'})$.
For~$(i,j)\in\ZZ\times\ZZ$, define
\[\label{f9113}
f_i^j \coloneqq f^{\mcal{G}_j\vee\mcal{F}_i} - \EE[f|\mcal{G}_j\vee\mcal{F}_{i-1}] \]
and\footnote{Beware: the definition of~$g_j^i$
is \emph{not} analogous to the definition of~$f_i^j$\,!}
\[ g_j^i \coloneqq g^{\mcal{G}_j} - \EE[g^{\mcal{G}_j}|\mcal{G}_{j-1}\vee\mcal{F}_i] .\]
Denote $V\coloneqq\Var(f)$, $W\coloneqq\Var(g)$, $V_i^j\coloneqq\Var(f_i^j)$, $W_j^i\coloneqq\Var(g_j^i)$;
also denote
\[ S_{ij} \coloneqq \EE[f_i^{j-1}g_j^{i-1}] .\]

Our auxiliary functions were devised so that
\begin{Clm}\label{clm4070}
Provided the sum in the right-hand side is absolutely convergent,
\[\label{for5163} \EE[fg] = \sum_{i,j} S_{ij} .\]
\end{Clm}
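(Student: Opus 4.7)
The plan is to recognize each $S_{ij}$ as a simple martingale-difference inner product, after which the double sum telescopes directly to $\EE[fg]$.

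First, I would introduce the martingale increments $\Delta g_j \coloneqq g^{\mcal{G}_j} - g^{\mcal{G}_{j-1}}$ along the filtration $(\mcal{G}_j)_{j \in \ZZ}$, and note that $g^{\mcal{G}_{j-1}}$ is already $\mcal{G}_{j-1} \vee \mcal{F}_i$-measurable. Substituting $g^{\mcal{G}_j} = \Delta g_j + g^{\mcal{G}_{j-1}}$ into the definition of $g_j^i$, the term $g^{\mcal{G}_{j-1}}$ cancels against itself and we get
\[ g_j^i = \Delta g_j - \EE[\Delta g_j | \mcal{G}_{j-1} \vee \mcal{F}_i] . \]

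Second, I would prove that $S_{ij} = \EE[f_i^{j-1} \Delta g_j]$. By construction, $f_i^{j-1}$ is centered conditionally on $\mcal{G}_{j-1} \vee \mcal{F}_{i-1}$, while $\EE[\Delta g_j | \mcal{G}_{j-1} \vee \mcal{F}_{i-1}]$ is $\mcal{G}_{j-1} \vee \mcal{F}_{i-1}$-measurable, so the tower property kills the cross term:
\[ S_{ij} = \EE\bigl[f_i^{j-1}\bigl(\Delta g_j - \EE[\Delta g_j|\mcal{G}_{j-1}\vee\mcal{F}_{i-1}]\bigr)\bigr] = \EE[f_i^{j-1} \Delta g_j] . \]

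Third, for fixed $j$ the $(f_i^{j-1})_{i \in \ZZ}$ are the orthogonal increments of the martingale $i \mapsto f^{\mcal{G}_{j-1} \vee \mcal{F}_i}$, which (assuming $\bigvee_i \mcal{F}_i = \mcal{F}$ and $\bigcap_i \mcal{F}_i = \mcal{O}$) converges in $L^2$ to $f$ as $i \to +\infty$ and to $f^{\mcal{G}_{j-1}}$ as $i \to -\infty$. Hence $\sum_i f_i^{j-1} = f - f^{\mcal{G}_{j-1}}$ in $L^2$, and continuity of the scalar product against $\Delta g_j$ yields
\[ \sum_i S_{ij} = \EE[(f - f^{\mcal{G}_{j-1}})\Delta g_j] = \EE[f \Delta g_j], \]
the second equality following from $\EE[\Delta g_j|\mcal{G}_{j-1}] = g^{\mcal{G}_{j-1}} - g^{\mcal{G}_{j-1}} = 0$. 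Summing over $j$ and using the analogous telescoping $\sum_j \Delta g_j = g$ in $L^2$ gives $\sum_{i,j} S_{ij} = \EE[fg]$.

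The only delicate point is to make the above interchanges of limit, sum and expectation rigorous. This is where the absolute-convergence hypothesis intervenes: it legitimizes Fubini-type reordering of the double sum indexed by $\ZZ \times \ZZ$, while the one-sided convergences as $i,j \to \pm\infty$ are handled by standard $L^2$-martingale convergence, after truncating to large finite boxes and bounding the tails via Cauchy--Schwarz.
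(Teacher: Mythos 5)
Your proof is correct and follows essentially the same two-stage telescoping strategy as the paper, decomposing along the $j$-filtration and then along the $i$-filtration and using the centering of $f_i^{j-1}$ with respect to $\mcal{G}_{j-1}\vee\mcal{F}_{i-1}$ to identify each term. The one genuine streamlining is that where the paper introduces a second auxiliary orthogonal decomposition $\bar{g}_j=\sum_i\bar{g}_{ji}$ and then shows $\EE[f_i^{j-1}\bar{g}_{ji}]=\EE[f_i^{j-1}g_j^{i-1}]$, you instead prove directly that $S_{ij}=\EE[f_i^{j-1}\Delta g_j]$, which lets the sum over $i$ collapse by plain linearity rather than a Parseval argument; the implicit regularity assumptions on the tail $\sigma$-algebras are the same in both proofs and are dealt with in the paper by the subsequent reduction to compact arrays.
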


\begingroup\def\proofname{Proof of Claim~\ref{clm4070}}\begin{proof}
First define $\bar{f} \coloneqq f^{\mcal{G}}$,
so that $\bar{f}$ is $\mcal{G}$-measurable and
$\EE[fg] = \EE[\bar{f}g]$.
For~$j \ab {\in \ZZ}$, define
$g_j \coloneqq g^{\mcal{G}_j} - \EE[g|\mcal{G}_{j-1}]$,
resp.\ $\bar{f}_j \coloneqq {\bar f}^{\mcal{G}_j} - \EE[\bar{f}|\mcal{G}_{j-1}]$:
we have $g=\sum_jg_j$ and $\bar{f}=\sum_j\bar{f}_j$,
which are the respective decompositions of~$g$ and~$\bar{f}$
on the same basis of orthogonal subspaces of~$\ldb(\mcal{G})$,
so $\EE[fg] = \sum_{j} \EE[\bar{f}_jg_j]$.
The terms of the right-hand side of that formula are unchanged upon replacing
$\bar{f}_j$ by~$f^{j-1} \coloneqq f - \EE[f|\mcal{G}_{j-1}]$,
since $\EE[(f^{j-1} - \bar{f}_j)g_j]$ is zero%
—the function~$(f^{j-1} - \bar{f}_j)$ is indeed equal to~$(f-\EE[f|\mcal{G}_j])$,
which is centered conditionally to~$\mcal{G}_j$,
while $g_j$ is $\mcal{G}_j$-measurable.
In the end we have:
\[\label{for5102} \EE[fg] = \sum_j \EE[f^{j-1}g_j] .\]

So in a first step we have decomposed $\EE[fg]$
into a sum indexed by~$j$. Now we decompose each term of that sum
into a sum indexed by~$i$.
Let us reason conditionally to~$\mcal{G}_{j-1}$.
Then $f^{j-1}$ is an $\ldb(\mcal{F})$ function
and $g_j$ is in~$\ldb(Y_j)$.
We compute $\EE[f^{j-1}g_j]$ as in the first step of this proof:
first we replace $g_j$ by~$\bar{g}_j \coloneqq (g_j)^{\mcal{F}}$;
then we decompose $f^{j-1} = \sum_i f^{j-1}_i$ and $\bar{g}_j = \sum_i \bar{g}_{ji}$,
with $f^{j-1}_i \coloneqq (f^{j-1})^{\mcal{F}_i} - \EE[f^{j-1}|\mcal{F}_{i-1}]$%
\footnote{Notation is consistent: this $f^{j-1}_i$ is indeed the same
as the~$f^{j-1}_i$ defined by~(\ref{f9113}),
since we are reasoning conditionally to~$\mcal{G}_{j-1}$.},
resp.\ $\bar{g}_{ji} \coloneqq \bar{g}_j^{\mcal{F}_i} - \EE[\bar{g}_j|\mcal{F}_{i-1}]$,
and by orthogonal decomposition
we get $\EE[f^{j-1}g_j] = \sum_i \EE[f^{j-1}_i\bar{g}_{ji}]$;
then we conclude by saying that $\EE[f^{j-1}_i\bar{g}_{ji}]$
is actually equal to~$\EE[f^{j-1}_ig^{i-1}_j]$,
since $(g^{i-1}_j - \bar{g}_{ji})$ is centered conditionally to~$\mcal{F}_i$
while $f^{j-1}_i$ is $\mcal{F}_i$-measurable.
In the end we have obtained
\[\label{for5103} \EE[f^{j-1}g_j] = \sum_i \EE[f^{j-1}_ig^{i-1}_j] ,\]
which combined with~(\ref{for5102}) yields~(\ref{for5163}).
\end{proof}\endgroup

So we have expressed $\EE[fg]$ as a function of the~$S_{ij}$.
It is also possible to `read' the values of~$V$ and~$W$ from the~$V_i^j$,
resp.\ from the~$W_j^i$, \emph{via} the formulas:
\begin{eqnarray}
\label{for0923a} V &=& \overset{\nearrow}{\lim_{j\longto-\infty}} \Big( \sum_i V_i^j \Big) ;\\
\label{for0923b} W &=& \sum_j \Big( \overset{\nearrow}{\lim_{i\longto-\infty}} W_j^i \Big) .
\end{eqnarray}

Now we are looking for relations between the~$V_i^j$, the~$W_j^i$ and the~$S_{ij}$.
The first relation comes from the decorrelation hypothesis:
conditionally to~$\mcal{G}_{j-1} \vee \mcal{F}_{i-1}$,
$f_i^{j-1}$ is in~$\ldb(X_i)$, resp.~$g_j^{i-1}$ is in~$\ldb(Y_j)$,
and ${\{X_i:Y_j\}} \leq \epsilon(j-i)$, so:
\[\label{for9472a} |S_{ij}| \leq \epsilon(j-i) \sqrt{V_i^{j-1}W_j^{i-1}} .\]

The second relation means that a large value of~$|S_{ij}|$ forces $W_j^i$ to diminish.
To state it, we observe that, since $f_i^{j-1}$ is $(\mcal{G}_{j-1}\vee\mcal{F}_i)$-measurable,
$S_{ij} = \EE[f_i^{j-1}(g_j^{i-1})^{\mcal{G}_{j-1}\vee\mcal{F}_i}]$,
so by the Cauchy--Schwarz inequality
$|S_{ij}| \leq \ecty(f_i^{j-1}) \*
\ecty\big( (g_j^{i-1})^{\mcal{G}_{j-1}\vee\mcal{F}_i} \big)$.
Moreover, since $g_j^{i-1} - (g_j^{i-1})^{\mcal{G}_{j-1}\vee\mcal{F}_i} = g_j^i$,
one has by orthogonality $\Var\big( (g_j^{i-1})^{\mcal{G}_{j-1}\vee\mcal{F}_i} \big) =
\Var(g_j^{i-1}) - \Var(g_j^i)$,
so our inequality becomes
\[\label{for9472b} |S_{ij}| \leq \sqrt{V_i^{j-1}}\sqrt{W_j^{i-1}-W_j^i} \]
(where it is understood that $W_j^i \leq W_j^{i-1}$),
or more eloquently
\[\label{for9472b'} W^i_j \leq W^{i-1}_j - (S_{ij})^2/V^{j-1}_i \]
provided $V_i^{j-1} > 0$.

The third and last relation means, on the other hand,
that a large value of~$\big|\sum_{i'>i} S_{i'j}\big|$ forces~$\sum_{i'>i} V_i^j$ to diminish.
To state it, we denote
\[ \tilde{f}^j_i \coloneqq f - f^{\mcal{G}_j\vee\mcal{F}_i}
= \sum_{i'>i} f^j_{i'} ,\]
whose variance is $\Var(\tilde{f}^j_i) = \sum_{i'>i} \Var(f_{i'}^j)$
since the~$f_{i'}^j$ are pairwise orthogonal.
One has
\[ \sum_{i'>i} S_{i'j} = \sum_{i'>i} \EE[f_{i'}^{j-1} g_j^{i'-1}]
= \sum_{i'>i} \EE [ f_{i'}^{j-1} g_j^i ] = \EE[\tilde{f}_i^{j-1} g_j^i]
= \EE[(\tilde{f}_i^{j-1})^{\mcal{G}_j\vee\mcal{F}_i} g_j^i] , \]
so by the Cauchy--Schwarz inequality,
\[ \Big| \sum_{i'>i} S_{i'j} \Big| \leq
\ecty\big((\tilde{f}^{j-1}_i)^{\mcal{G}_j\vee\mcal{F}_i}\big) \ecty(g_j^i) .\]
Since $\tilde{f}^{j-1}_i - (\tilde{f}^{j-1}_i)^{\mcal{G}_j\vee\mcal{F}_i} = \tilde{f}^j_i$,
one has by orthogonality
\[ \Var\big((\tilde{f}^{j-1}_i)^{\mcal{G}_j\vee\mcal{F}_i}\big) =
\Var(\tilde{f}^{j-1}_i) - \Var(\tilde{f}^j_i) ,\]
so our inequality becomes
\[\label{for9472c}
\Big| \sum_{i'>i} S_{i'j} \Big| \leq
\sqrt{\sum_{i'>i} V_{i'}^{j-1} - \sum_{i'>i} V_{i'}^j} \sqrt{W_j^i} ,\]
or more eloquently:
\[\label{for9472c'} \sum_{i'>i} V_{i'}^j \leq
\sum_{i'>i} V_{i'}^{j-1} - \Big( \sum_{i'>i} S_{i'j} \Big)^{\!2} \mathbin{\Big/} W_j^i .\]

So, we have transformed our initial probabilistic problem into the following analytic one:
let~$\mbf{A}$ be an array indexed by~$\ZZ\times\ZZ$,
each entry $(i,j)$ of which contains three numbers~$V_i^j\geq 0$, $W_j^i\geq 0$ and~$S_{ij}$,
satisfying~(\ref{for9472a}), (\ref{for9472b}) and~(\ref{for9472c})%
—we will say such an array is \emph{correct}.
We define $V$ by~(\ref{for0923a}) and~$W$ by~(\ref{for0923b}),
and we set $S = \sum_{i,j} S_{ij}$ (provided it makes sense);
our goal is to get a bound of the form
``$|S| \leq \bar{\epsilon}\sqrt{VW}$'',
with~$\bar{\epsilon}$ only depending on~$\epsilon(\Bcdot)$.

Note that \emph{A priori} some problems of summability can arise from $\mbf{A}$'s being infinite,
for instance to check~(\ref{for9472c'}) or to define $S$.
However, in the situations which are of interest to us,
we can restrict to cases in which $\mbf{A}$ is of nice particular form.
To do this, we first approximate $f$ in~$\ldb(\vec{X})$, resp.~$g$ in~$\ldb(\vec{Y})$,
by a function depending only on a finite number of~$X_i$, resp.\ of~$Y_j$
—say, we assume $f$ is $\vec{X}_{\dot{I}}$-measurable
and $g$ is $\vec{Y}_{\dot{J}}$-measurable for finite $\dot{I} \subset I, \dot{J} \subset J$.
Then, we define a new model $(\tilde{X}_i)_{i\in\ZZ}, (\tilde{Y}_j)_{j\in\ZZ}$
by~$\tilde{X}_i = X_i$ for $i\in\dot{I}$, resp.\ $\tilde{Y}_j = Y_j$ for $j\in\dot{J}$,
and $\tilde{X}_i, \tilde{Y}_j = \partial$ for $i\notin\dot{I}, j\notin\dot{J}$,
$\partial$ being some cemetery point.
This new model still gives a correct array,
for which $S/\sqrt{VW}$ is arbitrarily close to the initial value of
$\EE[fg]/\ecty(f)\ecty(g)$;
and the new array is of the following form, which we will call \emph{compact},
for which all the quantities of interest are well defined:
\begin{itemize}
\item $V^j_i$ is zero as soon as $i\notin\dot{I}$, and it does not depend on~$j$
for $j < \min\dot{J}$, nor for $j \geq \max\dot{J}$;
\item Similarly, $W^i_j$ is zero as soon as $j\notin\dot{J}$, and it does not depend on~$i$
for $i < \min\dot{I}$, nor for $i \geq \max\dot{I}$;
\item $S_{ij}$ is zero as soon as $(i,j)\notin\dot{I}\times\dot{J}$.
(This condition automatically follows from the first two if the array is correct).
\end{itemize}

We define the following operations on arrays:
\begin{Def}\strut\begin{itemize}
\item For~$z\in\ZZ$, we define the \emph{translation operator} $\tau^z$ on arrays
such that, if the entries of~$\mbf{A}$ at~$(i,j)$ are $V_i^j,W_j^i,S_{ij}$,
the entries of~$\tau^z\mbf{A}$ at~$(i,j)$ are $V_{i+z}^{j+z},W_{j+z}^{i+z},S_{(i+z)(j+z)}$.
\item For~$\grave{\mbf{A}}$ and~$\acute{\mbf{A}}$ two arrays
with entries $\grave{V}_i^j, \ab \grave{W}_j^i, \ab \grave{S}_{ij}$,
resp.~$\acute{V}_i^j, \ab \text{etc.}$,
for~$\alpha, \beta$ two real numbers,
we define the linear combination $\alpha\grave{\mbf{A}} + \beta\acute{\mbf{A}}$
as the array with entries $\alpha\grave{V}_i^j+\beta\acute{V}_i^j,
\alpha\grave{W}_j^i+\beta\acute{W}_j^i,\text{etc.}$.
\end{itemize}\end{Def}

\begin{Lem}\label{lem0513}
Correct arrays are stable by translations and by nonnegative linear combinations,
i.e., if $\mbf{A}$ and~$\mbf{B}$ are correct arrays,
then for all~$z\in\ZZ$ and~$\alpha,\beta \geq 0$,
$\tau^z\mbf{A}$ and~$\alpha\mbf{A} + \beta\mbf{B}$
are correct too.
\end{Lem}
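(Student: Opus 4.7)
The plan is to verify each of the three defining inequalities of correctness —~(\ref{for9472a}), (\ref{for9472b}) and~(\ref{for9472c})~— under each of the two operations.

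Translation invariance is essentially immediate. The only subtle point is that the bound in~(\ref{for9472a}) involves $\epsilon(j-i)$, but since a translation $\tau^z$ shifts both $i$ and $j$ by the same amount, the difference $j-i$ is preserved. The other two inequalities do not depend on the indices at all except through the entries they relate, so they transfer verbatim.

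Stability under nonnegative linear combinations is the substantive part. The key observation is that all three inequalities share the same bilinear shape: a signed quantity is bounded by a square root of a product of two nonnegative quantities. For any $\alpha,\beta\geq 0$ and nonnegative $a,b,c,d$, Cauchy--Schwarz gives
\[ \sqrt{\alpha a}\sqrt{\alpha c} + \sqrt{\beta b}\sqrt{\beta d}
\leq \sqrt{\alpha a + \beta b}\sqrt{\alpha c + \beta d} . \]
Combining this with the triangle inequality $|\alpha \grave{S} + \beta \acute{S}| \leq \alpha|\grave{S}| + \beta|\acute{S}|$ and the respective hypotheses on $\grave{\mbf{A}}$ and $\acute{\mbf{A}}$ yields immediately the required bound for $\alpha\grave{\mbf{A}}+\beta\acute{\mbf{A}}$. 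For~(\ref{for9472a}) take $(a,c)=(\grave{V}_i^{j-1},\grave{W}_j^{i-1})$ and $(b,d)=(\acute{V}_i^{j-1},\acute{W}_j^{i-1})$; for~(\ref{for9472b}) take $(a,c)=(\grave{V}_i^{j-1},\grave{W}_j^{i-1}-\grave{W}_j^{i})$ and analogously for the accented entries, noting that the second factors add correctly because $\alpha(\grave{W}_j^{i-1}-\grave{W}_j^i)+\beta(\acute{W}_j^{i-1}-\acute{W}_j^i) = (\alpha\grave{W}_j^{i-1}+\beta\acute{W}_j^{i-1}) - (\alpha\grave{W}_j^i+\beta\acute{W}_j^i)$; for~(\ref{for9472c}) take $(a,c) = (\sum_{i'>i}\grave{V}_{i'}^{j-1} - \sum_{i'>i}\grave{V}_{i'}^{j},\grave{W}_j^i)$ and analogously, moving the outer sum and absolute value inside before invoking the hypothesis, again using additivity of the telescoping difference.

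I do not anticipate a real obstacle: the proof is essentially a bookkeeping exercise in applying Cauchy--Schwarz term-by-term. The only point requiring a moment of care is the sign/monotonicity condition hidden in~(\ref{for9472b}), namely $W_j^i \leq W_j^{i-1}$ (and the analogous condition for the differences of the $V$-sums in~(\ref{for9472c})): one must check that these survive the operations, which they do because taking nonnegative linear combinations preserves both nonnegativity and componentwise inequalities, so all square roots remain well defined.
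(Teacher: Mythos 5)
Your proposal is correct and uses essentially the same approach as the paper: the key step is the elementary inequality $\sqrt{a_1b_1}+\sqrt{a_2b_2}\leq\sqrt{(a_1+a_2)(b_1+b_2)}$, which you derive from Cauchy--Schwarz and the paper calls a special case of Brunn--Minkowski (same inequality, different attribution), applied term-by-term to each of the three defining conditions. The paper spells out only the case of~(\ref{for9472b}) explicitly and remarks the others are analogous, while you indicate the substitutions for all three, including the point that the monotonicity conditions implicit in~(\ref{for9472b}) and~(\ref{for9472c}) are preserved under nonnegative combinations.
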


\begingroup\def\proofname{Proof of Lemma~\ref{lem0513}}\begin{proof}
Recall that being correct means satisfying~(\ref{for9472a}), (\ref{for9472b}) and~(\ref{for9472c}).
These conditions are trivially stable by multiplication by a nonnegative constant
and by translation%
\footnote{Getting stability of Condition~(\ref{for9472a}) by translation
is actually the only place where the symmetries of the problem are used.}.
It remains to see that they are stable by addition.
The technique being the same for all three inequalities,
we just treat the case of~(\ref{for9472b}).
Stability of this condition by addition is a consequence of the following inequality
(which is in fact a particuliar case of the \emph{Brunn--Minkowski inequality},
see~\cite{B-M_inequality}):
\begin{Lem}\label{lem0514} For all~$a_1, b_1, a_2, b_2 \geq 0$,
\[\label{f1709} \sqrt{(a_1+a_2)(b_1+b_2)} \geq \sqrt{a_1b_1} + \sqrt{a_2b_2} .\]
\end{Lem}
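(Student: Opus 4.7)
The plan is to recognize Lemma~\ref{lem0514} as a direct consequence of the Cauchy--Schwarz inequality in~$\RR^2$. Consider the two vectors $u = (\sqrt{a_1}, \sqrt{a_2})$ and $v = (\sqrt{b_1}, \sqrt{b_2})$ in~$\RR^2$; since all entries of~$a_i$ and~$b_i$ are nonnegative, these square roots are well-defined. The inner product reads $\langle u, v \rangle = \sqrt{a_1 b_1} + \sqrt{a_2 b_2}$, while the norms are $\|u\| = \sqrt{a_1 + a_2}$ and $\|v\| = \sqrt{b_1 + b_2}$. The Cauchy--Schwarz inequality $\langle u, v \rangle \leq \|u\| \|v\|$ then gives exactly~(\ref{f1709}).

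Alternatively, one can verify the inequality by a direct computation: since both sides of~(\ref{f1709}) are nonnegative, it is equivalent, after squaring, to
\[ (a_1 + a_2)(b_1 + b_2) \geq a_1 b_1 + a_2 b_2 + 2\sqrt{a_1 b_1 a_2 b_2} ,\]
i.e., after expanding the left-hand side and cancelling, to
\[ a_1 b_2 + a_2 b_1 \geq 2 \sqrt{(a_1 b_2)(a_2 b_1)} ,\]
which is simply the arithmetic-geometric mean inequality applied to the nonnegative numbers $a_1 b_2$ and $a_2 b_1$.

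There is no real obstacle here: the statement is a basic two-dimensional instance of Cauchy--Schwarz (equivalently of AM--GM), and either formulation fits on a single line. I would favor the Cauchy--Schwarz presentation, since it makes transparent how the lemma iterates to arbitrary finite sums $\sqrt{\bigl(\sum a_k\bigr)\bigl(\sum b_k\bigr)} \geq \sum \sqrt{a_k b_k}$, which is exactly the additive stability property needed to conclude the proof of Lemma~\ref{lem0513}.
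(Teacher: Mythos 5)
Your proof is correct, and your second (direct-expansion) argument is essentially identical to the paper's, which also squares both sides and observes that the difference $a_1b_2+a_2b_1-2\sqrt{a_1b_1a_2b_2}=(\sqrt{a_1b_2}-\sqrt{a_2b_1})^2\geq 0$. The Cauchy--Schwarz packaging is a pleasant repackaging of the same computation and indeed makes the $k$-term generalization transparent.
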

\begingroup\def\proofname{Proof of Lemma~\ref{lem0514}}\begin{proof}
Take squares on both sides of~(\ref{f1709})
and notice that ${(a_1+a_2)}\ab {(b_1+b_2)} \ab - {\big(\sqrt{a_1b_1} + \sqrt{a_2b_2}\big)^2}
\ab = a_1b_2 + a_2b_1 - 2\sqrt{a_1b_1a_2b_2} \ab =
{\big(\sqrt{a_1b_2} - \sqrt{a_2b_1}\big)^2} \ab \geq 0$.
\end{proof}\endgroup

For~$\grave{\mbf{A}}$ and~$\acute{\mbf{A}}$ two correct arrays satisfying~(\ref{for9472b}),
applying~(\ref{f1709}) with ${a_1 = \acute{V}^{j-1}_i}, a_2 = \grave{V}^{j-1}_i,
b_1 = \acute{W}^{i-1}_j - \acute{W}^i_j, b_2 = \grave{W}^{i-1}_j - \grave{W}^i_j$,
we get:
\begin{multline} \big|\grave{S}_{ij}+\acute{S}_{ij}\big| \leq
|\grave{S}_{ij}| + |\acute{S}_{ij}| \leq
\sqrt{\grave{V}_i^{j-1}}\sqrt{\grave{W}_j^{i-1}-\grave{W}_j^i}
+ \sqrt{\acute{V}_i^{j-1}}\sqrt{\acute{W}_j^{i-1}-\acute{W}_j^i} \\
\leq \sqrt{\grave{V}_i^{j-1}+\acute{V}_i^{j-1}}
\sqrt{(\grave{W}_j^{i-1}+\acute{W}_j^{i-1}) - (\grave{W}_j^i+\acute{W}_j^i)} ,
\end{multline}
so~(\ref{for9472b}) is still valid for~$(\grave{\mbf{A}}+\acute{\mbf{A}})$.
\end{proof}\endgroup

Now, thanks to Lemma~\ref{lem0513} we will reduce our problem on~$(\ZZ\times\ZZ)$-arrays
into a problem on~$\ZZ$-arrays.
Suppose $\mbf{A}$ is a correct array with certain values of~$V$, $W$ and~$S$.
Then, for~$k \geq 0$, the array
\[\label{for3010} \mbf{A}_k = \frac{1}{2k+1} \sum_{z=-k}^{k} \tau^z\mbf{A} \]
is correct too, with the same values of~$V$, $W$ and~$S$ as~$\mbf{A}$.
Now when~${k \longto \infty}$, $\mbf{A}_k$ `looks more and more like a Toeplitz array',
that is, an array whose entries at~$(i,j)$ only depend on~$(j-i)$.
To state it rigorously, we need some definitions:
\begin{Def}\label{def3111}\strut
\begin{itemize}
\item Here, a \emph{Toeplitz array} will mean a $\ZZ\times\ZZ$ array whose entries at~$(i,j)$
only depend on~$(j-i)$.
For such an array, for~$z\in\ZZ$ we denote by~$V_{(z)}, W_{(z)}, S_{(z)}$ the quantities
characterized by~$V_i^j = V_{(j-i)}$, etc..
\item Actually we can always assume our Toeplitz array is \emph{Toeplitz compact},
which means that there exists some $z^-\leq z^+$ such that:
\begin{itemize}
\item $V_{(z)}$ does not depend on~$z$ for $z<z^-$, nor for $z\geq z^+$;
\item $W_{(z)}$ does not depend on~$z$ for $z\leq z^-$, nor for $z>z^+$;
\item $S_{(z)}$ is zero as soon as $z<z^-$ or $z>z^+$.
\end{itemize}
\item For a compact Toeplitz array, we define $v, w, s$
as `renormalized versions' of~$V, W, S$:
\begin{eqnarray}
\label{for3162a} v &\coloneqq& V_{(z<z^-)} ;\\
\label{for3162b} w &\coloneqq& W_{(z>z^+)} ;\\
\label{for3162c} s &\coloneqq& \sum_{z\in\ZZ} S_{(z)} .
\end{eqnarray}
\item A Toeplitz array is said to be \emph{correct} if it is correct when seen as an ordinary array.
For a Toeplitz array, Equations~(\ref{for9472a}), (\ref{for9472b'}) and~(\ref{for9472c'})
become respectively%
\footnote{Note that the way (\ref{for3534}) follows from~(\ref{for9472c'}) is rather tricky,
because it appears a difference between two infinite quantities,
which has to be `renormalized' in the convenient way.}:
\begin{eqnarray}
\label{for5175}
|S_{(z)}| &\leq& \epsilon(z) \sqrt{V_{(z-1)}W_{(z+1)}} ; \\
\label{for5246}
W_{(z)} &\leq& W_{(z+1)} - S_{(z)}^2 \mathbin{\big/} V_{(z-1)} ; \\
\label{for3534}
V_{(z-1)} &\leq& v - \Big( \sum_{z'<z} S_{(z')} \Big)^{\!2} \mathbin{\Big/} W_{(z)} .
\end{eqnarray}
\end{itemize}
\end{Def}

With that vocabulary, our informal statement can be made precise:
let~$\mbf{A}$ be a compact correct array with entries $V_i^j, W_j^i, S_{ij}$,
and associated quantities $V, W, S$,
and define the arrays $\mbf{A}_k$ by~(\ref{for3010}).
Then when~$k \longto \infty$ one has $(2k+1) \mbf{A}_k \longto \bar{\mbf{A}}$ (in the sense that
each entry of~$(2k+1) \mbf{A}_k$ converges to the corresponding entry of~$\bar{\mbf{A}}$),
where $\bar{\mbf{A}}$ is the Toeplitz array with entries $\bar{V}_i^j,\bar{W}_j^i,\bar{S}_{ij}$
defined by:
\begin{eqnarray}
\bar{V}_{(z)} &=& \sum_{j-i=z} V_i^j ; \\ 
\bar{W}_{(z)} &=& \sum_{j-i=z} W_j^i ; \\
\bar{S}_{(z)} &=& \sum_{j-i=z} S_{ij}.
\end{eqnarray}
This array $\bar{\mbf{A}}$ is Toeplitz compact
with $z^-=\min\dot{J}-\max\dot{I}$, resp.\ $z^+=\max\dot{J}-\min\dot{I}$,
and the quantities~(\ref{for3162a})--(\ref{for3162c}) for~$\bar{\mbf{A}}$
are:
\begin{eqnarray}
\bar{v} &=& V ; \\
\bar{w} &=& W ; \\
\bar{s} &=& S.
\end{eqnarray}
Moreover $\bar{\mbf{A}}$ is correct, because all the~$(2k+1) \mbf{A}_k$ are,
and being correct is clearly conserved by array convergence.

The consequence of this statement is the following claim,
which achieves the reduction to a `$\ZZ$-indexed' problem
I alluded to a few lines above:
\begin{Clm}
The supremum of~$|S|/\sqrt{VW}$ for correct arrays
is not greater than the supremum of~$|s|/\sqrt{vw}$
for correct Toeplitz arrays.
\end{Clm}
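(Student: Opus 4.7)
The plan is to show that, given any compact correct array $\mathbf{A}$ with associated quantities $V, W, S$, one can construct a compact Toeplitz correct array $\bar{\mathbf{A}}$ whose renormalised quantities $v, w, s$ satisfy $v = V$, $w = W$, $s = S$; the claim then follows immediately.

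First I would use the averaging scheme that was informally described just before the claim: setting
\[ \mathbf{A}_k \coloneqq \frac{1}{2k+1}\sum_{z=-k}^{k}\tau^z\mathbf{A}, \]
Lemma~\ref{lem0513} ensures that each $\mathbf{A}_k$ is correct. Since translations leave the quantities $V$, $W$, $S$ invariant (these are defined by monotone limits or by absolutely convergent sums over the finite support $\dot{I}\times\dot{J}$, all of which are manifestly shift-invariant), and since $\mathbf{A} \mapsto (V, W, S)$ is linear in entries, the array $\mathbf{A}_k$ has the very same $V, W, S$ as $\mathbf{A}$.

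Next I would study the pointwise limit as $k \to \infty$ of $(2k+1)\mathbf{A}_k = \sum_{z=-k}^k \tau^z \mathbf{A}$. Compactness of $\mathbf{A}$ ensures that, at each position $(i,j)$, only finitely many translates $\tau^z\mathbf{A}$ contribute a nonzero term, so each entry stabilises for $k$ large enough and we obtain a limit array $\bar{\mathbf{A}}$ with entries $\bar V^j_i = \sum_{z\in\ZZ} V^{j+z}_{i+z}$ (and analogously for $\bar W, \bar S$). Each such sum depends only on $j-i$, so $\bar{\mathbf{A}}$ is Toeplitz, and in fact Toeplitz compact with $z^- = \min\dot{J} - \max\dot{I}$ and $z^+ = \max\dot{J} - \min\dot{I}$. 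Each finite truncation $(2k+1)\mathbf{A}_k$ is correct by Lemma~\ref{lem0513}, and the three correctness inequalities~(\ref{for9472a}), (\ref{for9472b}), (\ref{for9472c}) are closed under entrywise convergence: each of them involves only finitely many entries in the compact setting, so passing to the limit in $k$ commutes with the sums. Thus $\bar{\mathbf{A}}$ is itself correct.

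It then remains to verify that $v = V$, $w = W$, $s = S$ for $\bar{\mathbf{A}}$: for instance $\bar V_{(d)} = \sum_{i' \in \dot{I}} V^{i'+d}_{i'}$, and making $d \to -\infty$ and exchanging the finite sum over $\dot{I}$ with the monotone limit over $j$ yields $v = \sum_{i'} \lim_{j\to-\infty} V^j_{i'} = V$; the computations for $w$ and $s$ are entirely analogous. The main potentially delicate point is checking that (\ref{for9472c}) survives the passage to the Toeplitz limit in its renormalised form~(\ref{for3534}), where two \emph{a priori} infinite quantities must be combined so that the eventual-constancy built into compactness yields well-defined finite values; all other steps are essentially formal.
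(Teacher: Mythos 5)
Your proposal follows exactly the same route as the paper: average the array over translations to get the $\mathbf{A}_k$, pass to the pointwise limit $(2k+1)\mathbf{A}_k \to \bar{\mathbf{A}}$, observe that the limit is a compact Toeplitz correct array with $\bar v = V$, $\bar w = W$, $\bar s = S$, and read off the claim. The paper states this chain of facts in the paragraph preceding the claim and presents the claim as their immediate consequence; your only addition is the (correct) observation, which the paper relegates to a footnote on~(\ref{for3534}), that the passage from~(\ref{for9472c}) to its renormalised Toeplitz form requires some care because of the infinite sums.
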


So we have to study (compact) correct Toeplitz arrays.
Consider such an array.
Denote $\theta(z) \coloneqq \arcsin \epsilon(z)$;
then~(\ref{for5175}) can be rewritten:
\[\label{f7312} \exists \hat{\theta}(z) \in [\pm\theta(z)] \qquad
S_{(z)} = \sin \hat{\theta}(z) \cdot \sqrt{V_{(z-1)}W_{(z+1)}} .\]
Now, notice that for fixed values of the~$V_{(z)}$, the~$S_{(z)}$ and~$w$,
if we have values $W_{(z)}$ such that (\ref{for5175})--(\ref{for3534}) are satisfied,
we can modify those $W_{(z)}$ so that (\ref{for5246}) becomes an equality for all~$z$,
an operation which keeps (\ref{for5175}) and~(\ref{for3534}) true
since it can only make the~$W_{(z)}$ increase.
So we can suppose that (\ref{for5246}) actually is an equality,
i.e.\ that for all~$z\in\ZZ$,
\[\label{for5357} W_{(z)} = w \prod_{z'\geq z} \cos^2\hat{\theta}(z') .\]
Then it remains to integrate~(\ref{for3534}).
For~$z\in\ZZ$, denote
\[ \Gamma(z) \coloneqq \sum_{z'<z} \Big( \sin \hat{\theta}(z') \cdot
\prod_{z'<z''<z} \cos \hat{\theta}(z'') \cdot \sqrt{V_{(z'-1)}} \Big) ,\]
so that (\ref{for3534}) becomes:
\[\label{for6008} V_{(z-1)} \leq v - \Gamma(z)^2 .\]
$\Gamma(\Bcdot)$ satisfies the recursion equation
\[ \Gamma(z+1) = \sin\hat{\theta}(z) \sqrt{V_{(z-1)}} + \cos\hat{\theta}(z) \Gamma(z) ,\]
so by~(\ref{for6008}):
\[\label{for6399} |\Gamma(z+1)| \leq \sin |\hat{\theta}(z)| \sqrt{v-\Gamma(z)^2}
+ \cos\hat{\theta}(z) |\Gamma(z)| .\]
From~(\ref{for6399}), we will now prove that for all~$z\in\ZZ$:
\[\label{for6450} |\Gamma(z)| \leq
\sin \Big( \frac{\pi}{2} \wedge \sum_{z'<z} \theta(z') \Big) \sqrt{v} .\]
Indeed, (\ref{for6450}) is equivalent to saying that
there exists some $\eta(z) \in [0, \sum_{z'<z} \theta(z')]$
such that $|\Gamma(z)| = \sin\eta(z) \sqrt{v}$,
which we prove by induction.
First, since our Toeplitz array was supposed compact,
$\forall z<z^- \enskip \hat\theta(z) = 0$, so
the formula is true for $z\leq z^-$ with $\eta(z) = 0$.
Next, if the formula is true for~$z$, then (\ref{for6399}) yields
\[|\Gamma(z)| \leq \big( \sin|\hat{\theta}(z)| \cos\eta(z)
+ \cos|\hat{\theta}(z)| \sin \eta(z) \big) \sqrt{v}
= \sin\big( \eta(z) + |\hat{\theta}(z)| \big) \sqrt{v} ,\]
where $\eta(z) + |\hat{\theta}(z)| \leq \sum_{z'<z} \theta(z') + \theta(z)
= \sum_{z'<z+1} \theta(z')$, so the formula is true for~$(z+1)$,
which ends the induction.

To conclude, we write that $s = \sum_z S_{(z)} = \Gamma({z>z^+})\sqrt{w}$.
But by~(\ref{for6450}),
${|\Gamma(z > z^+)|} \ab \leq \sin\bar{\epsilon} \cdot \sqrt{v}$,
so in the end:
\[\label{for4760} |s| \leq \sin\bar{\epsilon} \cdot \sqrt{vw} ,\]
\emph{quod erat demonstrandum.}
\end{proof}

\begin{Cor}[`$\ZZ^n$ against $\ZZ^n$' theorem]\label{corZnZn}
Let~$n\geq 1$; let~$(X_x)_{x\in\ZZ^n}$ and~$(Y_y)_{y\in\ZZ^n}$ be random variables,
and assume there exists a function~$\epsilon \colon \ZZ^n \to [0,1]$
such that for all~$x,y\in\ZZ^n$,
\[ \{ X_x : Y_y \}_{\mcal{M}} \leq \epsilon(y-x) ,\]
$\mcal{M}$ being the natural $\sigma$-metalgebra of the system.
Then $\{\vec{X} : \vec{Y}\} \leq \bar{\epsilon}$,
where $\bar{\epsilon}$ the number in~$[0,1]$ such that
\[\label{for7131} \arcsin (\bar{\epsilon}) =
\Big( \sum_{v\in\ZZ^n} \arcsin \epsilon(v) \Big) \wedge \frac{\pi}{2} .\]
\end{Cor}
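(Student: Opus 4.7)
My plan is to proceed by induction on~$n\geq1$, with the base case~$n=1$ being exactly Theorem~\ref{thm0119}. For the inductive step, I will slice $\ZZ^n$ along its first coordinate: writing $\ZZ^n = \ZZ\times\ZZ^{n-1}$, I group the variables into slices by setting, for $x_1\in\ZZ$, $\tilde{X}_{x_1} \coloneqq \vec{X}_{\{x_1\}\times\ZZ^{n-1}}$, and analogously $\tilde{Y}_{y_1}$. Since $\sigma(\vec{X}_{\ZZ^n}) = \sigma(\vec{\tilde{X}}_{\ZZ})$ (and likewise for~$Y$), it suffices to bound ${\{\vec{\tilde{X}}_{\ZZ}:\vec{\tilde{Y}}_{\ZZ}\}}$.

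The key intermediate step is to control ${\{\tilde{X}_{x_1}:\tilde{Y}_{y_1}\}}_{\mcal{M}}$ for each pair~$(x_1,y_1)$, using the induction hypothesis on each pair of slices. For this one first observes that both Theorem~\ref{thm0119} and the inductive hypothesis automatically upgrade to subjective statements with respect to~$\mcal{M}$: conditionally on any~$\mcal{H}\in\mcal{M}$, the system retains~$\mcal{M}$ as its natural $\sigma$-metalgebra (because $\mcal{M}$ is stable under~$\vee$), and the coefficient bounds ${\{X_x:Y_y\}}_{\mcal{M}}\leq\epsilon(y-x)$ persist, so the unconditional conclusion, applied under~$\Pr[\Bcdot|\mcal{H}]$, yields the ${\{\Bcdot:\Bcdot\}}_{\mcal{M}}$-version. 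Applying this subjective inductive hypothesis for fixed~$(x_1,y_1)$ to the $\ZZ^{n-1}$-indexed sub-system with coefficients $\epsilon(y_1-x_1,\Bcdot)$, I get
\[ {\{\tilde{X}_{x_1}:\tilde{Y}_{y_1}\}}_{\mcal{M}} \,\leq\, \tilde{\epsilon}(y_1-x_1), \qquad
\arcsin\tilde{\epsilon}(z_1) = \Big(\sum_{v'\in\ZZ^{n-1}} \arcsin\epsilon(z_1,v')\Big)\wedge \frac{\pi}{2}. \]
A second application of (the subjective form of) Theorem~\ref{thm0119}, now to the $\ZZ$-indexed family $(\tilde{X}_{x_1})_{x_1}, (\tilde{Y}_{y_1})_{y_1}$ with coefficient function~$\tilde{\epsilon}$, then bounds ${\{\vec{\tilde{X}}_{\ZZ}:\vec{\tilde{Y}}_{\ZZ}\}}$ by some~$\bar{\epsilon}'$ with $\arcsin\bar{\epsilon}' = \big(\sum_{z_1}\arcsin\tilde{\epsilon}(z_1)\big)\wedge\frac{\pi}{2}$.

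The last step is pure arcsine bookkeeping: since the $\pi/2$-truncation in the definition of~$\tilde{\epsilon}$ can only decrease the quantity, one has $\arcsin\tilde{\epsilon}(z_1)\leq\sum_{v'}\arcsin\epsilon(z_1,v')$, hence $\sum_{z_1}\arcsin\tilde{\epsilon}(z_1)\leq\sum_{v\in\ZZ^n}\arcsin\epsilon(v)$, and therefore $\arcsin\bar{\epsilon}'\leq\big(\sum_v\arcsin\epsilon(v)\big)\wedge\frac{\pi}{2} = \arcsin\bar{\epsilon}$, giving $\bar{\epsilon}'\leq\bar{\epsilon}$ as required by~(\ref{for7131}). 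The only delicate point I foresee, and what I regard as the main obstacle, is the promotion of Theorem~\ref{thm0119} to its subjective form; once this is in hand the induction is entirely mechanical. One must check that for every~$\mcal{H}\in\mcal{M}$ the conditional system under~$\Pr[\Bcdot|\mcal{H}]$ still satisfies the hypotheses of Theorem~\ref{thm0119} with the same function~$\epsilon$ and the same metalgebra~$\mcal{M}$, and that the natural $\sigma$-metalgebra of the slice-sub-system is a sub-metalgebra of~$\mcal{M}$, so that the inductive hypothesis can be applied slice-wise without coefficient loss.
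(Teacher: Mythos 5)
Your proposal is correct and follows essentially the same route as the paper. The paper phrases the argument as an induction on an auxiliary parameter $r\in\{1,\ldots,n\}$ that counts how many coordinate directions have been absorbed, working throughout with a claim already stated in the subjective form ${\{\vec{X}_{x+\ZZ^r}:\vec{Y}_{y+\ZZ^r}\}}_{\mcal{M}}\leq\ldots$ for a fixed ambient $\mcal{M}$; you instead induct on the dimension $n$ and then explicitly upgrade the inductive conclusion to a subjective one before applying it slice-wise — but this upgrade is precisely what \S~\ref{parSubjectiveResults} guarantees (and the paper cites it at the same spot), so the two formulations are just cosmetically different. The one detail worth stating a bit more carefully in your write-up is the arcsine bookkeeping: the paper records the associativity of the truncated arcsin-sum as an equality, whereas you only need the inequality direction, which is fine for the corollary but slightly obscures the fact that iterating the $\tilde{+}$ operation over a product index set $\ZZ\times\ZZ^{n-1}$ gives \emph{exactly} the $\tilde{\sum}$ over $\ZZ^n$ (once one checks that the cap is absorbing: if any inner sum saturates, so does the outer one).
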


\begin{proof}
To alleviate notation, we define the `arcsin-sum'
as the binary operation $\tilde{+} \colon [0,1]^2 \to [0,1]$
defined by:
\[ a \tilde{+} b = \sin \Big( \big( \arcsin a + \arcsin b \big) \wedge \frac{\pi}{2} \Big) .\]
$\tilde{+}$ is associative, commutative and nondecreasing,
so it can be extended into an $\infty$-ary operator $\tilde{\sum}$;
with this notation, (\ref{for7131}) merely writes
$\bar{\epsilon} = \tilde{\sum}_{v\in\ZZ^n} \epsilon(v)$.

Let~$(\mbf{e}_1,\ldots,\mbf{e}_n)$ be a $\ZZ$-basis of~$\ZZ^n$.
For~$1\leq r\leq n$,
we identify $\ZZ^r$ with~$\ZZ\mbf{e}_1 \oplus \ZZ\mbf{e}_2 \oplus \cdots \oplus \ZZ\mbf{e}_r$;
we also denote $\ZZ_r^\perp \coloneqq \ZZ\mbf{e}_{r+1} \oplus \cdots \oplus \ZZ\mbf{e}_n$.
What we will prove is actually the following
\begin{Clm}\label{clm6369}
For all~$r\in\{1,\ldots,n\}$, all~$x,y\in\ZZ_r^\perp$,
\[\label{for7222} \big\{ \vec{X}_{x+\ZZ^r} : \vec{Y}_{y+\ZZ^r} \big\}_{\mcal{M}}
\leq \tilde{\sum}_{v\in\ZZ^r} \epsilon(y-x+v) .\]
\end{Clm}%
\noindent The statement of the lemma then corresponds to the claim for $r=n$.

We prove Claim~\ref{clm6369} by induction on~$r$.
The case $r=1$ is merely Theorem~\ref{thm0119}%
\footnote{More precisely, it is the subjective version of that theorem,
cf.\ \S~\ref{parSubjectiveResults}.}.
Now let us show how to go from the case $r-1$ to the case $r$ for~$r>1$:

Take $x,y \in \ZZ_r^\perp$.
We notice that
\[ \vec{X}_{x+\ZZ^r} = \big( \vec{X}_{x+i\mbf{e}_r+\ZZ^{r-1}} \big)_{i\in\ZZ} ,\]
which we shorthand into $\vec{X}_{x+\ZZ^r} = (\mbf{X}_i)_{i\in\ZZ}$;
similarly we write, with obvious notation, $\vec{Y}_{y+\ZZ^r} \eqqcolon (\mbf{Y}_j)_{j\in\ZZ}$.
By induction hypothesis one has for all~$i,j\in\ZZ$:
\[\label{for7020} \{ \mbf{X}_i : \mbf{Y}_j \}_{\mcal{M}} \leq
\tilde{\sum}_{v\in\ZZ^{r-1}} \epsilon\big(y-x+(j-i)\mbf{e}_r+v\big) .\]
Since the right-hand side of~(\ref{for7020}) only depends on~$(j-i)$,
we can apply Theorem~\ref{thm0119}
to the~$\mbf{X}_i$ and the~$\mbf{Y}_j$, which yields
\[ \big\{ \vec{X}_{x+\ZZ^r} : \vec{Y}_{y+\ZZ^r} \big\}_{\mcal{M}} \leq
\tilde{\sum}_{z\in\ZZ} \Big( \tilde{\sum}_{v\in\ZZ^{r-1}} \epsilon\big(y-x+z\mbf{e}_r+v \big) \Big)
= \tilde{\sum}_{v\in\ZZ^r} \epsilon(y-x+v) ,\]
i.e.~(\ref{for7222}).
\end{proof}

\section{Generalizations of the tensorization results}

\subsection{Minimal Hypotheses}\label{parMinimalHypotheses}

When reading the proofs of the tensorization theorems,
you may have noticed that taking the decorrelation hypotheses
w.r.t.\ the whole $\sigma$-metalgebra of the system was a needlessly strong assumption.
Actually each decorrelation hypothesis can be stated
relatively to only \emph{one} $\sigma$-algebra,
in the following way:
\begin{itemize}
\item For Theorem~\ref{thm5750},
one needs only assume that for all~$i\in I$,
$X_i$ and~$Y$ are $\epsilon_i$-decorrelated
when seen from $\sigma\big((X_{i'})_{i'<i}\big)$;
\item For Theorems~\ref{thm6413} and~\ref{thm0119},
one needs only assume that
$X_i$ and~$Y_j$ are $\epsilon_{ij}$-decorrelated (or $\epsilon(j-i)$-decorrelated)
when seen from $\sigma\big((X_{i'})_{i'<i},(Y_{j'})_{j'<j}\big)$.
\end{itemize}

In practice it is rare that one can bound above
${\{X_i:Y\}_{\vec{X}_{\{i'<i\}}}}$
or~${\{X_i:Y_j\}_{\mathlarger{(}\vec{X}_{\{i'<i\}},\vec{Y}_{\{j'<j\}}\mathlarger{)}}}$
more sharply than~${\{X:Y_i\}}_{\mcal{M}}$, resp.~${\{X_i:Y_j\}_{\mcal{M}}}$;
yet it is worth remembering that the `genuine' decorrelation hypotheses
are weaker than those we wrote,
especially when one gets interested in optimality issues (cf.\ \S~\ref{parOptimality}).

\begin{Rmk}
In our tensorization proofs we took~$I$ and~$J$ finite; yet those proofs,
and therefore everything in this subsection,
remain valid if we take for~$I$ or~$J$ any (countable) well-ordered set,
in particular if $I$ or~$J$ is~$\NN$.
\end{Rmk}

\subsection{Subjective versions of the theorems}\label{parSubjectiveResults}

In the tensorization theorems I stated,
the decorrelation hypotheses were given with regard to
the natural $\sigma$-metalgebra $\mcal{M}$ of the system,
while the results were given in terms of `objective' (I mean, not subjective) decorrelations.
Yet actually it can be shown that our results are still valid w.r.t.~$\mcal{M}$
—or even w.r.t.\ any sharper $\sigma$-metalgebra $\mcal{N}\supset\mcal{M}$,
provided decorrelation hypotheses are stated w.r.t.~$\mcal{N}$.
As an example, let us state and prove the subjective result corresponding to Theorem~\ref{thm5750}:
\begin{Cor}\label{cor4274}
Let~$X$, $(Y_i)_{i\in I}$ and~$(Z_\theta)_{\theta\in\Theta}$ be random variables,
and call~$\mcal{N}$ the $\sigma$-metalgebra they span.
Suppose we have bounds ${\{X:Y_i\}}_{\mcal{N}} \leq \epsilon_i$ for all~$i\in I$;
then:
\[\label{for4193}
\{ X : \vec{Y}_I \}_{\mcal{N}} \leq \sqrt{1-\prod_{i\in I}(1-\epsilon_i^2)} .\]
\end{Cor}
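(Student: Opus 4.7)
The plan is to reduce the corollary to Theorem~\ref{thm5750} by first fixing an arbitrary element $\mcal{H}\in\mcal{N}$ and then conditioning on it. By the very definition of subjective correlation w.r.t.\ a $\sigma$-metalgebra, it suffices to show that for every $\mcal{H}\in\mcal{N}$,
\[
\{X:\vec{Y}_I\}_{\mcal{H}} \;\leq\; \sqrt{1-\prod_{i\in I}(1-\epsilon_i^2)},
\]
and then to take the supremum in~$\mcal{H}$. By Proposition~\ref{cor0780}, any such $\mcal{H}$ is of the form $\sigma(X^?,\vec Y_{I'},\vec Z_{\Theta'})$ for some subsets~$I'\subset I$, $\Theta'\subset\Theta$ and a choice of whether to include~$X$; the case in which $X$ is $\mcal{H}$-measurable is trivial (the subjective correlation vanishes), and the indices $i\in I'$ contribute nothing since $Y_i$ is then $\mcal{H}$-measurable, so I may harmlessly assume $\mcal{H}=\sigma(\vec Z_{\Theta'})$ and reindex~$I$ as $\{1,\ldots,N\}$ disjoint from~$I'$.

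Next, I would unfold the definition of~$\{X:\vec Y_I\}_{\mcal{H}}$: it is the essential supremum, over $h$ in the range of~$\mcal{H}$, of~$\{X:\vec Y_I\}$ computed under the conditional law $\Pr[\Bcdot|\mcal{H}=h]$. Under this conditional law, I want to apply the `minimal-hypothesis' version of Theorem~\ref{thm5750} (cf.~\S~\ref{parMinimalHypotheses}), which only demands that for each $i\in\{1,\ldots,N\}$,
\[
\{X:Y_i\}_{\sigma(Y_1,\dots,Y_{i-1})} \leq \epsilon_i
\qquad\text{(under the conditional law $\Pr[\Bcdot|\mcal{H}=h]$).}
\]
Transferring back to the original law, this amounts to $\{X:Y_i\}_{\mcal{H}\vee\sigma(Y_1,\dots,Y_{i-1})}\leq\epsilon_i$. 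Now the $\sigma$-algebra $\mcal{H}\vee\sigma(Y_1,\dots,Y_{i-1})=\sigma(\vec Y_{\{1,\ldots,i-1\}},\vec Z_{\Theta'})$ lies in~$\mcal{N}$ by stability of a $\sigma$-metalgebra under~$\bigvee$; so the assumption $\{X:Y_i\}_{\mcal{N}}\leq\epsilon_i$ supplies exactly the needed bound for $\Pr[\mcal{H}]$-a.e.~$h$. Applying Theorem~\ref{thm5750} under~$\Pr[\Bcdot|\mcal{H}=h]$ then gives the desired inequality for almost every~$h$, hence for the essential supremum, completing the argument.

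The main obstacle is the bookkeeping described above: one must check carefully that conditioning on~$\mcal{H}$ preserves the subjective decorrelation hypotheses, which requires identifying each intermediate $\sigma$-algebra~$\mcal{H}\vee\sigma(Y_1,\dots,Y_{i-1})$ as an element of~$\mcal{N}$ and invoking Proposition~\ref{cor0780}. Once this verification is made, no new analytic content is needed: the inequality follows from Theorem~\ref{thm5750} applied fibrewise. The same scheme (fix $\mcal{H}\in\mcal{N}$, reduce to conditioning on~$\mcal{H}$, invoke the $\sigma$-metalgebra stability to recover the hypotheses, apply the objective tensorization theorem under each conditional law) would yield the corresponding subjective versions of Theorems~\ref{thm6413} and~\ref{thm0119} without any change.
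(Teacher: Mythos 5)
Your proof is correct and takes essentially the same approach as the paper: fix an $\mcal{H}\in\mcal{N}$, apply Theorem~\ref{thm5750} under each conditional law $\Pr[\Bcdot\,|\,\mcal{H}=h]$, and observe that the resulting compound conditioning $\mcal{H}\vee\sigma(Y_1,\dots,Y_{i-1})$ still lies in~$\mcal{N}$ by $\bigvee$-stability, so the decorrelation hypotheses transfer. The only (inessential) variations are that you invoke the minimal-hypothesis form from \S~\ref{parMinimalHypotheses} rather than the full $\sigma$-metalgebra hypothesis of Theorem~\ref{thm5750}, and that you perform a preliminary reduction to $\mcal{H}=\sigma(\vec{Z}_{\Theta'})$; the paper instead leaves $\Xi\subset\bar{\Theta}$ general and verifies that any further conditioning on $\vec{Z}_\Upsilon$ combines with $\vec{Z}_\Xi$ into a single $\vec{Z}_{\Xi\cup\Upsilon}$ for which the hypothesis directly applies. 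Both bookkeeping schemes are sound and rely on the same observation.
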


\begin{proof}
Up to making up copies of~$I$ and~$\Theta$,
we can assume that $\{0\}$, $I$ and~$\Theta$ are disjoint,
which allows us to denote $Z_0 \coloneqq X$ and $Z_i \coloneqq Y_i$ for~$i\in I$,
so that $\mcal{N}$ is the $\sigma$-metalgebra spanned by the~$Z_\theta$
for~$\theta\in\bar{\Theta} \coloneqq \{0\} \uplus I \uplus \Theta$.
Then (\ref{for4193}) means that for all~$\Xi \subset \bar{\Theta}$,
for (almost-)all~$\vec{z}_{\Xi}$, one must have:
\[ \{ X : \vec{Y}_I \} \leq \sqrt{1-\prod_{i\in I}(1-\epsilon_i^2)} 
\quad \text{under the law $\Pr\big[\Bcdot\big|\vec{Z}_{\Xi} = \vec{z}_{\Xi}\big]$.} \]
So, Corollary~\ref{cor4274} will ensue from Theorem~\ref{thm5750}
provided we can prove that, denoting by~$\mcal{M}$ the $\sigma$-metalgebra spanned by~$X$ and the~$Y_i$,
one has for all~$i\in I$:
\[\label{for4395} \{ X : Y_i \}_{\mcal{M}} \leq \epsilon_i
\quad \text{under the law $\Pr\big[\Bcdot\big|\vec{Z}_{\Xi} = \vec{z}_{\Xi}\big]$.} \]

But under a law $P$, saying that ${\{X:Y_i\}}_{\mcal{M}} \leq \epsilon_i$ means that for all~$\Upsilon\subset\{0\}\uplus I$,
for (almost-)all~$\vec{z}\,'_{\!\Upsilon}$,
one has ${\{X:Y_i\}}\leq\epsilon_i$ under the law $P[\Bcdot|\vec{Z}_{\Upsilon} = \vec{z}\,'_{\!\Upsilon}]$.
So, for $P = \Pr[\Bcdot\big|\vec{Z}_{\Xi} = \vec{z}_{\Xi}]$,
(\ref{for4395}) means that, for all~$\vec{z}\,'_{\!\Upsilon}$:
\[\label{for4586} \{ X : Y_i \} \leq \epsilon_i
\quad \text{under the law $\Pr\big[\Bcdot\big|\vec{Z}_{\Xi} = \vec{z}_{\Xi}\enskip\text{and}\enskip \vec{Z}_{\Upsilon} = \vec{z}\,'_{\!\Upsilon}\big]$.} \]
In Formula~(\ref{for4586}) we can assume that $z_\theta$ and~$z'_\theta$ coincide
for all~$\theta \in \Xi \cap \Upsilon$,
since otherwise the event ``$\vec{Z}_{\Xi}=\vec{z}_{\Xi}\enskip\text{and}\enskip \vec{Z}_{\Upsilon} = \vec{z}\,'_{\!\Upsilon}$''
would be empty and there would be nothing to say.
Then ``$\vec{Z}_{\Xi}=\vec{z}_{\Xi}\enskip\text{and}\enskip \vec{Z}_{\Upsilon} = \vec{z}\,'_{\!\Upsilon}$''
is of the form ``$\vec{Z}_{\Xi\cup\Upsilon} = \vec{z}_{\Xi\cup\Upsilon}$'',
where $\Xi\cup\Upsilon \subset \bar{\Theta}$,
so that (\ref{for4586}) follows directly from the hypothesis
$\{X:Y_I\}_{\mcal{N}} \leq \epsilon_i$.
\end{proof}

\section{Optimality}\label{parOptimality}

\subsection{Exact Optimality}

With the minimal hypotheses stated in \S~\ref{parMinimalHypotheses},
Theorems~\ref{thm5750} and~\ref{thm0119} are optimal:
\begin{Thm}\label{t6656}
The bound~(\ref{for5719}) in Theorem~\ref{thm5750} is optimal, in the following sense:
for any integer $N$, for all~$(\epsilon_i)_{1\leq i\leq N}$ in~$[0,1]^N$,
one can find random variables $X_1, \ldots, X_N, Y$ such that
for all~$i \in \{1,\ldots,N\}$,
\[\label{f6808} \{X_i:Y\}_{\vec{X}_{\{i'<i\}}} = \epsilon_i \]
and
\[ \{ \vec{X} : Y \} = \sqrt{1-\prod_{i}(1-\epsilon_i^2)} .\]
\end{Thm}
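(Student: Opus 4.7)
The natural candidate for the extremal system is a jointly Gaussian model, chosen so that the inequalities (\ref{f9041a})--(\ref{f9041c}) in the proof of Theorem~\ref{thm5750}, as well as the Cauchy--Schwarz step in~(\ref{f9280}), all become equalities. Explicitly, the plan is to let $X_1,\ldots,X_N$ be i.i.d.\ standard Gaussian, let $\xi_0$ be an independent standard Gaussian, and set
\[
Y \;\coloneqq\; \sum_{i=1}^N \epsilon_i \sqrt{\prod_{j<i}(1-\epsilon_j^2)}\cdot X_i \;+\; \sqrt{\prod_{j=1}^N (1-\epsilon_j^2)}\cdot \xi_0 .
\]
A telescoping computation ($\epsilon_i^2\prod_{j<i}(1-\epsilon_j^2) = \prod_{j<i}(1-\epsilon_j^2) - \prod_{j\leq i}(1-\epsilon_j^2)$) shows that $\Var(Y)=1$, so all variables are standard Gaussian; and $(X_1,\ldots,X_N,Y)$ is a Gaussian vector by construction.

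The first thing to check is~(\ref{f6808}). For each $i$, the law of $(X_i,Y)$ conditional on $\vec{X}_{\{i'<i\}}=\vec{x}$ is Gaussian; since $X_i$ is independent of $\vec{X}_{\{i'<i\}}$, its conditional law is still standard Gaussian, while the residual $Y-\EE[Y|\vec X_{\{i'<i\}}]$ has variance $\prod_{j<i}(1-\epsilon_j^2)$ and covariance $\epsilon_i\sqrt{\prod_{j<i}(1-\epsilon_j^2)}$ with $X_i$. The conditional linear correlation between $X_i$ and $Y$ is thus exactly $\epsilon_i$, regardless of $\vec{x}$. Since for a Gaussian pair the maximal correlation coincides with the absolute value of the linear correlation (Theorem~\ref{pro1857}, together with Remark~\ref{rmk4919} to account for conditional means), we obtain $\{X_i:Y\}_{\vec X_{\{i'<i\}}}=\epsilon_i$. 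In particular, this gives equality in the minimal form of the decorrelation hypothesis noted in~\S~\ref{parMinimalHypotheses}, so Theorem~\ref{thm5750} applies and yields $\{\vec X:Y\}\leq \sqrt{1-\prod_i(1-\epsilon_i^2)}$.

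It remains to verify the matching lower bound. Here one just reads off the global structure: $(\vec X,Y)$ is Gaussian, and the best linear predictor of $Y$ from $\vec X$ equals $\sum_i \epsilon_i\sqrt{\prod_{j<i}(1-\epsilon_j^2)}\,X_i$, whose $L^2$-error has variance $\prod_j(1-\epsilon_j^2)$. Therefore the multiple correlation coefficient between $\vec X$ and $Y$ equals $\sqrt{1-\prod_i(1-\epsilon_i^2)}$, which by Theorem~\ref{pro1857} (after diagonalizing the covariance of $\vec X$, which is already the identity, and normalizing $Y$) is exactly the Hilbertian correlation $\{\vec X:Y\}$.

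I expect no serious obstacle: the computation is essentially forced by the requirement that every inequality in the proof of Theorem~\ref{thm5750} be saturated, and the Gaussian setting realizes this because for Gaussian pairs maximal correlation coincides with linear correlation. The only mild subtleties are the degenerate cases: when some $\epsilon_i=1$ the residual $\xi_0$ can have variance $0$ (so $Y$ is $\vec X$-measurable) and when all $\epsilon_i=0$ then $Y=\xi_0$ is independent of $\vec X$; in both extremes the Gaussian formulas still give the claimed identities, so the construction works uniformly in $(\epsilon_1,\ldots,\epsilon_N)\in[0,1]^N$.
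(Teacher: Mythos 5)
Your construction is a genuine and attractive simplification of the paper's route. The paper first proves a general identity (Lemma~\ref{l6791}) asserting that for \emph{any} Gaussian vector $(X_1,\ldots,X_N,Y)$ the bound of Theorem~\ref{thm5750} is achieved with equality when the $e_i$ are taken to be the conditional maximal correlations, and then realizes prescribed values $e_i=\epsilon_i$ with a model of the form $Y=\xi$, $X_i=\sqrt{1-\alpha_i}\,\zeta_i+\sqrt{\alpha_i}\,\xi$, choosing the $\alpha_i$ one at a time via a monotonicity/continuity argument. You instead take the $X_i$ i.i.d.\ standard Gaussian and write $Y$ down explicitly as a linear combination with coefficients $\epsilon_i\sqrt{\prod_{j<i}(1-\epsilon_j^2)}$ plus independent noise, then verify both $\{X_i:Y\}_{\vec X_{\{i'<i\}}}=\epsilon_i$ and $\{\vec X:Y\}=\sqrt{1-\prod_i(1-\epsilon_i^2)}$ by direct computation (using Theorem~\ref{pro1857} and the telescoping identity). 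This bypasses Lemma~\ref{l6791} and the existence argument entirely and is, for $\epsilon_i\in[0,1)$, cleaner than the paper's argument; the paper's extra mileage is that Lemma~\ref{l6791} is also used elsewhere (notably in Appendix~\ref{par3lines}).

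However, your last paragraph is wrong: the construction does \emph{not} work uniformly over $[0,1]^N$. Take $N=2$, $\epsilon_1=1$, $\epsilon_2\in(0,1]$. Then the coefficient of $X_2$ is $\epsilon_2\sqrt{1-\epsilon_1^2}=0$ and the coefficient of $\xi_0$ is $\sqrt{\prod_j(1-\epsilon_j^2)}=0$, so $Y=X_1$. Conditionally on $X_1=x_1$ the variable $Y$ is a.s.\ constant, hence $\sigma(Y)$ is trivial and $\{X_2:Y\}_{X_1}=0\neq\epsilon_2$. More generally the construction breaks whenever some $\epsilon_{i_0}=1$ and some later $\epsilon_i>0$: all coefficients with index $\geq i_0$ vanish and $Y$ becomes $\vec X_{<i}$-measurable for every $i>i_0$, forcing the conditional correlation to $0$. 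The paper waves this case away with the claim that ``the theorem is immediate if some $e_i$ equals $1$'' and then restricts to $\epsilon_i<1$; but your claim that ``the Gaussian formulas still give the claimed identities'' in the degenerate case is simply false. Either restrict to $\epsilon_i<1$ as the paper effectively does, or give a separate construction for the boundary case (for example making $Y$ vector-valued so that equality $\{X_{i_0}:Y\}_{\vec X_{<i_0}}=1$ is realized by one coordinate while the remaining coordinates carry the other correlations; this is not entirely trivial to set up).
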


\begin{Thm}\label{thm6657-0}
The bound~(\ref{f6444}) in Theorem~\ref{thm0119} is optimal, in the following sense:
for any integer $N$, for all~$(\epsilon(z))_{-N\leq z\leq N} \in [0,1]^{\{-N,\ldots,N\}}$,
one can find random variables~$(X_i)_{i\in\ZZ}$ and~$(Y_j)_{j\in\ZZ}$ such that for all~$i,j\in\ZZ$,
\[\label{f0794} \{ X_i : Y_j \}_{\mathlarger{(}\vec{X}_{\{i'<i\}},\vec{Y}_{\{j'<j\}}\mathlarger{)}}
= \left\{ \begin{array}{lcl} \epsilon(j-i) & \quad & \text{if $|j-i| \leq N$;} \\
0 & \quad & \text{if $|j-i| > N$} \end{array} \right.\]
and ${\{ \vec{X} : \vec{Y} \}} = \bar{\epsilon}$, with~$\bar{\epsilon}$ defined by:
\[\label{f6595} \arcsin \bar{\epsilon} = \sum_{z=-N}^N \arcsin \epsilon(z)
\wedge \frac{\pi}{2} .\]
\end{Thm}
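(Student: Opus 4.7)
The plan is to construct an explicit Gaussian example attaining the bound. For Gaussians, Theorem~\ref{pro1857} identifies $\{\vec X:\vec Y\}$ with the operator norm of the cross-covariance matrix, and an analogous identification holds for conditioned Gaussians via Schur complements; this makes the ansatz tractable. Using that $\{\Bcdot:\Bcdot\}$ is continuous as a function of the law (since it is essentially an operator norm, as explained in \S~\ref{parOperatorInterpretation}), I would first reduce to the case $T\coloneqq\sum_{z=-N}^N\arcsin\epsilon(z)<\pi/2$ strictly, so that $\bar\epsilon=\sin T$ without any truncation.

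I would then look for a stationary jointly Gaussian family $(X_i,Y_j)_{i,j\in\ZZ}$ with each $X_i,Y_j\sim\mcal{N}(0,1)$, the $X_i$'s mutually independent and the $Y_j$'s mutually independent, and cross-covariance $\Cov(X_i,Y_j)=c(j-i)$ for a carefully chosen $c\colon\ZZ\to\RR$. The guide for choosing $c$ is the extremal Toeplitz array identified in the proof of Theorem~\ref{thm0119}: equality there is attained when $\hat\theta(z)=\theta(z)\coloneqq\arcsin\epsilon(z)$ throughout, yielding $V_{(z)}=v\cos^2\!\big(\sum_{z'\leq z}\theta(z')\big)$, $W_{(z)}=w\prod_{z'\geq z}\cos^2\theta(z')$ and $S_{(z)}=\sin\theta(z)\sqrt{V_{(z-1)}W_{(z+1)}}$. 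Reverse-engineering $c$ from this prescription amounts to exhibiting unit vectors $\vec u_i,\vec v_j\in\ell^2(\ZZ)$ with $\langle\vec u_i,\vec u_{i'}\rangle=\delta_{ii'}$, $\langle\vec v_j,\vec v_{j'}\rangle=\delta_{jj'}$ and $\langle\vec u_i,\vec v_j\rangle=c(j-i)$, and then setting $X_i=\langle\vec u_i,\vec G\rangle$, $Y_j=\langle\vec v_j,\vec G\rangle$ for a white-noise $\vec G=(G_n)_{n\in\ZZ}$ of iid $\mcal{N}(0,1)$'s.

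Once the model is in place, two things remain. For the global correlation: by Theorem~\ref{pro1857} and the Toeplitz structure, $\{\vec X:\vec Y\}$ equals $\|\hat c\|_\infty$, the $L^\infty$ norm of the Fourier symbol of~$c$, which by construction reaches the value $\bar\epsilon$. For the conditional ones: conditionally on $\vec X_{\{i'<i\}},\vec Y_{\{j'<j\}}$, the pair $(X_i,Y_j)$ is again jointly Gaussian with conditional covariance given by a Schur complement, and Theorem~\ref{pro1857} applied in that conditional world identifies $\{X_i:Y_j\}_{(\vec X_{<i},\vec Y_{<j})}$ with the Pearson correlation of the Gaussian innovations; the construction is designed so that this value equals $\epsilon(j-i)$ for $|j-i|\leq N$ and $0$ for $|j-i|>N$.

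The principal obstacle is exactly the verification of these conditional values: the infinite Schur complement produces ratios that \emph{a priori} need delicate algebraic cancellations, and this is most likely the place where Gershgorin's lemma (credited in the acknowledgments to S.~Martineau) enters, to control the spectral radius of a suitable finite truncation of the covariance matrix in a way that survives passage to the infinite system. A secondary difficulty is the positive semidefiniteness of the prescribed Gram matrix of the $\vec u_i,\vec v_j$; the natural route is a finite block approximation (restricting to $|i|,|j|\leq M$) whose positivity is checked inductively by a Schur-complement recursion mimicking the extremal Toeplitz array, followed by a limiting argument in $M$.
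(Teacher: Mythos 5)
Your ansatz—Gaussian model, independent $X_i$'s, independent $Y_j$'s, Toeplitz cross-covariance $c(j-i)$, guided by the extremal Toeplitz array—is the right skeleton, and it matches the paper's construction in spirit (the paper builds exactly such a model from auxiliary variables $\xi_j$ and $\omega_{ij}$, with $\Cov(X_i,Y_j)=\alpha_{j-i}$ supported on $|j-i|\leq N$). But there is a genuine gap at the core: you never actually produce $c$, nor do you show that \emph{some} $c$ yields the prescribed conditional correlations $\{X_i:Y_j\}_{(\vec X_{<i},\vec Y_{<j})}=\epsilon(j-i)$. The map $c\mapsto(\dot e_z)_z$ from the cross-covariance to the conditional correlations goes through an infinite Schur complement and is highly nonlinear and implicit; the step you label ``the construction is designed so that this value equals $\epsilon(j-i)$'' is precisely the theorem to be proved, not a design choice. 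You correctly flag this as the principal obstacle (and correctly guess Gershgorin's role), but flagging it does not close it.

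Two further remarks. First, your preliminary reduction to $T<\pi/2$ ``by continuity'' is not justified: continuity of the \emph{bound} $\bar\epsilon$ in $\vec\epsilon$ does not give you a limiting model whose \emph{conditional} correlations are the limit of the approximating ones, since conditional quantities are not continuous in weak convergence of laws in general, and the phase-transition regime $T\geq\pi/2$ (cf.\ Remark~\ref{rmkPhTr}) is exactly where care is needed. Second, it is worth knowing that the paper itself does not prove Theorem~\ref{thm6657-0} in full: it proves only the weaker Theorem~\ref{t6657}, valid for $\vec\epsilon$ in a neighbourhood of $\vec 0$, by setting up the explicit Gaussian model and invoking the inverse function theorem to show that the map $\vec\alpha\mapsto\vec{\dot e}$ is a local diffeomorphism at $\vec 0$; the global surjectivity you implicitly need is deferred to~\cite{perso}. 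So your route is not wrong, but the missing verification is exactly where both the neighbourhood restriction and the ``heavy technicalities'' live, and a convincing proof would have to confront it head-on rather than assert it.
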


Actually, as proving Theorem~\ref{thm6657-0} for \emph{all} the~$(\epsilon(z))_{-N\leq z\leq N}$
involves some heavy technicalities~\cite{perso},
I will only prove the slightly weaker following
\begin{Thm}\label{t6657}
For any integer $N$, the exists a neighbourhood $U$ of~$\vec{0}$ in~$[0,1]^{\{-N,\ldots,N\}}$
such that, for all~$(\epsilon(z))_{-N\leq z\leq N} \in U$, one can find random variables
$(X_i)_{i\in\ZZ}$ and~$(Y_j)_{j\in\ZZ}$ satisfying~(\ref{f0794}) and~(\ref{f6595})%
\footnote{Notice that in the neighbourhood of~$\vec{0}$,
one can drop the ``$\wedge\frac{\pi}{2}$'' in the right-hand side of~(\ref{f6595}).}.
\end{Thm}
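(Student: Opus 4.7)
My approach is to produce the extremal model via a jointly Gaussian construction, taking advantage of Theorem~\ref{pro1857} and of the fact that conditioning jointly Gaussian variables on other jointly Gaussian variables again yields a Gaussian law, so subjective correlations reduce to ordinary Gaussian correlations of the Schur-complemented covariance. In particular, every Cauchy--Schwarz step used in the proof of Theorem~\ref{thm0119} becomes an equality when the witnesses are linear in the $X_i, Y_j$, so the proof is expected to be tight in the Gaussian class. By shrinking $U$, one may moreover assume $\sum_{|z|\leq N}\arcsin\epsilon(z)<\pi/2$, removing the cutoff in~(\ref{f6595}) and reducing the target to $\bar\epsilon=\sin\!\bigl(\sum_z\arcsin\epsilon(z)\bigr)$.

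\textbf{Step 1: construction.} Set $\theta(z)\coloneqq\arcsin\epsilon(z)$ for $|z|\leq N$ and $\theta(z)\coloneqq 0$ otherwise. I would interleave the $X_i$ and $Y_j$ on a fine lattice and define a stationary centred jointly Gaussian family with unit variances whose (partial) covariance structure matches the equality case of the Toeplitz array built in the proof of Theorem~\ref{thm0119}. Concretely, the family can be generated from i.i.d.\ standard Gaussian innovations by a finite rotation cascade with angles $\theta(z)$, so that the prescribed \emph{partial} covariances (not the raw covariances) are exactly those appearing at equality in~(\ref{for5175})--(\ref{for3534}). The neighborhood hypothesis keeps all $\cos\theta(z)$ uniformly close to~$1$, which ensures that the candidate covariance kernel is positive definite and that the Gaussian family exists as a bona fide probability measure.

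\textbf{Step 2: verifying~(\ref{f0794}).} In the constructed model, the subjective correlation $\{X_i:Y_j\}_{\vec X_{\{i'<i\}},\vec Y_{\{j'<j\}}}$ is the absolute correlation of $X_i,Y_j$ under the conditional Gaussian law obtained by Schur-complementing out the conditioning variables, a purely algebraic quantity not depending on the values of those variables. By construction of the rotation cascade, this equals $\sin\theta(j-i)=\epsilon(j-i)$ when $|j-i|\leq N$, and vanishes for $|j-i|>N$ because the cascade connects $X_i$ to $Y_j$ only through paths of combined length at most $N$, so beyond that range $X_i$ and $Y_j$ become conditionally independent.

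\textbf{Step 3: matching the bound; main obstacle.} Given the model, Theorem~\ref{pro1857} identifies $\{\vec X:\vec Y\}$ with the operator norm of the normalized cross-covariance. One then exhibits witnesses $f,g$ linear in the $X_i$'s and $Y_j$'s, read off from the extremal quasi-eigenvector of the cascade; these saturate the recursion~(\ref{for6399}) at the value $\sin\!\bigl(\sum_z\theta(z)\bigr)=\bar\epsilon$, giving $\{\vec X:\vec Y\}\geq\bar\epsilon$, hence equality by Theorem~\ref{thm0119}. The hard part is Step~2: the conditioning $\sigma$-metalgebra $\bigl(\vec X_{\{i'<i\}},\vec Y_{\{j'<j\}}\bigr)$ is not the "past" of any single linear chain containing both $X_i$ and $Y_j$, so the vanishing of the partial correlations for $|j-i|>N$ does not follow from a Markov-chain shortcut and requires a direct partial-covariance computation in the interleaved cascade. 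The neighborhood restriction is used precisely at this point: it keeps the Gaussian model uniformly non-degenerate and makes the required partial-correlation identities tractable. Extending beyond this neighborhood (the full Theorem~\ref{thm6657-0}) would force some $\cos\theta(z)$ to degenerate and is what is deferred to~\cite{perso}.
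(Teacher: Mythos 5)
Your proposal identifies the right overall strategy (a Gaussian construction whose conditional laws are explicit Schur complements, so that every Cauchy--Schwarz step in the proof of Theorem~\ref{thm0119} becomes an equality, matched by linear witnesses), and this is indeed the strategy the paper follows. But there is a genuine gap at precisely the point you flag as ``the hard part'': you never produce a model satisfying~(\ref{f0794}).

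The difficulty is that the quantities you wish to prescribe --- the conditional correlations $\{X_i:Y_j\}_{(\vec X_{\{i'<i\}},\vec Y_{\{j'<j\}})}$ --- are nonlinear functionals of the covariance kernel, not parameters you may set by fiat. Describing a ``finite rotation cascade with angles $\theta(z)$'' whose \emph{partial} covariances ``are exactly those appearing at equality'' asserts the existence of a stationary positive-definite kernel realizing a prescribed family of partial correlations; nothing in the proposal establishes that such a kernel exists, that it is stationary, or that the resulting partial correlations vanish for $|j-i|>N$. Your remark that the neighborhood ``keeps the Gaussian model uniformly non-degenerate and makes the required partial-correlation identities tractable'' describes what you would like to be true but is not a proof, and in fact misidentifies the role of the neighborhood hypothesis. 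The paper's route is different and fills exactly this hole: it posits a \emph{freely parametrized} Gaussian family (latent innovations $\xi_j,\omega_{ij}$ with explicit coefficients $\alpha_z$, $|z|\le N$), for which the partial correlations $\dot e_z$ are explicit smooth functions of $\vec\alpha$; the support structure forces $\dot e_z=0$ for $|z|>N$ for any $\vec\alpha$, and the differential of $\vec\alpha\mapsto\vec{\dot e}$ at $\vec0$ is computed to be $(2N+1)^{-1/2}$ times the identity (Claim~\ref{c4330}). The \emph{inverse function theorem} then supplies the neighborhood $U$: for $\vec\epsilon$ small one can solve for $\vec\alpha$ exactly. This is the mechanism your argument lacks, and it is why the theorem is stated only locally. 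Your Step~3 is correct in spirit (linear witnesses $\sum_{|i|\le k}X_i$, $\sum_{|j|\le k}Y_j$, passage to a compact Toeplitz limit where all inequalities saturate), but it presupposes the explicit parametrized model and cannot be carried out on the hypothetical cascade. To complete the proof, replace the cascade by an explicitly parametrized family and invoke local invertibility to hit $\vec\epsilon$.
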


\begin{Rmk}
On the other hand, Theorem~\ref{thm6413}
is obviously not optimal since, as we pointed out, its bound is strictly weaker
than that of Theorem~\ref{thm0119}.
\end{Rmk}

The proof of Theorem~\ref{t6656} relies on the following important result:
\begin{Lem}\label{l6791}
Let~$(X_1,\ldots,X_N,Y)$ be an $(N+1)$-dimensional Gaussian vector.
For all~$1\leq i\leq N$, define
\[ e_i \coloneqq \{ X_i : Y \}_{\vec{X}_{\{i'<i\}}} ,\]
then one has exactly:
\[\label{f1922} \{ \vec{X} : Y \} = \sqrt{1-\prod_{i}(1-e_i^2)} .\]
\end{Lem}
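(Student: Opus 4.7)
\textbf{Proof plan for Lemma~\ref{l6791}.}

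My plan is to reduce the whole question to the classical variance decomposition for Gaussian vectors, using Theorem~\ref{pro1857} at both levels (global and subjective). First I would invoke Theorem~\ref{pro1857} to observe that for the jointly Gaussian vector $(\vec{X},Y)$, the maximal correlation is attained by linear functions, so that
\[ \{\vec{X}:Y\}^2 \;=\; \frac{\Var(Y^{\sigma(\vec{X})})}{\Var(Y)} \;=\; 1 - \frac{\Var(Y-Y^{\sigma(\vec{X})})}{\Var(Y)}, \]
where I use associativity of variance and the fact that in the Gaussian case the conditional expectation $Y^{\sigma(\vec{X})}$ is linear in $\vec{X}$, and $\Var(Y|\vec{X})$ is a (deterministic) constant equal to $\Var(Y - Y^{\sigma(\vec{X})})$.

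Next, I would give the Gaussian interpretation of each $e_i$. Conditionally on $\vec{X}_{\{i'<i\}} = \vec{x}$, the pair $(X_i,Y)$ remains Gaussian, with a covariance structure independent of $\vec{x}$ (only the means depend on $\vec{x}$). Applying Theorem~\ref{pro1857} again in this conditional setting, $\{X_i:Y\}$ under $\Pr[\Bcdot\,|\vec{X}_{\{i'<i\}}=\vec{x}]$ equals the absolute value of the partial correlation coefficient
\[ \rho_i \;\coloneqq\; \frac{|\Cov(X_i,Y\mid X_1,\ldots,X_{i-1})|}{\ecty(X_i|X_1,\ldots,X_{i-1})\,\ecty(Y|X_1,\ldots,X_{i-1})}, \]
which does not depend on the conditioning values; hence $e_i = \rho_i$.

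The main computational step is then the standard one-step Gaussian variance reduction: working under $\Pr[\Bcdot\,|X_1,\ldots,X_{i-1}]$, which is Gaussian, one has by associativity of variance and the definition of $\rho_i$,
\[ \Var(Y\mid X_1,\ldots,X_i) \;=\; \Var(Y\mid X_1,\ldots,X_{i-1})\,(1-e_i^2). \]
Iterating over $i=1,\ldots,N$ yields
\[ \Var(Y\mid \vec{X}) \;=\; \Var(Y)\,\prod_{i=1}^{N}(1-e_i^2), \]
which plugged into the first displayed formula gives exactly~(\ref{f1922}).

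I do not anticipate a real obstacle: the only point that requires some care is the observation that for Gaussian vectors the conditional covariance structure is deterministic (so that $e_i$ is truly a single number, not an essential supremum), together with the two applications of Theorem~\ref{pro1857}---once globally and once conditionally---which is what makes the identity \emph{exact} rather than merely an inequality.
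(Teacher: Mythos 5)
Your proof is correct, but it takes a genuinely different route from the paper's. The paper's argument works "from below": it sets $g=Y$, tries $f(\vec X)=\sum_i\beta_i X_i$, and then traces through the chain of inequalities in the proof of Theorem~\ref{thm5750} (Formulas~(\ref{f9041a})--(\ref{f9041c}), (\ref{f8485}), (\ref{f9280})) to show that the coefficients $\beta_i$ can be chosen (recursively from $\beta_N$ down to $\beta_1$) so that \emph{every} inequality is saturated; the upper bound then comes for free from Theorem~\ref{thm5750}. Your argument instead bypasses the simple-tensorization machinery entirely: you observe via Theorem~\ref{pro1857} that in the Gaussian case $\{\vec X:Y\}$ is the multiple correlation coefficient, i.e.\ $\{\vec X:Y\}^2=\Var(Y^{\sigma(\vec X)})/\Var(Y)=1-\Var(Y\mid\vec X)/\Var(Y)$, and then compute $\Var(Y\mid\vec X)$ by the one-step Gaussian variance-reduction identity $\Var(Y\mid\mathcal F_i)=(1-e_i^2)\Var(Y\mid\mathcal F_{i-1})$, iterated over $i$. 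Your route is more elementary and self-contained (it never needs the internals of Theorem~\ref{thm5750}, only its Gaussian consequence); the paper's route is heavier but has the side benefit of exhibiting the extremal $f$ explicitly and of showcasing the "equality-tracking" technique that the monograph reuses elsewhere (e.g.\ in Theorem~\ref{t6657}). One small completeness point: your iterated formula silently handles the degenerate cases ($\Var(Y\mid\mathcal F_{i-1})=0$, where $e_i=0$ by the convention of Definition~\ref{def4393}, or $X_i$ being $\mathcal F_{i-1}$-measurable, where again $e_i=0$), whereas the paper spends a paragraph on them; a one-line remark to that effect would make your argument watertight.
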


\begin{Rmk}\label{rmk>3lines}
Maximal correlation, as I told in \S~\ref{parBasicHilbertianDecorrelations},
is fundamentally a Hilbertian concept. When one deals with Gaussian vectors,
the Hilbert spaces involved actually have finite dimensions,
so that Lemma~\ref{l6791} about decorrelations can also be seen as a result about Euclidian spaces.
In Appendix~\ref{par3lines}, I will present an unexpected corollary of this lemma,
stating a geometric property of the $3$-dimensional Euclidian space.
\end{Rmk}

\begingroup\def\proofname{Proof of Lemma~\ref{l6791}}\begin{proof}
To alleviate notation, we denote $\mcal{F}_{i-1} \coloneqq \sigma\big(\vec{X}_{\{i'<i\}}\big)$.
Since $(\vec{X},Y)$ is Gaussian, the law of~$(X_i,Y)$
under~$\Pr[\Bcdot|\ab x_1,\ldots,x_{i-1}]$ is Gaussian and only depends on
$(x_1,\ldots,x_{i-1})$ through an additive constant;
consequently, we can speak of ``\emph{the} Hilbertian correlation between~$X_i$ and~$Y$ conditionally to~$\mcal{F}_{i-1}$'', which is $e_i$,
and also of ``\emph{the} conditional variance of~$X_i$ w.r.t.~$\mcal{F}_{i-1}$'',
resp.\ ``\emph{the} conditional variance of~$Y$'', resp.\ ``\emph{the} conditional covariance of~$(X_i,Y)$'',
which we denote resp.~$\Var(X_i|\mcal{F}_{i-1})$, $\Var(Y|\mcal{F}_{i-1})$,
$\Cov(X_i,Y|\mcal{F}_{i-1})$. By Theorem~\ref{pro1857}, one has:
\[\label{f2664}
\Cov(X_i,Y|\mcal{F}_{i-1}) = \pm e_i \ecty(X_i|\mcal{F}_{i-1}) \ecty(Y|\mcal{F}_{i-1}) .\]

Now take $g(Y) = Y$ and $f(X) = \sum_{i=1}^{N} \beta_i X_i$,
for some $\beta_i \in \RR$ to be chosen later.
Then $g^{i-1}$ is equal to~$Y - \EE[Y|\mcal{F}_{i-1}]$ and $f_i$ is proportional
to~$X_i - \EE[X_i|\mcal{F}_{i-1}]$, thus, by~(\ref{f2664}) and our model's being Gaussian,
all the inequalities until~(\ref{f9041c}) in the proof of Theorem~\ref{thm5750}
actually are equalities for $\epsilon_i = e_i$.
If moreover $\Cov(f_i,g^{i-1}|\mcal{F}_{i-1}) \geq 0$ for all~$i$,
then we can drop the absolute values in their left-hand sides,
and thus (\ref{f8485}) will also be an equality.
Then, to get an equality in~(\ref{f9280}), it just remains to ensure
that the final Cauchy--Schwarz equality is an equality,
i.e.\ to ensure that one has, for all~$i$:
\[\label{f2464} \Var(f_i) \propto e_i^2 \prod_{i'=1}^{i-1} (1-e_{i'}^2) .\]
If all of that is satisfied, then one will have exactly
$\EE[fg] = \sqrt{1-\prod_{i}(1-e_i^2)} \ab \hspace{.056em} \ecty(f)\ecty(g)$,
so that $\{\vec{X}:Y\} \geq \sqrt{1-\prod_{i}(1-e_i^2)}$.
The converse inequality being obviously true
by (the minimal version of) Theorem~\ref{thm5750},
the result will follow.

So, we have to check that the choice of the~$\beta_i$ can be performed so that
(\ref{f2464}) is satisfied, with $\Cov(f_i,g^{i-1}|\mcal{F}_{i-1})$ of the good sign.
To do this, we will choose successively
relevant values for~$\beta_N,\ab \beta_{N-1},\ab \ldots,\ab \beta_1$.

We observe that, if $\beta_N,\ldots,\beta_{i+1}$ have already been fixed, then
$\beta_i \mapsto \Cov(f_i,g^{i-1}|\mcal{F}_{i-1})$
is an affine function with slope
\[ \pm e_i \ecty(Y|\mcal{F}_{i-1}) \frac{\ecty(X_i|\mcal{F}_{i-1})}{\ecty(X_i)} .\]
Moreover, $\Var(f_i) = \Var(f_i|\mcal{F}_{i-1})$ as $f_i$ is centered w.r.t.~$\mcal{F}_{i-1}$;
so, since $f_i \propto X_i - \EE[X_i|\mcal{F}_{i-1}]$, (\ref{f2664}) implies:
\[ \Var(f_i) = \frac{\Cov(f_i,g^{i-1}|\mcal{F}_{i-1})^2}{e_i^2 \Var(Y|\mcal{F}_{i-1})} .\]
So, provided all the three quantities $e_i$, $\ecty(Y|\mcal{F}_{i-1})$ and~$\ecty(X_i|\mcal{F}_{i-1})$ are nonzero,
there exists a (unique) $\beta_i$ satisfying~(\ref{f2464}).

Now if $\ecty(Y|\mcal{F}_{i-1})$ is zero, this means that $Y$ is $\mcal{F}_{i-1}$-measurable;
then one of the~$e_{i'}$ has to be~$1$ and thus the result is trivial.
Next if $\ecty(X_i|\mcal{F}_{i-1})$ is zero,
this means that $X_i$ is $\mcal{F}_{i-1}$-measurable;
then $e_i = 0$ and $f_i \equiv 0$, so that (\ref{f2464}) is automatically satisfied.
Finally if $e_i = 0$ and $\Var(X_i|\mcal{F}_{i-1})>0$, then there exists a (unique) $\beta_i$
such that $f_i \equiv 0$, for which (\ref{f2464}) is satisfied.
So all those particular cases actually work fine too.
\end{proof}\endgroup

\begingroup\def\proofname{Proof of Theorem~\ref{t6656}}\begin{proof}
For technical reasons, we begin with noticing that the theorem
is immediate if some $e_i$ is equal to~$1$,
so that we can assume that all the~$e_i$ are $<1$.
Thanks to Lemma~\ref{l6791}, it suffices to prove that
for any sequence of~$\epsilon_i \in [0,1)$
it is possible to build a Gaussian vector~$(X,\vec{Y})$
for which $e_i = \epsilon_i \ \forall i$.
To do this, let~$\xi, \zeta_1, \ldots, \zeta_N$ be i.i.d.\ $\mcal{N}(1)$ variables,
and take $Y = \xi$ and $X_i = \sqrt{1-\alpha_i}\zeta_i + \sqrt{\alpha_i}\xi$
for some parameters $\alpha_i \in [0,1)$.
We want to choose the~$\alpha_i$ such that $\vec{e}(\vec{\alpha}) = \vec{\epsilon}$;
this is always possible, by the following method:
\begin{itemize}
\item First we compute $\alpha_1$: By Theorem~\ref{pro1857},
one can write down the equation linking~$\alpha_1$ and~$e_1$.
It is clear without knowing the precise form of that equation
(actually, $e_1 = \sqrt{\alpha_1}$)
that $e_1$ is a continuous increasing function of~$\alpha_1$
with $e_1=0$ for $\alpha_1=0$ and $e_1=1$ for $\alpha_1=1$.
Therefore there is a unique $\alpha_1$ such that $e_1=\epsilon_1$.
\item Then we compute $\alpha_2$: As we already know the value of~$\alpha_1$,
we can treat it as a constant and look for the equation linking~%
$\alpha_2$ and~$e_2$, which we compute by Theorem~\ref{pro1857} again.
That equation, though more complicated than in the previous case
(actually, $e_2 = \sqrt{\alpha_2}\sqrt{1-\alpha_1}/\sqrt{1-\alpha_1\alpha_2}$),
exhibits the same behaviour: $e_2$ is a continuous increasing function of~$\alpha_2$
with $e_2(\alpha_2=0)=0$ and $e_2(\alpha_2=1)=1$.
Therefore there is a unique $\alpha_2$ such that $e_2=\epsilon_2$.
\item We carry on this process until having determined all the~$\alpha_i$.
\end{itemize}
\end{proof}\endgroup

\begingroup\def\proofname{Proof of Theorem~\ref{t6657}}\begin{proof}
Again, the principle of the proof will consist in showing
how the optimal bound can be attained for relevant Gaussian vectors
and linear functions of them.

We consider independent $\mcal{N}(1)$ variables~$(\xi_j)_{j\in\ZZ}$
and~$(\omega_{ij})_{(i,j)\in\ZZ\times\ZZ}$.
For all~$i$ we set:
\[ X_i = \sum_{z=-N}^{N} \omega_{i(i+z)} ,\]
resp. for all~$j$:
\[ Y_j = \xi_j + \sum_{z=-N}^{N} \alpha_z \omega_{(j-z)j} \]
for some real parameters $(\alpha_z)_{-N\leq z\leq N}$ to be fixed later.
This model is obviously invariant by translation of the indexes.
For~$z\in\ZZ$, define
\[ \dot{e}_z \coloneqq \frac{\Cov\big(X_i,Y_{i+z}\big|\mcal{F}_{i-1}\vee\mcal{G}_{i+z-1}\big)}
{\ecty\big(X_i\big|\mcal{F}_{i-1}\vee\mcal{G}_{i+z-1}\big)
\, \ecty\big(Y_{i+z}\big|\mcal{F}_{i-1}\vee\mcal{G}_{i+z-1}\big)} ,\]
where the choice of~$i$ does not matter.
Since our model is Gaussian, by Theorem~\ref{pro1857},
\[ \{ X_i : Y_{i+z} \}_{\mathlarger{(}\vec{X}_{\{i'<i\}},\vec{Y}_{\{j'<i+z\}}\mathlarger{)}}
= |\dot{e}_z| .\]

By the properties of Gaussian vectors,
it is possible to write down explicitly the equations linking
the~$\dot{e}_z$ to the~$\alpha_z$.
Though these equations may be quite horrendous, some of their properties can be easily established:
\begin{Clm}\label{c4330}\strut
\begin{ienumerate}
\item\label{i4332} For $|z|>N$, $\dot{e}_z = 0$ (for any choice of the~$\alpha_z$);
\item The map $(\alpha_{-N},\ldots,\alpha_N) \mapsto (\dot{e}_{-N},\ldots,\dot{e}_N)$
is of class $\mcal{C}^1$ on the neighbourhood of~$(0,\ldots,0)$, with:
\[ \left(\frac{\partial \dot{e}_z}{\partial \alpha_y}\right)(\vec0) =
\frac{\1{y=z}}{\sqrt{2N+1}} .\]
\end{ienumerate}
\end{Clm}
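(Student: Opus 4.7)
I would prove both parts by carefully tracking which of the underlying independent Gaussians $(\omega_{lk})$ and $(\xi_j)$ enter into each $X_i$, each $Y_j$ at a given $\vec\alpha$, and each of the conditioning $\sigma$-algebras. The model has a sparse ``row/column'' structure: $X_i$ is a function only of row $i$ (specifically, of $\omega_{i,k}$ for $k\in[i-N,i+N]$), while $Y_j$ is a function of $\xi_j$ together with column $j$ (specifically, of $\omega_{l,j}$ for $l\in[j-N,j+N]$). Since everything is jointly centered Gaussian, the conditional (co)variance entering $\dot e_z$ can be read off of linear-algebra formulas, and whenever one of the three objects $X_i$, $Y_{i+z}$, $\mcal{H}\coloneqq\mcal{F}_{i-1}\vee\mcal{G}_{i+z-1}$ happens to be independent of the other two, the corresponding conditional covariance vanishes.

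For part~(\ref{i4332}), I split on the sign of $z$. If $z>N$, then $Y_{i+z}$ is built from $\xi_{i+z}$ together with $\omega_{l,i+z}$ for $l\in[i+z-N,i+z+N]\subset(i,+\infty)$; none of these variables appear in $X_i$ (row $i$), in $\mcal{F}_{i-1}$ (rows $<i$), or in $\mcal{G}_{i+z-1}$ (columns $<i+z$), and $\xi_{i+z}$ does not lie in $\mcal{H}$ either. Therefore $Y_{i+z}$ is independent of $(X_i,\mcal{H})$, whence $\Cov(X_i,Y_{i+z}|\mcal{H})=0$; the conditional variances being positive ($\Var(Y_{i+z}|\mcal{H})\geq\Var(\xi_{i+z})=1$), this gives $\dot e_z=0$. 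For $z<-N$ I run the dual argument: then $X_i$'s variables---row $i$ and columns $[i-N,i+N]$---are disjoint from those of $\mcal{F}_{i-1}$ (rows $<i$), of $\mcal{G}_{i+z-1}$ (columns $\leq i+z-1<i-N$), and of $Y_{i+z}$ itself (column $i+z<i-N$), so $X_i$ is independent of $(Y_{i+z},\mcal{H})$ and again $\dot e_z=0$.

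For part~(ii), at the base point $\vec\alpha=\vec 0$ one has $Y_j\equiv\xi_j$, so the $Y$'s are globally independent of the $X$'s and of $\mcal{F}_{i-1}$. This trivializes all conditional variances and covariances at $\vec 0$: $\Var(X_i|\mcal{H})=2N+1$, $\Var(Y_{i+z}|\mcal{H})=1$, and the numerator $\Cov(X_i,Y_{i+z}|\mcal{H})$ vanishes. The $\mcal{C}^1$ regularity near $\vec 0$ is then automatic: the joint covariance matrix of $(X_i,Y_{i+z},\vec W)$, where $\vec W$ lists the generators of $\mcal{H}$, depends polynomially on $\vec\alpha$, and the standard Gaussian formula $\Cov(X_i,Y_{i+z}|\mcal{H})=\Cov(X_i,Y_{i+z})-\Cov(X_i,\vec W)\Var(\vec W)^{-1}\Cov(\vec W,Y_{i+z})$ is rational in these entries, with $\det\Var(\vec W)$ bounded away from $0$ in a neighbourhood of $\vec 0$.

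Differentiating at $\vec 0$ is then immediate: since the numerator vanishes, $\partial_{\alpha_y}\dot e_z|_{\vec 0}=\partial_{\alpha_y}\Cov(X_i,Y_{i+z}|\mcal{H})|_{\vec 0}\big/\sqrt{2N+1}$. A direct expansion yields $\Cov(X_i,Y_{i+z})=\alpha_z\mbf{1}_{|z|\leq N}$, whose $\alpha_y$-derivative is $\mbf{1}_{y=z}$; meanwhile the correction $\Cov(X_i,\vec W)\Var(\vec W)^{-1}\Cov(\vec W,Y_{i+z})$ is the product of two factors both of order $O(\|\vec\alpha\|)$ at $\vec 0$ (a direct inspection of covariances of the form $\Cov(X_i,Y_{j'})$ and $\Cov(X_{i'},Y_{i+z})$ shows they are each linear in $\vec\alpha$, and $\Cov(X_i,X_{i'})\equiv 0$), so it contributes only at second order. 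Dividing by $\sqrt{2N+1}$ yields the announced formula $\mbf{1}_{y=z}/\sqrt{2N+1}$. The only real place one could stumble is the disjointness bookkeeping in part~(\ref{i4332}) for $z<-N$, where one must confirm that although $Y_{i+z}$ shares $\omega$'s with $\mcal{F}_{i-1}$ through the $X_{i'}$'s, this does not create any indirect entanglement with $X_i$---which is automatic once one checks that $X_i$'s row and column supports are disjoint from those of all the other ingredients.
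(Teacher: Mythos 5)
The paper does not actually prove Claim~\ref{c4330}---it is merely asserted after a remark that the relevant Gaussian computations "can be easily established"---so your blind proof cannot be compared against a written argument. I will therefore assess it on its own merits. Your treatment of part~(i) is correct: the row/column support bookkeeping is exactly right, and you correctly identify that for $z>N$ the variable $Y_{i+z}$ is independent of the triple $(X_i,\mcal{F}_{i-1},\mcal{G}_{i+z-1})$ while for $z<-N$ it is $X_i$ which detaches. Your evaluations at $\vec\alpha=\vec 0$ (conditional variances $2N+1$ and $1$, vanishing numerator), the identity $\Cov(X_i,Y_{i+z})=\alpha_z\1{|z|\leq N}$, and the observation that the correction term $\Cov(X_i,\vec W)\Var(\vec W)^{-1}\Cov(\vec W,Y_{i+z})$ is $O(\|\vec\alpha\|^2)$ because both outer factors vanish at $\vec 0$, are all correct and give the stated value $\1{y=z}/\sqrt{2N+1}$.

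However, your justification of $\mcal{C}^1$ regularity is not sound as written. You argue that the conditional covariance is a rational function of the entries of $\Var(\vec W)$ with "$\det\Var(\vec W)$ bounded away from~$0$". But $\vec W$ is \emph{not} a finite vector: $\mcal{H}=\mcal{F}_{i-1}\vee\mcal{G}_{i+z-1}$ is generated by the infinitely many variables $(X_{i'})_{i'<i}$ and $(Y_{j'})_{j'<i+z}$, so $\Var(\vec W)$ is an infinite matrix and its determinant does not exist. Nor can one truncate to a finite window exactly: the dependency graph of the generators is connected (each $Y_{j'}$ is correlated with $X_{i'}$ for $|i'-j'|\leq N$, and these overlaps chain out to infinity), so the projection $P_{\mcal{H}}X_i$ genuinely involves all generators, with coefficients that decay but do not vanish. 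The claim is true, but proving it requires either (a) approximating $\mcal{H}$ by the $\sigma$-algebra generated by a finite window of generators and showing the truncated conditional covariances converge together with their $\alpha$-derivatives, locally uniformly in $\vec\alpha$; or (b) working directly with the Gram operator on $\ell^2$, proving its invertibility near $\vec 0$ (e.g.\ by diagonal dominance---this is where a Gershgorin-type estimate enters, as the paper's acknowledgements hint in connection with the "complete proof" of Theorem~\ref{thm6657-0}), and then that the inverse, and hence the conditional covariance, is differentiable in $\vec\alpha$. This is the real technical content of the claim, and it is what your write-up passes over by treating $\vec W$ as though it were finite.
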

\noindent By the inverse function theorem, one can therefore find neighbourhoods~%
$V$ and~$U$ of~$\vec0$ in~$\RR^{\{-N,\ldots,N\}}$ such that
the map $\vec{\alpha} \mapsto \vec{\dot{e}}$ is a $\mcal{C}^1$-diffeomorphism from $V$ onto $U$.
In particular, for~$\vec\epsilon$ in such an $U$ we can always fix the~$\alpha_z$
of our model such that $\forall z\enskip\dot{e}_z = \1{|z|\leq N}\epsilon(z)$,
so that (\ref{f0794}) is satisfied.

Now we have to choose~$f$ and~$g$.
Morally\footnote{I say ``morally'' because nothing ensures
that the supremum~(\ref{for3878}) would actually be a maximum here.}
we have to take the functions~$f$ and~$g$ having maximal Pearson correlation.
Since the model is Gaussian, these functions will be linear,
and since the model is invariant by translation,
they will likely be invariant by translation too.
So we would like to take, formally, $f(\vec{X}) = \sum_{i\in\ZZ} X_i$
and $g(\vec{Y}) = \sum_{j\in\ZZ} Y_j$.
As such functions are not properly defined,
we will rather consider
$f[k](\vec{X}) = \sum_{i=-k}^k X_i$,
resp.\ $g[k](\vec{Y}) = \sum_{j=-k}^k Y_j$,
and then we will let~$k$ tend to infinity.

For these~$f[k]$ and~$g[k]$, define the~$V[k]^j_i$, the~$W[k]_j^i$ and the~$S[k]_{ij}$
as in the proof of Theorem~\ref{thm0119}, which are gathered into the array $\mbf{A}[k]$.
The following properties of the~$\mbf{A}[k]$
follow easily from the structure of our model:
\begin{Clm}\label{c6626}\strut
\begin{ienumerate}
\item All the~$V[k]^j_i, W[k]_j^i, S[k]_{ij}$ are bounded uniformly in~$i,j,k$.
\item\begin{itemize}\item $V[k]^j_i$ is zero as soon as $i\notin\{-k-2N,\ldots,k\}$;
\item $W[k]_j^i$ is zero as soon as $j\notin\{-k,\ldots,k\}$.\end{itemize}
\item $S[k]_{ij}$ is zero as soon as $|j-i| > N$.
\item\label{itmHOT1}\begin{itemize}\item For $-k\leq i\leq k-2N$,
$V[k]^j_i$ only depends on~$(j-i)$, even when $k$ varies. We denote its value by~$V_{(j-i)}$.
\item For $-k\leq j\leq k$,
$W[k]_j^i$ only depends on~$(j-i)$, even when $k$ varies. We denote its value by~$W_{(j-i)}$.
\item For $-k\leq i\leq k-2N$ and $-k\leq j\leq k$,
$S[k]_{ij}$ only depends on~$(j-i)$, even when $k$ varies. We denote its value by~$S_{(j-i)}$.\end{itemize}
\item\label{itmHOT2}\begin{itemize}\item $V_{(z)}$ has some constant value $v$ for $z < -N$;
\item $W_{(z)}$ has some constant value $w$ for $z > N$.
\end{itemize}
\end{ienumerate}
\end{Clm}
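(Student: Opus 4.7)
The plan is to verify the five properties one at a time by exploiting the Gaussian nature of the model together with the translation invariance of the joint law and the finite-range structure of the covariance (recall that different $X_i$ are mutually independent, different $Y_j$ are mutually independent, and $\Cov(X_i, Y_j) = \alpha_{j-i}\,\1{|j-i|\leq N}$). First I would dispose of the easy halves of~(ii): for $i > k$ the random variable $f[k] = \sum_{i'=-k}^k X_{i'}$ is $\mcal{F}_k \subset \mcal{F}_{i-1}$-measurable, hence $f[k]^j_i = 0$; symmetrically for $j > k$, $g[k]$ is $\mcal{G}_k \subset \mcal{G}_{j-1}$-measurable, giving $W[k]^i_j = 0$. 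For $j < -k$, the $\sigma$-algebra $\mcal{G}_j$ is generated by the $Y_{j'}$ with $j' \leq j < -k$, whose underlying noise $\xi_{j'}, \omega_{\ast,j'}$ lives in columns disjoint from the columns $[-k,k]$ used by $g[k]$; hence $g[k] \perp \mcal{G}_j$ and $g[k]^{\mcal{G}_j} \equiv 0$, giving $W[k]^i_j = 0$. For~(iii), Claim~\ref{c4330}\hbox{-(\ref{i4332})} gives $\dot{e}_z = 0$ for $|z| > N$, and then the inequality~(\ref{for9472a}) from the proof of Theorem~\ref{thm0119} yields $|S[k]_{ij}| \leq \epsilon(j-i)\sqrt{V[k]^{j-1}_i W[k]^{i-1}_j} = 0$ whenever $|j-i| > N$.

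The delicate half of~(ii) is to show $V[k]^j_i = 0$ for $i < -k-2N$. In the Gaussian setting this reduces to $\Cov(f[k], X_i \mid \mcal{H}) = 0$ for $\mcal{H} = \mcal{G}_j \vee \mcal{F}_{i-1}$, which I would compute by the Schur formula
\[
\Cov(f[k], X_i \mid \mcal{H}) = \Cov(f[k], X_i) - \Cov(f[k], \mcal{H})\,\Var(\mcal{H})^{-1}\,\Cov(\mcal{H}, X_i).
\]
The first term is zero since $i \notin [-k,k]$. For the second, the variable $X_i$ is coupled to $\mcal{H}$ only through the $Y_{j'}$ with $j' \in [i-N, i+N] \subset (-\infty, -k-N-1]$, while $f[k]$ is coupled to $\mcal{H}$ only through the $Y_{j'}$ with $j' \in [-k-N, k+N]$ and, \emph{a priori}, through the $X_{i''} \in [-k,k]$; but all $X_{i''} \in \mcal{F}_{i-1}$ satisfy $i'' \leq i-1 < -k-2N$, so they lie outside $[-k,k]$ and couple themselves only with $Y_{j'}$ for $j' \leq i''+N < -k-N$. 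The variables of $\mcal{H}$ therefore split into a ``low-column'' block $\{Y_{j'}: j' < -k-N\} \cup (\vec{X}_{\{i'' < i\}})$ and a ``high-column'' block $\{Y_{j'}: j' \geq -k-N\}$, these two blocks being mutually uncorrelated, so $\Var(\mcal{H})^{-1}$ is block-diagonal in the same partition and the cross bilinear form above vanishes identically. This block-separation argument is the main technical obstacle and precisely dictates the threshold $-k-2N$: for $i = -k-2N$, the two blocks would touch at $Y_{-k-N}$ and the separation would fail.

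Properties~(iv), (v), (i) then follow from translation invariance and the support constraints already proved. For~(iv), the joint law of $(\vec X, \vec Y)$ is invariant under the simultaneous shift $(i,j) \mapsto (i+1,j+1)$; combined with the support constraints from (ii), (iii), this means that for $-k \leq i \leq k-2N$ the projections defining $V[k]^j_i$ depend only on the configuration in a neighborhood of $(i,j)$ that never sees the boundaries $\pm k$, hence $V[k]^j_i$ depends only on $j-i$ and is the same function $V_{(j-i)}$ for every $k$; analogously for $W[k]^i_j$ when $-k \leq j \leq k$ and for $S[k]_{ij}$ in the appropriate interior range. For~(v), specializing (iv) to $z = j - i < -N$ means no $Y_{j'} \in \mcal{G}_j$ is correlated with $X_i$, so $\mcal{G}_j$ is irrelevant and $V_{(z)}$ reduces to the free-boundary value $\Var(E[f[k] \mid \mcal{F}_i] - E[f[k] \mid \mcal{F}_{i-1}]) = \Var(X_i) = 2N+1 =: v$; the value $w$ is obtained symmetrically for $z > N$. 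Finally for~(i), uniform boundedness is a direct consequence: each of $V[k]^j_i, W[k]^i_j, S[k]_{ij}$ is a Gaussian projection that is constant (namely $V_{(j-i)}$, $W_{(j-i)}$, $S_{(j-i)}$) on the bulk region identified in~(iv) and zero outside the support identified in~(ii), (iii); boundedness of these bulk values in turn follows from the translation-invariant Schur-complement bounds on $\Var(\mcal{H})^{-1}$, which are controlled uniformly in $k$ when $\vec\alpha$ stays in a neighborhood of the origin.
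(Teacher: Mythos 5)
The overall framework of your argument is sound, and the Schur-complement/block-decoupling computation in your second paragraph is exactly the right tool for this claim: it correctly identifies the threshold $i < -k-2N$ and shows why the separation fails at equality. The easy halves of~(ii), part~(iii), and the concrete evaluation $v = \Var(X_i) = 2N+1$ in~(v) are also fine. Since the paper states the claim without a written proof, what matters is whether your justifications hold up, and two of them do not quite.

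The real gap is in~(iv). You write that ``the projections defining $V[k]^j_i$ depend only on the configuration in a neighborhood of $(i,j)$ that never sees the boundaries $\pm k$.'' As a statement about Gaussian conditional expectations this is false: the projection $f[k]^{\mcal{G}_j\vee\mcal{F}_i}$ is nonlocal, and the individual conditional expectations genuinely depend on every $X_{i'}$ in the window $[-k,k]$. What is true is more delicate: the \emph{telescopic increment} $f[k]^j_i = f[k]^{\mcal{G}_j\vee\mcal{F}_i} - \EE[f[k]\mid\mcal{G}_j\vee\mcal{F}_{i-1}]$ is unchanged when one enlarges the window from $[-k,k]$ to $[-k-1,k+1]$, provided $-k\leq i\leq k-2N$. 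The contribution of $X_{-k-1}$ cancels because $X_{-k-1}$ is $\mcal{F}_{i-1}$-measurable (this uses $i\geq -k$); the contribution of $X_{k+1}$ vanishes because $\Cov(X_{k+1},X_i\mid\mcal{G}_j\vee\mcal{F}_{i-1})=0$, which is the \emph{same} block-decoupling argument you gave for the hard half of~(ii), now with the low block $\{Y_{j'}:j'\leq k-N\}\cup\vec X_{\{<i\}}$ and the high block $\{Y_{j'}:j'>k-N\}$ (this is what forces $i\leq k-2N$). Only then does translation invariance give dependence on $j-i$ alone. You had the right lemma in hand but did not invoke it where it was actually needed.

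Part~(i) also needs more care than ``constant on the bulk, zero outside the support.'' For $i\in(k-2N,k]$ the bound is easy once you notice that all summands $X_{i'}$ of $f[k]$ with $i'<i$ are $\mcal{F}_{i-1}$-measurable and drop out of $f[k]^j_i$, leaving at most $2N$ terms, so $V[k]^j_i\leq 2N(2N+1)$. But for $i\in[-k-2N,-k)$ this cancellation does not help ($X_i$ is not a summand, and \emph{no} summand is $\mcal{F}_{i-1}$-measurable), and the trivial bound $\Var(f[k])=(2k+1)(2N+1)$ grows with $k$. You need either an exponential-decay estimate on the entries of $\Var(\mcal{H})^{-1}$ (in the spirit of Lemma~\ref{lem4060}) to control $\Cov(f[k],X_i\mid\mcal{H})$, or the observation that once $k$ is large enough these boundary values stabilize (by the same block argument) to a finite list of $k$-independent numbers indexed by $i+k\in\{-2N,\ldots,-1\}$. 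Your last sentence gestures at ``translation-invariant Schur-complement bounds,'' which is the right direction, but it is not an argument as written.
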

\noindent By Claim~\ref{c6626}, $\mbf{A}[k]$ converges pointwise to
some compact Toeplitz array $\mbf{A}$, whose entries are the~$V_{(z)}, W_{(z)}, S_{(z)}$
introduced at Item~(\ref{itmHOT1}) of the claim,
whose values~$v$ and~$w$ are those introduced at Item~(\ref{itmHOT2}),
and whose value $s$ is $\sum_{z=-N}^{N} S_{(z)}$.
All the arrays $\mbf{A}[k]$ are obviously correct since they correspond to true functions,
so by passing to the limit $\mbf{A}$ is correct too.

Since our model is Gaussian,
all the inequalities~(\ref{for9472a}), (\ref{for9472b}) and~(\ref{for9472c})
are actually equalities for the arrays $\mbf{A}[k]$;
moreover, since the~$\dot\epsilon_z$ are nonnegative,
the~$S[k]_{ij}$ are nonnegative.
By letting $k$ tend to infinity, it follows that
all the inequalities (\ref{for5175})--(\ref{for3534}) are actually equalities for the array $\mbf{A}$,
with the~$S_{(z)}$ nonnegative.
Consequently in~(\ref{f7312}) one has $\hat\theta(z) = \theta(z)$,
and all the further inequalities are actually equalities, so that in the end
(\ref{for4760}) becomes:
\[\label{for4760=} \frac{s}{\sqrt{vw}} = \bar\epsilon .\]

Now, defining~$V[k]$, $W[k]$ and~$S[k]$
by resp.~(\ref{for0923a}), (\ref{for0923b}) and~(\ref{for5163}) for the arrays $\mbf{A}[k]$,
Claim~\ref{c6626} shows that, when~$k \longto \infty$,
$V[k] \sim 2kv$, resp.\ $W[k] \sim 2kw$, resp.\ $S[k] \sim 2ks$,
so~(\ref{for4760=}) implies that $S[k]/\sqrt{V[k]W[k]} \longto \bar\epsilon$.
But recall that $V[k]$, $W[k]$ and~$S[k]$ are the respective variances and covariance
of the functions~$f[k] \in \ldb(\vec{X})$ and~$g[k] \in \ldb(\vec{Y})$,
so by the very definition~(\ref{for3878}) of Hilbertian correlations,
\[ \{\vec{X} : \vec{Y}\} \geq \frac{S[k]}{\sqrt{V[k]W[k]}} .\]
Making $k \longto \infty$, it follows that $\{\vec{X}:\vec{Y}\} \geq \bar\epsilon$;
the converse inequality being obviously true
by (the minimal version of) Theorem~\ref{thm0119}, this proves Theorem~\ref{t6657}.
\end{proof}\endgroup

\begin{Xpl}\label{xplOptZZ}
In this example we will carry out explicit computations for a Gaussian model
close to the model presented in the proof above.
We take independent $\mcal{N}(1)$ variables $\ldots,\zeta_{-1},\zeta_0,\zeta_1,\ldots$,
$\ldots,\xi_{-1/2},\xi_{1/2},\xi_{3/2}\ldots$,
$\ldots,\omega_{-1/4},\omega_{1/4},\omega_{3/4},\ldots$,
and we set
\begin{eqnarray}
X_i &=& \zeta_i + \sqrt{\alpha} (\omega_{i-1/4} + \omega_{i+1/4}) , \\
\llap{resp.\quad} Y_j &=& \xi_j + \sqrt{\alpha} (\omega_{j-1/4} + \omega_{j+1/4})
\end{eqnarray}
for all integer $i$, resp.\ all half-integer $j$,
where $\alpha$ is some arbitrary nonnegative parameter.
We are going to show that for this system (\ref{f6444}) is actually an equality,
in accordance with the proof of Theorem~\ref{t6657}.

For half-integer $z$ denote
\[ e_z \coloneqq \{ X_i : Y_{i+z} \}%
_{\mathlarger{(}\vec{X}_{\{i'<i\}},\vec{Y}_{\{j'<i+z\}}\mathlarger{)}} ,\]
where the choice of~$i$ does not matter by translation invariance.
Clearly $e_{-z} = e_z$ for all~$z$ and $e_z = 0$ for $|z| > 1/2$,
so to know all the~$e_z$ the only nontrivial computation is computing $e_{1/2}$.
Let us perform it.

Since everything is Gaussian, by Theorem~\ref{pro1857}, $e_{1/2}$ is the value,
under the law $\Pr[\Bcdot|\ab \vec{X}_{\{i<0\}},\vec{Y}_{\{j<1/2\}} \equiv0]$, of
\[\label{forVespr} |\EE[X_0Y_{1/2}]| \mathbin{\big/} \ecty(X_0)\ecty(Y_{1/2}) .\]
Under the law $\Pr[\Bcdot|\vec{X}_{\{i<0\}},\vec{Y}_{\{j<1/2\}} \equiv0]$,
it is clear that $\zeta_0,\ab \omega_{1/4},\ab \xi_{1/2},\ab \omega_{3/4},\ldots$
have exactly the same (joint) law as under~$\Pr$,
and that $\omega_{-1/4}$ is still independent of these (joint) variables,
though its variance shall have diminished.
So we need only compute
\[ v \coloneqq \Var\big(\omega_{-1/4}\big|\vec{X}_{\{i<0\}},\vec{Y}_{\{j<1/2\}} \equiv0\big) .\]

Denote $\vec{L}_{\mathrm r} \coloneqq (\ldots,X_{-2},Y_{-3/2},X_{-1},Y_{-1/2})$,
resp.\ $\vec{L}_{\mathrm l} \coloneqq (\ldots,X_{-2},Y_{-3/2},X_{-1})$.
We write that (formally)
\[ \dx\Pr\big[ \vec{L}_{\mathrm r} \equiv0 \ \text{and} \ \omega_{-1/4} = x \big]
\propto e^{-x^2/2v} \dx{x}, \]
and also
$\dx\Pr[ \vec{L}_{\mathrm l} \equiv0 \ \text{and} \ \omega_{-3/4} = y]
\propto e^{-y^2/2v} \dx{y}$
by translation invariance.
But under~$\Pr[\Bcdot| \ab {\vec{L}_{\mathrm l} \equiv0} \enskip \text{and} \enskip {\omega_{-3/4} = y}]$,
the law of~$(\xi_{-1/2},\omega_{1/4})$ is the same as under~$\Pr$, so one has:
\begin{multline}
e^{-x^2/2v} \propto
\dx\Pr\big[ \vec{L}_{\mathrm r} \equiv0 \ \text{and} \ \omega_{-1\!/\!4} = x \big] \\
= \int_y \dx{y} \,
\dx\Pr\big[ \vec{L}_{\mathrm l} \equiv0 \ \text{and} \ \omega_{-3\!/\!4} = y \big]
\dx\Pr\big[ Y_{-1/2} = 0 \ \text{and} \ \omega_{-1\!/\!4} = x \big|
\vec{L}_{\mathrm l} \equiv0 \ \text{and} \ \omega_{-3\!/\!4} = y \big] \\
\propto \int_y \dx\Pr\big[ Y_{-1/2} = 0 \ \text{and} \ \omega_{-1\!/\!4} = x \big|
\vec{L}_{\mathrm l} \equiv0 \ \text{and} \ \omega_{-3\!/\!4} = y \big] e^{-y^2/2v} \,\dx{y} \\
= \int_y
\dx\Pr\big[ \xi_{-1\!/\!2} = -\sqrt{\alpha}(x+y) \ \text{and} \ \omega_{-1\!/\!4} = x \big|
\vec{L}_{\mathrm l} \equiv0 \ \text{and} \ \omega_{-3\!/\!4} = y \big]
e^{-y^2/2v} \,\dx{y} \\
\propto \int_y e^{-\alpha(x+y)^2/2} e^{-x^2/2} e^{-y^2/2v} \,\dx{y}
\propto \exp \Big\{ \Big( 1+\alpha-\frac{\alpha^2}{\alpha+1/v} \Big) \frac{x^2}{2} \Big\},
\end{multline}
so that $v$ must satisfy:
\[ 1+\alpha-\frac{\alpha^2}{\alpha+1/v} = \frac{1}{v} ,\]
whose only nonnegative solution is
\[ v = \frac{\sqrt{1+4\alpha}-1}{2\alpha} .\]

So one has $\ecty(X_0|\vec{L}_{\mathrm l} \equiv0) = \sqrt{1+\alpha v+\alpha}
= \big(\sqrt{1+4\alpha}+1\big)/2$,
$\ecty(Y_{1/2}|\vec{L}_{\mathrm l} \equiv0) = \sqrt{1+2\alpha}$
and $\EE[X_0Y_{1/2}|\vec{L}_{\mathrm l} \equiv0] = \alpha$,
so that in the end (\ref{forVespr}) yields:
\[\label{f3260} e_{1/2} = \frac{\sqrt{1+4\alpha}-1}{2\sqrt{1+2\alpha}} .\]
With this value, Theorem~\ref{thm0119} states that one has necessarily
\[\label{forMncz} \{ \vec{X} : \vec{Y} \} \leq \sin (2\arcsin e_{1/2}) \footnotemark
= 2 e_{1/2} \sqrt{1-e_{1/2}^2} = \frac{2\alpha}{1+2\alpha} .\]
\footnotetext{As here one always has $e_{1/2} \leq 1/\sqrt{2}$,
we can drop the ``$\wedge\frac{\pi}{2}$'' of Formula~(\ref{f6444}).}

We show that (\ref{forMncz}) is actually an equality:
take indeed~$f[k](\vec{X}) \coloneqq \sum_{i=1}^{k} X_k$,
resp.\ $g[k](\vec{Y}) \coloneqq \sum_{j=1/2}^{k-1/2} Y_k$,
then $\Var(f[k]) = \Var(g[k]) = k(1+2\alpha)$
and $\EE[fg] = (2k-1)\alpha$,
so that
\[\label{f4151} \{ \vec{X} : \vec{Y} \} \geq \frac{(2k-1)\alpha}{k(1+2\alpha)} 
\stackrel{k\longto\infty}{\longto} \frac{2\alpha}{1+2\alpha} .\]
\end{Xpl}

\begin{Rmk}\label{rmkPhTr}
One can formally set $\alpha = +\infty$ in the previous example,
which actually means that one takes $X_i = \omega_{i-1/4} + \omega_{i+1/4}$,
resp.\ $Y_j = \omega_{j-1/4} + \omega_{j+1/4}$.
In this case, both Formulas~(\ref{f3260}) and~(\ref{f4151}) `pass to the limit',
yielding $e_{1/2} = 1/\sqrt{2}$ and $\{\vec{X}:\vec{Y}\}=1$.
This shows that it is possible indeed that the~$e_z$ have `mild' values
and that yet $\vec{X}$ and~$\vec{Y}$ are fully correlated.
In other words, the ``$\wedge\frac{\pi}{2}$'' in~(\ref{f6444})
is not an `artifact' of the proof of Theorem~\ref{thm0119}%
\footnote{\label{ftn6129}%
On the other hand, it is possible that the ``$\wedge1$'' in~(\ref{for3009})
was such an artifact, since Theorem~\ref{thm6413} is not optimal.},
but the expression of a real `phase transition' phenomenon%
\footnote{There exist indeed situations going
`beyond the phase transition', i.e.\ for which $\sum_{z\in\ZZ} \arcsin(e_z) \ab > \pi/2$,
though this is not the case for Example~\ref{xplOptZZ}.}.
Such a phase transition did not occur for the simple tensorization formula~(\ref{for5719}),
which shows that double tensorization in intrinsically more complicated
than simple tensorization.
\end{Rmk}

\subsection{Asymptotic optimality}\label{parAsymptoticOptimality}

In the previous subsection we saw that
(the minimal versions of) Theorems~\ref{thm5750}
and~\ref{thm0119} were optimal, while Theorem~\ref{thm6413} was not.
However it turns out that that result is nevertheless `asymptotically optimal',
in the sense that the bound it gives is equivalent to the optimal bound
when the correlations between the variables become weak.
Here is a precise statement:
\begin{Thm}
Let $I = \{1,\ldots,N\}$ and $J=\{1,\ldots,M\}$ be finite sets, and define
the function~$\mathit{Opt} \colon [0,1]^{I\times J} \to [0,1]$ by
\[\label{f5516}
\mathit{Opt} \big( \vec\epsilon_{I\times J} \big)
\coloneqq \sup \Big\{ \big\{ \vec{X}_I : \vec{Y}_J \big\} \ ;\ \big(\forall (i,j) \in I\times J\big) \,
\big(\{ X_i:Y_j \}_{\mathlarger{(}\vec{X}_{\{i'<i\}},\vec{Y}_{\{j'<j\}}\mathlarger{)}}
\leq \epsilon_{ij}\big) \Big\} ;\]
then, when~$\vec\epsilon_{I\times J} \longto \vec{0}$,
one has:
\[\label{f7160}
\mathit{Opt} \big(\vec\epsilon\big) \sim \VERT \boldepsilon \VERT .\]
\end{Thm}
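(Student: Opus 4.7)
The upper bound $\mathit{Opt}(\vec\epsilon) \leq \VERT\boldepsilon\VERT$ is nothing but Theorem~\ref{thm6413} (with the minimal hypotheses of \S~\ref{parMinimalHypotheses}), since the ``$\wedge1$'' becomes irrelevant as $\vec\epsilon\longto\vec{0}$. So the content lies in the matching lower bound: I would exhibit, for each $\vec\epsilon$ sufficiently close to~$\vec 0$, an admissible Gaussian model saturating it. The guiding idea is that in the Gaussian/linear world the subjective and unconditioned correlations essentially coincide at first order, so tensorization is sharp infinitesimally.

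Concretely, I would take $(\vec X,\vec Y)$ to be an $(N+M)$-dimensional Gaussian vector with covariance matrix $\begin{pmatrix} \mathbf{I}_N & C \\ \T{C} & \mathbf{I}_M \end{pmatrix}$ for an $N\times M$ matrix $C=(c_{ij})$ to be chosen. By Theorem~\ref{pro1857}, $\{\vec X_I:\vec Y_J\}=\VERT C\VERT$. The key computation is the expansion of the elementary subjective correlations
\[ \phi_{ij}(C) \coloneqq \{X_i:Y_j\}_{\mathlarger{(}\vec X_{\{i'<i\}},\vec Y_{\{j'<j\}}\mathlarger{)}} \]
as functions of~$C$ near $C=0$. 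Conditionally on $Z=(\vec X_{\{i'<i\}},\vec Y_{\{j'<j\}})$, the pair $(X_i,Y_j)$ is Gaussian with covariance given by the Schur complement $\mathrm{Cov}(W)-\mathrm{Cov}(W,Z)\mathrm{Cov}(Z)^{-1}\mathrm{Cov}(Z,W)$. Since $\mathrm{Cov}(Z)=\mathbf{I}+O(\|C\|)$, hence $\mathrm{Cov}(Z)^{-1}=\mathbf{I}+O(\|C\|)$, and since $\mathrm{Cov}(W,Z)$ has entries of order $\|C\|$ with a block-antidiagonal structure making $\mathrm{Cov}(W,Z)\mathrm{Cov}(W,Z)^{\!\T{}}$ diagonal to leading order, one gets, via Theorem~\ref{pro1857} applied to the conditioned Gaussian pair,
\[ \phi_{ij}(C) \;=\; c_{ij} + O(\|C\|^3) \qquad \text{as } C\to 0, \]
uniformly in $(i,j)$.

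Hence the map $\Phi\colon C\mapsto(\phi_{ij}(C))_{(i,j)\in I\times J}$ is $\mathcal{C}^1$ (indeed analytic) on a neighbourhood of~$0$ in $\RR^{I\times J}$, with $D\Phi(0)=\mathbf{I}$. By the inverse function theorem, $\Phi$ admits a local inverse $\Psi$ defined on a neighbourhood of $\vec 0$, with $\Psi(\vec\epsilon)=\vec\epsilon+O(\|\vec\epsilon\|^3)$; thus, choosing $C=\Psi(\vec\epsilon)$ yields a Gaussian model whose elementary subjective correlations are \emph{exactly} $\epsilon_{ij}$, and therefore fall within the admissible class of the supremum~(\ref{f5516}). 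Consequently
\[ \mathit{Opt}(\vec\epsilon) \;\geq\; \VERT\Psi(\vec\epsilon)\VERT \;=\; \VERT\vec\epsilon\VERT\bigl(1+O(\|\vec\epsilon\|^2)\bigr), \]
using that the operator norm on $N\times M$ matrices is $1$-Lipschitz w.r.t.\ the entrywise sup norm times $\sqrt{NM}$ (or any convenient equivalent norm). Combining this with the upper bound $\mathit{Opt}(\vec\epsilon)\leq\VERT\vec\epsilon\VERT$ produces the desired equivalence~(\ref{f7160}).

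The main obstacle is the computation and uniformity of the expansion $\phi_{ij}(C)=c_{ij}+O(\|C\|^3)$: one has to verify that the Schur complement calculation goes through cleanly for \emph{every} index pair $(i,j)$ and that the $O$ is uniform in $(i,j)$ (which it is, because $N$ and $M$ are fixed). Everything else—application of Theorem~\ref{pro1857} in both the global and conditional settings, and the inverse function theorem—is routine once this expansion is in hand. Note moreover that this same argument explains why Theorem~\ref{thm6413} cannot be sharpened below $\VERT\boldepsilon\VERT$ in general: near $\vec 0$, Gaussian models realize the bound, and the loss of optimality observed for larger $\vec\epsilon$ is essentially a higher-order phenomenon linked to the phase transition highlighted in Remark~\ref{rmkPhTr}.
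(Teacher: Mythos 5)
Your route and the paper's are close cousins: both build an admissible Gaussian model and invert the map from parameters to elementary subjective correlations near $\vec 0$, the difference lying in the parameterization. The paper writes $X_i=\sum_j\omega_{ij}$, $Y_j=\xi_j+\sum_i\alpha_{ij}\omega_{ij}$ for latent i.i.d.\ $\mcal{N}(1)$ noises, shows the Jacobian of $\vec\alpha\mapsto\vec{\dot e}$ at $\vec 0$ equals $\frac{1}{\sqrt M}\mathbf{I}$, inverts, and then lower-bounds $\{\vec X:\vec Y\}$ by optimizing the Pearson correlation of linear test functions $f=\sum_i\phi_iX_i$, $g=\sum_j\psi_jY_j$ over unit $(\phi,\psi)$. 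You parameterize directly by the normalized cross-covariance block $C$, which lets Theorem~\ref{pro1857} give $\{\vec X:\vec Y\}=\VERT C\VERT$ in one stroke and removes the test-function step; your Schur-complement expansion $c_{ij}+O(\|C\|^3)$ plays the role of the paper's Jacobian computation. The skeleton (Gaussian model plus inverse function theorem) is identical, so this is essentially the same proof in lighter clothing, and arguably a cleaner write-up. One detail needs patching: what you call $\phi_{ij}(C)$ is the \emph{absolute value} of a conditional Pearson correlation, hence not $\mcal{C}^1$ (and certainly not analytic) across $c_{ij}=0$, so the inverse function theorem does not apply to your $\Phi$ as written. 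The paper handles this by introducing the signed coefficient $\dot e_{ij}$, applying the inverse function theorem to $\vec\alpha\mapsto\vec{\dot e}$, and only then restricting to $\vec\epsilon\in(\RR_+)^{I\times J}$ where $e_{ij}=|\dot e_{ij}|=\dot e_{ij}$; the same fix works with your Schur-complement formula by dropping the absolute value.
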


\begin{Rmk}
In the same way, the simple bound~(\ref{for5528}) of Proposition~\ref{pro5527}
is asymptotically equivalent to the optimal bound~(\ref{for5719}) of Theorem~\ref{thm5750}.
\end{Rmk}

\begin{proof}
Take $(M+NM)$ i.i.d.\ $\mcal{N}(1)$ variables
$\xi_1,\ldots,\xi_M,\omega_{11},\ldots,\omega_{NM}$.
For~$(\!(\alpha_{ij})\!)_{i,j} \in \RR^{N\times M}$,
set
\[ \left\{ \begin{array}{rcl} X_i &=& \sum_j \omega_{ij} ; \\
Y_j &=& \xi_j + \sum_i \alpha_{ij} \omega_{ij} .\end{array} \right.\]

Denote
\[ e_{ij} \coloneqq
\{ X_i:Y_j \}_{\mathlarger{(}\vec{X}_{\{i'<i\}},\vec{Y}_{\{j'<j\}}\mathlarger{)}} ,\]
and define $\dot{e}_{ij}$ as the Pearson correlation coefficient
of~$X_i$ and~$Y_j$ under the law
$\Pr[\Bcdot|\ab \vec{X}_{\{i'<i\}}, \ab {\vec{Y}_{\{j'<j\}}\equiv 0}]$.
Then, as in the proof of Theorem~\ref{t6657}, one has $e_{ij} = |\dot{e}_{ij}|$, and the function
$\vec\alpha \mapsto \vec{\dot{e}}$ is $\mcal{C}^1$ around~$\vec{0}$, with
\[\label{f8236} \vec{\dot{e}} = \frac{1}{\sqrt{M}}\,\vec\alpha + O(\|\vec\alpha\|^2)
\qquad \text{when $\vec\alpha \longto \vec0$.} \]
By the inverse function theorem,
$\vec\alpha \mapsto \vec{\dot{e}}$ is therefore a diffeomorphism
from some neighbourhood~$V$ of~$\vec{0}$ onto some neighbourhood $U$ of~$\vec{0}$,
whose inverse function is such that
\[ \vec{\alpha} = \sqrt{M}\,\vec{\dot{e}} + O(\|\vec{\dot{e}}\|^2) \qquad
\text{when $\vec{\dot{e}} \longto \vec0$.} \]

Now let~$\vec\epsilon \in (\RR_+)^{N\times M} \cap U$.
Take $\vec\alpha\in V$ such that $\vec{\dot{e}}(\vec\alpha) = \vec\epsilon$,
so that the condition of~(\ref{f5516}) is satisfied.
For~$\phi\in\RR^N,\ab {\psi\in\RR^M}$ with $\|\phi\|,\|\psi\| = 1$, set
\[\left\{\begin{array}{rcl}
f(\vec{X}) &\coloneqq& \sum_i \phi_i X_i ; \\
g(\vec{Y}) &\coloneqq& \sum_j \psi_j Y_j.
\end{array}\right.\]
One has
\[ \Var(f) = M ,\]
\[ \Var(g) = 1 + O(\|\vec\alpha\|^2) = 1 + O(\|\vec\epsilon\|^2) \]
and
\[ \EE[fg] = \sum_{i,j} \alpha_{ij}\phi_i\psi_j
= \langle \phi, \boldsymbol\epsilon \psi \rangle + O(\|\vec\epsilon\|^2) ,\]
where the constants implicit in the ``$O(\|\vec\epsilon\|^2)$'' are uniform
in~$(\phi,\psi)$.
So one has
\[ \mathit{Opt}(\vec{\epsilon}) \geq \{ \vec{X} : \vec{Y} \}
\geq \frac{|\EE[fg]|}{\ecty(f)\ecty(g)}
= |\langle \phi, \boldsymbol\epsilon \psi \rangle| + O(\|\vec\epsilon\|^2) ,\]
whence after taking supremum over $(\phi,\psi)$:
\[ \mathit{Opt}(\vec{\epsilon}) \geq \VERT \boldsymbol\epsilon \VERT + O(\|\vec\epsilon\|^2)
\stackrel{\vec\epsilon\longto\vec0}{\sim} \VERT \boldsymbol\epsilon \VERT .\]
Since on the other hand $\mathit{Opt}(\vec{\epsilon}) \leq \VERT\boldsymbol\epsilon\VERT$
by Theorem~\ref{thm6413}, the proposition follows.
\end{proof}

\begin{Rmk}
If we state decorrelation hypotheses
w.r.t.\ the \emph{whole} $\sigma$-metalgebra of the system (denoted by~$*$),
no quantity analogous to~$\dot{e}_{ij}$ shall exist any more;
then one can only write, denoting $e'_{ij} \coloneqq \{X_i:Y_j\}_{*}$:
\[ e'_{ij} (\vec\alpha) = \frac{|\alpha_{ij}|}{\sqrt{M}} + O(\|\vec\alpha\|^2) .\]
So, to see how the correlations depend on the parameters,
we have to study the map $\vec\alpha\mapsto\vec{e}\,'$,
which is approximated by a homothety \emph{only on the cone $\RR^{N\times M}_+$}
—and which moreover is no better than continuous here.
So we shall replace the inverse function theorem by an alternative technique,
which will yield the slightly weaker theorem stated just below.
\end{Rmk}
\begin{Thm}
Define
\[ \mathit{Opt}' \big( (\epsilon_{ij})_{(i,j)\in I\times J} \big)
\coloneqq \sup \Big\{ \big\{ \vec{X}_I : \vec{Y}_J \big\} \ ;\ \big(\forall (i,j) \in I\times J\big) \,
\big(\{ X_i:Y_j \}_* \leq \epsilon_{ij}\big) \Big\} ;\]
then for any closed cone $C$ of~$\RR^{N\times M}$
contained in~$(\RR_+^*)^{N\times M} \cup \{0\}$,
on~$C$, one has
\[ \mathit{Opt}'(\vec{\epsilon}) \stackrel{\vec\epsilon\longto\vec0}\sim
\VERT \boldepsilon \VERT .\]
\end{Thm}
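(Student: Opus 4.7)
The plan splits into the easy upper bound and the harder lower bound.

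For the upper bound, I observe that the $*$-hypothesis $\{X_i:Y_j\}_* \leq \epsilon_{ij}$ is \emph{stronger} than the hypothesis ``${\{X_i:Y_j\}}_{\mathlarger{(}\vec{X}_{\{i'<i\}},\vec{Y}_{\{j'<j\}}\mathlarger{)}} \leq \epsilon_{ij}$'' used in the definition of $\mathit{Opt}$, because the relevant $\sigma$-metalgebra is larger. Hence fewer models satisfy the constraint, and $\mathit{Opt}'(\vec{\epsilon}) \leq \mathit{Opt}(\vec{\epsilon}) \leq \VERT\boldepsilon\VERT$ by (the minimal-hypothesis version of) Theorem~\ref{thm6413}. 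So I only need to prove the matching asymptotic lower bound $\mathit{Opt}'(\vec\epsilon) \geq \VERT\boldepsilon\VERT - o(\VERT\boldepsilon\VERT)$ as $\vec\epsilon \longto \vec0$ along~$C$.

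For the lower bound, I reuse the Gaussian construction of the previous theorem: take i.i.d.\ $\mcal{N}(1)$ variables~$\xi_j$ and~$\omega_{ij}$, set $X_i = \sum_j \omega_{ij}$ and $Y_j = \xi_j + \sum_i \alpha_{ij}\omega_{ij}$. Since the whole system is jointly Gaussian, for any subset $\Xi$ of the generating variables the conditional law is still Gaussian with a covariance that does not depend on the conditioning values; Theorem~\ref{pro1857} then makes $\{X_i:Y_j\}_{\sigma(\vec Z_\Xi)}$ equal to the absolute Pearson correlation in that conditional law. Taking the supremum over all such $\Xi$ defines $e'_{ij}(\vec\alpha) = \{X_i:Y_j\}_*$; by the expansion recalled just before the theorem,
\[ e'_{ij}(\vec\alpha) = \frac{|\alpha_{ij}|}{\sqrt{M}} + O(\|\vec\alpha\|^2) \qquad \text{as } \vec\alpha\to\vec0, \]
uniformly in $(i,j)$, with the $O(\cdot)$ term Lipschitz on any compact neighbourhood of~$\vec0$.

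Now, given $\vec\epsilon \in C$ small, write $\vec\epsilon = t\vec u$ with $t = \|\vec\epsilon\|$ and $\vec u \in C \cap S^{NM-1}$. Because $C\setminus\{0\} \subset (\RR_+^*)^{N\times M}$ and $C\cap S^{NM-1}$ is compact, there exists $\kappa>0$ (depending only on~$C$) such that $\epsilon_{ij} \geq \kappa t$ for every~$(i,j)$. The natural guess $\alpha_{ij} = \sqrt M\,\epsilon_{ij}$ gives $e'_{ij}(\vec\alpha) = \epsilon_{ij} + O(t^2)$, which can slightly \emph{exceed} $\epsilon_{ij}$; this is the main obstacle, since $\vec\alpha \mapsto \vec{e}'$ is only Lipschitz (not $\mcal{C}^1$), so the inverse function theorem is unavailable. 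I fix it by a linear shrinkage: set $\tilde\alpha_{ij} = (1-\delta)\sqrt M\,\epsilon_{ij}$ with $\delta = C_0 t$ for a constant $C_0$ depending only on $C$ and the Lipschitz estimate above. Then
\[ e'_{ij}(\vec{\tilde\alpha}) \leq (1-\delta)\epsilon_{ij} + C_1 t^2 \leq \epsilon_{ij} - (C_0\kappa - C_1)t^2 \leq \epsilon_{ij} \]
once $C_0$ is chosen large enough, so the model satisfies the $\mathit{Opt}'$ constraint.

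It remains to lower-bound $\{\vec X:\vec Y\}$ for this model. Taking $f = \langle\phi,\vec X\rangle$, $g = \langle\psi,\vec Y\rangle$ with unit vectors $\phi,\psi$, as in the proof of the $\mathit{Opt}$-theorem, $\Var(f) = M$, $\Var(g) = 1 + O(\|\vec{\tilde\alpha}\|^2)$, $\EE[fg] = \sum_{ij}\tilde\alpha_{ij}\phi_i\psi_j = (1-\delta)\sqrt M\,\langle\phi,\boldepsilon\psi\rangle$, so
\[ \{\vec X:\vec Y\} \geq \frac{|\EE[fg]|}{\ecty(f)\ecty(g)} = (1-\delta)|\langle\phi,\boldepsilon\psi\rangle| + O(t^2), \]
uniformly in $(\phi,\psi)$. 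Taking sup over $(\phi,\psi)$ yields $\mathit{Opt}'(\vec\epsilon) \geq (1-\delta)\VERT\boldepsilon\VERT + O(t^2) = \VERT\boldepsilon\VERT + O(t^2)$. Finally, $\VERT\boldepsilon\VERT \geq \max_{ij}\epsilon_{ij} \geq \kappa t$, so $O(t^2) = o(\VERT\boldepsilon\VERT)$, and combined with the upper bound this gives $\mathit{Opt}'(\vec\epsilon) \sim \VERT\boldepsilon\VERT$ as claimed. The hypothesis that $C$ avoids the boundary $\partial (\RR_+)^{N\times M}$ is used exactly once, to guarantee the lower bound $\epsilon_{ij} \geq \kappa t$ that absorbs the quadratic error.
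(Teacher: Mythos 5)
Your argument is correct and, more to the point, it supplies a proof that the paper itself omits: after the remark explaining that $\vec\alpha\mapsto\vec{e}\,'$ is only approximated by a homothety on the positive cone and that the inverse function theorem is unavailable, the paper states the theorem without writing out the ``alternative technique.'' Your linear-shrinkage argument is exactly that technique made precise: parametrize by $\vec\epsilon = t\vec u$, use compactness of $C\cap S^{NM-1} \subset (\RR_+^*)^{N\times M}$ to extract a uniform lower bound $\epsilon_{ij}\geq\kappa t$, and then absorb the quadratic error $O(t^2)$ by shrinking $\vec\alpha$ by a factor $1-C_0 t$. Both halves of the estimate are sound: the upper bound from monotonicity of the constraint set in the $\sigma$-metalgebra plus Theorem~\ref{thm6413}, and the lower bound from $\VERT\boldepsilon\VERT\geq\max_{ij}\epsilon_{ij}\geq\kappa t$, which turns $O(t^2)$ into $o(\VERT\boldepsilon\VERT)$.

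Two small observations. First, you claim the remainder in $e'_{ij}(\vec\alpha)=|\alpha_{ij}|/\sqrt M + O(\|\vec\alpha\|^2)$ is Lipschitz on compact neighbourhoods, which is slightly stronger than the paper's ``no better than continuous''; your claim is in fact correct (in the Gaussian model $e'_{ij}$ is a finite maximum of $\mcal{C}^1$ functions over the $2^{N+M}$ conditioning choices, hence locally Lipschitz), but the proof uses only the pointwise $O(\|\vec\alpha\|^2)$ bound, so the Lipschitz assertion is an inessential aside. Second, the final sentence correctly isolates where the cone hypothesis is used — to guarantee $\epsilon_{ij}\geq\kappa t$ so the shrinkage $\delta=C_0 t$ can dominate the quadratic error $C_1 t^2$; if $C$ touched the boundary $\partial(\RR_+)^{N\times M}$, some $\epsilon_{ij}$ could be $o(t)$ and the inequality $\delta\epsilon_{ij}\geq C_1 t^2$ would fail, which is precisely why the theorem is stated for such a cone.
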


\section{Machinery for using the tensorization theorems}%
\label{parMachineryForUsingTheTensorizationTheorems}

Up to now we stated the tensorization theorems in a rather `theoretical' form.
To apply these results to `concrete' situations, some additional techniques may be needed.
This section gives such techniques, which we will use later for the applications
of Chapter~\ref{parApplications}.

\begin{NOTA}
In this section, all the probability systems considered
will be endowed with their natural $\sigma$-metalgebras,
cf.\ Definition~\ref{def3128}.
To alleviate notation, I will give no names to these $\sigma$-metalgebras,
but will plainly denote ${\{X:Y\}}_*$ to mean
``the subjective decorrelation between~$X$ and~$Y$
seen from the natural $\sigma$-metalgebra of the underlying system''.
\end{NOTA}

\subsection{The `doubling-up' technique}\label{parTheDoublingUpTechnique}

\begin{Def}
For~$I$ a set and~$\mcal{R}$ a binary relation on~$I$,
$J_1,J_2\subset I$, we will say that ``$J_2$ is $\mcal{R}$-disjoint to~$J_1$''
if $(i,j)\in J_1\times J_2\ \Rightarrow\ i\notR j$.
\end{Def}

\begin{Lem}[`Doubling-up' lemma]\label{lem0306}
Let~$I$ be a (countable) set and let~$(X_i)_{i\in I}$ be random variables
such that for all~$i,j\in I$, $\{X_i:X_j\}_* \leq \epsilon_{ij}$
for a certain family of~$\epsilon_{ij} \in [0,1]$.

Let~$\mcal{R}$ be a binary relation on~$I$;
for~$i,j\in I$, denote $\epsilon^{\mcal{R}}_{ij} \coloneqq \1{i\notR j}\epsilon_{ij}$.

Define $\mbf{I} = I_1 \uplus I_2$ to be a disjoint union of two copies of~$I$;
denote by~$(i_1)_{i\in I}$, resp.~$(j_2)_{j\in I}$, the elements of~$I_1$, resp.~$I_2$.
Assume that the following holds for a certain $\epsilon\in[0,1]$:
``if $(Y_{i_{\kappa}})_{i_{\kappa}\in\mbf{I}}$ are random variables such that
$\forall i, j \in I\enskip\{Y_{i_1}:Y_{j_2}\}_{*}
\leq \epsilon^{\mcal{R}}_{ij}$, then $\{\vec{Y}_{I_1}:\vec{Y}_{I_2}\} \leq \epsilon$''.

Then for all~$J_1,J_2\subset I$ such that $J_2$ is $\mcal{R}$-disjoint to~$J_1$,
$\{\vec{X}_{J_1}:\vec{X}_{J_2}\}_{*} \leq \epsilon$.
\end{Lem}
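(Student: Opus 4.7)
The plan is to embed the problem into the framework of the hypothesis by constructing a suitable doubled $Y$-system on an enlarged probability space (or just on the original one). The natural candidate is to set, on the original space, $Y_{i_1} \coloneqq X_i$ for $i \in J_1$, $Y_{j_2} \coloneqq X_j$ for $j \in J_2$, and $Y_{i_\kappa} \coloneqq 0$ (or any constant) for all other indices $i_\kappa \in \mbf{I}$.

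First I would verify that this $Y$-system meets the hypothesis, i.e.\ ${\{Y_{i_1}:Y_{j_2}\}_*} \leq \epsilon^{\mcal{R}}_{ij}$ for every $(i,j) \in I \times I$. The easy cases: whenever $i \notin J_1$ or $j \notin J_2$, one of the two variables is constant, hence ${\{Y_{i_1}:Y_{j_2}\}_*} = 0$, which is clearly $\leq \epsilon^{\mcal{R}}_{ij}$. The substantive case is $i \in J_1$, $j \in J_2$: here $\mcal{R}$-disjointness gives $i \notR j$, so $\epsilon^{\mcal{R}}_{ij} = \epsilon_{ij}$; and the natural $\sigma$-metalgebra of the $Y$-system is (up to trivial algebras) the metalgebra spanned by the $X_k, k \in J_1 \cup J_2$, which is contained in the natural $\sigma$-metalgebra $\mcal{M}$ of $\vec{X}_I$. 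Since subjective correlation is monotone in the reference metalgebra, one concludes ${\{Y_{i_1}:Y_{j_2}\}_*} \leq {\{X_i:X_j\}_{\mcal{M}}} \leq \epsilon_{ij}$.

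Next, invoking the hypothesis gives $\{\vec{Y}_{I_1} : \vec{Y}_{I_2}\} \leq \epsilon$. Since the constant components contribute nothing to the generated $\sigma$-algebras, $\sigma(\vec{Y}_{I_1}) = \sigma(\vec{X}_{J_1})$ and $\sigma(\vec{Y}_{I_2}) = \sigma(\vec{X}_{J_2})$, so this yields the objective bound $\{\vec{X}_{J_1}:\vec{X}_{J_2}\} \leq \epsilon$. To upgrade to the subjective version ${\{\vec{X}_{J_1}:\vec{X}_{J_2}\}_*} \leq \epsilon$, I would replay the whole argument under any conditioning $\Pr[\Bcdot|\vec{X}_K = \vec{x}_K]$ with $K \subset I$: the hypothesis ${\{X_i:X_j\}_*} \leq \epsilon_{ij}$ is preserved under such further conditioning, so the construction and the preceding two steps go through verbatim to give $\{\vec{X}_{J_1}:\vec{X}_{J_2}\} \leq \epsilon$ under each conditional law; taking the supremum over $K$ and $\vec{x}_K$ then yields the subjective bound.

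The main obstacle is not really computational but notational: keeping straight the three distinct $\sigma$-metalgebras in play (the one of the original $X$-system, the one of the constructed $Y$-system, and the family of further conditionings used for the subjective upgrade) and checking at each step that the inequalities go the right way. Once this bookkeeping is in place, the rest of the argument is essentially tautological — the power of the lemma lies entirely in having packaged a useful doubling recipe so that any tensorization-type result (such as Theorems~\ref{thm6413} or~\ref{thm0119}) applied to a system indexed by $\mbf{I}$ with decorrelation data $(\epsilon^{\mcal{R}}_{ij})$ automatically yields a decorrelation bound between $\mcal{R}$-disjoint bunches of variables in the original $X$-system.
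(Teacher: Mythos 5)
Your proof is correct and mirrors the paper's own argument exactly: the same construction of the doubled $Y$-system by filling the irrelevant slots with constants (the paper uses a cemetery point $\partial$ rather than $0$, but as you note this is immaterial), the same verification of the hypothesis via monotonicity of subjective correlation in the reference $\sigma$-metalgebra, and the same reduction of the subjective conclusion to conditional laws, which the paper merely gestures at by citing \S~\ref{parSubjectiveResults}. Nothing to add.
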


\begin{Rmk}
The interest of Lemma~\ref{lem0306} is that, by proving
\emph{one} tensorization result on ${\{\vec{Y}_{I_1}:\vec{Y}_{I_2}\}}$,
one gets tensorization results
on \emph{all} the~$\{\vec{X}_{J_1}:\vec{X}_{J_2}\}$
for $J_2$ $\mcal{R}$-disjoint to~$J_1$.
\end{Rmk}

\begin{Xpl}\label{xpl1201}\strut
\begin{enumerate}
\item If you take for~$\mcal{R}$ the equality relation,
then Lemma~\ref{lem0306} gives a decorrelation result
for all disjoint~$J_1$ and~$J_2$.
\item\label{itm1202} If $I$ is equipped with a distance $\mathit{dist}$
and if you take $(i\mcal{R}j) \, \Leftrightarrow \, {\big(\mathit{dist}(i,j)<d_1\big)}$,
then you get a decorrelation result for all~$J_1$ and~$J_2$
such that $\mathit{dist}(J_1,J_2) {\geq d_1}$.
\end{enumerate}
\end{Xpl}

\begin{proof}
Assume that the hypotheses of the lemma hold
and let~$J_1,J_2\subset I$ with~$J_2$ $\mcal{R}$-disjoint to~$J_1$.
For~$i_{\kappa} \in \mbf{I}$, define
\[ Y_{i_{\kappa}} = \begin{cases} X_i &
\text{if\ \ $(\kappa=1\enskip\text{and}\enskip i\in J_1)$\ \ or\ \ $(\kappa=2\enskip\text{and}\enskip i\in J_2)$;} \\
\partial & \text{otherwise,} \end{cases} \]
for~$\partial$ some cemetery point in the range of none of the~$X_i$.
Since a constant variable is always independent of any variable,
the hypothesis ``$\{X_i:X_j\}_* \leq \epsilon_{ij}$'' for all~$i,j \in I$
implies that ${\{Y_{i_1}:Y_{j_2}\}_{*}} \leq \epsilon^{\mcal{R}}_{ij}$,
so, by the assumption of the lemma, ${\{\vec{Y}_{I_1}:\vec{Y}_{I_2}\}} \leq \epsilon$.
But $\vec{X}_{J_1}$ is $\vec{Y}_{I_1}$-measurable,
resp.\ $\vec{X}_{J_2}$ is $\vec{Y}_{I_2}$-measurable,
hence $\{\vec{X}_{J_1}:\vec{X}_{J_2}\} \leq \epsilon$.

Getting the subjective result w.r.t.~$*$
is just a variant of that reasoning, cf.\ \S~\ref{parSubjectiveResults}.
\end{proof}

\subsection{A practical result on~\texorpdfstring{$\ZZ^n$}{Z\textasciicircum n}}\label{parPractical}

In Chapter~\ref{parApplications}, the situations we will handle
shall always be of the following form:
\begin{Hyp}\label{hypZZn}
For some $n\in\NN^*$, the system is made of random variables $X_i$, $i\in\ZZ^n$,
which satisfy the condition
\[ \forall i,j\in\ZZ^n \qquad \{X_i:X_j\}_* \leq \epsilon(j-i) \]
for some symmetric function~$\epsilon \colon \ZZ^n \to [0,1]$.
\end{Hyp}

For systems satisfying Assumption~\ref{hypZZn},
one has the following practical synthetic result:
\begin{Lem}\label{pro8882}
Consider a norm $|\Bcdot|$ on~$\RR^n$,
the associated distance on the affine $\RR^n$ being denoted by~$\dist$.
Then for a system satisfying Assumption~\ref{hypZZn},
for all~$J_1,J_2\subset I$:
\[\label{for7041} \big\{ \vec{X}_{J_1} : \vec{X}_{J_2} \big\} \leq
\Big(\sum_{\mathclap{\substack{z\in\ZZ^n\\|z|\geq\dist(J_1,J_2)}}} \epsilon(z)\Big)
\wedge 1 .\]
\end{Lem}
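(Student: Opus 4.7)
The plan is to combine the doubling-up technique (Lemma~\ref{lem0306}) with the `$N$~against~$M$' theorem (Theorem~\ref{thm6413}), and then bound the resulting operator norm by a simple Schur/Young-type estimate.

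First, I would invoke Lemma~\ref{lem0306} with the binary relation $\mcal{R}$ defined on $\ZZ^n$ by $i\mcal{R}j \Leftrightarrow |j-i| < d$, where $d \coloneqq \dist(J_1,J_2)$ is computed with the norm $|\Bcdot|$. By Example~\ref{xpl1201}-(\ref{itm1202}), if we can produce an abstract bound $\epsilon$ valid for any pair of families $(Y_{i_1})_{i\in\ZZ^n}, (Y_{j_2})_{j\in\ZZ^n}$ satisfying $\{Y_{i_1}:Y_{j_2}\}_* \leq \epsilon^{\mcal{R}}_{ij} = \1{|j-i|\geq d}\,\epsilon(j-i)$, then this same bound will give $\{\vec{X}_{J_1}:\vec{X}_{J_2}\} \leq \epsilon$, as required (since $J_2$ is $\mcal{R}$-disjoint to $J_1$ whenever $\dist(J_1,J_2) \geq d$).

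Second, I would apply Theorem~\ref{thm6413} to such a pair, with $I=J=\ZZ^n$ and coefficients $\epsilon^{\mcal{R}}_{ij} = \1{|j-i|\geq d}\,\epsilon(j-i)$. This yields $\{\vec{Y}_{I_1}:\vec{Y}_{I_2}\} \leq \VERT\boldepsilon^\mcal{R}\VERT \wedge 1$, where $\boldepsilon^\mcal{R}: L^2(\ZZ^n)\to L^2(\ZZ^n)$ is the translation-invariant operator of convolution by the function $\tilde\epsilon(z) \coloneqq \1{|z|\geq d}\,\epsilon(z)$.

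Third, and this is the only real computational step, I would bound the norm of this convolution operator. For any $a\in L^2(\ZZ^n)$, the Cauchy--Schwarz inequality gives
\[
\|\boldepsilon^\mcal{R} a\|_2^2 = \sum_{i} \Big(\sum_{j} \tilde\epsilon(j-i) a_j\Big)^2
\leq \sum_i \Big(\sum_j \tilde\epsilon(j-i)\Big)\Big(\sum_j \tilde\epsilon(j-i) a_j^2\Big),
\]
and since $\sum_j \tilde\epsilon(j-i) = \|\tilde\epsilon\|_1$ is independent of $i$, Fubini yields $\|\boldepsilon^\mcal{R} a\|_2^2 \leq \|\tilde\epsilon\|_1^2 \|a\|_2^2$, i.e.\ $\VERT\boldepsilon^\mcal{R}\VERT \leq \|\tilde\epsilon\|_1 = \sum_{|z|\geq d} \epsilon(z)$. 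Combined with the two previous steps, this is exactly~(\ref{for7041}).

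No serious obstacle is expected: the doubling-up lemma reduces the statement to an application of Theorem~\ref{thm6413}, and the operator $\boldepsilon^\mcal{R}$ has a translation-invariant (convolution) structure, so its norm is straightforwardly controlled by the $\ell^1$ norm of its kernel. The only mild subtlety is to notice that the values of $\epsilon_{ij}$ for $|j-i| < d$ are irrelevant to the bound on $\{\vec{X}_{J_1}:\vec{X}_{J_2}\}$, which is precisely what the doubling-up construction exploits by substituting cemetery values and thereby erasing those correlations.
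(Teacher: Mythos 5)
Your proof is correct and follows essentially the same route as the paper's: doubling-up via Lemma~\ref{lem0306}, then Theorem~\ref{thm6413}, then a bound on the norm of the resulting translation-invariant operator by the $\ell^1$ norm of its kernel. The only difference is in the last computational step, where you use a Schur/Cauchy--Schwarz estimate while the paper writes $\boldepsilon^{\mcal{R}} = \sum_z \tilde\epsilon(z) M_z$ as a convex combination of unitary shift operators and applies the triangle inequality for operator norms; both are standard one-line proofs of Young's inequality $\VERT f\ast\Bcdot\VERT_{L^2\to L^2}\leq\|f\|_{\ell^1}$, so the distinction is cosmetic.
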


\begin{proof}
To alleviate notation, denote $d \coloneqq \dist(J_1,J_2)$.
Applying Lemma~\ref{lem0306},
taking for ``$\mcal{R}$'' the relation ``be at distance $<d$''
(cf.\ Example~\ref{xpl1201}-\ref{itm1202}),
our goal becomes the following:
supposing $(Y_{i_\kappa})_{i_\kappa\in\ZZ^n_1\uplus\ZZ^n_2}$ are random variables
such that $\{Y_{i_1}:Y_{j_2}\}_* \leq \1{|j-i|\geq d}\epsilon(j-i)$,
we want to bound above $\{\vec{Y}_{\ZZ^n_1}:\vec{Y}_{\ZZ^n_2}\}$.

To do this we apply Theorem~\ref{thm6413},
and we get that $\{\vec{Y}_{\ZZ^n_1}:\vec{Y}_{\ZZ^n_2}\}$ is bounded by
$\VERT \boldepsilon \VERT \wedge 1$,
where $\boldepsilon$ is the following operator:
\[ \begin{array}{rrcl} \boldepsilon \colon & L^2(\ZZ) & \circlearrowleft & \\
& \big(g(j)\big)_{j\in\ZZ} & \mapsto &
\big( \sum_{j\in\ZZ} \1{|j-i|\geq d} \epsilon(j-i) g(j) \big)_{i\in\ZZ} .
\end{array} \]

To compute $\VERT \boldepsilon \VERT$, we split $\boldepsilon$
as $\sum_{z\in\ZZ} \1{|z|\geq d} \epsilon(z) M_z$, where
$M_z$ is the operator
\[ \begin{array}{rrcl} M_z \colon & L^2(\ZZ) & \circlearrowleft & \\
& \big(g(j)\big)_{j\in\ZZ} & \mapsto &
\big( g(i+z) \big)_{i\in\ZZ} .\end{array} \]
Obviously $\VERT M_z \VERT = 1$,
thus $\VERT \boldepsilon \VERT \leq \sum_{|z|\geq d} \epsilon(z)$
—actually there is even equality\hbox{—,}
which ends the proof of Lemma~\ref{pro8882}.
\end{proof}

\begin{Rmk}
Instead of Theorem~\ref{thm6413}, here we could have used Theorem~\ref{corZnZn},
which would yield a better result;
yet that would be very specific to~$\ZZ^n$ (cf.\ \S~\ref{parNonFlat}),
and the result would actually be almost equivalent to~(\ref{for7041})
(cf.\ \S~\ref{parAsymptoticOptimality}).
\end{Rmk}

\subsection{Avoiding the artificial phase transition}%
\label{parAvoidingThePhaseTransitionForTheToyModel}

Let us look again at Formula~(\ref{for7041}):
the ``$\wedge 1$'' in it is not really relevant
since a correlation level is \emph{always} bounded by~$1$.
In fact the situation is di\-cho\-to\-mic:
denoting $d \coloneqq \dist(I,J)$, either $\sum_{|z|\geq d} \epsilon(z)$ is $<1$ and then
(\ref{for7041}) is a true decorrelation result,
or it is $\geq1$ and then (\ref{for7041}) tells us actually nothing.
In other words, our result has a `phase transition'
depending on the relative values of~$\sum_{|z|\geq d} \epsilon(z)$ and~$1$,
similar to the phenomenon we discussed in Remark~\ref{rmkPhTr}.

However, as I pointed out in Footnote~\ref{ftn6129} on page~\pageref{ftn6129},
it is not clear whether the phase transition we are dealing with is a real phenomenon:
maybe it is rather an artifact due to Theorem~\ref{thm6413}'s bound's being non-optimal,
which could be avoided by a cleverer reasoning.
We are strengthened in that thought by observing that,
if $\sum_{z\in\ZZ^n} \epsilon(z) < \infty$,
then for~$d$ large enough one has
$\sum_{|z|\geq d} \epsilon(d) < 1$,
so that there is no phase transition for long distances;
why would a transition appear all of a sudden for short distances?

This subsection will show that, indeed, phase transitions can be avoided
in the situations we deal with.

\begin{Lem}\label{lem4067}
For a system satisfying Assumption~\ref{hypZZn}
with $\epsilon(z)<1$ as soon as $z\neq0$ and $\sum_{z\neq0} \epsilon (z) < \infty$,
there exists a constant $k<1$ such that, for all disjoint
$J_1,J_2\subset I$, one has ${\{\vec{X}_{J_1}:\vec{X}_{J_2}\}} \ab \leq k$.
\end{Lem}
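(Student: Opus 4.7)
The plan is to combine the doubling-up trick (Lemma~\ref{lem0306}) with a multi-scale argument based on Corollary~\ref{corZnZn}, whose arcsin-saturation at $\pi/2$ provides a genuine improvement over Theorem~\ref{thm6413}'s $\wedge 1$ saturation. First I would apply Lemma~\ref{lem0306} with $\mathcal R$ the equality relation on $\ZZ^n$: the task reduces to finding a uniform $k<1$ such that any two families $(Y_{i_1})_{i\in\ZZ^n}$, $(Y_{j_2})_{j\in\ZZ^n}$ satisfying $\{Y_{i_1}:Y_{j_2}\}_*\leq\tilde\epsilon(j-i):=\1{i\neq j}\epsilon(j-i)$ satisfy $\{\vec Y_{\ZZ^n_1}:\vec Y_{\ZZ^n_2}\}\leq k$. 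The key new feature of this ``shifted'' correlation kernel is that $\tilde\epsilon(0)=0$, $\tilde\epsilon(v)<1$ for every $v\in\ZZ^n$, and $\sum_{v}\tilde\epsilon(v)<\infty$.

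Second, using the bound $\arcsin x \leq \tfrac{\pi}{2}x$ on $[0,1]$ together with summability, I would pick an integer $M\geq 1$ large enough that $\sum_{v\in M\ZZ^n\setminus\{0\}}\arcsin\epsilon(v)<\pi/4$. Partition $\ZZ^n=\bigsqcup_{r\in R}\Lambda_r$ into $C:=M^n$ cosets $\Lambda_r=M\ZZ^n+r$, $r\in R:=\{0,\ldots,M-1\}^n$. Within each coset $\Lambda_r\cong M\ZZ^n$, the doubled system satisfies the hypothesis of Corollary~\ref{corZnZn} (in its subjective form, cf.~\S~\ref{parSubjectiveResults}) with correlation function $v\mapsto \tilde\epsilon(v)$ restricted to $M\ZZ^n$, whose arcsin-sum is $<\pi/4$; hence
\[
\big\{\vec Y_{\Lambda_{r,1}}:\vec Y_{\Lambda_{r,2}}\big\}_{*}\leq\eta:=\sin\Big(\sum_{v\in M\ZZ^n\setminus\{0\}}\arcsin\epsilon(v)\Big)<1,
\]
uniformly in $r$, and $\eta$ can be made arbitrarily small by increasing $M$.

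Third, I would reassemble the $C$ cosets by applying Theorem~\ref{thm6413} at the coset level, viewing each $\vec Y_{\Lambda_{r,\kappa}}$ as a vector-valued ``super-variable''. The pairwise cross-coset bounds come from Theorem~\ref{thm6413} applied within each cross pair: for $r\neq r'$,
\[
\big\{\vec Y_{\Lambda_{r,1}}:\vec Y_{\Lambda_{r',2}}\big\}_{*}\leq\sum_{v\in M\ZZ^n+(r'-r)}\epsilon(v).
\]
The aggregated $C\times C$ matrix of pairwise bounds is a translation-invariant kernel on the finite abelian group $(\ZZ/M\ZZ)^n$, with diagonal replaced by the arcsin-refined $\eta$ instead of the raw sum $\sum_{v\in M\ZZ^n\setminus\{0\}}\epsilon(v)$; by Fourier analysis on $(\ZZ/M\ZZ)^n$, its operator norm equals the maximum over characters of the associated symbol, which a final appeal to Theorem~\ref{thm6413} then turns into the desired uniform $k<1$.

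The main obstacle is the last step: showing that this aggregated operator norm is strictly less than $1$ even when the raw global sum $s:=\sum_{v\neq 0}\epsilon(v)$ exceeds $1$. The point is that replacing the naive diagonal contribution $\sum_{v\in M\ZZ^n\setminus\{0\}}\epsilon(v)$ by the strictly smaller arcsin-refined value $\eta$ introduces a gap whose propagation through the finite-group Fourier calculation must be controlled precisely; if $M$ is chosen to make both $\eta$ and the ``off-diagonal mass'' $s - \sum_{v\in M\ZZ^n\setminus\{0\}}\epsilon(v)$ sufficiently small, the gap persists. Should one pass of this sparsification not suffice, the scheme can be iterated at coarser scales, each iteration reducing the effective sum strictly — convergence being ensured by the arcsin-refinement, exactly as in the proof of Theorem~\ref{thm0119} itself.
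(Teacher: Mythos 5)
Your opening moves (doubling-up with $\mathcal R$ the equality relation, coset partition at scale $M$) are in the same spirit as the paper's proof, but the reassembly step is a genuine gap, and the plan you sketch for it will not work.

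The obstacle you flag is fatal, not merely technical. The operator norm of your $C\times C$ coset-level matrix, being a symmetric nonnegative circulant on $(\ZZ/M\ZZ)^n$, is exactly the common row sum, namely $\eta + \sum_{r\neq 0}\sum_{v\in M\ZZ^n+r}\epsilon(v) = \eta + \bigl(s - \sigma_M\bigr)$ with $\sigma_M := \sum_{v\in M\ZZ^n\setminus\{0\}}\epsilon(v)$. Your plan requires this to be $<1$, i.e.\ $\sigma_M - \eta > s-1$. But as $M\to\infty$ both $\sigma_M\to 0$ and $\eta\to 0$, so $\sigma_M - \eta\to 0$; and for small $M$ (including $M=1$), if $\sum_v\arcsin\epsilon(v)\geq\pi/2$ then $\eta=1$ and the arcsin refinement buys nothing. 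Moreover, using $\sin\alpha \geq (2/\pi)\alpha$ and $\alpha_M\geq\sigma_M$ one gets $\eta\geq(2/\pi)\sigma_M$, hence the row sum is $\geq(2/\pi)s$ regardless of $M$; in particular it is $\geq 1$ whenever $s\geq\pi/2$. You cannot simultaneously make $\eta$ and the off-diagonal mass small (they move in opposite directions with $M$), so the Fourier bound never drops below $1$ in precisely the regime where the lemma is non-trivial. The proposed iteration at coarser scales is not substantiated and faces the same fundamental barrier, since each pass only buys a third-order correction $\sigma_M - \eta = O(\sigma_M^3)$.

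The missing idea is that reassembly must \emph{not} go through Theorem~\ref{thm6413} at all. The paper reassembles with two successive applications of the \emph{simple} tensorization bound, Theorem~\ref{thm5750}: first write $\vec Y_{\ZZ^n_2}$ as the finite tuple $(\vec Y_{Z_2^{(1)}},\dots,\vec Y_{Z_2^{(N)}})$ and conclude $\{\vec Y_{Z_1^{(u)}}:\vec Y_{\ZZ^n_2}\}_* \leq \sqrt{1-\prod_v\bigl(1-\{\vec Y_{Z_1^{(u)}}:\vec Y_{Z_2^{(v)}}\}_*^2\bigr)} < 1$, then iterate on the first slot. The point is that the $N$-against-$1$ bound $\sqrt{1-\prod_i(1-\epsilon_i^2)}$ is $<1$ as soon as every $\epsilon_i<1$ and $N$ is finite — it places no condition whatsoever on $\sum_i\epsilon_i$. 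This is exactly what sidesteps the artificial phase transition that Theorem~\ref{thm6413}'s $\VERT\boldepsilon\VERT\wedge 1$ cannot avoid. Once you replace the Fourier/Theorem~\ref{thm6413} reassembly with the double application of Theorem~\ref{thm5750}, your within-coset bound is also over-engineered: you only need each cross-coset correlation $\{\vec Y_{Z_1^{(u)}}:\vec Y_{Z_2^{(v)}}\}_*$ to be \emph{strictly less than} $1$, not small, and this follows from Claim~\ref{clm8133} (a within-coset version of Lemma~\ref{pro8882}) together with the elementary Claim~\ref{clm9816}: for $\ell$ large enough, each residue class of $\ZZ^n\setminus\{0\}$ modulo $\ell\ZZ^n$ contains at most one short vector, so the corresponding row sum is bounded by $\zeta + (1-\zeta) < 1$ with $\zeta := \sup_{z\neq 0}\epsilon(z)$.
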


\begin{proof}
As before, using Lemma~\ref{lem0306}
we have to bound above $\{\vec{Y}_{\ZZ^n_1}:\vec{Y}_{\ZZ^n_2}\}$
in the relevant doubled-up model.
Our plan to avoid the phase transition will consist in reducing to the `long distance' case.

For some $\ell \in \NN^*$,
we split $\ZZ^n_1$, resp.~$\ZZ^n_2$,
into a partition of~$\ell^n \eqqcolon N$ sublattices $Z_1^{(1)}, \ab \ldots, \ab Z_1^{(N)}$,
resp.~$Z_2^{(1)}, \ab \ldots, \ab Z_2^{(N)}$,
each lattice $Z_\kappa^{(u)}$ being of the form~$\ell \ZZ^n + z_u$ for some $z_u\in\ZZ^n \div \ell\ZZ^n$.
I claim two fundamental properties of these sublattices:

\begin{Clm}\label{clm8133}
For all~$u,v\in\{1,\ldots,N\}$,
\[\label{for8590} \big\{ \vec{Y}_{\ZZ_1^{(u)}} : \vec{Y}_{\ZZ_2^{(v)}} \big\}_*
\leq \sum_{z \equiv z_v-z_u} \!\! \1{z\neq0} \, \epsilon(z) .\]
\end{Clm}

\begin{proof} It is analogous to the proof of Lemma~\ref{pro8882}.
\end{proof}

\begin{Clm}\label{clm9816}
Provided $\ell$ is large enough,
the right-hand side of~(\ref{for8590}) is (strictly) less than $1$
for all the possible values of~$u,v$.
\end{Clm}

\begin{proof}
Denote $\zeta \coloneqq \sup_{z\neq0} \epsilon(z)$;
notice that our assumptions imply that $\zeta < 1$.
Since $\sum_{z\in\ZZ^n} \epsilon(z)$ converges,
there exists some $d_1<\infty$ such that $\sum_{|z|>d_1} \epsilon(z) < 1-\zeta$.
Now, denoting $d_0 \coloneqq \min\{ |z| \colon z\in\ZZ^n\setminus\{0\} \}$,
for $\ell > 2d_1 \div d_0$, for all~$u,v$ there is at most one $z$ congruent to
$z_v-z_u$ [mod.\ $\ell$] such that $|z| \leq d_1$,
whence the following uniform bound for the right-hand side of~(\ref{for8590}):
\[ \sum_{z\equiv z_v-z_u} \!\! \1{z\neq0} \, \epsilon(z)
\leq \sum_{|z| > d_1} \epsilon(z)
+ \underbrace{\sum_{\substack{|z|\leq d_1\\z\equiv z_v-z_u\\z\neq 0}} \epsilon(z)}%
_{\substack{\leq \zeta \text{ because}\\\text{the sum has at most}\\\text{one term, being $\leq \zeta$}}}
\leq \underbrace{\sum_{|z| > d_1} \epsilon(z)}_{< 1-\zeta} + \zeta < 1.\]
\end{proof}

Now, suppose $\ell$ large enough so that Cl\-aim~\ref{clm9816} works.
We apply \emph{simple} tensorization (Theorem~\ref{thm5750}) to the~$\vec{Y}_{\ZZ_2^{(v)}}$:
writing that $\vec{Y}_{\ZZ^n_2} = \big( \vec{Y}_{\ZZ_2^{(1)}}, \ldots, \vec{Y}_{\ZZ_2^{(N)}} \big)$,
we get that, for any~$u\in\{1,\ldots,N\}$,
\[\label{for8741} \big\{ \vec{Y}_{\ZZ_1^{(u)}} : \vec{Y}_{\ZZ^n_2} \big\}_*
\leq \sqrt{ 1 - \prod_{v=1}^{N}
\left( 1 - \big\{ \vec{Y}_{\ZZ_1^{(u)}}:\vec{Y}_{\ZZ_2^{(v)}} \big\}_*^2 \right) } < 1 .\]
Now we write $\vec{Y}_{\ZZ^n_1} = \big( \vec{Y}_{\ZZ_1^{(1)}}, \ldots, \vec{Y}_{\ZZ_1^{(N)}} \big)$
and we apply simple tensorization again\ab —this time to the~$\vec{Y}_{\ZZ_1^{(u)}}$ —to get:
\[\label{for1198}
\big\{ \vec{Y}_{\ZZ^n_1} : \vec{Y}_{\ZZ^n_2} \big\}
\leq \sqrt{ 1 - \prod_{u=1}^{N}
\left( 1 - \big\{ \vec{Y}_{\ZZ_1^{(u)}}:\vec{Y}_{\ZZ^n_2} \big\}_*^2 \right) } < 1 .\]
Bound~(\ref{for1198}) achieves our goal.
\end{proof}

\begin{Rmk}
With that proof, the way $k$ depends on~$\epsilon(\Bcdot)$ is rather complicated;
in particular, you cannot express $k$
as a function of only~$\sum_{z\neq0} \epsilon(z)$ and~$\sup_{z\neq0} \epsilon(z)$.
\end{Rmk}

\begin{Rmk}
In the case $n=1$, at first sight Lemma~\ref{lem4067} seems to contradict Theorem~\ref{t6657},
in which we told that Theorem~\ref{thm0119}, which does have a phase transition, was optimal.
The explanation for this paradox stands in the slight difference between the assumptions
of Lemma~\ref{lem4067} and Theorem~\ref{thm0119}:
while in Lemma~\ref{lem4067} we really imposed that $\{X_i:X_j\}_* \leq \epsilon\big(\mfrak{d}(i,j)\big)$,
with ``$*$'' denoting the \emph{full} natural $\sigma$-metalgebra of the system,
in Theorem~\ref{thm0119}—more precisely, in the version of Theorem~\ref{thm0119}
Theorem~\ref{t6657} proved to be optimal, which was the \emph{minimal} version of this theorem
(cf.\ \S~\ref{parMinimalHypotheses})—%
the conditions on subjective decorrelations were a bit looser.
That difference makes all the trick when one performs the steps of simple tensorization in the proof of Lemma~\ref{lem4067},
because these steps require subjective decorrelations w.r.t.\ the~$\vec{Y}_{I_\kappa^{(u)}}$,
which the sole assumptions of Theorem~\ref{t6657} do not provide.
\end{Rmk}

\subsection{Non-flat geometries}\label{parNonFlat}

It is natural to ask what we one can do
when the basic variables $X_i$ are not indexed by~$\ZZ^n$,
but by the vertices of a more general graph,
for instance a tree or a finitely generated group.
This shall occur indeed if the physical space one works in
exhibits some curvature%
—though Chapter~\ref{parApplications} will not handle such situations.

Actually for general graphs there are results analogous to those of
\S\S~\ref{parPractical} and~\ref{parAvoidingThePhaseTransitionForTheToyModel},
with similar (though more technical) proofs~\cite{perso}.
Here I will only give the statements of these results.

In this subsection the situation will be the following:
\begin{Hyp}\label{hypNF}
The system is made of random variables $(X_i)$ indexed by a (countable) set $I$.
There is a group $G$ acting transitively on~$I$,
and $I$ is endowed with a symmetric map $\mfrak{d} \colon {I\times I}\to\mfrak{D}$,
called the `abstract distance', which is preserved by the action of~$G$.
We assume that one has
\[ \forall i,j\in I \quad \{X_i:X_j\}_{*} \leq \epsilon\big(\mfrak{d}(i,j)\big) \]
for some function~$\epsilon \colon \mfrak{D} \to [0,1]$.
\end{Hyp}

\begin{Def}
For~$d\in\mfrak{D}$, we define $\valency(d) \coloneqq \# \{ i\in I \colon \mfrak{d}(o,i) = d \}$,
where the choice of~$o\in I$ does not matter.
\end{Def}

Then the analogous to Lemma~\ref{pro8882} is the
\begin{Lem}\label{pro8882+}
For~$\mfrak{D}'\subset\mfrak{D}$, for all~$J_1,J_2\subset I$ such that
$(i\in J_1,j\in J_2)\,\Rightarrow \mfrak{d}(i,j) \ab \in \nolinebreak[2] \mfrak{D}'$,
\[ \big\{\vec{X}_{J_1} : \vec{X}_{J_2}\big\} \leq
\Big( \sum_{d\in \mfrak{D}'} \valency(d)\,\epsilon(d) \Big) \wedge 1 .\]
\end{Lem}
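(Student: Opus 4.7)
The plan is to mimic the proof of Lemma~\ref{pro8882}, using the doubling-up technique to reduce to a single tensorization inequality, and then compute an operator norm — the only genuine difference from the $\ZZ^n$ case is that the ``translation'' operators are no longer isometries, so one needs a Schur-type argument to bound their norms.

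Concretely, I would first invoke the doubling-up lemma (Lemma~\ref{lem0306}) with the relation $i\mcal{R}j$ defined by ``$\mfrak{d}(i,j)\notin\mfrak{D}'$''. The hypothesis on $J_1,J_2$ in the lemma statement is exactly that $J_2$ is $\mcal{R}$-disjoint to $J_1$. Thus it suffices to establish the following: for any random variables $(Y_{i_\kappa})_{i_\kappa\in I_1\uplus I_2}$ satisfying
\[
\{Y_{i_1}:Y_{j_2}\}_{*}\ \leq\ \1{\mfrak{d}(i,j)\in\mfrak{D}'}\,\epsilon\bigl(\mfrak{d}(i,j)\bigr)
\qquad(\forall i,j\in I),
\]
one has $\{\vec{Y}_{I_1}:\vec{Y}_{I_2}\}\leq\bigl(\sum_{d\in\mfrak{D}'}\valency(d)\epsilon(d)\bigr)\wedge 1$.

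Applying the ``$N$~against~$M$'' theorem (Theorem~\ref{thm6413}) to the doubled-up model, this amounts to bounding $\VERT\boldepsilon\VERT$, where $\boldepsilon \colon L^2(I)\to L^2(I)$ has matrix entries $\boldepsilon_{ij}=\1{\mfrak{d}(i,j)\in\mfrak{D}'}\,\epsilon(\mfrak{d}(i,j))$. As in the flat case, I would decompose
\[
\boldepsilon\ =\ \sum_{d\in\mfrak{D}'}\epsilon(d)\,M_d,\qquad\text{where }(M_d)_{ij}\coloneqq\1{\mfrak{d}(i,j)=d},
\]
so that by the triangle inequality $\VERT\boldepsilon\VERT\leq\sum_{d\in\mfrak{D}'}\epsilon(d)\VERT M_d\VERT$. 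The key step — and the only nontrivial point compared with \S~\ref{parPractical} — is now to show $\VERT M_d\VERT\leq\valency(d)$. By the transitivity of the $G$-action and the $G$-invariance of $\mfrak{d}$, each row of $M_d$ has exactly $\valency(d)$ nonzero entries, and by the symmetry of $\mfrak{d}$ the same is true of each column. A direct Cauchy--Schwarz (Schur test) then gives
\[
\|M_d g\|^2\ =\ \sum_i\Bigl(\sum_j(M_d)_{ij}g(j)\Bigr)^{\!2}\ \footrel{\text{CS}}{\leq}\ \sum_i\valency(d)\sum_j(M_d)_{ij}g(j)^2\ =\ \valency(d)^2\|g\|^2,
\]
hence $\VERT M_d\VERT\leq\valency(d)$. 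Combining these bounds delivers the inequality claimed.

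The main obstacle I anticipate is essentially bookkeeping rather than mathematical: one must check that Theorem~\ref{thm6413} still applies even when $I$ is infinite (this is built into that theorem via Proposition~\ref{pro6559}), and that the decomposition $\boldepsilon=\sum_d\epsilon(d)M_d$ converges in operator norm — which holds trivially once the right-hand side $\sum_d\valency(d)\epsilon(d)$ is assumed finite (otherwise the stated bound is vacuous due to the ``$\wedge 1$''). Everything else is a direct translation of the $\ZZ^n$ argument, the role previously played by translation-invariance being taken over by the homogeneity afforded by the transitive $G$-action.
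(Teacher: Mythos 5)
Your proof is correct and is exactly the natural generalization of the paper's proof of Lemma~\ref{pro8882} — the paper itself says the proofs of the non-flat versions are ``similar (though more technical)'' and refers to~\cite{perso} rather than including them. The one genuinely new ingredient you supply, the Schur-test bound $\VERT M_d\VERT\leq\valency(d)$ (which reduces to the isometry of the shift operators in the flat case, where $\valency\equiv1$ on ordinary $\ZZ^n$-differences), is correct: the $G$-transitivity gives constant row sums $\valency(d)$, and the symmetry $\mfrak{d}(i,j)=\mfrak{d}(j,i)$ gives the same for columns, which is what the Cauchy--Schwarz step uses. The choice of relation $\mcal{R}$ for the doubling-up lemma and the resulting tensorization via Theorem~\ref{thm6413} are also set up correctly, and your remark about the ``$\wedge1$'' absorbing the possibly divergent case disposes of the convergence issue.
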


The analogous of Lemma~\ref{lem4067} is the
\begin{Lem}\label{lem4067+}
Assume that Assumption~\ref{hypNF} is satisfied;
denoting by~$0$ the (common) value of the~$\mfrak{d}(i,i)$,
also assume that $\valency(0)=1$ and that $\epsilon(d) < 1$ as soon as $d \neq 0$.
Assume that $\sum_{d\in\mfrak{D}} \valency(d)\epsilon(d) < \infty$.

Moreover, assume that the action of~$G$ on~$I$ is \emph{profinite}
(cf.~\cite[Definition~1.1]{profinite}), i.e.\ that
there is a subset $\mcal{N} \subset \NN^*$
such that for each~$N\in\mcal{N}$, there is a subgroup $G_N \leq G$ such that:
\begin{ienumerate}
\item\label{itm4759} The action of~$G_N$ splits $I$ into exactly $N$ orbits
$I^{(1)},\ldots,I^{(N)}$;
\item\label{itm4518} $G_N$ is normal,
so that the partition of~$I$ into the~$I^{(u)}$ is stable by the action of~$G$;
\item\label{itm4750} Any two distinct points of~$I$ are ultimately separated
by the partitions induced by the~$G_N$, i.e.:
\[ \limsup_{\substack{N\in\mcal{N}\\N\longto\infty}} (G_{N}\cdot o) = \{o\} .\]
\end{ienumerate}

Then there exists a constant $k<1$ such that, for all disjoint
$J_1,J_2\subset I$, one has $\{\vec{X}_{J_1}:\vec{X}_{J_2}\} \leq k$.
\end{Lem}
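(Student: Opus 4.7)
The strategy mirrors the proof of Lemma~\ref{lem4067}, with the profiniteness hypothesis replacing the scaled-sublattice decomposition of $\ZZ^n$. First I would apply the doubling-up lemma (Lemma~\ref{lem0306}) with $\mcal{R}$ the equality relation, reducing the problem to showing that if $(Y_{i_\kappa})_{i_\kappa \in I_1 \uplus I_2}$ satisfy $\{Y_{i_1}:Y_{j_2}\}_* \leq \1{i\neq j}\epsilon(\mfrak{d}(i,j))$, then $\{\vec Y_{I_1}:\vec Y_{I_2}\}$ is bounded by some universal $k<1$.

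Next I would use the profinite structure to choose $N \in \mcal{N}$ large, partitioning $I$ into $N$ $G_N$-orbits $I^{(1)},\dots,I^{(N)}$. For each pair $(u,v)$ and any $i_0 \in I^{(u)}$ set
\[
\epsilon^{(u,v)} \coloneqq \sum_{\substack{j\in I^{(v)}\\ j\neq i_0}} \epsilon\bigl(\mfrak{d}(i_0,j)\bigr);
\]
by normality of $G_N$ (condition (\ref{itm4518})) and transitivity of $G$, this quantity is independent of $i_0 \in I^{(u)}$, and symmetrically it also equals the analogous sum from $I^{(v)}$ to $I^{(u)}$. Denoting $\zeta \coloneqq \sup_{d\neq 0}\epsilon(d) < 1$, pick $d_1$ such that $\sum_{d>d_1}\valency(d)\epsilon(d) \leq (1-\zeta)/2$ (possible by the summability hypothesis); the finite set $B \coloneqq \{j\neq i_0 : \mfrak{d}(i_0,j)\leq d_1\}$ then contributes $\leq \zeta$ to $\epsilon^{(u,v)}$ as long as $B\cap I^{(v)}$ contains at most one point. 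The key input of profiniteness is condition (\ref{itm4750}): applied after conjugating by an element of $G$ sending $i_0$ to a fixed basepoint $o$, it says that any two distinct points of a finite set are eventually separated by the $G_N$-partition, so for $N$ large every orbit $I^{(v)}$ meets $B$ in at most one point. This yields $\epsilon^{(u,v)} \leq \zeta + (1-\zeta)/2 = (1+\zeta)/2 \eqqcolon \eta < 1$ uniformly in $(u,v)$.

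Then I would bound $\{\vec Y_{I^{(u)}_1}:\vec Y_{I^{(v)}_2}\}_*$ by applying Theorem~\ref{thm6413} to the operator $\boldepsilon^{(u,v)}$ with entries $\1{i\neq j}\epsilon(\mfrak{d}(i,j))$: since row and column sums of this nonnegative operator both equal $\epsilon^{(u,v)}$, one has $\VERT \boldepsilon^{(u,v)}\VERT \leq \epsilon^{(u,v)} \leq \eta$. Finally, applying simple tensorization (Theorem~\ref{thm5750}) twice---first over $v$ to bound $\{\vec Y_{I^{(u)}_1}:\vec Y_{I_2}\}_*$, then over $u$ to bound $\{\vec Y_{I_1}:\vec Y_{I_2}\}$---yields a bound of the form
\[
k \coloneqq \sqrt{1 - \bigl(1 - (1-(1-\eta^2)^N)\bigr)^N} < 1,
\]
since both products involve only $N$ factors each bounded away from $0$ by $1-\eta^2$.

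The main subtlety is verifying that profiniteness really delivers the needed separation property for pairs of orbits, not just for $G_N\cdot o$. The issue is that condition (\ref{itm4750}) is formulated only at the basepoint $o$; one must leverage transitivity of $G$ and the normality of $G_N$ to transport this to a simultaneous separation statement for an arbitrary finite set $B$, and to argue that $\epsilon^{(u,v)}$ is truly independent of the choice of representative $i_0 \in I^{(u)}$. Once this bookkeeping is done, the rest of the argument is formally identical to the $\ZZ^n$ case; there is no need for asymptotic optimality, and the inelegant dependence of $k$ on $N,\zeta,\epsilon(\Bcdot)$ is harmless.
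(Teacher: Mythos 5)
The paper itself does not prove Lemma~\ref{lem4067+}: it only states it and defers to~\cite{perso}, indicating the argument should be ``similar (though more technical)'' to that of Lemma~\ref{lem4067}. Your proof is a faithful and essentially correct transposition of that argument to the profinite setting: doubling up, partition into $G_N$-orbits, the uniform bound $\epsilon^{(u,v)}\leq(1+\zeta)/2<1$ via profiniteness, and two rounds of simple tensorization; and the key ``bookkeeping'' step you flag does indeed go through, because normality gives $G_N\cdot(go)=g(G_N\cdot o)$, so condition~(\ref{itm4750}) at the base point $o$ transports to any pair $(j_1,j_2)$ of distinct points in the fixed finite set $B$. The only small slip is the claim that both row and column sums of $\boldepsilon^{(u,v)}$ equal $\epsilon^{(u,v)}$: the column sums are in fact $\epsilon^{(v,u)}$, which is not obviously equal to $\epsilon^{(u,v)}$ in general; but the Schur bound $\VERT\boldepsilon^{(u,v)}\VERT\leq\sqrt{\epsilon^{(u,v)}\epsilon^{(v,u)}}$ still gives $\leq\eta$ since both factors are bounded by $\eta$ uniformly, so the conclusion is unaffected.
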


\begin{Xpl}
For $I=\ZZ^n$ on which $G=\ZZ^n$ acts by translation,
equipped with the abstract distance $\mfrak{d}(x,y) = \{\pm(y-x)\}$,
the assumptions of Lemmas~\ref{pro8882+} and~\ref{lem4067+} are checked,
and these lemmas re-give resp.\ Lemmas~\ref{pro8882} and~\ref{lem4067}.
\end{Xpl}

\begin{Xpl}\label{x3339}
For $I$ the modular group $\mathit{PSL}_2(\ZZ)$ acting by left multiplication on itself,
equipped with its natural abstract distance (i.e., $\mfrak{d}(i,j) = \{i^{-1}j,j^{-1}i\}$),
the assumptions of Lemmas~\ref{pro8882+} and~\ref{lem4067+} are also checked%
—to see that the action of~$G$ on~$I$ is profinite,
take for the~$G_{N(\ell)}$ the principal congruence subgroups
$\Gamma(\ell)$ of the modular group~\cite{PSL2Z}.
Notice that $\mathit{PSL}_2(\ZZ)$ is an example of graph having negative curvature%
~\cite{Gromov}.
\end{Xpl}

\section{Appendix: Illustration of the proof of Theorem~\ref{thm6413}}%
\label{parIllustrationOfTheProofs}

\begin{NOTA}
This subsection is devised for the readers who would like to
understand better the proof of Theorem~\ref{thm6413}
by seeing how it works on a concrete example.
It only contains pedagogical material,
and thus can be skipped safely.
\end{NOTA}

\subsection{A Gaussian system of variables}\label{parX1X2Y}

In this illustration we take $N=2, M=1$ —since $M=1$, $Y_1$ will merely be denoted by~$Y$ —,
and we take $(X_1,X_2,Y)$ Gaussian (and centered),
whose law is described through a $3\times3$ matrix
\emph{via} writing that, for some standard Gaussian vector $(\xi_1,\xi_2,\xi_3) \in \RR^3$,
\[\label{for5012}
\begin{pmatrix} X_1\\X_2\\Y \end{pmatrix} =
\begin{pmatrix} \alpha_1 & \alpha_2 & \alpha_3 \\
\beta_1 & \beta_2 & \beta_3 \\ \omega_1 & \omega_2 & \omega_3 \end{pmatrix}
\begin{pmatrix} \xi_1\\\xi_2\\\xi_3 \end{pmatrix}.\]
We denote the matrix appearing in~(\ref{for5012}) by~$\mbf{M}$.
The rows of~$\mbf{M}$ will be denoted by~$\alpha,\beta,\gamma \in \RR^3$,
and $(\xi_1,\xi_2,\xi_3)$ will be denoted by~$\xi\in\RR^3$.
On~$\RR^3$ we will use the Euclidian scalar product ``$\cdot$'' and the associated norm ``$\|\Bcdot\|$''.

The advantage of this model is that,
by of the general properties of Gaussian vectors
(in particular Theorem~\ref{pro1857}),
all the quantities of interest are computable exactly.

First we compute the correlation levels:
by Theorem~\ref{pro1857},
\[ \{X_1:Y\} = \frac{|\alpha \cdot \omega|}{\|\alpha\| \, \|\omega\|} ,\]
similarly $\{X_2:Y\} = |\beta \cdot \omega| \div \|\beta\|\|\omega\|$; and
\[ \{\vec{X}:Y\} = \sqrt{1-\frac{|\omega\cdot(\alpha\vec{\times}\beta)|^2}{\|\omega\|^2\,\|\alpha\vec{\times}\beta\|^2}}, \]
where ``$\vec{\times}$'' denotes the cross product on $\RR^3$.
Concerning the conditional quantities,
denote by~$\beta^1$, resp.~$\omega^1$, the (orthogonal) projections of~$\beta$, resp.~$\omega$, on~$\RR\alpha$,
and~$\beta^*$, resp.~$\omega^*$, the projections of the same vectors on~$(\RR\alpha)^\perp$,
i.e.\ (assuming that $\alpha\neq0$):
\[ \beta^1 \coloneqq (\beta\cdot\alpha \div \|\alpha\|^2)\,\alpha , \quad
\omega^1 \coloneqq (\omega\cdot\alpha \div \|\alpha\|^2)\,\alpha ; \]
\[ \beta^* \coloneqq \beta - \beta^1 , \quad
\omega^* \coloneqq \omega - \omega^1 \]
(see Figure~\ref{fig-abo}).
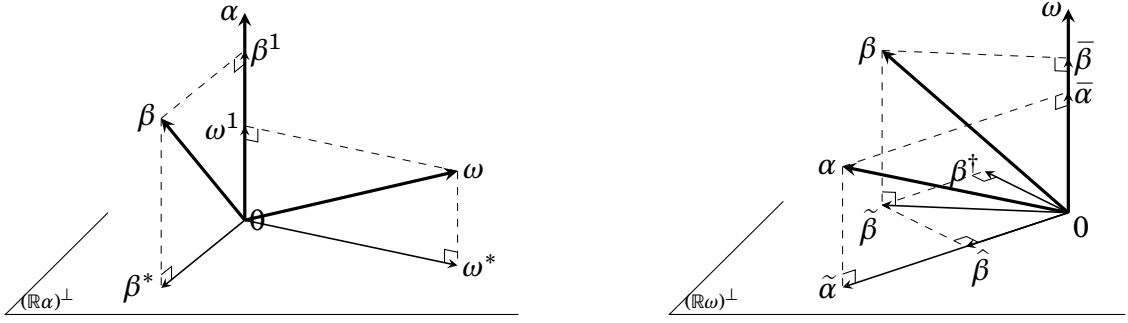
\begin{figure}
\centering
%
%
\noindent\strut\hspace{1em}%
\begin{tikzpicture}[>=stealth,x=.5cm,y=.5cm,inner sep=1.5pt]
\coordinate (O) at (0,0);
\coordinate (A) at (0,5.5);
\coordinate (Bb) at (0,4.5);
\coordinate (Zb) at (0,2.5);
\coordinate (Bt) at (-2.2,-1.8);
\coordinate (Zt) at (5.6,-1.2);
\coordinate (B) at ($(Bb)+(Bt)$);
\coordinate (Z) at ($(Zb)+(Zt)$);
\draw (O) node[anchor=west]{$0$};


\draw[very thick,->] (O)--(A) node[anchor=east]{$\alpha$};
\draw[very thick,->] (O)--(B) node[anchor=east]{$\beta$};
\draw[very thick,->] (O)--(Z) node[anchor=west]{$\omega$};
\draw[semithick,->] (O)--(Bb) node[anchor=west]{$\beta^1$};
\draw[semithick,->] (O)--(Zb) node[anchor=east]{$\omega^1$};
\draw[semithick,->] (O)--(Bt) node[anchor=east]{$\beta^*$};
\draw[semithick,->] (O)--(Zt) node[anchor=west]{$\omega^*$};
\draw[very thin, dashed] (B)--(Bb);
\draw[very thin, dashed] (B)--(Bt);
\draw[very thin, dashed] (Z)--(Zb);
\draw[very thin, dashed] (Z)--(Zt);
\draw[thin] (-3.6,0.2)--(-6.3,-2.5) node[above right]{\scriptsize~~$(\RR\alpha)^\perp$} --(7.2,-2.5);

\rightangle{B}{Bb}{O}
\rightangle{Z}{Zb}{O}
\rightangle{B}{Bt}{O}
\rightangle{Z}{Zt}{O}
\end{tikzpicture}
\hfill
%
%
\begin{tikzpicture}[>=stealth,x=.5cm,y=.5cm,inner sep=1.5pt]
\coordinate (O) at (0,0);
\coordinate (Z) at (0,5.4);
\coordinate (Bd) at (-2.2,1.1);
\coordinate (Bc) at (-2.7,-.9);
\coordinate (At) at ($2.2*(Bc)$);
\coordinate (Ab) at (0,3.2);
\coordinate (Bb) at (0,4.1);
\coordinate (Bt) at ($(Bc)+(Bd)$);
\coordinate (B) at ($(Bt)+(Bb)$);
\coordinate (A) at ($(At)+(Ab)$);
\draw (O) node[anchor=north west]{$0$};


\draw[very thick,->] (O)--(A) node[anchor=east]{$\alpha$};
\draw[very thick,->] (O)--(B) node[anchor=east]{$\beta$};
\draw[very thick,->] (O)--(Z) node[anchor=east]{$\omega$};
\draw[semithick,->] (O)--(Ab) node[anchor=west]{$\bar\alpha$};
\draw[semithick,->] (O)--(At) node[anchor=east]{$\tilde\alpha$};
\draw[semithick,->] (O)--(Bb) node[anchor=west]{$\bar\beta$};
\draw[semithick,->] (O)--(Bt) node[anchor=north east]{$\tilde\beta$};
\draw[semithick,->] (O)--(Bc) node[anchor=north west]{$\hat\beta$};
\draw[semithick,->] (O)--(Bd) node[anchor=east]{$\beta^\dag$};
\draw[very thin, dashed] (A)--(Ab);
\draw[very thin, dashed] (A)--(At);
\draw[very thin, dashed] (B)--(Bb);
\draw[very thin, dashed] (B)--(Bt);
\draw[very thin, dashed] (Bt)--(Bc);
\draw[very thin, dashed] (Bt)--(Bd);
\draw[thin] (-7.5,0.3)--(-10.5,-2.7) node[above right]{\scriptsize~~$(\RR\omega)^\perp$} --(1.5,-2.7);

\rightangle{B}{Bb}{O}
\rightangle{A}{Ab}{O}
\rightangle{B}{Bt}{O}
\rightangle{A}{At}{O}
\rightangle{Bt}{Bc}{O}
\rightangle{Bt}{Bd}{O}
\end{tikzpicture}%
\hspace{1em}\strut
\caption{Visual definitions of the vectors derived from $\alpha$, $\beta$ and~$\omega$:
the left drawing shows how to build $\beta^1,\ab \omega^1,\ab \beta^*,\ab \omega^*$;
the right drawing (with different values for~$\alpha,\ab \beta,\ab \omega$) explains
the construction of~$\protect\bar\alpha,\ab \protect\bar\beta,\ab \protect\tilde\alpha,\ab \protect\tilde\beta,\ab \protect\hat\beta,\ab \beta^\dag$.}%
\label{fig-abo}
\end{figure}%
Then one has
$\EE[X_2|X_1] = \beta^1_1\xi_1 + \beta^1_2\xi_2 + \beta^1_3\xi_3 = \beta^1\cdot\xi$,
resp.\ $\EE[Y|X_1] = \omega^1\cdot\xi$,
thus $X_2-\EE[X_2|X_1] = \beta^*\cdot\xi$,
resp. $Y-\EE[Y|X_1] = \omega^*\cdot\xi$.
As $(X_1,X_2,Y)$ is Gaussian,
the law of~$(X_2-\EE[X_2|X_1],\ab {Y-\EE[Y|X_1]})$
under~$\Pr[\Bcdot|\ab X_1=x]$ does not depend on the value of~$x$;
therefore we know all the conditional laws of~$(X_2,Y)$ under the~$\Pr[\Bcdot|X_1=x]$,
and for all these laws $\{X_2:Y\}$ is equal by Theorem~\ref{pro1857} to
$|\beta^*\cdot\omega^*| \div \|\beta^*\|\|\omega^*\|$, so in the end:
\[ \{ X_2:Y \}_{X_1} = \frac{|\beta^*\cdot\omega^*|}{\|\beta^*\|\,\|\omega^*\|} .\]
$\{ X_1:Y \}_{X_2}$ can be computed by a similar formula.

Now let us `dissect' the proof of Theorem~\ref{thm6413} on our example.
We take $f$ linear, namely
\[\label{forX1+X2} f(X_1,X_2) \coloneqq X_1 + X_2 ,\]
so that all the computations shall again be tractable exactly.

Let us start with computing the quantities linked to~$f^0$:
one has
\begin{eqnarray}
f^0 = f &=& (\alpha+\beta)\cdot\xi ;\\
f_1^0 = f^{\sigma(X_1)} &=& X_1 + (X_2)^{\sigma(X_1)} = (\alpha+\beta^1)\cdot\xi ;\\
f_2^0 = f - f^{\sigma(X_1)} &=& \beta^*\cdot\xi ,
\end{eqnarray}
whence respectively
\begin{eqnarray}
V = V^0 &=& \|\alpha+\beta\|^2 = \|\alpha\|^2 + \|\beta\|^2 + 2\alpha\cdot\beta ; \\
V_1^0 &=& \|\alpha+\beta^1\|^2 = \|\alpha\|^2 + 2\alpha\cdot\beta + \frac{(\alpha\cdot\beta)^2}{\|\alpha\|^2} ; \\
V_2^0 &=& \|\beta^*\|^2 = \|\beta\|^2 - \frac{(\alpha\cdot\beta)^2}{\|\alpha\|^2} .
\end{eqnarray}
By the way we check that, as claimed by Formula~(\ref{for3043}),
$V^0 = V_1^0 + V_2^0$.

Now we turn to the quantities linked to~$f^1$.
First we have to compute the conditional laws of~$(X_1,X_2)$
under the events ``$Y=y$''.
The technique is the same as for computing $\{X_2:Y\}_{X_1}$
a few lines above:
denoting by~$\bar{\alpha}$, resp.\ by~$\bar{\beta}$, the projections of~$\alpha$, resp.~$\beta$, on~$\RR\omega$,
and~$\tilde{\alpha}$, resp.~$\tilde{\beta}$, the projections of the same vectors on~$(\RR\omega^\perp)$, i.e.\ (see Figure~\ref{fig-abo})
\[ \bar{\alpha} \coloneqq (\alpha\cdot\omega \div \|\omega\|^2)\,\omega , \quad
\bar{\beta} \coloneqq (\beta\cdot\omega \div \|\omega\|^2) \,\omega ; \]
\[ \tilde{\alpha} \coloneqq \alpha - \bar{\alpha} , \quad
\tilde{\beta} \coloneqq \beta - \bar{\beta} ,\]
one has $\EE[X_1|Y] = \bar{\alpha}\cdot\xi$,
resp.\ $\EE[X_2|Y] = \bar{\beta}\cdot\xi$, thus
$X_1 - \EE[X_1|Y] = \tilde{\alpha}\cdot\xi$,
resp.\ $X_2 - \EE[X_2|Y] = \tilde{\beta}\cdot\xi$;
and $(X_1 - \EE[X_1|Y], X_2 - \EE[X_2|Y])$
has the same law under all the~$\Pr[\Bcdot|Y=y]$.
So we can compute the quantities linked to~$f^1$ in the same way
as we computed those linked to~$f^0$: denoting
\[ \hat{\beta} \coloneqq \frac{\tilde{\beta} \cdot \tilde{\alpha}}{\|\tilde{\alpha}\|^2} \, \tilde{\alpha} ; \]
\[\beta^\dag \coloneqq \tilde{\beta} - \hat{\beta} \]
(see Figure~\ref{fig-abo}), one finds
\begin{eqnarray}
f^1 &=& (\tilde{\alpha}+\tilde{\beta})\cdot\xi ;\\
f_1^1 &=& (\tilde{\alpha}+\hat{\beta})\cdot\xi ;\\
f_2^1 &=& \beta^\dag \cdot \xi ,
\end{eqnarray}
whence respectively:
\begin{eqnarray}
V^1 &=& \| \tilde{\alpha} + \tilde{\beta} \|^2 ;\\
V_1^1 &=& \|\tilde{\alpha}+\hat{\beta}\|^2 ;\\
V_2^1 &=& \|\beta^\dag\|^2.
\end{eqnarray}
As for~$f^0$, we check that $V^1 = V_1^1 + V_2^1$,
since $\tilde{\alpha} + \tilde{\beta}$ is the orthogonal sum of
$\tilde{\alpha} + \hat{\beta}$ and~$\beta^\dag$.
Moreover one always has $V^1 \leq V^0$, resp.\ $V_2^1 \leq V_2^0$:
the first inequality follows indeed from ${(\tilde{\alpha} + \tilde{\beta})}$'s being
the projection of~${(\alpha + \beta)}$ on~$(\RR\omega)^\perp$,
and the second one from $\beta^\dag$'s being
the projection of~$\beta^*$ on~${(\RR\omega+\RR\alpha)}^\perp$.
These inequalities are consistent with the following corollary of Claim~\ref{clm1908},
obtained by applying the claim conditionally to~$\mcal{G}_{j-1}$
with the role of ``$f$'' played by~$f^{j-1}$ and the role of ``$Y$'' played by~$Y_j$:
\begin{Pro}\label{clm3701}
For all~$1\leq j\leq M$, all~$0\leq i\leq N$,
\[ \sum_{i'>i} V_{i'}^j \leq \sum_{i'>i} V_{i'}^{j-1} .\]
\end{Pro}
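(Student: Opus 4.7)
The plan is to unfold the hint given in the paper: Proposition~\ref{clm3701} should follow by applying Claim~\ref{clm1908} (which was the content of the first step of the proof of Lemma~\ref{lem7265}) conditionally to~$\mcal{G}_{j-1}$, with the role of ``$f$'' in the lemma played by~$f^{j-1}$ and the role of ``$Y$'' played by the single variable~$Y_j$.

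First I would set the stage. Reasoning under~$\Pr[\Bcdot|\mcal{G}_{j-1}]$, the function~$\dot f\coloneqq f^{j-1}$ lies in~$\ldb(\vec X)$, so we may apply the construction of Lemma~\ref{lem7265} with $\dot{\mcal{F}}_i\coloneqq\sigma(X_1,\dots,X_i)$ and $\dot{\mcal{F}}^*_i\coloneqq\sigma(X_1,\dots,X_i,Y_j)$. As already remarked in the proof of Theorem~\ref{thm6413}, the associated auxiliary functions are exactly
\[ \dot f_{i'} = f^{j-1}_{i'}, \qquad \dot f^*_{i'} = f^j_{i'}, \]
so the quantities $\tilde V_i=\sum_{i'>i}\Var(\dot f_{i'}|\mcal{G}_{j-1})$ and $\tilde V^*_i=\sum_{i'>i}\Var(\dot f^*_{i'}|\mcal{G}_{j-1})$ of Lemma~\ref{lem7265} become
\[ \tilde V_i = \sum_{i'>i}\Var\!\big(f^{j-1}_{i'}\big|\mcal{G}_{j-1}\big),\qquad \tilde V^*_i = \sum_{i'>i}\Var\!\big(f^j_{i'}\big|\mcal{G}_{j-1}\big). \]

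Next I invoke (the conditional version of) Claim~\ref{clm1908}, which states that $\tilde V_i-\tilde V^*_i=\Var\!\big((\tilde f_i)^{\dot{\mcal{F}}^*_i}\big|\mcal{G}_{j-1}\big)\geq 0$. Hence conditionally
\[ \sum_{i'>i}\Var\!\big(f^j_{i'}\big|\mcal{G}_{j-1}\big)\;\leq\;\sum_{i'>i}\Var\!\big(f^{j-1}_{i'}\big|\mcal{G}_{j-1}\big). \]

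It remains to descend from conditional to unconditional variances. Here I would use that by construction $f^j_{i'}=f^{\mcal{G}_j\vee\mcal{F}_{i'}}-\EE[f|\mcal{G}_j\vee\mcal{F}_{i'-1}]$ is centered w.r.t.\ $\mcal{G}_j\vee\mcal{F}_{i'-1}$, in particular $\EE[f^j_{i'}|\mcal{G}_{j-1}]\equiv 0$; by associativity of variance this yields $V^j_{i'}=\EE\!\big[\Var(f^j_{i'}|\mcal{G}_{j-1})\big]$, and the analogous identity holds for~$f^{j-1}_{i'}$. Integrating the displayed inequality over~$\mcal{G}_{j-1}$ thus gives $\sum_{i'>i}V^j_{i'}\leq\sum_{i'>i}V^{j-1}_{i'}$, which is the desired bound.

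There is no real obstacle: the argument is essentially a translation of Claim~\ref{clm1908} into the two-parameter notation. The only point deserving some care is the identification $\dot f_{i'}=f^{j-1}_{i'}$ and $\dot f^*_{i'}=f^j_{i'}$, together with checking that $\mcal{G}_{j-1}$ is contained in all the relevant $\sigma$-algebras so that the conditional expectations reduce as claimed; once this bookkeeping is done, the nonnegativity statement of Claim~\ref{clm1908} together with associativity of variance yields the proposition immediately.
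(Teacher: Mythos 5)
Your proof is correct and matches the paper's own argument exactly: the paper states that Proposition~\ref{clm3701} is a corollary of Claim~\ref{clm1908} applied conditionally to~$\mcal{G}_{j-1}$ with ``$f$'' replaced by~$f^{j-1}$ and ``$Y$'' replaced by~$Y_j$, which is precisely the substitution you carry out. Your bookkeeping (the identifications $\dot f_{i'}=f^{j-1}_{i'}$, $\dot f^*_{i'}=f^j_{i'}$, and the passage from conditional to unconditional variances via centeredness w.r.t.\ $\mcal{G}_{j-1}$ and associativity of variance) is exactly what makes the reduction rigorous.
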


\subsection{Numerical computations}\label{par411}

Now let us see a numerical example.
Our parameters will be chosen so that the function~$f$ defined by~(\ref{forX1+X2})
is optimal in the supremum~(\ref{for3878})
defining the Hilbertian correlation coefficient ${\{\vec{X}:Y\}}$;
other than that, the behaviour of our example will be generic:
\[ \mbf{M} = \begin{pmatrix} 4&1&1 \\ 1&4&1 \\ 1&1&4 \end{pmatrix} .\]
For that $\mbf{M}$ the calculations of the previous subsection give:

\begin{center}
\renewcommand\tabcolsep{5pt}
\noindent\begin{tabular}{|c||c|c|c|c|c|c|c|c|c|c|c|c|c|c|c|c|c|c|}
\hline
$\chi$  & $\alpha$ & $\beta$ & $\gamma$ & $\alpha\vec{\times}\beta$ &
$\beta^1$ & $\omega^1$ & $\beta^*$ & $\omega^*$ &
$\bar{\alpha}$ & $\bar{\beta}$ & $\tilde{\alpha}$ & $\tilde{\beta}$ & 
$\hat{\beta}$ & $\beta^\dag$ \\
\hline\hline
$\chi_1$ & $4$ & $1$ & $1$ & $-3$ & $2$  & $2$  & $-1$ & $-1$  & $1/2$ & $1/2$ & $7/2$ & $1/2$ & $7/6$ & $-2/3$ \\
\hline
$\chi_2$ & $1$ & $4$ & $1$ & $-3$ & $1/2$ & $1/2$ & $7/2$ & $1/2$ & $1/2$ & $1/2$ & $1/2$ & $7/2$ & $1/6$ & $10/3$ \\
\hline
$\chi_3$ & $1$ & $1$ & $4$ & $15$ & $1/2$ & $1/2$ & $1/2$ & $7/2$ & $2$  & $2$  & $-1$ & $-1$ & $-1/3$ & $-2/3$ \\
\hline
\end{tabular}
\end{center}

\noindent whence $\{X_1:Y\} = 1/2$ and $\{X_1:Y\}_{X_2} = 1/3$,
thus $\{X_1:Y\}_{\mcal{M}} = 1/2$;
and similarly $\{X_2:Y\}_{\mcal{M}} = 1/2$.

Then Theorem~\ref{thm5750} yields:
\[ \{ \vec{X} : Y \} \leq 1/\sqrt{2} = 0.707\ldots ,\]
and even, according to the refinements of \S~\ref{parMinimalHypotheses}:
\[ \{ \vec{X} : Y \} \leq \sqrt{13}/6 = 0.600\ldots ;\]
on the other hand, the true result is:
\[ \{ \vec{X} : Y \} = 1/\sqrt{3} = 0.577\ldots .\]
So here the bound~(\ref{for3009}) is (fortunately!) correct, and even rather sharp.

Now, as the proof of Theorem~\ref{thm6413} consists in studying the relations
between the~$V^j_i$, let us see what these quantities look like here.
One computes:
\[ \begin{pmatrix} V_1^0 & V_2^0 \\ V_1^1 & V_2^1 \end{pmatrix} =
\begin{pmatrix} 40\frac{1}{2} & 13\frac{1}{2} \\ 24 & 12 \end{pmatrix} .\]
As a first consequence, we can check the conclusions of Proposition~\ref{clm3701}:
$V_2^1 = 12 \leq V_2^0 = 13\frac{1}{2}$,
resp.\ $V_1^1+V_2^1 = 36 \leq V_1^0+V_2^0 = 54$.
Going further, we check the conclusions of Claim~\ref{clm1959},
which forbids the differences~$V_2^0 - V_2^1$ and~$(V_1^0+V_2^0) - (V_1^1+V_2^1)$
to be too large: for the first difference, one has
$V_2^0 - V_2^1 = 1\frac{1}{2} \leq \epsilon_2^2 V_2^0 = 3\frac{3}{8}$%
\footnote{According to \S~\ref{parMinimalHypotheses},
one can replace $\epsilon_2 = 1/2$ by~$\{X_2:Y\}_{X_1} = 1/3$
in this inequality. Then the inequality even becomes an equality:
this is linked to the optimality of certain tensorization results for Gaussian variables,
cf.\ \S~\ref{parOptimality}.}, and for the second one,
$(V_1^0+V_2^0) - (V_1^1+V_2^1) = 18 \leq
\big( \epsilon_1 V_1^0 + \sqrt{V_2^0-V_2^1} \big)^2 = 19.419\dots$.

\subsection{Some traps to avoid}

To finish with this appendix,
I would like to comment on what is true or not about the~$V^j_i$ in general situations.
Proposition~\ref{clm3701} pointed out that for all~$\hat\imath \in \{0,\ldots,N\}$,
$\sum_{i>\hat\imath} V^j_{i}$ is a nonincreasing function of~$j$;
in particular, when one looks at the table of the~$V^j_i$,
the last term ($\hat\imath=N-1$), resp.\ the total ($\hat\imath=0$) of line $j$
can only decrease.
Moreover, if in some line $j$ all the~$V^j_i$ are zero from some position $\hat\imath+1$,
then this property remains true in all the lower lines $j'>j$.
That can be explained very simply, since
saying that all the~$V^j_i$ are zero from position $\hat\imath+1$
means indeed that $f$ is $(\mcal{G}_j\vee\mcal{F}_{\hat\imath})$-measurable,
hence \emph{a fortiori} $(\mcal{G}_{j'}\vee\mcal{F}_{\hat\imath})$-measurable.
The following example, in which $f$ turns out to be $2X_1$, illustrates this phenomenon:
\[ \mbf{M} = \begin{pmatrix} 1&0&0 \\ 1&0&0 \\ 1&0&1 \end{pmatrix}
\qquad \Rightarrow \qquad
\begin{pmatrix} V_1^0 & V_2^0 \\ V_1^1 & V_2^1 \end{pmatrix} =
\begin{pmatrix} 4 & 0 \\ 2 & 0 \end{pmatrix} .\]

However, keep careful: almost anything else
you would like to say about the table of the~$V_i^j$ would be false!
In particular, for $i<N$, $V_i^j$ is not a nonincreasing function of~$j$ in general;
it is not even true that $V_i^j = 0 \ \Rightarrow V_i^{j'>j} = 0$,
as shown by the following example:
\[\label{for9186a} \mbf{M} = \left(\begin{array}{ccc} 1&0&0 \\ \!\!-1&1&0 \\ 1&1&1 \end{array}\right)
\qquad \Rightarrow \qquad
\begin{pmatrix} V_1^0 & V_2^0 \\ V_1^1 & V_2^1 \end{pmatrix} =
\begin{pmatrix} 0 & 1 \\ 1/6 & 1/2 \end{pmatrix} .\]

It is not true either that,
if $V^j$ remains unchanged from one line to another
(that is, the total of the~$V^j_i$ remains unchanged),
then all the~$V^j_i$ are unchanged.
In fact, that $V^{j+1}$ is equal to~$V^j$ means that, conditionally to~$\mcal{G}_j$,
$f^j$ is centered w.r.t.~$Y_{j+1}$, and then $f^{j+1} = f^j$.
However, the way $f^{j+1}$ decomposes into a sum of~$f^{j+1}_i$
may be different to the way $f^j$ decomposed into a sum of~$f^j_i$,
because conditioning w.r.t.~$Y_{j+1}$ may make the law of the~$X_i$ change!
That is what happens in the following example:
\[\label{for9186b} \mbf{M} = \begin{pmatrix} 1&0&1 \\ 0&1&\!\!-1 \\ 0&0&1 \end{pmatrix}
\qquad \Rightarrow \qquad
\begin{pmatrix} V_1^0 & V_2^0 \\ V_1^1 & V_2^1 \end{pmatrix} =
\begin{pmatrix} 1/2 & 3/2 \\ 1 & 1 \end{pmatrix} .\]

\section{Appendix: A corollary of the Perron--Fro\-be\-nius theorem}%
\label{parACorollaryOfThePerronFrobeniusTheorem}

In this appendix I handle a lemma used
in the proof of Theorem~\ref{thm6413}.
We are working on the vector space $\RR^N$ for some $N>0$;
a vector or a matrix is said to be ${>0}$ if all its entries are positive,
resp.~${\geq 0}$ if all its entries are nonnegative.
Then the \emph{Perron--Frobenius theorem}~\cite[Theorem~8.3.1]{HornJohnson} states that
if a square matrix $A$ is ${\geq 0}$, then $A$ has some ${\geq 0}$ eigenvector
for the eigenvalue $\rho(A)$. Our goal here is prove the following corollary:
\begin{Lem}\label{l3542}
Let~$A \geq 0$ be a square matrix,
then:
\[\label{for6491} \inf \big\{ \lambda\geq 0 \colon (\exists u > 0)(Au \leq \lambda u) \big\} = \rho(A) .\]
\end{Lem}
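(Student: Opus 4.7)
The plan is to prove the two inequalities separately. Let $S$ denote the left-hand side of~(\ref{for6491}).

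For the inequality $\rho(A) \leq S$, I would argue as follows. Suppose $u > 0$ satisfies $Au \leq \lambda u$. Since $A \geq 0$, the matrix $A^k$ is also $\geq 0$ for every $k \geq 0$, and applying $A^k$ to the (nonnegative) inequality $\lambda u - Au \geq 0$ preserves it, so by induction $A^k u \leq \lambda^k u$ coordinatewise. Setting $u_{\min} := \min_i u_i > 0$ and $u_{\max} := \max_i u_i$, any vector $v$ with $\|v\|_\infty \leq 1$ satisfies $-u/u_{\min} \leq v \leq u/u_{\min}$; applying $A^k \geq 0$ gives $\|A^k v\|_\infty \leq \lambda^k u_{\max}/u_{\min}$, whence $\VERT A^k\VERT_\infty^{1/k} \leq \lambda \cdot (u_{\max}/u_{\min})^{1/k}$. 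Taking $k \to \infty$ yields $\rho(A) \leq \lambda$, and minimising over $\lambda$ gives $\rho(A) \leq S$.

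For the reverse inequality $S \leq \rho(A)$, the main step is to obtain, for every $\lambda > \rho(A)$, a strictly positive $u$ with $Au \leq \lambda u$. The Perron--Frobenius theorem alone only provides a \emph{nonnegative} eigenvector for $\rho(A)$, which may have zero coordinates; so I would resolve this by perturbation. Fix $\epsilon > 0$ and consider $A_\epsilon := A + \epsilon J$ where $J$ is the all-ones matrix. Then $A_\epsilon > 0$ entrywise, so the stronger form of the Perron--Frobenius theorem for strictly positive matrices (see \cite[Theorem~8.2.8]{HornJohnson}) yields a strictly positive eigenvector $u_\epsilon > 0$ with $A_\epsilon u_\epsilon = \rho(A_\epsilon) u_\epsilon$. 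Since $\epsilon J u_\epsilon \geq 0$, we have $Au_\epsilon \leq A_\epsilon u_\epsilon = \rho(A_\epsilon) u_\epsilon$.

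To conclude, I would invoke continuity of the spectral radius for matrix entries: $\rho(A_\epsilon) \to \rho(A)$ as $\epsilon \searrow 0$. Therefore, given any $\lambda > \rho(A)$, choosing $\epsilon$ small enough makes $\rho(A_\epsilon) \leq \lambda$, and then $u_\epsilon > 0$ witnesses $Au_\epsilon \leq \lambda u_\epsilon$, proving $\lambda \geq S$. Letting $\lambda \searrow \rho(A)$ gives $\rho(A) \geq S$, closing the loop. The only subtle point is the use of the strong Perron--Frobenius theorem to ensure the eigenvector is \emph{strictly} positive; everything else is a routine perturbation and continuity argument.
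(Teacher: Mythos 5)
Your proof is correct, but it takes a genuinely different route from the paper's on both halves. For the direction $\rho(A)\leq S$, the paper uses the Perron eigenvector $v\geq 0$ and, given any $u>0$, finds the $\beta\geq 0$ at which $u-\beta v$ first touches zero, then derives a contradiction at that coordinate when $\lambda<\rho(A)$. You instead run an elementary growth estimate: from $Au\leq\lambda u$ and $A^k\geq 0$ you deduce $A^ku\leq\lambda^ku$, sandwich $A^kv$ between $\pm A^ku/u_{\min}$ to bound $\VERT A^k\VERT_\infty$, and invoke Gelfand's formula $\rho(A)=\lim\VERT A^k\VERT^{1/k}$. This avoids Perron--Frobenius entirely on this side, which is a real simplification. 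For the direction $S\leq\rho(A)$, the paper works with the (possibly degenerate) nonnegative Perron eigenvector of $A$ itself and handles zero coordinates by reordering into block-triangular form and inducting on the dimension, producing a strictly positive $u$ by an $\eta$-perturbation. You replace the induction by a perturbation of the \emph{matrix}: $A_\epsilon=A+\epsilon J>0$ has a strictly positive Perron eigenvector $u_\epsilon$ by the strong Perron theorem, and $Au_\epsilon\leq\rho(A_\epsilon)u_\epsilon$ follows immediately; you then let $\epsilon\searrow0$ using continuity of the spectral radius. Your version buys a shorter, cleaner argument at the cost of needing two extra standard inputs (Gelfand's formula and continuity of $\rho$ in the entries); the paper's version is longer but closer to self-contained, relying only on the weak Perron--Frobenius statement quoted just before the lemma. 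Both are sound.
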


\begin{proof}
We prove separately each way of the equality.
Let us begin with way~``$\leq$''.
Let~$v \geq 0$ be some eigenvector of~$A$ for the eigenvalue $\rho(A)$.
If $v > 0$, then the value $\lambda = \rho(A)$ checks the condition in the infimum and we are done.
Otherwise if $v \ngtr 0$, up to a permutation of indices it has the form
$(0,\ldots,0,v'_{n+1},\ldots,v'_N)$ with $0<n<N$ and all the~$v'_i$ positive.
Reasoning by induction,
assume that we have proved the way~``$\leq$'' of the lemma
for all~$n<N$.
Then the form of the eigenvector~$v$ forces~$A$ to write blockwise
\[ A = \left( \begin{array}{cc} \tilde{A} & 0 \\ * & * \end{array} \right) \]
with $\RR^{n\times n} \ni \tilde{A} \geq 0$.
I claim that $\rho(\tilde{A}) \leq \rho(A)$, since
if $\tilde{v}$ is an eigenvector of~$\tilde{A}$ for the eigenvalue~$\rho(\tilde{A})$,
then for~$t \geq 0$
\[ A^{t} \big(\tilde{v},0,\ldots,0\big) = \big(\rho(\tilde{A})^{t}\,\tilde{v},*,\ldots,*\big) ,\]
so
\[\label{for5711}
\limsup_{t\longto\infty} \rho(\tilde{A})^{-t} \big|A^{t}(\tilde{v},0,\ldots,0)\big| > 0 \footnotemark \]
\footnotetext{Equation~(\ref{for5711}) is meaningless if $\rho(\tilde{A})=0$,
but in this case there is nothing to prove.}%
and consequently $\rho(A) \geq \rho(\tilde{A})$.
Now let~$\epsilon > 0$.
By induction hypothesis
there exists some $\RR^n \ni w \ab > 0$ such
that $\tilde{A}w \leq {(\rho(\tilde{A})+\epsilon) w}$.
Thus for~$\eta>0$, $\RR^N \ni (\eta w,v') > 0$ and
\[
A(\eta w,v') = \big( \eta \tilde{A} w,\rho(A)v'+O(\eta) \big)
\leq \big( \eta (\rho(\tilde{A})+\epsilon) w, \rho(A)v' + O(\eta) \big)
\stackrel{\eta \searrow 0}{\leq} \big(\rho(A)+\epsilon\big) (\eta w, v') .
\]
So $(\rho(A)+\epsilon)$ checks the condition in the right-hand side of the infimum,
which ends the proof of the way~``$\leq$'' of~(\ref{for6491}).

For the way~``$\geq$'', consider any~$\RR^N \ni u > 0$
and let again $v \geq 0$ be some eigenvector of~$A$
for the eigenvalue $\rho(A)$.
Then there exists a (unique) $\beta \geq 0$
such that $u-\beta v \geq 0$ but $u - \beta v \ngtr 0$.
For this $\beta$, one of the entries of~$\beta v$ and~$u$
is the same, say $\beta v_{i_0} = u_{i_0}$.
So if $\lambda < \rho(A)$,
\[ \lambda u_{i_0} < \rho(A) u_{i_0} = \rho(A) \beta v_{i_0} = \big(A(\beta v)\big)_{i_0}
\leq \big(A(\beta v)\big)_{i_0} + \big(A(u-\beta v)\big)_{i_0}
= (Au)_{i_0} ,\]
thus $Au \nleqslant \lambda u$.
That relation being true for any~$u>0$, $\lambda$ does not check the condition in the infimum,
which proves the way~``$\geq$'' of~(\ref{for6491}).
\end{proof}

\section{Appendix: A geometric consequence of results on correlations}\label{par3lines}

As I pointed out in Remark~\ref{rmk>3lines}, for Gaussian vectors
Hilbertian correlations can be interpreted in terms of Euclidian spaces.
In this appendix I will present a funny corollary of Lemma~\ref{l6791}
following from this interpretation.
Actually that result itself is more or less a pretext:
the real goal of this appendix is in fact to show in an eloquent way
the geometric meaning of maximal correlations
and the Hilbertian frame that underlies them.

First we need some vocabulary about Euclidian spaces:
\begin{Def}\strut
\begin{enumerate}
\item For~$L_1,L_2$ two vector lines in the Euclidian space $\RR^2$,
or more generally in any Hilbert space,
we call \emph{geometric angle} between~$L_1$ and~$L_2$,
denoted by~$\widehat{L_1L_2}$, their ``angle'' in the elementary sense:
for arbitrary $\vec{a}\in L_1\setminus\{0\}, \vec{b}\in L_2\setminus\{0\}$,
\[\widehat{L_1L_2}= \arccos \frac{|\vec{a}\cdot\vec{b}|}{\|\vec{a}\|\,\|\vec{b}\|} \in [0,\pi/2].\]
\item For~$L_1,L_2$ and~$L_3\neq L_1,L_2$ three vector lines in the Euclidian space $\RR^3$
(or any Hilbert space),
we call \emph{apparent angle between~$L_1$ and~$L_2$ seen from $L_3$}
the geometric angle that an observer located somewhere on~$L_3\setminus\{0\}$
would have the impression, due to perspective, that $L_1$ and~$L_2$ make
(see Figure~\ref{fig-3lines}): technically,
it is the geometric angle $\widehat{L'_1L'_2}$,
where $L_1'$ and~$L_2'$ are the respective orthogonal projections of~$L_1$ and~$L_2$
onto the plane $(L_3)^\perp$.
\end{enumerate}
\end{Def}

Then one has the following corollary of Lemma~\ref{l6791}:
\begin{Thm}\label{c8818}
Let~$L_1,L_2,L_3$ be three distinct vector lines of~$\RR^3$.
Denote $\widehat{A} \coloneqq \widehat{L_2L_3}, \widehat{B} \coloneqq \widehat{L_3L_1},
\widehat{\Omega} \coloneqq \widehat{L_1L_2}$,
and denote by~$\widehat{A}'$ the apparent angle between~$L_2$ and~$L_3$ seen from $L_1$,
resp.~$\widehat{B}'$ the apparent angle between~$L_3$ and~$L_1$ seen from $L_2$,
etc..
Then the relative order of~$\widehat{A}$ and~$\widehat{A}'$ is the same
as the relative order of~$\widehat{B}$ and~$\widehat{B}'$
and as the relative order of~$\widehat{\Omega}$ and~$\widehat{\Omega}'$, i.e.,
``$\widehat{A}'<\widehat{A}$'' (resp.\ ``$\widehat{A}'=\widehat{A}$'',
resp.\ ``$\widehat{A}'>\widehat{A}$'')
is equivalent to ``$\widehat{B}'<\widehat{B}$'' (resp.\ ``$\widehat{B}'=\widehat{B}$'',
resp.\ ``$\widehat{B}'>\widehat{B}$''),
etc..
\end{Thm}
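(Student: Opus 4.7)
The plan is to encode the geometric configuration into three jointly Gaussian random variables and then to apply Lemma~\ref{l6791} in two different orderings. Concretely, fix unit vectors $\vec{l}_k\in L_k$ and a standard Gaussian vector $\vec{\xi}\in\RR^3$, and set $X_k\coloneqq\vec{l}_k\cdot\vec{\xi}$: then $(X_1,X_2,X_3)$ is a centered Gaussian triple with $\Corr(X_i,X_j)=\langle\vec{l}_i,\vec{l}_j\rangle$, so by Theorem~\ref{pro1857} one has $\{X_i:X_j\}=\cos\widehat{L_iL_j}$, the absolute value being absorbed by the convention $\widehat{L_iL_j}\in[0,\pi/2]$. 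A routine conditional-Gaussian computation shows that, under $\Pr[\Bcdot\,|\,X_k=x]$, the pair $(X_i,X_j)$ is again Gaussian with covariance $\langle\vec{l}_i',\vec{l}_j'\rangle$ and variances $\|\vec{l}_i'\|^2$, $\|\vec{l}_j'\|^2$, where $\vec{l}_i'$ denotes the orthogonal projection of $\vec{l}_i$ onto $L_k^\perp$. Since this conditional law does not depend on $x$ up to an additive shift, applying Theorem~\ref{pro1857} a second time identifies $\{X_i:X_j\}_{X_k}$ with the cosine of the geometric angle between the projected lines $\RR\vec{l}_i'$ and $\RR\vec{l}_j'$—that is, with the cosine of the apparent angle of $L_i,L_j$ seen from $L_k$.

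With this dictionary in hand, I apply Lemma~\ref{l6791} (with $N=2$) to the Gaussian triple, letting $X_k$ play the role of ``$Y$''. Using the ordering $(X_i,X_j)$ and then the reverse ordering yields two expressions for the single quantity $\{(X_i,X_j):X_k\}^2$ which must agree; translated through the dictionary, the resulting identity asserts that $\sin\widehat{L_iL_k}$ times the sine of the apparent angle of $L_j,L_k$ seen from $L_i$ equals $\sin\widehat{L_jL_k}$ times the sine of the apparent angle of $L_i,L_k$ seen from $L_j$. Running this through the three choices $k\in\{1,2,3\}$ produces, in the theorem's notation, the three relations
\[
\sin\widehat{A}\,\sin\widehat{B}' = \sin\widehat{B}\,\sin\widehat{A}',\qquad
\sin\widehat{\Omega}\,\sin\widehat{A}' = \sin\widehat{A}\,\sin\widehat{\Omega}',\qquad
\sin\widehat{\Omega}\,\sin\widehat{B}' = \sin\widehat{B}\,\sin\widehat{\Omega}'.
\]

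Since the three lines are pairwise distinct, $\sin\widehat{A}$, $\sin\widehat{B}$, $\sin\widehat{\Omega}$ are all positive, so I may divide to obtain
\[
\frac{\sin\widehat{A}'}{\sin\widehat{A}} = \frac{\sin\widehat{B}'}{\sin\widehat{B}} = \frac{\sin\widehat{\Omega}'}{\sin\widehat{\Omega}}.
\]
All six angles lie in $[0,\pi/2]$, on which $\sin$ is strictly increasing, so this common ratio is simultaneously $<1$, $=1$, or $>1$ for all three pairs—which is precisely the equivalence asserted by the theorem. The only mildly delicate point in the plan is the identification of the conditional maximal correlation with the cosine of the apparent angle; once that translation is established, the theorem is a direct consequence of two applications of Lemma~\ref{l6791}.
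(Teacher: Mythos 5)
Your proposal is correct and takes essentially the same route as the paper: translating lines to jointly Gaussian variables via Theorem~\ref{pro1857}, identifying the subjective correlation $\{X_i:X_j\}_{X_k}$ with the cosine of the apparent angle (the paper's Proposition~\ref{procscs}), and then equating the two orderings in Lemma~\ref{l6791} to derive $\sin\widehat{A}'/\sin\widehat{A} = \sin\widehat{B}'/\sin\widehat{B}$. The only cosmetic difference is that you write all three relations explicitly, while the paper derives one and invokes cyclic permutation; the substance is identical.
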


\begin{Rmk}
I found Theorem~\ref{c8818} by chance, one day that I was looking for a situation
where one would have $\widehat{B}' > \widehat{B}$ but $\widehat{A}' < \widehat{A}$,
in order to build a `nice' example for \S~\ref{par411}.
I thought that such a situation would be generic,
but after having looked for it without success,
I realized that it was actually impossible, and that the explanation had a simple
interpretation in terms of correlations.
\end{Rmk}

\begin{figure}
\centering
\includegraphics[width=\textwidth]{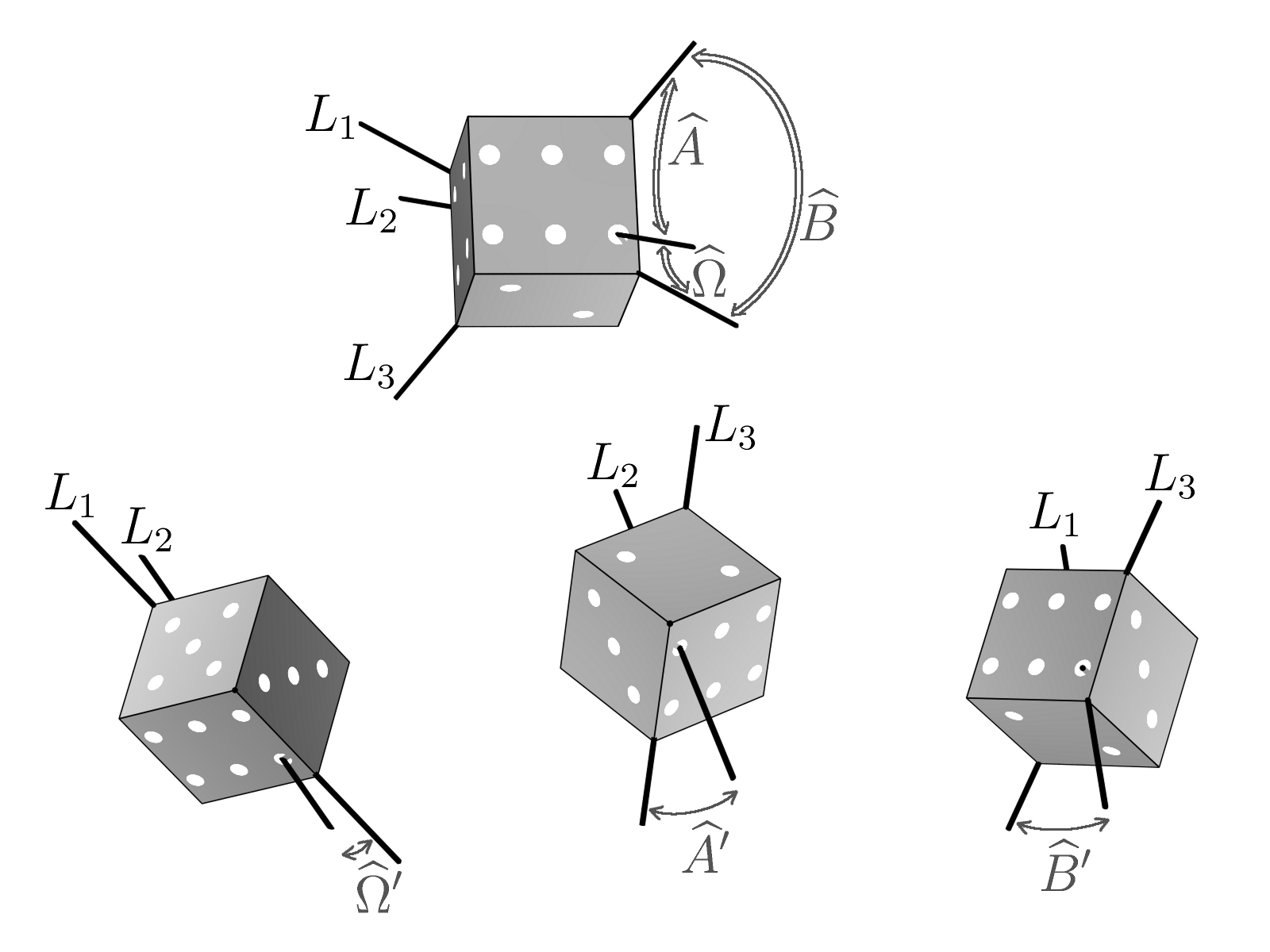}
\caption{This figure shows four different views of the same $3$-dimensional object.
What interests us actually is only the three concurrent lines $L_1,L_2,L_3$,
but we added a die
centered at their point of concurrency to see depth better on the pictures
{\footnotesize[Recall that on a die, the total number of points on two opposite faces is always $7$.]}.
On the top picture, the die is shown in generic position. We represent the angles~%
$\widehat{A}$, $\widehat{B}$ and~$\widehat\Omega$; these angles are $3$-dimensional angles,
which we underline by drawing them with double strokes.
On each of the bottom pictures, the die is viewed from the direction of one of the lines
(from the left to the right, $L_3$, $L_1$ and~$L_2$),
so that this line appears completely foreshortened.
We represent the angles $\widehat\Omega'$, $\widehat{A}'$ and~$\widehat{B}'$
made by the two other lines {\it as they appear on the drawing};
we underline that these angles are $2$-dimensional by drawing them with simple strokes
{\footnotesize[Note that in the case of~$\widehat{B}'$,
the angular sector representing $\widehat{B}'$
is not the projection of the angular sector representing $\widehat{B}$, but its supplementary%
—otherwise $\widehat{B}'$ would be greater than $\pi/2$, which would contradict
our `geometric' definition of angles.]}.
On this example one has
$\widehat{A}\simeq58\,^{\circ}, \widehat{B}\simeq71\,^{\circ}, \widehat\Omega\simeq15\,^{\circ}$ and $\widehat{A}'\simeq30\,^{\circ}, \ab \widehat{B}'\simeq34\,^{\circ}, \ab 
\widehat\Omega'\simeq9\,^{\circ}$; so, perspective makes angles appear
smaller than they are really for all three pairs of lines,
which is in accordance with Theorem~\ref{c8818}.}
\label{fig-3lines}
\end{figure}

\begin{proof}
Fix three arbitrary nonzero vectors $\alpha, \ab \beta, \ab \omega$ of resp.~$L_1, \ab L_2, \ab L_3$;
and consider the Gaussian system~(\ref{for5012}) of \S~\ref{parIllustrationOfTheProofs}
for these vectors.
Then the correlation coefficients between~$X_1$, $X_2$ and~$Y$
can be interpreted as angles between~$L_1$, $L_2$ and~$L_3$;
more precisely, one has the following correspondance:
\begin{Pro}\label{procscs}\strut
\begin{ienumerate}
\item\label{itmCos2}
$\{X_1:X_2\}$ is the cosine of the geometric angle between~$L_1$ and~$L_2$;
\item\label{itmCos3}
$\{X_1:X_2\}_Y$ is the cosine of the apparent angle between~$L_1$ and~$L_2$ seen from $L_3$.
\end{ienumerate}
\end{Pro}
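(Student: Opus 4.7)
The plan is to verify both items by direct application of Theorem~\ref{pro1857}, exploiting the fact that everything remains Gaussian under conditioning and that the computation already carried out in \S~\ref{parIllustrationOfTheProofs} identifies the relevant covariance structures with orthogonal projections.

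For item~(\ref{itmCos2}), I would simply read off the entries of the covariance matrix of $(X_1,X_2)$: since $(X_1,X_2) = (\alpha\cdot\xi,\beta\cdot\xi)$ with $\xi$ a standard Gaussian, one has $\Var(X_1) = \|\alpha\|^2$, $\Var(X_2) = \|\beta\|^2$ and $\Cov(X_1,X_2) = \alpha\cdot\beta$. Theorem~\ref{pro1857} applied to this $2$-dimensional Gaussian vector (after reducing its covariance matrix to the form required by that theorem via a diagonal rescaling, as allowed by Remark~\ref{rmk4919}) yields $\{X_1:X_2\} = |\alpha\cdot\beta|/(\|\alpha\|\,\|\beta\|)$, which by definition of the geometric angle equals $\cos \widehat{L_1L_2}$. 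This part is essentially bookkeeping.

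For item~(\ref{itmCos3}), the idea is to reduce to item~(\ref{itmCos2}) applied to a conditioned Gaussian vector. By the standard properties of Gaussian vectors recalled in \S~\ref{parIllustrationOfTheProofs}, the conditional law of $(X_1,X_2)$ knowing $Y=y$ is Gaussian whose covariance matrix does \emph{not} depend on $y$; its centered version is realised by $(\tilde\alpha\cdot\xi,\tilde\beta\cdot\xi)$, where $\tilde\alpha$ and $\tilde\beta$ are the orthogonal projections of $\alpha$ and $\beta$ onto $(\RR\omega)^\perp$. Hence ${\{X_1:X_2\}}$ is the same under every law $\Pr[\,\Bcdot\,|Y=y]$, so by the very definition of subjective correlation, $\{X_1:X_2\}_Y$ coincides with this common value. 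Applying Theorem~\ref{pro1857} once more gives $\{X_1:X_2\}_Y = |\tilde\alpha\cdot\tilde\beta|/(\|\tilde\alpha\|\,\|\tilde\beta\|)$. Finally, since $\tilde\alpha$ and $\tilde\beta$ are by construction direction vectors of the projections $L_1'$ and $L_2'$ of $L_1$ and $L_2$ onto $(L_3)^\perp$, this quantity is precisely $\cos \widehat{L_1'L_2'}$, i.e.\ the cosine of the apparent angle between $L_1$ and $L_2$ seen from $L_3$.

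I do not foresee any real obstacle: both identities follow from plugging the explicit covariance data into Theorem~\ref{pro1857}. The only mild subtlety is making sure that the definition of subjective correlation (which \emph{a priori} is an essential supremum over conditioning values) collapses to a single value here, which is guaranteed by the Gaussian conditional law being independent of~$y$; this is the only place where Gaussianity is really used beyond the statement of Theorem~\ref{pro1857} itself.
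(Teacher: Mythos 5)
Your proof is correct and takes essentially the same route as the paper: item~(\ref{itmCos2}) is exactly the Euclidean reading of Theorem~\ref{pro1857}, and item~(\ref{itmCos3}) follows from the fact that conditioning on a jointly Gaussian variable corresponds to orthogonal projection onto the orthogonal complement of the corresponding direction. The paper compresses this into one sentence per item, whereas you spell out the covariance bookkeeping and correctly flag the one genuine subtlety (that the essential supremum in the definition of subjective correlation collapses because the conditional covariance of $(X_1,X_2)$ given $Y=y$ does not depend on~$y$); that observation is implicit but unstated in the paper's proof.
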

\begin{proof}
(\ref{itmCos2}) is nothing but the Euclidian interpretation of Theorem~\ref{pro1857}.
(\ref{itmCos3}) follows from the fact that,
in the vector space spanned by jointly Gaussian real random variables,
conditional expectation corresponds to orthogonal projection
and independence corresponds to orthogonality.
\end{proof}

By Proprosition~\ref{procscs}, in our situation Lemma~\ref{l6791} gives:
\[\label{for0803a} \big\{(X_1,X_2):Y\big\} = \sqrt{1 - \sin^2\widehat{B} \, \sin^2\widehat{A}'} .\]
Obviously the roles of~$X_1$ and~$X_2$ can be interchanged in the above argument,
yielding:
\[\label{for0803b} \big\{(X_2,X_1):Y\big\} = \sqrt{1 - \sin^2\widehat{A} \, \sin^2\widehat{B}'} .\]
But $(X_1,X_2)$ and~$(X_2,X_1)$ generate the same $\sigma$-algebra,
so ${\{(X_1,X_2):Y\}} \ab = {\{(X_2,X_1):Y\}}$,
and thus, comparing~(\ref{for0803a}) and~(\ref{for0803b}):
\[ \frac{\sin\widehat{A}'}{\sin\widehat{A}} = \frac{\sin\widehat{B'}}{\sin\widehat{B}} .\]
This implies in particular that
$\sin\widehat{A},\sin\widehat{A}'$ and~$\sin\widehat{B},\sin\widehat{B}'$
have the same relative order, so also do $\widehat{A},\widehat{A}'$ and~$\widehat{B},\widehat{B}'$.
A cyclic permutation of~$L_1$, $L_2$ and~$L_3$
shows that the result is still valid for~$\widehat{\Omega},\ab \widehat{\Omega}'$.\linebreak[1]\strut
\end{proof}

\begin{Xpl}
See Figure~\ref{fig-3lines} on page~\pageref{fig-3lines}.
\end{Xpl}

\chapter{Other applications of tensorization techniques}%
\label{parOtherApplicationsOfTensorizationTechniques}%
\addcontentsline{itc}{chapter}{\protect\numberline{\thechapter}Other applications of tensorization techniques}

In the previous chapter we have been seeing
how Hilbertian decorrelation hypotheses between pairs of variables
could yield `global' results on an arbitrary number of variables,
by splitting functions of several variables into relevant telescopic sums.
I used the word ``tensorization'' to qualify these results,
as the conclusions were of the same nature as the hypotheses.

But the techniques of \S~\ref{parTensorization} can also be applied to get other types of results.
In this chapter I am going to show how, from Hilbertian decorrelation hypotheses,
one can get results on some classical features of particle systems
which are not linked with Hilbertian correlations \emph{a priori}.

I will deal with two such features.
First, I will look at the implications of $\rho$-mixing
on the existence of a central limit theorem%
—more precisely, of a \emph{spatial} central limit theorem,
since I am more interested in random fields than in sequences
(variables indexed by~$\ZZ^n$ rather than by~$\ZZ$).
Very sharp results concerning this issue are already known;
however, I find interesting to show how it goes with my `tensorization-like' approach:
this approach takes indeed a quite different way to do the job,
which may be neater by certain sides.
Moreover, the results are stated with a slighlty different vocabulary%
—namely, \emph{subjective} Hilbertian correlations.

Next, I will look at the question of spectral gap for Glauber dynamics.
Though this point has already been thouroughly studied in a $\beta$-mixing paradigm,
this work, to the best of my knowledge,
is the first to show how $\rho$-mixing can be used to tackle this issue.

My main goal here is just to show
\emph{how} the techniques of this work may be applied to the problems
of spatial central limit theorem and convergence of the Glauber dynamics.
Accordingly, I favoured the simplicity on proofs against the refinement of the results.

\section{Spatial central limit theorem}

\subsection{Introduction}

A fundamental result in probability theory is the central limit theorem (CLT),
which, in its standard statement, requires an assumption of complete independence.
It is natural to wonder whether that assumption can be relaxed
into an hypothesis of `near independence'.
Hilbertian decorrelations are a natural frame for such a generalization,
since the CLT already takes place in an $L^2$ setting.

Our point of view is motivated by statistical physics.
Let~$\ZZ^n$ be a lattice, on each vertex $i$ of which there is a random `spin' $X_i$
ranged in some space $\mcal{X}$ not depending on~$i$.
We assume that the law of the system is translation invariant,
i.e.\ that for all~$z\in\ZZ^n$, $(X_{i+z})_{i\in\ZZ^n}$ has the same law
as~$\vec{X}_{\ZZ^n}$.
Then, for all~$z \in \ZZ^n$, we denote
\[\label{f5933} \epsilon_z = \{ X_i : X_{i+z} \}_* .\]
We are interested in situations where the~$\epsilon_z$ are sufficiently `rapidly decreasing'
as $|z| \longto \infty$ so that $\sum_{z\in\ZZ^n} \epsilon_z < \infty$.

Let~$f \colon \mcal{X} \to \RR$ be a function such that $f(X_0)$ is square-integrable and centered.
The question is, does one get a CLT when summing $f(X_i)$ for~$i$ in a large subset of~$\ZZ^n$,
i.e., does the sum grow as the square root of the number of its terms
and have asymptotically normal distribution?
For instance, we would like the law of the variable
\[ \frac{1}{\sqrt{\ell^n}} \sum_{\substack{i\in\ZZ^n\\0\leq i_1,\ldots,i_n<\ell}} f(X_i) \]
to weakly converge, when~$\ell \longto \infty$, to some Gaussian distribution.

\begin{Rmk}
Note that the limit distribution, if it exists, will have to be centered,
but its variance will not be equal to~$\Var\big(f(X_0)\big)$ in general.
\end{Rmk}

In the case $n=1$, extremely sharp results for this topic have been known from long;
let us cite, among many others, \cite{Rosenblatt56, Ibragimov75, Peligrad87, Bradley92}.
For $n\geq 2$, similar results also exist; see e.g.\ \cite[Theorem~5]{Bradley92} for such a result,
and~\cite[\S~29]{Bradley-book} for a survey of the topic.
All these proofs relie on some `coupling' between (bunches of) the spins
and other convenient variables which are close to them,
but which are \emph{actually} independent,
so as to deduce the CLT for the former from the CLT for the latter.
On the other hand, my proof will mimick
Lévy's proof of the CLT, hence needing no coupling argument.

\emph{A priori} the results presented here do not improve the state of the art;
however, when turning to quantitative versions of these results,
it is likely that the difference between the usual method and mine
would yield a difference in the corresponding non-asymptotic bounds obtained.

\subsection{Product of weakly coupled variables}

My results relie on the following
\begin{Lem}\label{l1840}
Let~$N \geq 1$ and let~$\dot{\mcal{F}}_1, \ldots, \dot{\mcal{F}}_N$ be $\sigma$-algebras with
$\{ \dot{\mcal{F}}_i : \dot{\mcal{F}}_j \}_* \leq \epsilon_{ij}$%
\footnote{As in \S~\ref{parMachineryForUsingTheTensorizationTheorems},
``$*$'' stands for ``the natural $\sigma$-metalgebra of the system''.
Moreover, in the same way as in \S~\ref{parMinimalHypotheses},
it is actually possible in the statement of the lemma
to replace that~$\sigma$-metalgebra by smaller ones.},
and denote
\[ \bar\epsilon = \sup_i \sum_{j\neq i} \epsilon_{ij} .\]
Let~$\Phi_1, \ldots, \Phi_N$ be complex-valued random variables with $|\Phi_i|\leq1$ a.s.,
such that $\Phi_i$ is $\dot{\mcal{F}}_i$-measurable for all~$i$,
with all the~$\Phi_i$ having the same distribution.
Then, denoting by~$\phi$ the common value of the~$\EE[\Phi_i]$,
\[\label{fo9356} \Big| \EE\Big[ \prod_{i} \Phi_i \Big] - \phi^N \Big| \leq
N \bar\epsilon(1+\bar\epsilon) (1-|\phi|^2) .\]
\end{Lem}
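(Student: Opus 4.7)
The plan is to use a telescoping decomposition together with the complex version of the correlation bound. First I would verify, by straightforward induction on $N$, the algebraic identity
\[
\prod_{i=1}^N \Phi_i - \phi^N = \sum_{k=1}^N \phi^{N-k}(\Phi_k - \phi)\prod_{i<k}\Phi_i,
\]
so that taking expectation yields
\[
\EE\Big[\prod_i \Phi_i\Big] - \phi^N = \sum_{k=1}^N \phi^{N-k}\, D_k, \qquad D_k \coloneqq \EE\Big[(\Phi_k - \phi)\prod_{i<k}\Phi_i\Big].
\]

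For each $k$, since $(\Phi_k - \phi)$ is centered and $\dot{\mcal{F}}_k$-measurable while $\prod_{i<k}\Phi_i$ is measurable w.r.t.\ $\bigvee_{i<k}\dot{\mcal{F}}_i$, the complex correlation bound (the proposition following Definition~\ref{def4393}) gives
\[
|D_k| = \Big|\EE\Big[(\Phi_k - \phi)\Big(\prod_{i<k}\Phi_i - \EE\!\prod_{i<k}\Phi_i\Big)\Big]\Big|
\leq \{\dot{\mcal{F}}_k : \textstyle\bigvee_{i<k}\dot{\mcal{F}}_i\}_* \cdot \ecty(\Phi_k) \cdot \ecty\bigl(\prod_{i<k}\Phi_i\bigr).
\]
The subjective version of simple tensorization (Theorem~\ref{thm5750} in the form of Corollary~\ref{cor4274}) bounds the correlation factor above by $\sqrt{1 - \prod_{i<k}(1-\epsilon_{ki}^2)} \leq \sum_{i<k}\epsilon_{ki} \leq \bar\epsilon$, and $\ecty(\Phi_k)^2 = \EE|\Phi_k|^2 - |\phi|^2 \leq 1-|\phi|^2$.

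The hard part will be estimating $\ecty(\prod_{i<k}\Phi_i)$: a trivial bound only gives $\leq 1$, whereas the desired conclusion requires a second factor essentially of size $\sqrt{1-|\phi|^2}$ (up to the $(1+\bar\epsilon)$ slack) to combine with $\ecty(\Phi_k)$ and yield the $(1-|\phi|^2)$ on the right-hand side. I would attack this by simultaneous induction on $N$, proving the lemma in parallel with an auxiliary $L^2$ control on partial products, and applying the inductive lemma \emph{twice}: once to the complex $\Phi_i$ themselves, and once to the real variables $|\Phi_i|^2 \in [0,1]$, which satisfy the same decorrelation hypothesis on the same $\sigma$-algebras and whose common mean is $q = \EE|\Phi|^2 \in [|\phi|^2,1]$. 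Combining these via $\ecty(\prod_{i<k}\Phi_i)^2 = \EE|\prod_{i<k}\Phi_i|^2 - |\EE\prod_{i<k}\Phi_i|^2$ gives the required variance estimate; this is where the $(1+\bar\epsilon)$ bootstrap factor naturally appears.

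Finally, using $|\phi|^{N-k}\leq 1$ and summing over $k$ produces the linear-in-$N$ estimate. The main subtleties will be (a) closing the double induction cleanly so that the constant remains $(1+\bar\epsilon)$ rather than compounding to $(1+\bar\epsilon)^N$, and (b) using essentially that the decorrelation hypothesis $\{\dot{\mcal{F}}_i:\dot{\mcal{F}}_j\}_* \leq \epsilon_{ij}$ is \emph{subjective}, so that tensorization can be invoked at each inductive step and so that the lemma applies to the $|\Phi_i|^2$ with the same $\bar\epsilon$.
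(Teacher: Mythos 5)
Your first telescoping identity, yielding $\EE\big[\prod_i\Phi_i\big]-\phi^N=\sum_k\phi^{N-k}D_k$, is correct and coincides with the paper's own starting point ($\psi^{(N)}-\phi^N=\sum_i\phi^{N-i}\,(\psi^{(i)}-\phi\psi^{(i-1)})$). The gap is in how you propose to bound $D_k$. You apply a \emph{global} Cauchy--Schwarz, which forces you to control $\ecty\big(\prod_{i<k}\Phi_i\big)$, and the estimate you need --- that this is $\lesssim(1+\bar\epsilon)\sqrt{1-|\phi|^2}$ --- is simply false. Already for independent $\Phi_i$ with $|\Phi_i|\equiv 1$ one has $\Var\big(\prod_{i<k}\Phi_i\big)=1-|\phi|^{2(k-1)}$, which tends to $1$ as $k$ grows; so the partial-product variance is not of order $1-|\phi|^2$ but grows towards $1$ uniformly in all small parameters. (The lemma is still trivially true in that example because the \emph{correlation} factor in $D_k$ vanishes, not the variance factor.) In the genuinely relevant regime $|\Phi_i|\equiv 1$, $\bar\epsilon>0$, $|\phi|$ close to $1$, your scheme would at best give $|D_k|\lesssim\bar\epsilon\,\delta\cdot 1$, summing to $O(N\bar\epsilon\delta)=O\big(N\bar\epsilon\sqrt{1-|\phi|^2}\big)$, which is an order of magnitude weaker than the claimed $O\big(N\bar\epsilon(1-|\phi|^2)\big)$ as $\phi\to1$. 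Your proposed repair, applying the lemma inductively to $|\Phi_i|^2$ as well, does not close this: with $q:=\EE|\Phi|^2=1$ the bound $\EE\big|\prod_{i<k}\Phi_i\big|^2\le q^{k-1}+O(k\bar\epsilon)$ is useless since $q^{k-1}=1$, and it delivers no $1-|\phi|^2$ factor.

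The paper avoids this entirely by never invoking a global Cauchy--Schwarz on the partial product. Instead it decomposes $\Psi^{(k-1)}:=\prod_{i<k}\Phi_i$ \emph{orthogonally} along the filtration $\mcal{F}_1\subset\cdots\subset\mcal{F}_{k-1}$ as $\Psi^{(k-1)}=\psi^{(k-1)}+\sum_j\Psi^{(k-1)}_j$ and similarly $\Phi_k-\phi=\sum_j\Phi_{k,j}$, so that $D_k=\sum_{j<k}\EE\big[\Psi^{(k-1)}_j\Phi_{k,j}\big]$ by orthogonality. Each summand is then handled by a \emph{local} Cauchy--Schwarz, and the crucial gain is that $\ecty(\Phi_{k,j})\le\epsilon_{kj}\delta$ carries a per-term $\epsilon_{kj}$ factor, while $\ecty(\Psi^{(k-1)}_j)\le(1+\bar\epsilon)\delta$ holds \emph{uniformly in $j$} (this is the technical heart, proved from the recursion $\Psi^{(i)}_j=\Psi^{(i-1)}_j(\Phi_i)^{\mcal{F}_j}+\Psi^{(i-1)}_{j-1}\Phi_{i,j}$). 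Summing over $j$ gives $|D_k|\le(1+\bar\epsilon)\delta^2\sum_{j<k}\epsilon_{kj}\le(1+\bar\epsilon)\bar\epsilon\,\delta^2$, and then $\delta^2\le1-|\phi|^2$. The moral: what must be bounded uniformly are the orthogonal \emph{pieces} $\Psi^{(k-1)}_j$ of the partial product, not the partial product itself; its total variance genuinely grows with $k$, and the $\epsilon_{kj}$ weights in the local pairing are what rescue the estimate.
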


\begin{proof}
Denote $\delta \coloneqq \ecty(\Phi)$. Since $\EE[|\Phi^2|] \leq 1$,
the definition of (complex) variance ensures that $\delta \leq \sqrt{1-|\phi|^2}$.

For all~$i \in \{0,\ldots,N\}$, denote $\mcal{F}_i \coloneqq \bigvee_{i'\leq i}\dot{\mcal{F}}_i$ ;
denote $\Psi^{(i)} \coloneqq \prod_{i'\leq i} \Phi_{i'}$; define
\[ \Psi^{(i)}_j \coloneqq (\Psi^{(i)})^{\mcal{F}_j} - \EE[\Psi^{(i)}|\mcal{F}_{j-1}] \]
and denote $\Delta^{(i)}_j \coloneqq \ecty(\Psi^{(i)}_j)$.
Also denote
\[ \Phi_{i,j} \coloneqq (\Phi_i)^{\mcal{F}_j} - \EE[\Phi_i|\mcal{F}_{j-1}] .\]

Usual manipulation on conditioning shows that, for~$i\geq1$,
\[ \Psi^{(i)}_j = \Psi^{(i-1)}_j (\Phi_i)^{\mcal{F}_j}
+ \Psi^{(i-1)}_{j-1} \Phi_{i,j} .\]
Since $\|\Phi_i\|_{L^\infty} \leq 1$,
one has also $\| (\Phi_i)^{\mcal{F}_j} \|_{L^\infty} \leq 1$, hence
\[ \ecty\big( \Psi^{(i-1)}_j (\Phi_i)^{\mcal{F}_j} \big)
\leq \ecty(\Psi^{(i-1)}_j) = \Delta^{(i-1)}_j .\]
Similarly, it is obvious that $\|\Psi^{(i-1)}\|_{L^\infty} \leq 1$, whence
\[ \ecty\big( \Psi^{(i-1)}_{j-1} \Phi_{i,j} \big) \leq \ecty(\Phi_{i,j}) .\]

Now, I claim that
\begin{Clm}\label{clm-dld}
\[ \ecty(\Phi_{i,j}) \leq \epsilon_{ij} \delta .\]
\end{Clm}
\begingroup\def\proofname{Proof of Claim~\ref{clm-dld}}
\begin{proof}
Conditionally to~$\mcal{F}_{j-1}$, $\Phi_{i,j}$ is indeed the projection on~$\dot{\mcal{F}}_{j}$ of the centered $\dot{\mcal{F}}_i$-measurable function $(\Phi_i - \EE[\Phi_i|\mcal{F}_{j-1}])$, whose standard deviation is less than $\ecty(\Phi_i) = \delta$ by associativity of the variance, so that $\ecty(\Phi_{i,j}) \leq {\{\dot{\mcal{F}}_i:\dot{\mcal{F}}_j\}_{\mcal{F}_{j-1}}} \delta \ab \leq \epsilon_{ij} \delta$.
\end{proof}\endgroup

In the end, we got that
\[ \Delta^i_j \leq \Delta^{(i-1)}_j + \epsilon_{ij} \delta .\]
Since $\Delta^0_j = 0$, one has therefore:
\[ \forall i,j \qquad \Delta^i_j \leq (1+\bar\epsilon) \delta .\]
Now, denoting $\psi^{(i)} \coloneqq \EE[\Psi^{(i)}]$, one has
\[ | \psi^{(i)} - \phi\psi^{(i-1)} |
= \Big| \sum_{j<i} \EE\big[ \Psi^{(i-1)}_j \Phi_{i,j} \big] \Big|
\leq \sum_{j<i} \Delta^{(i-1)}_j \ecty(\Phi_{i,j})
\leq \bar\epsilon (1+\bar{\epsilon}) \delta^2 ,\]
and finally
\[ | \psi^{(N)} - \phi^N |
= \sum_{i=1}^N |\phi|^{N-i} \big|\psi^{(i)} - \phi\psi^{(i-1)}\big|
\leq \sum_{i=1}^N \big|\psi^{(i)} - \phi\psi^{(i-1)}\big| \\
\leq N \bar\epsilon(1+\bar\epsilon) \delta^2 ,\]
which is (\ref{fo9356}) if you recall that $\delta^2 \leq 1-|\phi|^2$.
\end{proof}

\subsection{A spatial CLT}

First I state and prove a CLT on cubes:
\begin{Thm}\label{thm0318}
Consider a translation-invariant spin model on a lattice $\ZZ^n$
and define $\epsilon_z$ by~(\ref{f5933}).
Assume that $\sum_{z\in\ZZ^n} \epsilon_z < \infty$.
Then for any centered square-summable function~$f \colon \mcal{X} \to \RR$,
there exists a constant $\sigma < \infty$ such that
\[\label{f7877}
\frac{1}{\sqrt{\ell^n}} \sum_{\substack{i\in\ZZ^n\\0\leq i_1,\ldots,i_n<\ell}} f(X_i)
\stackrel{\ell\longto\infty}{\rightharpoonup} \mcal{N}(\sigma^2) ,\]
where ``$\rightharpoonup$'' denotes convergence in law.
\end{Thm}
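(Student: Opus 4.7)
The plan is to follow Lévy's classical characteristic-function strategy: show that for every $t \in \RR$, the characteristic function of the left-hand side of~(\ref{f7877}) converges to $\exp(-\sigma^2 t^2/2)$, whence weak convergence to~$\mcal{N}(\sigma^2)$ by Lévy's continuity theorem. The candidate limit variance is $\sigma^2 \coloneqq \sum_{z \in \ZZ^n} \Cov(f(X_0), f(X_z))$; this series converges absolutely since by the definition of~$\epsilon_z$ and the Cauchy--Schwarz characterization of Hilbertian correlation one has $|\Cov(f(X_0), f(X_z))| \leq \epsilon_z \|f(X_0)\|_{L^2}^2$, and a direct counting argument then shows that the variance of the normalized sum of interest converges to~$\sigma^2$.

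The main tool is Bernstein's big-little block decomposition: partition $[0,\ell)^n$ into cubical blocks $B_u$ of side $p = p(\ell)$ separated by thin moats of thickness $d = d(\ell)$, with $1 \ll d \ll p \ll \ell$ as $\ell \to \infty$. Set $T_u \coloneqq \sum_{i \in B_u} f(X_i)$ and let $R_\ell$ denote the sum of $f(X_i)$ over moat indices. An analogous variance computation, restricted to the moats (whose volume is $O((d/p)\ell^n)$), yields $\Var(R_\ell/\sqrt{\ell^n}) = O(d/p) \to 0$, so the moat contribution is negligible in $L^2$ (hence in probability), and it suffices to control the characteristic function of $\ell^{-n/2} \sum_u T_u$.

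For the block sums I apply Lemma~\ref{l1840} to the identically distributed variables $\Phi_u \coloneqq \exp(it T_u/\sqrt{\ell^n})$ with $\dot{\mcal{F}}_u \coloneqq \sigma(\vec{X}_{B_u})$: denoting $\phi \coloneqq \EE[\Phi_u]$ and $N$ the number of blocks, the lemma yields
\[
\Big| \EE\Big[\prod_u \Phi_u\Big] - \phi^N \Big| \leq N \bar{\epsilon}(1+\bar{\epsilon})(1-|\phi|^2).
\]
Block-variance estimates give $1 - |\phi|^2 \leq t^2 \Var(T_u)/\ell^n$ and $N \Var(T_u)/\ell^n \to \sigma^2$, so $N(1-|\phi|^2) = O(t^2\sigma^2)$; hence the error reduces to $O(\bar\epsilon(1+\bar\epsilon))$ and everything hinges on showing $\bar\epsilon \to 0$. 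Once this is granted, a Lévy-type truncation (exploiting the $L^2$ hypothesis on $f(X_0)$ via dominated convergence) yields $\phi = 1 - t^2\Var(T_u)/(2\ell^n) + o(\Var(T_u)/\ell^n)$, whence $\phi^N \to \exp(-\sigma^2 t^2/2)$ and the characteristic-function limit follows.

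The principal obstacle, as expected, is estimating $\bar\epsilon = \sup_u \sum_{v \neq u} \{\vec{X}_{B_u} : \vec{X}_{B_v}\}_*$. Each pairwise correlation is controlled by Lemma~\ref{pro8882} through $\{\vec{X}_{B_u} : \vec{X}_{B_v}\}_* \leq \sum_{|z| \geq \dist(B_u, B_v)} \epsilon_z$, and a Fubini rearrangement bounds $\bar\epsilon$ by roughly $\sum_{|z| \geq d} \epsilon_z + p^{-n} \sum_z \epsilon_z |z|^n$. The first term is small by the tail of a convergent series; the second is the genuinely delicate point, since under the bare hypothesis $\sum_z \epsilon_z < \infty$ it need not vanish on its own. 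I therefore expect to sharpen the argument---either by tuning $p, d$ so that the available tail decay of $\epsilon_z$ (essentially $\sum_{|z| \geq R}\epsilon_z \to 0$ as $R \to \infty$) already suffices, or by summing along sparse cosets of the block lattice and iterating Lemma~\ref{l1840}, effectively replacing the brute row sum in the lemma by an operator-norm-type bound in the spirit of Chapter~\ref{parTensorization}.
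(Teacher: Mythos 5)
Your plan coincides with the paper's: Lévy's characteristic-function strategy, a block decomposition, control of the block product via Lemma~\ref{l1840}, and disposal of the moat/scrap contribution by $L^2$-smallness. There are two real differences between your version and the paper's, one of substance and one of bookkeeping.

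On the bookkeeping side, you run the classical Bernstein ladder $1\ll d(\ell)\ll p(\ell)\ll\ell$, so every quantity is tuned to $\ell$. The paper instead fixes an arbitrary tolerance $\eta>0$, picks a \emph{fixed} moat width $\ell_0$ (so that $\sum_{|z|_\infty>\ell_0}\epsilon_z\leq\eta$) and a \emph{fixed} tile side $\ell_1$ (so that $\Var(F(\ell_1))$ is within $\eta$ of $\sigma^2$), lets $\ell\to\infty$ first (so that the block characteristic function $\phi$ is just a rescaling of one fixed random variable $F_{\rm t}$, with no uniformity headaches), and only then sends $\eta\to0$. The scrap fraction is then $1-[\ell_1/(\ell_1+\ell_0)]^n$, made small by taking $\ell_1\gg\ell_0$. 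Both schemes work, but the paper's two-stage $\ell\to\infty$, then $\eta\to0$ limit is tidier.

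The substantive point is your $\bar\epsilon$ estimate, and here you have correctly located the crux but used the wrong bound. You apply the \emph{statement} of Lemma~\ref{pro8882}, namely $\{\vec{X}_{B_u}:\vec{X}_{B_v}\}_*\leq\sum_{|z|\geq\dist(B_u,B_v)}\epsilon_z$, and then sum over $v$; as you observe, this produces the dubious term $p^{-n}\sum_z\epsilon_z|z|^n$, which can genuinely be infinite under the bare hypothesis $\sum_z\epsilon_z<\infty$. Neither ``tuning $p,d$'' nor ``iterating the lemma along sparse cosets'' will repair that estimate, because the estimate itself is lossy. What you want is the sharper bound that lies inside Lemma~\ref{pro8882}'s \emph{proof}: Theorem~\ref{thm6413} gives $\{\vec{X}_{B_u}:\vec{X}_{B_v}\}_*\leq\VERT\boldepsilon_{|B_u\times B_v}\VERT$, and decomposing $\boldepsilon_{|B_u\times B_v}=\sum_z\epsilon_z M_z^{(u,v)}$ with $M_z^{(u,v)}$ the (partial) shift matrices gives
\[
\{\vec{X}_{B_u}:\vec{X}_{B_v}\}_* \ \leq \ \sum_{z\in B_v-B_u}\epsilon_z ,
\]
i.e.\ the sum over the \emph{difference set}, not the whole annulus $\{|z|\geq\dist(B_u,B_v)\}$. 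Now the blocks live on a super-lattice of spacing $\geq p$ (resp.\ $\geq\ell_1$), so each $z\in\ZZ^n$ lies in $B_v-B_u$ for boundedly many $v$ (uniformly in $\ell$), and each such $z$ satisfies $|z|\geq d$ (resp.\ $\geq\ell_0$). Summing over $v$ therefore gives $\bar\epsilon\leq C_n\sum_{|z|\geq d}\epsilon_z$, which tends to~$0$ as $d\to\infty$ by the mere convergence of $\sum_z\epsilon_z$ --- no moment condition on $\epsilon_z$ is needed. This closes the gap you were worrying about and makes your growing-block version go through essentially as you described.
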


\begin{proof}
Denote by~$F(\ell)$ —or merely $F$ —the left-hand side of~(\ref{f7877}).

What will be the value of~$\sigma$?
Clearly we must have
\[\label{fo0116} \sigma^2 = \lim_{\ell\longto\infty} \Var \big( F(\ell) \big) ,\]
which yields
\[ \sigma = \sqrt{ \sum_{z\in\ZZ} \EE\big[ f(X_0) f(X_z) \big] } ,\]
where the expression under the root sign,
which is necessarily nonnegative,
is finite because $|\EE[f(X_0)f(X_z)]| \leq
\epsilon_z \ecty(f(X_0))\ecty(f(X_z)) = \epsilon_z \Var(f)$.
By the way, we will denote
\[ \sigma_*^2 \coloneqq \Big( \sum_z \epsilon_z \Big) \|f\|_{L^2}^2 .\]

Fix some arbitrary $\eta > 0$.
The assumption that $\sum \epsilon_z < \infty$
implies the existence of an $\ell_0 < \infty$
such that
\[ \sum_{|z|_\infty > \ell_0} \epsilon_z \leq \eta ,\]
where $|z|_\infty$ denotes $\max(|z_1|,\ldots,|z_n|)$.
By~(\ref{fo0116}), we can also fix an $\ell_1 < \infty$
such that
\[\label{f1939} \big| \Var \big( F(\ell) \big) - \sigma^2 \big| \leq \eta .\]

Now we will `tile' the cube of size $\ell$ into a `patchwork'
made of cubes of size $\ell_1$ which I call ``tiles'',
each tile being at distance at least $\ell_0$ from the others, plus some ``scrap''.
I denote by~$\tilde{F}$ the part of~$F$ due to the tiles
and by~$F^*$ the part of~$F$ due to the scrap.

Index the tiles by~$\{ 1,\ldots,N \}$, with
$N \coloneqq \big\lfloor (\ell+\ell_0) \div (\ell_1+\ell_0) \big\rfloor^n$.
We write, with obvious notation, $\tilde{F} \eqqcolon F_1 + \cdots + F_N$.
For~$\lambda\in\RR$, denote
\begin{eqnarray}
\Psi (\lambda,\ell) \coloneqq \exp(i\lambda \tilde{F}), &\qquad&
\psi(\lambda,\ell) \coloneqq \EE[\Psi (\lambda,\ell)] ; \\
\Phi_j(\lambda,\ell) \coloneqq \exp(i\lambda F_j), &\qquad&
\phi(\lambda,\ell) \coloneqq \EE[\Phi_j(\lambda,\ell)] .
\end{eqnarray}
Then we are exactly in situation of applying Lemma~\ref{l1840},
which yields:
\[\label{f1890} \big| \psi(\lambda,\ell) - \phi(\lambda,\ell)^N \big| \leq
N \eta(1+\eta) \big( 1-|\phi(\lambda,\ell)|^2 \big) .\]

Let us look at the asymptotics of Formula~(\ref{f1890}) when~$\ell \longto \infty$.
We observe that, denoting
\[ F_{\rm t} \coloneqq \frac{1}{\sqrt{\ell_1^n}} \sum_{i\in\text{\it\ fixed tile}} f(X_i) ,\]
one has
\[ F_j \stackrel{\text{law}}{=} \frac{\sqrt{\ell_1^n}}{\sqrt{\ell^n}} F_{\rm t} .\]
Since $F_{\rm t}$ is centered, its Fourier transform satisfies $\hat{F}_{\rm t}(0)=1$,
$\hat{F}_{\rm t}'(0)=0$ and $\hat{F}_{\rm t}'' = \Var(F_{\rm t})$, so that
\[ \lim_{\ell\longto\infty} \ell^n \big(1-\phi(\lambda,\ell)\big)
= \frac{\lambda^2}{2} \ell_1^n \Var(F_{\rm t}) ,\]
where, denoting $\sigma^2_{\ell_1} \coloneqq \Var(F_{\rm t})$,
we recall that $\ell_1$ has been taken sufficiently large so that
$|\sigma^2_{\ell_1} - \sigma^2| \leq \eta$.
Then, since $N \stackrel{\ell\longto\infty}{\sim} \ell^n \div (\ell_1+\ell_0)^n$,
one has the following asymptotics for~(\ref{f1890}):
\begin{eqnarray}
\phi(\lambda,\ell)^N & \stackrel{\ell\longto\infty}{\longto} &
\exp \left[ -\sigma^2_{\ell_1} \bigg( \frac{\ell_1}{\ell_1+\ell_0} \bigg)^{\!n}
\frac{\lambda^2}{2} \right] ; \\
N \big( 1-|\phi(\lambda,\ell)|^2 \big) & \stackrel{\ell\longto\infty}{\longto} &
\sigma^2_{\ell_1} \bigg( \frac{\ell_1}{\ell_1+\ell_0} \bigg)^{\!n}\lambda^2 .
\end{eqnarray}

It remains to control the contribution of~$F^*$.
\begin{Clm}\label{c7901}
There are at most
\[ \bigg[1-\bigg(\frac{\ell_1}{\ell_1+\ell_0}\bigg)^{\!n}\bigg]\ell^n + n\ell_1\ell^{n-1} \]
scrap spins.
\end{Clm}
By Claim~\ref{c7901},
\[ \|F^*\|_{L^1} \leq \Var(F^*) \leq
\bigg[1-\bigg(\frac{\ell_1}{\ell_1+\ell_0}\bigg)^{\!n} + n\frac{\ell_1}{\ell} \bigg] \sigma_*^2 ;\]
then the contribution of~$F^*$ is controlled using the following immediate
\begin{Lem}
Let~$X$ and~$H$ be real random variables with $\|H\|_{L^1} < \infty$.
Then, for~$\lambda\in\RR$,
\[ \big| \EE\big[e^{i\lambda(X+H)}\big] - \EE\big[e^{i\lambda X}\big] \big| \leq |\lambda| \|H\|_{L^1} .\]
\end{Lem}

In the end, putting everything together we get:
\begin{multline}\label{f6829}
\limsup_{\ell\longto\infty} \big| \psi(\lambda,\ell) - e^{-\sigma^2\lambda^2/2} \big| \leq \\
\Big|e^{\big{-(\sigma^2-\eta) [\ell_1 \div (\ell_1+\ell_0)]^n \lambda^2/2}} - e^{-\sigma^2\lambda^2/2}\Big|
+ \eta(1+\eta) \bigg( \frac{\ell_1}{\ell_1+\ell_0} \bigg)^{\!n} (\sigma^2+\eta) \lambda^2
+ \sqrt{ 1-\bigg(\frac{\ell_1}{\ell_1+\ell_0}\bigg)^{\!n} } \sigma_* .
\end{multline}
Since there were no upper restriction on the value of~$\ell_1$, we can assume
that we have taken it such that $[\ell_1 \div (\ell_1+\ell_0)]^n \geq 1-\eta$.
Then (\ref{f6829}) becomes:
\[\label{f6890} \limsup_{\ell\longto\infty}
\big| \psi(\lambda,\ell) - e^{-\sigma^2\lambda^2/2} \big| \leq
\Big| e^{\big{-(\sigma^2-\eta)(1-\eta)\lambda^2/2}} - e^{-\sigma^2\lambda^2/2} \Big|
+ \eta(1+\eta)(1-\eta) (\sigma^2+\eta) \lambda^2
+ \sqrt{\eta} \sigma_* .\]
The right-hand side of~(\ref{f6890}) can be made arbitrarily close to~$0$
by taking $\eta$ small enough, so we have proved that
\[ \forall \lambda \in \RR \quad \EE\big[e^{i\lambda F(\ell)}\big]
\stackrel{\ell\longto\infty}{\longto} e^{-\sigma^2\lambda^2/2} .\]
By L\'evy's theorem on characteristic functions,
this is tantamount to saying that $F(\ell)$ converges in law to~$\mcal{N}(\sigma^2)$.
\end{proof}

The CLT should remain valid for other shapes than a cube,
since morally the random field $f(X_i)$
should look like a Gaussian white noise at large scales.
Indeed, the same proof as above yields a CLT for general shapes,
where moreover convergence is uniform in the shape considered in some way.
Let us give a precise statement:
\begin{Def}
An open subset $U \subset \RR^n$ (not necessarily connected)
is said to be \emph{$\mcal{C}^2$} if its boundary $M$ is a $\mcal{C}^2$ submanifold
of~$\RR^n$ (of codimension $1$).
We define the \emph{roughness} of~$U$, denoted by~$\kappa(U)$, as
\[ \kappa(U) \coloneqq \sup_{x\in M} \VERT \II(x) \VERT, \]
where $\II(\Bcdot)$ denotes the shape tensor of~$M$~\cite[Chapter~10]{Differential},
which measures the local deviation of~$M$ from being flat.
Also, the Lebesgue measure of~$U$ will be denoted by~$\volume(U)$.
\end{Def}

\begin{Thm}\label{thm3015}
Consider a translation-invariant spin model on a lattice $\ZZ^n$
and define $\epsilon_z$ by~(\ref{f5933}).
Assume that $\sum_{z\in\ZZ^n} \epsilon_z < \infty$.
Then for any centered square-summable function~$f \colon \mcal{X} \to \RR$,
if $(U_k)_{k\in\NN}$ is a sequence of $\mcal{C}^2$ bounded subsets of~$\RR^n$
with $\sup_k \kappa(U_k) < \infty$ and
$(\ell_k)_{k\in\NN}$ is a sequence of positive numbers tending to infinity,
\[ \frac{1}{\sqrt{\ell_k^n\volume(U_k)}} \sum_{i\in\ell_kU_k\cap\ZZ^n} f(X_i)
\stackrel{k\longto\infty}{\rightharpoonup} \mcal{N}(\sigma^2) ,\]
where $\sigma^2$ is the same as in Theorem~\ref{thm0318}.
\end{Thm}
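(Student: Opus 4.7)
The plan is to replay the proof of Theorem~\ref{thm0318} with the cube $[0,\ell)^n$ replaced by the bounded $\mathcal{C}^2$ domain $\ell_k U_k$. Given $\eta > 0$, pick $\ell_0$ with $\sum_{|z|_\infty > \ell_0} \epsilon_z \leq \eta$ and $\ell_1$ with $\bigl|\Var\bigl(F(\ell_1)\bigr) - \sigma^2\bigr| \leq \eta$, where $F(\ell_1)$ is the cubic normalised sum of Theorem~\ref{thm0318}. I then tile the interior of $\ell_k U_k$ by cubic ``tiles'' of side $\ell_1$ that lie entirely within $\ell_k U_k$ and are mutually at $|\cdot|_\infty$-distance $\geq \ell_0$, keeping $N_k$ such tiles, and decompose the normalised sum as $F_k = \tilde{F}_k + F_k^*$, where $\tilde{F}_k$ collects the contributions $F_j$ of the $N_k$ tiles and $F_k^*$ gathers the ``scrap'' (inter-tile corridors plus lattice points within distance $\ell_1 + \ell_0$ of $\partial(\ell_k U_k)$).

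The tile contributions $F_j$ all share the same law by translation invariance, and the $\sigma$-algebras they generate are pairwise $\eta$-decorrelated by Lemma~\ref{pro8882}, so Lemma~\ref{l1840} applies to $\Phi_j(\lambda) \coloneqq e^{i\lambda F_j}$ and gives
\[ \bigl| \EE[e^{i\lambda \tilde{F}_k}] - \phi(\lambda,k)^{N_k} \bigr| \leq N_k\eta(1+\eta)\bigl(1 - |\phi(\lambda,k)|^2\bigr) .\]
Fourier expansion then yields $1 - \phi(\lambda,k) \sim \lambda^2 \sigma^2_{\ell_1}\ell_1^n / \bigl[2\ell_k^n\volume(U_k)\bigr]$ with $|\sigma^2_{\ell_1} - \sigma^2| \leq \eta$, and provided that $N_k \ell_1^n / [\ell_k^n \volume(U_k)] \to \rho^n$ for a constant $\rho \in (0,1]$ depending on $\ell_0,\ell_1$, one gets $\phi(\lambda,k)^{N_k} \to \exp(-\rho^n \sigma^2_{\ell_1}\lambda^2/2)$ exactly as in the cubic case.

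The new technical issue, which is the main obstacle, is bounding the scrap: one must show that the number $|S_k|$ of scrap spins satisfies $|S_k|/[\ell_k^n \volume(U_k)] = 1 - \rho^n + o(1)$ for some $\rho$ that can be made arbitrarily close to $1$ by taking $\ell_1 \gg \ell_0$. The inter-tile corridor contribution is handled exactly as in the cube case. For the boundary contribution, the hypothesis $\sup_k \kappa(U_k) < \infty$ is crucial: it implies $\kappa(\ell_k U_k) = \kappa(U_k)/\ell_k \to 0$, so for large $k$ the domain $\ell_k U_k$ has reach much larger than $\ell_1 + \ell_0$, and a tube formula along the normal flow gives a bound $C(\ell_1+\ell_0)\,\ell_k^{n-1}\,\mathrm{area}(\partial U_k)$ on the number of near-boundary lattice points. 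A standard consequence of the bounded-roughness hypothesis on a family of bounded domains then makes the ratio $(\ell_1+\ell_0)\,\mathrm{area}(\partial U_k) / [\ell_k\volume(U_k)]$ tend to zero. Combined with the variance estimate $\Var(F_k^*) \leq \bigl(|S_k|/[\ell_k^n\volume(U_k)]\bigr)\sigma_*^2$ and the elementary inequality $|\EE[e^{i\lambda(X+H)}] - \EE[e^{i\lambda X}]| \leq |\lambda|\|H\|_{L^1}$, this gives $|\EE[e^{i\lambda F_k}] - \EE[e^{i\lambda \tilde{F}_k}]| \to 0$.

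Putting everything together, first letting $k \to \infty$ and then $\eta \to 0$ yields $\EE[e^{i\lambda F_k}] \to e^{-\sigma^2 \lambda^2/2}$ for each $\lambda \in \RR$, and Lévy's continuity theorem concludes. The sole genuinely new ingredient beyond the proof of Theorem~\ref{thm0318} is this geometric scrap estimate, and the bounded-roughness hypothesis is precisely what makes it available uniformly in $k$.
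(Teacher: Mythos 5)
Your proposal is correct and follows essentially the same route as the paper, whose proof of Theorem~\ref{thm3015} is just a one-line pointer: ``copy the proof of Theorem~\ref{thm0318}; the only difference is the analogue of Claim~\ref{c7901}, which is where bounded~$\kappa(U_k)$ enters, and use the non-asymptotic form of the intermediate bounds so the estimates hold uniformly in the shape.'' You have simply spelled out what that adapted scrap estimate looks like (tube-type bound on the near-boundary lattice points, with $\kappa(\ell_k U_k)=\kappa(U_k)/\ell_k\to 0$, plus a bounded $\mathrm{area}(\partial U_k)/\volume(U_k)$ ratio), which is exactly the step the paper leaves to the reader.
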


\begin{proof}
Just copy the proof of Theorem~\ref{thm0318}.
The only difference lies in proving the analoguous of Claim~\ref{c7901},
which is where one needs the~$\kappa(U_k)$ to be bounded.
Observe that we use the non-asymptotic form of our intermediate bounds
to get a result independent of the precise shape of the~$U_k$.
\end{proof}

\begin{Rmk}
Another generalization of the CLT, still based on the idea that
the field $f(X_i)$ looks like a Gaussian white noise at large scales,
is the statement that for~$\phi$ a continuous function with compact support,
\[ \frac{1}{\sqrt{\ell^n}} \sum_{i\in\ZZ^n} \phi\big(X_i\div\ell\big) f(X_i)
\stackrel{\ell\longto\infty}{\rightharpoonup}
\mcal{N}\big(\sigma^2 \mathsmaller{\int_{\RR^n} \phi(x)^2\,\dx{x}} \big) .\]
This can be proved with the same methods as before.
\end{Rmk}

\section{Spectral gap for the Glauber dynamics}

\subsection{Introduction}

In this section we are looking at a probabilistic system
made of a large number of `elementary' random variables $(X_i)_{i\in I}$
— $I$ may be seen as lattice and $X_i$ as the state of the particle being at site~$i$.
As is customary by now, theorems will only be stated in the case where $I$ is finite,
the infinite case being got by passing to the limit.

\begin{Def}
Denoting by~$\Omega$ the states space of~$\vec{X}_I$,
let~$\Pr$ be a probability measure on~$\Omega$.
The \emph{Glauber dynamics}~\cite{Glauber, Dobrushin71}
associated to~$\Pr$ is the Markov process on~$\Omega$ having the following law:
on each~$i\in I$ there is an alarm clock,
all the clocks being independent and ringing with law $\mathit{Poisson}(1)$.
When a clock rings, the state of spin $X_i$ —and only it—is flipped
so that the state of~$X_i$ immediately after the flip follows the law
$\Pr\big(X_i|\vec{X}_{I\setminus\{i\}}\big)$.

In formal terms, the Glauber dynamics is the Markov process
whose generator $\mcal{L}$ on~$L^\infty(\Omega)$ is defined by:
\[ \big(\mcal{L}f\big)(\vec{x}_I)
= \sum_{i\in I} \EE \big[ f(\vec{X}_I) - f(\vec{x}_I) \big|
\vec{X}_{I\setminus\{i\}} = \vec{x}_{I\setminus\{i\}} \big] .\]
\end{Def}

Let us recall some basic facts on the Glauber dynamics
(see~\cite[Chapter~IV]{Liggett} for more details).
By construction $\Pr$ is a reversible equilibrium measure for the dynamics,
so $\mcal{L}$ is self-adjoint on~$L^2(\Pr)$.
Since obviously $\mcal{L}1\equiv0$, one can also define $\mcal{L}$ on~$\ldb(\Pr)$,
on which it is self-adjoint too. This leads to the following definition:
\begin{Def}
The \emph{energy} of~$f\in\ldb(\Pr)$ is
\[ \mcal{E}(f,f) = \langle Lf, f \rangle \]
\end{Def}
The following immediate identity shows that $\mcal{E}$ is always a nonnegative bilinear form:
\begin{Pro}\label{p1345}
\[ \mcal{E}(f,f) = \int_\Omega \dx\Pr[\vec{x}_I]
\sum_i \Var\big( f \big| \vec{X}_{I\setminus\{i\}} = \vec{x}_{I\setminus\{i\}} \big) .\]
\end{Pro}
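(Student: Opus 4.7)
The identity is purely algebraic; the proof is a one-step unwinding of the definitions followed by the computational formula for variance, applied site by site.

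First I would substitute the explicit formula for $\mcal{L}$ into $\mcal{E}(f,f) = \langle \mcal{L}f, f\rangle$ and swap the sum over $i\in I$ with the integral against~$\Pr$. For each fixed~$i$, I would disintegrate $\dx\Pr[\vec{x}]$ as $\dx\Pr[\vec{x}_{I\setminus\{i\}}] \otimes \dx\Pr[x_i|\vec{x}_{I\setminus\{i\}}]$ and integrate in~$x_i$ first with~$\vec{x}_{I\setminus\{i\}}$ frozen. Writing $\bar{f}_i(\vec{x}_{I\setminus\{i\}}) \coloneqq \EE[f|\vec{X}_{I\setminus\{i\}} = \vec{x}_{I\setminus\{i\}}]$, this constant factors out and the inner $x_i$-integral reduces to
\[
\int \dx\Pr[x_i|\vec{x}_{I\setminus\{i\}}]\, f(\vec{x})\,\bigl(\bar{f}_i(\vec{x}_{I\setminus\{i\}}) - f(\vec{x})\bigr)
= \bar{f}_i^2 - \EE[f^2|\vec{x}_{I\setminus\{i\}}]
= -\Var(f|\vec{x}_{I\setminus\{i\}}),
\]
the last equality being the computational formula for variance. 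Summing over~$i$ and integrating against $\dx\Pr[\vec{x}_{I\setminus\{i\}}]$ gives the claimed identity, up to a global sign reflecting the standard Dirichlet-form convention $\mcal{E}(f,f) = -\langle \mcal{L}f, f\rangle$ which makes the form nonnegative as announced.

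I expect no real obstacle: once the sum–integral swap is made, everything collapses at each site independently, and the only ingredient is that conditional expectation is an $L^2$-projection, so $\EE[f\bar{f}_i] = \EE[\bar{f}_i^2]$. If one prefers to avoid any sign bookkeeping, an equivalent route is to start from the standard reversible jump-process representation of the Dirichlet form, namely
\[
\mcal{E}(f,f) = \frac{1}{2}\sum_{i\in I} \int \dx\Pr[\vec{x}] \int \dx\Pr[y_i | \vec{x}_{I\setminus\{i\}}]\,\bigl(f(\vec{x}_{I\setminus\{i\}},y_i) - f(\vec{x})\bigr)^2,
\]
and then apply the elementary identity $\tfrac{1}{2}\int\!\!\int (g(u)-g(v))^2\,\dx\mu(u)\dx\mu(v) = \Var_\mu(g)$ to the conditional law of~$X_i$ given $\vec{X}_{I\setminus\{i\}}$; the inner pair of integrals collapses directly to $\Var(f|\vec{x}_{I\setminus\{i\}})$, making nonnegativity manifest.
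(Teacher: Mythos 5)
The paper states this proposition without proof, labelling the identity ``immediate,'' so there is no argument in the text to compare against; your computation is the standard one and it is correct. Both of your routes work: the direct pairing $\langle \mcal{L}f,f\rangle$ with site-by-site disintegration, and the reversible jump-process representation of the Dirichlet form. You also correctly flag the one genuine subtlety here, namely that the paper's written definition $\mcal{E}(f,f)=\langle \mcal{L}f,f\rangle$, taken literally with $\mcal{L}f=\sum_i(\EE[f|\vec{X}_{I\setminus\{i\}}]-f)$, produces the \emph{negative} of the claimed right-hand side; the intended convention is $\mcal{E}(f,f)=-\langle\mcal{L}f,f\rangle$ (consistent with the paper's later use of the semigroup $e^{-t\mcal{L}}$ and with the asserted nonnegativity of $\mcal{E}$), and your proof implements exactly that.
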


\begin{Def}
For~$\lambda > 0$, the Glauber dynamics is said to have \emph{spectral gap $\geq\lambda$}
if, for all~$f\in\ldb(\Pr)$,
\[\label{fo8103} \mcal{E}(f,f) \geq \lambda \Var(f) .\]
\end{Def}
What makes spectral gap interesting is that its positiveness
is equivalent to exponential convergence to~$0$
of the semigroup $(e^{-t\mcal{L}})_{t\geq0}$ on $\ldb(\Pr)$,
the rate of convergence being equal to the width of the spectral gap.
As the Glauber dynamics is one of the easiest ways to simulate the law $\Pr$ for complicated models,
the stake of having exponential convergence for it is evident.

Many works have been done on the spectral gap of the Glauber dynamics,
see for instance Martinelli's St-Flour course~\cite{Martinelli}.
Several results state that,
the less spins are correlated, the larger the spectral gap is.
Yet the researchers who work on this topic generally express
the decorrelation between the spins in terms of $\beta$-mixing (cf.\ Definition~\ref{def3167}),
while it seems be more natural to look at them in terms of Hilbertian decorrelations,
since the formula~(\ref{fo8103}) stating the spectral gap problem
takes place in a Hilbertian frame itself.
Thus my goal here will be to find a control on the spectral gap
expressed in terms of $\rho$-mixing conditions.
Since Hilbertian correlations look to be the minimal frame
to study the spectral gap for the Glauber dynamics,
hopefully the bounds yielded by this method will be sharp.

Another noticeable feature of my approach
is that it remains at a quite abstract level:
no symmetry property of~$I$ or $\Pr$ need be assumed,
all the work essentially consisting in manipulating relevant quadratic forms.

\subsection{A lower bound for the spectral gap}

The central theorem of this section is the following:
\begin{Thm}\label{t7230}
Take $I = \{1,\ldots,N\}$.
Suppose that for all distinct $i,j\in I$ one has ${\{X_i:X_j\}_\ast} \ab \leq \epsilon_{ij} < 1$
—we will make the costless assumption that $\epsilon_{ji} = \epsilon_{ij}$.
For~$i \in I$, denote
\[ \tilde1_i \coloneqq \frac{1}{ \prod_{i<j\leq N} (1-\epsilon_{ij}^2) }
= \frac{\teps_{iN}}{\epsilon_{iN}} ,\]
and for~$i < j$, denote
\[ \teps_{ij} \coloneqq
\frac{\epsilon_{ij}}{\prod_{i<j'\leq j}(1-\epsilon_{ij'}^2)} .\]
Then the Glauber dynamics has spectral gap at least $\VERT M \VERT^{-2}$,
where $M$ is the $(N \times N)$ matrix defined by
\[\label{for4393} M = \begin{pmatrix} 1 & -\teps_{12} & \cdots & -\teps_{1N} \\
0 & \ddots & \ddots & \vdots \\
\vdots & \ddots & \ddots & -\teps_{(N-1)N} \\
0 & \cdots & 0 & 1 \end{pmatrix}^{-1}
\begin{pmatrix} \tilde1_1 & 0 & \cdots & 0 \\
0 & \ddots & \ddots & \vdots \\
\vdots & \ddots & \ddots & 0 \\
0 & \cdots & 0 & \tilde1_N \end{pmatrix} .\]
\end{Thm}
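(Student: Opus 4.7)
The plan is to prove $\Var(f) \leq \VERT M \VERT^2 \mcal{E}(f,f)$ for every $f \in \ldb(\Pr)$, which is equivalent to the claimed spectral gap lower bound by the definition of spectral gap.

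\textbf{Decompositions on both sides.} I would begin with the usual telescopic decomposition $f = \sum_{i=1}^N f_i$, with $f_i \coloneqq \EE[f|\mcal{F}_i] - \EE[f|\mcal{F}_{i-1}]$ and $\mcal{F}_i = \sigma(X_1,\ldots,X_i)$, exactly as in the proof of Theorem~\ref{thm5750}. Writing $V_i \coloneqq \Var(f_i)$, orthogonality of the~$f_i$ in~$\ldb(\Pr)$ gives $\Var(f) = \sum_i V_i$. On the energy side, Proposition~\ref{p1345} yields $\mcal{E}(f,f) = \sum_{j=1}^N e_j$ with $e_j \coloneqq \EE[\Var(f \mid \vec{X}_{I\setminus\{j\}})]$, and one checks directly that the boundary case $V_N = e_N$ holds with equality.

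\textbf{Key inequality.} The heart of the proof is the estimate
\[ \sqrt{V_j} \;\leq\; \tilde 1_j \, \sqrt{e_j} \;+\; \sum_{k>j} \teps_{jk} \, \sqrt{V_k} \qquad (1 \leq j \leq N) , \tag{$\star$} \]
from which the theorem follows by a purely algebraic argument. Setting $u \coloneqq (\sqrt{V_i})_i$ and $w \coloneqq (\sqrt{e_j})_j$ in~$\RR^N_+$, $(\star)$ reads $Uu \leq Dw$ componentwise, where $U$ and~$D$ are the triangular and diagonal matrices appearing in the definition $M = U^{-1}D$. Since $I - U$ is strictly upper triangular with nonnegative entries (namely the $\teps_{ij}$), it is nilpotent and $U^{-1} = \sum_{k\geq 0}(I-U)^k$ has nonnegative entries; hence $u \leq U^{-1}Dw = Mw$ componentwise, with both vectors nonnegative. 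It follows that $\Var(f) = \|u\|^2 \leq \|Mw\|^2 \leq \VERT M\VERT^2 \|w\|^2 = \VERT M\VERT^2 \mcal{E}(f,f)$, as desired.

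\textbf{Proving $(\star)$ and the main obstacle.} The derivation of $(\star)$ is what I expect to be the main technical work. The natural route is to fix~$j$ and apply, conditionally on~$\mcal{F}_{j-1}$, the machinery underlying Lemma~\ref{lem7265} with the r\^ole of ``$Y$'' played by~$X_j$ and the r\^ole of ``$X_1,\ldots,X_n$'' by $X_{j+1},\ldots,X_N$. In this setup the ``$V^*_i$'' of the lemma recover our original $V_{j+i}$, while the ``$V_i$'' recover the telescopic variances $\hat V_{j+i}$ of the decomposition where $X_j$ is \emph{omitted} altogether; combined with the identity $e_j = V_j + \sum_{i>0}(V_{j+i}-\hat V_{j+i})$, this gives a first bound expressing $\sqrt{V_j}$ in terms of $\sqrt{e_j}$ and the $\sqrt{\hat V_{j+i}}$'s. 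Converting the $\hat V_{j+i}$'s back into the $V_{j+i}$'s requires an iterative application of simple tensorization (Theorem~\ref{thm5750}), and this iteration is precisely what produces the telescoping products $\prod_l(1-\epsilon_{j,l}^2)^{\pm1}$ appearing in the definitions of $\tilde 1_j$ and $\teps_{jk}$. The main obstacle will be to carry out this bookkeeping cleanly enough for the resulting coefficients to match \emph{exactly} the ones in the statement; direct verification in the cases $N=2$ (where $V_N=e_N$ forces the last line, and $(\star)$ for $j=1$ can be checked by hand on Gaussian examples) strongly supports that the bound~$(\star)$ is indeed the tight form that the iteration produces.
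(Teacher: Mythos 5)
Your skeleton is right, and in fact cleaner than the paper's own bookkeeping: the inequality $(\star)$ is exactly what the paper establishes (it is the iterated form of its Claim~\ref{l3526}), and the linear-algebra step — observing that $I-U$ is nilpotent with nonnegative entries so $U^{-1}\geq 0$ entrywise, hence $u\leq Mw$ and $\|u\|\leq\VERT M\VERT\|w\|$ — is a correct and tidy way to pass from $(\star)$ to the spectral gap bound, where the paper instead expands $U^{-1}$ as an explicit sum of products of~$\teps$'s.

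The genuine gap is precisely where you flag it: $(\star)$ is identified but not proven, and the route you sketch via Lemma~\ref{lem7265} is unlikely to land on the stated coefficients without substantial extra work. Conditioning on~$\mcal{F}_{j-1}$ and running the machinery of Lemma~\ref{lem7265} (and then using your correct identity $e_j = V_j + \sum_{k>j}(\hat V_k - V_k)$) gives something like $\sqrt{V_j}\leq\sqrt{e_j}+\sum_{k>j}\epsilon_{jk}\sqrt{\hat V_k}$, with no $\tilde 1_j$ factor and with the ``hatted'' variances $\hat V_k$ (where $X_j$ is dropped from the filtration) on the right. Converting the $\hat V_k$ back into the $V_k$ is not a straightforward application of simple tensorization, and it is exactly this back-and-forth that manufactures the $\prod(1-\epsilon^2)^{-1}$ inflation factors. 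The paper does this directly via two elementary lemmas instead: first a one-step inequality $\Delta_i^{I'}\leq\Delta_i^{I'\uplus\{j\}}+\epsilon_{ij}\Delta_j^{I'}$ (a triangle inequality in $L^2(\mcal{F}_i)$ plus the definition of $\{X_i:X_j\}$), and then a ``double application and solve'' trick: applying that one-step inequality twice, once in each direction between~$i$ and~$j$, yields a self-referential bound on $\Delta_i^{[j-1]}$ which, solved, gives $\Delta_i^{[j-1]}\leq(1-\epsilon_{ij}^2)^{-1}\big(\Delta_i^{[j]}+\epsilon_{ij}\Delta_j^<\big)$; iterating from $j=i+1$ to $j=N$ is what produces $\tilde 1_i$ and the $\teps_{ij}$'s exactly. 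Without this (or an equivalent argument), $(\star)$, and hence the theorem, is not established; the $N=2$ Gaussian sanity check you mention gives evidence that $(\star)$ is the right target but does not prove it.
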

\begin{Rmk}\label{rmk4123}
The form of the first matrix in the right-hand side of~(\ref{for4393})
ensures that it is invertible.
Since moreover all the~$\epsilon_{ij}$ were supposed $<1$,
all the~$\tilde\epsilon_{ij}$ and the~$\tilde{1}_i$ are finite;
thus, the lower bound~$\VERT M \VERT^{-2}$ is strictly positive.
\end{Rmk}

\begin{proof}
Let~$f$ be a centered square-integrable function on~$(\Omega,\Pr)$.
For~$I' \subset I$, denote $\mcal{F}_{I'} \coloneqq \sigma\big(\vec{X}_{I'}\big)$.
For~$i\in I$, $I' \subset I\setminus\{i\}$,
denote
\[ f_i^{I'} \coloneqq f^{\mcal{F}_{I'\uplus\{i\}}} - \EE[f|\mcal{F}_{I'}] ;\]
define moreover
\begin{eqnarray}
f_i^{\neq} & \coloneqq & f_i^{I\setminus\{i\}} ; \\
f_i^{<}  & \coloneqq & f_i^{\{1,\ldots,i-1\}}.
\end{eqnarray}
Then by Proposition~\ref{p1345}, one has
\[ \mcal{E}(f,f) = \sum_i \Var(f_i^{\neq}) ,\]
while the usual telescopic argument shows that
\[ \Var(f) = \sum_i \Var(f_i^<) .\]

So to prove the theorem, we have to establish links between
the different values $\Var(f_i^{I'})$.
It will be convenient to introduce the shorthands $\Delta_i^{I'} = \ecty(f_i^{I'})$.
One has the following
\begin{Clm}\label{l8326}
For~$I' \subset I$ and~$i,j\in I \setminus I'$ with $j\neq i$,
\[\label{f4464} \Delta_i^{I'} \leq \Delta_i^{I'\uplus\{j\}} + \epsilon_{ij} \Delta_j^{I'} .\]
\end{Clm}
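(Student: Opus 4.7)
The plan is to deduce the inequality from a single tower-property identity, combined with Minkowski's inequality and the defining property of subjective correlation.

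First I will establish the identity
\[
f_i^{I'} = \EE[f_i^{I' \uplus \{j\}} + f_j^{I'} \,|\, \mcal{F}_{I' \uplus \{i\}}].
\]
Writing for brevity $\alpha := f^{\mcal{F}_{I' \uplus \{i\}}}$, $\beta := f^{\mcal{F}_{I' \uplus \{j\}}}$, $\gamma := f^{\mcal{F}_{I' \uplus \{i,j\}}}$ and $\mu := f^{\mcal{F}_{I'}}$, one has $f_i^{I'} = \alpha - \mu$, $f_j^{I'} = \beta - \mu$ and $f_i^{I' \uplus \{j\}} = \gamma - \beta$. Since $\mcal{F}_{I' \uplus \{i\}} \subset \mcal{F}_{I' \uplus \{i,j\}}$, the tower property gives $\alpha = \EE[\gamma | \mcal{F}_{I' \uplus \{i\}}]$; combined with the $\mcal{F}_{I' \uplus \{i\}}$-measurability of $\mu$ and the decomposition $\gamma - \mu = (\gamma - \beta) + (\beta - \mu)$, this yields the identity.

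Next I will take $L^2$-norms and apply Minkowski's inequality to obtain
\[
\Delta_i^{I'} \leq \|\EE[f_i^{I' \uplus \{j\}} | \mcal{F}_{I' \uplus \{i\}}]\| + \|\EE[f_j^{I'} | \mcal{F}_{I' \uplus \{i\}}]\|.
\]
The first summand is bounded by $\|f_i^{I' \uplus \{j\}}\| = \Delta_i^{I' \uplus \{j\}}$, since conditional expectation is a contraction on $L^2$. For the second summand I will invoke the decorrelation hypothesis: conditionally on $\mcal{F}_{I'}$, the function $f_j^{I'}$ is centered and $\sigma(X_j)$-measurable, and because $\mcal{F}_{I'} \subset \mcal{F}_{I' \uplus \{i\}}$, the $L^2$-projection of $f_j^{I'}$ onto $\sigma(X_i)$-measurable functions coincides with $\EE[f_j^{I'} | \mcal{F}_{I' \uplus \{i\}}]$. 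The assumption $\{X_i : X_j\}_* \leq \epsilon_{ij}$ forces $\{X_i : X_j\}_{\mcal{F}_{I'}} \leq \epsilon_{ij}$ almost surely, so pointwise
\[
\Var(\EE[f_j^{I'} | \mcal{F}_{I' \uplus \{i\}}] \,|\, \mcal{F}_{I'}) \leq \epsilon_{ij}^2 \Var(f_j^{I'} \,|\, \mcal{F}_{I'});
\]
integrating over $\mcal{F}_{I'}$ yields $\|\EE[f_j^{I'} | \mcal{F}_{I' \uplus \{i\}}]\| \leq \epsilon_{ij} \Delta_j^{I'}$, and summing the two estimates proves the claim.

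The only step requiring real thought is locating the right identity in the first paragraph: the more naive orthogonal decomposition $f_i^{I'} = f_i^{I' \uplus \{j\}} + f_j^{I'} - f_j^{I' \uplus \{i\}}$ turns out to be useless since taking $L^2$-norms on it produces a tautology, whereas the tower-property identity above has exactly the right shape because it packages the contribution of $X_j$ into a single inner conditional expectation, to which the subjective correlation bound applies directly.
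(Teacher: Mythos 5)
Your proof is correct and follows essentially the same route as the paper's: you establish the same key identity $f_i^{I'} = \EE[f_i^{I'\uplus\{j\}} + f_j^{I'} \,|\, \mcal{F}_{I'\uplus\{i\}}]$ (the paper writes it as $f_i = (f_i^j)^{\mcal{F}_i} + (f_j)^{\mcal{F}_i}$ for $I'=\emptyset$ and then conditions for general $I'$), then apply Minkowski, the contraction property of conditional expectation, and the subjective correlation bound integrated over $\mcal{F}_{I'}$. The only minor difference is that you handle the general $I'$ case directly and use associativity of variance for the integration step, whereas the paper first treats $I'=\emptyset$ and then integrates the conditional inequality via a second application of Minkowski; these are cosmetic variants of the same argument.
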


\begin{proof}
Assume in a first time that $I'=\emptyset$,
and denote $f_i \coloneqq f_i^{\emptyset}$, $f_j \coloneqq f_j^{\emptyset}$, $f_i^j \coloneqq f_i^{\{j\}}$
and $\mcal{F}_i \coloneqq \mcal{F}_{\{i\}}$.
Projecting the decomposition ``$f_i = f_i^j + (f_i-f_i^j)$''
on~$L^2(\mcal{F}_i)$, one has $f_i = (f_i^j)^{\mcal{F}_i} + (f_j)^{\mcal{F}_i}$,
whence by the Cauchy--Shwarz inequality:
\[\label{f4448}
\ecty(f_i) \leq \ecty\big((f^j_i)^{\mcal{F}_i}\big) + \ecty\big((f_j)^{\mcal{F}_i}\big) .\]
One has trivially
$\ecty\big((f^j_i)^{\mcal{F}_i}\big) \leq \ecty(f^j_i)$;
on the other hand, $f_j$ is $X_j$-measurable,
so $\ecty\big((f_j)^{\mcal{F}_i}\big) \leq \epsilon_{ij} \ecty(f_j)$.
In the end, (\ref{f4448}) becomes
\[ \ecty(f_i) \leq \ecty(f^j_i) + \epsilon_{ij} \ecty(f_j) ,\]
which is~(\ref{f4464}) for $I' = \emptyset$.

In the case $I' \neq \emptyset$, the same reasoning can be performed, except that
one have to work conditionally to~$\mcal{F}_{I'}$.
Then, taking
$f_i = f^{I'}_i$,
$f_j = f^{I'}_j$,
$f^j_i = f^{I'\uplus\{j\}}_i$,
$\mcal{F}_i = \mcal{F}_{I'\uplus\{i\}}$,
one gets
\[\label{fo5917}
\ecty(f_i|\mcal{F}_{I'}) \leq \ecty(f^j_i|\mcal{F}_{I'}) + \epsilon_{ij} \ecty(f_j|\mcal{F}_{I'}) .\]
Now
\[\ecty(f_i) = \sqrt{\int \ecty\big(f_i|\vec{X}_{I'}=\vec{x}_{I'}\big)^2 \dx\Pr[\vec{x}_{I'}]} ,\]
with similar formulas for~$f_j$ and~$f^j_i$, since all these functions
are centered w.r.t.~$\mcal{F}_{I'}$.
Therefore, integrating~(\ref{fo5917}) and applying Minkowski's inequality yields:
\[ \ecty(f_i) \leq \ecty(f^j_i) + \epsilon_{ij} \ecty(f_j) ,\]
i.e.~(\ref{f4464}).
\end{proof}

For~$i \leq j$, let us denote
\[ \Delta^{[j]}_i = \Delta^{\{1,\ldots,j\} \setminus \{i\}}_i .\]
Claim~\ref{l8326} will be used through the following corollary:
\begin{Clm}\label{l3526}
For all~$i<j$,
\[\label{f7631} \Delta^{[j-1]}_i \leq \frac{1}{1-\epsilon_{ij}^2}
\big( \Delta^{[j]}_i + \epsilon_{ij} \Delta^{<}_j \big) .\]
\end{Clm}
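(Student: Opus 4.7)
The claim in fact follows from applying Claim~\ref{l8326} twice, with a little algebraic rearrangement at the end; there is no deep obstacle.

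First I would apply Claim~\ref{l8326} with $I' = \{1,\ldots,j-1\}\setminus\{i\}$ (so $i,j\notin I'$ as required, since $i<j$) and the ordered pair of indices $(i,j)$. Noting that $I'\uplus\{j\} = \{1,\ldots,j\}\setminus\{i\}$, this yields
\[ \Delta^{[j-1]}_i \ \leq\ \Delta^{[j]}_i + \epsilon_{ij}\,\Delta^{\{1,\ldots,j-1\}\setminus\{i\}}_j. \]
The second factor is not yet $\Delta^<_j$, so I would apply Claim~\ref{l8326} a second time, to the same $I'$ but with the roles of~$i$ and~$j$ swapped, to bound
\[ \Delta^{\{1,\ldots,j-1\}\setminus\{i\}}_j \ \leq\ \Delta^{\{1,\ldots,j-1\}}_j + \epsilon_{ji}\,\Delta^{\{1,\ldots,j-1\}\setminus\{i\}}_i \ =\ \Delta^<_j + \epsilon_{ij}\,\Delta^{[j-1]}_i, \]
where I have used the symmetry $\epsilon_{ji}=\epsilon_{ij}$.

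Substituting this into the first inequality gives
\[ \Delta^{[j-1]}_i \ \leq\ \Delta^{[j]}_i + \epsilon_{ij}\Delta^<_j + \epsilon_{ij}^2\,\Delta^{[j-1]}_i. \]
Since $\epsilon_{ij}<1$ by hypothesis, $(1-\epsilon_{ij}^2)>0$, so I may move the last term to the left-hand side and divide by $(1-\epsilon_{ij}^2)$ to obtain~(\ref{f7631}). The only thing to watch is that all the $\Delta$'s appearing are finite (guaranteed since $f\in L^2$), so the rearrangement is legitimate.
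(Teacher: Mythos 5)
Your proof is correct and coincides with the paper's own: both apply Claim~\ref{l8326} twice with $I' = \{1,\ldots,j-1\}\setminus\{i\}$, once in each order of the pair $(i,j)$, and then rearrange. The paper leaves the final algebraic rearrangement implicit (``Combining \ldots\ then yields''), whereas you spell it out; otherwise the arguments are identical.
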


\begin{proof}
We have to bound $\Delta^{[j-1]}_i$, which here we rather denote
$\Delta^{[b-1]}_a$ to avoid confusion with the notation of Claim~\ref{l8326}.
Applying Claim~\ref{l8326} with $I' = \{1,\ldots,b-1\} \setminus \{a\}$,
$i = a$ and $j = b$, one has
\[\label{f7625a} \Delta^{[b-1]}_a = \Delta^{\{1,\ldots,b-1\}\setminus\{a\}}_{a}
\leq \Delta^{\{1,\ldots,b\}\setminus\{a\}}_{a}
+ \epsilon_{ab} \Delta^{\{1,\ldots,b-1\}\setminus\{a\}}_b
= \Delta^{[b]}_{a} + \epsilon_{ab} \Delta^{\{1,\ldots,b-1\}\setminus\{a\}}_b .\]
But applying again Claim~\ref{l8326}, this time with $I' = \{1,\ldots,b-1\} \setminus \{a\}$,
$i = b$ and $j = a$, one has
\[\label{f7625b} \Delta^{\{1,\ldots,b-1\}\setminus\{a\}}_{b}
\leq \Delta^{\{1,\ldots,b-1\}}_{b}
+ \epsilon_{ab} \Delta^{\{1,\ldots,b-1\}\setminus\{a\}}_a
= \Delta^{<}_b + \epsilon_{ab} \Delta^{[b-1]}_a .\]
Combining~(\ref{f7625a}) and~(\ref{f7625b}) then yields~(\ref{f7631}).
\end{proof}

Now let us show how Claim~\ref{l3526} implies the theorem.
To avoid heavy formalism, I will detail the computations for $I = \{1,2,3,4\}$
(rather denoted by~$I = \{a,b,c,d\}$ here to avoid confusions
with ``$1$'' and~``$2$'' taken as numbers),
hoping that generalizing is obvious then.

First, note that
\[\label{f4236d} \Delta^<_d = \Delta^{\neq}_d .\]
Now, by a direct use of Claim~\ref{l3526},
\[\label{f4236c} \Delta^<_c = \Delta^{[c]}_c
\leq \frac{1}{1-\epsilon_{cd}^2} \big( \Delta^{[d]}_c + \epsilon_{cd} \Delta^{<}_d \big)
= \tilde1_c \Delta^{\neq}_c + \tilde{\epsilon}_{cd} \Delta^{\neq}_d .\]
To bound $\Delta^<_b$, we have to iterate Claim~\ref{l3526} twice:
\begin{multline}\label{f4236b}
\Delta^<_b = \Delta^{[b]}_b
\leq \frac{1}{1-\epsilon_{bc}^2} \Delta^{[c]}_b + \teps_{bc} \Delta^{<}_c \\
\leq \frac{1}{(1-\epsilon_{bc}^2)(1-\epsilon_{bd}^2)}
\big( \Delta^{[d]}_b + \epsilon_{cd} \Delta^{<}_d \big) + \teps_{bc} \Delta^<_c
= \tilde1_b \Delta^{\neq}_b + \teps_{bd} \Delta^{\neq}_d + \teps_{bc} \Delta^<_c \\
\footrel{(\ref{f4236c})}\leq
\tilde1_b \Delta^{\neq}_b + \tilde1_c \teps_{bc} \Delta^{\neq}_c
+ (\teps_{bd} + \teps_{bc}\teps_{cd}) \Delta^{\neq}_d .
\end{multline}
Last, bounding $\Delta^{<}_a$ requires iterating Claim~\ref{l3526} three times:
\begin{multline}\label{f4236a}
\Delta^<_a = \Delta^{[a]}_a
\leq \frac{1}{1-\epsilon_{ab}^2} \Delta^{[b]}_a + \teps_{ab} \Delta^<_b \\
\leq \frac{1}{(1-\epsilon_{ab}^2)(1-\epsilon_{ac}^2)} \Delta^{[c]}_a
+ \teps_{ac} \Delta^<_c + \teps_{ab} \Delta^<_b 
\leq \tilde1_a \Delta^{\neq a} + \teps_{ad} \Delta^<_d
+ \teps_{ac} \Delta^<_c + \teps_{ab} \Delta^<_b \\
\footrel{(\ref{f4236c},\ref{f4236b})}
\leq \tilde1_a \Delta^{\neq a} + \tilde1_b \teps_{ab} \Delta^{\neq}_b
+ \tilde1_c (\teps_{ac} + \teps_{ab}\teps_{bc}) \Delta^{\neq}_c
+ (\teps_{ad} + \teps_{ac}\teps_{cd}
+ \teps_{ab}\teps_{bd}
+ \teps_{ab}\teps_{bc}\teps_{cd}) \Delta^{\neq}_d.
\end{multline}

One can sum up Equations~(\ref{f4236d})--(\ref{f4236a}) into the matricial expression
\[\label{f5482}
\begin{pmatrix} \Delta_a^< \\ \Delta_b^< \\ \Delta_c^< \\ \Delta_d^< \end{pmatrix} \leq
\begin{pmatrix} 1 & \teps_{ab} & \teps_{ac} + \teps_{ab}\teps_{bc} &
\teps_{ad} + \teps_{ac}\teps_{cd} + \teps_{ab}\teps_{bd} + \teps_{ab}\teps_{bc}\teps_{cd} \\
0 & 1 & \teps_{bc} & \teps_{bd} + \teps_{bc}\teps_{cd} \\
0 & 0 & 1 & \teps_{cd} \\
0 & 0 & 0 & 1 \end{pmatrix}
\begin{pmatrix} \tilde1_a \Delta^{\neq}_a \\ \tilde1_b \Delta^{\neq}_b \\
\tilde1_c \Delta^{\neq}_c \\ \Delta^{\neq}_d \end{pmatrix} .\]
If we look back at how the square matrix in~(\ref{f5482}) has been constructed,
we find that
\[ \begin{pmatrix}
\text{\it square} \\
\text{\it matrix} \\
\text{\it in} \\
\text{\it (\ref{f5482})} \\
\end{pmatrix} =
\sum_{k=0}^{\infty} \begin{pmatrix}
0 & \teps_{ab} & \teps_{ac} & \teps_{ad} \\
0 & 0 & \teps_{bc} & \teps_{bd} \\
0 & 0 & 0 & \teps_{cd} \\
0 & 0 & 0 & 0 \end{pmatrix}^{k}
= \begin{pmatrix}
1 & -\teps_{ab} & -\teps_{ac} & -\teps_{ad} \\
0 & 1 & -\teps_{bc} & -\teps_{bd} \\
0 & 0 & 1 & -\teps_{cd} \\
0 & 0 & 0 & 1 \end{pmatrix}^{-1} ,\]
so in the end we obtain that
\[ \begin{pmatrix} \Delta_a^< \\ \Delta_b^< \\ \Delta_c^< \\ \Delta_d^< \end{pmatrix} \leq
M \begin{pmatrix} \Delta^{\neq}_a \\ \Delta^{\neq}_b \\ \Delta^{\neq}_c \\ \Delta^{\neq}_d
\end{pmatrix} ,\]
where $M$ is given by~(\ref{for4393}).
Then it is immediate that $\Var(f) = \sum_i \big(\Delta_i^<\big)^2 \leq
\VERT M \VERT^2 \sum_i \big(\Delta_i^{\neq}\big)^2 = \VERT M \VERT^{2} \,\mcal{E}(f,f)$, \textsc{qed}.
\end{proof}

The bound we have obtained for the spectral gap
is not symmetric by permutation of the indexes in~$I$.
It can however can be bounded by a simpler expression,
which is nearly as good as the original one in concrete situations:
\begin{Cor}
In Theorem~\ref{t7230}, $M$ can be replaced by the matrix
\[ M' = \begin{pmatrix} 1 & -\epsilon_{12} & \cdots & -\epsilon_{1N} \\
-\epsilon_{12} & \ddots & \ddots & \vdots \\
\vdots & \ddots & \ddots & -\epsilon_{(N-1)N} \\
-\epsilon_{1N} & \cdots & -\epsilon_{(N-1)N} & 1 \end{pmatrix}^{-1} ,\]
provided $\rho(\mbf{I}_N-M') < 1$.
\end{Cor}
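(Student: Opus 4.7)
The proof of Theorem~\ref{t7230} in fact establishes the stronger componentwise bound $\vec\Delta^{<}\leq M\vec\Delta^{\neq}$ between two nonnegative vectors, with $M\geq 0$ componentwise (as $M=(\sum_{k\geq 0}U^k)D$, where $U$ is the strictly upper-triangular matrix with entries $\teps_{ij}\geq 0$ for $i<j$ and $D$ is the nonnegative diagonal matrix with $\tilde{1}_i$'s). The hypothesis $\rho(\mbf{I}_N-M')<1$ forces every (real) eigenvalue of $E$ to be $<1/2$ (the eigenvalues of $\mbf{I}-M'$ are the numbers $-\mu/(1-\mu)$ for $\mu$ running over the spectrum of $E$), and in particular $\rho(E)<1$, so $M'=(\mbf{I}-E)^{-1}=\sum_{k\geq 0}E^k$ is also componentwise nonnegative. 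Because $\VERT A\VERT\leq\VERT B\VERT$ whenever $0\leq A\leq B$ componentwise (the operator norm of a nonnegative matrix is attained at a nonnegative vector by Perron--Frobenius), the corollary reduces to showing the componentwise inequality $M\leq M'$.

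I propose to prove $M\leq M'$ by interpreting both sides as generating functions of weighted walks on $\{1,\ldots,N\}$. The entry $M'_{ij}=(\sum_k E^k)_{ij}$ enumerates walks $i=v_0,v_1,\ldots,v_k=j$ with $v_l\neq v_{l+1}$, each weighted by $\prod_l\epsilon_{v_lv_{l+1}}$. For $i\leq j$, $M_{ij}=\tilde{1}_j\sum_{i=i_0<\cdots<i_k=j}\prod_l\teps_{i_li_{l+1}}$ (and $M_{ij}=0$ for $i>j$). Expanding each $\teps_{ab}=\epsilon_{ab}\prod_{c\in(a,b]}(1-\epsilon_{ac}^2)^{-1}$ and $\tilde{1}_j=\prod_{c>j}(1-\epsilon_{jc}^2)^{-1}$ as a geometric series $\prod_c\sum_{n_c\geq 0}\epsilon^{2n_c}$ rewrites $M_{ij}$ as a sum over \emph{augmented monotone paths}: a monotone skeleton $i=i_0<\cdots<i_k=j$ together with loop multiplicities $n_{l,c}\geq 0$ indexing round-trip excursions $i_l\to c\to i_l$ for $c\in(i_l,i_{l+1}]$ at each intermediate skeleton vertex $i_l$ (and $c>j$ at the endpoint $j$), with weight $\prod_l\epsilon_{i_li_{l+1}}\cdot\prod_{l,c}\epsilon_{i_lc}^{2n_{l,c}}$. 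Each augmented path is canonically realized as an $E$-walk by performing at every $i_l$ the $n_{l,c}$ round-trips $i_l\to c\to i_l$ in increasing order of $c$ before the skeleton jump $i_l\to i_{l+1}$; the resulting walk has no consecutive repeats, and its $\prod\epsilon$-weight matches that of the augmented path exactly, so it appears as a term in the Neumann expansion of $M'_{ij}$.

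To conclude $M_{ij}\leq M'_{ij}$ it suffices that the assignment (augmented path)$\mapsto$($E$-walk) be injective, so that the $M$-side becomes a subsum of the $M'$-side. I would verify this by explicit decoding: given a walk $v_0,\ldots,v_n$ from $i$ to $j$, set $i_0=i$, let $p_0$ be the \emph{last} index with $v_{p_0}=i_0$, define $i_1:=v_{p_0+1}$, and iterate, setting $p_l$ to be the last index $>p_{l-1}$ with $v_{p_l}=i_l$ and $i_{l+1}:=v_{p_l+1}$, until $i_k=j$. For walks produced by augmented monotone paths this procedure recovers the skeleton exactly, because after the main jump $i_l\to i_{l+1}$ the walk never revisits $i_l$ (subsequent loop targets $c\in(i_{l'},i_{l'+1}]$ for $l'\geq l+1$ satisfy $c>i_{l+1}>i_l$, and all subsequent skeleton jumps are strictly forward); the walk segments between consecutive last-occurrence indices then recover the loop multiplicities uniquely under the increasing-$c$ convention. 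For $i>j$, $M_{ij}=0\leq M'_{ij}$ trivially. The principal obstacle in this plan is the careful combinatorial bookkeeping in the decoding argument: one must verify that the recursive procedure always returns a strictly increasing skeleton terminating at $j$, and that each intersegment admits at most one decomposition into permitted loops, so that walks ``not of the expected form''---for example $1,2,1,3,1,2,3$ for $N=3$, which would require an illegal loop $1\to 3\to 1$ at node $1$ in a skeleton that starts with $1\to 2$---produce no decoding at all, contributing only to the $M'$-surplus without ever appearing in $M$.
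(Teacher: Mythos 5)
Your proof is correct and follows essentially the same combinatorial approach as the paper's: both expand $M_{ij}$ and $M'_{ij}$ as weighted-walk generating functions on $\{1,\ldots,N\}$ and show that the walks counted by $M$ (your augmented monotone paths, which are precisely the paper's ``first condition'' sequences) form a sub-family of those counted by $M'$ (the paper's ``second condition'', walks with no immediate repeat), giving $0\leq M\leq M'$ entrywise and hence $\VERT M\VERT\leq\VERT M'\VERT$. You are merely more explicit than the paper about the injectivity of the path-to-walk encoding and about how the hypothesis $\rho(\mbf{I}_N-M')<1$ forces $\rho(\boldepsilon)<1$ (via Perron--Frobenius, since $\boldepsilon\geq0$) and hence $M'\geq0$ entrywise.
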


\begin{proof}
Each entry of~$M$ is actually bounded by the corresponding entry of~$M'$.
To see it, we `expand' the entries of~$M$, resp.~$M'$.
First, notice that $1/(1-\epsilon_{ij}^2)$ can be expanded
into $1+\epsilon_{ij}\epsilon_{ji}
+ \epsilon_{ij}\epsilon_{ji}\epsilon_{ij}\epsilon_{ji} + \cdots$,
so that one has the expansions
\[ \tilde1_i = \sum_{i<j_1\leq\cdots\leq j_k} \prod_{\ell=1}^k
\epsilon_{ij_\ell} \epsilon_{j_\ell i} \]
and
\[ \teps_{ij} = \sum_{i<j_1\leq\cdots\leq j_k\leq j} \Big( \prod_{\ell=1}^k
\epsilon_{ij_\ell} \epsilon_{j_\ell i} \Big) \epsilon_{ij} .\]
Then, using the inversion formula $(\mrm{I}-A)^{-1} = \sum_{k=0}^\infty A^k$
for triangular arrays, one obtains that
\[ M_{ij} = \sum_{\substack{(i_0,i_1,\ldots,i_k)\\\text{\it first condition}}}
\prod_{\ell=0}^{k-1} \epsilon_{i_\ell i_{\ell+1}} ,\]
where the meaning of ``\textit{first condition}'' is given by the following
\begin{Def}\label{d8225}
A sequence $(i_0,\ldots,i_k)$ is said to satisfy the \textit{first condition} if:
\begin{ienumerate}
\item\label{i8226a} $i_0 = i$ and $i_k = j$;
\item\label{i8226b} $i_\ell \neq i_{\ell+1}$ for all~$\ell$;
\item $i_{\ell+1} < i_\ell$ only if $\ell \geq 1$ and $i_{\ell+1} = i_{\ell-1}$;
\item If $i_{\ell+1} < i_\ell$ and $\ell \leq k-2$,
then $i_{\ell+2} \geq i_\ell$.
\end{ienumerate}
\end{Def}

One has a similar formula for~$M'$:
\[ M'_{ij} = \sum_{\substack{(i_0,i_1,\ldots,i_k)\\\text{\it second condition}}}
\prod_{\ell=0}^{k-1} \epsilon_{i_\ell i_{\ell+1}} ,\]
where
\begin{Def}
A sequence $(i_0,\ldots,i_k)$ is said to satisfy the \textit{second condition} if
it satisfies Conditions~(\ref{i8226a}) and~(\ref{i8226b}) of Definition~\ref{d8225}.
\end{Def}

Since the \textit{second condition} is obviously weaker than the \textit{first condition},
one has $M_{ij} \leq M'_{ij}$.\linebreak[1]\strut
\end{proof}

There is a still weaker but even simpler formula:
\begin{Cor}\label{cor3632}
Defining
\[ \begin{array}{rrcl} \boldepsilon \colon & L^2(I) & \to & L^2(I) \\
& (a_i)_{i\in I} & \mapsto & \big( \sum_{j\neq i} \epsilon_{ij}a_j \big)_{i\in I} ,\end{array} \]
the spectral gap of the Glauber dynamics is at least
\[\label{fo6385} \big( 1 - \VERT \boldepsilon \VERT \big)_+^2 .\]
\end{Cor}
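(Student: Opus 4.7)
The plan is simply to recognise that the matrix $M'$ appearing in the previous corollary is nothing but $(\mathbf{I}_N - \boldepsilon)^{-1}$, where $\boldepsilon$ is identified with its matrix $(\!(\epsilon_{ij})\!)_{i,j\in I}$ (with the convention $\epsilon_{ii}=0$), and then to bound $\VERT M'\VERT$ by a Neumann series.

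More precisely, first I would dispose of the trivial case $\VERT\boldepsilon\VERT \geq 1$: then $(1-\VERT\boldepsilon\VERT)_+^2 = 0$, and the bound~(\ref{fo6385}) is vacuously true, since a spectral gap is always nonnegative. So I may assume $\VERT\boldepsilon\VERT < 1$. In particular each individual $\epsilon_{ij}$ is strictly less than $1$ (since $\epsilon_{ij} \leq \VERT\boldepsilon\VERT$), so that Theorem~\ref{t7230} and the previous corollary are applicable.

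Next, I would observe that the defining expression of $M'$ rewrites
\[
M' = \big(\mathbf{I}_N - \boldepsilon\big)^{-1} .
\]
Since $\VERT\boldepsilon\VERT < 1$, the Neumann series $\sum_{k=0}^{\infty}\boldepsilon^k$ converges in operator norm to $(\mathbf{I}_N - \boldepsilon)^{-1}$, giving both the existence of $M'$ and the bound
\[
\VERT M'\VERT \,\leq\, \sum_{k=0}^{\infty} \VERT\boldepsilon\VERT^k \,=\, \frac{1}{1-\VERT\boldepsilon\VERT} .
\]
The same Neumann expansion also yields that the eigenvalues of $\mathbf{I}_N - M'$ are of the form $-\lambda_i/(1-\lambda_i)$ for $\lambda_i$ an eigenvalue of $\boldepsilon$; under $\VERT\boldepsilon\VERT < 1$, these moduli are at most $\VERT\boldepsilon\VERT/(1-\VERT\boldepsilon\VERT)$, and a direct continuity argument (or, more economically, replacing the~$\epsilon_{ij}$ by $t\epsilon_{ij}$ and letting $t\nearrow 1$) lets one apply the previous corollary whenever $\VERT\boldepsilon\VERT < 1/2$; the general case $\VERT\boldepsilon\VERT < 1$ then follows by the same rescaling trick, since $\VERT M'\VERT^{-2}$ depends continuously on the~$\epsilon_{ij}$.

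Putting it together, the previous corollary gives a lower bound for the spectral gap of $\VERT M'\VERT^{-2} \geq (1-\VERT\boldepsilon\VERT)^2 = (1-\VERT\boldepsilon\VERT)_+^2$. The only genuinely delicate point is the last one, namely justifying application of the preceding corollary under the stronger-looking hypothesis $\VERT\boldepsilon\VERT < 1$ rather than the stated $\rho(\mathbf{I}_N - M') < 1$; everything else is elementary Neumann-series manipulation.
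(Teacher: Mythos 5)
Your core argument — identifying $M' = (\mathbf{I}_N-\boldepsilon)^{-1}$, bounding $\VERT M'\VERT$ by the Neumann series $\sum_k\VERT\boldepsilon\VERT^k = (1-\VERT\boldepsilon\VERT)^{-1}$, and dismissing the case $\VERT\boldepsilon\VERT\geq1$ as trivial — is exactly the paper's proof, and you could have stopped there.

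The extra discussion about the hypothesis $\rho(\mathbf{I}_N-M')<1$ raises a legitimate concern, but your proposed fix is not sound. You are right that since $\boldepsilon$ is symmetric and nonnegative, $\operatorname{Spec}\boldepsilon\subset[-\VERT\boldepsilon\VERT,\VERT\boldepsilon\VERT]$ with $\VERT\boldepsilon\VERT$ itself an eigenvalue (Perron--Frobenius), so that $\rho(\mathbf{I}_N-M')=\max_i|\lambda_i/(1-\lambda_i)|=\VERT\boldepsilon\VERT/(1-\VERT\boldepsilon\VERT)$, which is $<1$ precisely when $\VERT\boldepsilon\VERT<1/2$. But the ``rescaling trick'' cannot bridge $1/2\leq\VERT\boldepsilon\VERT<1$: replacing $\epsilon_{ij}$ by $t\epsilon_{ij}$ with $t<1$ replaces the \emph{hypothesis} of Theorem~\ref{t7230} by the stronger assumption $\{X_i:X_j\}_*\leq t\epsilon_{ij}$, which your system need not satisfy, so the bounds you obtain for $t<1$ are bounds for a different (hypothetical) system and letting $t\nearrow1$ gives nothing. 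Conversely, enlarging the $\epsilon_{ij}$ only makes $\VERT\boldepsilon\VERT$ bigger and the condition harder, not easier, to meet.

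The correct way to close the gap is not by rescaling but by going back to the \emph{proof} of the preceding corollary rather than to its stated hypothesis. That proof establishes, for every $i,j$, the entry-wise inequality $0\leq M_{ij}\leq M'_{ij}$, and the only convergence requirement used there is that the Neumann expansion $M'=\sum_{k\geq0}\boldepsilon^k$ converge with nonnegative terms — i.e.\ $\rho(\boldepsilon)<1$, which for symmetric $\boldepsilon$ is just $\VERT\boldepsilon\VERT<1$. Since $M$ and $M'$ both have nonnegative entries, the entry-wise comparison gives $\VERT M\VERT\leq\VERT M'\VERT$, and chaining directly with Theorem~\ref{t7230}:
\[
\mathit{gap}\ \geq\ \VERT M\VERT^{-2}\ \geq\ \VERT M'\VERT^{-2}\ \geq\ (1-\VERT\boldepsilon\VERT)^2
\]
holds whenever $\VERT\boldepsilon\VERT<1$. (This strongly suggests the printed hypothesis of the preceding corollary is a misprint, e.g.\ for $\rho(\mathbf{I}_N-M'^{-1})<1$, which is nothing but $\rho(\boldepsilon)<1$.)
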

\begin{proof}
One has $M' = (\mrm{I}-\boldepsilon)^{-1}$, so, provided $\VERT A\VERT < 1$,
\[ \VERT M' \VERT = \big\VERT (\mrm{I}-A)^{-1} \big\VERT = \Big\VERT \sum_{k=0}^\infty A^k \Big\VERT
\leq \sum_{k=0}^\infty \VERT A \VERT^k = (1-\VERT A\VERT)^{-1} .\]
In the case $\VERT A\VERT \geq 1$, (\ref{fo6385}) is trivial.
\end{proof}

\subsection{Avoiding the articial phase transition}

A common situation in which we would like to apply the previous results is when $I = \ZZ^n$
and $\epsilon_{ij}$ is of the form $\epsilon(j-i)$
for some symmetric function~$\epsilon \colon \ZZ^n \to [0,1]$.
Then Corollary~\ref{cor3632} tells
that the Glauber dynamics has a (strictly) positive spectral gap as soon as
$\sum_{z\neq 0} \epsilon(z) < 1$.
But like in \S~\ref{parAvoidingThePhaseTransitionForTheToyModel},
we are going to prove that that bound is somehow `artificial'
and that it can be relaxed into the neater condition
``$\sum_{z\neq 0} \epsilon(z) < \infty$'':
\begin{Thm}\label{thm7431}
Suppose that $I = \ZZ^n$ and that for all~$i,j\in \ZZ^n$
one has ${\{X_i:X_j\}_\ast} \ab \leq \epsilon(j-i)$
for some symmetric function~$\epsilon \colon \ZZ^n \to [0,1]$
such that $\epsilon(z) < 1$ as soon as $z \neq 0$.
Then if $\sum_{z\in\ZZ^n} \epsilon(z) < \infty$,
the spectral gap of the Glauber dynamics is positive.
\end{Thm}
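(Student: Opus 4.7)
The strategy will mirror Lemma~\ref{lem4067}: partition $\ZZ^n$ into $N\coloneqq\ell^n$ translated sublattices $I^{(1)},\ldots,I^{(N)}$ of $\ell\ZZ^n$, with $\ell$ large, and decompose the spectral gap estimate into a ``between-block'' part and a ``within-block'' part. I would first reduce to finite boxes $\Lambda_L=\{-L,\ldots,L\}^n$ by a standard limiting argument (passing to the infinite-volume limit at the end, since the spectral gap on $\ZZ^n$ is the infimum of spectral gaps on finite subsystems).

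The key choice is to take $\ell$ large enough so that:
\begin{ienumerate}
\item $\sum_{z\in\ell\ZZ^n,\,z\neq0}\epsilon(z)<1$, which is possible since by dominated convergence this tail sum tends to $0$ as $\ell\longto\infty$;
\item for every pair $u\neq v$, $\sum_{z\equiv z_v-z_u\pmod\ell,\,z\neq0}\epsilon(z)<1$, by the very same argument as in Claim~\ref{clm9816}.
\end{ienumerate}

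With this choice fixed, I would combine two Dirichlet form inequalities. First, consider the ``block Glauber dynamics'' in which whole blocks $Y^{(u)}\coloneqq\vec{X}_{I^{(u)}\cap\Lambda_L}$ are resampled together according to their conditional law given the other blocks. Thanks to Lemma~\ref{pro8882}, the pairwise correlations $\{Y^{(u)}:Y^{(v)}\}_*$ are bounded by quantity~(ii), so all these are strictly less than $1$; Theorem~\ref{t7230} applied to this finite system of $N$ ``mega-spins'' then yields a spectral gap $\lambda_1>0$, giving
\[ \Var(f)\;\leq\;\lambda_1^{-1}\sum_{u=1}^N\EE\bigl[\Var\bigl(f\big|\vec{X}_{\Lambda_L\setminus I^{(u)}}\bigr)\bigr]. \]
Second, for each $u$ and each conditioning on the spins outside $I^{(u)}$, the single-site Glauber dynamics restricted to $I^{(u)}\cap\Lambda_L$ still satisfies the correlation bound $\{X_i:X_j\}_*\leq\epsilon(j-i)$ for $i\neq j$ in $I^{(u)}$; since such $(j-i)$ lie in $\ell\ZZ^n\setminus\{0\}$, condition~(i) makes Corollary~\ref{cor3632} directly applicable and provides a conditional spectral gap $\lambda_2>0$ uniform in the conditioning, whence
\[ \EE\bigl[\Var\bigl(f\big|\vec{X}_{\Lambda_L\setminus I^{(u)}}\bigr)\bigr]\;\leq\;\lambda_2^{-1}\sum_{i\in I^{(u)}\cap\Lambda_L}\EE\bigl[\Var\bigl(f\big|\vec{X}_{\Lambda_L\setminus\{i\}}\bigr)\bigr]. \]
Stacking the two inequalities gives $\Var(f)\leq(\lambda_1\lambda_2)^{-1}\mcal{E}(f,f)$, so the single-site Glauber dynamics on $\Lambda_L$ has spectral gap at least $\lambda_1\lambda_2>0$, uniformly in~$L$.

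The main technical hurdle will be justifying that Theorem~\ref{t7230} applies cleanly to the block dynamics and that the resulting bound $\VERT M\VERT^{-2}$ is uniform in $L$. The theorem is abstract enough that the ``blocks'' being infinite-dimensional poses no issue---only the finiteness of the number~$N$ of mega-spins matters. Moreover, since the $\tilde\epsilon_{uv}$ and $\tilde{1}_u$ depend only on the bounds given by~(ii), which are $L$-independent, the operator norm $\VERT M\VERT$ is automatically bounded uniformly in $L$. Aside from this bookkeeping, the proof is essentially a compact synthesis of the sublattice idea from Section~\ref{parAvoidingThePhaseTransitionForTheToyModel} with the abstract spectral gap estimate of Theorem~\ref{t7230}.
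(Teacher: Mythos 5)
Your proposal follows exactly the same two-level block strategy as the paper's proof: partition $\ZZ^n$ into the $\ell^n$ sublattices $\ell\ZZ^n+z_u$, apply Corollary~\ref{cor3632} within each block (where site-to-site displacements lie in $\ell\ZZ^n\setminus\{0\}$, so the relevant $\epsilon$-sum is $<1$), and apply Theorem~\ref{t7230} to the coarse-grained ``block'' Glauber dynamics, whose pairwise block correlations are $<1$. The only cosmetic difference is that you justify the block-level bounds directly via the congruence-class sums (your condition~(ii), which is really Claim~\ref{clm8133} rather than Lemma~\ref{pro8882} as cited), whereas the paper simply invokes Lemma~\ref{lem4067} to conclude that the block correlations are $<1$.
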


\begin{proof}
The assumption on~$\sum_z\epsilon(z)$ allows us to take $\ell < \infty$ large enough so that
\[ \sum_{z\in\ell\ZZ^n\setminus\{0\}} \epsilon(z) < 1 .\]
We split $\ZZ^n$ into a partition of~$\ell^n \eqqcolon N$ sublattices $Z_1, \ldots, Z_N$,
each lattice $Z_u$ being of the form $\ell \ZZ^n + z_u$ for some $z_u\in\ZZ^n/\ell\ZZ^n$.
Then we define an auxiliary dynamics:
\begin{Def}\label{def4108}
The \emph{sublattice Glauber dynamics} is the Glauber dynamics for~$\vec{X}_{\ZZ^n}$
\emph{considered as the finite-dimensional vector
$(\vec{X}_{Z_1},\ldots,\vec{X}_{Z_N})$}.
In other words, on each~$u\in\{1,\ldots,N\}$
there is an independent $\mathit{Poisson}(1)$ alarm clock,
and when clock $u$ rings, the state of the whole $\vec{X}_{Z_u}$ is flipped in one shot
according to~$\Pr\big(X_{Z_u}\big|\vec{X}_{\ZZ^n\setminus Z_u}\big)$.
\end{Def}

Now let~$f \ab {\in \ldb(\Omega)}$.
In addition to the notation of the proof of Theorem~\ref{t7230},
we introduce the following definition:
\begin{Def}
For~$u \in \{1,\ldots,N\}$, we define
\[ f_{(u)}^{\neq} \coloneqq f - \EE\big[ f \big| \vec{X}_{\ZZ^n \setminus Z_u} \big] .\]
\end{Def}
\begin{Rmk}\label{rmk8840}
The~$f_{(u)}^{\neq}$ are the equivalent of the~$f_i^{\neq}$ for the sublattice Glauber dynamics.
\end{Rmk}

Fixing some `boundary condition' $\vec{x}_{\ZZ^n\setminus Z_u}$ on~$\ZZ^n\setminus Z_u$,
we can apply Corollary~\ref{cor3632} to the Glauber dynamics
for~$\vec{X}_{Z_u}$ under the law
$\Pr\big[\Bcdot\big|\vec{X}_{\ZZ^n\setminus Z_u}=\vec{x}_{\ZZ^n\setminus Z_u}\big]$.
After integrating, one gets that
\[\label{fo4048} \Var\big( f_{(u)}^{\neq} \big) \leq
\big(1-\VERT\boldsymbol\zeta\VERT\big)^{-2} \sum_{i\in Z_u} \Var\big( f_i^{\neq} \big) ,\]
where $\boldsymbol\zeta$ is the operator on~$L^2(\ell\ZZ^n)$ defined by
\[ \big(\boldsymbol\zeta g\big)(i)
= \sum_{z\in\ell\ZZ^n\setminus\{0\}} \epsilon(z) g(i+z) ,\]
whose norm is obviously bounded by~$\sum_{z\in\ell\ZZ^n\setminus\{0\}} \eqqcolon \zeta < 1$.
Then, summing~(\ref{fo4048}) for all~$u$:
\[\label{fo8710a} \sum_{u=1}^N \Var\big( f_{(u)}^{\neq} \big) \leq
\big(1-\zeta\big)^{-2} \mcal{E}(f,f) .\]

Now, let us apply Theorem~\ref{t7230} to the sublattice Glauber dynamics [Definition~\ref{def4108}].
It yields that
\[\label{fo8710b} \Var(f) \leq \VERT M\VERT^2 \sum_{u=1}^N \Var\big( f_{(u)}^{\neq} \big) ,\]
where $M$ is some $(N\times N)$ matrix depending on the
${\{ \vec{X}_{Z_u} : \vec{X}_{Z_v} \}_*}$.
But by Theorem~\ref{lem4067},
${\{ \vec{X}_{Z_u} : \vec{X}_{Z_v} \}_*} < 1$ for all~$u\neq v$,
thus $\VERT M\VERT < \infty$ by Remark~\ref{rmk4123}.
Combining~(\ref{fo8710a}) and~(\ref{fo8710b}), we finally get that
the spectral gap of the Glauber dynamics
for~$\vec{X}_{\ZZ^n}$ is bounded below by~$\VERT M\VERT^{-2} \times (1-\zeta)^2 > 0$.\linebreak[1]\strut
\end{proof}

\begin{Rmk}
Like Theorem~\ref{lem4067}, Theorem~\ref{thm7431} could actually be stated
in the general case of `abstract' metric spaces on which some group acts profinitely.
\end{Rmk}

\chapter{Concrete examples}\label{parApplications}%
\addcontentsline{itc}{chapter}{\protect\numberline{\thechapter}Concrete examples}

It is now time to see what the results of Chapters~\ref{parTensorization}
and~\ref{parOtherApplicationsOfTensorizationTechniques}
yield for concrete models of statistical physics.
I will try to give rather different types of examples,
so as to illustrate the advantages of working with Hilbertian correlations:
this frame is indeed quite general, as it requires little structure on the models considered.

In \S~\ref{parBackToIsingsModel} we will look back at Ising's model,
seeing how tensorization of Hilbertian decorrelations
improves the results of \S~\ref{parSomeResultsOnIsingsModel},
and what other results are given by the theorems of
\S~\ref{parOtherApplicationsOfTensorizationTechniques}.
We will also consider two kinds of generalizations, namely
when the range of interactions becomes infinite
and when the strength of the interactions is random (\emph{spin glasses}).
In the two next sections we will look at models with continuous states spaces:
first a quite general class of linear models [\S~\ref{parQuadraticModels}],
then a family of nonlinear models [\S~\ref{parNonlinearLatticeOfParticles}].
Finally in \S~\ref{parAHypocoerciveSystemOfInteractingParticles} we will see
how one can consider time as a supplementary dimension of the system
to get contractivity results for non-reversible Markov chains
(\emph{hypocoercivity}) on an infinite system of particles.

\begin{NOTA}
In this chapter, all the probability systems considered
will be endowed with their natural $\sigma$-metalgebras,
cf.\ Definition~\ref{def3128}.
To alleviate notation, I will give no names to these $\sigma$-metalgebras,
but will plainly write ``${\{X:Y\}_*}$'' to mean
``the subjective decorrelation between~$X$ and~$Y$
seen from the natural $\sigma$-metalgebra of the underlying system''.
\end{NOTA}

\section{Back to Ising's model}\label{parBackToIsingsModel}

\subsection{Standard Ising's model}

In all this section, we work on the lattice $\ZZ^n$
equipped with its natural distance $\dist$;
accordingly $|\Bcdot|$ will denote the $\ell^1$ norm on~$\RR^n$.
Recall the definition of Ising's model and the related notation that we introduced
in \S~\ref{parSomeResultsOnIsingsModel}, and Theorem~\ref{t4505}
on the existence of a completely analytical regime.

The following theroem states that Ising's model in completely analytical regime
is $\rho$-mixing, i.e.\ that two distant bunches of spins
are little correlated in the sense of maximal correlation:
\begin{Thm}\label{t3730}
For Ising's model on~$\ZZ^n$ in the completely analytical regime,
\begin{ienumerate}
\item\label{i3743a} There exists some $\psi'>0$ (the same as in Theorem~\ref{t4505})
such that for all disjoint $I,J\subset\ZZ^n$, one has when~$\dist(I,J)\longto\infty$ that
\[\label{f4615}
\{ \vec\omega_I : \vec\omega_J \} \leq \exp\big[-\big(\psi'+o(1)\big) \, \dist(I,J)\big] ,\]
where the ``$o(1)$'' can be
easily computed as an explicit function of~$\dist(I,J)$, $n$, $T$, $\psi'$ and the
$C'$ appearing in Theorem~\ref{t4505}.
\item\label{i3743b} There exists some $k<1$ such that for all disjoint $I,J\subset\ZZ^n$,
\[ \{ \vec\omega_I : \vec\omega_J \} \leq k .\]
\item\label{i5202}
Points~(\ref{i3743a}) and~(\ref{i3743b}) remain valid uniformly under any law of the form
$\Pr[\Bcdot|{\vec\omega_K=\vechat{\omega}_K}]$,
for~$K \subset \ZZ^n$ and~$\vechat{\omega}_K \in \{\pm1\}^K$ a `boundary condition' on~$K$.
\end{ienumerate}
\end{Thm}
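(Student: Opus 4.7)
The plan is to reduce the general statement for disjoint bunches $I, J$ to the case of single pairs of spins via the tensorization machinery of Chapter~\ref{parTensorization}, using complete analyticity (Theorem~\ref{t4505}) to control those single-pair correlations.

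First, I would establish a uniform pairwise subjective bound. Given distinct sites $i, j \in \ZZ^n$ and any (possibly empty) $K \subset \ZZ^n \setminus \{i, j\}$ with boundary condition $\vechat\omega_K$, complete analyticity applied with $I = \{i\}, J = \{j\}$ yields
\[ \beta_{\Pr[\Bcdot | \vec\omega_K = \vechat\omega_K]}(\omega_i, \omega_j) \leq C' e^{-\psi' \dist(i,j)}. \]
Since $\omega_i$ and $\omega_j$ are binary, this $\beta$-mixing bound controls the $\alpha$-mixing coefficient under every such conditioning, and Theorem~\ref{thm0367b} converts this into a Hilbertian correlation bound
\[ \{\omega_i : \omega_j\}_* \leq \epsilon(j-i) \coloneqq \Lambda\big( c\, C' e^{-\psi' |j-i|} \big) \]
for a universal constant $c$, the subscript $*$ denoting the natural $\sigma$-metalgebra — subjective conditioning on any subset of the remaining spins is dominated, by integration, by conditioning on the full complement. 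Moreover, since the local update rule~(\ref{f1762}) gives positive weight to both signs of each spin under any conditioning, one has $\epsilon(z) < 1$ for all $z \neq 0$, and the exponential decay ensures $\sum_{z \in \ZZ^n} \epsilon(z) < \infty$.

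Second, the two parts follow by tensorization. For part~(\ref{i3743a}), Lemma~\ref{pro8882} applied with the $\ell^1$ norm gives
\[ \{\vec\omega_I : \vec\omega_J\} \leq \Big(\sum_{|z| \geq \dist(I,J)} \epsilon(z)\Big) \wedge 1, \]
and the tail sum is of order $\dist(I,J)^{n-1} e^{-\psi' \dist(I,J)}$ by exponential decay, yielding the claimed $\exp[-(\psi' + o(1))\dist(I,J)]$ bound with an $o(1)$ that can be made completely explicit in terms of $n$, $\psi'$ and $C'$. For part~(\ref{i3743b}), Lemma~\ref{lem4067} applies directly: its two hypotheses — $\epsilon(z) < 1$ for $z \neq 0$, and summability — have just been verified, so there is a universal $k < 1$ with $\{\vec\omega_I : \vec\omega_J\} \leq k$ for all disjoint $I, J$. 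Part~(\ref{i5202}) follows because every ingredient above is already uniform over boundary conditions: complete analyticity holds uniformly over $K$ and $\vechat\omega_K$, so running the same argument under $\Pr[\Bcdot | \vec\omega_K = \vechat\omega_K]$ produces the same bounds.

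The main technical hurdle lies in the first step: one must properly convert complete analyticity (a total variation / $\beta$-mixing statement) into a sharp \emph{subjective} Hilbertian bound $\{\omega_i : \omega_j\}_* \leq \epsilon(j-i)$, uniform over all conditionings of the remaining spins, so that the output can be fed into the tensorization lemmas with the `$*$' $\sigma$-metalgebra. Theorem~\ref{thm0367b} supplies the conversion, but care is needed to check that for single binary spins the $\beta$-bound gives exactly a control of the form $|\Pr[A \cap B] - \Pr[A]\Pr[B]| \leq \epsilon \sqrt{p(1-p)q(1-q)}$ with the optimal $\epsilon$, and that this holds under every conditioning — otherwise the application of Lemmas~\ref{pro8882} and~\ref{lem4067} would fail to be subjective. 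Once this is done, the remainder of the proof is a mechanical combination of the two tensorization lemmas.
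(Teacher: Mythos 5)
Your high-level skeleton matches the paper's: establish a uniform subjective pairwise bound $\{\omega_i:\omega_j\}_* \leq \epsilon(j-i) \wedge k_0$ from complete analyticity, then feed it into Lemma~\ref{pro8882} (part~(\ref{i3743a})), Lemma~\ref{lem4067} (part~(\ref{i3743b})), and use the uniformity of complete analyticity for part~(\ref{i5202})). But the first step — where you yourself flag the main hurdle — differs from the paper's and has a genuine gap.

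\textbf{Different route for the pairwise conversion.} You route the $\beta$-to-$\rho$ conversion through Theorem~\ref{thm0367b}, inheriting the logarithmic loss $\Lambda(\epsilon) = \epsilon(1+|\ln\epsilon|)$. The paper instead exploits the fact that $\omega_i,\omega_j$ are \emph{binary}, where the conversion is exact and linear: by Remark~\ref{rmk9549}/Formula~(\ref{f0699}), $\{\omega_i:\omega_j\} = |p_a^b-p_ap^b|/\sqrt{p_1p_2p^1p^2}$, and $|p_a^b-p_ap^b| = \beta(\omega_i,\omega_j)/2$. Combined with a uniform lower bound on the marginals (the paper's Formula~(\ref{f7537}), coming from the local Gibbs rule~(\ref{f1762})), this yields $\{\omega_i:\omega_j\}_* \leq c_0 C' e^{-\psi' |j-i|}$ with no $\Lambda$ correction. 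For binary variables the hypothesis of Theorem~\ref{thm0367b} is literally equivalent to the conclusion, so invoking it only degrades the constant. Your detour is not wrong — the extra polynomial factor is absorbed in the $o(1)$ — just suboptimal and heavier than needed.

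\textbf{The gap.} Lemma~\ref{lem4067}, which you correctly identify as the source of part~(\ref{i3743b}), requires $\epsilon(z) < 1$ for \emph{every} $z\neq 0$. You assert this follows because "the local update rule gives positive weight to both signs," which is the right intuition, but your actual definition $\epsilon(j-i) := \Lambda(cC'e^{-\psi'|j-i|})$ does not deliver it: for small $|j-i|$ the argument $cC'e^{-\psi'|j-i|}$ may exceed $1$, where $\Lambda$ is undefined, and capping at $1$ gives $\Lambda(1)=1$, which violates the hypothesis. A separate quantitative argument for nearby pairs is required. The paper supplies it via Formula~(\ref{f7538}), a uniform positive lower bound on the four joint probabilities $\Pr_{\mathrm{con}}[\omega_i=a,\omega_j=b]$ under all conditionings (a direct consequence of the bounded finite-range specification), combined with Lemma~\ref{l1373} which converts this into $\{\omega_i:\omega_j\}_* \leq 1-4p_a^b \eqqcolon k_0 < 1$. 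Without this small-distance bound, your $\epsilon$ does not satisfy the hypothesis of Lemma~\ref{lem4067}, and parts~(\ref{i3743b}) and~(\ref{i5202}) are not established. The fix is to set $\epsilon(z) := (\text{your large-}|z|\text{ bound}) \wedge k_0$ with $k_0 < 1$ obtained from the Gibbs-specification positivity.
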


\begin{Rmk}
Let us compare Theorem~\ref{t3730} with Theorem~\ref{t4505}.
Both theorems state decorrelation between distant bunches of spins
above temperature $T_{\mrm{c}}'$; the difference relies in using $\rho$-mixing
rather than $\beta$-mixing to quantify dependence between the bunches
in Theorem~\ref{t3730}.

Both results give an exponential decay of correlations,
with the same exponential constant $\psi'$,
but Theorem~\ref{t3730} is more powerful
in the sense that the bound~(\ref{f4615}) is uniform in the size of~$I$ and~$J$
while (\ref{f7869}) was not.
Moreover, thanks to Point~(\ref{i3743b}) we get a non-trivial result
for any choice of disjoint~$I$ and~$J$, which was not the case beforehand.
Recall that the drawbacks of Theorem~\ref{t4505} were inherent to $\beta$-mixing,
as Theorem~\ref{c3793} shew.

Both result remain valid under conditioning.
However, if one takes a \emph{random} boundary condition%
—that is, if one works under the law $\Pr[\Bcdot|{\vec\omega_K\in C}]$
for some non-singleton $C \subset \{\pm1\}^K$\hbox{ —,}
then Point~(\ref{i5202}) of Theorem~\ref{t3730} fails (cf.\ Remark~\ref{r4670}),
while (\ref{f7869}) is still valid by convexity of the total variation norm.
\end{Rmk}

\begin{Rmk}
Let us compare Theorem~\ref{t3730} with Theorem~\ref{c6177}.
The result of Theorem~\ref{c6177} can be rewritten:
\[ \big\{ \vec\omega_{\{0\}\times\ZZ^{n-1}} : \vec\omega_{\{x\}\times\ZZ^{n-1}} \big\}
\leq e^{-\psi x} . \]
Theorem~\ref{t3730} can be seen as a generalization of that result
to the case where $I$ and~$J$ have arbitrary shapes.%
\footnote{Note that in the case $I$ and~$J$ are hyperplanes,
we shew on page~\pageref{parHyperplanesInIsingsModel}
that (\ref{f4615}) could be improved into
\[ \big\{ \vec\omega_{\{0\}\times\ZZ^{n-1}} : \vec\omega_{\{x\}\times\ZZ^{n-1}} \big\}
\leq e^{-\psi' x} .\]}
Moreover, Point~(\ref{i5202}) also gives the existence of a conditional version,
which we did not have before.

There is however a price to pay for this greater generality,
since we had to require complete analyticity rather than just weak mixing,
which can be really more restrictive in some cases
(cf.\ Footnote~\ref{Tc'=Tc?} on page~\pageref{Tc'=Tc?}).
\end{Rmk}

\begin{Rmk}
Continuing the previous remark,
a natural open question is whether one can tensorize maximal decorrelation
under assumptions of weak mixing type.
In the case of Ising's model at least, I expect $\rho$-mixing to remain true%
—even for arbitrary shapes—as soon as $T>T_{\mrm{c}}$,
because on the one hand Theorem~\ref{c6177}
proves $\rho$-mixing between parallel hyperplanes,
while on the other hand $\rho$-mixing
seems to hold also in the `opposite extreme case'
when $I$ and~$J$ make a check pattern.

By the way, it is likely that the natural condition
should not be weak mixing itself but rather something like \emph{strong mixing for cubes}
(often called merely \emph{strong mixing}%
\footnote{Strong mixing \emph{stricto sensu} is actually
the same as complete analyticity, so that mathematicians have got used to undermeaning
``for cubes''—but strong mixing for cubes is strictly weaker than complete analyticity!%
~\cite[\S~2]{MO}.},
which means that when a boundary condition is fixed outside a cube of arbitrary edge,
changing one spin on the boundary has an effect in total variation
which decreases exponentially with the distance to the spin changed.
In fact it has been proved~\cite{MOS} that in dimension~2,
weak mixing is equivalent to strong mixing.
\end{Rmk}

\begingroup\def\proofname{Proof of Theorem~\ref{t3730}}\begin{proof}
Theorem~\ref{t3730} will be a direct consequence of the work of Chapter~\ref{parTensorization}
as soon as we show that, denoting by~$*$ the natural $\sigma$-metalgebra of the system
(i.e.\ the $\sigma$-metalgebra generated by the~$\omega_i$), for all distinct $i,j\in\ZZ^n$,
one has
\[\label{f7295} \{ \omega_i : \omega_j \}_* \leq c_0C' e^{-\psi' \dist(i,j)} \wedge k_0 \]
for some explicit $c_0 < \infty$ and~$k_0 < 1$ only depending on~$n$ and~$T$.
Then indeed, Proposition~\ref{pro8882} yields
\begin{multline}
\{ \vec\omega_I : \vec\omega_J \} \leq
\sum_{\substack{\delta\in\ZZ^n\\|\delta|\geq\dist(I,J)}}
c_0C' e^{-\psi'|\delta|}
= c_0C' \sum_{d=\dist(I,J)}^\infty
\#\{\delta\in\ZZ^n\colon |\delta|=d\} e^{-\psi'd} \\
\stackrel{\mathsmaller{\dist(I,J)\longto\infty}}{\sim}
c_0C' \sum_{d=\dist(I,J)}^\infty \frac{2^nd^{n-1}}{(n-1)!} e^{-\psi'd}
\sim \frac{c_0C'2^n}{(n-1)!} \dist(I,J)^{n-1} e^{-\psi'\dist(I,J)} \\
= e^{-(\psi'+o(1))\dist(I,J)},
\end{multline}
whence Point~(\ref{i3743a}). Moreover, since
\[ \sum_{\substack{\delta\in\ZZ^n\setminus\{0\}
\\|\delta|\leq\dist(I,J)}} c_0C' e^{-\psi' \dist(i,j)}
< \infty ,\]
Point~(\ref{i3743b}) follows from Lemma~\ref{lem4067},
and finally (\ref{i5202}) is a consequence of \S~\ref{parSubjectiveResults}
about subjective results.

So, we have to prove~(\ref{f7295}).
Let~$\vechat{\omega}_K \in \{\pm1\}^K$, $K\subset\ZZ^n$, be some arbitrary boundary condition,
and denote by~$\Pr_{\mathrm{con}}$ the associated law,
that is, $\Pr_{\mathrm{con}} = \Pr[\Bcdot|{\vec\omega_K = \vechat{\omega}_K}]$;
our goal is to show that under~$\Pr_{\mathrm{con}}$, for all distinct $i,j\in\ZZ^n$, one has
$\{ \omega_i : \omega_j \} \leq c_0C' e^{-\psi' \dist(i,j)} \wedge k_0$.

The result is immediate if $i\in K$, resp.\ $j\in K$
(since then $\omega_i$, resp.~$\omega_j$, is constant and thus independent of everything),
so we assume $i,j\notin K$.
We begin with observing that if $K$ is the set~$N(i)$ of all the neighbours of~$i$,
equilibrium at~$i$ implies that, whatever the boundary condition may be:
\[\label{f7537} \Pr_{\mathrm{con}}[\omega_i = -1], \Pr_{\mathrm{con}}[\omega_i = +1]
\geq \big( e^{4n/T} + 1 \big)^{-1} \]
—the extremal cases being when $\vechat{\omega}_{N(i)} \equiv +1$,
resp.\ $\vechat{\omega}_{N(i)} \equiv -1$.
Now in the general case $K \subset \ZZ^n\setminus\{i\}$,
$\Law_{\mathrm{con}}[\omega_i]$ is an average of laws of the form
$\Law(\omega_i|{\vec\omega_{N(i)}=\vechat{\omega}_{N(i)}})$,
so that (\ref{f7537}) remains valid.
Similarly, equilibrium on~$\{i,j\}$ gives that
for all~$a,b\in\{\pm1\}$,
\[ \label{f7538} \Pr_{\mathrm{con}}[\omega_i = a \text{ and } \omega_j = b]
\geq \big( e^{8n/T} + 2e^{(4n+2)/T} + 1 \big)^{-1} .\]

Now, recall that the correlation level between two two-ranged variables
can be computed by Formula~(\ref{f0699}), where
$|p_a^b-p_ap^b|$ is also $\beta(X,Y)/2$.
Thus the bound ``$\{\omega_i:\omega_j\} \leq C_0 e^{-\psi' \dist(i,j)}$''
is a direct consequence of Theorem~\ref{t4505}, with
\[ c_0 = \frac{1/2}{(e^{4n/T}+1)^{-1}\big(1-(e^{4n/T} + 1)^{-1}\big)}
= \tanh(4n/T) + 1 .\]

It remains to prove the bound ``$\{\omega_i:\omega_j\} \leq k_0$''.
We will use the following corollary of~(\ref{f0699}):
\begin{Lem}\label{l1373}
With the notation of Remark~\ref{rmk9549},
there exists $a,b$ in the respective ranges of~$X,Y$ such that
\[ \{X:Y\} \leq 1 - 4p_a^b .\]
\end{Lem}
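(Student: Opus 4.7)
The plan is to exploit the formula from Remark~\ref{rmk9549}, plus the sign symmetries of the $2\times2$ covariance table, to pick $(a,b)$ judiciously and then reduce the claim to three elementary inequalities.

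First I will normalize the problem. Set $D \coloneqq p_1^1 - p_1 p^1$; the marginal identities $\sum_a p_a^b = p^b$ and $\sum_b p_a^b = p_a$ force $p_2^2 - p_2 p^2 = D$ and $p_1^2 - p_1 p^2 = p_2^1 - p_2 p^1 = -D$, so Remark~\ref{rmk9549} gives $\{X:Y\} = |D|/s$ with $s \coloneqq \sqrt{p_1 p_2 p^1 p^2}$. Since swapping the labels $1 \leftrightarrow 2$ on $X$ alone flips the sign of~$D$ while leaving $s$, and the list $\{p_a^b\}$, unchanged, I may assume WLOG that $D \geq 0$; then among the two ``anti-correlated'' cells $p_1^2 = p_1 p^2 - D$ and $p_2^1 = p_2 p^1 - D$, swapping labels on both $X$ and $Y$ exchanges them, so I may further assume WLOG that $p_1 p^2 \leq p_2 p^1$. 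I will then take $(a,b) = (1,2)$, which corresponds to the smallest of the four cells among the anti-correlated pair.

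Second, I will rewrite the target inequality. With this choice, $p_a^b = p_1 p^2 - D$, so $\{X:Y\} \leq 1 - 4 p_a^b$ becomes
\[ \frac{D}{s} \leq 1 - 4(p_1 p^2 - D), \]
which after clearing $s$ and grouping terms is equivalent to
\[ D(1-4s) \;\leq\; s\,(1 - 4 p_1 p^2). \]

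Third, I will verify this via three elementary facts, each a few lines:
\begin{enumerate}
\item $D \leq p_1 p^2$, because $p_1^2 = p_1 p^2 - D \geq 0$;
\item $p_1 p^2 \leq s$, because the assumption $p_1 p^2 \leq p_2 p^1$ gives $(p_1 p^2)^2 \leq (p_1 p^2)(p_2 p^1) = s^2$;
\item $1 - 4s \geq 0$, because the AM--GM inequality yields $p_1 p_2 \leq 1/4$ and $p^1 p^2 \leq 1/4$, whence $s \leq 1/4$.
\end{enumerate}
Combining these in a single chain:
\[ D(1-4s) \;\overset{(1),(3)}{\leq}\; p_1 p^2 (1-4s) \;=\; p_1 p^2 - 4 s\,p_1 p^2 \;\overset{(2)}{\leq}\; s - 4 s\,p_1 p^2 \;=\; s(1 - 4 p_1 p^2), \]
which closes the proof.

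The only genuinely non-routine step is the first one, the choice of~$(a,b)$: picking an anti-correlated cell so that $p_a^b = p_a p^b - D$ is what makes the slack $(1-4p_a^b)$ on the right-hand side \emph{grow} linearly with~$D$, while the left-hand side $\{X:Y\}=D/s$ also grows linearly in~$D$; and picking the anti-correlated cell with the smaller product $p_a p^b$ is what makes the inequality $p_1 p^2 \leq s$ available. Once this combinatorial choice is made, everything else is a two-line algebraic manipulation.
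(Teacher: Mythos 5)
Your proof is correct and follows essentially the same route as the paper's: you pick the anti\-correlated cell (where $p_a^b - p_ap^b \leq 0$) with the smaller product $p_ap^b$, which is exactly the choice that makes $p_ap^b \leq \sqrt{p_ap_{a'}p^bp^{b'}}$, and then you close the gap with $s\leq 1/4$ (AM--GM) together with $p_a^b\geq 0$. The only cosmetic difference is in the final algebra: the paper writes the chain as $\{X:Y\} = \frac{p_ap^b-p_a^b}{s} \leq 1 - \frac{p_a^b}{s} \leq 1-4p_a^b$, whereas you multiply through and regroup as $D(1-4s)\leq s(1-4p_ap^b)$; both rearrangements use the same three ingredients.
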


\begingroup\def\proofname{Proof of Lemma~\ref{l1373}}%
\begin{proof}
The difference~$p_a^b-p_ap^b$ gets its sign changed whenever $a$, resp.~$b$, changes,
so there are some~$a$ and~$b$ for which this value is nonpositive;
moreover, denoting by~$\{a,a'\}$ and~$\{b,b'\}$ the respective ranges of~$X$ and~$Y$,
$p_{a'}^{b'}-p_{a'}p^{b'}$ is also nonpositive.
Now one has
\[ \frac{p_ap^b}{\sqrt{p_ap_{a'}p^bp^{b'}}} \times \frac{p_{a'}p^{b'}}{\sqrt{p_ap_{a'}p^bp^{b'}}}
= 1 ,\]
so that either $p_ap^b$ or $p_{a'}p^{b'}$ is $\leq \sqrt{p_ap_{a'}p^bp^{b'}}$.
Up to changing notation we can assume that it is~$p_ap^b$,
and then
\[ \{X:Y\} = \frac{|p_a^b-p_ap^b|}{\sqrt{p_ap_{a'}p^bp^{b'}}}
= \frac{p_ap^b - p_a^b}{\sqrt{p_ap_{a'}p^bp^{b'}}}
\leq 1 - \frac{p_a^b}{\sqrt{p_ap_{a'}p^bp^{b'}}} \leq 1 - 4p_a^b .\]
\end{proof}\endgroup

Combining Lemma~\ref{l1373} with~(\ref{f7538}),
we then get the desired bound, with
\[ k_0 = 1 - 4\big( e^{8n/T} + 2e^{(4n+2)/T} + 1 \big)^{-1} < 1 .\]
\end{proof}\endgroup

Formula~(\ref{f7295}) is also what we need to apply the results of Chapter~%
\ref{parOtherApplicationsOfTensorizationTechniques}. Indeed, denoting
$\epsilon(z) \coloneqq \{ X_i : X_{i+z} \}_*$, it gives that $\sum_{z\in\ZZ^n} \epsilon(z) < \infty$
with $\epsilon(z) < 1$ as soon as $z \neq 0$,
so that Theorems~\ref{thm3015} and~\ref{thm7431} yield respectively:
\begin{Thm}
In completely analytical regime, the spins Ising's model satisfies
the central limit theorem, in the sense that the conclusions of Theorem~\ref{thm3015}
hold for them.
\end{Thm}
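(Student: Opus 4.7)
The plan is to derive this theorem as an immediate application of Theorem~\ref{thm3015} to the Ising spin field $(\omega_i)_{i \in \ZZ^n}$. Theorem~\ref{thm3015} has two hypotheses to check: translation invariance of the law, and summability of the pairwise maximal correlations $\epsilon_z = \{\omega_i : \omega_{i+z}\}_*$. Once both are established, one applies the theorem with $X_i = \omega_i$, $\mcal{X} = \{\pm1\}$, and an arbitrary centered $f \colon \{\pm1\} \to \RR$ (of which there is essentially a unique one up to scaling, namely $f(s) = s$), and reads off the conclusion.

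First I would observe that translation invariance is automatic: in the completely analytical regime the Gibbs measure is unique, and since the local specifications defining Ising's model through~(\ref{f1762}) are manifestly invariant under translations of~$\ZZ^n$, so is the unique equilibrium measure. This takes essentially no work.

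The summability condition is the substantive point, but here too the hard work has already been performed inside the proof of Theorem~\ref{t3730}. Indeed, the bound~(\ref{f7295}) established there gives, for every distinct $i, j \in \ZZ^n$, the estimate $\{\omega_i : \omega_j\}_* \leq c_0 C' e^{-\psi' \dist(i,j)}$, with $\psi' > 0$ and $c_0, C' < \infty$ depending only on~$n$, $T$, and the constants of Theorem~\ref{t4505}. By translation invariance this yields $\epsilon_z \leq c_0 C' e^{-\psi'|z|}$ for $z \neq 0$, whence
\[ \sum_{z \in \ZZ^n} \epsilon_z \;\leq\; 1 + c_0 C' \sum_{z \in \ZZ^n \setminus\{0\}} e^{-\psi'|z|} \;=\; 1 + c_0 C' \sum_{d=1}^{\infty} \#\{z \in \ZZ^n : |z|=d\} \, e^{-\psi' d}, \]
which is finite since $\#\{z \in \ZZ^n : |z|=d\}$ grows only polynomially in~$d$ while $e^{-\psi'd}$ decays exponentially.

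With both hypotheses of Theorem~\ref{thm3015} verified, the theorem applies directly and delivers the spatial CLT in the form stated: for any sequence of $\mcal{C}^2$ bounded domains $U_k \subset \RR^n$ with uniformly bounded roughness and any scales $\ell_k \longto \infty$, the normalized sum $(\ell_k^n \volume(U_k))^{-1/2} \sum_{i \in \ell_k U_k \cap \ZZ^n} f(\omega_i)$ converges weakly to $\mcal{N}(\sigma^2)$, with $\sigma^2 = \sum_{z \in \ZZ^n} \EE[f(\omega_0)f(\omega_z)]$ (a convergent series by the same exponential bound). There is genuinely no obstacle in this proof; the whole difficulty was absorbed into the earlier decorrelation estimate~(\ref{f7295}), and the present statement is really a bookkeeping consequence of Chapter~\ref{parOtherApplicationsOfTensorizationTechniques}.
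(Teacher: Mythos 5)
Your proposal is correct and follows exactly the same route the paper takes: the paper simply observes that Formula~(\ref{f7295}) established in the proof of Theorem~\ref{t3730} gives $\sum_{z\in\ZZ^n}\epsilon(z)<\infty$, and then invokes Theorem~\ref{thm3015}. You have merely made explicit the two hypotheses of Theorem~\ref{thm3015} (translation invariance and summability) and the polynomial-vs.-exponential comparison that makes the sum converge, which is what the paper leaves implicit.
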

\begin{Thm}
In completely analytical regime, the Glauber dynamics for Ising's model
has a (strictly) positive spectral gap,
and this remains valid uniformly if one fixes
a `boundary condition' on the spins of some $K \subset \ZZ^n$.
\end{Thm}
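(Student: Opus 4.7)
The plan is to read off the conclusion as a direct application of the abstract Glauber--gap result Theorem~\ref{thm7431}, fed by the pairwise Hilbertian decorrelation bounds supplied by Theorem~\ref{t3730}.

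First, I would record the key input. By Formula~(\ref{f7295}) in the proof of Theorem~\ref{t3730}, for every pair of distinct sites $i,j \in \ZZ^n$ one has the subjective bound
\[
\{\omega_i : \omega_j\}_* \leq c_0 C' e^{-\psi' \dist(i,j)} \wedge k_0,
\]
with constants $c_0 < \infty$, $k_0 < 1$ depending only on $n$ and $T$. Setting $\epsilon(z) \coloneqq c_0 C' e^{-\psi'|z|} \wedge k_0$ for $z \in \ZZ^n \setminus \{0\}$, one gets a symmetric function on $\ZZ^n \setminus \{0\}$ satisfying $\epsilon(z) < 1$ for every $z \neq 0$, and, because of the exponential tail and the polynomial growth of $\#\{z \colon |z| = d\}$, summability $\sum_{z \neq 0} \epsilon(z) < \infty$. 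These are exactly the hypotheses of Theorem~\ref{thm7431}, which therefore yields a strictly positive lower bound on the spectral gap of the (unconditioned) Glauber dynamics.

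For the uniformity statement, I would repeat the argument under an arbitrary conditional law $\Pr_{\mrm{con}} = \Pr[\,\Bcdot\,|\,\vec\omega_K = \vechat\omega_K]$. By part~(\ref{i5202}) of Theorem~\ref{t3730}, the pairwise bounds~(\ref{f7295}) hold under $\Pr_{\mrm{con}}$ with the \emph{same} constants $c_0, C', \psi', k_0$, so the same function $\epsilon(\Bcdot)$ controls $\{\omega_i : \omega_j\}_*$ in the conditioned model. Since Theorem~\ref{thm7431} is stated in purely abstract terms and does not use translation invariance of the underlying law (only the pointwise decorrelation bound and the summability), its conclusion applies verbatim under $\Pr_{\mrm{con}}$, producing a spectral gap whose lower bound depends only on $\epsilon(\Bcdot)$ and not on $K$ or $\vechat\omega_K$.

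The only nontrivial point—and in this setting it is not much of an obstacle, since all the work has been done upstream—is to notice that spins inside $K$ become deterministic under conditioning and hence trivially independent of everything, so the condition ``$\epsilon(z) < 1$ for $z \neq 0$'' is preserved (the conditioned correlations can only decrease). Once this is observed, the uniform gap follows directly from Theorem~\ref{thm7431} applied under each conditional law, concluding the proof.
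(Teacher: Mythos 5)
Your proposal is correct and follows essentially the same route as the paper: the paper itself derives this theorem as an immediate application of Theorem~\ref{thm7431}, fed by the pairwise bound~(\ref{f7295}) established in the proof of Theorem~\ref{t3730} (with the summability coming from the exponential tail), and obtains uniformity under boundary conditions from the subjective version of that bound, i.e.\ part~(\ref{i5202}) of Theorem~\ref{t3730}. You have merely spelled out the steps the paper leaves implicit, including the harmless observation that frozen spins in $K$ become deterministic.
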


\begin{Rmk}
As I told in Chapter~\ref{parOtherApplicationsOfTensorizationTechniques},
results of these kinds have already been studied by other methods
(see e.g.~\cite{Bradley92, Dedecker} for the CLT and~\cite{Martinelli} for the spectral gap).
For the standard Ising model in completely analytical regime,
which is ``very nice'', these previous works apply well,
so the two theorems above are not new.
They are interesting however because of the new method used to prove them,
which is quite direct and likely to apply to a broader class of models.
Such models will be presented in the sequel of this chapter.
\end{Rmk}

\subsection{Generalizations of Ising's model}

The previous results can be adapted to several kinds of generalizations of Ising's model.
Let us expose some of them.

\subsubsection*{Long-range Ising models}

A physically important case is the \emph{long-range Ising models} on~$\ZZ^n$.
In these models, the states space is unchanged, but the Hamiltonian $H$ becomes
\[ H(\vec\omega) = - \frac{1}{2} \sum_{i\neq j} J(j-i) \omega_i\omega_j ,\]
where $J \colon \ZZ^n\setminus\{0\} \to \RR$ is some symmetric function with non-compact support
such that $J(z) \stackrel{|z|\longto\infty}{=} O(|z|^{-(n+\alpha)})$ for some $\alpha>0$.

Let us state a decorrelation result for this class of models.
The frame of the proof of the following proposition will work as well
for the other generalizations of Ising's model.
\begin{Pro}\label{pro5088}
There exists an temperature $T_1 < \infty$ such that, provided $T\geq T_1$:
\begin{ienumerate}
\item\label{i0797} Equilibrium for the long-range Ising model is unique;
\item\label{i96a} Uniformly in~$i,j$,
$\{\omega_i:\omega_j\}_* \stackrel{|j-i|\longto\infty}{=} O(|j-i|^{-(n-\alpha)})$;
\item\label{i96b} There exists some $k_0 < 1$ such that for all~$i\neq j$,
$\{\omega_i:\omega_j\}_* \leq k_0$.
\end{ienumerate}
\end{Pro}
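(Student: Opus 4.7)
The plan is to exploit Dobrushin's uniqueness criterion, which becomes applicable at sufficiently high temperature, so as to obtain (i), (ii) and (iii) simultaneously. The outer skeleton of the argument parallels that used for the short-range case in Theorem~\ref{t3730}: first get pairwise control $\{\omega_i:\omega_j\}_* \leq \epsilon(j-i)$ for all choices of boundary condition outside $\{i,j\}$, then feed this into the machinery of \S~\ref{parMachineryForUsingTheTensorizationTheorems} (though actually here we are only working with single spins, so we only need the event criteria and the subjective conventions of \S~\ref{parSubjectiveResults}).

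First, I would check Dobrushin's condition. The Dobrushin interdependence matrix of the long-range Ising model has entries bounded by
\[ C_{ij} \;\leq\; \tanh\!\bigl(|J(j-i)|/T\bigr) \;\leq\; |J(j-i)|/T, \]
so $\sup_i \sum_{j \neq i} C_{ij} \leq \|J\|_{\ell^1}/T$, where $\|J\|_{\ell^1} = \sum_{z \neq 0} |J(z)| < \infty$ because $J(z) = O(|z|^{-(n+\alpha)})$ with $\alpha>0$. Choosing $T_1$ large enough that $\|J\|_{\ell^1}/T_1 < 1$ thus yields $\VERT C \VERT_{\ell^\infty \to \ell^\infty} < 1$, which by Dobrushin's classical theorem gives uniqueness of the equilibrium, proving~(i), together with the weak-mixing type bound $|\Cov(\omega_i,\omega_j)| \leq K_{ij}$, where $K = \sum_{k \geq 1} C^k = (I-C)^{-1} - I$.

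For point~(ii), the analytic input needed is that the polynomial decay $(1+|z|)^{-(n+\alpha)}$ of the kernel $C$ is inherited by the resolvent $(I-C)^{-1}$. This is the standard ``algebra of decaying kernels'' estimate: if $f(z) \lesssim (1+|z|)^{-s}$ with $s > n$, then $(f * f)(z) \lesssim (1+|z|)^{-s}$, and iterating through the Neumann series gives the same decay for $K$. Once we know $K_{ij}$ decays at the required polynomial rate, it remains to convert the covariance bound into a Hilbertian correlation bound. Since each $\omega_i$ is two-valued, Remark~\ref{rmk9549} gives
\[ \{\omega_i:\omega_j\} \;=\; \frac{|\Cov(\omega_i,\omega_j)|}{\sqrt{\Pr[\omega_i=+1]\Pr[\omega_i=-1]\Pr[\omega_j=+1]\Pr[\omega_j=-1]}}\,, \]
so the reduction to (ii) is complete once we show that the denominator is bounded away from~$0$ uniformly. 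This is precisely the content of the next step.

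For point~(iii) (and for the missing ingredient above), I would argue exactly as in the proof of Theorem~\ref{t3730}: because $\|J\|_{\ell^1} < \infty$, equilibrium at $\{i,j\}$ conditionally on any boundary condition $\vechat{\omega}_{\ZZ^n \setminus \{i,j\}}$ forces
\[ \Pr_{\mathrm{con}}[\omega_i = a \text{ and } \omega_j = b] \;\geq\; p_{\min} \;>\; 0 \qquad (\forall a,b \in \{\pm 1\}), \]
uniformly in $i,j$ and in the boundary condition, with an explicit $p_{\min}$ depending only on $\|J\|_{\ell^1}$ and $T$. Combined with Lemma~\ref{l1373}, this gives $\{\omega_i:\omega_j\}_* \leq 1 - 4p_{\min} < 1$, which is~(iii); and the same uniform lower bound controls the denominator in the formula above, completing the proof of~(ii). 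Finally, subjectivity w.r.t.\ the natural $\sigma$-metalgebra is automatic because every bound has been derived uniformly in arbitrary boundary conditions, as in \S~\ref{parSubjectiveResults}.

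The main technical obstacle is the passage from the decay of $C$ to that of $(I-C)^{-1}$, i.e.\ verifying that the Neumann series converges with the right tail estimates rather than merely in operator norm. While this is folklore, the constants and the precise exponent need to be tracked carefully: this is presumably what explains the slightly weaker exponent $n - \alpha$ stated in~(ii) rather than the exponent $n + \alpha$ of $J$ itself. All other steps are routine extensions of the short-range argument from \S~\ref{parBackToIsingsModel}.
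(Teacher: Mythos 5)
Your proof is correct but takes a genuinely different route from the paper's for the key covariance estimate. The paper constructs an explicit coupling of Glauber dynamics: it dominates the set of discrepancies $\Theta(t)$ between two coupled processes by a birth-and-death Markov chain $\Theta^*(t)$ with rates controlled by $\beta J$, shows $\#\Theta^*$ contracts exponentially when $\beta\mcal{J}\cosh(\beta\mcal{J}) < e^{-\beta\mcal{J}}$, and then bounds the stationary probability $\theta(j) = \Pr_{\rm eq}[j\in\Theta^*]$ by the solution of a discrete subelliptic equation, via the maximum principle and the convolution inverse $a^{-*}$ of a kernel built from $\beta J$. You instead invoke Dobrushin's uniqueness criterion and the associated covariance comparison estimate $|\Cov(\omega_i,\omega_j)| \leq ((I-C)^{-1})_{ij}$. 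This is a clean, black-box replacement for the paper's hands-on coupling, and gives essentially the same threshold (a smallness condition on $\|J\|_{\ell^1}/T$); the paper's route has the advantage of meshing with its later study of the Glauber dynamics and spectral gap, and of extending more transparently to the spin-glass and nonlinear models treated just afterwards. From that point on the two arguments converge: both need the convolution-algebra input (the paper's Lemma~\ref{l1939b}, your ``algebra of decaying kernels'' remark) to propagate polynomial decay through the Neumann series, and both close the argument by combining the two-point formula of Remark~\ref{rmk9549}, Lemma~\ref{l1373}, and a uniform lower bound on conditional pair probabilities obtained from $\|J\|_{\ell^1}<\infty$; the reduction to subjective correlations via uniformity in the boundary condition is also identical.

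One small caution: your explanation that the exponent $n-\alpha$ in item~(ii) reflects a genuine loss in the Neumann series is off the mark. Both approaches actually produce $O(|j-i|^{-(n+\alpha)})$---Lemma~\ref{l1939b} says precisely that the convolution inverse inherits the decay $|z|^{-(n+\alpha)}$, and the paper's own proof of the proposition obtains $F(j)=O(|j-i|^{-(n+\alpha)})$ explicitly. The ``$n-\alpha$'' in the statement is almost certainly a misprint for ``$n+\alpha$'': a pairwise decay of only $|z|^{-(n-\alpha)}$ would not be summable over $\ZZ^n$, so Lemma~\ref{pro8882} and Theorem~\ref{thm8777}'s bound $O(\dist(I,J)^{-\alpha})$ could not be derived from it. Do not try to justify the weaker exponent; just note the typo.
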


\begin{proof}
The principle of the proof consists in coupling two Glauber dynamics
with different initial conditions.
Recall that the Glauber dynamics is defined as follows:
each spin has an independent clock ringing with rate $1$,
and when the clock of a spin rings, this spin is flipped so that its final state is drawn
according to its equilibrium measure conditionnally to the state of all other spins.
Namely, if the clock of spin $i$ rings at time~$t$,
denoting as usual $\beta = T^{-1}$,
\[ \Pr[\omega_i(t+)=+1] = \frac{\exp\big(\beta\sum_{j\neq i}J(j-i)\omega_j(t)\big)}
{2\cosh\big(\beta\sum_{j\neq i}J(j-i)\omega_j(t)\big)} \]
and $\Pr[\omega_i(t+)=-1] = 1-\Pr[\omega_i(t+)=+1]$.

To couple the Glauber dynamics, we will assume that, rather than just ``ringing'' at time~$t$,
the clock of~$i$ is a Poisson process on~$\RR_+ \times (0,1)$,
points of which are denoted by~$(t,y)$.
Then, if at time~$t$ the clock of spin $i$ has a point $(t,y)$,
spin $i$ flips to~$+1$ if $y<\Pr[\omega_i(t+)=+1]$,
resp.\ to~$-1$ if~$y\geq\Pr[\omega_i(t+)=+1]$.

Now, consider two Glauber dynamics~$\vec\omega^-$ and~$\vec\omega^+$
having the same Poisson process, but starting with different initial conditions.
It will be convenient%
\footnote{In the cases where interactions can be antiferromagnetic ($J<0$),
monotonicity does not stand any more;
the proof however remains valid with a heavier formalism,
replacing ``$>$'' by ``$\neq$'' and putting absolute values at the right places.}
to assume that $\vec\omega^-(t=0) \leq \vec\omega^+(t=0)$ almost-surely:
then, as we will see, for the coupled dynamics
one has (a.s.) $\vec\omega^-(t) \leq \vec\omega^+(t) \ \forall t$.
At time $t$, denote by~$\Theta(t)$ the set of points where~$\vec\omega^-$ and~$\vec\omega^+$ differ:
\[ \Theta(t) = \big\{ i\in\ZZ^n \colon \big(\omega^-_i(t), \omega^+_i(t)\big) = (-1,+1) \} .\]
When the clock at spin $i$ rings at time $t$, three cases have to be distinguished:
\begin{enumerate}
\item If $y < \exp\big(\beta\sum_{j\neq i}J(j-i)\omega^-_j(t)\big) \mathbin{\big/}
2\cosh\big(\beta\sum_{j\neq i}J(j-i)\omega^-_j(t)\big)$,
then both $\omega^+_i$ and~$\omega^-_i$ flip into state~$+1$;
\item If $y > \exp\big(\beta\sum_{j\neq i}J(j-i)\omega^+_j(t)\big) \mathbin{\big/}
2\cosh\big(\beta\sum_{j\neq i}J(j-i)\omega^+_j(t)\big)$,
then both $\omega^+_i$ and~$\omega^-_i$ flip into state~$-1$;
\item If $\frac{\exp\big(\beta\sum_{j\neq i}J(j-i)\omega^-_j(t)\big)}
{2\cosh\big(\beta\sum_{j\neq i}J(j-i)\omega^-_j(t)\big)} < y
< \frac{\exp\big(\beta\sum_{j\neq i}J(j-i)\omega^+_j(t)\big)}
{2\cosh\big(\beta\sum_{j\neq i}J(j-i)\omega^+_j(t)\big)}$,
then $\omega^+_i$ flips into state $+1$ while $\omega^-_i$ flips into state $-1$.
\end{enumerate}
Denoting
\[ \mcal{J} \coloneqq \sum_{z\in\ZZ^n\setminus\{0\}} J(z) ,\]
which is always finite by the assumption on~$J$,
the probability of each of the two first cases is bounded below by
$e^{-\beta\mcal{J}} / 2\cosh(\beta\mcal{J})$.
The probability of the third case is
\[ \frac{\sinh\big(2\beta\sum_{j\in\Theta(t)\setminus\{i\}} J(j-i)\big)}
{2\cosh\big(\beta\sum_{j\neq i}J(j-i)\omega^-_j(t)\big)
\cosh\big(\beta\sum_{j\neq i}J(j-i)\omega^+_j(t)\big)} ,\]
which is bounded above by~$\beta \sum_{j\in\Theta(t)\setminus\{i\}} {J(j-i)}$
thanks to the following computational
\begin{Lem}
For~$a \leq b$ two real numbers,
\[ \sinh(b-a) \leq (b-a) \cosh a \cosh b .\]
\end{Lem}
\begin{proof}
Making the change of variables $x = (a+b)/2, t = (b-a)/2$,
we have to prove that for~${x \in \RR}, \ab t \geq 0$, one has:
\[\label{fo4994} \sinh(2t) \leq 2 t \cosh(x-t)\cosh(x+t) .\]
If we consider the right-hand side of~(\ref{fo4994})
as a function of~$x$, it is symmetric (since $\cosh$ is symmetric)
and its logarithm is convex (since $\ln\circ\cosh$ is convex,
its derivative being the increasing function~$\tanh$),
so its minimum is attained for $x=0$;
thus it suffices to prove~(\ref{fo4994}) in that case,
i.e.\ to prove that $\sinh(2t) \leq 2t\cosh^2t$ for all~$t\geq 0$.
But $\sinh(2t) = 2\sinh t\cosh t$, so we can simplify both sides by~$2\cosh t$,
and then it suffices to prove that
$\sinh t \leq t \cosh t$, which is true since
$\tanh t \leq t$ for all~$t\geq 0$.
\end{proof}

Thanks to these estimates, we can define a process Markovian $\Theta^*(t)$ on~$\mathfrak{P}(\ZZ^n)$
such that almost-surely, $\Theta^*(t) \supset \Theta(t)\ \forall t$.
This process has the following law:
\begin{Def}
The law of~$\Theta^*$ is defined thanks to independent Poissonian clocks
indexed by~$(\ZZ^n)^2$.
For $i\neq j$ the clock $(i,j)$ has rate $\beta J(j-i)$,
while the clock $(i,i)$ has rate $e^{-\beta\mcal{J}} / \cosh(\beta\mcal{J})$.
At $t=0$ one has $\Theta^*(0) = \Theta(0)$.
If at time $t$ the clock $(i,j)$ rings, with $j\neq i$, then:
\begin{itemize}
\item Either $i\in\Theta^*(t-)$ and then $\Theta^*$ changes so that
$\Theta^*(t+) = \Theta^*(t-) \cup \{j\}$%
\footnote{Of course, if $j\in\Theta^*(t-)$ then $\Theta^*$ does actually not change.};
\item Or $i\notin\Theta^*(t-)$ and then $\Theta^*$ does not change.
\end{itemize}
On the other hand, if at time $t$ the clock $(i,i)$ rings,
then $\Theta^*$ changes so that $\Theta^*(t+) = \Theta^*(t-) \setminus \{i\}$.
\end{Def}

Let~$\lambda \coloneqq \beta\mcal{J} - \big(e^{-\beta\mcal{J}}/\cosh(\beta\mcal{J})\big)$.
If we take $\EE\big[\#\Theta(t=0)\big] < \infty$%
\footnote{The general case where $\Theta^*$ can be infinite can be got
from the finite case by passing to the limit,
despite some technicalities of little interest.},
it is immediate that $\#\Theta^*(t)/e^{\lambda t}$ is a supermartingale.
So, provided $T$ is large enough so that $\lambda < 0$, i.e.
\[\label{for5229} \beta\mcal{J} < \frac{e^{-\beta\mcal{J}}}{\cosh(\beta\mcal{J})} ,\]
the two processes~$\vec{\omega}^-(t)$ and~$\vec{\omega}^+(t)$
tend to be equal when~$t\longto\infty$; in particular they have the same equilibrium.
That proves Point~(\ref{i0797}) of the Lemma,
since any initial condition stands between the `extreme' conditions
$\vec\omega^-(t=0) \equiv -1$ and $\vec\omega^+(t=0) \equiv +1$.

Observe that the previous reasoning remains entirely valid if one reasons conditionally
to some boundary condition of the form ``$\vec\omega_K = \vechat{\omega}_K$'',
with the same condition on~$T$.

Now we are turning to the correlation between two distant spins.
Let~$i\in\ZZ^n$ and let~$\vechat{\omega}_K$ be some boundary condition on
some $K \subset \ZZ^n\setminus\{i\}$.
Suppose $T$ satisfies~(\ref{for5229});
I want to compare the Glauber dynamics corresponding to the boundary condition
``$\vec\omega_{K\uplus\{i\}} = \big(\vechat{\omega}_K, (+1)^{\{i\}}\big)$''%
—where $(\vechat{\omega}_K, (+1)^{\{i\}})$ stands for the function on
$K\uplus\{i\}$ which is equal to~$\hat\omega$ on~$K$ and to~$+1$ at~$i$ —%
with the Glauber dynamics corresponding to the boundary condition
``$\vec\omega_{K\uplus\{i\}} = \big(\vechat{\omega}_K, (-1)^{\{i\}}\big)$''.
In this frame, one defines the process $\Theta^*$ as previously,
except that one imposes that $\Theta^*(t) \cap K = 0$ and $i \in \Theta^*(t)$ for all~$t$.
This time, it is the equilibrium behaviour of~$\Theta^*$ which interests us.
Denote by~$\Pr_{\text{eq}}$ the equilibrium law of~$\Theta^*$;
for~$j'\in\ZZ^n\setminus K$, denote $\theta(j) \coloneqq \Pr_{\text{eq}}[j\in \Theta^*]$.
Then $\theta$ satisfies the following discrete subelliptic equation
with Dirichlet boundary conditions:
\[\label{fo7960} \left\{ \begin{array}{ll}
\forall j\notin K\uplus\{i\}
\quad \frac{e^{-\beta\mcal{J}}}{\cosh(\beta\mcal{J})} \theta(j) \leq
\beta \sum_{\substack{i'\in\ZZ^n\setminus K \\ i'\neq j}} J(i'-j) \theta(i') ;\\
\forall k\in K \quad \theta(k) = 0; \qquad \theta(i) = 1.
\end{array} \right. \]

Define the convolution kernel $a$ on~$\ZZ^n$ by
\[ \left\{ \begin{array}{ll} a(0) = 1 ; \\
\forall z\neq 0 \quad a(z) = - \frac{\cosh(\beta\mcal{J})}{e^{-\beta\mcal{J}}}\beta J(z) ,
\end{array}\right. \]
so that (\ref{fo7960}) writes in the bulk:
\[ a * \theta \leq 0 .\]
Writing $a \eqqcolon \delta_0 - \tilde{a}$,
Condition~(\ref{for5229}) ensures that $\|\tilde{a}\|_{\ell^1} < 1$.
Since $\ell^1(\ZZ^n)$ is a Banach algebra for the convolution operator $*$,
with neutral element $\delta_{0}$, it follows that $a$ is invertible with inverse
\[\label{fo7178} a^{-*} =
\delta_{0} + \tilde{a} + \tilde{a} * \tilde{a} + \tilde{a} * \tilde{a} * \tilde{a} + \cdots .\]
Since $\tilde{a} \geq 0$, $a^{-*}$ is nonnegative everywhere with $a^{-*}(0)>0$.
Therefore the function~$F \coloneqq \big(a^{-*}(0)\big)^{-1} \delta_i * a^{-*}$
satisfies:
\[\label{fo7960*} \left\{ \begin{array}{ll}
\forall j\notin K\uplus\{i\} \quad \big(a \ast F\big)(j) = 0 ;\\
\forall k\in K \quad F(k) \geq 0; \qquad F(i) = 1.
\end{array}\right.\]
Comparing~(\ref{fo7960}) with~(\ref{fo7960*}),
since~(\ref{fo7960}) is subelliptic,
we can apply a maximum principle to it%
\footnote{The maximum principle is generally stated in a PDE context,
see for instance~\cite[\S~3.1]{elliptique}, but it works exactly the same for discrete equations.},
which yields that $\theta \leq F$ everywhere.
But $J(z) = O(|z|^{-(n+\alpha)})$,
so by Lemma~\ref{l1939b} in appendix, $F(j) = O(|j-i|^{-(n+\alpha)})$,
and therefore
\[\label{fo9696} \Pr\big[ \omega_j=+1 \big| \vec\omega_K = \vechat{\omega}_K , \omega_i = 1 \big]
- \Pr\big[ \omega_j=-1 \big| \vec\omega_K = \vechat{\omega}_K , \omega_i = -1 \big]
= O(|j-i|^{-(n+\alpha)}) ,\]
uniformly in~$i, j, K, \vechat{\omega}_K$.

The end of the proof, namely deducing Point~(\ref{i96a}) from~(\ref{fo9696})
and proving Point~(\ref{i96b}), is then performed in the same way as to establish~(\ref{f7295})
in the proof of Theorem~\ref{t3730}.
\end{proof}

Thanks to Proposition~\ref{pro5088}, we can apply the results of
Chapters~\ref{parTensorization} and~\ref{parOtherApplicationsOfTensorizationTechniques}.
One gets the following
\begin{Thm}\label{thm8777}
For the long-range Ising model on~$\ZZ^n$ at~$T\geq T_1$,
\begin{ienumerate}
\item\label{i204a} For all disjoint $I,J\subset\ZZ^n$, uniformly in~$I,J$, one has an estimate
\[\label{fo2580}
\{ \vec\omega_I : \vec\omega_J \} \leq O \big(\dist(I,J)^{-\alpha}\big) ,\]
where the~$O(\Bcdot)$ can be turned into an explicit constant only depending
on~$\mcal{J}$ and~$T$.
Moreover, there exists some $k<1$
(still explicit and only depending on~$\mcal{J}$ and~$T$)
such that for all disjoint $I,J\subset\ZZ^n$,
\[ \{ \vec\omega_I : \vec\omega_J \} \leq k .\]
\item\label{i204b} The spins satisfies
the central limit theorem, in the sense that the conclusions of Theorem~\ref{thm3015}
hold for them.
\item\label{i204c} The Glauber dynamics has a positive spectral gap.
\item\label{i204d}
Points~(\ref{i204a}) and~(\ref{i204c}) remain valid uniformly under any law of the form
$\Pr[\Bcdot|{\vec\omega_K=\vechat{\omega}_K}]$,
for~$K \subset \ZZ^n$ and~$\vechat{\omega}_K \in \{\pm1\}^K$ a `boundary condition' on~$K$.
\end{ienumerate}
\end{Thm}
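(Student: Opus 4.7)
The plan is to bootstrap everything from the pairwise subjective decorrelation estimates of Proposition~\ref{pro5088}. From Points~(\ref{i96a}) and~(\ref{i96b}) of that proposition, setting $\epsilon(z) \coloneqq \sup_{i} \{\omega_i : \omega_{i+z}\}_{*}$, we have $\epsilon(z) = O(|z|^{-(n+\alpha)})$ with $\epsilon(z) \leq k_0 < 1$ for all $z \neq 0$. Since $n+\alpha > n$, this bound is summable, so $\sum_{z\in\ZZ^n} \epsilon(z) < \infty$. This is precisely the input needed for all the general machinery of Chapters~\ref{parTensorization} and~\ref{parOtherApplicationsOfTensorizationTechniques}.

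For Point~(\ref{i204a}), I would apply Lemma~\ref{pro8882} to the doubled-up system, using the $\ell^1$ norm on $\RR^n$. Setting $d \coloneqq \dist(I,J)$, the bound becomes
\[ \{\vec\omega_I : \vec\omega_J\} \leq \sum_{|z| \geq d} \epsilon(z) \leq \mathrm{Cst} \cdot \sum_{k \geq d} k^{n-1} k^{-(n+\alpha)} = O(d^{-\alpha}) ,\]
yielding~(\ref{fo2580}) with an explicit constant depending only on $\mcal{J}$ and $T$. For the uniform bound $\{\vec\omega_I:\vec\omega_J\} \leq k < 1$, I would invoke Lemma~\ref{lem4067}, whose hypotheses—namely $\sum_{z\neq0}\epsilon(z) < \infty$ together with $\epsilon(z)<1$ for $z\neq0$—are exactly what Proposition~\ref{pro5088} provides.

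Point~(\ref{i204b}) follows directly from Theorem~\ref{thm3015} applied to any centered square-integrable $f \colon \{\pm1\} \to \RR$, given the summability $\sum_{z} \epsilon(z) < \infty$ and the translation invariance of the long-range Ising model. Point~(\ref{i204c}) follows from Theorem~\ref{thm7431}: indeed the assumptions on $\epsilon(\cdot)$ required there are exactly those already verified.

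The remaining Point~(\ref{i204d}), which is the one requiring some care, is handled by recalling that all our estimates from Proposition~\ref{pro5088} were established relative to the \emph{natural $\sigma$-metalgebra} $*$ of the system, which by construction incorporates every conditioning of the form ``$\vec\omega_K = \vechat\omega_K$''. Therefore, working under $\Pr[\,\cdot\,|\vec\omega_K=\vechat\omega_K]$, the hypotheses of Lemmas~\ref{pro8882} and~\ref{lem4067} and of Theorem~\ref{thm7431} remain valid with the same constants; the subjective versions developed in \S~\ref{parSubjectiveResults} then transfer the conclusions verbatim, giving uniformity in $K$ and $\vechat\omega_K$. The only real subtlety—and the step I would expect to require the most attention—is ensuring that the proof of Proposition~\ref{pro5088} itself was carried out uniformly in the boundary conditions, which was indeed the case since the coupling argument there (the monotone process $\Theta^*(t)$) depends on $\vechat\omega_K$ only through monotone comparisons that respect the supermartingale inequality.
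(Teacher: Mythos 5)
Your proposal is correct and follows essentially the same route as the paper's own proof: you invoke Proposition~\ref{pro5088} to get the pairwise subjective estimate $\epsilon(z)$ with the required summability and uniform bound, then feed it into Lemma~\ref{pro8882} (with the same tail-sum computation $\sum_{|z|\geq d}|z|^{-(n+\alpha)}=O(d^{-\alpha})$), Lemma~\ref{lem4067}, and Theorems~\ref{thm3015} and~\ref{thm7431}, and handle the conditioned case via \S~\ref{parSubjectiveResults}. The paper's own proof is simply ``the same as the work done in the previous subsection,'' pointing to the exact chain of lemmas you use, with the only displayed detail being the tail-sum estimate you also reproduce.
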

\begin{proof}
The proof is the same as the work done in the previous subsection.
The only difference is to prove~(\ref{fo2580}),
which follows from the following computation:
denoting $D\coloneqq\dist(I,J)$, one has that, when~$D\longto\infty$,
\[ \sum_{\substack{z\in\ZZ^n\\|z|\geq D}} \frac{1}{|z|^{n+\alpha}}
\leq \sum_{d=D}^\infty
\frac{\#\{z\in\ZZ^n\colon |z|=d\}}{d^{n+\alpha}}
= \sum_{d=D}^\infty \frac{O(d^{n-1})}{|d^{n+\alpha}|}
= O\Big( \sum_{d=D}^\infty \frac{1}{d^{1+\alpha}} \Big) = O(D^{-\alpha}). \]
\end{proof}

\subsubsection*{Spin glasses}

\emph{Spin glasses} are another generalization of Ising's model.
In these models, the interaction constants are not invariant by translation any longer.
The Hamiltonian writes
\[ H(\vec\omega) = - \frac{1}{2} \sum_{i\neq j} J(i,j) \omega_i\omega_j \]
(with $J(j,i) = J(i,j)$), where the~$J(i,j)$ themselves are random.
We make the following assumptions on the interaction constants:
\begin{Hyp}\label{hyp79}
For distinct unordered pairs $\{i,j\}$, all the~$J(i,j)$ are independent.
Moreover, $J(i,j)$ is distributed according to some law $P_J^{(j-i)}$ only depending on~$(j-i)$%
\footnote{Observe that one has necessarily $P_J^{(-z)} = P_J^{(z)}$ for all~$z$;
in particular the function~${J_\infty \colon} \ab {\ZZ^n \setminus \{0\}} \to \RR_+$
shall always be symmetric.}.
We will assume that all the~$P_J^{(z)}$ have bounded support,
and we denote by~$J_\infty(z)$ the smallest number such that
$P_J^{(z)} \big[ |J| \leq J_\infty(z) \big] = 1$.
\end{Hyp}

\begin{Rmk}
Here the~$J(i,j)$ can be negative, which corresponds to antiferromagnetic interactions.
\end{Rmk}

\begin{NOTA}
In spin glass models, there are two levels of randomness: first to fix the~$J(i,j)$,
next to take $\vec\omega$ according to the Gibbs measure associated to~$H$.
When both levels of randomness are taken into consideration, one speaks of \emph{annealed} law.
Here I am only interested in the \emph{quenched} laws,
which deal with the second level of randomness for fixed~$J(i,j)$.
I will write sentences beginning with ``for almost-all quenched systems'',
which mean that what follows is valid for almost-all Gibbs measures
when the~$J(i,j)$ are taken randomly according to Assumption~\ref{hyp79}.
\end{NOTA}

The machinery exposed above still works for spin glass models.
We obtain the
\begin{Thm}
Suppose that when~$|z| \longto \infty$,
$J_\infty(z)$ decreases at least as fast as $O(|z|^{-(n+\alpha)})$ for some $\alpha>0$.
Then there is a $T_1 < \infty$ such that, for the spin glass model on~$\ZZ^n$ at~$T\geq T_1$,
for almost-all quenched systems,
\begin{ienumerate}
\item If $J_\infty(z) = O(|z|^{-(n+\alpha)})$,
then for all disjoint $I,J\subset\ZZ^n$, uniformly in~$I,J$, one has an estimate
\[\label{fo8727} \{ \vec\omega_I : \vec\omega_J \} \leq O \big(\dist(I,J)^{-\alpha}\big) .\]
If moreover $J_\infty(z)$ has exponential decay
(see Definition~\ref{def726} in the appendix),
then the right-hand side of~(\ref{fo8727}) can even be replaced by
``$\theta\big(\dist(I,J)\big)$'' for some function~$\theta(\Bcdot)$ with exponential decay.

In both cases, there exists some $k<1$ such that for all disjoint $I,J\subset\ZZ^n$,
\[ \{ \vec\omega_I : \vec\omega_J \} \leq k .\]
\item Points~(\ref{i204b})--(\ref{i204d}) of Theorem~\ref{thm8777} hold.
\end{ienumerate}
\end{Thm}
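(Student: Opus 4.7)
The plan is to follow the same three-step strategy as in the proofs of Theorems~\ref{t3730} and~\ref{thm8777}: first establish a uniform bound on pair correlations $\{\omega_i:\omega_j\}_*$ by a coupling argument; then tensorize via the machinery of \S~\ref{parMachineryForUsingTheTensorizationTheorems}; and finally invoke Theorems~\ref{thm3015} and~\ref{thm7431} to get the CLT and the spectral gap. The only genuinely new ingredient is that the interaction constants are now random, so everything must be done \emph{for almost all quenched realizations}, uniformly enough that the tensorization step still applies.

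First I would revisit the coupling argument of Proposition~\ref{pro5088}. Since $|J(i,j)| \leq J_\infty(j-i)$ almost surely, the disagreement process $\Theta^*$ can be dominated, \emph{for every} quenched realization satisfying this bound, by the Markov process whose rate from $i$ to $j$ is $\beta J_\infty(j-i)$ and whose self-extinction rate at $i$ is $e^{-\beta \mcal{J}_\infty}/\cosh(\beta\mcal{J}_\infty)$, where $\mcal{J}_\infty \coloneqq \sum_{z\neq 0} J_\infty(z) < \infty$ (finite because $J_\infty(z) = O(|z|^{-(n+\alpha)})$). For $T$ large enough that the analogue of~(\ref{for5229}) holds with $\mcal{J}$ replaced by $\mcal{J}_\infty$, the subellipticity argument and maximum principle of Proposition~\ref{pro5088} go through verbatim with $J$ replaced by $J_\infty$ throughout; hence, almost surely and \emph{uniformly in} $i,j,K,\vechat{\omega}_K$, one obtains $\{\omega_i:\omega_j\}_\ast = O(|j-i|^{-(n+\alpha)})$, together with the uniform bound $\{\omega_i:\omega_j\}_\ast \leq k_0 < 1$ for $i \neq j$. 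Note that the uniform bound does not require any almost-sure statement beyond ``$|J(i,j)| \leq J_\infty(j-i)$ for all pairs'', which holds on a set of full measure by Assumption~\ref{hyp79}.

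Once this uniform pair-correlation bound $\epsilon(z) = O(|z|^{-(n+\alpha)})$ is in hand, tensorization proceeds exactly as in Theorem~\ref{thm8777}: Lemma~\ref{pro8882} together with the polynomial tail computation yields~(\ref{fo8727}) uniformly in $I,J$; the hypotheses $\sum_{z \neq 0} \epsilon(z) < \infty$ and $\epsilon(z) < 1$ are satisfied, so Lemma~\ref{lem4067} produces the uniform constant $k<1$. Finally, the same $\epsilon(\Bcdot)$ meets the hypotheses of Theorems~\ref{thm3015} and~\ref{thm7431}, delivering the spatial CLT and a strictly positive spectral gap for the Glauber dynamics, with the conditional versions following from \S~\ref{parSubjectiveResults} since all decorrelation hypotheses were stated subjectively. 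For the exponential decay refinement, I would use that if $J_\infty(z)$ has exponential decay, then the inverse kernel $a^{-*}$ appearing in~(\ref{fo7178}) also has exponential decay by Lemma~\ref{l1939a} (alluded to in the acknowledgements and used analogously in Theorem~\ref{t3730}); this yields pair correlations $\epsilon(z) = \theta_0(|z|)$ with $\theta_0$ of exponential decay, and Lemma~\ref{pro8882} then gives a bound $\theta(\dist(I,J))$ with $\theta$ still of exponential decay.

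The main obstacle is the uniformity issue: the coupling from Proposition~\ref{pro5088} produces, for each quenched realization, a process $\Theta^*$ whose rates depend on the random $J(i,j)$, and I need an estimate that does not deteriorate as one takes suprema over infinitely many pairs $(i,j,K,\vechat{\omega}_K)$. The point to check carefully is that dominating $\Theta^*$ by the deterministic process with rates $\beta J_\infty$ really does preserve the maximum-principle comparison $\theta \leq F$ in the random setting, not just in expectation over the disorder — which it does, because the domination is pathwise under the Poissonian coupling, so the subelliptic inequality~(\ref{fo7960}) holds sample-wise with coefficients bounded by the deterministic $J_\infty$-kernel. Once this sample-wise domination is made rigorous, everything else is a transposition of the arguments already developed in this chapter.
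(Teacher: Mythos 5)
Your proposal is correct and reconstructs exactly what the paper intends: the paper itself gives no explicit proof of the spin glass theorem (it merely states ``the machinery exposed above still works''), and your outline --- dominate the quenched disagreement process sample-wise by the deterministic process with rates $\beta J_\infty(j-i)$ (on the full-measure set where $|J(i,j)|\leq J_\infty(j-i)$ for all pairs), then run the subelliptic/maximum-principle argument with $J$ replaced by $J_\infty$ and apply the tensorization machinery together with Lemmas~\ref{l1939a} and~\ref{l1939b} --- is precisely the intended route. The one caveat worth flagging, which the paper itself only footnotes in the proof of Proposition~\ref{pro5088}, is that since spin-glass interactions can be negative the monotone coupling must be replaced by the ``heavier formalism'' tracking the disagreement set via absolute values; your pathwise domination by the $J_\infty$-kernel is exactly what makes this go through.
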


\subsection*{Synthetic vocabulary}

For all the models considered in this section,
the techniques used and the results stated walked along the same lines.
First, one establishes a bound $\{X_i:X_j\}_* \leq \epsilon(j-i) \wedge k_0$ for all~$i\neq j$,
for some sufficiently rapidly decreasing function
$\epsilon \colon \ZZ^n \to [0,1]$ and some $k_0 < 1$.
Then, one applies the results of
Chapters~\ref{parTensorization} and~\ref{parOtherApplicationsOfTensorizationTechniques},
which yield maximal decorrelation for distant bunches of spins
(which is sometimes called \emph{(interlaced) $\rho^*$-mixing})
with uniformly non-full correlation between any two disjoint bunches of spins
(which is sometimes denoted ``$\rho^*(1)<1$''),
central limit theorem, and spectral gap for the Glauber dynamics.

Since this method will be used again in the following sections,
it will be convenient to introduce some synthetic vocabulary:
\begin{Def}\label{def-WRM}
If a spin model (spins can have arbitrary range) $\vec{X}$ on~$\ZZ^n$ satisfies some bound
``${\{X_i:X_j\}_*} \leq \epsilon(j-i) \wedge k_0$'' for all distinct $i,j \in \ZZ^n$,
with $\sum_{z\in\ZZ^n\setminus\{0\}} \epsilon(z) < \infty$ and $k_0 < 1$,
we say that this model is \emph{well-$\rho$-mixing}.
According to our results, for such a model one has $\rho^*$-mixing with $\rho^*(1)<1$,
CLT and spectral gap.

Moreover,
\begin{ienumerate}
\item If $\epsilon(z) = O(|z|^{-(n+\alpha)})$ when~$|z|\longto\infty$,
then we say that the model is \emph{$\alpha$-polynomially $\rho$-mixing}.
According to our results, in this case $\rho^*$-mixing is polynomial with rate $\alpha$,
i.e.\ Formula~(\ref{fo2580}) holds.
\item If $\epsilon(z)$ has exponential decay (cf.\ Definition~\ref{def726}),
then we say that the model is \emph{exponentially $\rho$-mixing}.
According to our results, in this case $\rho^*$-mixing has an exponential speed of decay
(but not with the same rate as $\epsilon(\Bcdot)$, cf.\ Remark~\ref{rmk9890}),
i.e.\ a formula similar to~(\ref{f4615}) holds.
\end{ienumerate}
\end{Def}

\section{Quadratic models}\label{parQuadraticModels}

\begin{NOTA}
In this subsubsection, an arbitrary norm $|\Bcdot|$ on~$\ZZ^n$ is fixed.
\end{NOTA}

\begin{Def}
In our \emph{quadratic model}, the states space is $\Omega = \RR^{\ZZ^n}$
for some $n\in\NN^*$. For~$\vec\omega_{\ZZ^n} \in \Omega$, $i\in\ZZ^n$,
the real number $\omega_i$ will be called the \emph{polarization of particle~$i$}.
Each particle $i$ is submitted to two types of forces:
\begin{itemize}
\item A \emph{pinning force}, preventing the particle from having a too large polarization,
which derives from the quadratic potential $\omega_i^2/2$;
\item \emph{Interaction forces}: each particle $j\neq i$ exerts a force on~$i$
which tends to make the polarizations of particles~$i$ and~$j$ equal;
this force derives from a quadratic potential $\gamma_{j-i} (\omega_j-\omega_i)^2/2$.
\end{itemize}
In other words, the Hamiltonian of the system is formally defined by
\[ H(\vec\omega) = \frac{1}{2}\sum_{i\in\ZZ^n} \omega_i^2
+ \frac{1}{4}\sum_{i\neq j} \gamma_{j-i} (\omega_j-\omega_i)^2 ,\]
where the~$\gamma_z$, for~$z\in\ZZ^n\setminus\{0\}$,
are nonnegative numbers which we impose to satisfy the symmetry condition
$\gamma_z = \gamma_{-z}$ for all~$z$.
Moreover we impose the that the sum of the~$\gamma_z$ is convergent, and we denote
\[\label{f2380} \Gamma \coloneqq \sum_{z\in\ZZ^n\setminus\{0\}} \gamma_z < \infty .\]
\end{Def}

The Hamiltonian $H$ is a quadratic function of~$\vec\omega$,
so at fixed parameter $\beta$ the (infinite-dimensional) random vector~$\vec\omega$
will be Gaussian (and centered).
Let us compute its covariance:
the probability density of~$\vec\omega$ w.r.t.\ the `Lebesgue measure' on~$\Omega$
is formally defined by
\[ \frac{\dx\Pr_\beta [\vec\omega]}{\prod_{i\in\ZZ^n}\dx{\omega_i}} \propto
\exp \Big({\textstyle\frac{1}{2}} \T{\omega} (\beta Q) \omega \Big), \]
where $Q$ is the (infinite-dimensional) symmetric matrix defined by
\[\label{f1243} \left\{ \begin{array}{ccl} Q_{ij} \coloneqq -\gamma_{j-i} & \quad & \text{for $i\neq j$;} \\
Q_{ii} \coloneqq 1 + \Gamma & \quad & \text{on the diagonal,} \end{array} \right.\]
thus the covariance matrix of~$\vec\omega$ is $(\beta Q)^{-1}$.
So we have to compute $Q^{-1}$, the inverse matrix of~$Q$.
Since $Q$ is a Toeplitz matrix (with $n$-dimensional indexes)%
\footnote{Recall that saying that matrix $Q$ is \emph{Toeplitz}
means that its entries $Q_{ij}$ only depend on~$(j-i)$.},
$Q^{-1}$ —if it exists—will be of the same form.
Now, knowing that it is a Toeplitz matrix, $Q$ is described by the function
$a_Q \colon \ZZ^n \to \RR$ such that for all~$i,j$, $Q_{ij} = a_Q(j-i)$.
With this notation, (\ref{f1243}) rewrites:
\[ \forall z\in\ZZ^n \qquad a_Q(z) = \1{z=0} (1+\Gamma) - \1{z\neq0} \gamma_z .\]
When coded by functions like $a_Q$,
the multiplication of Toeplitz matrices becomes the convolution product:
\[ \forall M, N \ \text{Toeplitz} \qquad a_{MN} = a_M * a_N .\]
So, $Q^{-1}$ will be the Toeplitz matrix
whose $a_{Q^{-1}}$ is the inverse of~$a_Q$ for the convolution product.
Thanks to Condition~(\ref{f2380}), such an inverse always exists:
indeed we can write $a_Q = (1+\Gamma) (\delta_{0} - \tilde{a}_Q)$,
where $\tilde{a}_Q$ is a nonnegative function with
$\|\tilde{a}_Q\|_{\ell^1} = \Gamma \div (1+\Gamma) < 1$,
so that $a_Q$ is invertible with
\[\label{f0024} a_Q^{-*} = (1+\Gamma)^{-1} \big(\delta_{0} + \tilde{a}_Q +
\tilde{a}_Q * \tilde{a}_Q + \tilde{a}_Q * \tilde{a}_Q * \tilde{a}_Q + \cdots \big) .\]
In the end, at parameter $\beta>0$ the covariance matrix of~$\vec\omega$ has entries:
\[\label{f3366} \Cov(\omega_i,\omega_j) = \frac{a_{Q^{-1}}(j-i)}{\beta} .\]

\begin{Rmk}
All the entries of~$\Cov(\vec\omega)$ are nonnegative,
which reflects the fact that all the interaction forces are attractive.
\end{Rmk}

\begin{Rmk}
Since $\Cov(\vec\omega)$ depends on~$\beta$ only through a constant factor,
the behaviour of the system is exactly the same, up to a multiplicative constant,
for all~$\beta > 0$. Hence the study of correlations will not depend on~$\beta$.
\end{Rmk}

\begin{NOTA}
In the sequel, we fix arbitrarily $\beta = 1$ and we denote~$\Pr$ for~$\Pr_{\beta=1}$.
\end{NOTA}

Since the model is Gaussian, by~(\ref{f3366}) and Theorem~\ref{pro1857}
one has for all~$i \neq j$:
\[ \{ \omega_i : \omega_j \} = \frac{a_{Q^{-1}}(j-i)}{a_{Q^{-1}}(0)} .\]
Now we have the following claim, with an immediate key corollary:
\begin{Clm}\label{l3475}
For all~$i\neq j$, for all~$K \subset \ZZ\setminus\{i,j\}$,
\[ \{ \omega_i : \omega_j \}_{\vec\omega_K} \leq \{ \omega_i : \omega_j \} .\]
\end{Clm}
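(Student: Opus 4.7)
My plan is to reduce the claim, via Gaussianity and induction on $|K|$, to an elementary three-variable algebraic inequality which exploits the attractive M-matrix structure of the precision matrix $Q$ defined by~(\ref{f1243}).

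First, since $\vec\omega$ is centered Gaussian, the conditional law of $(\omega_i, \omega_j)$ given $\vec\omega_K = \vec{x}_K$ is Gaussian with a covariance matrix \emph{not depending} on $\vec{x}_K$. Thus Theorem~\ref{pro1857} (applied both unconditionally and under the conditional law) identifies $\{\omega_i:\omega_j\}$ and $\{\omega_i:\omega_j\}_{\vec\omega_K}$ with absolute Pearson correlation coefficients, which by~(\ref{f3366}) are in fact nonnegative. The key observation then is that the precision matrix $Q$ has off-diagonal entries $-\gamma_{j-i}\leq 0$, so it is an M-matrix; and this property passes to any principal submatrix. Since the conditional precision of $\vec\omega_{K^c}$ given $\vec\omega_K$ is the submatrix $Q_{K^c K^c}$, every conditional Gaussian model obtained from ours by conditioning on some spins is again attractive.

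With these reductions in hand, I would induct on $|K|$ (the case $|K|=0$ being trivial). For the inductive step, pick $k_0 \in K$ and set $L\coloneqq K\setminus\{k_0\}$. By the induction hypothesis, $\{\omega_i:\omega_j\}_{\vec\omega_L} \leq \{\omega_i:\omega_j\}$, so it suffices to show $\{\omega_i:\omega_j\}_{\vec\omega_K} \leq \{\omega_i:\omega_j\}_{\vec\omega_L}$. Working under $\Pr[\Bcdot|\vec\omega_L = \vec{x}_L]$, the triple $(\omega_i,\omega_j,\omega_{k_0})$ is centered Gaussian with M-matrix precision (by the second observation above), so the inequality reduces to the following three-variable statement: for jointly Gaussian $X,Y,Z$ whose precision matrix is an M-matrix, $|\Corr(X,Y|Z)| \leq |\Corr(X,Y)|$.

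The heart of the proof, and what I expect to be the main technical obstacle, is this three-variable case. Standardizing to unit variances and letting $a=\Corr(X,Z)$, $b=\Corr(Y,Z)$, $\rho=\Corr(X,Y)$, the M-matrix precision condition translates into the three attractive inequalities $\rho\geq ab$, $a\geq b\rho$, $b\geq a\rho$ (each being equivalent to one of the pairwise conditional correlations being nonnegative). A direct computation gives $\Corr(X,Y|Z)=(\rho-ab)/\sqrt{(1-a^2)(1-b^2)}$, so with $s\coloneqq\sqrt{(1-a^2)(1-b^2)}$ the desired inequality becomes $\rho(1-s)\leq ab$. Assuming WLOG $a\leq b$, the attractive constraint $a\geq b\rho$ yields $\rho\leq a/b$; combined with $s\geq 1-b^2$ (which follows from $a\leq b$ via $\sqrt{1-a^2}\geq\sqrt{1-b^2}$), one gets $\rho(1-s)\leq (a/b)\cdot b^2 = ab$, as required. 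This algebraic step is where all the attractive hypotheses genuinely get used: dropping any one of them breaks the inequality, as simple numerical examples show.
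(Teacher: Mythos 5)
Your proof is correct, and it takes a genuinely different route from the one in the paper. Where the paper works with the function $\offset(\vechat\omega_K)$ (the energy-minimizing conditional mean), applies a maximum principle to the discrete subelliptic system~(\ref{f4463}) to show $\offset$ is monotone (Claims~\ref{c4577c}--\ref{c4577d}), and then converts offsets to correlations via Lemma~\ref{c4577b}, you instead induct on $|K|$ by peeling off one conditioning variable at a time and reducing everything to the attractive three-variable inequality $\Corr(X,Y\mid Z)\le\Corr(X,Y)$. The algebra you give for that core inequality --- standardize, assume $a\le b$, use $a\ge b\rho$ and $s\ge 1-b^2$ to get $\rho(1-s)\le ab$ --- is correct, and it isolates exactly where each attractiveness constraint is used, which I find a transparent feature not visible in the paper's treatment. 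The paper buys a more direct physical picture (offsets, harmonicity, maximum principle) that generalizes smoothly to nonlinear attractive models as in \S~\ref{parNonlinearLatticeOfParticles}; your argument is more compressed and purely linear-algebraic.

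There is one point to tighten. After your ``second observation'' --- that conditioning on $\vec\omega_L$ yields a Gaussian on $\ZZ^n\setminus L$ with precision $Q_{L^cL^c}$, still a Z-matrix --- you immediately assert that the \emph{triple} $(\omega_i,\omega_j,\omega_{k_0})$ has M-matrix precision. That is not a principal submatrix of $Q_{L^cL^c}$: the precision of a Gaussian marginal is a Schur complement, not a submatrix. The claim is true, but it needs either the standard fact that Schur complements of M-matrices are M-matrices, or, more elementarily (and closer to what you already wrote), the remark that each of the three constraints $\rho\ge ab$, $a\ge b\rho$, $b\ge a\rho$ is precisely the nonnegativity of a \emph{conditional} correlation --- $\Corr(\omega_i,\omega_j\mid\omega_{k_0},\vec\omega_L)\ge 0$, $\Corr(\omega_i,\omega_{k_0}\mid\omega_j,\vec\omega_L)\ge 0$, $\Corr(\omega_j,\omega_{k_0}\mid\omega_i,\vec\omega_L)\ge 0$ --- each of which holds because the corresponding conditional model has precision a principal submatrix of $Q$, hence an M-matrix, hence entrywise-nonnegative covariance. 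With that one sentence supplied, the proof is complete.
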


\begin{Cor}\label{c6834}
Denoting by~$*$ the natural $\sigma$-metalgebra of the system, for all~$i\neq j$,
\[\label{fc6834} \{ \omega_i : \omega_j \}_* = \{ \omega_i : \omega_j \}
= \frac{a_{Q^{-1}}(j-i)}{a_{Q^{-1}}(0)} .\]
\end{Cor}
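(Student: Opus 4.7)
The plan is to deduce Corollary~\ref{c6834} in two short moves, one computing $\{\omega_i{:}\omega_j\}$ explicitly from the Gaussian structure and the other collapsing the subjective supremum down to this value by invoking Claim~\ref{l3475}. Both moves are essentially assembly work since the hard analytic content sits upstream, in Theorem~\ref{pro1857} and in the monotonicity statement of Claim~\ref{l3475}.

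First, I would compute $\{\omega_i{:}\omega_j\}$ directly. The pair $(\omega_i,\omega_j)$ is, by construction, a centered two-dimensional Gaussian vector; its covariance matrix reads off from~(\ref{f3366}) as
\[ \Cov(\omega_i,\omega_j) = \begin{pmatrix} a_{Q^{-1}}(0) & a_{Q^{-1}}(j-i) \\ a_{Q^{-1}}(j-i) & a_{Q^{-1}}(0) \end{pmatrix}, \]
using the Toeplitz structure of $Q^{-1}$ to get equal diagonal entries. Rescaling each coordinate by $\sqrt{a_{Q^{-1}}(0)}$ (which leaves the maximal correlation unchanged, since one can freely compose with affine functions inside the supremum~(\ref{for3878})) reduces us to the setting of Theorem~\ref{pro1857} with block matrix $C = (a_{Q^{-1}}(j-i)/a_{Q^{-1}}(0))$, a scalar. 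The theorem then gives $\{\omega_i{:}\omega_j\} = \VERT C\VERT = a_{Q^{-1}}(j-i)/a_{Q^{-1}}(0)$; the absolute value is superfluous because the expansion~(\ref{f0024}) as a sum of convolution powers of a nonnegative kernel shows $a_{Q^{-1}} \geq 0$ everywhere.

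Second, I would establish the equality $\{\omega_i{:}\omega_j\}_* = \{\omega_i{:}\omega_j\}$. By Proposition~\ref{cor0780}, the natural $\sigma$-metalgebra is $\{\sigma(\vec\omega_K) : K \subset \ZZ^n\}$, so by the very definition of correlation seen from a $\sigma$-metalgebra,
\[ \{\omega_i{:}\omega_j\}_* = \sup_{K \subset \ZZ^n} \{\omega_i{:}\omega_j\}_{\vec\omega_K}. \]
Taking $K=\emptyset$ (the trivial $\sigma$-algebra) gives at once $\{\omega_i{:}\omega_j\} \leq \{\omega_i{:}\omega_j\}_*$. For the reverse inequality, I split the supremum according to whether $K$ meets $\{i,j\}$: if $i\in K$ (or $j \in K$), then under $\Pr[\Bcdot|\vec\omega_K=\vechat\omega_K]$ the variable $\omega_i$ (resp.\ $\omega_j$) is deterministic, so $\ldb(\sigma(\omega_i))$ is trivial and the conditioned correlation is $0$ by the convention in Definition~\ref{def4393}; otherwise $K \subset \ZZ^n \setminus \{i,j\}$ and Claim~\ref{l3475} applies directly, giving $\{\omega_i{:}\omega_j\}_{\vec\omega_K} \leq \{\omega_i{:}\omega_j\}$. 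Taking the supremum over $K$ closes the argument.

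There is essentially no obstacle here beyond Claim~\ref{l3475} itself, which we are entitled to use; the only minor point is verifying the triviality in the degenerate cases $K \ni i$ or $K \ni j$, which is immediate from the definition once one recalls that the subjective correlation is required to hold for almost-every conditioning value.
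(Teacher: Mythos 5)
Your proof is correct and follows exactly the route the paper intends when it calls the corollary ``immediate'': computation of $\{\omega_i{:}\omega_j\}$ via Theorem~\ref{pro1857} using~(\ref{f3366}) and the Toeplitz structure of $Q^{-1}$, then the $\leq$ direction of the supremum from Claim~\ref{l3475} and the $\geq$ from taking $\mcal{H}$ trivial. Your explicit check of the degenerate cases $K\ni i$ or $K\ni j$ (where one conditioned $\sigma$-algebra collapses and the correlation is $0$ by the convention of Definition~\ref{def4393}) is a small but genuine detail that the paper's one-word ``immediate'' glosses over, since the hypothesis of Claim~\ref{l3475} excludes those $K$.
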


\begin{proof}
The proof of Claim~\ref{l3475} relies on the following claims:
\begin{Clm}\label{c4577a}
Up to an additive constant, $\Law(\vec\omega_{\ZZ^n}|\vec\omega_K=\vechat{\omega}_K)$
is the same for all~$\vechat{\omega}_K \in \RR^K$, i.e.\ there exists a vector-valued function
$\vechat{\omega}_K \mapsto \offset(\vechat{\omega}_K) \in \RR^{\ZZ^n}$
such that the law of~$\vec\omega_{\ZZ^n}$ under~$\Pr[\Bcdot|\ab {\vec\omega_K=\vechat{\omega}_K}]$
is the same as the law of~$\vec\omega_{\ZZ^n} + \offset(\vechat{\omega}_K)$
under~$\Pr[\Bcdot|\ab {\vec\omega_K \equiv 0}]$.
\end{Clm}

\begin{Lem}\label{c4577b}
For~$(X,Y)$ a two-dimensional centered Gaussian vector with~$X$ and~$Y$ non-degenerate,
\[\label{f5223} \{X:Y\} = \sqrt{\frac{\EE[X^2]}{\EE[Y^2]}} \, \big|\EE[Y|X=1]\big| .\]
\end{Lem}

\begin{Clm}\label{c4577c}
For~$K\subset\ZZ^n$, the function~$\offset$
defined in Claim~\ref{c4577a} is nondecreasing,
in the sense that each of the entries of~$\offset(\vechat{\omega}_K)$
is a nondecreasing function of each~$\hat\omega_k$ for~$k\in K$.
\end{Clm}

\begin{Clm}\label{c4577d}
For~$i\in\ZZ^n$, $K\subset\ZZ^n\setminus\{i\}$:
\[ \offset\big(1^{\{i\}},0^K\big) \leq \offset\big(1^{\{i\}}\big) ,\]
where $(1^{\{i\}},0^K)$ stands for the function on
$K\uplus\{i\}$ which is equal to~$1$ at~$i$ and to~$0$ on~$K$,
resp.~$1^{\{i\}}$ stands for the function on~$\{i\}$ mapping $i$ to~$1$.
\end{Clm}

Admit temporarily the claims.
Let~$i,j$ be distinct points of~$\ZZ^n$, let~$K\subset\ZZ^n\setminus\{i,j\}$
and let~$\vechat{\omega}_K \in \RR^K$; our goal is to compute
$\{ \omega_i : \omega_j \}$ under~$\Pr[\Bcdot|\ab \vec\omega_K=\vechat{\omega}_K]$.
First, by Claim~\ref{c4577a} we can suppose that $\vechat{\omega}_K \equiv 0$.
Now under~$\Pr[\Bcdot|\ab \vec\omega_K\equiv0]$, $(\omega_i,\omega_j)$ is still Gaussian
by the properties of Gaussian vectors, and it is centered by symmetry,
therefore by Lemma~\ref{c4577b}, $\{\omega_i:\omega_j\}$ is equal to
\[\label{f5276}
\sqrt{\frac{\EE[\omega_i^2|\vec\omega_K\equiv0]}{\EE[\omega_j^2|\vec\omega_K\equiv0]}} \, \big| \EE\big[\omega_j\big|\vec\omega_K\equiv0\enskip\text{and}\enskip \omega_i=1\big] \big|
= \sqrt{\frac{\EE[\omega_i^2|\vec\omega_K\equiv0]}{\EE[\omega_j^2|\vec\omega_K\equiv0]}} \, \big(\offset(1^{\{i\}},0^K)\cdot j\big) \]
—one has indeed $\offset(1^{\{i\}},0^K)\cdot j \geq 0$,
since by Claim~\ref{c4577c}, $\offset(1^{\{i\}},0^K) \geq \offset(0^{\{i\}\uplus K}) \equiv 0$.

Now, taking $K=\emptyset$ in~(\ref{f5276}), we find that under the law $\Pr$:
\[ \{\omega_i:\omega_j\} = \sqrt{\frac{\EE[\omega_i^2]}{\EE[\omega_j^2]}} \,
\big(\offset(1^{\{i\}})\cdot j\big) ,\]
which is $\geq \sqrt{\EE[\omega_i^2] \div \EE[\omega_j^2]} \, \big(\offset(1^{\{i\}},0^K)\cdot j\big)$
by Claim~\ref{c4577d}.
But up to switching the roles of~$i$ and~$j$, we can assume that
$\EE[\omega_i^2] \div \EE[\omega_j^2] \geq
\EE[\omega_i^2|\vec\omega_K\equiv0] \div \EE[\omega_j^2|\vec\omega_K\equiv0]$,
thus getting the desired result:
\[ \{\omega_i:\omega_j\}
\geq \sqrt{\frac{\EE[\omega_i^2|\vec\omega_K\equiv0]}{\EE[\omega_j^2|\vec\omega_K\equiv0]}} \, \big(\offset(1^{\{i\}},0^K)\cdot j\big) = \{\omega_i:\omega_j\}_{\vec\omega_K} .\]
\end{proof}

\begingroup\def\proofname{Proof of the claims}
\begin{proof}\strut\\
\indent\textit{Claim~\ref{c4577a}\ --}
It is a well-known property of Gaussian vectors,
which here is stated in an infinite-dimensional setting.

\indent\textit{Claim~\ref{c4577b}\ --}
Since $(X,Y)$ is centered Gaussian,
$Y^{\sigma(X)}$ is the orthogonal projection of the $L^2$ variable $Y$ on~$\RR X$,
so $\EE[Y|X=x] \propto x$.
Thus one has:
\[\label{c5212} \EE[XY] = \int x\, \EE[Y|X=x]\, \dx\Pr[X=x] \\
= \int x^2\, \EE[Y|X=1]\, \dx\Pr[X=x] = \EE[Y|X=1] \, \EE[X^2] .\]
But for such a Gaussian vector, Theorem~\ref{pro1857} gives that
\[ \{X:Y\} = \frac{|\Cov(X,Y)|}{\ecty(x)\ecty(y)} = \frac{|\EE[XY]|}{\sqrt{\EE[X^2]\EE[Y^2]}} ,\]
which combined with~(\ref{c5212}) gives~(\ref{f5223}).

\indent\textit{Claim~\ref{c4577c}\ --} First, notice that
$\EE[\vec\omega_{\ZZ^n}|\vec\omega_K\equiv0] = \vec0$, so that
\[\offset(\vechat{\omega}_K) =
\EE\big[\vec\omega \big| \vec\omega_K=\vechat{\omega}_K \big] .\]
Now, allowing temporarily $\beta$ to vary again, by the properties of Gaussian vectors,
the vector-valued variable $\EE_\beta[\vec\omega_{\ZZ^n}|\vec\omega_K=\vechat{\omega}_K]$
is Gaussian with constant expectation and covariance matrix proportional to~$\beta$.
Therefore, the common expectation of all these laws
is equal to the constant value of~$\vec\omega_{\ZZ^n}$ for $\beta=0$,
which is the~$\vec\omega$ minimising $H$ under the constraint ``$\vec\omega_K=\vechat{\omega}_K$'':
\[ \offset(\vechat{\omega}_K) =
\argmin_{\vec\omega_K=\vechat{\omega}_K} H(\vec\omega) .\]
Since it minimizes energy, the state $\offset(\vec\omega_K)$ is at equilibrium outside $K$.
In other words, it is the solution of the following subelliptic system:
\[\label{f4463} \left\{ \begin{array}{ll}
\forall i\in\ZZ^n\setminus K
\quad -\omega_i + \sum_{j\neq i} \gamma_{j-i}(\omega_j-\omega_i) = 0 ;\\
\forall i\in K \quad \omega_i = \hat\omega_i .\end{array} \right.\]
(That system is clearly subelliptic
because the pinning and interaction forces all are attractive).
By the maximum principle,
the solution of~(\ref{f4463}) is an increasing function of the boundary condition,
which was our claim.

\indent\textit{Claim~\ref{c4577d}\ --} Denote
$\vec\omega^{1}_{\ZZ^n} = \offset(1^{\{i\}})$.
Since $\offset(0^{\{i\}}) = 0^{\ZZ^n}$, by Claim~\ref{c4577c}
one has ${\vec\omega^1_{\ZZ^n} \geq 0^{\ZZ^n}}$.
Since obviously $\vec\omega^1_i = 1$, one has even
$\vec\omega^1_{\ZZ^n} \geq (1^{\{i\}},0^{\ZZ^n\setminus\{i\}})$.
In particular, $\vec\omega^1_{\{i\}\uplus K} \geq (1^{\{i\}},0^K)$;
therefore, using again Claim~\ref{c4577c},
\[\label{f3643} \offset\big(\vec\omega^1_{\{i\}\uplus K}\big) \geq \offset\big(1^{\{i\}},0^K\big).\]
Now, we defined $\vec\omega^1_{\ZZ^n}$ as $\offset(1^{\{i\}})$,
so by Formula~(\ref{f4463}) it satisfies
\[ -\omega^1_{i'} + \sum_{j\neq i'} \gamma_{j-i'}(\omega_j-\omega_i') = 0 \]
for all~$i' \in \ZZ^n\setminus\{i\}$, hence \emph{a fortiori}
for all~$i'\in \ZZ^n\setminus(\{i\}\uplus K)$.
Since moreover $\vec\omega^1$ obviously coincides with~$\vec\omega^1_{\{i\}\uplus K}$
on~$\{i\} \uplus K$, this implies, by Formula~(\ref{f4463}) again, that
\[ \offset\big( \vec\omega^1_{\{i\}\uplus K} \big) =
\vec\omega^1_{\ZZ^n} = \offset\big( 1^{\{i\}} \big) .\]
So, (\ref{f3643}) becomes ``$\offset(1^{\{i\}}) \geq \offset(1^{\{i\}},0^K)$'',
what we wanted.
\end{proof}\endgroup

Thanks to Corollary~\ref{c6834} our tensorization theorems give decorrelation results
for the quadratic model:
\begin{Thm}\label{thmTchou}
Provided Condition~(\ref{f2380}) holds:
\begin{ienumerate}
\item\label{i4805} The quadratic model is well-$\rho$-mixing, cf.\ Definition~\ref{def-WRM}.
If $\Gamma < 1$, one can be more specific about the property ``$\rho^*(1)<1$'':
for all disjoint $I,J\subset\ZZ^n$, ${\{\vec\omega_I:\vec\omega_J\}} \leq \Gamma$.
\item\label{i4806} Moreover, if there is polynomial decay of interactions
$\gamma_{z} = O(1/|z|^{n+\alpha})$, then the model is $\alpha$-polynomially $\rho$-mixing,
and if $\gamma_{z}$ has exponential decay,
then the model is exponentially $\rho$-mixing (but not with the same rate as $\gamma_z$ in general).
\end{ienumerate}
\end{Thm}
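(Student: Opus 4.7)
The plan is to reduce everything to analysing the Toeplitz kernel $a_{Q^{-1}}$, whose structure is already almost fully encoded in formula~(\ref{f0024}). By Corollary~\ref{c6834}, for any $z \in \ZZ^n \setminus \{0\}$ the quantity $\epsilon(z) \coloneqq \{\omega_0 : \omega_z\}_*$ equals $a_{Q^{-1}}(z)/a_{Q^{-1}}(0)$, so proving Theorem~\ref{thmTchou} amounts to getting pointwise and summability estimates on $a_{Q^{-1}}$, then invoking the machinery of Chapter~\ref{parTensorization} (through Definition~\ref{def-WRM}).

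For the summability step underlying~(\ref{i4805}), I would use~(\ref{f0024}) directly: since $\|\tilde{a}_Q\|_{\ell^1} = \Gamma/(1+\Gamma) < 1$, the geometric identity $\sum_z \tilde{a}_Q^{*k}(z) = (\Gamma/(1+\Gamma))^k$ yields $\sum_{z \in \ZZ^n} a_{Q^{-1}}(z) = 1$, while dropping all but the $k=0$ term gives $a_{Q^{-1}}(0) \geq (1+\Gamma)^{-1}$. Hence $\sum_{z \neq 0} \epsilon(z) \leq \Gamma < \infty$, which feeds directly into Lemma~\ref{pro8882} to produce the bound $\{\vec\omega_I : \vec\omega_J\} \leq \Gamma$ announced in the case $\Gamma < 1$. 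For the strict inequality $\epsilon(z) < 1$ for $z \neq 0$ (needed to invoke Lemma~\ref{lem4067}), I would argue that the $2 \times 2$ submatrix of $\Cov(\vec\omega)$ indexed by $\{0,z\}$ is positive definite: indeed finite restrictions of $Q$ are strictly diagonally dominant hence positive definite, and passing to the inverse gives strict positive definiteness of the corresponding restriction of the covariance, forcing $|\Cov(\omega_0,\omega_z)| < \Var(\omega_0)$, i.e.\ $\epsilon(z) < 1$. (Equivalently, by Theorem~\ref{pro1857}, equality would force an affine relation between $\omega_0$ and $\omega_z$, which is excluded by non-degeneracy.) Combining with the summability, the model is well-$\rho$-mixing in the sense of Definition~\ref{def-WRM}, and all conclusions of that definition apply.

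For the decay part~(\ref{i4806}), the task is to transfer the decay of $\gamma_z$ to decay of $a_{Q^{-1}}(z)$ via the series $a_{Q^{-1}} = (1+\Gamma)^{-1} \sum_{k \geq 0} \tilde{a}_Q^{*k}$. The standard tool is that convolution with an $\ell^1$ kernel preserves polynomial decay of order $n+\alpha$ on~$\ZZ^n$: one splits $\tilde{a}_Q * f(z)$ into the contributions from $|w| \leq |z|/2$ and $|w| \geq |z|/2$ and uses the assumed decay on the larger factor and $\ell^1$-norm on the smaller one. Iterating this for each convolution power $\tilde{a}_Q^{*k}$ and summing the resulting geometric series in $\|\tilde{a}_Q\|_{\ell^1} = \Gamma/(1+\Gamma) < 1$ gives the polynomial bound $a_{Q^{-1}}(z) = O(|z|^{-(n+\alpha)})$; this is precisely the content of Lemma~\ref{l1939b} already invoked in the long-range Ising proof. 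The exponential-decay case is handled analogously, but using the Laplace-transform technique of Lemma~\ref{l1939a}: exponential decay of $\tilde{a}_Q$ translates into analyticity of its Laplace transform in a strip, the geometric sum $\sum_k \tilde{a}_Q^{*k}$ converges on a possibly smaller strip (this is where the rate degrades, explaining the parenthetical caveat in the statement), and inversion of the Laplace transform gives the claimed exponential decay of $a_{Q^{-1}}$. Once these pointwise bounds on $\epsilon(z)$ are established, the corresponding statements in Definition~\ref{def-WRM} automatically yield~(\ref{i4806}).

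The main obstacle is the decay step: uniformly controlling all convolution powers $\tilde{a}_Q^{*k}$ with the same decay profile, then summing in $k$, requires care with constants (especially in the exponential case, where a naive union bound would lose the rate altogether). The positive-definiteness argument needed to show $\epsilon(z) < 1$ is classical but must be handled in the infinite-dimensional Gaussian setting; fortunately the diagonal-dominance bound $(1+\Gamma) > \Gamma$ makes this transparent at the level of finite restrictions.
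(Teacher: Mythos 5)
Your proof takes essentially the same route as the paper: reduce to the Toeplitz kernel $a_{Q^{-1}}$ via Corollary~\ref{c6834}, compute $\sum_z a_{Q^{-1}}(z)=1$ and $a_{Q^{-1}}(0)\geq(1+\Gamma)^{-1}$ from the series~(\ref{f0024}) to get $\sum_{z\neq0}\epsilon(z)\leq\Gamma$, and delegate the decay transfer to Lemmas~\ref{l1939b} (polynomial) and~\ref{l1939a} (exponential). You also supply an explicit argument for $\epsilon(z)<1$ when $z\neq0$ (via non-degeneracy of the Gaussian pair), a step needed to invoke Definition~\ref{def-WRM} when $\Gamma\geq1$ that the paper's proof leaves implicit; and you assign the two appendix lemmas to the correct decay regimes, whereas the paper's text cites them in swapped order.
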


\begin{proof}
To prove Point~(\ref{i4805}), we have to show that
\[ \sum_{z\in\ZZ^n\setminus\{0\}} \frac{a_{Q^{-1}}(z)}{a_{Q^{-1}}(0)} \leq \Gamma \]
—recall that we assumed $\Gamma < \infty$.
We write:
\[\label{f1610} \sum_{z\in\ZZ^n\setminus\{0\}} \frac{a_{Q^{-1}}(z)}{a_{Q^{-1}}(0)}
= \frac{\sum_{z\in\ZZ^n}a_{Q^{-1}}(z)}{a_{Q^{-1}}(0)} - 1 .\]
There, $\sum_{z\in\ZZ^n}a_{Q^{-1}}(z)$ is equal to~$1$:
indeed, $a_{Q^{-1}}$ is the convolution inverse of~$a_Q$,
so by Fubini's theorem:
\[ \sum_{z\in\ZZ^n} a_{Q^{-1}}(z) = \Big( \sum_{z\in\ZZ^n} a_Q(z) \Big)^{-1} ,\]
where
\[ \sum_{z\in\ZZ^n} a_Q(z) = \sum_{z\in\ZZ^n\setminus\{0\}}\big(-\gamma(z)\big) + (1+\Gamma)
= -\Gamma + 1 + \Gamma = 1 .\]
Now, by~(\ref{f0024}),
$a_{Q^{-1}}(0)$ is obviously bounded below by~$(1+\Gamma)^{-1}$, so in the end:
\[ \sum_{z\in\ZZ^n\setminus\{0\}} \frac{a_{Q^{-1}}(z)}{a_{Q^{-1}}(0)}
\leq \frac{1}{(1+\Gamma)^{-1}} - 1 = \Gamma .\]

To prove Point~(\ref{i4806}), we have to show that
polynomial decay of~$\gamma_z$ implies polynomial decay of~$a_{Q^{-1}}$
with the same exponent, resp.\ that exponential decay of~$\gamma_z$
implies exponential decay of~$a_{Q^{-1}}$.
This is achieved resp.\ by Lemmas~\ref{l1939a} and~\ref{l1939b} in the appendix.
\end{proof}

\section{Nonlinear lattice of particles}\label{parNonlinearLatticeOfParticles}

In this section we will consider a model with continuous spins,
but where interactions are nonlinear, so that we cannot use the properties of Gaussian variables.
One has a lattice of particles indexed by~$\ZZ^n$ (equipped with its $\ell^1$ graph structure),
each particle $i$ being described by its ``polarization'' $\omega_i\in\RR$.
Each particle is submitted to a pinning force deriving from a potential $V$,
and to interaction forces with its neighbours, the interactions deriving from a potential $W$.
In other words, the Hamiltonian is formally
\[\label{forHNL} H(\vec\omega) = \sum_{i\in\ZZ} V(\omega_i) +
\frac{1}{2} \sum_{i\sim j} W(\omega_j-\omega_i) .\]

We make the following assumptions:
\begin{Hyp}\label{hypVW}
Both $V$ and~$W$ are convex; moreover
$V$ is uniformly strictly convex and the Hessian of~$W$ is bounded,
i.e.\ there exist constants~$v_*>0$ and~$w_*<\infty$
such that for all~$x \in \RR$, $v_* \leq V''(x)$ and $W''(x) \leq w^*$.
\end{Hyp}
We are interested in the equilibrium state of the system at some inverse temperature
$0 < \beta < \infty$. (In the sequel we suppose that $\beta$ is fixed).

Let~$i\neq j\in\ZZ$, $K\subset\ZZ\setminus\{i,j\}$ and~$\vechat{\omega}_K \in \RR^K$;
we want to study the law of~$(\omega_i,\omega_j)$
under the law $\Pr[\Bcdot|\vec\omega_K=\vechat{\omega}_K]$.
Then, the probability distribution of the system is formally described by
\[ \dx\Pr \big( \omega_i,\omega_j,\vec\omega_{\C{K}\setminus\{i,j\}} \big) \propto
\exp \Big( -\beta
H \big(\omega_i,\omega_j,\vec\omega_{\C{K}\setminus\{i,j\}},\vechat{\omega}_K\big) \Big) .\]
Our assumptions ensure that the function
$H(\Bcdot,\Bcdot,\Bcdot,\vechat{\omega}_K)$ is uniformly convex,
so that the equilibrium exists and is unique.

For the sequel, we need to recall the definition of the $W_{\infty}$ Wasserstein distance:
\begin{Def}[see also~\cite{Winfty}]
For~$\mu_1,\mu_2$ two measures on some metric space $(X,d)$,
``$W_{\infty}(\mu_1,\mu_2) \leq \epsilon$'' means that there exists a probability measure~$\gamma$
on~$E^2$ such that the two respective marginals of~$\gamma$ are~$\mu_1$ and~$\mu_2$
and such that $d(x_1,x_2) \leq \epsilon \quad \gamma\text{-a.s.}$.
This defines a (possibly infinite) distance on the probability measures on~$E$.
\end{Def}

The fundamental lemma of this subsection is the following
\begin{Clm}\label{lem7014}
For~$\hat\omega_j \in \RR$, denote by~$\mu(\hat\omega_j)$ the law of~$\omega_i$
under~$\Pr[\Bcdot|\ab \vec{\omega}_{K\uplus\{j\}}=(\vechat{\omega}_K,\hat\omega_j)]$.
There exists a function~$\epsilon \colon \ZZ \to [0,1]$ with
$\epsilon(d) < 1$ as soon as $d > 0$ and $\epsilon(d) \stackrel{d\longto\infty}{\leq} Ce^{-\psi d}$
for some~${\psi > 0}$ and~${C < \infty}$, such that
\[ \forall \hat\omega_j^1, \hat\omega_j^2 \in \RR \qquad
W_\infty \big( \mu(\hat\omega_j^1), \mu(\hat\omega_j^2) \big)
\leq \epsilon(|j-i|) \big| \hat\omega_j^2 - \hat\omega_j^1 \big| .\]
\end{Clm}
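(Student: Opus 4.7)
The plan is to construct an explicit coupling of the two conditional laws via synchronous Langevin dynamics, and then to bound the coupling pathwise by a discrete maximum-principle argument. Throughout, I implicitly work on a large finite box containing~$i, j$ and~$K$, imposing some fixed boundary condition outside the box; since all bounds obtained are uniform in the box, they pass to the infinite-volume limit.

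Consider the two Langevin dynamics $(\omega^1(t))_{t\geq0}$ and $(\omega^2(t))_{t\geq0}$ associated with the Hamiltonian~(\ref{forHNL}) at inverse temperature~$\beta$, each driven by \emph{the same} system of independent Brownian motions, with frozen boundary values $\vec\omega_K \equiv \vechat\omega_K$ in both copies and $\omega_j^{\kappa}(t) \equiv \hat\omega_j^{\kappa}$ for $\kappa = 1,2$. Start both from the same initial condition, say $\omega^1(0) = \omega^2(0)$, outside~$K \uplus \{j\}$. Since the noise terms cancel in the difference, $\eta(t) \coloneqq \omega^2(t)-\omega^1(t)$ is \emph{deterministic given the paths} and satisfies, for every~$i \notin K\uplus\{j\}$:
\[ \frac{d\eta_i}{dt} = -a_i(t) \eta_i - \sum_{k\sim i} b_{ik}(t)(\eta_i-\eta_k), \]
with, by the mean-value theorem and Assumption~\ref{hypVW}, random coefficients satisfying $a_i(t) \geq \beta v_*$ and $b_{ik}(t) \in [0, \beta w^*]$. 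Meanwhile $\eta_j(t) \equiv \hat\omega_j^2-\hat\omega_j^1 \eqqcolon C$ and $\eta_k(t) \equiv 0$ for~$k \in K$.

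The core step is now a discrete parabolic maximum principle. I would pick $\psi > 0$ small enough that
\[ v_* - n w^* (e^\psi - 1) \geq 0 \]
(taking e.g.\ $\psi = \log(1 + v_*/(nw^*))$) and set $\phi_i \coloneqq |C|\,e^{-\psi \dist(i,j)}$. A direct computation—using that among the $2n$ neighbours of any~$i\neq j$, at most $n$ are strictly closer to~$j$ in graph distance—shows that for every admissible realisation of the coefficients,
\[ a_i \phi_i + \sum_{k\sim i} b_{ik}(\phi_i - \phi_k) \geq 0, \]
so that $\phi$ is a time-independent supersolution of the linear parabolic equation satisfied by~$\eta$. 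Since $|\eta_i(0)| = 0 \leq \phi_i$ initially and the boundary inequalities $|\eta_j(t)| = |C| = \phi_j$, $|\eta_k(t)| = 0 \leq \phi_k$ hold for all~$t$, comparing $|\eta_i(t)|$ (which, where it is smooth, satisfies the same equation with an inequality $\leq$ thanks to the triangle inequality in the neighbour terms) to~$\phi_i$ yields, pathwise,
\[ |\omega_i^2(t) - \omega_i^1(t)| \leq |C|\,e^{-\psi\,\dist(i,j)} \quad\text{for all }t\geq 0. \]

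Finally, the log-concavity of~$H$ (Assumption~\ref{hypVW}) ensures that each Langevin dynamics is ergodic and converges in law, so the joint law of $(\omega^1(t), \omega^2(t))$ converges to some coupling~$\gamma$ whose marginals are precisely the two full equilibrium measures with boundary data $\vec\omega_{K\uplus\{j\}} = (\vechat\omega_K, \hat\omega_j^\kappa)$. Projecting~$\gamma$ onto coordinate~$i$ produces a coupling of~$\mu(\hat\omega_j^1)$ and~$\mu(\hat\omega_j^2)$ under which the uniform pathwise bound survives, giving the announced $W_\infty$ estimate with $\epsilon(d) = e^{-\psi d}$. The strict inequality $\epsilon(d) < 1$ for $d > 0$ is automatic from $\psi > 0$. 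The main obstacle is making the infinite-volume Langevin dynamics rigorous and justifying that the pathwise supremum bound is preserved under the long-time weak limit; the cleanest route is to carry out everything in a finite box (where existence and ergodicity are standard consequences of uniform convexity) and let the box exhaust~$\ZZ^n$, using that the exponential estimate is uniform in the box.
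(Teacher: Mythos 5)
Your proposal is correct and follows the same overall strategy as the paper — synchronous coupling of the (overdamped) Langevin / Fokker--Planck dynamics driven by a common noise, so that the difference $\eta(t)$ solves a deterministic discrete damped heat equation with time-varying coefficients constrained only by $a_i \geq \beta v_*$, $0 \leq b_{ik} \leq \beta w^*$, followed by a maximum-principle comparison. The point where you genuinely diverge from the paper is the comparison step itself. The paper constructs the \emph{exact} stationary envelope $\Delta^+$ solving a \emph{nonlinear} subelliptic system (the coefficients chosen adversarially as a function of the unknown), shows $\Delta^+$ is harmonic for an associated killed nearest-neighbour random walk, and then bounds $\Delta^+_i$ by the survival-and-hitting probability of that walk, obtaining the rate $\big(\frac{2nw^*}{2nw^*+v_*}\big)^{|j-i|}$. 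You instead exhibit an explicit exponential supersolution $\phi_i = |C|\,e^{-\psi\,\dist(i,j)}$ and verify the supersolution inequality directly, using the elementary observation that at most $n$ of the $2n$ neighbours of a bulk site are strictly closer to~$j$; this gives the (slightly sharper) rate $e^{-\psi} = \frac{nw^*}{nw^*+v_*}$ and avoids the random-walk machinery entirely. A secondary stylistic difference: the paper reduces to $\hat\omega^1_j \leq \hat\omega^2_j$ and exploits monotonicity of the coupled dynamics to keep $\Delta \geq 0$, whereas you work with $|\eta_i|$ and a triangle inequality; the latter is a bit less transparent at the level of rigor (one must phrase the comparison for $|\eta_i|$ in a viscosity/one-sided-derivative sense, or first establish $\eta \geq 0$ as the paper does), but is sound. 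Both approaches leave the same technical point implicit — making the infinite-volume Langevin coupling and the long-time limit rigorous — and you flag it appropriately as a finite-box approximation. In short: same coupling skeleton, genuinely different (and somewhat more elementary) execution of the maximum-principle bound, equivalent conclusion.
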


\begin{proof}
The proof relies on the `explicit' construction of a coupling measure~$\gamma$
between~$\mu(\hat\omega_j^1)$ and~$\mu(\hat\omega_j^2)$.
To do that, we will construct $\Pr[\Bcdot|\vec{\omega}_{K\uplus\{j\}}=(\vechat{\omega}_K,\hat\omega_j)]$
thanks to a reversible Fokker--Planck dynamics, and then couple the dynamics
for~$\hat\omega_j^1$ and~$\hat\omega_j^2$.

We define the Fokker--Planck dynamics thanks to independent white noises
$(\dx B^{i'}_t)_{t\in\RR}$ for~$i' \in {\ZZ^n \setminus (K\uplus\{j\})}$.
The motion of point $i'$ is defined by:
\[ \dx\omega_{i'} = -\beta \big( V'(\omega_{i'}) +
\sum_{i''\sim i'} W'(\omega_{i'}-\omega_{i''}) \big) + \sqrt{2} \,\dx{B^{i'}_t} ,\]
with the boundary condition $\vec\omega_{K\uplus\{j\}} = \vechat{\omega}_{K\uplus\{j\}}$
for all times.
Coupling then consists in taking the same noise for the two processes.
The initial condition is not very important since it is asymptotically forgotten,
so we will suppose that the two systems have been coupled for an infinite time,
so that at any time both systems follow their equilibrium law.
We denote by~$\vec\omega^1(t)$ the system correponding to the boundary condition
``$\vec\omega_{K\uplus\{j\}} = (\vechat{\omega}_K,\hat\omega_j^1)$'',
resp.\ by~$\vec\omega^2(t)$ the system correponding to the other boundary condition.
We denote $\Delta_{i'}(t) \coloneqq \omega_{i'}^2(t) - \omega_{i'}^1(t)$.
Then when the dynamics are coupled,
$\vec{\Delta}$ evolves according to the following equation:
\[ \dx(\Delta_{i'}) =
-\beta \Big[ \big( V'(\omega^2_{i'}) - V'(\omega^1_{i'}) \big) +
\sum_{i''\sim i'} \big( W'(\omega^2_{i'}-\omega^2_{i''}) - W'(\omega^1_{i'}-\omega^1_{i''}) \big) \Big] .\]
Obviously the right-hand side is not a deterministic function of
$\vec{\Delta}(t)$, but it can nonetheless be written as
\[ -\beta \big[ v(i',t) \Delta_{i'}(t)
+ w(i',i'',t) \big( \Delta_{i'}(t) - \Delta_{i''}(t) \big) \big] ,\]
for some~$v(i',t)$ and~$w(i',i",t)$ satisfying
\begin{eqnarray}
\label{for0346v} v(i',t) & \geq & v_* \quad \text{and} \\
\label{for0346w} 0 \leq w(i',i",t) & \leq & w^* \end{eqnarray}
by Assumption~\ref{hypVW}.
Moreover, one has the boundary conditions:
\[ \forall t \qquad \left\{
\begin{array}{rcl} \vec{\Delta}_K &\equiv& 0 ;\\
\Delta_j &=& \hat\omega_j^2 - \hat\omega_j^1 . \end{array} \right.\]

So, $\vec{\Delta}$ is the solution of some discrete `damped heat equation',
whose coefficients can vary along time though having to satisfy bounds~(\ref{for0346v}) and~(\ref{for0346w}).
Such an equation has no stationary solution \emph{stricto sensu};
however there exists some $\vec\Delta^+$ such that
\[ \vec{\Delta}(t) \leq \vec\Delta^+ \qquad \Rightarrow \qquad
\forall t'\geq t \quad \vec{\Delta}(t') \leq \vec\Delta^+ ;\]
namely, this $\vec\Delta^+$ is defined as the solution
of the following system of equations:
$\vec\Delta^+_K \equiv 0$, $\Delta^+_j = \hat\omega_j^2 - \hat\omega_j^1$,
and for all~$i'\notin K\uplus\{j\}$,
\[\label{for7756a} 0 = - v_* \Delta^+_{i'} + \sum_{i''\sim i'}
\1{\Delta^+_{i''}\geq\Delta^+_{i'}}w^* (\Delta^+_{i''} - \Delta^+_{i'}) .\]
One has similarly that
\[ \vec{\Delta}(t) \geq \vec0 \qquad \Rightarrow \qquad
\forall t'\geq t \quad \vec{\Delta}(t') \geq \vec0 .\]
Consequently, I claim that for all~$t$ one has
\[\label{for4700} \vec0 \leq \vec\Delta(t) \leq \vec\Delta^+ :\]
indeed if the initial condition of the system satisfies~(\ref{for4700}),
then that property remains valid for all subsequent times;
now, as I told, initial conditions are asymptomatically forgotten,
so in fact (\ref{for4700}) is always satisfied.

One has the following control on~$\vec\Delta^+$:
\begin{Clm}\label{lem3945}
There exists a function~$\epsilon \colon \ZZ \to [0,1]$ with
$\epsilon(d) < 1$ as soon as $d > 0$ and $\epsilon(d) \stackrel{d\longto\infty}{\leq} Ce^{-\psi d}$
for some~$\psi > 0$ and~$C < \infty$, such that
\[ \forall i \in \ZZ^n \qquad \Delta^+_i \leq \epsilon(|j-i|) .\]
Moreover, the function~$\epsilon$ does not depend on~$K$ nor on~$j$.
\end{Clm}
Combining~(\ref{for4700}) with Claim~\ref{lem3945} ends the proof of Claim~\ref{lem7014}.
\end{proof}

\begingroup\def\proofname{Proof of Claim~\ref{lem3945}}
\begin{proof}
First, notice that Equation~(\ref{for7756a}) satisfies a maximum principle,
so we know in advance that $\Delta^+$ is uniquely defined with $0 \leq \Delta^+ \leq 1$ everywhere.

For~$i'\sim i''$, denote $w_{i'}(i'') \coloneqq \1{\Delta^+_{i''} \geq \Delta^+_{i'}} w^*$.
Then (\ref{for7756a}) can be rewritten into:
\[\label{for7756a+} \Delta_{i'} =
\sum_{i''\sim i'} \frac{w_{i'}(i'')}{v_*+\sum_{i''\sim i'}w_{i'}(i'')} \times \Delta_{i''}
+ \frac{v_*}{v_*+\sum_{i''\sim i'}w_{i'}(i'')} \times 0 .\]

Now I define the following Markov chain on~$\ZZ^n \uplus \{\partial\}$,
$\partial$ denoting a cemetery point:
\begin{Def}\strut
\begin{itemize}
\item If at some time the particle is on some point $i'$ of~$\ZZ^n \setminus (K\uplus\{j\})$,
at next time it jumps onto the neighbour $i''$ of~$i'$ with probability
$w_{i'}(i'') \mathbin{\big/} \big(v_*+\sum_{i''\sim i'}w_{i'}(i'')\big)$,
and it jumps onto~$\partial$ with probability
$v_* \mathbin{\big/} {\big(v_*+\sum_{i''\sim i'}w_{i'}(i'')\big)}$;
\item If the particle is somewhere in~$K \uplus \{\partial,j\}$ at some time,
then it does not move any more.
\end{itemize}
\end{Def}
Call~$(X_t)_{t\in\NN}$ such a Markov chain and denote by~$\mcal{L}$ its generator.
It is clear that with probability one,
$X_t$ eventually remains at some point of~$K\uplus\{\partial,i\}$.
Extend $\Delta^+$ to~$\ZZ^n\uplus\{\partial\}$ by setting $\Delta^+_\partial = 0$;
then, (\ref{for7756a+}) merely means that $\Delta^+$ is $\mcal{L}$-harmonic,
and it follows that
\[ \Delta^+_i = \EE \big[ f(X_\infty) \big| X_0=i \big] .\]
Thus, to bound above $\Delta^+_i$ I write that
\begin{multline} \EE\big[f(X_\infty)\big|X_0=i\big]
= \sum_{\substack{i=i_0\sim\cdots\sim i_t=j\\i_1,\ldots,i_{t-1}\notin K\uplus\{j\}}}
\,\,\prod_{u=0}^{t-1} \frac{w_{i_u}(i_{u+1})}{v_*+\sum_{i''\sim i_u}w_{i'}(i'')} \\
\leq \sum_{\substack{i=i_0\sim\cdots\sim i_t=j\\i_1,\ldots,i_{t-1}\neq j}}
\,\,\prod_{u=0}^{t-1} \frac{w_{i_u}(i_{u+1})}{\sum_{i''\sim i_u}w_{i'}(i'')}
\bigg( \frac{2dw^*}{2dw^*+v_*} \bigg)^{\!t} \\
\leq \bigg( \frac{2dw^*}{2dw^*+v_*} \bigg)^{|j-i|}\!\!
\underbrace{\sum_{\substack{i=i_0\sim\cdots\sim i_t=j\\i_1,\ldots,i_{t-1}\neq j}}
\,\,\prod_{u=0}^{t-1} \frac{w_{i_u}(i_{u+1})}{\sum_{i''\sim i_u}w_{i'}(i'')}}_{\leq 1}
\leq \bigg( \frac{2dw^*}{2dw^*+v_*} \bigg)^{\!|j-i|}.
\end{multline}
\end{proof}\endgroup

From Claim~\ref{lem7014}, we take the following
\begin{Cor}
For a Lipschitzian function~$f \colon \RR\to\RR$,
denote by~$\|f\|_{\mathit{Lip}}$ the optimal Lipschitz constant for~$f$.
On~$\ldb(\omega_i)$, define the (possibly infinite) norm $\|\Bcdot\|_{\mathit{Lip}}$
such that $\|f(\omega_i)\|_{\mathit{Lip}} = \|f\|_{\mathit{Lip}}$%
\footnote{This definition can be ambiguous if the support of~$\omega_i$ is not the whole $\RR$; in this case, just add an infimum in the definition.};
denote by~$\barLip(\omega_i)$ the corresponding Banach space.

Then under the law $\Pr[\Bcdot|\vec\omega_K=\vechat{\omega}_K]$,
the map $\pi_{\omega_j\omega_i}$ defined by~(\ref{for0595}) is $\epsilon(|j-i|)$-contracting
when seen as an application from $\barLip(\omega_i)$ into $\barLip(\omega_j)$.
\end{Cor}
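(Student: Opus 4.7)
The plan is to deduce this contraction estimate directly from Claim~\ref{lem7014}, via the standard Kantorovich--Rubinstein duality between Lipschitz functions and $W_\infty$ transport.

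First, I would unpack the statement. Under the reference law $\Pr[\Bcdot|\vec\omega_K=\vechat{\omega}_K]$, the operator $\pi_{\omega_j\omega_i}$ sends $f(\omega_i)\in\ldb(\omega_i)$ to the function $g(\omega_j)=\EE[f(\omega_i)|\omega_j]$ in $\ldb(\omega_j)$. Disintegrating along $\omega_j$, one has $g(\hat\omega_j)=\int f(x)\,\dx\mu(\hat\omega_j)(x)$, where $\mu(\hat\omega_j)$ is precisely the conditional law of~$\omega_i$ knowing $\vec\omega_{K\uplus\{j\}}=(\vechat\omega_K,\hat\omega_j)$ as introduced in Claim~\ref{lem7014}. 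Since $\|\Bcdot\|_{\mathit{Lip}}$ is insensitive to additive constants, passing to $\barLip$ creates no additional difficulty.

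Next, I would fix two values $\hat\omega_j^1,\hat\omega_j^2\in\RR$ and invoke Claim~\ref{lem7014} to pick a coupling $\gamma$ of $\mu(\hat\omega_j^1)$ and $\mu(\hat\omega_j^2)$ satisfying $|x_1-x_2|\leq\epsilon(|j-i|)\,|\hat\omega_j^2-\hat\omega_j^1|$ $\gamma$-almost surely. Then for any Lipschitz representative $f$ of an element of $\barLip(\omega_i)$,
\[
g(\hat\omega_j^2)-g(\hat\omega_j^1)=\int_{\RR\times\RR}\big(f(x_2)-f(x_1)\big)\,\dx\gamma(x_1,x_2),
\]
so that
\[
\big|g(\hat\omega_j^2)-g(\hat\omega_j^1)\big|\leq\|f\|_{\mathit{Lip}}\int|x_2-x_1|\,\dx\gamma\leq\epsilon(|j-i|)\,\|f\|_{\mathit{Lip}}\,\big|\hat\omega_j^2-\hat\omega_j^1\big|.
\]
Taking the supremum over $\hat\omega_j^1\neq\hat\omega_j^2$ gives $\|g\|_{\mathit{Lip}}\leq\epsilon(|j-i|)\|f\|_{\mathit{Lip}}$, which is exactly the desired contractivity of $\pi_{\omega_j\omega_i}\colon\barLip(\omega_i)\to\barLip(\omega_j)$.

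There is really no hard part left, as all the analytic content (building the coupling of the Fokker--Planck dynamics, controlling the discrete damped heat equation satisfied by the difference $\vec\Delta^+$, and deriving exponential decay of $\epsilon$) has already been carried out in the proof of Claim~\ref{lem7014}. The only point to double-check is measurability: one should verify that $\mu(\hat\omega_j)$ can be chosen to depend measurably (indeed continuously for the weak topology, which is automatic in this uniformly strictly convex Gibbs setting) on~$\hat\omega_j$, so that the identification of $g(\hat\omega_j)$ with $\int f\,\dx\mu(\hat\omega_j)$ really holds for all $\hat\omega_j$ and not just almost-all of them, which is straightforward from Assumption~\ref{hypVW}.
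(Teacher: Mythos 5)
Your proof is correct and spells out exactly the argument the paper leaves implicit (it presents the corollary as an immediate consequence of Claim~\ref{lem7014} with no separate proof): disintegrate $\pi_{\omega_j\omega_i}f$ as $\hat\omega_j\mapsto\int f\,\dx\mu(\hat\omega_j)$ and feed the Lipschitz constant of~$f$ through the $W_\infty$ coupling. Nothing to add.
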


Consequently, the map
$\pi_{\omega_i\omega_j\omega_i} \colon \barLip(\omega_i) \to \barLip(\omega_i)$
is $\epsilon(|j-i|)^2$-contracting.
But the canonical embedding $\barLip(\omega_i) \mapsto \ldb(\omega_i)$ is continuous
as our hypotheses ensure that $\Law(\omega_i)$ is uniformly log-concave,
therefore for all~$f \in \barLip(\omega_i)$ one has
\[ \limsup_{k\longto\infty}
\big|\langle \pi_{\omega_i\omega_j\omega_i}^k f, f \rangle_{\ldb(\omega_i)}\big|^{1/k}
\leq \epsilon(|j-i|)^2 .\]
Since $\pi_{\omega_i\omega_j\omega_i}$ is self-adjoint in~$\ldb(\omega_i)$
and $\barLip(\omega_i)$ is a dense subset of~$\ldb(\omega_i)$,
it follows by Lemma~\ref{l3253} that $\pi_{\omega_i\omega_j\omega_i}$ is
$\epsilon(|j-i|)^2$-contracting also in~$\ldb(\omega_i)$.
This, by Remark~\ref{rmk1265}, is equivalent to saying that
\[\label{forECRAN} \{\omega_i:\omega_j\}_{\vec\omega_K} \leq \epsilon(|j-i|) . \]

(\ref{forECRAN}) is what we need to apply Lemma~\ref{lem4067};
in the end, we get the
\begin{Thm}\label{thmLama}
The model~(\ref{forHNL}) is exponentially $\rho$-mixing.
\end{Thm}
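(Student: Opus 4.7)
The plan is to package the bound already obtained in~(\ref{forECRAN}) using the general machinery of Chapters~\ref{parTensorization} and~\ref{parOtherApplicationsOfTensorizationTechniques}, in exactly the same style as the proofs of Theorems~\ref{t3730} and~\ref{thmTchou}. All the analytic work has in fact been carried out in Claims~\ref{lem7014} and~\ref{lem3945} together with the $W_\infty$-to-$\ldb$ transfer via $\barLip$, which yields, for any distinct $i,j\in\ZZ^n$ and any conditioning $\vec\omega_K=\vechat\omega_K$ on $K\subset\ZZ^n\setminus\{i,j\}$,
\[
\{\omega_i:\omega_j\}_{\vec\omega_K}\leq\epsilon(|j-i|),
\]
where $\epsilon\colon\NN\to[0,1]$ satisfies $\epsilon(d)<1$ for $d>0$ and $\epsilon(d)\leq Ce^{-\psi d}$.

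First I would observe that, since this bound is uniform in $K$ and in $\vechat\omega_K$, it passes to the natural $\sigma$-metalgebra of the system: $\{\omega_i:\omega_j\}_{*}\leq\epsilon(|j-i|)$ for all $i\neq j$. Next I would check the sumability hypothesis: the number of lattice points at $\ell^1$-distance $d$ is $O(d^{n-1})$, so exponential decay of $\epsilon(d)$ yields $\sum_{z\in\ZZ^n\setminus\{0\}}\epsilon(|z|)<\infty$. Combined with $\epsilon(d)<1$ for $d>0$, this puts us in the setting of Assumption~\ref{hypZZn}.

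Then the proof reduces to two invocations. By Lemma~\ref{lem4067} there exists $k_0<1$ such that, for all disjoint $J_1,J_2\subset\ZZ^n$, $\{\vec\omega_{J_1}:\vec\omega_{J_2}\}\leq k_0$, giving the $\rho^*(1)<1$ half of Definition~\ref{def-WRM}. By Lemma~\ref{pro8882},
\[
\{\vec\omega_{J_1}:\vec\omega_{J_2}\}\leq\Big(\sum_{|z|\geq\dist(J_1,J_2)}\epsilon(|z|)\Big)\wedge1,
\]
and the same elementary geometric computation as in the proof of Theorem~\ref{t3730}---summing $d^{n-1}e^{-\psi d}$ from $\dist(J_1,J_2)$ to infinity---turns the right-hand side into an exponentially decaying function of $\dist(J_1,J_2)$ (with some rate possibly strictly smaller than $\psi$, as noted in Remark~\ref{rmk9890} and anticipated in the statement of Definition~\ref{def-WRM}). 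These two facts together are, by definition, exponential $\rho$-mixing, which is the theorem.

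There is essentially no obstacle at this stage; the only subtle point is to remember that the decay rate of the resulting $\rho^*$-mixing is generally worse than the rate $\psi$ of $\epsilon$, because of the polynomial factor from counting lattice points and the non-optimality of Theorem~\ref{thm6413} built into Lemma~\ref{pro8882}. All serious work has been done beforehand in constructing the coupled Fokker--Planck dynamics and running the discrete maximum-principle argument of Claim~\ref{lem3945}.
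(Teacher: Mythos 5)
Your proof is correct and proceeds along exactly the lines the paper intends, packaging (\ref{forECRAN}) through the machinery of Chapter~\ref{parTensorization} and Definition~\ref{def-WRM}; the paper is even terser, simply stating that (\ref{forECRAN}) ``is what we need to apply Lemma~\ref{lem4067}'' and concluding. One small point of framing: per Definition~\ref{def-WRM}, ``exponentially $\rho$-mixing'' is already an assertion about the \emph{pairwise} subjective bounds $\{\omega_i:\omega_j\}_*\leq\epsilon(|j-i|)\wedge k_0$ (with $\epsilon$ summable and exponentially decaying and $k_0<1$), so strictly speaking the theorem is established once you note that (\ref{forECRAN}) is uniform in $K$, that the explicit rate $\big(\tfrac{2nw^*}{2nw^*+v_*}\big)^{|j-i|}$ from Claim~\ref{lem3945} gives a $k_0<1$ uniformly, and that the sum over $\ZZ^n$ converges; Lemmas~\ref{pro8882} and~\ref{lem4067} are then the \emph{consequences} announced in Definition~\ref{def-WRM} rather than the proof of it. This is purely a matter of how the definitional burden is stated, not a mathematical gap.
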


\section{A hypocoercive system of interacting particles}%
\label{parAHypocoerciveSystemOfInteractingParticles}

For the time being we have only been dealing with \emph{spatial} decorrelations.
Yet I have had the idea that
the ability of Hilbertian decorrelations to get tensorized for infinite sets
could be well adapted to the study of \emph{temporal} relaxation of an infinite stochastic system:
one can consider indeed time as an extra dimension for the particle system,
which leads to a situation analogous to the parallel hyperplanes of \S~\ref{parc6177}.
In the reversible case, we saw that spectral techniques
make it possible to get $L^2$ results from $L^1$ results, cf.\ Theorem~\ref{c6177}.
Here I will show how Hilbertian decorrelations can be used
for a \emph{non-reversible} particle stochastic system.

The system which we will study here as an example
is governed by a \emph{kinetic Fokker-Planck equation}.
This equation, which arises naturally in phy\-sics, corresponds to a Hamiltonian evolution
perturbed by some noise \emph{acting on speeds}.
The study of such systems is made complicated by the fact that diffusion is only performed
along certain directions of the states space,
so that the non-reversibility of the evolution is essential to ensure convergence to equilibrium.
In~\cite{hypocoercivity}, Villani proves $L^2$ convergence for such systems
in situations where the state of the system lives in a finite-dimensional manifold.
Here we will use tensorization of Hilbertian decorrelations in a fundamental way
to get a result valid in an infinite-dimensional setting.
Moreover, we will get non-trivial bounds for arbitrary small times,
which is a new feature compared to~\cite{hypocoercivity}.

\begin{Def}\label{d7463}
For real parameters $m, \omega, c, T, \lambda > 0$%
\footnote{$m$ is the mass of each particle,
$\omega$ is the frequency corresponding to the pinning potential,
$c$ is more or less the speed of sound, expressed in inter-atomic distances by unit of time,
$T$ is the temperature and $\lambda$ is the relaxation constant of the friction.
Physical homogeneity of these constants are resp.~$[\mathsf{M}], \ab 
[\mathsf{T}^{-1}], \ab [\mathsf{T}^{-1}], \ab [\mathsf{M}\mathsf{L}^2\mathsf{T}^{-2}], \ab 
[\mathsf{T}^{-1}]$.},
we consider a system of particles $i$ indexed by~$\ZZ$,
each particle being described by its momentum $p_i \in \RR$ and its position $q_i \in \RR$.
We consider the Hamiltonian
\[ H(\vec{p},\vec{q}) = m^{-1} \sum_{i\in\ZZ} \frac{p_i^2}{2}
+ m\omega^2 \sum_{i\in\ZZ} \frac{q_i^2}{2}
+ mc^2 \sum_{i\in\ZZ} \frac{(q_{i+1}-q_i)^2}{2} .\]
Then the system $(\vec{p}(u),\vec{q}(u))$ evolves according to the Hamiltonian $H$,
plus a white noise independent on each~$p_i$, plus a friction force
$F_i = -\lambda p_i$ on each~$i$ which dissipates the energy brought by the white noise,
friction being adjusted to the noise so that their association constitutes
a (volumic) thermal bath at temperature $T$.
One computes that this means that the quadratic variation on~$p_i$ is given by
$\dx[p_i] = 2T\lambda m\,\dx{u}$.

In other words, if $\big(W_i(u)\big)_{i\in\ZZ}$ denotes a family of independent brownian motions,
the evolution of the system is given by
\[ \left\{ \begin{array}{rcl}
\dx{p_i} &=& \big( - m\omega^2 q_i + mc^2 (q_{i-1}+q_{i+1}-2q_i) - \lambda p_i \big) \,\dx{u}
+ \sqrt{2T\lambda m} \,\dx{W_i} \\
\dx{q_i} &=& m^{-1} p_i \,\dx{u} .
\end{array} \right. \]
\end{Def}

\begin{Rmk}
The system of Definition~\ref{d7463} is to be thought
as a toy model for a large class of similar systems
obtained by generalizing it in several ways.
A first example, which would change almost nothing but complicating the formalism,
is to replace the states space
$\RR\times\RR$ of each particle by~$\RR^n\times\RR^n$,
or to replace the lattice $\ZZ$ by~$\ZZ^n$.
A trickier generalization is to consider the case of non-harmonic interactions:
then I expect the results stated below to remain qualitatively true,
but proving them might be far more difficult
since one cannot use the properties of Gaussian vectors any more.
Also, if one allows for infinite-ranged interactions, which speed of decay is required
to get temporal decorrelations?

All these questions look quite worthwhile to me, though answering them
is out of the scope of this work.
Here I will only show how Hilbertian correlations make everything work fine for the toy model,
hoping that it shall be useful for the general situation.
\end{Rmk}

Let us consider the equilibrium dynamics of our system.
We fix an arbitrary time $0 < t < \infty$.
Denote by~$(p_i,q_i)$ the state of particle $i$ at time~${u=0}$,
resp.\ by~$(p'_i,q'_i)$ the state of particle $i$ at time~${u=t}$.
We have to prove the
\begin{Clm}\label{lem1278}
Provided $t$ is small enough, for all~$i,j\in\ZZ$ (possibly identical),
one has ${\{p_i:p'_j\}_*}, \ab {\{p_i,q'_j\}_*}, \ab {\{q_i:p'_j\}_*}, \ab 
{\{q_i,q'_j\}_*} < 1$, uniformly in~$i,j$.
Moreover, still uniformly in~$i,j$, these quantities are bounded by
$O(e^{-\gamma|j-i|})$ for some~$\gamma > 0$.
\end{Clm}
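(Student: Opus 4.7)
The plan is to exploit the Gaussian structure of the system: since the Hamiltonian is quadratic, the equilibrium measure is Gaussian, and the SDE of Definition~\ref{d7463} is linear with additive Gaussian noise, the full process $\big(\vec{p}(u),\vec{q}(u)\big)_{u\geq 0}$ is a Gaussian field, translation invariant in the spatial index. By Theorem~\ref{pro1857}, every maximal correlation in this setting equals a Pearson correlation coefficient of an appropriate covariance matrix; since conditioning Gaussians produces Gaussians, subjective correlations seen from $*$ reduce to Pearson correlations of conditional Gaussian marginals, i.e.\ to ratios of entries of Schur complements of the full covariance. This reduction is essentially the same philosophy as in the proof of Lemma~\ref{l6791}.

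Next, I would write the explicit solution of the SDE in Fourier. For $X_i(u)\coloneqq (p_i(u),q_i(u))^{\mathsf T}$ one has in Fourier
\begin{equation*}
\hat X(k,t) = e^{A(k)t}\hat X(k,0) + \int_0^t e^{A(k)(t-u)}\,\boldsymbol\sigma\,\dx\hat W(k,u),
\end{equation*}
with $A(k)$ a $2\times 2$ matrix analytic (indeed trigonometric polynomial) in~$k$ and $\boldsymbol\sigma=(\sqrt{2T\lambda m},0)^{\mathsf T}$. Inverse Fourier gives the Green function $G(z,t)$; since $k\mapsto e^{A(k)t}$ extends analytically to a strip of the complex plane, a standard Paley\,--\,Wiener argument yields $|G(z,t)|\leq C_te^{-\gamma_t|z|}$ for some $\gamma_t>0$. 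Writing $X'_j=\sum_i G(j-i,t)X_i+N_j$, the noise contribution $(N_j)_{j\in\ZZ}$ is Gaussian, independent of $\vec X(0)$, translation invariant, with a Fourier covariance symbol $\Xi(k,t)$ that is explicit in terms of $A(k)$ and $\boldsymbol\sigma$.

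The bound $\{p_i:p'_j\}_*,\{q_i:q'_j\}_*,\ldots<1$ will then follow from the observation that the part of $(N_j)_{j\in\ZZ}$ \emph{orthogonal} to all other $(X_i(0),X_k(t))_{(i,k)\neq (\cdot,j)}$ has strictly positive variance, uniform in $j$, as soon as $\Xi(k,t)$ is uniformly positive definite in $k$; this orthogonal piece is jointly independent of everything else in the Gaussian system, so \emph{no} conditioning on a subset of the natural variables can drive the conditional correlation of $X_i$ with $X'_j$ to $1$. The exponential decay $O(e^{-\gamma|j-i|})$, uniformly in the conditioning, then propagates from the exponential decay of $G(\cdot,t)$ and of the entries of $\Xi(\cdot,t)^{-1}$ via the same Schur-complement formulas, using that the operators involved are convolutions with kernels whose Fourier symbols extend analytically to a common complex strip.

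The hard part will be proving the uniform positive definiteness of the $2\times 2$ Fourier symbol $\Xi(k,t)$ for small $t$ and all $k\in\mathbb{T}$. This is the kinetic/hypocoercive feature of the problem: the noise covariance $\boldsymbol\sigma\boldsymbol\sigma^{\mathsf T}$ is degenerate, supported only on the momentum coordinate, so the $q$--$q$ block of $\Xi$ vanishes at $t=0$ and acquires only an $O(t^{3})$ positive contribution through the commutator of $A(k)$ with $\boldsymbol\sigma\boldsymbol\sigma^{\mathsf T}$ (a Hörmander-bracket / Kalman-rank phenomenon). Once one verifies that this commutator is non-degenerate uniformly in $k$—which is a finite-dimensional linear algebra check on $2\times 2$ matrices depending continuously on $k\in\mathbb{T}$—the remainder of the argument is essentially a Gaussian exercise combining Theorem~\ref{pro1857}, analyticity of Fourier symbols, and the reduction of subjective correlation to conditional Pearson correlation.
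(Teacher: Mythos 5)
Your proposal takes a genuinely different route from the paper: you go through Fourier space, invoke a Paley\,--\,Wiener argument for the exponential decay of the Green function~$G(\cdot,t)$, and identify the degenerate-noise/hypocoercive issue via the positive definiteness of a Fourier symbol~$\Xi(k,t)$. The paper instead works directly in physical space, expands $\phi^u$ for small~$u$ (Formulas~(\ref{for5476a})--(\ref{for5476z})), computes the noise covariance~$\hat{C}$ by an explicit integral, inverts it entry-wise, and then uses a general ``nearly diagonal matrix'' inversion result, Lemma~\ref{lem4060}. Your identification of the heart of the difficulty (the $q$--$q$ block of $\Xi$ is only $O(t^3)$, a Hörmander/Kalman bracket phenomenon, yet still uniformly positive) is exactly the analogue of the paper's computation that $\hat{C}_{q_iq_i} = \frac{2}{3}T\lambda m^{-1}t^3 + O(t^5)$ dominates. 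So the two approaches buy the same thing here.

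There are, however, two genuine gaps in your sketch. First, the claim ``the orthogonal piece has strictly positive variance, so no conditioning can drive the conditional correlation to~$1$'' is not enough. Positive residual variance of each $X'_j$ (and, by time reversal, of each $X_i$) bounds the conditional variances below but does not by itself bound the conditional Pearson correlation away from~$1$; you also need all conditional variances bounded \emph{above}. The right statement is a two-sided operator bound $r\mathbf{I}\leq\bar{Q}\leq R\mathbf{I}$ on the joint inverse covariance of $(\eta,\eta')$ (after normalizing units), from which one reads off that the $2\times2$ conditional covariance of $(X_i,X'_j)$ has eigenvalues pinched in $[R^{-1},r^{-1}]$, hence conditional correlation $\leq(R-r)/(R+r)<1$. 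Your positive-definiteness of~$\Xi$ is half of this (it gives the lower bound on $\hat C$ and hence the upper bound $\bar{Q}\leq R\mathbf{I}$); the other half, $\bar{Q}\geq r\mathbf{I}$, comes from $\check{Q}$ being bounded below, and should be said explicitly.

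Second, and more fundamentally: once you condition on an arbitrary subset $K\subset\ZZ\times\{p,q,p',q'\}$, you must control the entries of $\big(\bar{Q}_{|_{\ZZ^{\uplus4}\setminus K}}\big)^{-1}$, and this restricted matrix is \emph{no longer a convolution}. Deleting rows and columns destroys translation invariance, so analyticity of the original Fourier symbols does not directly transfer to the Schur complements after conditioning. At this point you need a general lemma asserting that a symmetric matrix with two-sided operator bounds and exponentially decaying entries has an inverse with exponentially decaying entries, uniformly in the index set; this is exactly the role of Lemma~\ref{lem4060} in the paper, and your argument cannot dispense with something of that kind. The Fourier machinery is fine to produce the entry-wise decay and the operator bounds of $\bar{Q}$ in the first place, but the uniform-in-$K$ step requires a non-Fourier argument.
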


\begin{proof}
We denote by~$\eta \text{\ (resp.\ $\eta'$)} \in \RR^{\ZZ\times\{p,q\}}$
the global state $(p_i,q_i)_{i\in\ZZ}$ (resp.~$(p'_i,q'_i)_{i\in\ZZ}$) at time~$0$ (resp.~$t$).
We also denote by~$(\phi^u)_{u\geq0}$ the semigroup of operators on~$\RR^{\ZZ\times\{p,q\}}$
corresponding to the evolution of the system in absence of noise, but with the friction remaining.
Since the system is linear, the~$\phi^u$ are linear operators.

By the work of \S~\ref{parQuadraticModels},
we know that $\eta$ is distributed according to the centered Gaussian law
with covariance matrix $T^{-1}\check{C}$, where $\check{C}$ is defined as $\check{Q}^{-1}$,
the matrix $\check{Q}$ being in turn defined by:
\begin{eqnarray}
\label{forQpp} \check{Q}_{p_ip_i} &\coloneqq& m^{-1} ;\\
\label{forQpq} \check{Q}_{q_iq_i} &\coloneqq& m (\omega^2 + 2c^2) ;\\
\label{forQqq} \check{Q}_{q_iq_{i\pm1}} &\coloneqq& -mc^2 ,
\end{eqnarray}
the other entries of~$\check{Q}$ being zero.
Observe that, as the matrix of a quadratic form,
$\check{Q}$ is bounded (this is obvious from~(\ref{forQpp})--(\ref{forQqq}));
moreover, $\check{Q}^{-1}$ (actually exists and) is also bounded:
that follows from $\check{Q}$'s being bounded below by
the matrix having the same expression with~$c$ replaced by~$0$,
which we denote by~$\check{Q}^\circ$, which is a strictly positive `scalar' matrix
(modulo some homogeneity constant).

Because of the linear nature of the system, we have moreover that,
conditionally to~$\eta$, the law of~$\eta'$ is some Gaussian vector
of the form $\phi^{t}\eta + \theta$, where $\theta$ is a centered Gaussian vector
whose law does not depend on~$\eta$.
Let us denote by~$\hat{C}$ the covariance matrix of~$\theta$,
and $\hat{Q} = \hat{C}^{-1}$ —though for the time being it is not clear that $\hat{Q}$ exists.

Then, we can formally write the covariance matrix $\bar{C}$ of~$(\eta,\eta')$ as
$\bar{C} = \bar{Q}^{-1}$, with:
\[\label{for7743}
\bar{Q}(\eta,\eta') = \check{Q}(\eta) + \hat{Q}(\eta'-\phi^{t}\eta) .\]
(Note that $\bar{Q}$ is a quadratic form on~$\RR^{\ZZ\times\{p,q,p',q'\}}$,
while $\check{Q}$ and~$\hat{Q}$ were defined on~$\RR^{\ZZ\times\{p,q\}}$).

\begin{Not}
In the sequel, we shorthand ``$\ZZ\times\{p,q\}$'' into~``$\ZZ^{\uplus2}$'',
resp.\ ``$\ZZ\times\{p,q,p',q'\}$'' into~``$\ZZ^{\uplus4}$''.
\end{Not}

Now I claim that there exists
constants $0<r\leq R<\infty$ such that
$r\mathbf{I} \leq \bar{Q} \leq R\mathbf{I}$.
Well, this is meaningless \emph{stricto sensu}, because all the entries of~$\bar{Q}$
do not have the same physical homogeneity,
so we have to `convert' momenta into positions by dividing them
by some homogeneity parameter~$\chi$, say $\chi = m\omega$
—but other choices may be more relevant.

First, I claim that $\bar{Q} \geq \frac{1}{2}(\chi^2 m^{-1}\wedge m\omega^2) \mbf{I}$.
Let indeed $(\eta,\eta') = (\vec{p}_{\ZZ},\vec{q}_{\ZZ},\vec{p'}_{\ZZ},\vec{q'}_{\ZZ})
\in \RR^{\ZZ^{\uplus4}}$ with finite support.
We observe that
\[ \|(\eta,\eta')\|^2 = \sum_{i\in\ZZ} \big(\chi^{-2}p_i^2+q_i^2+\chi^{-2}{p'}_i^2+{q'}_i^2\big)
= \| \eta \|^2 + \| \eta' \|^2 ,\]
so that either $\|\eta\|^2 \geq \frac{1}{2} \|(\eta,\eta')\|^2$
or $\|\eta'\|^2 \geq \frac{1}{2} \|(\eta,\eta')\|^2$.
Now, recalling the definition of~$\check{Q}^\circ$ a few lines above,
$\check{Q}(\eta) \geq \check{Q}^\circ(\eta) \geq (\chi^2 m^{-1}\wedge m\omega^2) \|\eta\|^2$,
so by~(\ref{for7743}), $\bar{Q}(\eta,\eta') \geq (\chi^2 m^{-1}\wedge m\omega^2) \|\eta\|^2$.
Since reversing the direction of time yields the same system with the sign of speeds reversed, which does not change the norms of~$\eta$ and~$\eta'$,
one has similarly $\bar{Q}(\eta,\eta') \geq (\chi^2 m^{-1}\wedge m\omega^2) \|\eta'\|^2$.
The claim follows.

The second point consists in proving that $\bar{Q}$ is bounded above.
On the one hand, by~(\ref{forQpp})--(\ref{forQqq}),
\[ \check{Q}(\eta) \leq \big(m^{-1}\chi^2\vee m(\omega^2+4c^2)\big) \|\eta\|^2
\leq \big(m^{-1}\chi^2\vee m(\omega^2+4c^2)\big) \|(\eta,\eta')\|^2 .\]

Next, the difficult point is to prove that $\hat{Q}(\eta'-\phi^{t}\eta)$
(exists and) can be bounded above by a multiple of~$\|(\eta,\eta')\|^2$.
We begin with transforming the original problem
of bounding a quadratic form on~$\RR^{\ZZ^{\uplus4}}$
into a problem on~$\RR^{\ZZ^{\uplus2}}$.
Indeed, $\|\phi^t\eta\|$ is bounded by a multiple of~$\|\eta\|$,
since the operator $\phi^t$ dissipates the energy $H(\eta)$,
energy which the previous work on~$\check Q$
proved to be controlled below and above by~$\|\eta\|^2$;
therefore, it suffices to prove that the quadratic form $\hat{Q}(\eta)$ on~$\RR^{\ZZ^{\uplus2}}$
is bounded by a multiple of~$\|\eta\|^2$ to achieve our goal.

The natural quantity to be computed for~$\theta$
(recall that $\theta$ denotes the total effect of noise between times~$0$ and~$t$)
is its covariance matrix $\hat{C}$.
Its expression is the following (the notation is explained just below):
\[\label{for5483} \hat{C} = 2T\lambda m \int_0^{t} \phi^{t-u} \mbf{I}_p \T{(\phi^{t-u})} \,\dx{u} ,\]
where $\mbf{I}_p$ is the diagonal matrix being~$1$ on diagonal entries indexed by some~$p_i$
and~$0$ on diagonal entries indexed by some~$q_i$,
and $\T{(\phi^{t-u})}$ is the transpose of the linear operator $\phi^{t-u}$
seen as a square matrix indexed by~$\ZZ^{\uplus2}$.
This decomposition means that we are summing the contributions
of all the elementary noises occuring at times $u \in [0,t]$,
using that these elementary noises are independent.

Now we need an approximate expression for~$\phi^u$, $u\in[0,t]$.
Here for the sake of legibility I will remain at a formal level, giving only limited expansions;
it is essential nevertheless to keep in mind that all the ``$O(*)$''
can be made explicit by using Gronwall's lemma,
and that these explicit values ensure that the~$O(*)$ behave well provided $t$ is small enough.
One finds that
\begin{eqnarray}
\label{for5476a}
\phi^u \delta_{p_i} \cdot p_j &=& c^{2|j-i|} \frac{u^{2|j-i|}}{(2|j-i|)!} + O(u^{2|j-i|+2}) ;\\
\phi^u \delta_{p_i} \cdot q_j &=& m^{-1}c^{2|j-i|} \frac{u^{2|j-i|+1}}{(2|j-i|+1)!}
+ O(u^{2|j-i|+3}) ;\\
\phi^u \delta_{q_i} \cdot p_i &=& m\omega^2 u + O(u^3) ;\\
\phi^u \delta_{q_i} \cdot p_{j\neq i} &=& mc^{2|j-i|} \frac{u^{2|j-i|-1}}{(2|j-i|-1)!}
+ O(u^{2|j-i|+1}) ;\\
\label{for5476z}
\phi^u \delta_{q_i} \cdot q_j &=& c^{2|j-i|} \frac{u^{2|j-i|}}{(2|j-i|)!} + O(u^{2|j-i|+2}) .
\end{eqnarray}
Injecting Equations~(\ref{for5476a})--(\ref{for5476z}) into~(\ref{for5483}), one finds that:%
\footnote{Recall that $\hat{C}$, as a covariance matrix, is symmetric.}
\begin{eqnarray}
\label{for5579a}
\hat{C}_{p_ip_i} &=& 2T\lambda m t + O(t^3) ;\\
\hat{C}_{p_iq_i} &=& T\lambda t^2 + O(t^4) ;\\
\hat{C}_{q_iq_i} &=& \mathsmaller{\frac{2}{3}}T\lambda m^{-1} t^3 + O(t^5) ;\\
\hat{C}_{p_ip_{j\neq i}} &=& O(t^{2|j-i|+1}) ;\\
\hat{C}_{p_iq_{j\neq i}} &=& O(t^{2|j-i|+2}) ;\\
\label{for5579z}
\hat{C}_{q_iq_{j\neq i}} &=& O(t^{2|j-i|+3}).
\end{eqnarray}

Consequently, the covariance matrix $\hat{C}$ can be seen
as a perturbation of the matrix $\hat{C}^\circ$
which is defined by Equations~(\ref{for5579a})--(\ref{for5579z}),
but with the ``$O(*)$'' terms replaced by~$0$.
Since $\hat{C}^\circ$ is invertible, with an explicitly computable inverse,
one finds that $\hat{C}$ is invertible too with:
\begin{eqnarray}
\hat{Q}_{p_ip_i} &=& 2T^{-1}\lambda^{-1}m^{-1}t^{-1} + O(t) ;\\
\hat{Q}_{p_iq_i} &=& -3T^{-1}\lambda^{-1}t^{-2} + O(1) ;\\
\hat{Q}_{q_iq_i} &=& 6T^{-1}\lambda^{-1}mt^{-3} + O(t^{-1}) ;\\
\hat{Q}_{p_ip_{j\neq i}} &=& O(t^{2|j-i|-1}) ;\\
\hat{Q}_{p_iq_{j\neq i}} &=& O(t^{2|j-i|-2}) ;\\
\hat{Q}_{q_iq_{j\neq i}} &=& O(t^{2|j-i|-3}).
\end{eqnarray}
In the end, provided that $t$ is small enough, we have proved that
$\hat{Q}(\eta)/\|\eta\|^2 \leq 6T^{-1}\lambda mt^{-3} + O(t^{-1}) \ab {< \infty}$.

Actually we have proved more than that:
not only we have a bound on the operator norm of~$\bar{Q}$,
but we have bounded it entry-wise.
More precisely, expanding the~$O(*)$, we find that provided $t$ is small enough,
there exists constants~$A < \infty$ and~$\gamma > 0$ such that for all~$i,j\in\ZZ$,
\[ \underbrace{\hat{Q}_{p_ip_j}, \hat{Q}_{p_iq_j}, \ldots, \hat{Q}_{q'_iq'_j}}%
_{\text{all 16 possibilities}} \leq A e^{-\gamma|j-i|} .\]

\begin{Not}
From now on we denote the basic variables $p_i, q_i, p'_i, q'_i$ of our system by~$X_i$,
$i \in \ZZ^{\uplus4}$.
\end{Not}

Now the question is: for~$i\neq j \in \ZZ^{\uplus4}$,
$K \subset \ZZ^{\uplus4} \setminus \{i,j\}$,
what is the value of~${\{X_i:X_j\}}_{\vec{X}_K}$?
By the properties of Gaussian variables [Theorem~\ref{pro1857}], the answer is the following.
Let~$\bar{Q}_{|_{\ZZ^{\uplus4}\setminus K}}$ be the restriction
of~$\bar{Q}$ to indexes in~$(\ZZ^{\uplus4}\setminus K)$.
Since $r\mathbf{I} \leq \bar{Q} \leq R\mathbf{I}$, the same holds
for~$\bar{Q}_{|_{\ZZ^{\uplus4}\setminus K}}$, so this matrix is invertible;
denote by~$\bar{C}^{|^{\ZZ^{\uplus4}\setminus K}}$ its inverse.
This matrix is the covariance matrix of (the centered version of)
$\vec{X}_{\ZZ^{\uplus4}\setminus K}$ under some fixed value for~$\vec{X}_K$;
thus:
\[\label{for4972} \{X_i:X_j\}_{\vec{X}_K} =
\frac{\Big| \bar{C}^{|^{\ZZ^{\uplus4}\setminus K}}_{ij} \Big|}
{\sqrt{\bar{C}^{|^{\ZZ^{\uplus4}\setminus K}}_{ii}
\bar{C}^{|^{\ZZ^{\uplus4}\setminus K}}_{jj}}} .\]

It remains to control the entries of~$\bar{C}^{|^{\ZZ^{\uplus4}\setminus K}}$, uniformly in~$K$.
We need two types of control: first an exponential control when $i$ is far away from $j$,
then a non-trivial control for the values of~$i$ and~$j$ corresponding to close
(or even identical) atoms.

Let us start with the first one.
$\bar{C}^{|^{\ZZ^{\uplus4}\setminus K}}_{ii}$ and
$\bar{C}^{|^{\ZZ^{\uplus4}\setminus K}}_{jj}$ are bounded below by~$R^{-1}$,
so we just have to bound above $\bar{C}^{|^{\ZZ^{\uplus4}\setminus K}}_{ij}$.
This is achieved by a direct use of Lemma~\ref{lem4060} in appendix.

Concerning the uniform non-trivial control, since $r\mbf{I} \leq \bar{Q} \leq R\mbf{I}$
one has $r\mbf{I} \leq \bar{Q}_{|_{\ZZ^{\uplus4}\setminus K}} \leq R\mbf{I}$,
hence $R^{-1}\mbf{I} \leq \bar{C}^{|^{\ZZ^{\uplus4}\setminus K}} \leq r^{-1}\mbf{I}$,
hence $R^{-1}\mbf{I} \leq \big(\bar{C}^{|^{\ZZ^{\uplus4}\setminus K}}\big)_{|_{\{i,j\}^2}} \leq r^{-1}\mbf{I}$;
from this and~(\ref{for4972}),
\[ \forall i \neq j \in I \qquad \{X_i:X_j\}_* \leq \frac{R-r}{R+r} < 1 .\]
\end{proof}

From Claim~\ref{lem1278}, we get the main result of this subsection:
\begin{Thm}\label{thmParis}
For the model of Definition~\ref{d7463}, for all~$t>0$, $\{\eta,\eta'\} < 1$.
\end{Thm}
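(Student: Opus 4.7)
The plan is to combine Claim~\ref{lem1278}, which handles the short-time regime, with the Markov property of the Fokker--Planck evolution to bootstrap to all $t>0$.

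First, for $t$ below some threshold $t_0$, I would observe that the proof of Claim~\ref{lem1278} actually delivers more than the cross-time bound in its statement: the uniform estimates $r\mbf{I}\leq\bar Q\leq R\mbf{I}$ on the full precision matrix of $(\eta,\eta')$, together with Lemma~\ref{lem4060}, imply that \emph{every} pairwise subjective correlation within the joint system $(X_i)_{i\in\ZZ^{\uplus4}}=(p_i,q_i,p'_i,q'_i)_{i\in\ZZ}$ satisfies
\[
\{X_i:X_j\}_\ast \;\leq\; A\,e^{-\gamma|j-i|}\wedge k_0 \qquad \forall\, i\ne j\in\ZZ^{\uplus4},
\]
with $k_0<1$, $A<\infty$, $\gamma>0$ (where $|j-i|$ denotes the $\ZZ$-component difference). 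In other words, for $t\le t_0$ the system is well-$\rho$-mixing (Definition~\ref{def-WRM}) on the product lattice $\ZZ\times\{p,q,p',q'\}$. I would then invoke the phase-transition-removal technique of \S\ref{parAvoidingThePhaseTransitionForTheToyModel}: the sublattice-splitting proof of Lemma~\ref{lem4067} adapts verbatim---one merely partitions each of the four type-sublattices $\ZZ\times\{s\}$ into $L$ residue-mod-$L$ sub-sublattices and applies simple tensorization (Theorem~\ref{thm5750}) twice---to conclude
\[
\{\eta:\eta'\}=\big\{\vec X_{\ZZ\times\{p,q\}}:\vec X_{\ZZ\times\{p',q'\}}\big\}\;\leq\; k(t)<1 \qquad \forall\, t\in(0,t_0].
\]

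Next, for $t>t_0$, I would exploit the Markov property. Pick $N\in\NN^\ast$ so that $t_1:=t/N\le t_0$. Because the Fokker--Planck dynamics is a Markov diffusion and we work under the stationary equilibrium law, the sequence $\eta(0)\to\eta(t_1)\to\eta(2t_1)\to\cdots\to\eta(Nt_1)$ is a stationary Markov chain. Iterating Proposition~\ref{cor0706} and using stationarity gives
\[
\{\eta:\eta'\}=\{\eta(0):\eta(Nt_1)\}\;\leq\;\prod_{m=1}^{N}\{\eta((m-1)t_1):\eta(mt_1)\}=\{\eta(0):\eta(t_1)\}^{N}\;\leq\; k(t_1)^{N}<1,
\]
which yields the theorem.

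The substantive work lies entirely in Claim~\ref{lem1278}; the rest is structural. The main obstacle I expect is the careful extraction, from the proof of Claim~\ref{lem1278}, of the full pairwise bound on $\ZZ\times\{p,q,p',q'\}$ (rather than just the cross-time bound literally stated), together with the mild adaptation of Lemma~\ref{lem4067} to this product index set: strict transitivity of the $\ZZ$-translation action fails (there are four orbits), but this is inessential to the proof of that lemma, since what truly matters is the existence of arbitrarily fine profinite partitions---namely the sublattices $L\ZZ\times\{p,q,p',q'\}$---on which cross-class correlations sum to something strictly less than~$1$.
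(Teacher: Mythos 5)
Your proof is correct and follows essentially the same route as the paper: Claim~\ref{lem1278} plus Lemma~\ref{lem4067} (or its straightforward adaptation to the product index set $\ZZ\times\{p,q,p',q'\}$) handles small $t$, and the Markov property bootstraps to all $t>0$. The only difference is in the large-time step: the paper splits once, writing $\{\eta:\eta'\}\leq\{\eta:\eta(t_1)\}\cdot\{\eta(t_1):\eta'\}\leq\{\eta:\eta(t_1)\}$ and bounding the second factor trivially by $1$, whereas you iterate Proposition~\ref{cor0706} over $N$ equal subintervals to get the (quantitatively stronger, though here unnecessary) bound $k(t_1)^N$; both are valid. Your explicit attention to the fact that Claim~\ref{lem1278}'s proof actually furnishes bounds on \emph{all} pairwise correlations in $\ZZ^{\uplus4}$, and that the sublattice-splitting argument of Lemma~\ref{lem4067} survives the loss of transitivity, fills in a step the paper's one-line ``direct application'' leaves implicit.
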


\begin{proof}
First, if $t$ is small enough so that Claim~\ref{lem1278} holds,
direct application of Lemma~\ref{lem4067} proves the result,
as the~$\{X_i,X_j\}_*$ are summable (since they decrease exponentially)
and they all are $<1$.

Now for larger $t$, fix some $0<t_1<t$ so that Claim~\ref{lem1278} holds for~$t_1$.
Then we notice that $\eta \to \eta(t_1) \to \eta'$ is a Markov chain
(with ``$\eta(t_1)$'' standing for ``$\big(\vec{p}(t_1),\vec{q}(t_1)\big)$''),
so by Proposition~\ref{cor0706}, $\{\eta,\eta'\} \leq \{\eta,\eta(t_1)\} < 1$.
\end{proof}

\section{Appendix: Inverses of `nearly diagonal' matrices}%
\label{parInversesOfDiagonalConcentratedMatrices}

The goal of this appendix is to state and prove a few lemmas sharing the same spirit:
``\emph{if a matrix is `nearly diagonal', then it shall be invertible
and its inverse shall also be `nearly diagonal' with the same type of decay}''.

\subsection{Matrices with exponential decay}

The goal of this subsection is to prove the following
\begin{Lem}\label{lem4060}
Let~$I \subset \ZZ$ and let~$(\!(M_{ij})\!)_{(i,j)\in I^2}$ be a matrix.
Assume that, when seen as a quadratic form on~$L^2(I)$, one has
$r\mbf{I} \leq M \leq R\mbf{I}$ for $0<r\leq R<\infty$ —in particular, $M$ is invertible.
Assume moreover that there exists constants~$A < \infty$ and~$\gamma > 0$ such that
for all~$i,j\in I$, $|M_{ij}| \leq A e^{-\gamma|j-i|}$.

Then there exist constants~$A' < \infty$ and~$\gamma' > 0$
which are explicit functions of~$r,R,\gamma,A$ (so \emph{they do not depend on~$I$}),
such that one has the following control on the entries of~$M^{-1}$:
\[ \forall i,j \in I \qquad (M^{-1})_{ij} \leq A' e^{-\gamma'|j-i|} .\]
\end{Lem}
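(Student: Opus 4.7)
The plan is to combine a Neumann series expansion with a weighted matrix norm and an interpolation argument. First I would normalize by setting $\alpha = (R+r)/2$ and $K = \mathbf{I} - M/\alpha$; since $M$ is self-adjoint with spectrum in $[r,R]$, the operator $K$ is self-adjoint with spectrum in $[-\kappa,\kappa]$ where $\kappa := (R-r)/(R+r) < 1$, so $\VERT K \VERT \leq \kappa$ and $M^{-1} = \alpha^{-1}\sum_{n\geq 0} K^n$. The hypothesis on the entries of $M$ together with the uniform diagonal bound $|K_{ii}| = |1 - M_{ii}/\alpha| \leq \kappa$ give a single estimate $|K_{ij}| \leq A_0\, e^{-\gamma|j-i|}$, with $A_0$ an explicit function of $A,r,R$.

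Next, I would fix some $\gamma' \in (0,\gamma)$ and introduce the weighted norm
\[ |||N|||_{\gamma'} := \sup_i \sum_j e^{\gamma'|j-i|} |N_{ij}|, \]
which is submultiplicative by a one-line application of the triangle inequality $|j-i| \leq |k-i| + |j-k|$ inside the definition of the matrix product. A direct computation gives
\[ |||K|||_{\gamma'} \leq A_0 \sum_{k\in\ZZ} e^{-(\gamma-\gamma')|k|} =: B < \infty, \]
so submultiplicativity yields $|(K^n)_{ij}| \leq B^n e^{-\gamma'|j-i|}$ for every $n$. Independently, the $L^2$ operator bound gives $|(K^n)_{ij}| \leq \VERT K^n \VERT \leq \kappa^n$.

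The key step is then a pointwise interpolation between these two bounds: for every $\theta \in [0,1]$,
\[ |(K^n)_{ij}| \leq |(K^n)_{ij}|^{\theta} |(K^n)_{ij}|^{1-\theta} \leq \big( B^\theta \kappa^{1-\theta} \big)^n e^{-\theta\gamma'|j-i|}. \]
If $B \leq \kappa$ one concludes immediately; otherwise choose $\theta := \tfrac{1}{2}\log(1/\kappa) / \log(B/\kappa) \in (0,1)$, so that $B^\theta \kappa^{1-\theta} = \sqrt{\kappa} < 1$. Setting $\gamma'' := \theta\gamma' > 0$ and summing the Neumann series,
\[ |(M^{-1})_{ij}| \leq \alpha^{-1} \sum_{n\geq 0} (\sqrt{\kappa})^n e^{-\gamma''|j-i|} = A' e^{-\gamma''|j-i|}, \]
with $A' = \alpha^{-1}(1-\sqrt{\kappa})^{-1}$; both $A'$ and $\gamma''$ are explicit in $r, R, A, \gamma$ alone and independent of $I$.

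The main obstacle is exactly this trade-off: the weighted-norm bound provides the right spatial decay but with a constant $B$ that may exceed $1$, whereas the operator bound $\kappa < 1$ carries no spatial information. The geometric-mean interpolation reconciles the two with quantitative constants, and the preliminary loss $\gamma' < \gamma$ (needed only to ensure $B < \infty$) is harmless since the conclusion tolerates any $\gamma'' < \gamma$.
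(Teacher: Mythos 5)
Your approach is essentially the same as the paper's. Both reduce $M^{-1}$ to a convergent Neumann series $\sum_n K^n$ (the paper normalizes to $R=1$ and writes $M=\mathbf{I}-H$; you shift by $(R+r)/2$, which costs nothing), both establish an entrywise exponential bound of the form $|(K^n)_{ij}| \leq B^n e^{-\gamma'|j-i|}$ with $\gamma'<\gamma$ — your weighted-norm submultiplicativity is exactly what the paper's inductive step computes — and both pair it with the independent $L^2$ operator-norm bound $|(K^n)_{ij}|\leq\kappa^n$, $\kappa<1$, before combining the two. Your geometric-mean interpolation is a cleaner rewriting of the paper's ``take the minimum of the two bounds and sum over $n$''.

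One small slip in the interpolation step: the dichotomy should be on whether $B<1$, not on whether $B\leq\kappa$. With $\theta = \tfrac12\log(1/\kappa)/\log(B/\kappa)$, the inclusion $\theta\in(0,1)$ is equivalent to $B>\sqrt{\kappa}$, not to $B>\kappa$; in the range $\kappa<B<\sqrt{\kappa}$ one has $\theta>1$, the exponent $1-\theta$ is negative, and the inequality $|(K^n)_{ij}|^{1-\theta}\leq(\kappa^n)^{1-\theta}$ reverses. The fix is immediate: when $B<1$ take $\theta=1$ (the weighted-norm bound alone already sums), and when $B\geq 1$ one has $B\geq 1>\sqrt{\kappa}$ automatically, so your formula produces $\theta\in(0,1)$ and the rest of the argument goes through as written.
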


\begin{proof}
Up to multiplying by a scalar, one can assume that $R=1$.
Then $M$ writes $M = \mbf{I} - H$, where $0 \leq H \leq (1-r) \mbf{I}$;
since $H$ is symmetric, that inequality means that $\VERT H \VERT \leq 1-r < 1$.
Therefore, for all~$k \in \NN$ one has $\VERT H^k \VERT \leq (1-r)^k$,
which allows us to write $M^{-1}$ as a series expansion:
\[ M^{-1} = \sum_{k=0}^\infty H^k .\]

Up to replacing $A$ by~$A+1$,
we have the same entry-wise control on~$H$ as on~$M$.
Then one sees by induction that for all~$k\in\NN$,
\[\label{for5051}
\forall i,j\in I \qquad \big| (H^k)_{ij} \big| \leq A_1^k e^{-\gamma_1|j-i|} ,\]
where $\gamma_1$ is an arbitrary parameter in~$(0,\gamma)$ and
\[ A_1 \coloneqq \sum_{z\in\ZZ} Ae^{-\gamma|z| + \gamma_1 z}
= \frac{(1-e^{-2\gamma})A}{(1-e^{-(\gamma-\gamma_1)})(1-e^{-(\gamma+\gamma_1)})} \]
—observe that $I$ does not appear in the expression of~$A_1$.
Since $A_1$ is greater than $1$,
(\ref{for5051}) is not enough to get an entry-wise control on~$M^{-1}$.
But now observe that the bound $\VERT H^k \VERT \leq (1-r)^k$ implies
that all the~$(H^k)_{ij}$ are bounded by~$(1-r)^k$ in absolute value;
thus:
\[ \big| (M^{-1})_{ij} \big| \leq
\sum_{k=0}^\infty \big( e^{-\gamma_1|j-i|} A_1^k \wedge (1-r)^k \big) \\
\leq \bigg( \frac{A_1}{A_1-1} + \frac{1}{r} \bigg)
\exp \left( -\frac{|\ln(1-r)| \gamma_1}{|\ln(1-r)| + \ln A_1} |j-i| \right) ,
\]
from which you read suitable values for~$A'$ and~$\gamma'$.
\end{proof}

\subsection{Convolution inverses of rapidly decreasing functions}%
\label{parDeconvolutionOfRapidlyDecreasingFunctions}

\begin{NOTA}
In all this subsection, we work on~$\ZZ^n$ for some $n\in\NN^*$;
$\RR^n$ is endowed with some fixed norm~$|\Bcdot|$.
\end{NOTA}

\begin{Rmk}
Here I will deal with fonctions on~$\ZZ^n$, but the results of this subsection
could also be tranposed for functions on~$\RR^n$.
\end{Rmk}

\begin{Def}
If $a:\ZZ^n\to\RR$ is some integrable function with
$\|a\|_{\ell^1} < 1$, we define
\[\label{f8928} B[a] = a + a* a + a* a* a + \cdots ,\]
which is the sum of a convergent series in~$\ell^1(\ZZ^n)$.
$B[a]$ is the function~$b\in\ell^1(\ZZ^n)$ characterized by:
\[ (\delta_0-a) * (\delta_0+b) = \delta_0 .\]
\end{Def}

\begin{Def}\label{def726}
A function~$a:\ZZ^n\to\RR$ is said to have \emph{exponential decay} if there exists some $\beta > 0$
such that, for all~$\beta'<\beta$, $a(z) = O(e^{-\beta'|z|})$ when~$|z|\longto\infty$.
The minimal $\beta$ satisfying that property is called the
\emph{(exponential) rate of decay} of~$a$.
\end{Def}

\begin{Lem}\label{l1939a}
Let~$a\in\ell^1(\ZZ^n)$ with $\|a\|_{\ell^1} < 1$.
If $a$ has exponential decay, then so does $B[a]$.
\end{Lem}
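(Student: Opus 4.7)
The plan is to exploit the fact that $B[a]$ is defined by an absolutely convergent geometric-like series in $\ell^1(\ZZ^n)$, and to show that this convergence persists in a suitably weighted $\ell^1$-space. Concretely, for $\xi\in\RR^n$ (using the Euclidean inner product, which does no harm since all norms on $\RR^n$ are equivalent), I would introduce the weighted $\ell^1$-norm
\[
\|f\|_\xi \coloneqq \sum_{z\in\ZZ^n} |f(z)|\,e^{\xi\cdot z}.
\]
The map $f\mapsto (z\mapsto f(z)\,e^{\xi\cdot z})$ is an isometric isomorphism from this weighted space to ordinary $\ell^1(\ZZ^n)$, and it intertwines convolution with itself; hence $\|f\ast g\|_\xi\leq\|f\|_\xi\|g\|_\xi$, and the same series~(\ref{f8928}) defines $B[a]$ in the weighted space as soon as $\|a\|_\xi<1$.

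The next step is to show that $\|a\|_\xi<1$ for $|\xi|_2$ small enough. Since $a$ has exponential decay at some rate $\beta>0$, we can fix $\beta'\in(0,\beta)$ and a constant $A<\infty$ with $|a(z)|\leq A e^{-\beta'|z|}$; norm-equivalence on $\RR^n$ then gives $|a(z)|\leq A'e^{-\beta''|z|_2}$ for some $A',\beta''>0$. For $|\xi|_2<\beta''$ the sum $\sum_z|a(z)|e^{\xi\cdot z}$ is dominated by $\sum_z A' e^{-(\beta''-|\xi|_2)|z|_2}<\infty$, and by dominated convergence $\|a\|_\xi\to\|a\|_{\ell^1}<1$ as $\xi\to0$. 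Hence there is some $\beta_0>0$ (which we are free to take smaller than $\beta''$) such that $\|a\|_\xi\leq\kappa<1$ uniformly for $|\xi|_2\leq\beta_0$, and therefore
\[
\|B[a]\|_\xi\;\leq\;\sum_{k\geq1}\|a\|_\xi^{\,k}\;\leq\;\frac{\kappa}{1-\kappa}\;\eqqcolon\;M<\infty.
\]

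The last step is to read off pointwise exponential decay of $B[a]$ from the uniform bound $\|B[a]\|_\xi\leq M$ for $|\xi|_2\leq\beta_0$. Fixing $z\neq0$ and specializing to $\xi\coloneqq-\beta_0\,z/|z|_2$ (which has $|\xi|_2=\beta_0$ and $\xi\cdot z=-\beta_0|z|_2$), every term of the defining sum is nonnegative, so
\[
|B[a](z)|\,e^{-\beta_0|z|_2}\;\leq\;\|B[a]\|_\xi\;\leq\;M,
\]
yielding $|B[a](z)|\leq M\,e^{-\beta_0|z|_2}$. Translating back to the original norm via norm-equivalence, $B[a]$ has exponential decay, which is the conclusion of the lemma.

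The argument is essentially routine once the weighted-norm formalism is in place; the only mildly delicate point is the continuity of $\xi\mapsto\|a\|_\xi$ at the origin, which forces the initial exponential-decay hypothesis to be used in its quantitative form in order to apply dominated convergence. A minor bookkeeping nuisance is that the exponential decay rate one obtains is strictly smaller than $\beta$ (indeed, one must keep $\beta_0<\beta''<\beta$), so this proof gives no information on the \emph{optimal} rate of decay of $B[a]$, only on its existence — which is all that Definition~\ref{def726} demands.
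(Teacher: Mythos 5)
Your proof is correct and is essentially the same argument as the paper's, just phrased differently: your weighted norm $\|f\|_\xi=\sum_z|f(z)|e^{\xi\cdot z}$ is exactly the Laplace transform $\mcal{L}\{|a|\}(\xi)$ that the paper introduces (after reducing to nonnegative $a$, a reduction you absorb into the absolute value inside the weight), and both proofs hinge on its submultiplicativity under convolution and on showing it stays strictly below $1$ for small $\xi$. The only real difference is in how that last continuity step is established — the paper observes that $\xi\mapsto\mcal{L}\{a\}(\xi)$ is convex (since $a\geq0$), hence continuous on the interior of its finiteness domain, and therefore $<1$ near the origin because it equals $\|a\|_{\ell^1}<1$ at $\xi=0$; you instead argue by dominated convergence that $\|a\|_\xi\to\|a\|_{\ell^1}$. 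Both work, though for the dominated-convergence version you should make explicit that you fix a single dominating function by restricting to, say, $|\xi|_2\leq\beta''/2$ — the bound $A'e^{-(\beta''-|\xi|_2)|z|_2}$ you wrote depends on $\xi$ and degenerates as $|\xi|_2\to\beta''$, so it is not itself a valid dominant. With that small fix noted, the argument is sound, and your final step of reading off pointwise decay by choosing $\xi=-\beta_0 z/|z|_2$ is a clean way to make explicit what the paper leaves implicit.
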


\begin{proof}
Denoting by~$|a|$ the function defined by~$|a|(z) = |a(z)|$,
it is clear by~(\ref{f8928}) that
\[ \forall z\in\ZZ^n \qquad \big|B[a](z)\big| \leq B\big[|a|\big](z) ,\]
therefore it suffices to prove the case where $a$ is nonnegative.
In that case, $B[a]$ will also be nonnegative.

Let~$(\RR^n)^*$ denote the dual space of~$\RR^n$, endowed with the dual norm
\[ \forall \lambda \in (\RR^n)^* \qquad
|\lambda|_* = \sup_{\substack{z\in\RR^n\\|z|=1}} |\langle \lambda, z \rangle| .\]
For a nonnegative function~$a$, we define its Laplace transform
$\mcal{L}\{a\} \colon (\RR^n)^* \to \RR_+ \cup \{+\infty\}$ by
\[\label{f9677} \mcal{L}\{a\}(\lambda) = \sum_{z\in\ZZ^n} e^{\langle\lambda,z\rangle} a(z) .\]
Then, saying that $a$ has exponential decay with rate $\gamma$
is equivalent to saying that, for all~$\lambda \in (\RR^n)^*$ with $|\lambda|_* < \gamma$,
$\mcal{L}\{a\}(\lambda)$ is finite.

Since Laplace transform is linear and turns convolution into ordinary product,
(\ref{f8928}) yields, for all~$\lambda \in (\RR^n)^*$:
\[\label{f9574} \mcal{L}\{B[a]\}(\lambda) = \mcal{L}\{a\}(\lambda) + \mcal{L}\{a\}(\lambda)^2
+ \mcal{L}\{a\}(\lambda)^3 + \cdots, \]
which converges if and only if $\mcal{L}\{a\}(\lambda) < 1$.

Now, since $a$ is nonnegative, by~(\ref{f9677}) the function~$\mcal{L}\{a\}$ is convex,
so it is continuous on the interior of the domain where it is finite.
By the exponential decay hypothesis, that domain contains a neighbourhood of~$0$,
so $\mcal{L}\{a\}$ is continuous at~$0$.
And since $\mcal{L}\{a\}(0) = \sum_{z\in\ZZ^n} a(z) = \|a\|_{\ell^1} < 1$,
there is a neighbourhood of~$0$ on which $\mcal{L}\{a\}<1$ and thus $\mcal{L}\{B[a]\}<\infty$.
This implies that $B[a]$ has exponential decay.
\end{proof}

\begin{Rmk}\label{rmk9890}
This proof also shows that (for nonnegative $a$)
the rate of decay of~$B[a]$ will never be greater than the rate of decay of~$a$.
In general, it is even strictly smaller,
since all the values of~$\lambda$ for which $1\leq\mcal{L}\{a\}(\lambda)<\infty$ yield
a finite Laplace tranform for~$a$ but an infinite one for~$B[a]$.
For example, take $n=1$ and $a = e^{-1}\delta_{1}$,
which has exponential decay with infinite rate since it is compactly supported;
then the $k$-th convolution power of~$a$ is $a^{*k} = e^{-k}\delta_k$,
so that $B[a]$ is the function
\[ B[a](z) = \1{z>0} e^{-z} ,\]
which also has exponential decay, but with rate $1$ only.
\end{Rmk}

\begin{Lem}\label{l1939b}
If $\|a\|_{\ell^1(\ZZ^n)} < 1$ and $a(z) = O(1/|z|^\alpha)$ when~$|z|\longto\infty$
for some $\alpha > n$, then $B[a](z) \ab {= O(1/|z|^\alpha)}$ when~$|z|\longto\infty$.
\end{Lem}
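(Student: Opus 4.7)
The plan is to follow the structure of the proof of Lemma~\ref{l1939a}: first reduce to the case $a \geq 0$ by observing $|B[a](z)| \leq B[|a|](z)$ entrywise from~(\ref{f8928}), with $\|\,|a|\,\|_{\ell^1} = \|a\|_{\ell^1} < 1$; then prove a pointwise bound on each convolution power $a^{*k}(z)$ and sum over $k$. Once $a \geq 0$, combining the hypothesis $a(z) = O(|z|^{-\alpha})$ with the fact that $a$ is bounded on bounded sets (since $a \in \ell^1 \subset \ell^\infty$ on $\ZZ^n$) yields a constant $C_1 < \infty$ with $a(z) \leq C_1 \, p(z)$ for all $z$, where $p(z) := (1+|z|)^{-\alpha}$; I also set $r := \|a\|_{\ell^1} < 1$.

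The heart of the proof is an induction on $k$ showing that there exist numbers $D_k$ with $a^{*k}(z) \leq D_k \, p(z)$ satisfying a recursion of the form $D_{k+1} \leq \rho D_k + C_2 \, r^k$ with $\rho < 1$. To obtain it, I would fix a real $M > 1$ (to be chosen large) and split the convolution $a^{*(k+1)}(z) = \sum_y a(y) a^{*k}(z-y)$ at $|y| = |z|/M$. In the range $|y| \leq |z|/M$, the elementary inequality $1 + |z-y| \geq (M-1)(1+|z|)/M$ yields $p(z-y) \leq (M/(M-1))^\alpha p(z)$, so the inductive bound combined with $\sum_y a(y) \leq r$ gives a contribution bounded by $r \, (M/(M-1))^\alpha D_k \, p(z)$. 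In the complementary range $|y| > |z|/M$, the inequality $1 + |y| \geq (1+|z|)/M$ gives $p(y) \leq M^\alpha p(z)$, hence $a(y) \leq C_1 M^\alpha p(z)$ there, and summing using Young's inequality $\|a^{*k}\|_{\ell^1} \leq r^k$ yields a contribution bounded by $C_1 M^\alpha \, r^k \, p(z)$. This produces the desired recursion with $\rho := r (M/(M-1))^\alpha$ and $C_2 := C_1 M^\alpha$, and base case $D_1 = C_1$.

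Since $(M/(M-1))^\alpha \to 1$ as $M \to \infty$ and $r < 1$, one can choose $M$ large enough so that $\rho < 1$. Unrolling the recursion gives $D_k \leq \rho^{k-1} C_1 + C_2 \sum_{j=1}^{k-1} \rho^{k-1-j} r^j$, which is $O(\sigma^k)$ for any $\sigma > \max(\rho, r)$ (with at most a polynomial correction in the marginal case $\rho = r$); in particular $\sum_{k \geq 1} D_k < \infty$. Summing the bound $a^{*k}(z) \leq D_k \, p(z)$ over $k$ then yields $B[a](z) \leq (\sum_k D_k) \, p(z)$, which is the desired $O(|z|^{-\alpha})$ estimate.

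The main obstacle is the choice of the splitting radius. A naive split at $|y| = |z|/2$ would produce a factor $r \cdot 2^\alpha$ in front of $D_k$ in the recursion, which exceeds $1$ as soon as $r \geq 2^{-\alpha}$, so the induction would close only for very small $r$. The fix described above is to split at the larger threshold $|z|/M$: this trades the multiplicative factor $(M/(M-1))^\alpha$ (which can be made arbitrarily close to $1$ by choosing $M$ large) against the coefficient $M^\alpha$ of the $r^k$ term (large but fixed, and harmless for summability since it multiplies a convergent geometric series). Unlike Lemma~\ref{l1939a}, the Laplace transform route is not available here because polynomial weights are not submultiplicative for convolution, so this bare-hands inductive argument seems to be the natural approach.
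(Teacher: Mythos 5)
Your argument is correct; it is worth comparing with the paper's proof because, while the two share the same overall strategy, the key estimate is organized quite differently. Both proofs reduce to $a\geq 0$ via $|B[a]|\leq B[|a|]$, control each convolution power by a fixed polynomial profile, and split the sum $\sum_y a(y)a^{*k}(z-y)$ at a radius proportional to $|z|$. The paper, however, establishes the stronger \emph{pointwise sub-eigenvalue inequality} $\phi_d*a\leq\rho\phi_d$ (Claim~\ref{c9125}, with $\phi_d(z)=(|z|\wedge d)^{-\alpha}$ and a single $\rho<1$), which then gives $a^{*k}\leq C\rho^{k-1}\phi_d$ in one stroke; to prove that inequality it needs a \emph{three}-way split of the convolution (regimes \onecirc, \twocirc, \threecirc), and the middle regime—where both $|x|$ and $|z-x|$ are comparable to $|z|$—is handled by a fairly delicate sum-to-integral approximation with a uniform error estimate. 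You instead prove only a recursion $D_{k+1}\leq\rho D_k+C_2\,r^k$ for the constants $D_k$ in $a^{*k}\leq D_k\,p$, obtained from a plain \emph{two}-way split: in the far field ($|y|\leq|z|/M$) you pay the geometric factor $r\,(M/(M-1))^{\alpha}$, made $<1$ by taking $M$ large, and in the complementary range you accept a large but fixed constant $C_1M^{\alpha}$ because it multiplies the decaying quantity $\|a^{*k}\|_{\ell^1}\leq r^k$. This trade—using the $\ell^1$-decay of the powers of $a$ in place of the decay of the profile in the near range—lets you dispense entirely with the integral approximation and with the flat-top profile $\phi_d$, making the argument more elementary and self-contained; the price is that the intermediate bound on $a^{*k}$ is a two-term recursion rather than a pure geometric one, though this costs nothing for summability.
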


\begin{proof}
Let~$a$ satisfy the assumptions of the lemma for some $\alpha$.
Like in the proof of Lemma~\ref{l1939b}, we can assume that $a$ is nonnegative.
For~$d > 0$, we define the function~$\phi_d \colon \ZZ^n\to\RR$ by:
\[ \phi_d(z) \coloneqq 1/(|z|\wedge d)^\alpha ,\]
which is in~$\ell^1(\ZZ^n)$ since $\alpha > n$.
Then the key claim is the following sub-lemma, whose proof is postponed:
\begin{Lem}\label{c9125}
Under the assumptions of Lemma~\ref{l1939b},
there exists some $\rho < 1$ and some $d \in (0,\infty)$ such that, pointwise,
\[\label{f9148} \phi_d * a \leq \rho\phi_d .\]
\end{Lem}

Admitting Lemma~\ref{c9125}, take~$\rho$ and~$d$ such that (\ref{f9148}) is satisfied.
The assumption on~$a$ implies that there exists some $C < \infty$ such that $a \leq C\phi_d$;
therefore by~(\ref{f9148}) one also has $a * a \leq C \phi_d * a \leq \rho C \phi_d$,
whence by~(\ref{f9148}) again $a * a * a \leq \rho C \phi_d * a \leq \rho^2 C \phi_d$,
etc.. In the end,
\[ B[a] \leq C \phi_d + \rho C \phi_d + \rho^2 C \phi_d + \cdots \leq \frac{C}{1-\rho} \phi_d ,\]
which implies that $B[a](z) = O(1/|z|^\alpha)$.
\end{proof}

\begingroup\def\proofname{Proof of Lemma~\ref{c9125}}
\begin{proof}
Denote $S \coloneqq \|a\|_{\ell^1}$, which by hypothesis is $<1$,
and fix $\epsilon \in (0,1/2)$ such that $(1-\epsilon)^\alpha > S$.
Let~$d \in (0,\infty)$, devised to be quite large; our goal is to bound above
$\big(\phi_d * a\big)(z)$ for all~$z \in \ZZ^n$.
Since $\phi_d$ is bounded above by~$d^{-\alpha}$, one has obviously for all~$z \in \ZZ^n$:
\[\label{f2348} \big(\phi_d * a\big)(z) \leq d^{-\alpha} \sum_{z\in\ZZ^n} a(z) = S d^{-\alpha} ,\]
whence $\big(\phi_d * a\big)(z) \leq S \phi_d(z)$ for all~$z$ with $|z|\leq d$.
Since $S<1$, the claim is therefore okay for $|z|\leq d$. 

Now, let~$z\in\ZZ^n$ with $|z|>d$. We have to bound above
\[ \big(\phi_d*a\big)(z) = \sum_{\substack{x,y\in\ZZ^n\\x+y=z}} \phi_d(x)\,a(y) .\]
We decompose this sum into three pieces:
\[ \big(\phi_d*a\big)(z) = \sum_{|y|\leq\epsilon|z|} \phi_d(z-y)\,a(y) +
\sum_{\substack{|x|>\epsilon|z|\\|z-x|>\epsilon|z|}} \phi_d(x)\,a(z-x) +
\sum_{|x|\leq\epsilon|z|} \phi_d(x)\,a(z-x) ,\]
which we shorthand into ``$\onecirc + \twocirc + \threecirc$''.

We bound these three terms separately.
For~\onecirc, we observe that for $|y| \leq \epsilon|z|$, $|z-y| \geq (1-\epsilon)|z|$
by the triangle inequality, thus $\phi_d(z-y) \leq {\big((1-\epsilon)|z|\big)^{-\alpha}} \ab 
= (1-\epsilon)^{-\alpha} \phi_d(z)$, whence by summing:
\[\label{f1034a} \onecirc \leq (1-\epsilon)^{-\alpha} \phi_d(z) \sum_{|y|\leq\epsilon|z|} a(y)
\leq (1-\epsilon)^{-\alpha} S \phi_d(z) .\]
Similarly, for $|x| \leq \epsilon|z|$, $C$ denoting a constant such that $a\leq C\phi_d$,
one has ${a(z-x)} \ab \leq C\big((1-\epsilon)|z|\big)^{-\alpha}$, thus:
\[\label{f1034b} \threecirc \leq (1-\epsilon)^{-\alpha} C \|\phi_d\|_{\ell^1} \, \phi_d(z) .\]
Of course, $\|\phi_d\|_{\ell^1}$ depends on~$d$; the important point is that,
by dominated convergence, $\|\phi_d\|_{\ell^1} \longto 0$ when~$d \longto \infty$.

Finally, provided $d$ is large enough, Term~\twocirc\ will be well approximated by an integral:
\[\label{f1820} \twocirc \leq
\sum_{\substack{x\in\ZZ^n\\|x|,|z-x|>\epsilon|z|}} \frac{1}{|x|^\alpha}\times\frac{C}{|z-x|^\alpha}
\simeq \int_{\begin{subarray}{l}x\in\RR^n\\|x|,|z-x|>\epsilon|z|\end{subarray}}
\frac{C}{(|x|\,|z-x|)^\alpha}\, \dx{x} ,\]
where ``$\simeq$'' means that the ratio between the quantites at each side of that symbol
can be made arbitrarily close to~$1$ when~$d \longto \infty$, uniformly in~$z$.
Indeed, the difference between the sum and the integral
is due to two causes: first, approximating the integral on a unit square of~$\RR^n$
by the value of the integrand at the center of this square, second, summing (or not summing)
terms of the discrete sum corresponding to squares
that are not entirely in the domain of the integral.
For the first cause,
on the domain of the integral, $C/(|x||z-x|)^\alpha$
varies of at most $O(1/|z|)$ in relative value on all the unit squares.
For the second cause, the border of the domain of the integral
is made of two $(n-1)$-dimensional spheres of radius $\epsilon |z|$,
so it crosses $O(|z|^{n-1})$ unit squares. Since $C/(|x||z-x|)^\alpha$
is bounded by~$C(\epsilon(1-\epsilon))^{-\alpha} |z|^{-2\alpha}$
on the domain of the integral, the (absolute) error due to boundary squares
is at most $O(|z|^{n-1-2\alpha})$. As the integral itself is proportional to
$|z|^{n-2\alpha}$ (cf.\ the change of variables below),
the relative error due to boundary squares is at most $O(1/|z|)$ too,
and $O(1/|z|) = o(1)$ since $|z| > \epsilon d$.

Making the change of variables $x = |z|x'$, (\ref{f1820}) becomes:
\[ \twocirc \lesssim C|z|^{n-2\alpha}
\int_{|x'|,|1-x'|>\epsilon} \frac{1}{|x'|^\alpha|1-x'|^\alpha} \, \dx{x'} ,\]
which I shorthand into ``$\twocirc \lesssim \mcal{I}C|z|^{n-2\alpha}$''.
Since $|z|>d$ and $\alpha > n$, this bound implies:
\[\label{f1034c} \twocirc \lesssim \frac{\mcal{I}C}{d^{\alpha-n}} \, \phi_d(z) .\]

Combining~(\ref{f1034a}), (\ref{f1034b}) and~(\ref{f1034c}),
one finally gets that when~$d\longto\infty$, for all~$|z|\geq d$,
\[ \big(\phi_d*a\big)(z) \leq \rho(d) \, \phi_d(z) ,\]
with
\[ \rho(d) = (1-\epsilon)^{-\alpha} (S+C\|a\|_{\ell^1})
+ (1+o(1)) \mcal{I}C/d^{\alpha-n} .\]
$\rho(d)$ tends to~$(1-\epsilon)^{-\alpha} S < 1$ when~$d\longto\infty$,
so it is $<1$ provided $d$ is large enough, which is what we wanted.\linebreak[1]\strut
\end{proof}\endgroup

\bibliographystyle{abbrv}
\bibliography{biblio}

\end{document}